\documentclass[11pt]{amsbook}
\usepackage{amsfonts, amsthm, amssymb, amsmath, stmaryrd, color, enumerate, relsize}
\usepackage[pdftex]{graphicx}
\usepackage{mathrsfs,array}
\usepackage{xy}
\usepackage{hyperref}
\usepackage{tikz}
\input xy
\xyoption{all}

\DeclareMathOperator{\Spa}{Spa}

\DeclareMathOperator{\Hom}{Hom}

\DeclareMathOperator{\Spec}{Spec}

\renewcommand*{\tilde}{\widetilde}

\newcommand{\dotimes}{{\otimes^{\mathbb L}}}

\numberwithin{equation}{section}
\newtheorem{theorem}{Theorem}
\numberwithin{theorem}{section}
\newtheorem{lemma}[theorem]{Lemma}
\newtheorem{corollary}[theorem]{Corollary}
\newtheorem{proposition}[theorem]{Proposition}

\newtheorem{problem}[theorem]{Problem}
\newtheorem{assumption}[theorem]{Assumption}

\theoremstyle{definition}
\newtheorem{remark}[theorem]{Remark}
\newtheorem{exercise}[theorem]{Exercise}
\newtheorem{warning}[theorem]{Warning}
\newtheorem{definition}[theorem]{Definition}
\newtheorem{question}[theorem]{Question}
\newtheorem{example}[theorem]{Example}

\newtheorem{construction}[theorem]{Construction}

\date{\today}

\title{Condensed Mathematics and Complex Geometry}

\author{Dustin Clausen, Peter Scholze}

\begin{document}

\maketitle

\tableofcontents

\chapter*{Condensed Mathematics and Complex Geometry}

\section*{Preface}

This is a slightly revised version of lectures notes for a course in Summer 2022 joint between Bonn and Copenhagen, intended as a stable citable version.\\

Many thanks go to Ko Aoki and Mohan Ramachandran for many comments, and in particular for many pointers to the literature.\\

Dustin Clausen and Peter Scholze\hfill{May 2026}

\newpage

\section{Lecture I: Introduction}

Over the last few years, we have been working on an alternative foundation for the development of a (very general) ``analytic geometry'', based on a new foundation for combining algebra and topology, in the framework of ``condensed mathematics''. These ideas have been laid out in the lectures \cite{Condensed} and \cite{Analytic}, given in 2019--2020.

The goal of this course will be to make these developments more concrete by concentrating on the case of complex-analytic geometry, and instead of trying to develop new kinds of geometry, we will here merely try to redevelop the classical theory, but from a different point of view. More precisely, we aim to reprove some important theorems for compact complex manifolds, including:

\begin{enumerate}
\item Finiteness of coherent cohomology;
\item Serre Duality;
\item In the algebraic case, GAGA;
\item (Grothendieck--)Hirzebruch--Riemann--Roch.
\end{enumerate}

The proofs will be very different from previous proofs. Notably, at least for the first three results, our proofs will be of a local nature; even better, we will formulate versions of these results that are true even in the non-compact (sometimes also called non-proper) case. Moreover, we would like to say that our proofs are proofs by ``formal nonsense'' and in particular analysis-free. Let us try to be somewhat more precise by what we have in mind here.

The most basic notion in complex analysis is that of a holomorphic function, in one variable. The claim that this is a well-behaved notion is contained in the following theorem.

\begin{theorem}\label{thm:holomorphicdefinition} There is a (necessarily unique) sheaf $\mathcal O$ on the topological space $\mathbb C$, such that for any open disc
\[
D=D(x,r)=\{z\in \mathbb C\mid |z-x|<r\}\subset \mathbb C,
\]
one has
\[
\mathcal O(D)=\{\sum_{n=0}^\infty a_n (T-x)^n\mid a_n\in \mathbb C, \forall r'<r, a_n r'^n\to 0\},
\]
with obvious transition maps $\mathcal O(D)\to \mathcal O(D')$ for $D'\subset D$.

Moreover, for any such disc $D$, the sheaf cohomology groups $H^i(D,\mathcal O)=0$ for $i>0$.
\end{theorem}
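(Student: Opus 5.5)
The plan has two parts: first build the sheaf from its values on discs, then compute the cohomology on a disc by a Čech argument in which the only analytic input is a Laurent/Cousin splitting.

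\emph{Existence and uniqueness of the sheaf.} Open discs form a basis of the topology of $\mathbb C$. Moreover, a presheaf given on this basis, satisfying the sheaf condition for covers of discs by discs, extends uniquely to a sheaf. So I would define $\mathcal O(D)$ by the formula in the statement; the transition maps come from re-expanding a power series around a new centre. Then I would check the sheaf axiom for coverings $D=\bigcup_i D_i$ by discs.

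\begin{enumerate}
\item \emph{Separatedness} follows from the identity theorem: a convergent power series vanishing on a small disc has all its coefficients equal to zero.
\item \emph{Gluing} is the real issue. A compatible family $f_i\in\mathcal O(D_i)$ defines a function $f$ on $D$ that is locally given by convergent power series. We must show that its Taylor series at the centre $x$ converges on all of $D$.
\end{enumerate}

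For gluing I would use Cauchy's integral formula. Functions given locally by power series are complex differentiable, so the contour integral over $|z-x|=r'<r$ is defined. This yields $a_n=\frac1{2\pi i}\oint f(z)(z-x)^{-n-1}dz$ and the estimate $|a_n|r'^n\le\sup_{|z-x|=r'}|f|$. Hence $a_nr'^n$ is bounded, so $a_nr''^n\to0$ for every $r''<r'$. Finally, $\sum a_n(T-x)^n$ agrees with $f$, by Cauchy's formula for points inside the circle.

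\emph{Vanishing of cohomology.} Since discs form a basis closed under finite intersections up to refinement (intersections of discs are convex open sets, not discs), I would work with Čech cohomology on convex open sets. The standard route is to show $H^1(U,\mathcal O)=0$ and $H^i(U,\mathcal O)=0$ for $i\geq2$ for $U$ a disc or any convex open set. The key input is the additive Cousin problem: given $U=V\cup W$ with $V,W$ convex, every $h\in\mathcal O(V\cap W)$ can be written as $f-g$ with $f\in\mathcal O(V)$ and $g\in\mathcal O(W)$. I would prove this by a Cauchy integral splitting along a cut-off. Solve $\bar\partial$, or use the Laurent decomposition on annuli and a Mittag-Leffler-type exhaustion argument.

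From Cousin splitting for finite covers, and then exhaustion plus a Mittag-Leffler argument to pass to the limit, one gets $\check H^1=0$. For higher degrees I would use that a disc is an increasing union of compact discs, and that on compact sets the relevant complexes are complexes of Fréchet/Banach spaces. Cartan's argument (Leray acyclicity via refinement) then reduces to $H^1$. Alternatively, and more cleanly, I would take the Dolbeault resolution $0\to\mathcal O\to C^\infty\xrightarrow{\bar\partial}C^\infty\to0$. Here $C^\infty$ is fine, hence acyclic, so $H^i(D,\mathcal O)=0$ for $i\ge2$ immediately, and $H^1(D,\mathcal O)=\mathrm{coker}(\bar\partial)$ on $D$.

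\emph{Main obstacle.} The main step is therefore surjectivity of $\bar\partial$ on $C^\infty(D)$. On compactly supported data, $u=\frac1{2\pi i}\int \frac{g(w)}{w-z}dw\wedge d\bar w$ solves $\bar\partial u=g$. For general $g$ on $D$ I would exhaust $D$ by smaller closed discs and solve on each with cut-offs. The correction terms are holomorphic on the smaller disc, and by Runge (truncated Taylor series, which converge uniformly on smaller discs) they can be approximated by polynomials. A Mittag-Leffler telescoping argument then produces a global solution on $D$. I expect this approximation/limit step to be the delicate part; everything else is formal.
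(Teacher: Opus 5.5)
Your proof is correct, but it is exactly the classical argument that the paper sketches in Lecture I as ``the usual proof'' and deliberately avoids. You glue via Cauchy's integral formula and Cauchy estimates. You get acyclicity from the Dolbeault resolution $0\to\mathcal O\to C^\infty\xrightarrow{\overline{\partial}}C^\infty\to 0$, together with a Runge/Mittag-Leffler exhaustion for surjectivity of $\overline{\partial}$ on $D$. You should state explicitly that $\ker\overline{\partial}=\mathcal O$ (again by Cauchy's formula) and that local solvability follows from your Cauchy transform after a cut-off; both are standard.

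The paper uses no integrals or smooth functions. It works in $\mathcal D(\mathrm{Liq}_p(\mathbb C[T]))$, whose idempotent algebras form a locale. On this locale $U\mapsto j_{U\ast}j_U^\ast 1=\underline{\operatorname{RHom}}([1\to A],1)$ is a sheaf for purely formal reasons, where $A$ is the idempotent algebra of the complement of $U$. The only real computations are two:
\begin{itemize}
\item the overconvergent algebras $A(\{|T|\le 1\})$ and $A(\{|T|\ge 1\})$ are idempotent, shown by dividing by $U-T$ in two-variable overconvergent series;
\item $\underline{\operatorname{RHom}}_{\mathbb C[T]}([\mathbb C[T]\to A(\{|T|\ge 1\})],\mathbb C[T])$ is concentrated in degree $0$ and equals $\mathcal O(D(0,1))$, using the duality of $\mathcal M_{<p}$ with $C_0$ sequence spaces and the Mittag-Leffler argument of Lecture IV.
\end{itemize}
Ostrowski's elementary theorem then identifies the Berkovich spectrum of $\mathbb C[T]$ with $\mathbb C$, and one pushes the structure sheaf forward along $\bigcup_r\{|T|<r\}\to\mathbb C$. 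In that approach the sheaf property and the vanishing of $H^i(D,\mathcal O)$ are not separate steps. The value on $D$ of a $\mathcal D(\mathrm{Liq}_p)$-valued sheaf is already $R\Gamma(D,\mathcal O)$, so the single degree-$0$ computation gives both.

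Your route needs only classical one-variable analysis and no condensed machinery. The paper's route black-boxes the main theorem on liquid vector spaces but is otherwise formal, and it buys several things:
\begin{itemize}
\item it parallels Tate acyclicity in the nonarchimedean setting;
\item it produces $\mathcal O$ as a sheaf of $p$-liquid algebras, so the vanishing holds for liquid-valued cohomology and not only for underlying groups, together with the sheaf of categories $U\mapsto C(U)$;
\item it extends without further analysis to polydiscs in $\mathbb C^n$ and to analytifications (Lecture VI), where your method would need the several-variable $\overline{\partial}$-lemma.
\end{itemize}
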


Here, the ``obvious'' transition maps are those that are compatible with evaluation at points. More precisely, if $z\in D(x,r)$ and $f\in \mathcal O(D)$, i.e. $f=\sum_{n=0}^\infty a_n (T-x)^n$, then $f(z)=\sum_{n=0}^\infty a_n (z-x)^n$ is absolutely convergent in $\mathbb C$, so $\mathcal O(D)$ maps (injectively) to the functions from $D$ to $\mathbb C$. Then $\mathcal O(D)\to \mathcal O(D')$ is the unique map making
\[\xymatrix{
\mathcal O(D)\ar[r]\ar[d] & \mathrm{Map}(D,\mathbb C)\ar[d]\\
\mathcal O(D')\ar[r] & \mathrm{Map}(D',\mathbb C)
}\]
commute. (One could also write down this map very explicitly in terms of power series.)

In other words, there should be a notion of ``holomorphic function'' on $D$, a certain kind of function $D\to \mathbb C$, and checking holomorphicity can be done locally. On the other hand, any such, a priori locally defined, function, should have on $D$ a global convergent Taylor series expansion.

The usual proof of this theorem is based on treating holomorphic functions as special smooth functions, and most critically on Cauchy's integral formula, based on integrating along paths. This makes it possible to compute the Taylor coefficients $a_n$ in terms of integrals around the circle $|z-x|=r'$ for any $r'<r$. For the final cohomology statement, one uses the exact sequence
\[
0\to \mathcal O\to C^\infty\xrightarrow{\overline{\partial}} C^\infty\to 0,
\]
and uses the good properties of the $\overline{\partial}$-differential operator, as well as bump functions to see that the higher cohomology of the sheaf of $C^\infty$-functions vanishes. This is analysis!

Already Weierstra\ss\ was trying to avoid the use of integrals in the proof of this theorem; he gave an argument replacing the integrals by approximating finite sums, but in essence it is still the same argument. In a letter to Schwarz dated October 3, 1875, Weierstra\ss\ writes (cf.~\cite[Chapter 8, Section 2.2.3]{Remmert}):

``Je mehr ich \"uber die Principien der Functionentheorie nachdenke -- und ich thue dies unabl\"assig --, um so fester wird meine \"Uberzeugung, dass diese auf dem Fundamente algebraischer Wahrheiten aufgebaut werden muss, und dass es deshalb nicht der richtige Weg ist, wenn umgekehrt zur Begr\"undung einfacher und fundamentaler algebraischer S\"atze das `Transzendente', um mich kurz auszudr\"ucken, in Anspruch genommen wird -- so bestechend auch auf den ersten Ablick z.B.~die Betrachtungen sein m\"ogen, durch welche Riemann so viele der wichtigsten Eigenschaften algebraischer Functionen entdeckt hat. (Dass dem Forscher, so lange er sucht, jeder Weg gestattet sein muss, versteht sich von selbst; es handelt sich nur um die systematische Begr\"undung.)''

In this course, we will try to develop complex geometry in this spirit.

Let us also remark that the above strategy of proof is extremely different from the strategy of proof used to prove the corresponding result in $p$-adic geometry (known as ``Tate acyclicity''). Part of our goal is to develop foundations for analytic geometry that treat archimedean and non-archimedean geometry on equal grounds; and we will proceed by making archimedean geometry more similar to non-archimedean geometry. In very rough outline, we will make $\mathbb C[T]$ into an ``analytic ring'', which will have an associated ``analytic spectrum'' $\mathrm{AnSpec} \mathbb C[T]$. Moreover, any $D$ as above will correspond to an open subset of $\mathrm{AnSpec} \mathbb C[T]$, and the theorem basically amounts to a computation of the structure sheaf on $\mathrm{AnSpec} \mathbb C[T]$ (that will exist, and have acyclicity properties, by general theory). This computation will in fact be rather simple. So in this approach, one discovers holomorphic functions as functions on certain open subsets of the analytic spectrum $\mathrm{AnSpec} \mathbb C[T]$.

Let us step back for a moment and take a big picture look at the development of different kinds of geometry. Basically, things started with the analysis of functions of several real variables,
\[
f: \mathbb R^n\to \mathbb R^m,
\]
restricting to continuous, differentiable, or smooth functions. The focus here was always on the infinitesimal variation of such functions; and functions are notably something that is determined by its values (and vice-versa, any collection of values determines a function). Assuming $n$ and $m$ even and identifying $\mathbb R^2=\mathbb C$, one can also restrict to complex-differentiable functions. The first big surprise in the theory is that this already implies smoothness, and in fact that the function is given by a convergent power series expansion. Here, a small shift happens: Any function is uniquely given by a formula (its power series expansion). Generally, complex analysis becomes much more rigid, and in fact is often extremely close to algebraic geometry, which deals just with polynomial functions (with complex coefficients). Pictorially:

\[
\mathrm{Map}(\mathbb C,\mathbb C)\supset C^0(\mathbb C,\mathbb C)\supset C^1(\mathbb C,\mathbb C)\supset \ldots\supset C^\infty(\mathbb C,\mathbb C)\supset \mathcal O(\mathbb C)\supset \mathbb C[T]\supset \mathbb Z[T]
\]

This goes from arbitrary maps (``set theory'') to continuous maps (``topology'') to differentiable functions (``analysis'') to smooth functions (``differential geometry'') to holomorphic functions (``complex analysis'') to complex polynomials (``algebraic geometry'') to integer polynomials (``arithmetic geometry''). Historically (at least on a very superficial level), the development started with some class of differentiable functions, implicit in the first works on analysis, and then proceeded rightwards by putting more and more stringent conditions. Proceeding along those lines, however, made functions much more into formulas -- any element of $\mathbb Z[T]$ is just something like $3T^4 + 7T - 4$, and in fact can be evaluated not just in $\mathbb C$, but in any field, for example $\mathbb F_p$.

Traditionally, objects on the right are considered more complicated than objects towards the left. We will take the opposite point of view -- what could be simpler than polynomials with integer coefficients? (Indeed, these are the simplest kinds of commutative rings.) Thus, we will start from $\mathbb Z[T]$ and try to get to $\mathbb C[T]$ and $\mathcal O(D)$ in a ``formal nonsense'' manner -- i.e., we want to find ways to freely adjoint the ``correct'' convergent power series.

In other words, we need a way to ``formally'' adjoin convergent sums. This is formalized in the notion of an ``analytic ring'' developed in \cite{Condensed} and \cite{Analytic}. The idea is the following. An analytic ring should be a topological ring $A$ together with a certain (topological) $A$-module
\[
\mathrm{Summable}(\mathbb N,A)\subset A^{\mathbb N}
\]
of ``summable'' sequences, such that, for any null-sequence $a_0,a_1,\ldots,a_n,\ldots\in A$, encoded in a continuous map
\[
f: \mathbb N\cup \{\infty\}\to A
\]
(with $f(\infty)=0$), and any summable sequence $(x_0,x_1,\ldots,x_n,\ldots)\in \mathrm{Summable}(\mathbb N,A)$, one can form a new element
\[
\sum_{n=0}^\infty x_n a_n\in A.
\]

In other words, any continuous map $f: \mathbb N\cup\{\infty\}\to A$ with $f(\infty)=0$ should extend uniquely to a continuous map
\[
\tilde{f}: \mathrm{Summable}(\mathbb N,A)\to A.
\]
It turns out that it's more convenient to generalize convergent sequences (for example, in general continuity cannot be checked in terms of convergent sequences). Namely, $\mathbb N\cup\infty$ is a special kind of profinite set $S$ (recall that profinite sets are also known as totally disconnected compact Hausdorff spaces). One should define for any profinite set $S$ a topological $A$-module $\mathcal M_A(S)$ of ``$A$-valued measures on $S$'', with a map $S\to \mathcal M_A(S)$ sending any $s\in S$ to the ``Dirac measure'' $\delta_s\in \mathcal M_A(S)$, such that for any continuous map $f: S\to A$, there is a unique extension
\[
\tilde{f}: \mathcal M_A(S)\to A: \mu\mapsto \int f \mu.
\]

In other words, an analytic ring $A$ is roughly the datum of a topological ring $A$, together with a topological $A$-module $\mathcal M_A(S)$ for any profinite set $S$, satisfying some compatibilities. One would like the basic example to be $A=\mathbb R$, with $\mathcal M_{\mathbb R}(S)$ given by the space of Radon measures, i.e.
\[
\mathcal M_{\mathbb R}(S)=\mathrm{Hom}_{\mathbb R}^{\mathrm{cont}}(\mathrm{Cont}(S,\mathbb R),\mathbb R).
\]
Unfortunately, as we will discuss below, this will not define an example of an analytic ring, due to some subtle behaviour of the real numbers. But similar constructions carried over $\mathbb Z$ or $\mathbb Z_p$ (for a prime $p$) do give very interesting examples of analytic rings, discussed in \cite{Condensed}.

We note that any analytic ring $A$ will in fact give rise to a notion of ``complete'' topological $A$-modules $M$. Namely, $M$ is complete if for any $f: S\to M$ as before, there is a unique extension
\[
\tilde{f}: \mathcal M_A(S)\to M: \mu\mapsto \int f\mu.
\]

\begin{example} Let $A=\mathbb R$ with $\mathcal M_{\mathbb R}(S)$ given by the space of Radon measures as above. Then for any complete locally convex topological $\mathbb R$-vector space $V$, any map $f: S\to V$ extends uniquely to a map
\[
\tilde{f}: \mathcal M_{\mathbb R}(S)\to V: \mu\mapsto \int f\mu.
\]
\end{example}

On the other hand, we would like to use the very categorical methods of algebraic geometry in this setting of analytic rings. In particular, we would like to work in the setting of abelian categories.

\begin{problem} Topological abelian groups do not form an abelian category!
\end{problem}

The issue is very basic. Indeed, consider the map
\[
\mathbb R_{\mathrm{disc}}\to \mathbb R_{\mathrm{nat}}
\]
from $\mathbb R$ with the discrete topology to $\mathbb R$ with its natural topology. This has trivial kernel, trivial cokernel, but is not an isomorphism. This happens in fact whenever one equips an abelian group $M$ with two distinct topologies, one finer than the other, so it is an extremely pervasive problem.

On the other hand, in the presentation above, we started to look at topological abelian groups $M$ only in terms of the corresponding functor
\[
\underline{M}: \mathrm{ProFin}^{\mathrm{op}}\to \mathrm{Ab}: S\mapsto \mathrm{Cont}(S,M)
\]
taking any profinite set $S$ to the continuous maps from $S$ to $M$. This will be an example of a condensed abelian group, which is just a functor
\[
\mathrm{ProFin}^{\mathrm{op}}\to \mathrm{Ab}
\]
satisfying some simple axioms.

Condensed abelian groups form an abelian category! For example, there is a short exact sequence
\[
0\to \underline{\mathbb R_{\mathrm{disc}}}\to \underline{\mathbb R_{\mathrm{nat}}}\to Q\to 0
\]
where $Q$ is the condensed abelian group given by the functor
\[
Q(S) = \mathrm{Cont}(S,\mathbb R)/\mathrm{LocConst}(S,\mathbb R).
\]
Note that the ``underlying abelian group'' $Q(\ast)=0$, but still $Q\neq 0$ as $Q(S)\neq 0$ for infinite profinite sets -- for example, taking $S=\mathbb N\cup \{\infty\}$, not every convergent sequence in $\mathbb R$ is eventually constant.

Just like condensed abelian groups, one can define condensed sets, as functors $\mathrm{ProFin}^{\mathrm{op}}\to \mathrm{Set}$, satisfying some simple conditions (recalled in the next lecture). There is a functor
\[
\mathrm{Top}\to \mathrm{Cond}: X\mapsto (\underline{X}: S\mapsto \mathrm{Cont}(S,X)).
\]
Restricted to compactly generated spaces, this functor is fully faithful, so in practice condensed sets are an enlargement of topological spaces. But there are many more objects in $\mathrm{Cond}$; notably, nonseparated quotient objects.

\begin{definition}[slightly preliminary]\label{def:analyticringprelim} An analytic ring is a condensed ring $A$ together with a condensed $A$-module $\mathcal M_A(S)$ for any profinite set $S$, with a map $S\to \mathcal M_A(S)$ of ``Dirac measures'', subject to some compatibility. In particular, the full subcategory
\[
\{\mathcal M_A\text-\mathrm{complete\ condensed}\ A\text-\mathrm{modules}\}\subset \mathrm{Cond}(A),
\]
of those condensed $A$-modules $M$ such that for all $S$ and all $f: S\to M$, there is a unique extension $\tilde{f}: \mathcal M_A(S)\to M$, should be an abelian subcategory of $\mathrm{Cond}(A)$, stable under extensions.
\end{definition}

We can now explain why $\mathbb R$ (or $\underline{\mathbb R}$ -- we will soon become lax with our use of underlines, treating everything as condensed from the start...) equipped with the spaces of signed Radon measures is not an analytic ring.

\begin{example}\label{entropy} For any $p\in [1,\infty]$, let $\ell^p(\mathbb N)$ be the Banach space of $p$-summable sequences $(x_0,x_1,\ldots)$ of real numbers, equipped with the $\ell^p$-norm
\[
||(x_n)_n||_{\ell^p} = (\sum_n |x_n|^p)^{1/p}.
\]
Then $\ell^1(\mathbb N)\subset \ell^2(\mathbb N)$ with dense image; in condensed $\underline{\mathbb R}$-vector spaces, we can pass to the quotient:
\[
0\to \underline{\ell^1(\mathbb N)}\to \underline{\ell^2(\mathbb N)}\to Q\to 0.
\]
Now consider the (non-linear) map
\[
g: \ell^1(\mathbb N)\to \ell^2(\mathbb N): (x_n)_n\mapsto (x_n\log|x_n|)_n.
\]
It turns out that the composite
\[
\overline{g}: \underline{\ell^1(\mathbb N)}\xrightarrow{g} \underline{\ell^2(\mathbb N)}\to Q
\]
is a (nonzero) map of condensed $\underline{\mathbb R}$-vector spaces. On the other hand, restricted to the basis vectors, the map is identically $0$. In other words, the sequence
\[
\mathbb N\cup\{\infty\}\to Q
\]
that is constant $0$ admits two distinct extensions to the space of summable sequences $\ell^1(\mathbb N)$ (which is basically $\mathcal M_{\mathbb R}(\mathbb N\cup\{\infty\})$): The map that is constant $0$, and $\overline{g}$. That is, $Q$ is not $\mathcal M_{\mathbb R}$-complete, contrary to the requirement put in Definition~\ref{def:analyticringprelim}.
\end{example}

\newpage

\section{Lecture II: Liquid vector spaces}

Recall from the previous lecture that our first goal is to produce a sheaf $\mathcal{O}$ of $\mathbb{R}$-vector spaces on the topological space $\mathbb{C}$, whose value on the disk $D=D(x,r)$ of radius $r>0$ centered at a point $x\in \mathbb{C}$ is given by
\[
\mathcal O(D)=\{\sum_{n=0}^\infty a_n (T-x)^n\mid a_n\in \mathbb C, \forall r'<r, a_n r'^n\to 0\}.
\]
Moreover, we want to see that $\mathcal{O}$ has no higher cohomology on these disks $D$, so that this description of the sections is ``robust''.

As described in the previous lecture, we will approach this old result by new methods, which are much more algebraic in nature as opposed to analytic.  For this, the first step is to recognize more structure on these vector spaces of sections.  In fact, we need to promote $\mathcal O(D)$ to an object of an \emph{abelian tensor category}, such that this promotion in some sense encodes a structure of topological vector space on $\mathcal O(D)$.  The category being abelian is useful in order to control sheaf cohomology; and the tensor product, though its use is not a priori obvious, is actually also crucial for our approach.

The purpose of this lecture is to introduce the required abelian tensor category, whose objects are known as \emph{$p$-liquid vector spaces}.  

As mentioned, the idea is that we want to encode a topological vector space structure on $\mathcal O(D)$.  However, as indicated in the previous lecture, the formalism of topological vector spaces is not appropriate, because topological spaces don't mix well with algebra: in particular, topological abelian groups do not form an abelian category.

The solution is to find a replacement for the very notion of topological space, one for which it is much more convenient to add on algebraic structures such as that of a vector space.  This replacement is known as \emph{condensed set}.

The building blocks for condensed sets are the following very special kinds of topological spaces.

\begin{definition} Let $\operatorname{Prof}$ denote the category of compact Hausdorff totally disconnected topological spaces.\end{definition}

There are several useful characterizations of this class of topological spaces.  We have that $S\in \operatorname{Prof}$ if and only if $S$ is compact Hausdorff and there is a basis for the topology of $S$ consisting of \emph{clopen} subsets.  Another equivalent condition is that $S$ should be homeomorphic to some inverse limit of finite discrete spaces:
$$S = \varprojlim_{i\in I} S_i.$$
In fact, there is a canonical choice for this inverse limit diagram: we can take $I$ to be the set of disjoint union decompositions of $S$ into (finitely many) nonempty clopen subsets, with partial order given by refinement. Equivalently, an element of $I$ is an isomorphism class of continuous surjections $S\twoheadrightarrow S_i$ with $S_i$ a finite discrete space, and this shows how to build the inverse limit diagram.

This canonical index diagram is cofiltered, so every $S\in\operatorname{Prof}$ is a cofiltered inverse limit of finite sets.  In fact, this yields an equivalence of categories
$$\operatorname{Prof}\simeq \operatorname{Pro}(\operatorname{finite Sets})$$
with the Pro-category of finite sets, giving a ``combinatorial'' as opposed to topological description of our basic building blocks.  For this reason, objects of $\operatorname{Prof}$ are called simply \emph{profinite sets} for short.

More general condensed sets will be glued from objects of $\operatorname{Prof}$ via the formalism of sheaves.  To implement this, we consider the following finitary Grothendieck topology on $\operatorname{Prof}$.

\begin{definition}
A finite collection $(f_i:S_i\rightarrow S)_{i\in I}$ of maps in $\operatorname{Prof}$ with common codomain is called a \emph{covering} if the induced map
$$\bigsqcup_{i\in I} S_i \twoheadrightarrow S$$
is surjective.
\end{definition}

The category $\operatorname{Prof}$ has all pullbacks (in fact, all limits), and it is trivial to check the axioms of a Grothendieck topology.  However, before we use this to define a category of sheaves, we need to get around the technical difficulty that $\operatorname{Prof}$ is not essentially small.  For this we introduce a ``cutoff'' cardinal $\kappa$, which we assume to be infinite.

\begin{definition}
A profinite set $S$ is called \emph{$\kappa$-small} if the set of clopen subsets of $S$ has cardinality $<\kappa$.  The full subcategory of $\kappa$-small profinite sets is denoted
$$\operatorname{Prof}_\kappa\subset\operatorname{Prof}.$$
\end{definition}

We can restrict the Grothendieck topology on $\operatorname{Prof}$ to $\operatorname{Prof}_\kappa$, and then make the following definition.

\begin{definition}
A \emph{$\kappa$-condensed set} is a sheaf of sets on $\operatorname{Prof}_\kappa$.  The category of $\kappa$-condensed sets is denoted by $\operatorname{CondSet}_\kappa$.
\end{definition}

Explicitly, an $X\in \operatorname{CondSet}_\kappa$ is a functor
$$X: \operatorname{Prof}_\kappa^{\mathrm{op}}\rightarrow Set$$
such that:

\begin{enumerate}
\item If $(S_i)_{i\in I}$ is a finite collection of objects of $\operatorname{Prof}_\kappa^{\mathrm{op}}$, then
$$X(\bigsqcup_{i\in I} S_i)\overset{\sim}{\rightarrow} \prod_{i\in I} X(S_i);$$
\item If $f:T\twoheadrightarrow S$ is a surjective map in $\operatorname{Prof}_\kappa$, then
$$X(S)\overset{f^*}{\hookrightarrow} X(T)$$
is injective with image equal to the set of those $x\in X(T)$ for which $p_1^\ast x=p_2^\ast x$, where $p_1,p_2:T\times_ST\rightarrow T$ are the two projections.
\end{enumerate}

There are two ways of thinking about this definition.  The first is that instead of encoding a topological space structure on $X$ directly, we encode, for every $S\in \operatorname{Prof}$, the data of an abstract set $X(S)$ which we think of as specifying the set of ``continuous maps from $S$ to $X$''.  From this perspective, the above structure and conditions should make intuitive sense:
\begin{enumerate}
\item the contravariant functoriality $X: \operatorname{Prof}_\kappa^{\mathrm{op}}\rightarrow Set$ comes from the idea that a ``continuous map'' $S\rightarrow X$ and a continuous map $T\rightarrow S$ should compose to give a ``continuous map'' $T\rightarrow X$;
\item the first axiom encodes that giving a ``continuous map'' out of a disjoint union should be the same as separately giving a ``continuous map'' out of each constituent piece;
\item the second axiom encodes that if $f:T\twoheadrightarrow S$, then giving a ``continuous map'' out of $S$ should be the same as giving a ``continuous map'' out of $T$ which is constant on the fibers of $f$.
\end{enumerate}
To make this last point more palatable, note that a continuous surjection between compact Hausdorff spaces is a topological quotient map.

The other way of think about this definition, actually equivalent to the first though perhaps psychologically different, comes back to the idea that profinite sets are the building blocks for arbitrary condensed sets.  More formally, the Yoneda embedding gives a fully faithful functor
$$\operatorname{Prof}_\kappa \hookrightarrow \operatorname{CondSet}_\kappa$$
defined by
$$S\mapsto (\underline{S}: T\mapsto \operatorname{Cont}(T,S)),$$
allowing us to view $\kappa$-small profinite sets as special kinds of $\kappa$-condensed sets; and an arbitrary $\kappa$-condensed set is built from $\kappa$-small profinite sets via colimits in $\operatorname{CondSet}_\kappa$.

From this perspective, the role of the sheaf conditions is perhaps a bit more tangible.  If we had used the presheaf category instead of sheaf category in the definition, then $\operatorname{CondSet}_\kappa$ would be \emph{freely} generated under colimits by $\operatorname{Prof}_\kappa$, so every relation between colimits would be essentially diagrammatic in nature. The role of the sheaf conditions is to enforce that certain colimits we already have in $\operatorname{Prof}_\kappa$ should be preserved by the Yoneda embedding, and thus be visible in $\operatorname{CondSet}_\kappa$.  Indeed:
\begin{enumerate}
\item the first sheaf condition says that given finitely many profinite sets $S_i$, we have
$$\bigsqcup \underline{S_i} \overset{\sim}{\rightarrow} \underline{\bigsqcup S_i};$$
\item the second sheaf condition says that that if $T\twoheadrightarrow S$, thereby presenting $S$ as the quotient of $T$ by the equivalence relation $T\times_S T$, then $\underline{S}$ is the quotient of $\underline{T}$ by the equivalence relation $\underline{T\times_S T}$.
\end{enumerate}

Carrying around the cardinal $\kappa$ is sometimes a bit annoying.  If $\kappa < \kappa'$, then there is a pullback functor
$$\pi^\ast: \operatorname{CondSet}_\kappa\rightarrow \operatorname{CondSet}_{\kappa'},$$
characterized by the fact that it preserves colimits and restricts to the obvious inclusion $\operatorname{Prof}_\kappa\rightarrow \operatorname{Prof}_{\kappa'}$ under the Yoneda embeddings.  Now it turns out that, for $\kappa$ and $\kappa'$ restricted to a cofinal class of cardinals (e.g., strong limit cardinals of sufficiently large cofinality), the functor $\pi^\ast$ is fully faithful and preserves limits and internal hom's of any fixed size.  This means that the colimit category
$$\operatorname{CondSet} := \varinjlim_\kappa \operatorname{CondSet}_\kappa$$
of \emph{condensed sets} is just as well-behaved as each individual $\operatorname{CondSet}_\kappa$ in terms of its topos-theoretic exactness properties, because calculations can always be done at some ``finite'' level.  Moreover, $\operatorname{CondSet}$ is generated under colimits by arbitrary profinite sets in the same way $\operatorname{CondSet}_\kappa$ is generated under colimits by $\kappa$-small profinite sets.

Let's try to understand a bit about what condensed sets can look like by carving out a hierarchy of full subcategories of $\operatorname{CondSet}$:

$$\operatorname{qcProj}\subset\operatorname{Prof}\subset\operatorname{qcqs}\subset\operatorname{qs}\subset\operatorname{CondSet}.$$

We already know about $\operatorname{Prof}$: these were our basic building blocks, the profinite sets.  The other three full subcategories are given by some general properties an object in a topos can have.  We review the relevant definitions in the current context.

\begin{definition}  A condensed set $X$ is called \emph{quasicompact} or \emph{qc} for short if for all collections $(X_i\rightarrow X)_{i\in I}$ of maps of condensed sets with target $X$, indexed by an arbitrary set $I$, it holds that if
$$\bigsqcup_i X_i\twoheadrightarrow X,$$
then there exists a finite subset $I_0\subset I$ such that
$$\bigsqcup_{i\in I_0} X_i\twoheadrightarrow X.$$
\end{definition}

Since $\operatorname{CondSet}$ is generated by $\operatorname{Prof}$, every $X\in\operatorname{CondSet}$ admits a surjection from a coproduct of profinite sets, and it follows that in the above definition we could restrict to the case where all $X_i$ are profinite without any change in the resulting notion.  Since the Grothendieck topology on $\operatorname{Prof}$ is finitary, every profinite set is itself quasicompact, and in fact we find that an $X\in \operatorname{CondSet}$ is quasicompact if and only if there exists an $S\in\operatorname{Prof}$ and a surjection
$$S\twoheadrightarrow X.$$

Thus, a condensed set is qc if and only if it is a quotient of a profinite set.  However, this quotient can be quite arbitrary, and in particular non-Hausdorff: an example is $X=\mathbb{R}/\mathbb{R}^\delta$ from the previous lecture.  This is a quotient of $\mathbb{R}/\mathbb{Z}$, which is in turn a quotient of the profinite set $\prod_{\mathbb{N}} \{0,1\}$ via binary expansions.

The next condition rules out such non-Hausdorff behavior.

\begin{definition}  A condensed set $X$ is called \emph{quasiseparated} or \emph{qs} for short if the diagonal map $X\rightarrow X\times X$ is quasicompact, meaning for all $S\in \operatorname{Prof}$ and all pairs of maps $f,g:S\rightarrow X$, the condensed set $S\times_XS$ is quasicompact.
\end{definition}

This is the analog of the Hausdorff condition for condensed sets.  Indeed, a topological space is Hausdorff if and only if the diagonal is a closed inclusion; but on the other hand, the condition in the above definition that $S\times_XS$ be quasicompact is equivalent to the condition that the inclusion $S\times_XS \subset S\times S$ be represented by a closed subset of the topological space $S\times S$, thanks to the following lemma, whose proof we include to give a feel for this kind of elementary argument.

\begin{lemma}
Let $S\in \operatorname{Prof}$. For an arbitrary condensed subset $T\subset S$, we have that $T$ is qc $\Leftrightarrow$ $T$ is represented by a closed subset of $S$ $\Leftrightarrow$ $T\in  \operatorname{Prof}$.
\end{lemma}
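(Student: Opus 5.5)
We prove the cycle of implications: qc $\Rightarrow$ closed $\Rightarrow$ profinite $\Rightarrow$ qc.

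\emph{Profinite $\Rightarrow$ qc.} This is already recorded above: since the Grothendieck topology on $\operatorname{Prof}$ is finitary, every profinite set is quasicompact.

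\emph{Closed $\Rightarrow$ profinite.} Let $Z\subset S$ be a closed subset, where ``$T$ is represented by $Z$'' means $T=\underline{Z}$, i.e.\ $T(S')$ consists of the continuous maps $S'\to S$ landing in $Z$. A closed subset of a compact Hausdorff totally disconnected space is again compact, Hausdorff and totally disconnected, so $Z\in\operatorname{Prof}$ and $T=\underline{Z}$ is profinite.

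\emph{Qc $\Rightarrow$ closed.} This is the substantive direction. Since $T$ is qc, the characterization above gives a profinite set $S'$ and a surjection $S'\twoheadrightarrow T$ of condensed sets. Composing with $T\subset S$ gives an element of $\underline{S}(S')$, i.e.\ a continuous map $f\colon S'\to S$. Put $Z=f(S')$, the set-theoretic image; it is compact, hence closed in the Hausdorff space $S$, and therefore profinite by the previous step. We claim $T=\underline{Z}$ as subobjects of $\underline{S}$.

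For $T\subset\underline{Z}$: let $g\colon S''\to S$ lie in $T(S'')$. Surjectivity of sheaves means that, after passing to a finite cover $S'''\twoheadrightarrow S''$ (which we may take to be a single surjection of profinite sets), $g$ lifts through $f$. Hence every point of $g(S'')$ lies in $f(S')=Z$, so $g$ factors through $Z$ and $g\in\underline{Z}(S'')$.

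For $\underline{Z}\subset T$: the map $f\colon S'\to Z$ is a continuous surjection of profinite sets, hence a covering. So $\underline{S'}\to\underline{Z}$ is an epimorphism of sheaves. Its image is contained in $T$, and $T$ is a subsheaf, so $\underline{Z}\subset T$. Concretely, any $g\colon S''\to Z$ lifts after pulling back along the surjection $S''\times_Z S'\to S''$, and the sheaf condition for the subsheaf $T$ then puts $g$ in $T(S'')$.

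The main point needing care is this last step: we must use that a subsheaf is closed under descent along covers, and that ``surjection of condensed sets'' means local surjectivity. Both are immediate from the two sheaf axioms as spelled out above.
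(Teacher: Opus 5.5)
Your proof is correct and follows the paper's route: take a surjection $S'\twoheadrightarrow T$, set $Z=f(S')$, note that $Z$ is closed, and show $T=\underline{Z}$ as subobjects of $S$. The only difference is in this last step: you check both inclusions by hand (local lifting for one, descent for the subsheaf $T$ for the other), whereas the paper observes that $T$ and $\underline{Z}$ are both the quotient of $S'$ by the same equivalence relation $S'\times_S S'$.
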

\begin{proof}
The nontrivial implication is that $T$ qc $\Rightarrow$ $T$ is represented by a closed subset of $S$.  If $T$ is qc, we can choose an $S'\in\operatorname{Prof}$ and a surjection $S'\twoheadrightarrow T$.  The composition $$f:S'\twoheadrightarrow T\subset S$$
is a map between objects of $\operatorname{Prof}$.  Let $f(S')$ denote the topological space image of this map of compact Hausdorff spaces.  Then $f(S')$ is closed in $S$, so it suffices to show that $f(S')=T$ as subobjects of $S$.  However, $f(S')$ is the quotient of $S'$ by the equivalence relation $S'\times_{f(S')} S'$ in $\operatorname{CondSet}$ by definition of the Grothendieck topology.  On the other hand, $S'\times_{f(S')} S'=S'\times_TS'$ because $T,f(S')\subset S$, and $T$ is the quotient of $S'$ by $S'\times_T S'$ on completely general topos-theoretic grounds.
\end{proof}

Having the notions of qc and qs, we get the notion of qcqs: a condensed set is qcqs if it is both qc and qs.  This is the topos-theoretic analog of compact Hausdorff; but in fact, in this case it is much more than an analogy.

\begin{proposition}
The full subcategory $\operatorname{qcqs}\subset\operatorname{CondSet}$ is equivalent to the category $\operatorname{CHaus}$ of compact Hausdorff spaces.
\end{proposition}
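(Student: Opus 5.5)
We construct the functor $\operatorname{CHaus}\to\operatorname{qcqs}$, $X\mapsto \underline{X}$, and show it is an equivalence. The plan has four steps.

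\textbf{Step 1: $\underline{X}$ lands in $\operatorname{qcqs}$.} For $X$ compact Hausdorff, $\underline{X}(S)=\operatorname{Cont}(S,X)$ is a sheaf, because surjections of compact Hausdorff spaces are quotient maps. Every compact Hausdorff space is the continuous image of a profinite set, for instance the Stone--\v{C}ech compactification $\beta X^\delta$ of $X$ with the discrete topology. Such a surjection $S\twoheadrightarrow X$ of spaces induces a surjection $\underline{S}\twoheadrightarrow\underline{X}$ of condensed sets: any map $T\to X$ from a profinite $T$ lifts after pulling back along the surjection $T\times_X S\twoheadrightarrow T$, and $T\times_X S$ is profinite. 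Hence $\underline{X}$ is qc. For two maps $f,g:S\to X$, the fibre product $S\times_{\underline{X}}S$ is represented by the closed subset $\{(s,t):f(s)=g(t)\}\subset S\times S$, which is closed because $X$ is Hausdorff. By the preceding Lemma it is profinite, so $\underline{X}$ is qs.

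\textbf{Step 2: full faithfulness.} For compact Hausdorff $X,Y$, choose a profinite cover $S\twoheadrightarrow X$. By Step 1, $R=S\times_X S$ is profinite, and $\underline{X}$ is the coequalizer of $\underline R\rightrightarrows\underline S$ in $\operatorname{CondSet}$. Therefore
\[
\Hom(\underline X,\underline Y)=\{\phi\in\operatorname{Cont}(S,Y)\mid \phi p_1=\phi p_2 \text{ on } R\}.
\]
Since $S\to X$ is a topological quotient map, this set is exactly $\operatorname{Cont}(X,Y)$.

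\textbf{Step 3: essential surjectivity.} Let $Y$ be qcqs, and pick a surjection $S\twoheadrightarrow Y$ with $S$ profinite. By quasiseparatedness, $R=S\times_Y S$ is a qc condensed subset of $S\times S$, so by the Lemma it is a closed subset. It is an equivalence relation. Set $X=S/R$ with the quotient topology; a closed equivalence relation on a compact Hausdorff space gives a compact Hausdorff quotient. Then $\underline{X}$ is the coequalizer of $\underline R\rightrightarrows\underline S$, as in Step 2 applied to $X$. Since $Y$ is also this coequalizer (being the quotient of $S$ by $S\times_Y S$ in a topos), we get $Y\cong\underline X$.

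\textbf{Main obstacle.} The delicate point is the claim, used in Steps 2 and 3, that $\underline{X}$ is the coequalizer of $\underline R\rightrightarrows \underline S$ as condensed sets. This requires $\underline S\to\underline X$ to be an epimorphism of sheaves, which is the lifting argument in Step 1, together with the identification $\underline S\times_{\underline X}\underline S=\underline{R}$, which holds because $\underline{(-)}$ preserves limits. With these two facts, the claim follows from the fact that in a topos every epimorphism is effective. The topological input needed is only that $S/R$ is Hausdorff when $R$ is closed, which follows from normality of compact Hausdorff spaces.
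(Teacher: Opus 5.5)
Your proof is correct and follows the paper's idea: both qcqs condensed sets and compact Hausdorff spaces are quotients of profinite sets by closed (equivalently, profinite) equivalence relations, and $X\mapsto\underline{X}$ is the equivalence. The paper only sketches this and describes the inverse functor directly, as $Y\mapsto Y(\ast)$ with closed subsets the $Y'(\ast)$ for qc subobjects $Y'\subset Y$; you instead build the inverse from a chosen presentation $S/R$, which amounts to the same construction.
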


The idea behind this equivalence is that both qcqs condensed sets and compact Hausdorff spaces can be described in terms of profinite sets in exactly the same way: they are quotients of profinite sets by closed (equivalently, profinite) equivalence relations.  The functor $\operatorname{CHaus}\overset{\sim}{\rightarrow} \operatorname{qcqs}$ implementing this equivalence is also just the usual way
$$X\mapsto (\underline{X}: S\mapsto \operatorname{Cont}(S,X))$$
of assigning a condensed set to a topological space, and the functor backwards can be described as sending a qcqs $X$ to its underlying set $X(\ast)$ equipped with the topology where the closed subsets are those of the form $Y(\ast)$ for some qc condensed subset $Y\subset X$.

We can also describe the more general quasiseparated condensed sets.

\begin{proposition}
A condensed set is $qs$ if and only if it is a filtered union of $qcqs$ subobjects.  The full subcategory $\operatorname{qs}\subset\operatorname{CondSet}$ is equivalent to the full subcategory category of $\operatorname{Ind}(\operatorname{CHaus})$ consisting of those ind-compact Hausdorff spaces which can be represented by an ind-system with injective transition maps.
\end{proposition}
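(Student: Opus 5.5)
The first claim I would prove by examining quasicompact subobjects of $X$. Suppose $X$ is qs. Every condensed set is covered by profinite sets, so $X$ is the union of the images of all maps $f: S\to X$ with $S\in\operatorname{Prof}$. Each image $f(S)\subset X$ is qc, since it is a quotient of $S$. It is also qs: a subobject of a qs object is qs, because $T\times_{f(S)}T = T\times_X T$ for any two maps $T\to f(S)$. So each image is qcqs. The images form a filtered system, because the images of $S_1$ and $S_2$ are both contained in the image of $S_1\sqcup S_2$, which is again profinite. Since every section of $X$ over a profinite set factors through one of these images, $X$ is their filtered union.

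For the converse, let $X=\bigcup_i X_i$ be a filtered union of qcqs subobjects, and take $f,g: S\to X$ with $S$ profinite. I would first check that $f$ and $g$ factor through a single $X_i$. Since $S$ is qc and the $X_i$ cover $X$, the pullbacks $f^{-1}(X_i)$ form a filtered family of subobjects covering $S$, so one of them is already all of $S$; the same holds for $g$, and filteredness lets us use a common index. Then $S\times_X S = S\times_{X_i} S$ because $X_i\to X$ is a monomorphism, and the right-hand side is qc because $X_i$ is qs. Hence $X$ is qs.

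For the equivalence with $\operatorname{Ind}(\operatorname{CHaus})$, I would use the functor sending an ind-system $(K_i)$ with injective transitions to $\operatorname{colim}\underline{K_i}$ in $\operatorname{CondSet}$. By the previous proposition, $\operatorname{qcqs}\simeq\operatorname{CHaus}$. A filtered colimit of monomorphisms of sheaves is computed sectionwise on profinite sets, since the sheaf condition involves only finite limits, which commute with filtered colimits. So the colimit is a filtered union, and it is qs by the converse direction above. Essential surjectivity onto $\operatorname{qs}$ is then exactly the first claim.

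The main obstacle is full faithfulness. One has
\[
\operatorname{Hom}_{\operatorname{Ind}}((K_i),(L_j))=\varprojlim_i\varinjlim_j \operatorname{Hom}(K_i,L_j),
\]
while the condensed side is $\varprojlim_i \operatorname{Hom}(\underline{K_i},\operatorname{colim}_j \underline{L_j})$. So I need that a map $\underline{K}\to\operatorname{colim}_j\underline{L_j}$ from a compact Hausdorff $K$ factors through some $\underline{L_j}$, and does so uniquely in the colimit over $j$. Factorization comes from the quasicompactness argument above: choose a profinite surjection $S\to K$, factor it through some $L_j$, and descend along $S\to K$ using the sheaf condition, noting that $L_j\subset\operatorname{colim}$ is a monomorphism. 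Uniqueness also follows from the injectivity of the transition maps, since each $\underline{L_j}\to\operatorname{colim}_j\underline{L_j}$ is a monomorphism. Finally, one should record that maps between qcqs objects are the same as maps in $\operatorname{CHaus}$, which is the previous proposition.
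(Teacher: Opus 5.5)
Your proof is correct. The paper gives no proof of this proposition: it is stated as background, and the only supporting material is the equivalence $\operatorname{qcqs}\simeq\operatorname{CHaus}$ and Exercise~1 of Lecture~II (qcqs condensed sets are compact objects). So your argument is a natural way to fill it in, and each step checks out.

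Your full-faithfulness step is really that exercise, and in that form it does not need injective transition maps. Given $\underline{K}\to\varinjlim_j\underline{L_j}$ and a profinite surjection $S\to K$, the composite from $S$ factors through some $\underline{L_j}$, because filtered colimits are computed on profinite sets. The two composites $S\times_K S\rightrightarrows S\to L_j$ agree in the colimit, hence already agree in some $L_{j'}$, since $S\times_K S$ is profinite (this is where $K$ being qs enters). So the map descends to $K\to L_{j'}$. Uniqueness is similar, using that $\underline S\to\underline K$ is an epimorphism. Hence $\operatorname{Ind}(\operatorname{CHaus})\to\operatorname{CondSet}$ is fully faithful on all ind-systems, and injectivity of the transitions is needed only to land in $\operatorname{qs}$. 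Without it one gets, for example, the colimit of the quotients of $[0,1]$ identifying $1,\tfrac12,\ldots,\tfrac1n$, which is not qs. Your injectivity-based uniqueness argument is of course also fine.

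Two small points deserve a sentence in a written version. First, the descent step uses that $\underline K$ is the quotient of $\underline S$ by $\underline{S\times_K S}$ for compact Hausdorff (not just profinite) $K$. This holds because $\underline S\to\underline K$ is an epimorphism (any $T\to K$ lifts after the profinite cover $T\times_K S\to T$) and epimorphisms in a topos are effective. Second, in the essential surjectivity step, inclusions of qcqs subobjects correspond to injective continuous maps of compact Hausdorff spaces, because $\underline K(\ast)=K$.
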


The fully faithful functor $X \mapsto \underline{X}$ from compactly generated weak Hausdorff topological spaces to condensed sets lands inside $\operatorname{qs}$:  it sends $X$ to the union of all its compact Hausdorff subsets, viewing these as qcqs condensed sets.  However, there are many more quasi-separated condensed sets than just these: given a CGWH topological space $X$, we can take any family of compact Hausdorff subsets of $X$ which is closed under finite union, and take the union in condensed sets over just those instead.  If that family of compact subsets still determines the topology of $X$ (for example, if $X$ is metrizable and we take the family of countable compact subsets), this gives a new qs condensed set which is, however, indistinguishable from $X$ from a topological perspective.  In fact, the notion of qs condensed set exactly matches the notion of \emph{compactological space} developed by Waelbrock, \cite{waelbroeck2006topological}.

Via this description in terms of ind-compact Hausdorff spaces, the qs condensed sets are fairly accessible in standard topological terms.  On the other hand, disjoint unions of profinite sets are qs, and a subobject of a qs condensed set is qs.  It follows that any condensed set is a quotient of a qs condensed set by a qs equivalence relation.  This gives some idea of the general nature of condensed sets.

It remains to discuss the most special class $\operatorname{qcProj}$ of condensed sets, the \emph{quasicompact projective} objects.

\begin{definition}
A condensed set $X$ is called \emph{projective} if for all $Y\in\operatorname{CondSet}$, every surjection $Y\twoheadrightarrow X$ admits a section.
\end{definition}

Equivalently, $X$ is projective if for all surjections $A\twoheadrightarrow B$ of condensed sets, the induced map $A(X)\twoheadrightarrow B(X)$ is a surjective map of sets.

A condensed set is quasicompact projective if it is both quasicompact and projective.  Such an $X$, being quasicompact, admits a surjection from a profinite set; being projective, this surjection splits, so $X$ is a retract of a profinite set, and hence itself is a profinite set.  Thus we see that qc projective condensed sets are the same thing as projective profinite sets: profinite sets $S$ such that for every surjection $S'\twoheadrightarrow S$ of profinite sets splits.

These are the same as the \emph{extremally disconnected} profinite sets studied by Gleason in \cite{Gleason}.  The main result on them is that there are ``enough'' extremally disconnected profinite sets:

\begin{lemma}
For every $S\in \operatorname{Prof}$, there is a projective $S'\in \operatorname{Prof}$ with a surjection $S'\twoheadrightarrow S$.
\end{lemma}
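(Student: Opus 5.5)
The most direct route is the Stone–Čech compactification. Let $S_\delta$ be the underlying set of $S$ with the discrete topology, and set $S'=\beta S_\delta$. Concretely, $S'$ is the space of ultrafilters on the set $S$, topologized by the clopen basis $\{\mathcal U : A\in\mathcal U\}$ for $A\subset S$. The plan is to check three things: $S'$ is profinite, it surjects onto $S$, and it is projective in $\operatorname{Prof}$. By the discussion preceding the statement, projectivity in $\operatorname{Prof}$ is the same as being quasicompact projective in $\operatorname{CondSet}$, so this suffices.

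\emph{$S'$ is profinite.} Compactness and Hausdorffness of the ultrafilter space are standard, and the basic sets above are clopen. Alternatively, one can write $S'=\varprojlim \{\text{finite partitions of } S_\delta\}$, which exhibits $S'$ directly as a cofiltered limit of finite sets.

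\emph{Surjection onto $S$.} The identity map $S_\delta\to S$ is continuous, and $S$ is compact Hausdorff. By the universal property of $\beta$, it therefore extends uniquely to a continuous map $S'\to S$. Concretely, an ultrafilter $\mathcal U$ is sent to its limit point, which exists by compactness and is unique by Hausdorffness. The map is surjective because the principal ultrafilter at $s$ maps to $s$.

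\emph{Projectivity.} Let $p\colon T\twoheadrightarrow S'$ be a surjection of profinite sets. For each $x\in S_\delta$, choose a preimage under $p$ of the principal ultrafilter $\delta_x$; this gives a map of sets $S_\delta\to T$. Since $T$ is compact Hausdorff, the universal property of $\beta$ extends it to a continuous map $\sigma\colon S'\to T$. The composite $p\circ\sigma$ agrees with the identity on the principal ultrafilters, which are dense in $S'$. Both maps are continuous into a Hausdorff space, so $p\circ\sigma=\mathrm{id}$, and $\sigma$ is the required section.

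The only input that is not formal is the universal property of $\beta S_\delta$: every map of sets from $S_\delta$ to a compact Hausdorff space $K$ extends uniquely and continuously to $S'$, sending $\mathcal U$ to its limit in $K$. Proving this means showing that pushforward ultrafilters converge to a unique point (by compactness and Hausdorffness of $K$), and that the resulting map is continuous. For continuity, take a closed neighbourhood $C$ of $f(\mathcal U)$; its preimage contains the basic clopen set attached to $f^{-1}(C)$. This is standard ultrafilter bookkeeping, and I expect it to be the only real work.

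There is one set-theoretic wrinkle. If one insists on working in $\operatorname{Prof}_\kappa$, the space $\beta S_\delta$ may fail to be $\kappa$-small. This is harmless in $\operatorname{CondSet}=\varinjlim_\kappa\operatorname{CondSet}_\kappa$, and in any case the statement concerns $\operatorname{Prof}$ itself.
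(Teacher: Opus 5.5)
Your proposal is correct and is the paper's proof: the paper takes $S'=\beta S^\delta$ and cites the universal property of the Stone--\v{C}ech compactification for both the surjection and projectivity, which you simply write out in detail. Your $\kappa$-smallness remark also matches the paper's later observation that taking $\kappa$ to be a strong limit cardinal keeps $\beta S^\delta$ inside $\operatorname{Prof}_\kappa$.
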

\begin{proof}
We take $S'=\beta S^\delta$, the Stone-Cech compactification of the underlying set of $S$.  The universal property of Stone-Cech compactification gives both the required surjection and the proof that $S'$ is projective.
\end{proof}

This means that the projective profinite sets also generate the category of condensed sets.  In some ways they are a more convenient set of generators, because of their projectivity.  For example, a map $X\rightarrow Y$ of condensed sets is surjective (resp.\ injective, resp.\ an isomorphism) if and only if $X(S)\rightarrow Y(S)$ is surjective (resp.\ injective, resp.\ an isomorphism) as a map of sets, for all projective profinite $S$.  For ``surjective'', this fails on more general profinite sets.

Coming back to our cut-off cardinals $\kappa$, it is convenient to require $\kappa$ to be a strong limit cardinal ($S<\kappa \Rightarrow 2^S < \kappa$), because then the above lemma also holds inside $\operatorname{Prof}_\kappa$.  Moreover, then $\kappa$-smallness for a profinite set as defined above is equivalent to underlying set being $\kappa$-small.  In fact, for all practical purposes we know, nothing would be lost by restricting the theory to $\operatorname{CondSet}_\kappa$ for $\kappa$ equal to the first uncountable strong limit cardinal, the supremum of finite iterations of the power set function applied to the natural numbers.

These projective profinite sets, though they never show up in practice as explicit objects of interest, are very convenient abstract entities to have around when discussing the homological algebra of \emph{condensed abelian groups}, which constitute our next topic.

\begin{definition}
A \emph{condensed abelian group} is an abelian group object of $\operatorname{CondSet}$, or equivalently, a sheaf of abelian groups on $\operatorname{Prof}_\kappa$ for some $\kappa$.  The category of condensed abelian groups is denoted $\operatorname{CondAb}$.
\end{definition}

General topos-theoretic considerations show that $\operatorname{CondAb}$ is an abelian category with all limits and colimits, and that filtered colimits in $\operatorname{CondAb}$ are exact.  However, due to the existence of enough qc projective profinite sets, we get even better exactness properties.  If $S\in\operatorname{qcProj}$, then the free condensed abelian group on $S$
$$\mathbb{Z}[S]$$
is a \emph{compact projective} object of the abelian category $\operatorname{CondAb}$.  This means that
$$\operatorname{Hom}_{\operatorname{CondAb}}(\mathbb{Z}[S],-):\operatorname{CondAb}\rightarrow \operatorname{Ab}$$
commutes with both filtered colimits and cokernels.  It follows that it in fact commutes with all colimits.  It also obviously commutes with all limits, so it commutes with both limits and colimits.

Moreover, these $\mathbb{Z}[S]$ for $S\in\operatorname{qcProj}$ provide enough compact projectives to generate $\operatorname{CondAb}$.  This can be expressed in two equivalent ways:
\begin{enumerate}
\item a map $f:A\rightarrow B$ of condensed abelian groups is an isomorphism if and only if $\operatorname{Hom}(\mathbb{Z}[S],A)\rightarrow \operatorname{Hom}(\mathbb{Z}[S],B)$ is an isomorphism for all $S\in\operatorname{qcProj}$;
\item every condensed abelian group is isomorphic to a cokernel of a map between direct sums of condensed abelian groups of the form $\mathbb{Z}[S]$.
\end{enumerate}

From this we deduce that $\operatorname{CondAb}$ inherits all the exactness properties of $\operatorname{Ab}$.  Indeed, any question about interchanging limits and colimits in $\operatorname{CondAb}$ can be tested on Hom's out of the $\mathbb{Z}[S]$, and then it reduces to the same claim in $\operatorname{Ab}$. In particular, infinite products are exact, a feature not shared by abelian group objects in most toposes.

Now we can finally discuss our main objects of study: the \emph{liquid vector spaces}.  As mentioned in the previous lecture, the idea is that we want to restrict to a certain class of condensed abelian groups, which we consider to be ``complete" real vector spaces in some sense.  And this completeness is expressed in terms of the existence and uniqueness of certain infinite sums, or more precisely, integrals of continuous functions along measures.

We start with the set of (signed) Radon measures $\mathcal{M}(S)$ on a profinite set $S$.  In the previous lecture this was defined as the continuous dual of $C(S;\mathbb{R})$, the Banach space of continuous functions on $S$.  But for our purposes it will be convenient to introduce an alternate more direct perspective on $\mathcal{M}(S)$.

Classically, a measure assigns to every Borel subset of $S$ a real number, such that certain axioms are satisfied, in particular countable additivity and bounded variation.  But for profinite $S$, the description simplifies considerably: it suffices to specify the measure $\mu(T)$ only for clopen subsets of $S$, and then only finite additivity and bounded variation need to be imposed.  That is,
$$\mathcal{M}(S) \subset \{\mu:\operatorname{Clopen}(S)\rightarrow\mathbb{R} \mid \mu(\emptyset)=0, \mu(T\sqcup T')=\mu(T)+\mu(T')\}$$
is given by $\mathcal{M}(S) = \bigcup_{C>0} \mathcal{M}(S)_{\leq C}$, where
$$\mathcal{M}(S)_{\leq C} = \{\mu \mid |\mu(T_1)|+ \ldots+ |\mu(T_n)|\leq C\text{ if } S = T_1\sqcup\ldots\sqcup T_n\}.$$

There is another way of organizing this definition, using the presentation $S=\varprojlim_i S_i$ of $S$ as a filtered inverse limit of finite sets, namely
$$\mathcal{M}(S)_{\leq C} = \varprojlim_i \mathcal{M}(S_i)_{\leq C}\subset \varprojlim_i \mathbb{R}^{\oplus S_i},$$
where $\mathcal{M}(S_i)_{\leq C}$ is the closed ball of radius $C$ centered at the origin with respect to the $\ell^1$-norm on $\mathbb{R}^{\oplus S_i}$.  This also describes $\mathcal{M}(S)$ as a qs condensed set: it is the sequential union of the $\mathcal{M}(S)_{\leq C}$, which are compact Hausdorff spaces, being inverse limits of closed balls in finite-dimensional spaces.

The appearance of the $\ell^1$-norm here is necessary to make $\mathcal{M}(S)$ match the dual of continuous functions, but in this last presentation there's nothing to suggest we couldn't use a different norm.  In fact this freedom of changing norms will be crucial for us.

More precisely, for $0<p\leq 1$, we can consider the $\ell^p$-norm on $\mathbb{R}^S$ for a finite set $S$, defined by 
$$\lVert (x_s)_{s\in S}\rVert_p = \sum_{s\in S} \vert x_s\vert^p.$$
This norm still satisfies the triangle inequality (this uses $p\leq 1$).  Note that the closed disk of radius $C$ in the $\ell^p$ norm is smaller than in the $\ell^1$-norm, so we will be shrinking our space of measures.  More drastically, for $p<1$ the closed disks in the $\ell^p$ norm are no longer convex!  This constitutes a more radical departure from standard functional analysis, which makes extensive use of convexity.  However, nonconvex functional analysis has been studied to some extent, with Kalton in particular proving some interesting and nontrivial results which we used as inspiration, see \cite{KaltonConvexityType}.

Anyway, we can consider
$$\mathcal{M}_p(S) = \bigcup_{C>0} \mathcal{M}(S)_{\ell^p\leq C},$$
where
$$\mathcal{M}(S)_{\ell^p\leq C} = \varprojlim_i \mathcal{M}(S_i)_{\ell^p\leq C}\subset \varprojlim_i \mathbb{R}^{\oplus S_i},$$
in perfect analogy to the above.  In other words, instead of measures of bounded variation, we take measures of bounded $p$-variation.  However, we cannot directly use this space of measures for defining $p$-liquid vector spaces either, because of similar phenomena as given in Example \ref{entropy} of the previous lecture.  Instead, we need to fix a $0<p\leq 1$, and then consider
$$\mathcal{M}_{<p}(S) = \bigcup_{q<p} \mathcal{M}_{q}(S).$$
Note that there is a natural inclusion $S\rightarrow \mathcal{M}_{<p}(S)$ of condensed sets, singling out the Dirac measures (unit coordinate vectors in the finite-dimensional approximations in the above definitions).  With this space of measures in place, we make the main definition.

\begin{definition}
Let $V\in\operatorname{CondAb}$ and $0<p\leq 1$.  We say that $V$ is \emph{$p$-liquid} if for every $S\in\operatorname{Prof}$ and every map of condensed sets $f:S\rightarrow V$, there is a unique map of condensed abelian groups $\widetilde{f}:\mathcal{M}_{<p}(S)\rightarrow V$ such that $\widetilde{f}|_S = f$.
\end{definition}

In the next lecture, we will state the main theorem on $p$-liquid condensed abelian groups, which implies that they form a very robust abelian subcategory of condensed abelian groups; and moreover, every $p$-liquid condensed abelian group has a unique and functorial structure of condensed $\mathbb{R}$-module.  But first, we want to explore the meaning of this definition by specializing it to the case of quasi-separated $V$.  The result is the following.

\begin{theorem}\label{thm:qspliquid}
Let $V$ be a qs condensed $\mathbb{R}$-module and $0<p\leq 1$.  Then $V$ is $p$-liquid if and only if for every $q<p$, every quasicompact subobject of $V$  is contained in a quasicompact $q$-convex subobject of $V$.
\end{theorem}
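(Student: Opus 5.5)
Both directions run through the description of $\mathcal{M}_{<p}(S)$ as a union of compact Hausdorff pieces $\mathcal M(S)_{\ell^q\le C}$, and of qs condensed sets as filtered unions of qcqs subobjects (the proposition above). Here a subobject $K\subset V$ is called \emph{$q$-convex} if $\sum_i a_i k_i\in K$ whenever $k_i\in K$ and $\sum_i |a_i|^q\le 1$ (finite sums). First I reduce to $C=1$: $\mathcal M(S)_{\ell^q\le C}$ is obtained from $\mathcal M(S)_{\ell^q\le 1}$ by scaling by $C^{1/q}$, and $V$ is an $\mathbb R$-module.

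\emph{Sufficiency.} Let $f\colon S\to V$ and fix $q<p$. Since $S$ is qc, its image lies in some qc subobject, hence by hypothesis in a qc $q$-convex $K\subset V$; as $V$ is qs, $K$ is compact Hausdorff. Write $S=\varprojlim S_i$. Let $\Delta_i=\mathcal M(S_i)_{\ell^q\le 1}$, a compact subset of $\mathbb R^{S_i}$, and let $x_{i,s}\in S$ be chosen points in the fibre over $s\in S_i$. I define
\[
\phi_i\colon \Delta_i\to K,\qquad \phi_i(\mu)=\sum_{s\in S_i}\mu(s)f(x_{i,s}).
\]
This lands in $K$ by $q$-convexity. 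The goal is to show that the compositions $\mathcal M(S)_{\ell^q\le1}\to\Delta_i\to K$ converge uniformly as $i$ grows, in the uniform structure of the compact Hausdorff space $K$.

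The key estimate uses that the difference of two such sums at levels $i\le j$ is $\sum_{t\in S_j}\mu(t)\bigl(f(x_{j,t})-f(x_{i,\pi(t)})\bigr)$. For $j$ large, every difference $f(x_{j,t})-f(x_{i,\pi(t)})$ lies in any prescribed neighbourhood of $0$ in $K-K$, by uniform continuity of $f$ on the compact $S$. So I need: for every neighbourhood $U$ of $0$ there is a neighbourhood $W$ with $\sum a_k w_k\in U$ whenever $w_k\in W\cap(K-K)$ and $\sum|a_k|^q\le1$. To get this, I embed via a smaller exponent: choose $q<q'<p$ and $K'$ qc $q'$-convex containing $K-K$. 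Then $\epsilon K'$ is a neighbourhood basis of $0$ in $K'$, which should follow from compactness of $K'$ and the fact that $\bigcap_\epsilon \epsilon K'=0$. Since $q\le q'$ gives $\sum|a_k|^{q'}\le1$ as well, sums of elements of $\epsilon K'$ with such coefficients stay in $\epsilon K'$. This gives uniform convergence. The limit is a continuous map $\tilde f\colon\mathcal M(S)_{\ell^q\le1}\to K'$ which is additive where defined, so the maps glue to $\tilde f\colon\mathcal M_{<p}(S)\to V$ extending $f$. Uniqueness follows because finitely supported measures are dense in each compact piece and $V$ is qs: two extensions agree on the dense finitely supported measures and both land in a common compact Hausdorff subobject, so their equalizer is closed and dense.

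\emph{Necessity.} Let $V$ be $p$-liquid, $q<p$, and $L\subset V$ qc. Pick a surjection $S\twoheadrightarrow L$ with $S$ profinite, take the extension $\tilde f\colon\mathcal M_{<p}(S)\to V$, and set $K=\tilde f(\mathcal M(S)_{\ell^q\le1})$. This is qc as the image of a compact set, contains $L$ via Dirac measures, and is $q$-convex because $\mathcal M(S)_{\ell^q\le1}$ is closed under $\ell^q$-convex combinations and $\tilde f$ is linear. For this last point I use that $\tilde f$ is $\mathbb R$-linear, which holds here since $V$ is assumed to be an $\mathbb R$-module and $\tilde f$ is additive and continuous.

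I expect the main obstacle to be the neighbourhood-basis claim for $\epsilon K'$ inside a compact $q'$-convex set. My plan is a compactness argument: the sets $\epsilon K'$ are compact and decrease with intersection $\{0\}$, so they eventually lie inside any open $U\ni 0$. I also need to check that each $\epsilon K'$ contains an open neighbourhood of $0$ in the relevant compact set. This is where passing to $q'>q$ matters, and it may instead require showing that the convergence is Cauchy relative to the gauge of $K'$ restricted to $K$.
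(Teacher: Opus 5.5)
Your necessity direction is correct and is the paper's argument. The sufficiency direction has a genuine gap, exactly at the step you flag. To make the $\phi_i$ uniformly Cauchy you need the following: for every neighbourhood $U$ of $0$ in $K'$ there is a neighbourhood $W$ with $\sum_k a_k w_k\in U$ whenever $w_k\in W$ and $\sum_k|a_k|^q\le 1$.

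Your justification, that the sets $\epsilon K'$ form a neighbourhood basis of $0$ in $K'$, is false. Compactness and $\bigcap_\epsilon \epsilon K'=\{0\}$ only give $\epsilon K'\subset U$ for small $\epsilon$, i.e.\ the gauge topology of $K'$ is finer than its compact topology. They never give that $\epsilon K'$ contains a neighbourhood of $0$. For example, in $V=\ell^2(\mathbb N)$ let $K'=\{\sum_n a_n e_n/n : \sum_n |a_n|^{q'}\le 1\}$. This is the image of a compact subset of $\prod_n[-1,1]$ under the continuous map $(a_n)\mapsto (a_n/n)$, so it is compact and $q'$-convex. Here $e_n/n\to 0$ in $K'$, yet $e_n/n\notin \epsilon K'$ for $\epsilon<1$.

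Your fallback of proving gauge-Cauchyness also fails, already in the universal case $V=\mathcal M_{<p}(S)$ with $f$ the Dirac map. Every qc subobject lies in some $\mathcal M(S)_{\ell^r\le C}$, and in that gauge $\delta_x-\delta_y$ has size at least $(2/C)^{1/r}$ for $x\neq y$. Meanwhile $\phi_j(\delta_y)-\phi_i(\delta_y)=\delta_{x_{j,t}}-\delta_{x_{i,s}}$, which is typically nonzero. Continuity of $f$ makes the differences $f(x_{j,t})-f(x_{i,\pi(t)})$ small only in the compact topology (and for $i$ large, not $j$). Nothing you have proved controls arbitrarily long $q$-summable combinations of such differences.

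The paper sidesteps this by approximating in a different direction. Using $\mathcal M(S;\mathbb Z((T))_r)\otimes_{\mathbb Z((T))_r}\mathbb R\cong \mathcal M_q(S)$, it truncates $T$-adic expansions instead of passing to finite quotients $S_i$. These truncations are exact on Dirac measures and continuous, since they only use the free extension $\mathbb Z[S]\to V$. Evaluating at $T=a\lambda$ with $a>1$ and using $q'$-convexity gives $f_m-f_n\in a^{-n}cK$, so the approximations are Cauchy in the gauge of $K$. Lemma~\ref{approximate}(2) then produces a continuous limit.

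Alternatively, your finite-quotient scheme can be rescued by proving the missing estimate directly. The idea is to use $q<q'$ to gain smallness, not merely to stay inside $K'$:
\begin{enumerate}
\item Choose $\eta>0$ so small that $\eta^{(q'-q)/q'}K'$ lies in a prescribed neighbourhood of $0$. This is possible by compactness and $\bigcap_\epsilon\epsilon K'=\{0\}$.
\item The terms with $|a_k|<\eta$ satisfy $\sum|a_k|^{q'}\le \eta^{q'-q}$, so their sum lies in $\eta^{(q'-q)/q'}K'$.
\item At most $N=\lfloor \eta^{-q}\rfloor$ terms have $|a_k|\ge\eta$. These are handled by choosing $W$ via the tube lemma for the continuous map $[-1,1]^N\times (K')^N\to V$, which vanishes on $[-1,1]^N\times\{0\}^N$.
\item Combine the two parts using continuity of addition at $(0,0)$.
\end{enumerate}
With that lemma supplied, the rest of your argument goes through: the uniform limit, additivity, gluing over $q$ and $C$, and uniqueness via density of $\mathbb Q$-combinations of Dirac measures.
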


A quasi-compact subobject $K$ of a qs condensed $\mathbb{R}$-module is said to be \emph{$q$-convex} if for all finite sets $S$, the map
$$(\mathbb{R}^{\oplus S})_{\ell^q\leq 1}\times K^S\rightarrow V,$$
$$ ((\lambda_s),(x_s))\mapsto \sum_s \lambda_s x_s,$$
lands inside $K$.  If $q=1$, this is the usual notion of absolute convexity.

Recall that a qc subobject of a qs condensed set is qcqs, hence corresponds to a compact Hausdorff space.  In particular, the above $q$-convexity on $K$ is really a pointwise condition.  In essence, the theorem says that a $p$-liquid qs condensed $\mathbb{R}$-module is one which can be exhausted by $q$-convex compact Hausdorff subobjects, for all $q<p$.

One implication in the above theorem is easy to prove:

\begin{proof}
Suppose $V$ is $p$-liquid, and let $K$ be a quasicompact subobject of $V$.  Choose a surjection $S\twoheadrightarrow K$ with $S\in\operatorname{Prof}$.  By definition of $p$-liquid, the composite $f:S\rightarrow K \subset V$ extends to a map of condensed abelian groups from $\mathcal{M}_{q}(S)$ for all $q<p$.  Note that this extended map is necessarily $\mathbb{R}$-linear, because we can test this on the compact Hausdorff subobjects $\mathcal{M}(S)_{\ell^q\leq C}$ and then argue pointwise using the density of $\mathbb{Q}$ in $\mathbb{R}$.  On the other hand, from the definitions we see that the quasicompact subset
$$\mathcal{M}(S)_{\ell^q\leq 1}\subset \mathcal{M}_{q}(S)$$
is $q$-convex.  It follows that its image under the $\mathbb{R}$-linear map $\mathcal{M}_{q}(S)\rightarrow V$ is also quasicompact and $q$-convex.  Since it clearly contains $K$, this proves one direction of the above theorem.
\end{proof}

The other implication is a bit more tricky, and the argument can be found in the appendix.

Thus we have a convenient criterion for a qs condensed $\mathbb{R}$-module to be $p$-liquid.  We can use this to produce some examples.  First of all, the spaces $\mathcal{M}_{<p}(S)$ themselves clearly satisfy the hypotheses, so they are themselves $p$-liquid.  They are the most basic examples, being (by definition!) the free $p$-liquid vector spaces on the profinite sets $S$.

But there is another class of examples, closely related to classical functional analysis, which also play an important role in our theory.  These are the \emph{Banach spaces}, or more generally $p$-Banach spaces for our fixed $0<p\leq 1$.

\begin{definition}
Let $0<p\leq 1$.  For an abstract $\mathbb{R}$-vector space $V$, a \emph{$p$-norm} on $V$ is a map
$$\lVert \cdot \rVert: V\rightarrow \mathbb{R}_{\geq 0}$$
such that:
\begin{enumerate}
\item $\lVert v\rVert=0 \Leftrightarrow v=0$ for $v\in V$;
\item $\lVert v+w\rVert\leq \lVert v\rVert + \lVert w\rVert$ for $v,w\in V$;
\item $\lVert \lambda v\rVert = \vert \lambda\vert^p \lVert v\rVert$ for $v\in V$, $\lambda\in\mathbb{R}$.
\end{enumerate}
Such a $p$-norm induces a topological vector space structure on $V$, and those topological vector spaces which arise from $p$-norms on $V$ for which $V$ is \emph{complete} are called \emph{$p$-Banach spaces}.
\end{definition}

Note that if $q<p$ and $V$ is $p$-Banach, then it is also $q$-Banach: we can replace the norm by $\lVert\cdot\rVert^{q/p}$ without affecting the topology.  In particular, every Banach space, which is a $1$-Banach space in the above terminology, is also $p$-Banach for all $p$.

By definition the topology of a $p$-Banach space is sequential, hence compactly generated, so we can equivalently encode a $p$-Banach $V$ in terms of its associated condensed $\mathbb{R}$-vector space.  We claim that any $p$-Banach is $p$-liquid.  To prove this, and for other purposes, it's nice to have control over compact subsets of $p$-Banach spaces.  Good information is provided by the following lemma of Grothendieck, \cite{GrothendieckTensor} p.\ 112:

\begin{lemma}\label{compactnullsequence}
Let $0<p\leq 1$, and let $V$ be a $p$-Banach space.  Then:
\begin{enumerate}
\item Every compact subset of $V$ is contained in the closure of the $p$-convex hull of a nullsequence.
\item The closure of the $p$-convex hull of any nullsequence is compact (and $p$-convex).
\end{enumerate}
\end{lemma}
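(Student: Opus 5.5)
I would prove the two parts separately, with (2) first since it is a direct estimate. Fix a $p$-norm $\lVert\cdot\rVert$ on $V$, and let $(x_n)$ be a nullsequence. Its $p$-convex hull is the set of finite sums $\sum \lambda_n x_n$ with $\sum|\lambda_n|^p\le 1$. I would show that its closure equals the image of the map
\[
\Phi: \{(\lambda_n)\in \ell^p(\mathbb N) : \textstyle\sum|\lambda_n|^p\le 1\}\to V,\qquad (\lambda_n)\mapsto \sum_n\lambda_n x_n,
\]
where the source carries the topology of coordinatewise convergence. This source is compact by Tychonoff, being closed in $\prod_n[-1,1]$.

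Two estimates are needed. First, the series converges absolutely in the $p$-norm, because $\sum_n |\lambda_n|^p\lVert x_n\rVert\le \sup_n\lVert x_n\rVert$; completeness of $V$ then gives convergence. Second, $\Phi$ is continuous. Given $\epsilon$, choose $N$ with $\lVert x_n\rVert<\epsilon$ for $n>N$. Then the tail contributes at most $\epsilon$ uniformly over the ball, and the first $N$ terms depend continuously on finitely many coordinates. So the image of $\Phi$ is compact, hence closed. It contains the hull, and it lies in the closure of the hull because truncations converge. This proves the closure is compact, and $p$-convexity is inherited by closures.

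For (1), let $K\subset V$ be compact. The plan is a dyadic (Grothendieck-style) approximation. Pick constants $\epsilon_k\to 0$, say $\epsilon_k=4^{-k}$ measured in the $p$-norm. By compactness choose finite $\epsilon_k$-nets $F_k\subset K$. For $y\in K$, take successive approximations $y_k\in F_k$ with $\lVert y-y_k\rVert<\epsilon_k$, and write
\[
y = y_0 + \sum_{k\ge1}(y_k-y_{k-1}).
\]
Each difference lies in the finite set
\[
D_k=\{a-b : a\in F_k,\ b\in F_{k-1},\ \lVert a-b\rVert\le 2\epsilon_{k-1}\}.
\]
Now rescale. Write $y_k-y_{k-1}=\mu_k\cdot (y_k-y_{k-1})/\mu_k$ with $\mu_k=2^{-k}$, say. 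The vectors $z=d/\mu_k$ for $d\in D_k$ have norm at most $2^{kp}\cdot 2\cdot 4^{-(k-1)}\to 0$ (using $p\le 1$). The scalars $\mu_k$ have $\sum_k \mu_k^p<\infty$. Enumerate $F_0$ together with all the rescaled $z$ into a single nullsequence, multiplied by a fixed constant so that the coefficient sums are $\le 1$. Then every $y\in K$ lies in the closure of the $p$-convex hull, since the partial sums are honest finite $p$-convex combinations converging to $y$.

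The main obstacle is this bookkeeping: choosing $\epsilon_k$ and $\mu_k$ so that simultaneously the rescaled differences tend to $0$ and $\sum\mu_k^p$ is bounded, while keeping track that the $p$-norm scales by $|\lambda|^p$. Taking $\epsilon_k$ decaying geometrically much faster than $\mu_k^p$ handles this. A further point to check is that the overall scaling constant is absorbed by enlarging the nullsequence; it remains a nullsequence after multiplication by a constant.
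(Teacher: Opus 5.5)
Your proof is correct. The paper gives no proof of its own; it simply quotes the lemma from Grothendieck (\emph{Produits tensoriels topologiques}, p.~112). Your argument is the standard proof of that result:
\begin{enumerate}
\item for (2), you realize the closed $p$-convex hull of a nullsequence $(x_n)$ as the image of the compact ball $\{\sum_n|\lambda_n|^p\le 1\}\subset\prod_n[-1,1]$ under $\lambda\mapsto\sum_n\lambda_nx_n$;
\item for (1), you write each point of a compact set as a telescoping series over successively finer nets, and rescale the differences into a single nullsequence.
\end{enumerate}
The estimates you need all hold: the uniform tail bound for continuity of $\Phi$, $\lVert d/\mu_k\rVert\le 8\cdot 2^{k(p-2)}\to 0$, and the final rescaling by $C^{1/p}$ with $C=1+\sum_k\mu_k^p$.

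One point of context is worth keeping in mind. Your $\Phi$ is precisely a continuous map $\mathcal M(\mathbb N)_{\ell^p\le 1}\to V$ of compact Hausdorff spaces, and this is the form in which the lemma is used later in the paper. Since a $p$-Banach space is also $q$-Banach for $q<p$, your argument run with $q$ in place of $p$ supplies the compact $q$-convex sets required in Theorem~\ref{thm:qspliquid}. It also supplies the surjections from copies of $\mathcal M_{<p}(\mathbb N)$ invoked in Proposition~\ref{explicittraceclass}.
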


Evidently, this implies that a $p$-Banach space is $p$-liquid, given the criterion provided by the above theorem.  Since arbitrary inverse limits of $p$-liquid spaces are $p$-liquid, we deduce that arbitrary inverse limits of $p$-Banach spaces are $p$-liquid.  This covers the condensed $\mathbb{R}$-modules associated to arbitrary \emph{complete locally $p$-convex} topological vector spaces, showing that the most classical notion of completeness in functional analysis does yield condensed vector spaces which are complete in our sense, i.e.\ $p$-liquid.\\

But besides the desire to connect with classical functional analysis, there is another reason for us to study $p$-Banach spaces, or rather simply ordinary Banach spaces: it turns out they also play a foundational role in our theory.  This is for the following two reasons:

\begin{enumerate}
\item As mentioned, the most fundamental $p$-liquid vector spaces are the measure spaces $\mathcal{M}_{<p}(S)$, and these are far from being Banach.  (They are countable unions of compact subsets, which only happens for finite-dimensional Banach spaces.) However, their \emph{dual}, i.e.\ internal Hom to $\mathbb{R}$, identifies with the Banach space $C(S;\mathbb{R})$ of continuous maps $S\rightarrow \mathbb{R}$, and these duals play an important role.
\item It turns out that Banach spaces have many convenient calculational properties which are not shared by the $\mathcal{M}_{<p}(S)$, so for some arguments it is preferable to build more complicated $p$-liquid spaces from Banach spaces instead of spaces of measures.  We will see some examples in the coming lectures.\\
\end{enumerate}

\textbf{Exercise 1.}  Recall that an object $X$ of a category $\mathcal{C}$ is said to be \emph{compact} if the functor $\operatorname{Hom}(X;-): \mathcal{C}\rightarrow \operatorname{Set}$ commutes with filtered colimits.  For $X\in \operatorname{CondSet}$, show that there are implications
$$X \text{ qcqs }\Rightarrow X\text{ compact }\Rightarrow X\text{ quasicompact}.$$\\

\textbf{Exercise 2.}  Find examples showing that neither implication in the previous exercise is reversible.\\

\textbf{Exercise 3.} Let $\{U_i\}_{i\in I}$ be an arbitrary open cover of a topological space $X$.  Show that $\underline{X}$ is the quotient of $\bigsqcup_i \underline{U_i}$ by the equivalence relation $\bigsqcup_{i,j} \underline{U_i\cap U_j}$ in $\operatorname{CondSet}_\kappa$ for all $\kappa$.\\

\textbf{Exercise 4.} Let $0<p\leq 1$.  Suppose $V$ is a qs condensed $\mathbb{R}$-vector space and $K\subset V$ is a qc $p$-convex subset such that $V=\mathbb{R}\cdot K$.  Define a $p$-norm on $V(\ast)$ such that $K(\ast)$ is the closed unit ball and show that $V(\ast)$ is complete with respect to this $p$-norm, thus giving a new qs condensed $\mathbb{R}$-vector space $V^{Ban_p}$.  Show that there is an injective map of condensed $\mathbb{R}$-vector spaces
$$V^{Ban_p}\rightarrow V$$
which is the identity on $\ast$-valued points, and is natural in $V$ as a condensed $\mathbb{R}$-vector space.\newpage

\section*{Appendix to Lecture II: Quasiseparated liquid vector spaces}

In this appendix, we prove the converse direction in Theorem~\ref{thm:qspliquid}. We will use the following lemma.

\begin{lemma}\label{approximate}
Let $T$ be a compact Hausdorff space, $0<p\leq 1$, and $K\subset V$ a quasicompact $p$-convex subobject of a qs condensed $\mathbb{R}$-module $V$.  Then:
\begin{enumerate}
\item $\bigcap_{\lambda>0}\lambda\cdot K = \{0\}$;
\item for any sequence $f_1,f_2\ldots$ of continuous maps $T\rightarrow K$, if there is a nullsequence $\lambda_1,\lambda_2\ldots$ of positive real numbers such that $f_n(t) - f_m(t) \in \lambda_n\cdot K$ for all $t\in T$ and $n\leq m$, then the $f_n$ converge pointwise to a continuous map
$$f_\infty:T\rightarrow K.$$
\end{enumerate}
\end{lemma}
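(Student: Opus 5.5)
We identify $K$ with a compact Hausdorff space, since a qc subobject of a qs condensed set is qcqs. The plan is to use two basic facts about the compact Hausdorff space $K$. First, addition and scalar multiplication restrict to continuous maps $K\times K\to V$ and $[-1,1]\times K\to V$. Their images are qc subobjects of the qs object $V$, hence compact Hausdorff. Second, $q$-convexity gives $\lambda K\subset K$ for $|\lambda|\le 1$, and $x,y\in K$ implies $2^{-1/p}(x\pm y)\in K$, since $2\cdot(2^{-1/p})^p=1$.

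For (1), let $x\in\bigcap_{\lambda>0}\lambda K$ and consider the continuous map $m:[0,1]\times K\to V$, $(\lambda,y)\mapsto \lambda y$. For each $n$ we may write $x=n^{-1}y_n$ with $y_n\in K$. By compactness of $K$ the points $(1/n,y_n)$ have a cluster point $(0,y)$. The preimage $m^{-1}(x)$ is closed, because points are qc and $[0,1]\times K$ is compact Hausdorff, so the fibre over a point is closed. Since $(0,y)$ lies in the closure of this preimage, $x=m(0,y)=0$. Concretely, I would take the map $[0,1]\times K\to V$ together with the constant map to $x$. Their equalizer is a qc subobject by quasiseparatedness, hence a closed subset, and it contains all $(1/n,y_n)$. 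It therefore contains the cluster point, which forces $x=0$.

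For (2), we first define $f_\infty$ pointwise, using that $K$ is compact Hausdorff. Fix $t$; the sequence $f_n(t)$ lies in $K$ and has cluster points. If $a$ and $b$ are two cluster points, then for fixed $n$ we have $f_m(t)\in f_n(t)+\lambda_nK$ for all $m\ge n$. The set $f_n(t)+\lambda_nK$ is the image of the compact set $K$ under a continuous map, so it is closed in the compact Hausdorff space $K+K$ (and $K\subset K+K$, using $0\in K$). Hence $a,b\in f_n(t)+\lambda_nK$, and so $a-b\in\lambda_n(K-K)\subset 2^{1/p}\lambda_nK$. By (1), $a=b$. Thus $f_n(t)$ converges in $K$ to a limit $f_\infty(t)$, and moreover $f_\infty(t)-f_n(t)\in\lambda_nK$ for all $n$.

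The main obstacle is continuity of $f_\infty$, which amounts to uniform convergence in the topology of the compact Hausdorff space $L=K-K$. I would prove the following claim: for every neighbourhood $U$ of $0$ in the compact set $L'=K-K+K$, there is $\epsilon>0$ with $\epsilon K\subset U$. This is a compactness argument. Otherwise there are $\epsilon_j\to0$ and $y_j\in K$ with $\epsilon_jy_j\notin U$. The pairs $(\epsilon_j,y_j)$ have a cluster point $(0,y)$ in $[0,1]\times K$, and continuity of $m$ gives $m(0,y)=0\in U$, a contradiction.

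With the claim in hand, we show continuity at $t_0$. We use that subtraction $K\times K\to K-K$ is continuous and that its values near the diagonal lie close to $0$, by compactness and uniform continuity on a compact Hausdorff space. For $n$ large and $t$ near $t_0$, we write
\[f_\infty(t)-f_\infty(t_0)=(f_\infty(t)-f_n(t))+(f_n(t)-f_n(t_0))+(f_n(t_0)-f_\infty(t_0)).\]
The two outer terms lie in $\lambda_nK$, and the middle term is small by continuity of $f_n$. Finally, we use continuity of the addition map $L\times K\times K\to V$ at $(0,0,0)$ to conclude that $f_\infty(t)\to f_\infty(t_0)$, as measured in the compact Hausdorff topology of $K$.
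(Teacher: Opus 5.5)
Your proposal is correct, but it takes a different route from the paper in both parts.

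\textbf{Part (1).} The paper observes that $v\in\bigcap_{\lambda>0}\lambda K$ forces $\mathbb{R}v\subset K$. The closure of $\mathbb{R}v$ is then a compact Hausdorff topological $\mathbb{R}$-vector space, which must vanish (e.g.\ by Pontryagin duality). Your argument instead uses closedness of the fibre of $[0,1]\times K\to V$ over $x$, which comes from quasiseparatedness, together with a cluster point of $(1/n,y_n)$. It is more elementary, and it does not use $p$-convexity at all.

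\textbf{Part (2).} The paper extends $(t,n)\mapsto f_n(t)$ to a map $\beta(T\times\mathbb{N})\to K$. It then shows this map factors through the quotient $T\times(\mathbb{N}\cup\{\infty\})$, by proving that $\bigcap_{T',N}\overline{f(T'\times\mathbb{N}_{\geq N})}$ is a single point. Its key estimate is that two limit points differ by an element of $\bigcap_N\lambda_N(K+K)$, after which (1) applies; this is exactly what you use for uniqueness of the cluster point. In the paper, continuity then comes for free, because surjections of compact Hausdorff spaces are quotient maps, and one even gets joint continuity on $T\times(\mathbb{N}\cup\{\infty\})$.

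Your route instead mimics the classical proof that uniform limits of continuous functions are continuous. This costs you the extra absorption lemma (for small $\epsilon$, $\epsilon K$ lies in any given neighbourhood of $0$) and some bookkeeping with continuity of addition on compact pieces. In exchange it is completely hands-on. Your bound $f_\infty(t)-f_n(t)\in\lambda_nK$ is also uniform in $t$, so the same method would recover joint continuity if you wanted it.

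Two small points to tighten:
\begin{enumerate}
\item $V$ has no global translation-invariant topology, so knowing $f_\infty(t)-f_\infty(t_0)$ is near $0$ does not by itself place $f_\infty(t)$ near $f_\infty(t_0)$. The last step should use continuity of the four-fold addition $K\times L\times K\times K\to V$ at $(f_\infty(t_0),0,0,0)$, with $K$ closed in the compact image so that neighbourhoods in $K$ are traces of neighbourhoods there. Continuity at $(0,0,0)$ alone is not enough.
\item No uniform continuity is needed for the middle term. Once $n$ is fixed, continuity of that single $f_n$ at $t_0$ suffices.
\end{enumerate}
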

\begin{proof}
For (1), let $v\in \bigcap_{\lambda>0}\lambda\cdot K$.  Then $\mathbb{R}\cdot v\subset K$.  Hence the closure of $\mathbb{R}\cdot v$ is a compact Hausdorff topological $\mathbb{R}$-vector space.  But the only such is $0$: for example, by Pontryagin duality this is equivalent to the fact that a discrete topological $\mathbb{R}$-vector space must be $0$.

For (2), since $K$ is compact Hausdorff, we can extend the map $f:T\times \mathbb{N}\rightarrow K$ defined by $f(t,n)=f_n(t)$ to the Stone-Cech compactification:
$$\widetilde{f}:\beta(T\times\mathbb{N})\rightarrow K.$$
Recall that $\beta(T\times\mathbb{N})$ is a compact Hausdorff space containing $T\times\mathbb{N}$ as a dense subspace.  We will show that this $\widetilde{f}$ factors through the more refined compactification $T\times (\mathbb{N}\cup\infty)$, giving the claim. 

As surjective maps of compact Hausdorff spaces are quotient maps, it suffices to check this pointwise.  The subsets of the form $T'\times \mathbb{N}_{\geq N}\subset T\times \mathbb{N}$, for $T'\subset T$ a compact neighborhood of $t$ and $N\in\mathbb{N}$, are the intersections with $T\times \mathbb{N}$ of a fundamental system of neighborhoods of $t\times\infty$ in $T\times (\mathbb{N}\cup\infty)$, so it suffices to show that
$$\bigcap_{T',N} \overline{f(T'\times \mathbb{N}_{\geq N})}$$
consists of a single point of $K$ (which will give the value $\widetilde{f}(t,\infty)$).

However, from the hypothesis we have for fixed $N$ that
$$f(T'\times \mathbb{N}_{\geq N})\subset f_N(T') + \lambda_N\cdot K.$$
The latter subset is compact, and taking the intersection over $T'$ we deduce that
$$\bigcap_{T'}\overline{f(T'\times \mathbb{N}_{\geq N})}\subset f_N(t) + \lambda_N\cdot K.$$
If $v$ is any given limit point of the sequence $f_1(t),f_2(t),\ldots$ in $K$, then $v - f_N(t)$ is a limit point of the $f_n(t)- f_N(t)$, hence lies in $\lambda_N\cdot K$.  We deduce that
$$\bigcap_{T'} \overline{f(T'\times \mathbb{N}_{\geq N})}\subset v + \lambda_N\cdot (K + K)$$
for all $N$.  However, $K+K\subset C\cdot K$ for some $C>0$ depending only on $p$ because of the $p$-convexity of $K$.  Thus, intersecting over all $N$ and using part (1), we deduce the claim.  
\end{proof}

Now we turn to the proof of the reverse direction of Theorem~\ref{thm:qspliquid}: if $V$ is a qs condensed $\mathbb{R}$-module which is exhausted by quasicompact $q$-convex subobjects for all $q<p$, then $V$ is $p$-liquid.  The difficulty is that we will need to understand how to build maps out of these spaces of measures $\mathcal{M}_q(S)$.  For this, we will make use of the \emph{discretization} of $\mathcal{M}_q(S)$ described in \cite[Lecture VI]{Analytic}, based on passing from the condensed ring $\mathbb{R}$ to the condensed ring $$\mathbb{Z}((T))_r$$
of Laurent series with integer coefficients which converge absolutely on the closed disk of radius $r$, for some fixed arbitrary $r<1$.  For any $0<\lambda<r$, we get a surjective homomorphism
$$\mathbb{Z}((T))_r\rightarrow \mathbb{R}$$
by plugging in $T=\lambda$.  One very relevant advantage of $\mathbb{Z}((T))_r$ over $\mathbb{R}$ is that each element of $\mathbb{Z}((T))_r$ is \emph{canonically} a limit of elements of the discrete ring $\mathbb{Z}[T,T^{-1}]$, given by truncating the power series.

Quite remarkably, this also passes to spaces of measures.  For a profinite set $S$, we have

$$\mathcal{M}(S;\mathbb{Z}((T))_{r}) = \bigcup_{C>0} \mathcal{M}(S;\mathbb{Z}((T))_{r})_{\leq C},$$
where
$$\mathcal{M}(S;\mathbb{Z}((T))_{r})_{\leq C}\subset \prod_{n\in\mathbb{Z}} \mathbb{Z}[S]\cdot T^n$$
is the condensed --- in fact, profinite --- subset whose $S'$-valued points are specified pointwise, and on a point are given by those
$$\sum_n c_n T^n, c_n\in \mathbb{Z}[S]$$
such that $\sum_n \lVert c_n\rVert_{\ell^1} r^n \leq C$.  Then \cite[Proposition 6.9]{Analytic} says that there is a natural isomorphism of condensed $\mathbb{R}$-modules
$$\mathcal{M}(S;\mathbb{Z}((T))_{r}) \otimes_{\mathbb{Z}((T))_{r}}\mathbb{R} = \mathcal{M}_q(S)$$
again induced by plugging in $\lambda$ for $T$.  Here $0<q<1$ is such that $\lambda^q=r$.

Thus, to map $\mathbb{R}$-linearly out of $\mathcal{M}_q(S)$ is the same as mapping $\mathbb{Z}((T))_{r}$-linearly out of $\mathcal{M}(S;\mathbb{Z}((T))_r)$, and this can be easier because measures in $\mathcal{M}(S;\mathbb{Z}((T))_{r})$ are \emph{canonically} limits of finite sums of Dirac measures with coefficients in $\mathbb{Z}[T,T^{-1}]$!  We will use this in the following proof of the reverse direction.

\begin{proof}
Suppose given a qs condensed $\mathbb{R}$-vector space $V$ such that for all $q<p$, every quasicompact subobject of $V$ is contained in a quasicompact $q$-convex subobject of $V$.  Let $S\in\operatorname{Prof}$ with a map
$$f:S\rightarrow V.$$
We want to extend $f$ uniquely to a map $\widetilde{f}:\mathcal{M}_{<p}(S)\rightarrow V$ of condensed abelian groups.  First, note that uniqueness is automatic, because the sums of Dirac measures with $\mathbb{Q}$-coefficients are dense in each qc subset of $\mathcal{M}_{<p}(S)$.  Also for this reason, it suffices to show that for all $q<p$, the map $f$ extends to an $\mathbb{R}$-linear map
$$\widetilde{f}:\mathcal{M}_q(S)\rightarrow V.$$

Now, the image of $f$ is a qc subset of $V$, hence by assumption on $V$ it lies in a $q$-convex subset.  Actually, it even lands in a $q'$-convex qc subset for some $q'>q$.  Thus it suffices to show that if $f:S\rightarrow V$ lands in a $q'$-convex qc subset $K\subset V$, then it extends to $\mathcal{M}_q(S)$ for all $q<q'$.

Now we use the trick of passing to $\mathcal{M}(S;\mathbb{Z}((T))_{r})$.  By the above discussion, it suffices to produce a $\mathbb{Z}((T))_{r}$-linear map
$$\mathcal{M}(S;\mathbb{Z}((T))_{r})\rightarrow V$$
which restricts to $f$, where the right-hand side is a $\mathbb{Z}((T))_{r}$-module via evaluation at $\lambda=r^{1/q}$.  Now, using density of $\mathbb{Z}[T,T^{-1}]$-linear sums of Dirac measures in every qc subset of $\mathcal{M}(S;\mathbb{Z}((T))_{r})$, the $\mathbb{Z}((T))_{r}$-linearity will be automatic if we just show that there exists a map of condensed \emph{sets}
$$\widetilde{f}:\mathcal{M}(S;\mathbb{Z}((T))_{r})\rightarrow V$$
such that on $(\mathbb{Z}[T,T^{-1}])[S]$ it gives the $\mathbb{Z}[T,T^{-1}]$-linear extension of $f$.

We produce this $\widetilde{f}$ as a limit of approximations.  For $n\geq 0$ we can consider the projection
$$\mathcal{M}(S;\mathbb{Z}((T))_{r})\rightarrow \bigoplus_{i\leq n} \mathbb{Z}[S]\cdot T^i$$
and compose with the $\mathbb{Z}[T,T^{-1}]$-linear map $(\mathbb{Z}[T,T^{-1}])[S]\rightarrow V$ induced by $f:S\rightarrow V$ to deduce maps $f_n:\mathcal{M}(S;\mathbb{Z}((T))_{r})\rightarrow V$ which we will claim converge to our desired $\widetilde{f}$.

Clearly, if we plug a $\mathbb{Z}[T,T^{-1}]$-linear sum of Dirac measures into these $f_n$, we get an eventually constant sequence with the correct desired limit value.  Using Lemma \ref{approximate} applied to the source compact Hausdorff space $\mathcal{M}(S;\mathbb{Z}((T))_{r})_{\leq C}$ for arbitrary $C$, to finish the proof it therefore suffices to show two things: first, $f_n$ restricted to $\mathcal{M}(S;\mathbb{Z}((T))_{r})_{\leq C}$ lands in some constant multiple of $K$, and second, there is a nullsequence $(\lambda_n)$ such that
$$f_n-f_m\in \lambda_n\cdot K$$
if $n\leq m$, again restricting to $\mathcal{M}(S;\mathbb{Z}((T))_{r})_{\leq C}$.

To see that $f_n$ restricted to $\mathcal{M}(S;\mathbb{Z}((T))_{r})_{\leq C}$ lands in some constant multiple of $K$, recall that $\mathcal{M}(S;\mathbb{Z}((T))_{r})$ goes to $\mathcal{M}_q(S)$ under evaluation $\lambda=T$, so the bound on the coefficients of the finite $\mathbb{Z}[T,T^{-1}]$-linear combination of Dirac measures in $\mathcal{M}(S;\mathbb{Z}((T))_{r})$ implies an $\ell^q$-bound on the $\mathbb{R}$-coefficients once we plug in $\lambda$.  As $K$ is $q'$-convex, it is also $q$-convex, so this implies that claim.

To see that there is a nullsequence $(\lambda_n)$ as desired, note that for $a>1$ we have
$$(f_m - f_n)(\sum c_i T^i) = \sum_{n<i\leq m} f(c_i) \lambda^i = a^{-n} \sum_{n<i\leq m} f(c_i) a^{n-i}(a\lambda)^i$$
where $c_i\in\mathbb{Z}[S]$ and we use $f$ also to denote the $\mathbb{Z}$-linear extension of $f$.  But now we can choose $a$ so that evaluation at $T=a\lambda$ produces the space of $q'$-measures from $\mathcal{M}(S;\mathbb{Z}((T))_{r})$, instead of $q$-measures as we got from $T=\lambda$, and we deduce the desired claim as $a^{-n}$ is a nullsequence and the sum on the right hand side lands in a constant multiple of $K$ for the same reason as before, using $q'$-convexity of $K$.
\end{proof}\newpage

\section{Lecture III: Liquid vector spaces, redux}

In today's lecture, we spent some more time discussing condensed sets, condensed abelian groups, and discussed the main theorem on liquid vector spaces.

Recall from the last lecture that condensed sets are sheaves on the category $\operatorname{Prof}$ of profinite sets. (Up to set-theoretic issues, which we will from now on ignore --- they are resolved as in the last lecture.) In other words, those are functors
\[
X: \operatorname{Prof}^{\mathrm{op}}\to \operatorname{Set}: S\mapsto X(S)
\]
satisfying some simple conditions. Actually, these conditions simplify even further if one passes to the generating subcategory $\operatorname{Extr}=\operatorname{qcProj}\subset \operatorname{Prof}$ of extremally disconnected profinite sets. Then a condensed set is equivalently given by a functor
\[
X: \operatorname{Extr}^{\mathrm{op}}\to \operatorname{Set}
\]
that preserves finite products. Equivalently, $X(\emptyset)=\ast$, and $X(S_1\sqcup S_2)\to X(S_1)\times X(S_2)$ is bijective for all $S_1,S_2\in \operatorname{Extr}$. In particular, the category of condensed sets is almost given by a functor category. This is what gives the theory of condensed sets an extremely algebraic nature. We note that for any $S\in \mathrm{Extr}$, the functor
\[
\operatorname{CondSet}\to \operatorname{Set}: X\mapsto X(S)
\]
commutes with all limits, all filtered colimits, and preserves surjections. (In fact, it commutes with all so-called sifted colimits.) It does not, however, in general commute with finite coproducts, except in the trivial case $S=\ast$. The functor
\[
\operatorname{CondSet}\to \operatorname{Set}: X\mapsto X(\ast)
\]
should be thought of as the ``underlying set'' functor, but it is not conservative (as shown by the example of $\mathbb R/\mathbb R^\delta$).

\begin{remark} Both $\mathrm{Prof}$ and $\mathrm{Extr}$ (or rather the opposite categories) admit simple algebraic descriptions. Namely, $S\mapsto C(S,\mathbb F_2)$ induces an equivalence between $\mathrm{Prof}^{\mathrm{op}}$ and the category of Boolean algebras (i.e.~those rings where all elements satisfy $x^2=x$); while $\mathrm{Extr}\subset \mathrm{Prof}$ corresponds to the category of complete Boolean algebras. (Here, a Boolean algebra is complete if any (possibly infinite) set of elements has a supremum, where a Boolean algebra is partially ordered by $x\leq y$ if $xy=x$. Equivalently, a Boolean algebra is complete if and only if it is a retract of a Boolean algebra of the form $\prod_I \mathbb F_2$ for some set $I$.) Thus, condensed sets are the same thing as finite-product-preserving functors from complete Boolean algebras to sets. (It is even enough to consider the subcategory of Boolean algebras of the form $\prod_I \mathbb F_2$.)

However, going down this route one tends to lose track of the topological intuition, and we will not use the perspective of Boolean algebras.
\end{remark}

Next, we discussed condensed abelian groups $\operatorname{CondAb}$. These can be equivalently thought of as abelian group objects in $\operatorname{CondSet}$, or as sheaves of abelian groups on $\operatorname{Prof}$. We note that as finite coproducts agree with finite products, it is now the case that for $S\in \operatorname{Extr}$, the functor
\[
\operatorname{CondAb}\to \operatorname{Ab}: X\mapsto X(S)
\]
commutes with all limits and all colimits. This implies that $\operatorname{CondAb}$ has excellent categorical properties: Like sheaves of abelian groups on any topos, it has all limits and all colimits, and filtered colimits are exact, but in this case it is also true that arbitrary products are exact. In fact, it admits a class of compact projective generators, namely the free condensed abelian groups $\mathbb Z[S]$ for $S\in \mathrm{Extr}$. We recall here that the forgetful functor
\[
\operatorname{CondAb}\to \operatorname{CondSet}
\]
admits a left adjoint, the ``free condensed abelian group'' functor
\[
\operatorname{CondSet}\to \operatorname{CondAb}: X\mapsto \mathbb Z[X].
\]
Concretely, this is given by the sheafification of $S\mapsto \mathbb Z[X(S)]$ (and the sheafification is important here, even for $S\in \operatorname{Extr}$).

\begin{remark} The construction $\mathbb Z[X]$ for $X$ say a CW complex is closely related to classical constructions due to Dold--Thom. The underlying set of $\mathbb Z[X]$ is given by $\mathbb Z[X(\ast)]$, i.e.~finite collections of points of $X$ weighted with integers. The condensed structure is made so that if two points collide, the corresponding integers add up. This can also be understood as the group completion of the free abelian monoid $\mathbb N[X]$, which can be written as
\[
\mathbb N[X] = \bigsqcup_{n\geq 0} \mathrm{Sym}^n(X)
\]
where $\mathrm{Sym}^n(X) = X^n/\Sigma_n$ is the condensed set of unordered $n$-tuples of elements of $X$.
\end{remark}

If $X=S\in \mathrm{Prof}$, one can describe $\mathbb Z[S]$ explicitly.

\begin{proposition}[{\cite[Proposition 2.1]{Analytic}}]\label{prop:freecondab} Let $S=\varprojlim_i S_i$ be a profinite set, written as a cofiltered limit of finite sets $S_i$. The free condensed abelian group $\mathbb Z[S]$ is quasiseparated, and can explicitly be written as a countable union of profinite subsets, as follows:
\[
\mathbb Z[S] \cong \bigcup_{n>0} \varprojlim_i \mathbb Z[S_i]_{\ell^1 \leq n}\subset \varprojlim_i \mathbb Z[S_i].
\]
\end{proposition}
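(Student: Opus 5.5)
We construct a comparison map and check it is an isomorphism on $T$-valued points for every extremally disconnected $T$; by the discussion in Lecture III, this suffices to compare condensed abelian groups.

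\textbf{Setup.} Write $M_n=\varprojlim_i \mathbb Z[S_i]_{\ell^1\leq n}$ and $M=\bigcup_n M_n\subset \varprojlim_i \mathbb Z[S_i]$.

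Each $\mathbb Z[S_i]_{\ell^1\leq n}$ is a finite set, so $M_n$ is profinite. The transition maps $\mathbb Z[S_i]\to\mathbb Z[S_j]$ are homomorphisms that do not increase the $\ell^1$-norm, so the limit makes sense. Addition maps $M_n\times M_m\to M_{n+m}$, hence $M$ is a condensed subgroup of the condensed abelian group $\varprojlim_i \mathbb Z[S_i]$. As a subobject of $\varprojlim_i\mathbb Z[S_i]$, a limit of discrete and hence quasiseparated objects, $M$ is quasiseparated. This gives the first claim once we show $M\cong\mathbb Z[S]$.

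\textbf{The comparison map.} The maps $S\to S_i\to \mathbb Z[S_i]_{\ell^1\leq 1}$ induce $S\to M_1\subset M$. By the universal property of the free condensed abelian group, this extends to a homomorphism $\phi:\mathbb Z[S]\to M$. The remaining task is to show $\phi$ is an isomorphism.

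\textbf{Surjectivity.} Consider the map $\coprod_{k+l\le n} S^k\times S^l\times\{\pm\} \to M_n$ sending $(s,t)\mapsto \sum \delta_{s_a}-\sum\delta_{t_b}$. This factors through $\phi$, and its source is profinite. It suffices to check that it is surjective as a map of compact Hausdorff spaces; it then surjects as condensed sets, because a continuous surjection of profinite sets is a cover.

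For pointwise surjectivity, take $m\in M_n$ with images $m_i\in\mathbb Z[S_i]_{\ell^1\le n}$. For each $i$, the set of tuples in the source mapping to $m_i$ is nonempty and closed, since $S\to S_i$ is surjective and any element of norm $\le n$ can be written with at most $n$ Dirac terms. These sets form a cofiltered system of nonempty compact sets, so by compactness their intersection is nonempty. Any point in the intersection maps to $m$.

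\textbf{Injectivity.} This is the main obstacle. Since $\mathbb Z[S]$ is the sheafification of $T\mapsto\mathbb Z[S(T)]=\mathbb Z[C(T,S)]$, a $T$-point of $\mathbb Z[S]$ is, locally on $T$, a finite formal sum $\sum_a n_a[f_a]$ with $f_a:T\to S$ continuous.

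Suppose its image in $M(T)$ vanishes, i.e.\ $\sum n_a\delta_{f_a(t)}=0$ in every $\mathbb Z[S_i]$, for all $t$. Then for each $t$ the coefficients of $\sum n_a\delta_{f_a(t)}$ cancel in $\mathbb Z[S]$ as an abstract group. The combinatorial type of which $f_a(t)$ coincide defines a partition of $T$. Each piece $\{t: f_a(t)=f_b(t)\}$ is closed, but not necessarily open, so the pieces of the partition need not be clopen.

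To handle this, pass to the cover $T'=\coprod_{\text{types}} T_{\text{type}}\twoheadrightarrow T$, which is a finite disjoint union of closed subsets. On each piece the formal sum visibly cancels, i.e.\ it is already zero in $\mathbb Z[C(T_{\text{type}},S)]$. Hence the element vanishes after pullback to a cover, so it is zero in the sheaf $\mathbb Z[S]$.

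Combining surjectivity and injectivity, $\phi$ is an isomorphism of condensed abelian groups, giving the claimed description of $\mathbb Z[S]$ as the countable union $\bigcup_n M_n$ of profinite subsets.
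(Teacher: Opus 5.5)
The paper gives no proof of this statement; it is quoted from \cite{Analytic}. I am therefore judging your argument on its own terms.

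Most of it is correct:
\begin{itemize}
\item the comparison map $\phi$;
\item the quasiseparatedness of $M\subset\varprojlim_i\mathbb Z[S_i]$;
\item the surjectivity argument by compactness, using that a continuous surjection of profinite sets is an epimorphism of condensed sets;
\item the reduction of injectivity to showing that a formal sum $\sum_a n_a[f_a]\in\mathbb Z[C(T,S)]$ vanishing pointwise in every $\mathbb Z[S_i]$ dies on some finite cover of $T$.
\end{itemize}

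One unstated point: you use that $S\to S_i$ is surjective, which is not among the hypotheses. This is harmless. By compactness, $\mathrm{im}(S\to S_i)=\bigcap_{j\to i}\mathrm{im}(S_j\to S_i)$, so in any compatible family $(m_i)$ each $m_i$ is supported on $\mathrm{im}(S\to S_i)$. You may therefore replace $S_i$ by that image.

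There is, however, a concrete error in the last step of injectivity. The stratum $T_\tau$ of points whose coincidence pattern is exactly $\tau$ is
$\bigcap_{a\sim_\tau b}\{f_a=f_b\}\cap\bigcap_{a\not\sim_\tau b}\{f_a\neq f_b\}$.
This is an intersection of closed and open sets, so it is only locally closed. For example, take $T=S=\mathbb N\cup\{\infty\}$, $f_1=\mathrm{id}$ and $f_2\equiv\infty$. The stratum where $f_1\neq f_2$ is $\mathbb N$, which is not compact. So $\coprod_\tau T_\tau$ is not profinite, and $\coprod_\tau T_\tau\to T$ is not a cover in the site.

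The fix is to use closed sets instead of exact strata. For each type $\tau$ that is realized at some point, set
$T_{\geq\tau}=\{t: f_a(t)=f_b(t)\text{ for all }a\sim_\tau b\}$, which is closed.
\begin{itemize}
\item These finitely many closed sets cover $T$, since each $t$ lies in $T_{\geq\tau(t)}$ for its own exact type $\tau(t)$.
\item On $T_{\geq\tau}$ the formal sum equals $\sum_C\bigl(\sum_{a\in C}n_a\bigr)[f_C]$. Here $C$ runs over the classes of $\tau$ and $f_C$ is the common restriction of the $f_a$, $a\in C$.
\item At a point $t_0$ realizing $\tau$, the values $f_C(t_0)$ for distinct classes are pairwise distinct. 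Pointwise cancellation in $\mathbb Z[S]$ then forces every class sum $\sum_{a\in C}n_a$ to vanish.
\end{itemize}
Hence the element is already $0$ in $\mathbb Z[C(T_{\geq\tau},S)]$ for each realized $\tau$, so it vanishes on a finite closed cover. With this change your proof is complete.
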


Here $\mathbb Z[S_i]$ is the finite free abelian group on the finite set $S_i$, and the subset $\ell^1\leq n$ is the part of $\ell^1$-norm at most $n$; this produces a finite subset of $\mathbb Z[S_i]$, so $\varprojlim_i \mathbb Z[S_i]_{\ell^1\leq n}$ is a profinite set. We note that by the discreteness of $\mathbb Z$, the precise choice of norm is irrelevant here: any bound on the $\ell^1$-norm also gives a bound on the $\ell^p$-norm for all $0<p<1$.

We note that this description is remarkably close to the description of the space of signed Radon measures $\mathcal M_1(S)$ -- but over $\mathbb Z$, this construction produces the uncompleted free objects.

Another important structure is the tensor product of condensed abelian groups. In fact, like on any topos, condensed abelian groups have a symmetric monoidal tensor product, representing bilinear maps. Concretely, for $M,N\in \operatorname{CondAb}$, the tensor product $M\otimes N$ is the sheafification of the presheaf $S\mapsto M(S)\otimes N(S)$. In this case, one can in fact understand what happens under sheafification. Namely, for any $S\in \operatorname{Extr}$,
\[
(M\otimes N)(S) = M(S)\otimes_{\mathbb Z(S)} N(S)
\]
where $\mathbb Z(S) = \mathrm{Cont}(S,\mathbb Z)$ is the algebra of continuous integer-valued functions on $S$. (Indeed, this functor will already send finite disjoint unions to finite products.)

There is also a partial right adjoint to tensor product, the internal Hom functor $\underline{\operatorname{Hom}}(-,-)$, so that
\[
\operatorname{Hom}(M,\underline{\operatorname{Hom}}(N,P))\cong \operatorname{Hom}(M\otimes N,P).
\]
Moreover, all operations can be derived (using existence of projective resolutions), leading to derived tensor products $-\dotimes-$ and derived internal Hom $\underline{\operatorname{RHom}}$.

At this point, it may be helpful to say a few words about flat objects, and projective objects. Generally, it turns out that flat objects are plentiful and well-behaved, but projective objects are extremely scarce, and fragile.

\begin{proposition}\label{prop:flattorsionfree} A condensed abelian group $M\in \operatorname{CondAb}$ is flat if and only if for all $S\in \operatorname{Extr}$, the value $M(S)$ is torsion-free (as an abelian group). In that case, $M(S)$ is torsion-free for all $S\in \operatorname{Prof}$.
\end{proposition}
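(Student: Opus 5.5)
The plan is to use the formula $(M\otimes N)(S)=M(S)\otimes_{\mathbb Z(S)}N(S)$ for $S\in\operatorname{Extr}$, together with the fact that evaluation at $S\in\operatorname{Extr}$ is exact and that these evaluations jointly detect isomorphisms. This reduces flatness of $M$ to a purely algebraic flatness statement over the rings $\mathbb Z(S)=\mathrm{Cont}(S,\mathbb Z)$.

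\emph{Necessity.} Suppose $M$ is flat. Tensoring the injection $\mathbb Z\xrightarrow{n}\mathbb Z$ with $M$ gives an injection $M\xrightarrow{n}M$ in $\operatorname{CondAb}$. Evaluating at any $S\in\operatorname{Extr}$, which is exact, shows that multiplication by $n$ on $M(S)$ is injective. Hence $M(S)$ is torsion-free.

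\emph{Sufficiency.} Suppose $M(S)$ is torsion-free for all $S\in\operatorname{Extr}$, and let $N'\hookrightarrow N$ be an injection in $\operatorname{CondAb}$. It suffices to show that $M(S)\otimes_{\mathbb Z(S)}N'(S)\to M(S)\otimes_{\mathbb Z(S)}N(S)$ is injective for each $S\in\operatorname{Extr}$. Since $N'(S)\to N(S)$ is injective, it is enough to prove the following algebraic statement:
\begin{quote}
a $\mathbb Z(S)$-module $L$ whose underlying abelian group is torsion-free is flat over $\mathbb Z(S)$.
\end{quote}
By the ideal criterion, this amounts to $\operatorname{Tor}_1^{\mathbb Z(S)}(L,\mathbb Z(S)/I)=0$ for every finitely generated ideal $I=(f_1,\dots,f_k)$.

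\emph{The Tor computation.} This is the step I expect to carry the real content. Each $f_i$ is continuous into the discrete space $\mathbb Z$, so it takes finitely many values. Partition $S$ into the finitely many clopen subsets $S_1,\dots,S_m$ on which the vector $(f_1,\dots,f_k)$ is constant. Let $e_j$ be the idempotent of $S_j$. Then $\mathbb Z(S)=\prod_j \mathbb Z(S_j)$ and $L=\bigoplus_j e_jL$. On $S_j$ each $f_i$ is an integer constant $a_{ij}$, so $e_jI=d_j\,\mathbb Z(S_j)$ with $d_j=\gcd_i(a_{ij})$. Since Tor decomposes along the product, the group in question is $\bigoplus_j \operatorname{Tor}_1^{\mathbb Z(S_j)}(e_jL,\mathbb Z(S_j)/d_j)$. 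The element $d_j$ is a nonzerodivisor in $\mathbb Z(S_j)$ when $d_j\neq 0$, and the corresponding summand vanishes when $d_j=0$. So each summand is the $d_j$-torsion of $e_jL$, which is zero because $L$ is torsion-free. A small point to double-check is that the formula for the tensor product is compatible with the maps induced by $N'\to N$; this should be immediate from naturality.

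\emph{The final assertion.} For arbitrary $S\in\operatorname{Prof}$, choose a surjection $T\twoheadrightarrow S$ with $T\in\operatorname{Extr}$, which exists by the Stone--\v{C}ech lemma. The sheaf condition makes $M(S)\hookrightarrow M(T)$ injective. Since $M(T)$ is torsion-free by the first part, so is its subgroup $M(S)$.
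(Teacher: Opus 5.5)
Your proof is correct. The necessity direction and the final assertion match the paper's: you tensor with $0\to\mathbb Z\xrightarrow{n}\mathbb Z$, and you use the injection $M(S)\hookrightarrow M(T)$ for an extremally disconnected cover $T\twoheadrightarrow S$. The real difference is in sufficiency. The paper never uses the sheafified formula $(M\otimes N)(S)=M(S)\otimes_{\mathbb Z(S)}N(S)$. It notes that the presheaf $S\mapsto M(S)\otimes_{\mathbb Z}N(S)$ on $\operatorname{Extr}$ is already exact in $N$, because a torsion-free abelian group is flat over $\mathbb Z$, and then invokes exactness of sheafification. You work with the sheafified values instead, so you need an extra algebraic lemma: a $\mathbb Z(S)$-module that is torsion-free as an abelian group is flat over $\mathbb Z(S)=\mathrm{Cont}(S,\mathbb Z)$. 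Your proof of that lemma is sound. You take a finite clopen partition on which the generators of a finitely generated ideal are constant, which reduces the ideal to principal ideals $d_j\mathbb Z(S_j)$ with $d_j$ either zero or a nonzerodivisor. The argument works for every profinite $S$. The paper's route is shorter and needs no commutative algebra of $\mathbb Z(S)$. Yours depends on the formula for $(M\otimes N)(S)$, which the paper only sketches; justifying it is itself a small sheafification argument. In exchange, your route describes the sections of $M\otimes N$ concretely and gives the nice fact that, for $\mathbb Z(S)$-modules, flatness over $\mathbb Z(S)$ is equivalent to $\mathbb Z$-torsion-freeness. As a minor aside, your necessity argument already gives torsion-freeness of $M(S)$ for every $S\in\operatorname{Prof}$, since a monomorphism of sheaves is injective on all sections. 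So the covering argument for the final assertion is not even needed.
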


\begin{proof} In the forward direction, it is enough to look at tensor products of $M$ with short exact sequences of abstract abelian groups (where everything commutes with evaluation at $S$). In the converse direction, if $M(S)$ is torsion-free for all $S\in \operatorname{Extr}$, then tensoring is exact on the presheaf level. But sheafification is also exact.

For the final sentence, note that if $S\in \operatorname{Prof}$ and $T\in \operatorname{Extr}$ is a cover of $S$, then $M(S)\hookrightarrow M(T)$, showing that torsion-freeness of $M(T)$ implies torsion-freeness of $M(S)$.
\end{proof}

Concerning projective objects, we have the following basic characterization.

\begin{proposition}\label{prop:projectiveretract} A condensed abelian group $M\in \operatorname{CondAb}$ is compact projective if and only if it is a retract of $\mathbb Z[S]$ for some $S\in \operatorname{Extr}$; equivalently, if it is a retract of $\mathbb Z[\beta I]$ for some infinite set $I$. In that case, there is an isomorphism $M\oplus \mathbb Z[\beta I]\cong \mathbb Z[\beta I]$.
\end{proposition}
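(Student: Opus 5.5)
We will prove the three assertions in order: the forward direction (compact projective implies retract of some $\mathbb Z[S]$), the converse together with the reduction to $S=\beta I$, and finally the Eilenberg swindle giving $M\oplus \mathbb Z[\beta I]\cong \mathbb Z[\beta I]$.

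\emph{Forward direction.} Let $M$ be compact projective. Since the $\mathbb Z[S]$ for $S\in\operatorname{Extr}$ generate, there is a surjection $\bigoplus_{j\in J}\mathbb Z[S_j]\twoheadrightarrow M$. Projectivity gives a section $M\to\bigoplus_j\mathbb Z[S_j]$. Compactness of $M$ means $\operatorname{Hom}(M,-)$ commutes with filtered colimits, so this section factors through a finite subsum $\bigoplus_{j\in J_0}\mathbb Z[S_j]$. That finite sum is $\mathbb Z[\bigsqcup_{j\in J_0}S_j]$, and a finite disjoint union of extremally disconnected sets is again extremally disconnected. Hence $M$ is a retract of $\mathbb Z[S]$ with $S\in\operatorname{Extr}$.

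\emph{Converse and reduction to $\beta I$.} Retracts of compact projective objects are compact projective, since both properties are stable under retracts; and each $\mathbb Z[S]$ with $S\in\operatorname{Extr}$ is compact projective, as recalled above. For the reduction, recall that any $S\in\operatorname{Extr}$ is a retract of $\beta(S^\delta)$, because the surjection $\beta S^\delta\twoheadrightarrow S$ splits by projectivity of $S$. Then $\mathbb Z[S]$ is a retract of $\mathbb Z[\beta I]$ with $I=S^\delta$. We may take $I$ infinite by enlarging it: $\beta I$ is a clopen summand of $\beta(I\sqcup\mathbb N)=\beta I\sqcup\beta\mathbb N$.

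\emph{The isomorphism.} Write $\mathbb Z[\beta I]\cong M\oplus N$. We want to absorb a copy of $M$, and the key step is an infinite-direct-sum-free swindle that stays inside compact objects, because $\bigoplus_{\mathbb N}$ would destroy compactness. We use the fact that $\beta I\cong\beta(I\times\mathbb N)$ for infinite $I$, since $|I\times\mathbb N|=|I|$.

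The main obstacle is that $\mathbb Z[\beta(I\times\mathbb N)]$ is not $\bigoplus_{\mathbb N}\mathbb Z[\beta I]$. To get around this, I would try to decompose $I=I_1\sqcup I_2$ with $|I_1|=|I_2|=|I|$, giving $\mathbb Z[\beta I]\cong\mathbb Z[\beta I]\oplus\mathbb Z[\beta I]\cong \mathbb Z[\beta I]^{\oplus n}$ for every finite $n$.

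That alone does not swindle, so I would instead use a product-type construction. Let $P=\mathbb Z[\beta I]$ and consider $\underline{\mathrm{Hom}}$-free arguments as follows. Extremally disconnected sets satisfy $\beta(I\times\mathbb N)\supset$ a profinite set $X$ realizing ``sequences converging in $\beta$''. Concretely, I would aim to construct compatible isomorphisms $\mathbb Z[\beta(I\times\mathbb N)]\cong \mathbb Z[\beta I]\oplus\mathbb Z[\beta(I\times\mathbb N_{\ge1})]$, which hold by clopen decomposition. I would then apply the retraction $M\oplus N$ ``fiberwise in $\mathbb N$'', via the map $\beta(I\times\mathbb N)\to\beta I$ induced by projection, to produce $M\oplus \mathbb Z[\beta(I\times\mathbb N)]\cong \mathbb Z[\beta(I\times\mathbb N)]$ through the classical telescope $M\oplus(N\oplus M)\oplus(N\oplus M)\oplus\cdots$. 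I expect the hard part to be making this fiberwise retraction well-defined over $\beta$ rather than only on the dense discrete part; I would first try to define it on $\mathbb Z[I\times\mathbb N]$ and extend using the universal property of $\beta$ applied to the compact Hausdorff bounded pieces of Proposition~\ref{prop:freecondab}.
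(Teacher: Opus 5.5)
Your first two parts are correct and match the paper's argument. The paper splits a surjection $\mathbb Z[\beta I]\to M$; you split a surjection from a direct sum and use compactness to land in a finite subsum, which amounts to the same thing.

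For the isomorphism you follow the paper's route, which is itself only a sketch: view $\mathbb Z[\beta(I\times\mathbb N)]$ as a completion of $\bigoplus_{\mathbb N}\mathbb Z[\beta I]$ and run the telescope there. As written, however, your third part is a plan rather than a proof. The one nontrivial step, producing the ``fiberwise'' idempotent on the completion, is only announced. Moreover, one mechanism you suggest cannot do it. The projection $\beta(I\times\mathbb N)\to\beta I$ forgets the index $n$, and a point of $\beta(I\times\mathbb N)$ lying over $\beta\mathbb N\setminus\mathbb N$ lies in no copy $\beta(I\times\{n\})$. So that map gives no way to say which copy of the idempotent to apply. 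The profinite set ``realizing convergent sequences'' also plays no role.

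Your final sentence is the right idea, and what makes it work is a bound that is uniform in $n$. Let $e$ be the idempotent of $\mathbb Z[\beta I]$ with image $M$.
\begin{itemize}
\item Since $\beta I$ is quasicompact and $\mathbb Z[\beta I]=\bigcup_c\mathbb Z[\beta I]_{\ell^1\le c}$, the map $e|_{\beta I}$ lands in a single compact piece $\mathbb Z[\beta I]_{\ell^1\le c}$.
\item Pushforward along the clopen inclusions $\iota_n\colon\beta I\cong\beta(I\times\{n\})\subset\beta(I\times\mathbb N)$ preserves $\ell^1$-bounds. Hence $(i,n)\mapsto(\iota_n)_\ast e(i)$ sends the discrete set $I\times\mathbb N$ into the single compact Hausdorff space $\mathbb Z[\beta(I\times\mathbb N)]_{\ell^1\le c}$.
\item This map therefore extends to $\beta(I\times\mathbb N)$, which defines an endomorphism $\tilde e$ of $\mathbb Z[\beta(I\times\mathbb N)]$.
\end{itemize}
The remaining checks all use density of $I\times\mathbb N$, Hausdorffness of the bounded pieces, and the fact that any endomorphism sends each bounded piece into some bounded piece (by quasicompactness). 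In turn one gets:
\begin{itemize}
\item $\tilde e\circ(\iota_n)_\ast=(\iota_n)_\ast\circ e$;
\item $\tilde e^2=\tilde e$;
\item $\tilde e$ commutes with $\sigma_\ast$, where $\sigma\colon\beta(I\times\mathbb N)\xrightarrow{\sim}\beta(I\times\mathbb N_{\ge1})$ extends the shift $(i,n)\mapsto(i,n+1)$;
\item hence $\tilde e$ preserves the splitting $\mathbb Z[\beta(I\times\{0\})]\oplus\mathbb Z[\beta(I\times\mathbb N_{\ge1})]$.
\end{itemize}
Set $\tilde M=\operatorname{im}\tilde e$ and let $\tilde M_{\ge1}$ be its part in the second summand. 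Then $\tilde M\cong M\oplus\tilde M_{\ge1}\cong M\oplus\tilde M$, with the second isomorphism given by $\sigma_\ast$; no rearrangement of the $N$'s is needed. Hence $\mathbb Z[\beta(I\times\mathbb N)]=\tilde M\oplus\ker\tilde e\cong M\oplus\mathbb Z[\beta(I\times\mathbb N)]$, and $\beta(I\times\mathbb N)\cong\beta I$ finishes the argument. These verifications are exactly what your proposal leaves undone.
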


\begin{proof} We already know that $\mathbb Z[S]$ is compact projective when $S\in \operatorname{Extr}$. In the other direction, any surjection $\mathbb Z[\beta I]\to M$ must split, showing that $M$ is a retract. For the final statement, we only sketch the argument. Choose a bijection $I\cong I\times \mathbb N$ and regard $\mathbb Z[\beta (I\times \mathbb N)]$ as a certain completion of
\[
\bigoplus_{\mathbb N} \mathbb Z[\beta I],
\]
where $\mathbb Z[\beta I]\cong M\oplus M'$, and then reorder
\[
(M\oplus M')\oplus (M\oplus M')\oplus\ldots\cong  M\oplus (M'\oplus M)\oplus (M'\oplus M)\oplus \ldots ,
\]
to find an isomorphism (after recompleting)
\[
\mathbb Z[\beta (I\times \mathbb N)]\cong M\oplus \mathbb Z[\beta (I\times \mathbb N)].\qedhere
\]
\end{proof}

The following question is open, and closely related to an open question on injective Banach spaces \cite[Section 1.6.1]{SepInjBanach}.

\begin{question} Are all compact projective condensed abelian groups isomorphic to $\mathbb Z[S]$ for some $S\in \operatorname{Extr}$?
\end{question}

We note that one can find non-extremally disconnected $S$ such that $\mathbb Z[S]$ is compact projective; for example, if $S$ is obtained by identifying two points in the boundary of $\beta \mathbb N$, then $\mathbb Z[S]\cong \mathbb Z[\beta \mathbb N]$.

In any case, this indicates that there are extremely few compact projectives. They are also destroyed by basic operations:

\begin{proposition}\label{prop:tensorproductcompactprojective} For any infinite sets $I$ and $J$, the tensor product $\mathbb Z[\beta I]\otimes \mathbb Z[\beta J]=\mathbb Z[\beta I\times \beta J]$ is not projective.
\end{proposition}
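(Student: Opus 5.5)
The plan is to reduce to a single countable case and then contradict projectivity using the failure of extremal disconnectedness of a product.

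\emph{Reduction.} For an infinite set $I$, choose a countably infinite subset $A\subset I$ and a retraction $I\to A$. Then $\beta A\subset\beta I$ is clopen, and the retraction extends to a retraction $\beta I\to\beta A\cong\beta\mathbb N$. Hence $\mathbb Z[\beta\mathbb N\times\beta\mathbb N]$ is a retract of $\mathbb Z[\beta I\times\beta J]$. Retracts of projectives are projective, so it suffices to treat $S=\beta\mathbb N\times\beta\mathbb N$. Also $\mathbb Z[\beta I]\otimes\mathbb Z[\beta J]=\mathbb Z[\beta I\times\beta J]$ because $\mathbb Z[-]$ is symmetric monoidal.

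\emph{Exploiting non-extremal disconnectedness.} It is classical that $S=\beta\mathbb N\times\beta\mathbb N$ is not extremally disconnected. So there is an open $U\subset S$ whose closure $\overline U$ is not open. Put $F_1=\overline U$ and $F_2=\overline{S\setminus\overline U}$, and let
\[
T=F_1\sqcup F_2\twoheadrightarrow S .
\]
This is a surjection of profinite sets, so $\mathbb Z[T]\to\mathbb Z[S]$ is surjective. The fibre is a single point over $S\setminus(F_1\cap F_2)$ and two points over $F_1\cap F_2$, which is nonempty because $\overline U$ is not open. If $\mathbb Z[S]$ were projective, this surjection would have a section $\sigma$. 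Its restriction to $S$ would be a continuous map $S\to\mathbb Z[T]$. By quasicompactness of $S$ and Proposition~\ref{prop:freecondab}, this map lands in some profinite piece $\varprojlim_i\mathbb Z[T_i]_{\ell^1\le n}$, so each $\sigma(\delta_x)$ is a formal sum of at most $n$ points with bounded coefficients, varying continuously in $x$.

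\emph{Contradiction at a boundary point.} Take $x\in F_1\cap F_2$. On the dense set $U\cup(S\setminus F_1)$ one would like $\sigma(\delta_y)$ to be the Dirac mass at the unique preimage of $y$, lying in the first copy for $y\in U$ and in the second copy for $y\notin F_1$. Approaching $x$ from $U$ and from $S\setminus F_1$, continuity would then force $\sigma(\delta_x)$ to equal both $\delta_{x^{(1)}}$ and $\delta_{x^{(2)}}$, which is absurd. The ingredient used here is that limits of Dirac masses in $\mathbb Z[T]_{\ell^1\le 1}=T\cup(-T)\cup\{0\}$ are again Dirac masses.

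\emph{Main obstacle.} The step above is not automatic. For $y\in U$, $\sigma(\delta_y)$ is only some element mapping to $\delta_y$, so it may differ from $\delta_{y^{(1)}}$ by elements of the kernel. That kernel is generated by the classes $\delta_{e^{(1)}}-\delta_{e^{(2)}}$ for $e\in F_1\cap F_2$. To control this, I would first bound the $\ell^1$-norm of $\sigma(\delta_y)$ uniformly by $n$. I would then use the finite-level description $\varprojlim_i\mathbb Z[T_i]$ to compare supports: the kernel terms are supported over the closed nowhere-dense set $F_1\cap F_2$, while $y$ varies in an open set. Using the closed subset of $y$ for which $\sigma(\delta_y)$ has a given combinatorial shape (number of points and coefficients), a Baire-category argument on the open set $U$ should give an open piece where $\sigma(\delta_y)=\delta_{y^{(1)}}+k(y)$ with $k$ continuous and supported over $F_1\cap F_2$. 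Subtracting $k$ and passing to the limit would then give the contradiction. If this bookkeeping fails, the fallback is to replace the splitting argument by an $\operatorname{Ext}^1$ computation: exhibit a nonzero class in $\operatorname{Ext}^1(\mathbb Z[S],M)$ for a suitable non-discrete $M$ built from the pair $(F_1,F_2)$. This would use that such classes vanish for $S\in\operatorname{Extr}$ but detect the non-openness of $\overline U$.
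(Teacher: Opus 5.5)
Your reduction to $S=\beta\mathbb N\times\beta\mathbb N$ is fine. The core argument cannot work, however, because the only property of $S$ it uses is that $S$ is not extremally disconnected, and that does not obstruct projectivity of $\mathbb Z[S]$. As the paper notes just before this proposition, let $S'$ be $\beta\mathbb N$ with two boundary points $p\neq q$ identified. Then $S'$ is not extremally disconnected, but $\mathbb Z[S']\cong\mathbb Z[\beta\mathbb N]$ is projective.

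Run your construction on $S'$. Choose $A\subset\mathbb N$ with $A\in p$ and $A\notin q$, and let $U$ be the image of $\overline A\setminus\{p\}$. Then $F_1\cong\overline A$ and $F_2\cong\overline{\mathbb N\setminus A}$, and $F_1\cap F_2=\{x\}$ with $x^{(1)}=p$ and $x^{(2)}=q$. So $T\to S'$ is just the quotient map $\beta\mathbb N\to S'$. Now put $\sigma(\delta_y)=\delta_{y^{(1)}}$ for $y\in F_1$, and $\sigma(\delta_y)=\delta_{y^{(2)}}+\delta_{x^{(1)}}-\delta_{x^{(2)}}$ for $y\in F_2$. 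Both formulas give $\delta_{x^{(1)}}$ at $y=x$, so they glue to a continuous map $S'\to\mathbb Z[T]$, i.e.\ a section of $\mathbb Z[T]\to\mathbb Z[S']$.

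So the kernel correction $k(y)$ in your last paragraph need not be small near the boundary. Here it is the nonzero constant $\delta_{x^{(1)}}-\delta_{x^{(2)}}$ on all of $F_2$, and it exactly compensates the jump between the two lifts. Hence ``subtracting $k$ and passing to the limit'' produces no contradiction. Every ingredient of your bookkeeping is equally available for $S'$: the uniform $\ell^1$-bound, the kernel supported over the nowhere dense set $F_1\cap F_2$, and Baire category. No refinement of it can close the gap.

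Your fallback has the same defect. The relevant class is that of $0\to\mathbb Z[F_1\cap F_2]\to\mathbb Z[F_1]\oplus\mathbb Z[F_2]\to\mathbb Z[S]\to 0$. This sequence splits, for instance, whenever $F_1\cap F_2$ is a retract of $F_1$, so it does not detect the non-openness of $\overline U$.

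What is missing is an input that exploits the global structure of $\beta\mathbb N\times\beta\mathbb N$; the paper obtains it from Banach space theory. If $\mathbb Z[S]$ were projective, then $H^1(S,W)=\operatorname{Ext}^1(\mathbb Z[S],W)=0$ for every Smith space $W$. The dualization argument from the proof of Proposition~\ref{prop:lightlyprojectiveprofinite} would then show that $C(S)$ is an injective Banach space. That argument dualizes a closed immersion $V\subset V'$ to an exact sequence of Smith spaces, lifts $S\to W$ along $W'\to W$, extends to $\mathcal M_1(S)$, and dualizes back. But $C(\beta\mathbb N\times\beta\mathbb N)=C(\beta\mathbb N,\ell^\infty)$ contains a complemented copy of $c_0(\mathbb N)$ by Cembranos' theorem, and $c_0(\mathbb N)$ is not injective. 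The paper in fact proves the stronger Theorem~\ref{thm:productsnotsepinj}, which yields a nonzero $\operatorname{Ext}^1$ against an $\omega_1$-condensed Smith space.
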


The proposition is not completely trivial. In the appendix to this lecture, we include a proof of a strengthening, answering a question in \cite[Section 6.4.1]{SepInjBanach} in the negative.

This leads to the following important warning.

\begin{warning} For $S\in \operatorname{Extr}$ and any $M\in \operatorname{CondAb}$, clearly the Ext-groups
\[
\operatorname{Ext}^i(\mathbb Z[S],M) = H^i(S,M)=0
\]
vanish for $i>0$. However, the internal Ext-groups
\[
\underline{\operatorname{Ext}}^i(\mathbb Z[S],M)\in \operatorname{CondAb}
\]
are nonzero in general. Indeed, for $S'\in \operatorname{Extr}$, one has
\[
\underline{\operatorname{Ext}}^i(\mathbb Z[S],M)(S') = \operatorname{Ext}^i(\mathbb Z[S']\otimes \mathbb Z[S],M) = \operatorname{Ext}^i(\mathbb Z[S'\times S],M),
\]
which may be nonzero by the previous proposition.
\end{warning}

While the problem mentioned in the warning has never really led to any problems in situations of practical interest (see especially the appendix for some positive results), it is sometimes annoying in the theory development.

One might hope in particular that the issue discussed in this warning might disappear if $M$ is suitably ``complete''. The answer is yes and no:

\begin{enumerate}
\item In nonarchimedean functional analysis, when working with ``solid'' modules (see \cite{Condensed}), the issue disappers. In that case, $\mathbb Z[S]^\blacksquare$ is compact projective for all $S\in \operatorname{Prof}$, and hence (as profinite sets are stable under products) tensor products of compact projectives are still compact projective.

\item In archimedean functional analysis, we deal with ``liquid'' vector spaces, and in this case it turns out that the warning stays as acute. (In fact, in the appendix, we will prove Proposition~\ref{prop:tensorproductcompactprojective} by proving that the issue persists even in the world of Banach spaces.)
\end{enumerate}

With this, let us now turn back to liquid $\mathbb R$-vector spaces. Recall that for any profinite set $S=\varprojlim_i S_i$ (with finite sets $S_i$), we have the space of signed Radon measures
\[\begin{aligned}
\mathcal M_1(S) &= \underline{\operatorname{Hom}}(C(S,\mathbb R),\mathbb R)\\
&= \left\{\mu: \{\mathrm{clopen\ }U\subset S\}\to \mathbb R\mid \begin{array}{c} \mu(\emptyset)=0, \mu(U_1\sqcup U_2)=\mu(U_1)+\mu(U_2)\\
\exists c: \forall U_1,\ldots,U_n\ \mathrm{disjoint}: \sum_j |\mu(U_j)|\leq c
\end{array}\right\}\\
&=\bigcup_{c>0} \varprojlim_i \mathbb R[S_i]_{\ell^1\leq c}.
\end{aligned}\]
Notice that this has the prototypical form of a quasiseparated condensed set, as a union of compact Hausdorff subsets.

\begin{remark} Note that this definition of Radon measures on profinite sets is straightforward, as one has to define the measure only on open and closed subsets. In contrast, for general compact Hausdorff spaces $X$, the usual definition of $\mathcal M(X)$ is rather complicated, involving Borel sets, countable additivity, and inner/outer regularity. The perspective of condensed sets gives a different approach. Namely, from the condensed perspective, $X$ is a quotient of a profinite set $S$ by a profinite equivalence relation $R\subset S\times_X S$. In that case, there is an exact sequence
\[
\mathcal M(R)\to \mathcal M(S)\to \mathcal M(X)\to 0
\]
which can also be used to define $\mathcal M(X)$ as the quotient of $\mathcal M(S)$ by $\mathcal M(R)$.
\end{remark}

It turned out that this choice of measures does not define an abelian category of ``$\mathcal M_1$-complete'' $\mathbb R$-vector spaces; in fact, it is necessary to include non-locally convex objects. This led us to consider, for $0<p\leq 1$, the variant
\[\begin{aligned}
\mathcal M_p(S) &= \left\{\mu: \{\mathrm{clopen\ }U\subset S\}\to \mathbb R\mid \begin{array}{c} \mu(\emptyset)=0, \mu(U_1\sqcup U_2)=\mu(U_1)+\mu(U_2)\\
\exists c: \forall U_1,\ldots,U_n\ \mathrm{disjoint}: \sum_j |\mu(U_j)|^p\leq c
\end{array}\right\}\\
&=\bigcup_{c>0} \varprojlim_i \mathbb R[S_i]_{\ell^p\leq c}.
\end{aligned}\]

\begin{remark}\label{rem:ell0} These spaces shrink as $p$ gets smaller. One might even define a limiting case for $p=0$, defining
\[
\mathcal M_0(S)=\bigcup_{c>0} \varprojlim_i \mathbb R[S_i]_{\ell^0\leq c}
\]
where the condition $\ell^0\leq c$ means that at most $\lfloor c\rfloor$ coefficients are nonzero, and the $\ell^1$-norm is at most $c$. (Once one bounds the number of nonzero coefficients, the precise choice of norm does not matter.) As an analogue of Proposition~\ref{prop:freecondab}, it turns out that this is just the uncompleted free condensed $\mathbb R$-vector space on $S$
\[
\mathbb R[S] = \mathcal M_0(S).
\]
\end{remark}

For a fixed $p$, the issue persists: There is a nonzero map
\[
\ell^p(\mathbb N)\to \ell^2(\mathbb N)/\ell^p(\mathbb N): (x_n)_n\mapsto (x_n\log|x_n|)_n
\]
of condensed $\mathbb R$-vector spaces, whose restriction to basis vectors is zero. However, its restriction to $\ell^q(\mathbb N)$ with $q<p$ is zero. Thus, we consider the variant
\[
\mathcal M_{<p}(S) = \bigcup_{q<p} \mathcal M_q(S).
\]
All these spaces of measures have the subset $S\subset \mathcal M_q(S)\subset \mathcal M_{<p}(S)$ of Dirac measures.

Now we can state the main theorem on liquid vector spaces. In this course, we will use this as a black box; for the proof we refer to \cite[Lecture VI -- IX]{Analytic}. The theorem is quite a mouthful. The essential content is that $(\mathbb R,\mathcal M_{<p})$ (and $(\mathbb Z,\mathcal M_{<p})$) is an analytic ring in the sense of \cite{Condensed}; most of this theorem is then a formal consequence of general properties of analytic rings.

\begin{theorem}\label{thm:liquidmain} Let $V$ be a condensed abelian group. The following are equivalent.
\begin{enumerate}
\item For all $S\in \operatorname{Prof}$, all $f: S\to V$, and all $q<p$, there is a unique extension of $f$ to a map $\tilde{f}_q: \mathcal M_q(S)\to V$.
\item For all $S\in \operatorname{Prof}$ and all $f: S\to V$, there is a unique extension of $f$ to a map $\tilde{f}: \mathcal M_{<p}(S)\to V$.
\item One can write $V$ as a cokernel of a map
\[
\bigoplus_j \mathcal M_{<p}(T_j)\to \bigoplus_i \mathcal M_{<p}(S_i).
\]
\end{enumerate}
The class of such $V$ is an abelian subcategory $\mathrm{Liquid}_p$ of $\mathrm{CondAb}$ stable under all limits, all colimits, all extensions, and all internal Hom's (and internal Ext's). In fact, on the level of derived categories, the functor
\[
D(\mathrm{Liquid}_p)\to D(\mathrm{CondAb})
\]
is fully faithful, and the essential image consists of all those $C\in D(\mathrm{CondAb})$ for which all cohomologies $H^i(C)$ lie in $\mathrm{Liquid}_p$. For all $C\in D(\mathrm{Liquid}_p)$, all $S\in \operatorname{Prof}$, and all $q<p$, the maps
\[
\underline{\operatorname{RHom}}(\mathcal M_{<p}(S),C)\to \underline{\operatorname{RHom}}(\mathcal M_q(S),C)\to \underline{\operatorname{RHom}}(\mathbb Z[S],C)
\]
are isomorphisms (giving a derived and internal version of conditions (1) and (2)).

The embedding $\mathrm{Liquid}_p\subset \mathrm{CondAb}$ admits a left adjoint $M\mapsto M^{\mathrm{liq}}$, called ($p$-)liquidification, whose left derived functor defines a left adjoint to the inclusion
\[
D(\mathrm{Liquid}_p)\to D(\mathrm{CondAb}).
\]
Liquidification commutes with all colimits and sends $\mathbb Z[S]$ to $\mathcal M_{<p}(S)$. In particular, for $S=\ast$, it sends $\mathbb Z$ to $\mathbb R$.

There is a unique symmetric monoidal tensor product $-\otimes_{\mathbb R_{<p}}-$ on $\mathrm{Liquid}_p$ making $M\mapsto M^{\mathrm{liq}}$ symmetric monoidal, i.e.~such that
\[
M^{\mathrm{liq}}\otimes_{\mathbb R_{<p}} N^{\mathrm{liq}}\cong (M\otimes N)^{\mathrm{liq}}.
\]
Concretely, $-\otimes_{\mathbb R_{<p}}-$ represents bilinear maps in $\mathrm{Liquid}_p$. Its left derived functor defines a symmetric monoidal tensor product on $D(\mathrm{Liquid}_p)$ making the left adjoint $D(\mathrm{CondAb})\to D(\mathrm{Liquid}_p)$ symmetric monoidal.

All functors just mentioned (liquidification, forgetful functors, and tensor products) commute with all colimits (resp.~all direct sums on the level of derived categories).

As the liquidification of $\mathbb Z$ is $\mathbb R$, it follows that the tensor unit of $\mathrm{Liquid}_p$ is $\mathbb R$, and thus all objects of $\mathrm{Liquid}_p$ have unique and functorial $\mathbb R$-module structures. All the above results on the relation between $\mathrm{Liquid}_p$ and $\mathrm{CondAb}$ are also true about the relation between $\mathrm{Liquid}_p$ and $\mathrm{Cond}(\mathbb R)$.
\end{theorem}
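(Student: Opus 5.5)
The theorem is stated as a black box with a reference to \cite[Lecture VI -- IX]{Analytic}; what follows is my plan for how that proof goes, not a reconstruction checked in detail. I would split the statement into one hard analytic input and a formal consequence of the general theory of analytic rings from \cite{Condensed}. The hard input is the derived statement: for every profinite $T$ and every $q'<p$,
\[
\underline{\operatorname{RHom}}(\mathcal M_{<p}(S),\mathcal M_{q'}(T))\to \underline{\operatorname{RHom}}(\mathbb Z[S],\mathcal M_{q'}(T))
\]
is an isomorphism, and likewise for $\mathcal M_q(S)$ with $q<q'$. Because these are internal Homs, this amounts to Ext-vanishing against $\mathcal M_{q'}(T)$ after tensoring with arbitrary $\mathbb Z[S']$. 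Once this holds for the generators, the general machinery says that $(\mathbb Z,\mathcal M_{<p})$ is an analytic ring.

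I would get everything else from that general machinery, in this order.
\begin{enumerate}
\item Resolve an arbitrary $V$ satisfying (2), or any complex with such cohomology, by sums of the $\mathcal M_{<p}(T_j)$. A dévissage along this resolution then extends the isomorphism to all $C\in D(\mathrm{CondAb})$ whose cohomology is complete.
\item This extension gives (2)$\Rightarrow$(3) immediately. It also gives (3)$\Rightarrow$(1), since $\mathcal M_{<p}(T)$ is complete for each $\mathcal M_q(S)$ with $q<p$ by the qs criterion, Theorem~\ref{thm:qspliquid}. The implication (1)$\Rightarrow$(2) is a limit over $q$.
\item Exactness of $\operatorname{RHom}$ then shows that the complete objects are closed under kernels, cokernels, extensions, limits and colimits, and that $D(\mathrm{Liquid}_p)\to D(\mathrm{CondAb})$ is fully faithful with the stated essential image.
\item For internal Homs, use $\underline{\operatorname{RHom}}(M,C)(S')=\operatorname{RHom}(\mathbb Z[S']\otimes M,C)$ and reduce to $M=\mathbb Z[S]$, where the derived condition applies directly.
\item Define liquidification as the unique colimit-preserving functor sending $\mathbb Z[S]$ to $\mathcal M_{<p}(S)$ for extremally disconnected $S$, computed on projective resolutions.
\item Define the tensor product by $\mathcal M_{<p}(S)\otimes_{\mathbb R_{<p}}\mathcal M_{<p}(T)=\mathcal M_{<p}(S\times T)$ and extend by colimits. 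Its symmetric monoidality and compatibility with liquidification are formal.
\item Liquidification sends $\mathbb Z$ to $\mathcal M_{<p}(\ast)=\mathbb R$, so the unit is $\mathbb R$. Every object is then a module over $\operatorname{End}(\mathbb R)=\mathbb R$, which gives the unique functorial $\mathbb R$-module structure and the comparison with $\mathrm{Cond}(\mathbb R)$.
\end{enumerate}

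For the hard input, I would first discretize as in the Appendix to Lecture~II, using $\mathcal M(S;\mathbb Z((T))_r)\otimes_{\mathbb Z((T))_r}\mathbb R=\mathcal M_q(S)$. This replaces the statement over $\mathbb R$ by the corresponding statement for the analytic ring $\mathbb Z((T))_{<r}$, and lets one work with the profinite "balls" $\mathcal M(S;\mathbb Z((T))_r)_{\le C}$. Next, I would use a Breen--Deligne resolution to reduce the required Ext-vanishing to the functorial complexes $\mathbb Z[M^{n}]$ built from $M=\mathcal M(S;\mathbb Z((T))_r)$. At that point the claim becomes a question about the cohomology of explicit complexes of profinite sets.

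The main obstacle is proving that this cohomology vanishes with uniform control on the radii and constants. This is the quantitative core of \cite{Analytic}: an induction on the cohomological degree in which each step loses a controlled amount of norm. It requires passing to complexes of normed abelian groups with norm bounds of the form $\ell^q$, $q<q'$, and then taking a limit over the inverse system of finite sets $S_i$. This is the step I would expect to occupy essentially all of the real work; everything else in the theorem is formal once it is in place.
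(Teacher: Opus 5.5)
Your overall route matches the paper's. The paper takes this theorem as a black box from \cite[Lecture VI--IX]{Analytic}, noting that its essential content is that $(\mathbb R,\mathcal M_{<p})$ and $(\mathbb Z,\mathcal M_{<p})$ are analytic rings and that the rest is formal. Your steps 1--7 and your sketch of the hard part follow that route: discretization, Breen--Deligne resolutions, normed complexes. (The analytic ring in the discretization is the overconvergent $\mathbb Z((T))_{>r}$, not $\mathbb Z((T))_{<r}$.)

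However, the first half of what you call the hard input is false, so it cannot be the lemma you prove. Take $q'<p$ and $T$ infinite. The map $\underline{\operatorname{RHom}}(\mathcal M_{<p}(S),\mathcal M_{q'}(T))\to \underline{\operatorname{RHom}}(\mathbb Z[S],\mathcal M_{q'}(T))$ already fails to be an isomorphism on $H^0$ of global sections, because that would say exactly that $\mathcal M_{q'}(T)$ satisfies condition (2). Here is why it does not:
\begin{enumerate}
\item $\mathcal M_{q'}(T)$ is quasiseparated, and each of its quasicompact subobjects lies in a single ball $\mathcal M(T)_{\ell^{q'}\leq c}$.
\item For $q'<q<p$, every $q$-convex subset containing the Dirac measures contains $N^{-1/q}(\delta_{t_1}+\ldots+\delta_{t_N})$ for distinct $t_i\in T$.
\item The $\ell^{q'}$-norm of that element is $N^{1-q'/q}$, which is unbounded in $N$.
\end{enumerate}
So by Theorem~\ref{thm:qspliquid}, $\mathcal M_{q'}(T)$ is not $p$-liquid. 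Concretely, the Dirac map $T\to\mathcal M_{q'}(T)$ has no extension to $\mathcal M_q(T)$. The space $\mathcal M_{q'}(T)$ is only $q'$-liquid, i.e.\ complete for $\mathcal M_q$ with $q<q'$. In particular these spaces are not the generators of $\mathrm{Liquid}_p$, so "once this holds for the generators" does not apply to them.

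The correct input is only your second clause, with the quantifiers in this order. Fix the source exponent $q<p$. Use $\underline{\operatorname{RHom}}(\mathcal M_q(S),\mathcal M_{q'}(T))\cong\underline{\operatorname{RHom}}(\mathbb Z[S],\mathcal M_{q'}(T))$ only for $q<q'<p$. Reach the actual generator through $\mathcal M_{<p}(T)=\varinjlim_{q<q'<p}\mathcal M_{q'}(T)$. Only at the end pass to the source $\mathcal M_{<p}(S)$, via $\underline{\operatorname{RHom}}(\mathcal M_{<p}(S),-)=R\varprojlim_q\underline{\operatorname{RHom}}(\mathcal M_q(S),-)$.

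Note also that in \cite{Analytic} the quantitative theorem is the vanishing of $\operatorname{Ext}^i(\mathcal M_{p'}(S),V)$ for $i>0$, $p'<p$ and $V$ a $p$-Banach space. That Banach-target form is what the normed complexes in your last paragraph actually compute. Your plan should therefore also say how one passes from such targets to $\mathcal M_{q'}(T)$ and $\mathcal M_{<p}(T)$. With these corrections, the remaining steps are the formal consequences of analyticity, as in the paper.
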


\begin{remark} As $\mathbb Z^{\mathrm{liq}}=\mathbb R^{\mathrm{liq}} = \mathbb R$, it follows that $(\mathbb R/\mathbb Z)^{\mathrm{liq}}=0$. In fact, more generally, for any compact abelian group $A$, one has $A^{\mathrm{liq}} = 0$. Thus, for example, $\mathbb Z_2^{\mathrm{liq}}=0$.
\end{remark}

The following remark may be safely ignored (and reading it may actually lead to confusion, as it generalizes the concept of liquidity in a way not captured by the above discussion, but also not relevant for this course).

\begin{remark} When we tried to find an analytic ring structure on $\mathbb R$ (and eventually found the $p$-liquid analytic ring structure), we also wondered whether this analytic ring structure on $\mathbb R$ might admit non-archimedean analogues, say over $\mathbb Q_2$ (or any prime in place $2$, but the letters $p$, $q$ and $\ell$ are already taken...). And indeed, it does! More precisely, for any $0\leq p\leq \infty$, we can define
\[
\mathcal M_p(S,\mathbb Q_2) = \bigcup_{c>0} \varprojlim_i \mathbb Q_2[S_i]_{\ell^p\leq c}
\]
where for $0<p<\infty$, the $\ell^p$-norm is defined as usual by $\sum_{s\in S_i} |x_s|^p$ (which is now subadditive, as $|x+y|^p\leq \max(|x|^p,|y|^p)\leq |x|^p+|y|^p$ in the nonarchimedean case), the $\ell^0$-norm is defined as in Remark~\ref{rem:ell0}, and the $\ell^\infty$-norm is the supremum norm. Then $\mathcal M_0(S,\mathbb Q_2)=\mathbb Q_2[S]$, while $\mathcal M_\infty(S,\mathbb Q_2)$ is the dual of $C(S,\mathbb Q_2)$. It turns out that all pairs
\[
(\mathbb Q_2,\mathcal M_0), (\mathbb Q_2,\mathcal M_{<p})_{0<p\leq \infty}, (\mathbb Q_2,\mathcal M_\infty)
\]
define (pairwise distinct) analytic rings. For the first, all condensed $\mathbb Q_2$-vector spaces are complete. In the middle, we have various classes of $p$-liquid $\mathbb Q_2$-vector spaces for varying $p$. Finally, $\mathcal M_\infty$-complete $\mathbb Q_2$-vector spaces are exactly the solid $\mathbb Q_2$-vector spaces. Thus, over nonarchimedean fields, there is a whole line $0\leq p\leq \infty$ of possible ``completeness'' conditions, interpolating from arbitrary to ``liquid'' to ``solid''.

Over $\mathbb R$, one has to stop at $p=1$, because of the triangle inequality. This picture is extremely reminiscent of the Berkovich space of $\mathbb Z$, which includes a full $(0,\infty)$ at each nonarchimedean place, but only a half-open interval $(0,1]$ at the archimedean place.

All these various notions of liquidity over $\mathbb R$ and the various nonarchimedean local fields like $\mathbb Q_2$ can be understood as specializations of a notion of liquid module over a ring of overconvergent arithmetic Laurent series $\mathbb Z((T))_{>r}$, constructed in \cite[Lecture VI -- IX]{Analytic}. In the current lectures, we will only consider the case over $\mathbb R$, and so as in the theorem, ``liquid'' will imply $\mathbb R$-vector space for us, discarding these analogues of liquid vector spaces over nonarchimedean fields.
\end{remark}

Coming back to the discussion of flat and projective objects, we discuss flat objects in the following proposition (and projective objects in the appendix).

\begin{theorem} For any $S\in \mathrm{Prof}$, the object $\mathcal M_{<p}(S)\in \mathrm{Liquid}_p$ is flat. Moreover, for any quasiseparated $p$-liquid $\mathbb R$-vector space $V$, there is a natural isomorphism
\[
\mathcal M_{<p}(S)\otimes_{\mathbb R_{<p}} V\cong \bigcup_{q<p} \bigcup_{\substack{K\subset V\\q\text-\mathrm{convex}}} \mathcal M_q(S,K).
\]
Here, for finite $S$, we define
\[
\mathcal M_q(S,K)\subset \mathbb R[S]\otimes_{\mathbb R} V
\]
as the $q$-convex hull of the $|S|$ closed subsets $K\cong [s]\otimes K\subset \mathbb R[S]\otimes_{\mathbb R} V$, $s\in S$, and in general
\[
\mathcal M_q(S,K) = \varprojlim_i \mathcal M_q(S_i,K)
\]
for $S=\varprojlim_i S_i$ profinite.
\end{theorem}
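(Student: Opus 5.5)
I would prove the tensor product formula first, and then deduce flatness from it.

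\emph{Step 1: the formula.} Write $W$ for the right-hand side. Since each $\mathcal M_q(S,K)$ is an inverse limit of compact Hausdorff spaces, it is compact Hausdorff. These subobjects form a filtered system, because $q$-convex hulls of finitely many $q$-convex compacts are again contained in such. Hence $W$ is a quasiseparated condensed $\mathbb R$-vector space. Each $\mathcal M_q(S,K)$ is itself $q$-convex, and every qc subobject of $W$ lands in one of them. So by Theorem~\ref{thm:qspliquid}, $W$ is $p$-liquid.

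Next I construct a bilinear map $\mathcal M_{<p}(S)\times V\to W$. Since $\mathcal M_q(S)=\bigcup_c\varprojlim_i \mathbb R[S_i]_{\ell^q\le c}$ and $V=\bigcup K$, it suffices to note that $\mathbb R[S_i]_{\ell^q\le 1}\times K\to \mathcal M_q(S_i,K)$ is compatible in $i$. This gives a map $\mathcal M_{<p}(S)\otimes_{\mathbb R_{<p}}V\to W$.

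For the inverse, I use the universal property. Since $\mathcal M_{<p}(S)\otimes V$ is the liquidification of $\mathbb Z[S]\otimes V$, maps out of it into a liquid $X$ correspond to maps $S\to\underline{\operatorname{Hom}}(V,X)$. The key claim is that $W$ is the "free" target. Concretely, I want to show that for any $p$-liquid $X$ and any bilinear $\beta\colon\mathcal M_{<p}(S)\times V\to X$, the map $\beta$ factors uniquely through $W$. Uniqueness holds because finite sums $\sum [s]\otimes v$ are dense in each $\mathcal M_q(S,K)$. For existence, the plan is to extend from $S\times K$ to $\mathcal M_q(S,K)$ by the same approximation argument as in the appendix to Lecture II. That argument passes to $\mathcal M(S;\mathbb Z((T))_r)$-type discretizations with coefficients in $K$, and uses that $X$ is $p$-liquid to integrate.

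A cleaner route, which I would try first, is to present $\mathcal M_q(S,K)$ as the image of $\mathcal M_q(S\times T)$, where $T\twoheadrightarrow K$ is a profinite surjection. Indeed $\mathcal M_q(S)\otimes\mathcal M_q(T)$ maps to $\mathcal M_q(S\times T)$, and $K$ is a quotient of $\mathcal M_{q'}(T)_{\le 1}$ for $q'$ slightly larger than $q$. Then $W=\varinjlim$ of quotients of $\mathcal M_{<p}(S\times T)$, and $\mathcal M_{<p}(S)\otimes\mathcal M_{<p}(T)=\mathcal M_{<p}(S\times T)$ follows from the symmetric monoidal liquidification of $\mathbb Z[S]\otimes\mathbb Z[T]=\mathbb Z[S\times T]$.

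Writing $V$ as a filtered colimit (a union) of the images $K$, I would identify $V$ as a colimit of the cokernels of $\mathcal M_{<p}(R)\to\mathcal M_{<p}(T)$, with $R=T\times_V T$, restricted suitably. Tensoring is right exact and commutes with colimits. It therefore suffices to check that the resulting quotient of $\mathcal M_{<p}(S\times T)$ maps isomorphically onto $\mathcal M_{<p}$-hulls in $W$, which is a pointwise statement about compact Hausdorff images.

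\emph{Step 2: flatness.} To show $\mathcal M_{<p}(S)\otimes-$ is exact, first reduce to injections $V'\hookrightarrow V$. Using a resolution by sums of $\mathcal M_{<p}(T_j)$, which are quasiseparated, I reduce further to quasiseparated $V'\subset V$: one can arrange $\mathrm{Tor}_1$ vanishing by dimension shifting onto kernels of maps between qs objects, and kernels of qs objects are qs. For qs $V'\subset V$, the formula shows $\mathcal M_q(S,K')\subset\mathcal M_q(S,K)$ is injective for $K'\subset K$, since inverse limits of injections are injective. Moreover, a $q$-convex compact in $V'$ is one in $V$. So the tensor product is a union of injective maps, and hence injective.

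\emph{Main obstacle.} I expect the main obstacle to be the identification of the quotient in Step 1. One must show that the kernel of $\mathcal M_{<p}(S\times T)\to\mathcal M_q(S,K)$ is generated by $\mathcal M_{<p}(S\times R)$, i.e.\ that no extra non-Hausdorff quotient appears. I would attack this by noting that both sides are quasiseparated and comparing them on compact pieces via the appendix's Lemma~\ref{approximate}.
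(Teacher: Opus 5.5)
\textbf{There is a genuine gap in your Step 1.} The route you propose to close it rests on a false identification.

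By right exactness of liquidification, the cokernel of $\mathcal M_{<p}(R)\xrightarrow{p_1-p_2}\mathcal M_{<p}(T)$ with $R=T\times_V T$ is $\mathbb Z[K]^{\mathrm{liq}}$. This is the free $p$-liquid space on the compact Hausdorff space $K$. It only records which points of $T$ have the same image in $K$, and none of the linear relations that hold in $V$. So $V$ is not a colimit of such cokernels.

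Consequently, the statement you single out as the main obstacle is false already for $S=\ast$. You claim that $\ker(\mathcal M_{<p}(S\times T)\to\mathcal M_q(S,K))$ is generated by $\mathcal M_{<p}(S\times R)$. Take $V=\mathbb R$ and $K=[-1,1]$. The class $2\delta_{1/2}-\delta_1$ is nonzero in $\mathbb Z[K]^{\mathrm{liq}}$: it is detected in $\underline{\operatorname{Hom}}(C(K,\mathbb R),\mathbb R)$. Yet it dies in $V$. Lemma~\ref{approximate} cannot repair this.

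Your first route has a related defect. Lemma~\ref{approximate} only produces limits inside a quasiseparated target, but the universal property must be tested against arbitrary $p$-liquid $X$. Testing only quasiseparated $X$ cannot detect a kernel of $\mathcal M_{<p}(S)\otimes_{\mathbb R_{<p}}V\to W$ that dies in every quasiseparated target. That non-Hausdorff kernel is exactly what has to be ruled out. Surjectivity onto $W$ is easy; injectivity is the whole content. A minor slip as well: $K$ is the image of $\mathcal M_q(T)_{\le 1}$, not of $\mathcal M_{q'}(T)_{\le 1}$ for $q'>q$, since the latter need not land in a merely $q$-convex set.

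\textbf{The missing idea is to use the honest relations.} Choose a surjection $\bigoplus_i\mathcal M_{<p}(T_i)\to V$ with the $T_i$ extremally disconnected. Its kernel $V''$ is quasiseparated, being a subobject of a quasiseparated object, and you continue to a resolution all of whose kernels are quasiseparated.

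What you then need is that $V\mapsto\mathcal M_{<p}(S;V):=W$ is exact on short exact sequences $0\to V_1\to V_2\to V_3\to 0$ of quasiseparated $p$-liquid spaces. This has three parts.
\begin{enumerate}
\item Injectivity, which you already have.
\item Exactness in the middle: an element of $\mathcal M_q(S,K_2)$ dying in $V_3$ lies in $\mathcal M_q(S,K_2\cap V_1)$.
\item Surjectivity: a compact $q$-convex $K_3\subset V_3$ is the image of a compact $q$-convex $K_2\subset V_2$. Take $K_2$ to be a $q$-convex compact containing a compact lift of $K_3$, intersected with the preimage of $K_3$. Then $\mathcal M_q(S,K_2)\to\mathcal M_q(S,K_3)$ is onto by Tychonoff.
\end{enumerate}
You also need $\mathcal M_{<p}(S;\mathcal M_{<p}(T))=\mathcal M_{<p}(S\times T)$ and compatibility with direct sums.

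Applying this to the resolution shows that the complex computing $\mathcal M_{<p}(S)\otimes^{\mathbb L}V$ is exact outside degree $0$ and equals $W$ in degree $0$. This gives the formula and Tor-vanishing for quasiseparated $V$ at once; it is the paper's argument. Your Step 2, which reduces general $V$ to injectivity for quasiseparated $V''\subset V'$, then coincides with the paper's final step.
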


\begin{proof} Consider the functor
\[
V\mapsto \mathcal M_{<p}(S;V) = \bigcup_{q<p} \bigcup_{\substack{K\subset V\\q\text-\mathrm{convex}}} \mathcal M_q(S,K),
\]
which is an endofunctor on quasiseparated $p$-liquid $\mathbb R$-vector spaces. (Indeed, each $\mathcal M_q(S,K)$ is a compact $q$-convex subset, and all transition maps are injective.) This is an exact functor: If
\[
0\to V\to V'\to V''\to 0
\]
is an exact sequence of quasiseparated $p$-liquid $\mathbb R$-vector spaces, then
\begin{enumerate}
\item the map $\mathcal M_{<p}(S;V)\to \mathcal M_{<p}(S;V')$ is injective, directly from the definition. In fact, this holds for any injection $V\hookrightarrow V'$, not necessarily with quasiseparated quotient.
\item the image of $\mathcal M_{<p}(S;V)\to \mathcal M_{<p}(S;V')$ is the kernel of $\mathcal M_{<p}(S;V')\to \mathcal M_{<p}(S;V'')$, as for any compact $q$-convex $K'\subset V'$ with image $K''\subset V''$, any element in the kernel of $\mathcal M_q(S;K')\to \mathcal M_q(S;K'')$ actually defines an element of $\mathcal M_q(S;K)$, where $K=V\cap K'$.
\item the map $\mathcal M_{<p}(S;V')\to \mathcal M_{<p}(S;V'')$ is surjective, as any compact $q$-convex $K''\subset V''$ is the image of some compact $q$-convex $K'\subset V'$, in which case $\mathcal M_q(S;K')\to \mathcal M_q(S;K'')$ is surjective (by Tychonoff). Indeed, as $V'\to V''$ is surjective, there is some compact subset $K_0\subset V'$ with image $K''$, and this is contained in a compact $q$-convex subset $K_1\subset V'$. Then the intersection of $K_1$ with the preimage of $K''$ is a compact $q$-convex subset $K'\subset V'$ with image $K''$.
\end{enumerate}

In particular, the functor commutes with finite direct sums. In fact, it commutes with infinite direct sums (as any compact $K\subset \bigoplus_i V_i$ is already contained in some finite direct sum). Moreover, for $V=\mathcal M_{<p}(S')$, there is a natural isomorphism
\[
\mathcal M_{<p}(S;\mathcal M_{<p}(S'))\cong \mathcal M_{<p}(S\times S'),
\]
by writing out the definitions.

Now any $p$-liquid condensed $\mathbb R$-vector space $V$ admits a resolution
\[
\ldots\to \bigoplus_{i_1} \mathcal M_{<p}(S_{i_1})\to \bigoplus_{i_0} \mathcal M_{<p}(S_{i_0})\to V\to 0
\]
where all $S_{i_0}$, $S_{i_1}$ etc.~are extremally disconnected profinite sets. If $V$ is quasiseparated, then applying $\mathcal M_{<p}(S;-)$ to this sequence gives a long exact sequence
\[
\ldots\to \bigoplus_{i_1} \mathcal M_{<p}(S\times S_{i_1})\to \bigoplus_{i_0} \mathcal M_{<p}(S\times S_{i_0})\to \mathcal M_{<p}(S;V)\to 0.
\]
But the complex
\[
\ldots\to \bigoplus_{i_1} \mathcal M_{<p}(S\times S_{i_1})\to \bigoplus_{i_0} \mathcal M_{<p}(S\times S_{i_0})
\]
computes $\mathcal M_{<p}(S)\dotimes_{\mathbb R_{<p}} V$. This discussion implies that $\mathcal M_{<p}(S)\dotimes_{\mathbb R_{<p}} V$ is concentrated in degree $0$, and given by $\mathcal M_{<p}(S;V)$, as desired.

Finally, for any $p$-liquid condensed $\mathbb R$-vector space $V$, we can find a surjection $V'\to V$ with $V'$ quasiseparated, in which case also $V''=\ker(V'\to V)\subset V'$ is quasiseparated. Taking derived tensor products of
\[
0\to V''\to V'\to V\to 0
\]
with $\mathcal M_{<p}(S)$, we see that $\mathcal M_{<p}(S)\dotimes_{\mathbb R_{<p}} V$ is concentrated in degree $0$ if and only if $\mathcal M_{<p}(S;V'')\to \mathcal M_{<p}(S;V')$ is injective. But this is item (1) above.
\end{proof}\newpage

\section*{Appendix to Lecture III: Remarks on projective objects}

The goal of this appendix is to discuss some positive and some negative results on projective objects. The results of this appendix are not used in this course, and we mostly include them here to record them.

The starting point is the following positive result. Here, $\omega_1$ is the first uncountable ordinal, and hence $\operatorname{Prof}_{\omega_1}$ is the category of profinite sets that are countable limits of finite sets; equivalently, this is the category of metrizable profinite sets. The pullback functor $\operatorname{CondSet}_{\omega_1}\to \operatorname{CondSet}$ is fully faithful; objects in the image are called $\omega_1$-condensed. Equivalently, a condensed set $X$ is $\omega_1$-condensed if the functor
\[
\operatorname{Prof}^{\mathrm{op}}\to \mathrm{Set}
\]
commutes with $\omega_1$-filtered colimits, i.e.
\[
X(\varprojlim_i S_i) = \varinjlim_i X(S_i)
\]
whenever $S_i\in\operatorname{Prof}$, and the index category is $\omega_1$-filtered (i.e.~any countable subdiagram can be extended with a cone point).

\begin{proposition}\label{prop:lightlyprojective} Let $M$ be a $\omega_1$-condensed abelian group. Then for all $i>0$,
\[
\underline{\operatorname{Ext}}^i(\mathbb Z[\mathbb N\cup\{\infty\}],M)=0.
\]
\end{proposition}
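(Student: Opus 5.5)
The plan is to evaluate the internal Ext on extremally disconnected $S'$ and reduce to a statement in the world of metrizable (light) profinite sets, where $\mathbb N\cup\{\infty\}$ is especially well behaved. As in the Warning, for $S'\in\operatorname{Extr}$ we have
\[
\underline{\operatorname{Ext}}^i(\mathbb Z[\mathbb N\cup\{\infty\}],M)(S')=\operatorname{Ext}^i(\mathbb Z[S'\times(\mathbb N\cup\{\infty\})],M)=H^i(S'\times(\mathbb N\cup\{\infty\}),M).
\]
So it suffices to show that these groups vanish for $i>0$. First I split off the point $\infty$: write $\mathbb Z[\mathbb N\cup\{\infty\}]=\mathbb Z\oplus P$ with $P=\ker(\mathbb Z[\mathbb N\cup\{\infty\}]\to\mathbb Z[\{\infty\}])$. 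The summand $\mathbb Z$ contributes $\operatorname{Ext}^i(\mathbb Z[S'],M)=0$, so only $P$ remains.

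Next I use that $M$ is $\omega_1$-condensed. Write $S'=\varprojlim_j S'_j$ as an $\omega_1$-cofiltered limit of metrizable profinite sets, indexed by the countable Boolean subalgebras of $C(S',\mathbb F_2)$. I would first show that cohomology of an $\omega_1$-condensed abelian group commutes with such limits:
\[
H^i(\varprojlim_j T_j,M)=\varinjlim_j H^i(T_j,M).
\]
To prove this, compute cohomology on metrizable profinite sets via hypercovers by metrizable profinite sets, which exist by countable-level arguments. Both hypercovers and sections descend to some stage $j$, because $\omega_1$-filtered colimits commute with countable limits and $M(\varprojlim_j T_j)=\varinjlim_j M(T_j)$ by assumption. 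Applied to $T_j=S'_j\times(\mathbb N\cup\{\infty\})$ and to $T_j=S'_j$, this reduces everything to the light profinite sets $S'_j$, after passing to the colimit over $j$.

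The key light input is that $P$ is internally projective in $\omega_1$-condensed abelian groups. Concretely, for every metrizable profinite $T$ and every $\omega_1$-condensed $N$, I want $\operatorname{Ext}^i(\mathbb Z[T]\otimes P,N)=\operatorname{Ext}^i(\mathbb Z[T],\underline{\operatorname{Hom}}(P,N))$, with $\underline{\operatorname{Hom}}(P,N)$ again $\omega_1$-condensed. Here $\underline{\operatorname{Hom}}(P,N)$ is the object of null sequences in $N$. Granting this, we get
\[
H^i(S'_j\times(\mathbb N\cup\{\infty\}),M)\cong H^i(S'_j,M)\oplus H^i\bigl(S'_j,\underline{\operatorname{Hom}}(P,M)\bigr).
\]
Taking $\varinjlim_j$ and using continuity again, the right-hand side becomes $H^i(S',M)\oplus H^i(S',\underline{\operatorname{Hom}}(P,M))=0$ for $i>0$, since $S'$ is extremally disconnected.

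The main obstacle is the internal projectivity of $P$ in the light world, i.e.\ the vanishing of $\operatorname{Ext}^{>0}(\mathbb Z[T]\otimes P,N)$ for metrizable $T$. My first attempt would be to use the explicit description of $\mathbb Z[T\times(\mathbb N\cup\{\infty\})]$ from Proposition~\ref{prop:freecondab}, together with the short exact sequence
\[
0\to P\xrightarrow{1-\sigma}P\to\mathbb Z\to 0,
\]
where $\sigma$ is the shift $[n]\mapsto[n+1]$. I would also write a metrizable $T$ as a sequential limit of finite sets, so that maps out of $\mathbb Z[T]\otimes P$ can be built as explicit sequential limits. The goal is to show that every extension of $\mathbb Z[T]\otimes P$ by $N$ splits, by inductively lifting the images of the generators $[n]-[\infty]$ compatibly along the finite stages of $T$, using only countably many choices. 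I expect this inductive construction to carry the real content of the proposition.
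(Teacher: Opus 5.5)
\textbf{There is a genuine gap.} Your preliminary reductions are fine: evaluating at $S'\in\operatorname{Extr}$, splitting off $\mathbb Z$, and continuity of $H^i(-,M)$ along $\omega_1$-cofiltered limits (which the paper also uses). But after them the entire content of the proposition sits in the ``key light input''. You do not prove it, and the form in which you propose to prove it is false.

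You gloss internal projectivity as the vanishing of $\operatorname{Ext}^{>0}(\mathbb Z[T]\otimes P,N)$ for metrizable $T$, and you plan to split every extension of $\mathbb Z[T]\otimes P$ by $N$. However, $\{0\}\subset\mathbb N\cup\{\infty\}$ is clopen, so $\mathbb Z\cdot([0]-[\infty])$ is a direct summand of $P$. Hence $\mathbb Z[T]$ is a direct summand of $\mathbb Z[T]\otimes P$, and $\operatorname{Ext}^1(\mathbb Z[T]\otimes P,N)$ contains $H^1(T,N)$. For $T$ the Cantor set, Amir's theorem says $C(T)$ is not separably injective. So condition (3) of Proposition~\ref{prop:lightlyprojectiveprofinite} fails, giving a Smith space $N$ with metrizable unit ball (in particular $\omega_1$-condensed) with $H^1(T,N)\neq 0$. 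No countable inductive lifting can split all such extensions.

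What is true is only your displayed isomorphism $\operatorname{Ext}^i(\mathbb Z[T]\otimes P,N)\cong H^i(T,\underline{\operatorname{Hom}}(P,N))$. That says $R\underline{\operatorname{Hom}}(P,N)$ has no higher cohomology on light test objects, which is just the light-world version of the statement you are trying to prove. Separately, $0\to P\xrightarrow{1-\sigma}P\to\mathbb Z\to 0$ is not exact. A map $P\to\mathbb Z$ killing the image of $1-\sigma$ takes the same value on all $[n]-[\infty]$; these form a null sequence and $\mathbb Z$ is discrete, so the map is zero.

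\textbf{The missing idea.} The paper keeps $S\in\operatorname{Extr}$ and uses the pushout of condensed anima
\[
S\times(\mathbb N\cup\{\infty\})=\beta(S\times\mathbb N)\sqcup_{\partial\beta(S\times\mathbb N)}(S\times\{\infty\}),
\]
which comes from the two-compactifications lemma of Lecture IV. Since $S$ and $\beta(S\times\mathbb N)$ are extremally disconnected, Mayer--Vietoris reduces the claim to surjectivity of
\[
H^i(\beta(S\times\mathbb N),M)\to H^i(\partial\beta(S\times\mathbb N),M)\quad\text{for all } i\geq 0.
\]
Only at this point is your continuity lemma needed. It reduces surjectivity of restriction along any inclusion $S_0\subset T$ of profinite sets to the metrizable case. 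There $S_0$ is injective among profinite sets, so the inclusion splits and surjectivity is immediate. The light input actually required is therefore not a projectivity statement, but the easy fact that metrizable profinite sets are retracts of every profinite set containing them.
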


In other words, within $\omega_1$-condensed abelian group, $\mathbb Z[\mathbb N\cup \{\infty\}]$ is internally projective. (While within all condensed abelian groups, it is not even projective! And within all condensed abelian groups, Proposition~\ref{prop:tensorproductcompactprojective} shows that there are essentially no internally projective objects.)

\begin{proof} Let $S\in \operatorname{Extr}$. We have to see that
\[
H^i(S\times (\mathbb N\cup \{\infty\}),M)=0
\]
for $i>0$. Consider the pushout diagram
\[\xymatrix{
\partial \beta(S\times \mathbb N)\ar[r]\ar[d] & \beta(S\times \mathbb N)\ar[d]\\
S\times \{\infty\}\ar[r] & S\times (\mathbb N\cup \{\infty\}).
}\]
This induces a Mayer--Vietoris type sequence
\[
\ldots \to H^i(S\times (\mathbb N\cup \{\infty\}),M)\to H^i(S\times \{\infty\},M)\oplus H^i(\beta(S\times \mathbb N),M)\to H^i(\partial\beta(S\times \mathbb N),M)\to \ldots .
\]
Here, $S=S\times \{\infty\}$ and $\beta(S\times \mathbb N)$ are extremally disconnected profinite sets, so their $H^i(-,M)$ vanishes for $i>0$. Thus, it suffices to show that the restriction maps
\[
H^i(\beta(S\times \mathbb N),M)\to H^i(\partial\beta(S\times \mathbb N),M)
\]
are surjective for all $i\geq 0$. This is a special case of the next lemma.
\end{proof}

\begin{lemma}\label{lem:restrictionsurjectivelight} Let $M$ be an $\omega_1$-condensed abelian group and let $S\subset T$ be an injection of profinite sets. Then for all $i\geq 0$ the map $H^i(T,M)\to H^i(S,M)$ is surjective.
\end{lemma}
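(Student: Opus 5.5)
The plan is to reduce to the metrizable case, where the statement follows from the existence of retractions, and then pass to the limit using that the coefficients are $\omega_1$-condensed.

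\emph{Metrizable case.} Suppose first that $T$ (hence $S$) is metrizable, i.e.\ lies in $\operatorname{Prof}_{\omega_1}$. A closed subset of a metrizable profinite set is a retract of it. To see this, embed $T$ into the Cantor set $\prod_{\mathbb N}\{0,1\}$ and send each point to a nearest point of $S$ for a suitable ultrametric; ties can be broken continuously, because in an ultrametric profinite set the balls are clopen and form a countable tree. Choose a retraction $r:T\to S$ with $r|_S=\mathrm{id}$. Then $r^*:H^i(S,M)\to H^i(T,M)$ is a section of the restriction map, so restriction is surjective. This step uses nothing about $M$.

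\emph{Writing $T$ as a limit.} For general $T$, write $T=\varprojlim_j T_j$ as an $\omega_1$-cofiltered limit of metrizable profinite quotients $T\twoheadrightarrow T_j$. For this, index by countable sets of clopen subsets of $T$ (or countable sub-Boolean algebras of $C(T,\mathbb F_2)$); any countable family of such indices is dominated by their union. Let $S_j\subset T_j$ be the image of $S$. It is closed, and $S=\varprojlim_j S_j$ is again an $\omega_1$-cofiltered limit of metrizable profinite sets. The restriction maps are compatible, so we obtain a map of filtered colimits
\[
\varinjlim_j H^i(T_j,M)\to \varinjlim_j H^i(S_j,M).
\]
Each term is surjective by the metrizable case, so the colimit map is surjective as well.

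\emph{Continuity of cohomology.} It therefore remains to prove that for $M$ $\omega_1$-condensed and any $\omega_1$-cofiltered limit $X=\varprojlim_j X_j$ of profinite sets,
\[
H^i(X,M)=\varinjlim_j H^i(X_j,M).
\]
I expect this to be the main obstacle. For $i=0$ it is exactly the defining property of $\omega_1$-condensedness. For higher $i$, I would compute cohomology by hypercovers. The topology on $\operatorname{Prof}$ is finitary, so $H^i(X,M)$ is the colimit, over hypercovers $X_\bullet\to X$ by profinite sets (each term a single profinite set, the covering maps surjections), of the cohomology of the cochain complexes $M(X_\bullet)$.

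The key claim is that each truncated hypercover, together with a given cocycle, descends to some stage $j$. Any surjection of profinite sets $Y\to X$ is an $\omega_1$-cofiltered limit of surjections $Y_k\to X_j$ of metrizable sets. Only countably many such data are needed in each degree, and the index category is $\omega_1$-filtered, so everything can be realized simultaneously at one stage. Evaluating $M$ on a limit turns it into a colimit of values, so cocycles and coboundaries are each defined at some finite stage.

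A cleaner way to phrase this step is that pullback along $\pi:\operatorname{CondSet}\to\operatorname{CondSet}_{\omega_1}$ commutes with cohomology of profinite sets: $R\Gamma(X,\pi^*M')=\varinjlim_j R\Gamma(X_j,M')$. This is a coherence statement for the light site, and the hypercover argument is the way I would try to prove it. Combining this continuity with the metrizable case gives the lemma.
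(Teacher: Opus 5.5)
Your proposal is correct and is essentially the paper's proof: the paper also reduces to metrizable $S\subset T$ using that $H^i(-,M)$ turns $\omega_1$-cofiltered limits into filtered colimits, and then concludes from a retraction $T\to S$. The paper simply asserts that continuity fact, whereas you sketch a hypercover argument for it; the only other difference is that the paper gets the retraction from injectivity of metrizable profinite sets in $\operatorname{Prof}$ (lifting injections against surjections of finite sets along a sequential presentation of $S$), while you build it as an ultrametric nearest-point map.
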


\begin{proof} As $M$ is $\omega_1$-condensed, the functor $S\mapsto H^i(S,M)$ takes $\omega_1$-cofiltered limits to filtered colimits. Writing $T$ as an $\omega_1$-cofiltered limit of its metrizable quotients (and $S$ as the corresponding limit of its images), we can thus assume that $T$ and $S$ are metrizable. In that case, $S$ is injective in the category of profinite sets (by writing it as a sequential limit of finite sets along surjective maps, and noting that injections of profinite sets admit the left lifting property against surjective maps of finite sets). Thus, $S\hookrightarrow T$ splits, and consequently $H^i(T,M)\to H^i(S,M)$ is surjective.
\end{proof}

One might wonder whether further profinite sets $S$ have the property that $H^i(S,M)=0$ for all $M\in \operatorname{CondAb}_{\omega_1}$ and $i>0$. Regarding this property, we have the following proposition.

\begin{proposition}\label{prop:lightlyprojectiveprofinite} Let $S\in \operatorname{Prof}$. Consider the following conditions.
\begin{enumerate}
\item For all $M\in \operatorname{CondAb}_{\omega_1}$ and all $i>0$, one has $H^i(S,M)=0$.
\item For all $M\in \mathrm{Liquid}_p\cap \operatorname{CondAb}_{\omega_1}$ and all $i>0$, one has $H^i(S,M)=0$.
\item For all Smith spaces $W$ whose unit ball is metrizable, one has $H^1(S,W)=0$.
\item The real Banach space $C(S)$ is separably injective in the sense of \cite[Definition 2.1]{SepInjBanach}.
\end{enumerate}
Then (1) $\Rightarrow$ (2) $\Rightarrow$ (3) $\Rightarrow$ (4).
\end{proposition}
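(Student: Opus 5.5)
The three implications are of very different difficulty, and I will take them in order.

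\emph{(1) $\Rightarrow$ (2).} This is immediate, since $\mathrm{Liquid}_p\cap \operatorname{CondAb}_{\omega_1}\subset \operatorname{CondAb}_{\omega_1}$.

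\emph{(2) $\Rightarrow$ (3).} Take a Smith space $W$ whose unit ball $K$ is metrizable, so $W=\bigcup_n nK$ with $K$ compact, absolutely convex and metrizable. Then $W$ is quasiseparated, and it is $p$-liquid by Theorem~\ref{thm:qspliquid}: every compact subset lies in some $nK$, which is $1$-convex and hence $q$-convex for every $q<p$. It remains to see that $W$ is $\omega_1$-condensed. Each $nK$ is a metrizable compact Hausdorff space, and maps from $\varprojlim_i S_i$ into such a space factor through some $S_i$ along an $\omega_1$-cofiltered system. A map from a profinite set to $W$ lands in some $nK$ by quasicompactness, so this property passes to $W$. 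Condition (2) with $i=1$ then gives (3).

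\emph{(3) $\Rightarrow$ (4).} This is the main step. Recall that $C(S)$ is separably injective if every bounded operator $T:Y\to C(S)$, with $Y$ a closed subspace of a separable Banach space $X$, extends to $X$. I would dualize using Smith spaces. For a separable Banach space $X$, the dual $X^\vee=\underline{\operatorname{Hom}}(X,\mathbb R)$ is a Smith space whose unit ball (weak$^*$) is metrizable. Moreover
\[
\operatorname{Hom}(X,C(S)) = \operatorname{Hom}(\mathbb Z[S], X^\vee) = X^\vee(S),
\]
that is, bounded operators $X\to C(S)$ correspond to continuous maps $S\to X^\vee$ landing in a bounded ball. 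By Hahn--Banach in the Smith/Banach duality,
\[
0\to (X/Y)^\vee\to X^\vee\to Y^\vee\to 0
\]
is exact as condensed vector spaces, and each term is a Smith space with metrizable ball, since $X/Y$ is separable. Evaluating on $S$ gives the long exact sequence
\[
H^0(S,X^\vee)\to H^0(S,Y^\vee)\to H^1(S,(X/Y)^\vee),
\]
and the last term vanishes by (3). Hence every $T\in Y^\vee(S)=\operatorname{Hom}(Y,C(S))$ lifts to $X^\vee(S)=\operatorname{Hom}(X,C(S))$, which is exactly the required extension.

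I expect the main obstacle to be the justification of the exactness and duality facts used in the last step:

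\begin{enumerate}
\item Surjectivity of $X^\vee\to Y^\vee$ as condensed sets. I would check this on compact balls, where the restriction map of weak$^*$-compact balls is surjective by Hahn--Banach, and then use that surjections of compact Hausdorff spaces are surjections of condensed sets.
\item The identification $\operatorname{Hom}(X,C(S))=X^\vee(S)$. This should follow from $\underline{\operatorname{Hom}}(\mathbb Z[S],\mathbb R)=C(S)$ by adjunction.
\end{enumerate}

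Since separable injectivity only involves bounded operators, no norm control is lost when passing from lifts in $X^\vee(S)$ back to operators.
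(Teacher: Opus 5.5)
Your proof is correct and follows essentially the same route as the paper. Steps (1)$\Rightarrow$(2)$\Rightarrow$(3) are immediate; you add the details the paper calls clear, namely that a Smith space with metrizable ball is quasiseparated, $p$-liquid and $\omega_1$-condensed. For (3)$\Rightarrow$(4) you dualize $0\to Y\to X\to X/Y\to 0$ to an exact sequence of Smith spaces and use $H^1(S,(X/Y)^\vee)=0$ to lift the map $S\to Y^\vee$ to $S\to X^\vee$, exactly as the paper does; the only cosmetic difference is that you identify $\operatorname{Hom}(X,C(S))=X^\vee(S)$ directly by tensor--hom adjunction, whereas the paper passes through the dual map $\mathcal M_1(S)\to Y^\vee$ and dualizes back.
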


In the proof, we use the yoga of Banach and Smith spaces, for which we refer to \cite[Lecture IV]{Analytic}.

\begin{proof} It is clear that (1) $\Rightarrow$ (2) $\Rightarrow$ (3). To show that this implies (4), let $V\subset V'$ be a closed immersion of separable Banach spaces and $V\to C(S)$ a map of Banach spaces. We wish to show that it extends to a map $V'\to C(S)$. We note that the exact sequence $0\to V\to V'\to V''\to 0$ of Banach spaces dualizes to an exact sequence $0\to W''\to W'\to W\to 0$ of Smith spaces, and the map $V\to C(S)$ dualizes to a map $\mathcal M_1(S)\to W$ of Smith spaces, which is in fact determined by the map $S\to W$. The condition that the $V$'s are separable is equivalent to the condition that the $W$'s have metrizable unit balls. By (3), one has $H^1(S,W'')=0$, and hence the map $S\to W$ lifts to a map $S\to W'$, which then extends uniquely to a map $\mathcal M_1(S)\to W'$ of Smith spaces, necessarily lifting $\mathcal M_1(S)\to W$. Dualizing again, we arrive at the desired extension $V'\to C(S)$.
\end{proof}

One negative result is a theorem of Amir:

\begin{theorem}[{\cite[Theorem 3]{AmirSepInj}}] For a metrizable profinite set $S$, the Banach space $C(S)$ is separably injective if and only if $C(S)\cong c_0(\mathbb N)$. In particular, $C(S)$ is not separably injective for the Cantor set $S$.
\end{theorem}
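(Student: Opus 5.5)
The statement is a classical theorem of Banach space theory, and I would prove it by reducing to the derived-set structure of $S$. For the easy direction I would quote Sobczyk's theorem: $c_0(\mathbb N)$ is $2$-separably injective. Every isomorphism invariant of this kind transfers, so if $C(S)\cong c_0(\mathbb N)$ then $C(S)$ is separably injective.

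For the converse I would set up the following reduction chain.

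\begin{enumerate}
\item Since $S$ is metrizable, $C(S)$ is separable. Choose a continuous surjection $\varphi\colon\Delta\to S$ from the Cantor set. Then $\varphi^*\colon C(S)\hookrightarrow C(\Delta)$ is an isometric embedding of separable Banach spaces, so separable injectivity gives a bounded projection $P\colon C(\Delta)\to C(S)$.
\item Such a $P$ is the same as a weak$^*$-continuous map $s\mapsto\mu_s\in\mathcal M_1(\Delta)$ with $\varphi_*$-compatibility $\int (g\circ\varphi)\,d\mu_s=g(s)$, and with $\sup_s\|\mu_s\|=\|P\|$. This is the classical dictionary between extension operators and families of measures. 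So separable injectivity yields a uniformly bounded ``averaging'' family for every such $\varphi$.
\item By Bessaga--Pełczyński (together with Milutin's theorem in the uncountable case, where $C(S)\cong C(\Delta)$), $C(S)\cong c_0$ if and only if $S$ is countable with $S^{(\omega)}=\emptyset$. It therefore suffices to show: if $S^{(n)}\neq\emptyset$ for all $n$, then for suitable $\varphi$ every averaging family has norm at least roughly $n$.
\end{enumerate}

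Two preparatory reductions make step 3 concrete. First, I would show that separable injectivity is automatically $\lambda$-separable injectivity for a uniform $\lambda$. The argument is the usual one: if the constants were unbounded, glue countably many bad extension problems into one $c_0$-sum problem. Second, if $S^{(n)}\neq\emptyset$, then $S$ contains a clopen copy of the ordinal space $\omega^n+1$, whose $C$ is complemented in $C(S)$ with norm $1$ via a retraction. So it suffices to treat $S=\omega^n+1$ and obtain a lower bound on projection norms that grows with $n$.

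The main obstacle is constructing the bad cover $\varphi\colon\Delta\to\omega^n+1$ and proving the lower bound. I would build it inductively. Each isolated point gets a singleton fiber. Each point $x$ of Cantor--Bendixson rank $k$ gets a two-point fiber $\{x^+,x^-\}$. The points of rank $k-1$ converging to $x$ are attached alternately, with their fibers accumulating on $x^+$ along one subsequence and on $x^-$ along the complementary subsequence. Weak$^*$-continuity of $s\mapsto\mu_s$ then forces $\mu_x$ to be a limit along both subsequences. Combined with $\varphi_*\mu_x=\delta_x$, this should force $\|\mu_x\|\ge\|\mu_{y}\|+c$ for some $y$ of rank $k-1$ near $x$, since the mass must split with cancelling signs. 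Iterating from rank $0$ up to rank $n$ gives $\|P\|\gtrsim n$, contradicting the uniform $\lambda$. This is Amir's (and Ditor's) argument in spirit, and I expect the delicate step to be the bookkeeping that shows the sign cancellation genuinely costs a fixed amount of norm at each rank.
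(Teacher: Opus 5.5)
Your reduction chain is the standard one and is sound: Sobczyk for the easy direction, the $c_0$-sum argument for a uniform constant $\lambda$, norm-one complemented copies of $C(\omega^n+1)$, doubling covers, and the dictionary between left inverses of $\varphi^*$ and weak$^*$-continuous families $s\mapsto\mu_s$ with $\varphi_*\mu_s=\delta_s$. The paper only quotes Amir, so everything rests on your key step, and that step is stated backwards and cannot hold. If $y\to x$, then $\mu_y\to\mu_x$ weak$^*$. Since the total variation norm is weak$^*$-lower semicontinuous, $\lVert\mu_x\rVert\le\liminf\lVert\mu_y\rVert$. So a fixed gain $\lVert\mu_x\rVert\ge\lVert\mu_y\rVert+c$ over points $y$ near $x$, iterated upward from rank $0$, is impossible.

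Concretely, for your doubling cover of $\omega^n+1$, let $\mu_s=\tfrac12(\delta_{s^+}+\delta_{s^-})$ (resp.\ $\delta_s$ for isolated $s$), plus one correction for each of the $n-\operatorname{rank}(s)$ higher-rank points $t$ in whose block $s$ lies. The correction is $\tfrac12(\delta_{t^-}-\delta_{t^+})$ if the block of $t$ containing $s$ accumulates at $t^+$, and its negative otherwise. This family is weak$^*$-continuous with $\varphi_*\mu_s=\delta_s$. It has $\lVert\mu_{\mathrm{top}}\rVert=1$ while $\lVert\mu_s\rVert=1+n-\operatorname{rank}(s)$: the cost piles up at the isolated points, not at the top.

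The fix is to run the induction downward, on the off-fibre part. Put $F_s=\varphi^{-1}(s)$ and $\nu_s=\mu_s-\mu_s|_{F_s}$, so $\varphi_*\nu_s=0$. Suppose $y\to x$ through blocks accumulating at $x^+$. Then $\mu_y|_{F_y}\to\delta_{x^+}$, because it has total mass $1$, uniformly bounded norm, and $F_y$ shrinks to $x^+$. Hence $\nu_y\to\mu_x-\delta_{x^+}$ weak$^*$, and
\[
\liminf\lVert\nu_y\rVert\ \ge\ \lVert\mu_x|_{F_x}-\delta_{x^+}\rVert+\lVert\nu_x\rVert ,
\]
and symmetrically at $x^-$. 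The two fibre discrepancies add up to at least $\lVert\delta_{x^+}-\delta_{x^-}\rVert=2$. So on one side there are $y$ of rank $k-1$ arbitrarily close to $x$ with $\lVert\nu_y\rVert\ge\lVert\nu_x\rVert+1-\epsilon$. Descend from the top point to an isolated point $y$, where $\mu_y|_{F_y}=\delta_y$. This gives $\lVert P\rVert\ge\lVert\mu_y\rVert=1+\lVert\nu_y\rVert\ge n+1-\epsilon$, which is sharp by the family above.

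Two smaller corrections. First, a clopen copy of $\omega^n+1$ need not exist when $S$ is uncountable; the Cantor set has no countable nonempty clopen subsets. Use instead a closed copy $K$ and a retraction $S\to K$, which exists since closed subsets of metrizable profinite sets are retracts. Second, finite $S$ must be excluded, both from the statement and from your characterization in step~3. For finite $S$, $C(S)$ is finite-dimensional, hence injective, but not isomorphic to $c_0(\mathbb N)$.
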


The book \cite{SepInjBanach} raises the question whether the non-metrizable profinite set $S=\beta \mathbb N\times \beta\mathbb N$ has the property that $C(S)$ is separably injective, see \cite[Section 6.4.1]{SepInjBanach}. We answer this in the negative:

\begin{theorem}\label{thm:productsnotsepinj} The Banach space $C(\beta \mathbb N\times \beta \mathbb N)$ is not separably injective.
\end{theorem}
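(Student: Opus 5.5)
The plan is to exhibit an explicit extension problem that cannot be solved. That means finding a closed embedding $V\subset V'$ of separable Banach spaces and an operator $T\colon V\to C(\beta\mathbb N\times\beta\mathbb N)$ with no bounded extension to $V'$. I would take $V=c_0(\mathbb N)$, or a $c_0$-sum of such, sitting inside a separable space $V'$ of continuous functions on a countable compact space, for instance $C(\omega^2+1)$. This is the mechanism behind Amir's theorem: for metrizable $S$ with $C(S)\not\cong c_0$, the obstruction comes from a convergent sequence of convergent sequences. An operator $c_0\to C(K)$ is the same as a sequence $(f_n)$ in $C(K)$ with $\sum_n|f_n(x)|$ bounded uniformly in $x$. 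In the dual (Smith space) language used in the proof of Proposition~\ref{prop:lightlyprojectiveprofinite}, an extension to $V'$ is the same as a lift of the induced map $\beta\mathbb N\times\beta\mathbb N\to W=V^\ast$ to $W'=V'^\ast$. So the concrete task is to find a continuous map $\beta\mathbb N\times\beta\mathbb N\to W$ into a Smith space with metrizable unit ball that does not lift along $W'\to W$.

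The first step is to reduce to a one-variable statement with parameters. A continuous map $\beta\mathbb N\times\beta\mathbb N\to W$ is the same as a continuous map $\beta\mathbb N\to C(\beta\mathbb N,W)$, where the target is a space of bounded sequences $\ell^\infty(\mathbb N,W)$ with its bounded-weak$^\ast$ structure; equivalently $\underline{\operatorname{Hom}}(\mathbb Z[\beta\mathbb N],W)$. On a single factor $\beta\mathbb N$ every such problem is solvable, since $\beta\mathbb N$ is extremally disconnected. Any failure must therefore come from the fact that a lift in the second variable cannot be chosen continuously in the first variable. This is exactly the phenomenon in the Warning after Proposition~\ref{prop:tensorproductcompactprojective}, where the internal $\underline{\operatorname{Ext}}^1(\mathbb Z[\beta\mathbb N],-)$ is nonzero.

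Concretely, I would use the first coordinate to index a sequence of "Amir obstructions". For each $n$, fix the standard $c_0\subset c$-type extension datum. On $\{n\}\times\beta\mathbb N$, define the map so that a lift exists, but only with norm at least $\sim n$, or with a modulus that degenerates as $n\to\infty$. I would do this by encoding in the second variable a copy of $\omega^{k}+1$ for growing $k$, with the dual Szlenk/derived-length estimates giving the lower bound on the extension constant. The data on $\mathbb N\times\beta\mathbb N$ is uniformly bounded, so it extends by Stone–Čech to $\beta\mathbb N\times\beta\mathbb N$. Any global bounded lift would restrict to uniformly bounded lifts on each slice $\{n\}\times\beta\mathbb N$, which gives the contradiction. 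For this I need the quantitative form of Amir's theorem: the separable-extension constant for $C(\omega^{k}+1)$ tends to infinity with $k$. This should follow from the same argument, since separable injectivity of $C(S)$ for countable $S$ holds only up to constants growing with the Cantor–Bendixson rank.

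The main obstacle is making the second-variable construction land in one fixed pair $V\subset V'$ of separable spaces, independent of $n$, while the obstruction constants blow up. I would first try $V=c_0(\mathbb N\times\mathbb N)$ inside $V'$ the closed span of $c_0(\mathbb N\times\mathbb N)$ together with the indicator functions of the finite-level "blocks". This $V'$ is still separable, and the $n$-th block carries the $\omega^{n}$-pattern. The delicate point is to check that the slice maps $\beta\mathbb N\to W$ really force large lifts, rather than allowing cheap lifts that exploit the extremal disconnectedness of $\beta\mathbb N$ in the second variable. If this fails, I would instead try to find inside $\beta\mathbb N\times\beta\mathbb N$ a closed subset homeomorphic to a non-extremally-disconnected space such as $\mathbb N\times(\beta\mathbb N\setminus\mathbb N)\cup\{\text{limit points}\}$, equipped with an extension operator, and transport the classical non-F-space obstruction to it.
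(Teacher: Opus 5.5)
Your construction fails at three concrete points. First, $V=c_0(\mathbb N)$ (and $c_0(\mathbb N\times\mathbb N)$, which is again $c_0$) is separably injective by Sobczyk's theorem. It is complemented, via a projection of norm at most $2$, in every separable superspace $V'$, so every operator from $V$ into \emph{any} Banach space extends to $V'$. Hence no pair with $V\cong c_0$ can ever witness a failure of separable injectivity. In the metrizable case the natural test is $V=C(S)$ with the identity; note also that $C(\omega^2+1)\cong c_0$.

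Second, the slice mechanism cannot produce any lower bound. The space $C(\{n\}\times\beta\mathbb N)=\ell^\infty$ is $1$-injective (extend each coordinate functional by Hahn--Banach). So every operator $V\to C(\{n\}\times\beta\mathbb N)$ extends to $V'$ with the same norm, whatever it encodes. Extension constants of $C(\omega^k+1)$ are irrelevant, because the extension need not factor through $C(\omega^k+1)$. Even the restriction to $\mathbb N\times\beta\mathbb N$ lands in $C_b(\mathbb N\times\beta\mathbb N)=\ell^\infty(\mathbb N\times\mathbb N)$, which is $1$-injective.

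Third, uniformly bounded data on $\mathbb N\times\beta\mathbb N$ does not extend to $\beta\mathbb N\times\beta\mathbb N$, only to the much larger $\beta(\mathbb N\times\beta\mathbb N)$. For example, the indicator of $\{(n,m):n<m\}$, extended continuously in the second variable, does not extend. Writing $C(\beta\mathbb N\times\beta\mathbb N)=C(\beta\mathbb N,\ell^\infty)$, its elements are sequences $(F_n)$ in $\ell^\infty$ with relatively norm-compact range. So you cannot prescribe independent slice data of growing complexity.

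The obstruction is therefore exactly the one you name and then set aside. For $\Phi=(\Phi_n)\colon V\to C(\beta\mathbb N,\ell^\infty)$, each $\Phi_n$ extends without loss. What can fail is choosing extensions $\widetilde\Phi_n$ such that $(\widetilde\Phi_n v')_n$ stays relatively norm-compact for every $v'\in V'$, i.e.\ continuous in the first variable at $\beta\mathbb N\setminus\mathbb N$. This is a qualitative failure, not a norm blow-up, and a mechanism for detecting it is the missing idea.

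The paper gets this mechanism from Proposition~\ref{prop:equivcondition}. Separable injectivity would make $C(S_1\times S_2)\to C(S_1'\times S_2')$ factor through $C(T)$, for any metrizable cover $T\to S_1\times S_2$ and suitable metrizable covers $S_i'\to S_i$, because $C(\beta\mathbb N\times\beta\mathbb N)$ is an $\omega_1$-filtered colimit of such $C(S_1'\times S_2')$. This is applied to the Cantor set $S$ and the universal parameter space $\mathrm{Cov}(S)$ of its metrizable covers. The cover of the first factor is removed using Proposition~\ref{prop:cantorsetsplits}; this is Proposition~\ref{prop:consequence}. The contradiction comes at a parameter point $x$ corresponding to a cover $S'\to S$ for which no $C(U)\to C(S')$ splits, via a minimal-norm argument with a retraction $T\to S'$ and a geometric series.

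Your fallback does not help either. Failing to be an F-space obstructs only $1$-separable injectivity: $C(\omega+1)\cong c_0$ is separably injective although $\omega+1$ is not an F-space. Moreover, $\beta\mathbb N\times\beta\mathbb N$ has no nontrivial convergent sequences, so it contains no copy of $\omega+1$ or of the Cantor set from which to transport a classical obstruction.
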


In particular, by Proposition~\ref{prop:lightlyprojectiveprofinite}, this implies that not all $H^i(S,M)$ vanish for $i>0$ and $M\in \operatorname{CondAb}_{\omega_1}$, which in particular gives Proposition~\ref{prop:tensorproductcompactprojective}. (In fact, if one only wishes to show Proposition~\ref{prop:tensorproductcompactprojective}, it suffices to show that $C(\beta \mathbb N\times \beta \mathbb N)$ is not an injective Banach space. This follows from it admitting $c_0(\mathbb N)$ as a direct summand by \cite{Cembranos}, and $c_0(\mathbb N)$ not being injective, \cite[Theorem 1.25]{SepInjBanach}.)

\begin{remark} The proof below can also be read line-by-line with all Banach spaces $C(S)$ replaced by free condensed abelian groups $\mathbb Z[S]$, leading to a proof that $\mathbb Z[\beta \mathbb N\times \beta \mathbb N]$ has nontrivial $\mathrm{Ext}^1$-groups against $\omega_1$-condensed abelian groups, without unnecessarily involving Banach spaces. We formulate the proof in terms of Banach spaces in order to cover the question asked in \cite{SepInjBanach}.
\end{remark}

Given a profinite set $S$, one can naturally endow the set of all closed subsets of $S$ with the structure of a profinite set $\mathrm{Sub}(S)$. One way to do this is to observe that for finite sets $S$, $\mathrm{Sub}(S)$ is itself a finite set, and the association $S\mapsto\mathrm{Sub}(S)$ admits a covariant functoriality (by taking images of subsets). For general profinite $S=\varprojlim_i S_i$, one can then define $\mathrm{Sub}(S)=\varprojlim_i \mathrm{Sub}(S_i)$ as a profinite set. Note that indeed the underlying set of $\mathrm{Sub}(S)$ is given by the set of closed subsets of $S$: Any closed subset $Z\subset S$ is itself the limit of its images $Z_i\subset S_i$ (and conversely any compatible collection of subsets $Z_i\subset S_i$ defines a closed subset $Z\subset S$ in the inverse limit). Also note that if $S$ is metrizable, then so is $\mathrm{Sub}(S)$. One can give a ``moduli description'' of $\mathrm{Sub}(S)$: For any other profinite set $T$, maps from $T$ to $\mathrm{Sub}(S)$ are in bijection with those closed subsets $Z\subset S\times T$ such that for all finite quotients $S\to S_i$, the image $Z_i\subset S_i\times T$ (of $Z\subset S\times T$) is locally on $T$ constant.

We note that the subset $\mathrm{Sub}^{\mathrm{oc}}(S)\subset \mathrm{Sub}(S)$ of open and closed subsets defines a dense subset, which is also countable in case $S$ is metrizable.

If $S$ is metrizable, one can find a ``universal'' family of metrizable covers of $S$. Indeed, any such $T$ admits a closed immersion $T\hookrightarrow \{0,1\}^{\mathbb N}\times S$. Let $T_n\subset \{0,1\}^n\times S$ be the image of $T$. Then $T_n=\bigsqcup_{\alpha_n\in \{0,1\}^n} T_{n,\alpha_n}$, where each $T_{n,\alpha_n}\subset S$ is a closed subset. Such $T_n$'s are then parametrized by $\mathrm{Sub}(S)^{\{0,1\}^n}$, and the condition that $T_n\to S$ is a cover defines a closed subset $\mathrm{Sub}(S)^{\{0,1\}^n}_{\mathrm{cov}}\subset \mathrm{Sub}(S)^{\{0,1\}^n}$. Varying $n$, there are transition maps
\[
\mathrm{Sub}(S)^{\{0,1\}^{n+1}}_{\mathrm{cov}}\to \mathrm{Sub}(S)^{\{0,1\}^n}_{\mathrm{cov}}
\]
taking a collection
\[
(T_{\alpha_{n+1}})_{\alpha_{n+1}\in \{0,1\}^{n+1}}
\]
of closed subsets of $S$ to the collection $(T_{n,\alpha_n})_{\alpha_n}$ given by $T_{\alpha_n}=T_{(\alpha_n,0)}\cup T_{(\alpha_n,1)}$. Let
\[
\mathrm{Cov}(S) = \varprojlim_n \mathrm{Sub}(S)^{\{0,1\}^n}_{\mathrm{cov}};
\]
this is a metrizable profinite set whose points parametrize metrizable covers $T\to S$ with a fixed embedding $T\hookrightarrow \{0,1\}^{\mathbb N}\times S$.

There is a countable dense subset $\mathrm{Cov}^{\mathrm{oc}}(S)$ of $\mathrm{Cov}(S)$ consisting of those $T$ such that all $T_{n,\alpha_n}$ are open and closed subsets of $S$, and for large $n$ all inclusions $T_{n+1,\alpha_{n+1}}\subset T_{n,\alpha_n}$ are equalities.

Now we can give a reformulation of the separable injectivity of $C(\beta \mathbb N\times \beta \mathbb N)$.

\begin{proposition}\label{prop:equivcondition} The Banach space $C(\beta \mathbb N\times \beta \mathbb N)$ is separably injective if and only if for all metrizable profinite sets $S_1$ and $S_2$ with a metrizable cover $T\to S_1\times S_2$, there are metrizable covers $S_1'\to S_1$ and $S_2'\to S_2$ and a commutative diagram
\[\xymatrix{
C(S_1\times S_2)\ar[r]\ar[dr] & C(T)\ar@{-->}[d]\\
& C(S_1'\times S_2').
}\]
\end{proposition}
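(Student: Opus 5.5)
The plan is to prove both directions by moving between separable subspaces of $C(\beta\mathbb N\times\beta\mathbb N)$ and spaces $C(S_1\times S_2)$ with $S_i$ metrizable quotients of $\beta\mathbb N$. Throughout, for a continuous surjection $X\to Y$ of compact Hausdorff spaces, pullback $C(Y)\to C(X)$ is an isometric embedding with closed image. Two facts drive everything.

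\emph{Fact A.} Every separable closed subspace $U\subset C(\beta\mathbb N\times\beta\mathbb N)$ lies in the image of $C(S_1\times S_2)$ for suitable metrizable quotients $\beta\mathbb N\to S_1$, $\beta\mathbb N\to S_2$. To see this, pick a countable dense subset of $U$. By Stone--Weierstrass, each of its elements is a uniform limit of finite sums $\sum_k g_k(x)h_k(y)$ with $g_k,h_k\in C(\beta\mathbb N)$. This uses only countably many functions $g_k$ and $h_k$, and these define maps $\beta\mathbb N\to[-1,1]^{\mathbb N}$. Their images are metrizable compacta; replace them by metrizable profinite sets through which the maps factor (possible since $\beta\mathbb N$ is profinite: refine by countably many clopens). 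This gives $S_1$ and $S_2$.

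\emph{Fact B.} Every $S\in\operatorname{Prof}$ of metrizable type with a countable dense subset receives a surjection from $\beta\mathbb N$. Moreover, maps out of $\beta\mathbb N$ lift along surjections of profinite sets, since $\beta\mathbb N$ is extremally disconnected.

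($\Rightarrow$) Let $T\to S_1\times S_2$ be a metrizable cover.
\begin{enumerate}
\item Choose surjections $\pi_i:\beta\mathbb N\to S_i$ by Fact B. Pullback gives an isometric embedding $C(S_1\times S_2)\hookrightarrow C(\beta\mathbb N\times\beta\mathbb N)$.
\item Since $C(S_1\times S_2)\subset C(T)$ is an inclusion of separable Banach spaces, separable injectivity extends this embedding to a map $\psi:C(T)\to C(\beta\mathbb N\times\beta\mathbb N)$.
\item The image of $\psi$ is separable. By Fact A it lies in $C(S_1''\times S_2'')$ for metrizable quotients $S_i''$ of $\beta\mathbb N$.
\item Put $S_i'=$ the image of $\beta\mathbb N$ in $S_i''\times S_i$. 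This is metrizable, it surjects onto $S_i$, and $C(S_1''\times S_2'')\subset C(S_1'\times S_2')$.
\end{enumerate}
By construction the restriction of $\psi$ to $C(S_1\times S_2)$ is pullback along $S_1'\times S_2'\to S_1\times S_2$. Since $C(S_1'\times S_2')$ is a closed subspace, $\psi$ gives the dashed arrow.

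($\Leftarrow$) Let $V\subset V'$ be separable and $\varphi:V\to C(\beta\mathbb N\times\beta\mathbb N)$ a bounded map.
\begin{enumerate}
\item By Fact A, $\varphi$ factors through $C(S_1\times S_2)$ with $S_i$ metrizable quotients of $\beta\mathbb N$.
\item Form the Banach pushout $W=C(S_1\times S_2)\oplus_V V'$. It is separable, and $C(S_1\times S_2)\to W$ is an isomorphic embedding, with norm distortion bounded by some constant $c$. So it suffices to extend the map $C(S_1\times S_2)\to C(\beta\mathbb N\times\beta\mathbb N)$ along $C(S_1\times S_2)\to W$.
\item Let $T_0\subset c\cdot\mathrm{Ball}(W^*)$ be the set of functionals whose restriction to $C(S_1\times S_2)$ is a Dirac measure $\delta_s$. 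This set is weak$^*$-closed, and compact metrizable because $W$ is separable. By Hahn--Banach it surjects onto $S_1\times S_2$.
\item Evaluation gives a map $W\to C(T_0)$ whose restriction to $C(S_1\times S_2)$ is pullback along $T_0\to S_1\times S_2$. Injectivity of this map is not needed.
\item Replace $T_0$ by a metrizable profinite cover $T\to T_0$, for instance via a surjection from the Cantor set times $T_0$ restricted appropriately.
\item The hypothesis yields covers $S_i'\to S_i$ and a map $C(T)\to C(S_1'\times S_2')$ compatible with pullback from $C(S_1\times S_2)$.
\item By Fact B, lift $\beta\mathbb N\to S_i$ to $\beta\mathbb N\to S_i'$. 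This embeds $C(S_1'\times S_2')$ into $C(\beta\mathbb N\times\beta\mathbb N)$ compatibly with the given embedding of $C(S_1\times S_2)$.
\end{enumerate}
The composite $V'\to W\to C(T)\to C(\beta\mathbb N\times\beta\mathbb N)$ extends $\varphi$.

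I expect the main obstacle to be step 3 of ($\Leftarrow$). The point is to realize the separable extension problem inside some $C(T)$ so that the subspace $C(S_1\times S_2)$ sits in it exactly as a pullback along a cover, rather than as an arbitrary isomorphic embedding. The Dirac-extension trick on $T_0$ should handle this, but the closedness and surjectivity of $T_0\to S_1\times S_2$ need care. A lesser point of care is Fact A, where the metrizable quotients must be chosen profinite rather than merely compact metrizable.
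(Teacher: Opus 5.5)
Your proof is correct. Your ($\Rightarrow$) direction is the paper's argument. Fact A is an explicit version of the paper's observation that $C(\beta\mathbb N\times\beta\mathbb N)$ is the $\omega_1$-filtered colimit of the $C(S_1'\times S_2')$ over metrizable quotients of $\beta\mathbb N$. Your step 4, taking $S_i'$ as the image of $\beta\mathbb N$ in $S_i''\times S_i$, is the paper's remark that one may assume these quotients map to $S_1$ and $S_2$.

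The paper proves only this direction and explicitly omits the converse. Your ($\Leftarrow$) argument fills that gap, and it works. It goes through the Banach pushout $W$, then the set $T_0$ of functionals of norm at most $c$ whose restriction to $C(S_1\times S_2)$ is a Dirac measure. $T_0$ is weak$^*$-closed because $\{\delta_s\}$ is weak$^*$-compact and $\iota^*$ is weak$^*$-continuous, and it is metrizable because $W$ is separable. Finally you cover $T_0$ by the Cantor set.

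Two small points should be stated explicitly. First, the Hahn--Banach surjectivity of $T_0\to S_1\times S_2$ needs $c$ to be at least the norm of the inverse of the embedding $\iota\colon C(S_1\times S_2)\to W$. Alternatively, normalize $\|\varphi\|\le 1$ so that $\iota$ is isometric, and take $c=1$. Second, the lifts $\beta\mathbb N\to S_i'$ in step 7 need not be surjective, so $C(S_1'\times S_2')\to C(\beta\mathbb N\times\beta\mathbb N)$ is only a bounded map, not an embedding. That is harmless, since a bounded map is all you use.
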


\begin{proof} We prove only the direction of relevance to us, namely that the separable injectivity of $C(\beta \mathbb N\times \beta \mathbb N)$ implies this statement. As $T\to S_1\times S_2$ is a cover, the map $C(S_1\times S_2)\to C(T)$ is a closed immersion of separable Banach spaces. Choose surjections $\beta \mathbb N\to S_1$, $\beta \mathbb N\to S_2$. Then the map $C(S_1\times S_2)\to C(\beta \mathbb N\times \beta \mathbb N)$ extends to $C(T)$ by separable injectivity of $C(\beta \mathbb N\times \beta \mathbb N)$. But
\[
C(\beta \mathbb N\times \beta \mathbb N) = \varinjlim_{\beta \mathbb N\to S_1', \beta \mathbb N\to S_2'} C(S_1'\times S_2')
\]
is the $\omega_1$-filtered colimit of Banach spaces, where the index category is over all metrizable quotients $S_1'$, $S_2'$ of $\beta \mathbb N$; we may assume that these quotients map to $S_1$ and $S_2$. Thus, any map $C(T)\to C(\beta \mathbb N\times \beta \mathbb N)$ from the separable Banach space $C(T)$ factors over some $C(S_1'\times S_2')$, giving the desired result.
\end{proof}

We will show that if this property holds true, then in a certain ``universal'' case it cannot be necessary to include the cover of $S_1$. Intuitively, the point is that given $S_1$, there is a universal case for $S_2$ (given by $\mathrm{Cov}(S_1)$) and in this universal case, any one metrizable cover $S_1'\to S_1$ does not really simplify the situation.

As preparation, we recall the following.

\begin{proposition}\label{prop:cantorsetsplits} Let $S$ be the Cantor set, and let $f: T\to S$ be a metrizable cover. Then there is a closed subset $Z\subset S$ homeomorphic to the Cantor set on which $f$ splits.
\end{proposition}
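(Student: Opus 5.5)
The plan is to find $Z$ as the set of branches of a Cantor scheme, built inductively in $S$, along which we can choose lifts to $T$ coherently. The key input is a Baire category argument: since $f$ is surjective and $T$ is metrizable, hence second countable, write $T=\bigcup_k U_k$ with $U_k$ running over a countable clopen basis. Then $S=\bigcup_k f(U_k)$ is a countable union of closed sets. By Baire applied to any nonempty clopen piece of $S$, some $f(U_k)$ has nonempty interior in that piece. This lets us shrink both $S$ and $T$ compatibly while keeping $S$-pieces open.

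Concretely, I will construct for each finite binary word $w$ a nonempty clopen $V_w\subset S$ and a nonempty clopen $W_w\subset T$ such that:
\begin{itemize}
\item $V_{w0},V_{w1}$ are disjoint subsets of $V_w$;
\item $W_{wi}\subset W_w$, and $V_w\subset f(W_w)$;
\item the diameters of $V_w$ and of $W_w$ (for fixed compatible metrics) are at most $2^{-|w|}$.
\end{itemize}

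For the inductive step, take $V_w\subset f(W_w)$. Cover $W_w$ by finitely many clopen sets of diameter $<2^{-|w|-1}$. Their images are finitely many closed sets covering $V_w$, so by Baire (or simply finiteness) one of them, say the image of $W'$, contains a nonempty open subset of $V_w$. Since $S$ is a Cantor set it has no isolated points, so that open subset contains two disjoint nonempty clopen sets of small diameter, $V_{w0}$ and $V_{w1}$. Set $W_{w0}=W_{w1}=W'$; the diameter bound on $W'$ already holds. Repeating the same choice at the next level shrinks the $W$'s further.

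One technicality is that $W_{w0}$ and $W_{w1}$ may coincide, and the images must still be handled correctly. This is fine because at the next step each child chooses its own sub-piece of $W'$ over its own $V_{wi}$, using $V_{wi}\subset V_w\cap f(W')$.

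Now let $Z=\bigcap_n\bigcup_{|w|=n}V_w$. It is a closed subset of $S$ and homeomorphic to $\{0,1\}^{\mathbb N}$, since it is the branch set of a disjoint, shrinking, nonempty Cantor scheme. For a branch $x\in\{0,1\}^{\mathbb N}$, define $s(z_x)$ to be the unique point of $\bigcap_n W_{x|_n}$, which is nonempty by compactness and a singleton by the diameter bound.

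It remains to check two things:
\begin{itemize}
\item $f(s(z))=z$. The sets $f(W_{x|_n})$ are compact, decreasing, and each contains $V_{x|_n}\ni z$, while their diameters stay comparable. So $f(\bigcap_n W_{x|_n})\ni z$. More simply, $\bigcap_n (W_{x|_n}\cap f^{-1}(V_{x|_n}))$ is nonempty and its unique point maps into $\bigcap_n V_{x|_n}=\{z\}$. To make this airtight, I will replace each $W_w$ by $W_w\cap f^{-1}(V_w)$ after choosing $V_w$; this is still clopen and nonempty, and still surjects onto $V_w$.
\item $s$ is continuous. This holds because $s$ maps the cylinder $V_w\cap Z$ into $W_w$, whose diameter tends to $0$.
\end{itemize}

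The main obstacle is keeping the lifted pieces both small and surjecting onto an open piece at every stage. The finite clopen cover argument together with the perfectness of $S$ handles this.
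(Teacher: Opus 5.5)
Your proof is correct and follows essentially the paper's route: an inductive Cantor scheme of $2^n$ pairwise disjoint clopen subsets of $S$ over which compatible lifts to (approximations of) $T$ exist, found because a finite closed cover of a nonempty clopen set has a member with nonempty interior, with the splitting obtained in the limit. The differences are cosmetic: the paper works with the finite approximations $T_n\subset\{0,1\}^n\times S$ instead of a metric, and extracts a Cantor subset from the limit set afterwards, whereas your diameter bounds on the $V_w$ make $Z$ itself a Cantor set directly.
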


\begin{proof} This is well-known; one can for example write $T=\varprojlim_n T_n$ as above, and inductively for each $n$ choose $2^n$ pairwise disjoint open and closed subsets $S_{\alpha_n}$ with splittings $S_{\alpha_n}\to T_n$; then $\varprojlim_n \bigsqcup_{\alpha_n} S_{\alpha_n}$ is a closed subset of $S$ on which $f$ splits, and which contains a Cantor set (as it surjects onto the Cantor set, and this surjection admits a splitting as all metrizable profinite sets are injective in the category of profinite sets).
\end{proof}

Now let $S$ be the Cantor set and let $J=\mathrm{Cov}^{\mathrm{oc}}(S)$. We get a surjective map $\beta J\to \mathrm{Cov}(S)$.

\begin{proposition}\label{prop:consequence} Assume that $C(\beta \mathbb N\times \beta \mathbb N)$ is separably injective. Then there is a commutative diagram
\[\xymatrix{
C(S\times \mathrm{Cov}(S))\ar[r]\ar[dr] & C(T)\ar@{-->}[d]\\
& C(S\times \beta J)
}\]
where $T\to S\times \mathrm{Cov}(S)$ is the universal cover.
\end{proposition}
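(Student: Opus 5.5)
The plan is to apply Proposition~\ref{prop:equivcondition} to $S_1=S$, $S_2=\mathrm{Cov}(S)$ and $T\to S\times \mathrm{Cov}(S)$ the universal cover. We then remove the two auxiliary covers $S'\to S$ and $S_2'\to \mathrm{Cov}(S)$ it produces, one at a time.

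First we get rid of $S_2'$. Proposition~\ref{prop:equivcondition} gives metrizable covers $S'\to S$ and $S_2'\to\mathrm{Cov}(S)$, together with a map $C(T)\to C(S'\times S_2')$ compatible with the maps from $C(S\times\mathrm{Cov}(S))$. The space $\beta J$ is extremally disconnected, hence projective in profinite sets, so the surjection $\beta J\to \mathrm{Cov}(S)$ lifts through $S_2'\to \mathrm{Cov}(S)$. Pulling back along $S'\times\beta J\to S'\times S_2'$ yields a factorization $C(S\times \mathrm{Cov}(S))\to C(T)\to C(S'\times \beta J)$ of the natural map $C(S\times\mathrm{Cov}(S))\to C(S'\times \beta J)$.

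Next we deal with $S'$. By Proposition~\ref{prop:cantorsetsplits}, there is a closed subset $Z\subset S$ homeomorphic to the Cantor set on which $S'\to S$ has a section $\sigma\colon Z\to S'$. Composing with $\sigma$ gives a factorization
\[
C(Z\times \mathrm{Cov}(S))\to C(T|_{Z\times\mathrm{Cov}(S)})\to C(Z\times\beta J).
\]
The price is that we now work over $Z$ instead of $S$.

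To return from $Z$ to $S$ we use the universality of $\mathrm{Cov}(S)$. Fix a homeomorphism $\varphi\colon S\cong Z$ and a retraction $r\colon S\to Z$; the retraction exists because metrizable profinite sets are injective. Define $b\colon\mathrm{Cov}(S)\to\mathrm{Cov}(S)$ by transporting a cover of $S$ along $\varphi$ to a cover of $Z$, and then pulling it back along $r$ to a cover of $S$. This is done levelwise on the closed subsets $T_{n,\alpha_n}$, so $b$ is continuous. Its key property is that $T_{b(c)}|_Z=\varphi_*T_c$. Set $g=(\varphi,b)\colon S\times\mathrm{Cov}(S)\to Z\times \mathrm{Cov}(S)$. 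Then the pullback $g^*(T|_{Z\times\mathrm{Cov}(S)})$ identifies with the universal cover $T$.

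We then choose a lift $\tilde b\colon\beta J\to\beta J$ of $b$, composed with $\beta J\to \mathrm{Cov}(S)$, again using projectivity of $\beta J$. Pulling the factorization over $Z$ back along $g$ and along $\varphi\times\tilde b\colon S\times\beta J\to Z\times \beta J$ gives maps
\[
C(S\times\mathrm{Cov}(S))\to C(T)\to C(S\times\beta J).
\]
It remains to check that their composite is the natural restriction map.

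The step I expect to be the main obstacle is this last compatibility. We must check that $g^*T|_{Z\times\mathrm{Cov}(S)}\cong T$ compatibly with the fixed embeddings into $\{0,1\}^{\mathbb N}\times S\times\mathrm{Cov}(S)$. We must also check that the square formed by the pullback along $g$, the map $\varphi\times\tilde b$, and the structure maps to $S\times\mathrm{Cov}(S)$ commutes. The latter amounts to the triangle with $\beta J\to\mathrm{Cov}(S)$ commuting. If the map $b$ causes trouble, the fallback is to note that only the restriction to $Z$ matters. In that case one arranges $b(c)$ to be any continuous choice of cover with $T_{b(c)}|_Z=\varphi_*T_c$; pulling back along $r$ provides such a choice.
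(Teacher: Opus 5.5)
Your first step, removing $S_2'$ by projectivity of $\beta J$, is fine. The argument breaks in the second and third steps, because they treat the factorization as if it were a map of spaces. The obstacle you flag (compatibility of embeddings) is not the real one.

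\textbf{The gap.} Proposition~\ref{prop:equivcondition} only gives a bounded linear map $\Phi\colon C(T)\to C(S'\times\beta J)$ whose composite with $p^\ast$ is the natural map on $C(S\times\mathrm{Cov}(S))$. It is neither multiplicative nor local. If it were, it would come from a section and there would be nothing to prove. The operations that preserve this compatibility are postcomposition with restriction maps, and precomposition with pullbacks $q^\ast\colon C(T'')\to C(T)$ along continuous maps $q\colon T\to T''$ of covers lying over maps of bases.
\begin{itemize}
\item ``Composing with $\sigma$'' only gives $(\sigma\times\mathrm{id})^\ast\circ\Phi\colon C(T)\to C(Z\times\beta J)$. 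It does not give a map out of $C(T|_{Z\times\mathrm{Cov}(S)})$: nothing forces $\Phi$ to factor through restriction to a closed subset. Building a compatible extension operator would be a problem of the same kind as the one you are solving.
\item The identification $g^\ast(T|_{Z\times\mathrm{Cov}(S)})\cong T$ gives a map $T\to T|_{Z\times\mathrm{Cov}(S)}$, hence $C(T|_{Z\times\mathrm{Cov}(S)})\to C(T)$. That is the wrong direction for producing a map out of $C(T)$. ``Pulling back along $g$'' works for honest sections $Z\times\beta J\to T|_{Z\times\mathrm{Cov}(S)}$, not for linear maps between function spaces.
\item Separately, $b$ need not be continuous. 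For a general retraction $r$, the map $W\mapsto r^{-1}(W)$ is not continuous on $\mathrm{Sub}$; only images are.
\end{itemize}

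\textbf{The missing idea.} Use the covariant functoriality of $\mathrm{Cov}$ along the surjection $r$, which gives a continuous map of covers in the right direction. The paper lets $r$ induce $r_\ast\colon\mathrm{Cov}(S)\to\mathrm{Cov}(Z)$ by taking images. This gives a map $T\to T_Z$ to the universal cover over $Z\times\mathrm{Cov}(Z)$, lying over $r\times r_\ast$. Preimages are used only on the discrete set of clopen covers, $J_Z=\mathrm{Cov}^{\mathrm{oc}}(Z)\to J$, which is then extended to $\beta J_Z\to\beta J$. The composite $C(T_Z)\to C(T)\xrightarrow{\Phi}C(S'\times\beta J)\to C(Z\times\beta J_Z)$, using $\sigma$ on the first factor, is compatible with $C(Z\times\mathrm{Cov}(Z))$. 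This holds because $r\circ f\circ\sigma=\mathrm{id}_Z$ and $r_\ast r^\ast=\mathrm{id}$ on clopen covers. One concludes using $Z\cong S$.

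In your notation, the repair is as follows.
\begin{itemize}
\item Precompose $(\sigma\times\mathrm{id})^\ast\circ\Phi$ with pullback along $(x,s,c)\mapsto (x,\varphi^{-1}r(s),(\varphi^{-1}r)_\ast c)$.
\item Postcompose with restriction along $\varphi\times\tilde b$.
\item Take $\tilde b\colon\beta J\to\beta J$ to be the extension of $c\mapsto r^\ast\varphi_\ast c$ from $J$, rather than a lift of $b$ via projectivity.
\end{itemize}
Your $b$ is then exactly a section of $(\varphi^{-1}r)_\ast$, and that is what makes the triangle commute.
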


\begin{proof} Using Proposition~\ref{prop:equivcondition}, we see that there is some cover $f: S'\to S$ and a commutative diagram
\[\xymatrix{
C(S\times \mathrm{Cov}(S))\ar[r]\ar[dr] & C(T)\ar@{-->}[d]\\
& C(S'\times \beta J).
}\]
We need to get rid of the cover $f: S'\to S$. For this, we pick a copy of the Cantor set $Z\subset S$ on which $f$ splits.

Fix a retraction $r: S\to Z$. This induces a map $\mathrm{Cov}(S)\to \mathrm{Cov}(Z)$ (as for any surjective map of metrizable profinite sets), and letting $T_Z\to Z\times \mathrm{Cov}(Z)$ be the universal family, there is a map $T\to T_Z$ over $S\times \mathrm{Cov}(S)\to Z\times \mathrm{Cov}(Z)$. In particular, we get a commutative diagram
\[\xymatrix{
C(Z\times \mathrm{Cov}(Z))\ar[r]\ar[d] & C(T_Z)\ar[d]\\
C(S\times \mathrm{Cov}(S))\ar[r] & C(T).
}\]
By assumption, we get a further map $C(T)\to C(S'\times \beta J)$.

Let $J_Z=\mathrm{Cov}^{\mathrm{oc}}(Z)$. Using the retraction $r: S\to Z$, one can use contravariant functoriality of $\mathrm{Cov}^{\mathrm{oc}}(-)$ to get a map $J_Z\to J=\mathrm{Cov}^{\mathrm{oc}}(S)$, inducing a map $\beta J_Z\to \beta J\to \mathrm{Cov}(S)$. The composite $\beta J_Z\to \mathrm{Cov}(S)\to \mathrm{Cov}(Z)$ is given by the unique extension of the map $J_Z\to \mathrm{Cov}^{\mathrm{oc}}(Z)\subset \mathrm{Cov}(Z)$.

In particular, we get the map $C(T)\to C(S'\times \beta J)\to C(Z\times \beta J_Z)$ by restriction along $Z\times \beta J_Z\to S'\times \beta J$. We claim that the composite
\[
C(Z\times \mathrm{Cov}(Z))\to C(T_Z)\to C(T)\to C(S'\times \beta J)\to C(Z\times \beta J_Z)
\]
is pullback along the map $Z\times \beta J_Z\to Z\times \mathrm{Cov}(Z)$. But the first three maps compose to pullback along the map $S'\times \beta J\to Z\times \mathrm{Cov}(Z)$ that is given by $S'\to S\xrightarrow{r} Z$ in the first factor and by $\beta J\to \mathrm{Cov}(S)\to \mathrm{Cov}(Z)$ in the second factor. Restricting to $Z\times \beta J_Z$, the claim follows.

Thus, we get the desired splitting (for the Cantor set $Z$ in place of $S$, but they are isomorphic).
\end{proof}

Back to our Cantor set $S$, choose a metrizable cover $S'\to S$ such that $C(S)\to C(S')$ does not split. In fact, let us choose one such cover such that even the map $C(U)\subset C(S)\to C(S')$ does not split for any open and closed subset $U\subset S$, for example by choosing one such cover for each open and closed subset of $S$ and then taking the product of all such covers (noting that there are only countably many open and closed subsets of $S$). We can find a point $x\in \mathrm{Cov}(S)$ giving rise to this cover $S'\to S$. Picking a point of $\beta J$ mapping to $x$, Proposition~\ref{prop:consequence} implies that if $C(\beta \mathbb N\times \beta \mathbb N)$ is separably injective, then there is a commutative diagram
\[\xymatrix{
C(S\times \mathrm{Cov}(S))\ar[r]\ar[dr] & C(T)\ar@{-->}[d]\\
& C(S)
}\]
where the diagonal arrow is evaluation at $x\in \mathrm{Cov}(S)$. Now take the infimum $C$, over all possible open and closed subsets $U\subset S$ and all maps $f: C(T)\to C(U)$ making the diagram
\[\xymatrix{
C(S\times \mathrm{Cov}(S))\ar[r]\ar[d] & C(T)\ar@{-->}[d]\\
C(S)\ar[r] & C(U)
}\]
commute, of the norm of $f$. (Clearly, $C\geq 1$.) Choose some such $U$ and $f$ for which $||f||<C+\tfrac 12$. Note that we may always replace $f$ by $f$ precomposed with the projection to the preimage of an open and closed subset of $\mathrm{Cov}(S)$ containing $x$. Choosing a retraction $r: T\to S'$ of $T$ to $S'=T\times_{\mathrm{Cov}(S)} \{x\}$, we find that $||f-f\circ r||<1$ (otherwise we could make $||f||$ smaller at least on an open and closed subset of $U$ by precomposing with such a projection to an open and closed subset of $\mathrm{Cov}(S)$). But then $f\circ r: C(S')\to C(U)$ is almost an inverse to the pullback $g: C(U)\subset C(S)\to C(S')$, more precisely $||1-g\circ f\circ r||<1$, but then a geometric series produces an actual inverse to $g$.
\newpage

\section{Lecture IV: Liquid tensor product calculations}

Let us recap.  For every $0<p\leq 1$, we have an abelian full subcategory $\operatorname{Liquid}_p$ of $\operatorname{CondAb}$ (or of condensed $\mathbb{R}$-modules), closed under limits and colimits, generated by the compact projective objects
$$\mathcal{M}_{<p}(S),$$
the spaces of $<p$-summable measures on $S\in \operatorname{Prof}$.  There is also a tensor product $-\otimes_{\mathbb{R}_{<p}}-$ on $\operatorname{Liquid}_p$, the \emph{liquid tensor product}, which:
\begin{enumerate}
\item commutes with colimits in each variable;
\item satisfies $\mathcal{M}_{<p}(S)\otimes_{\mathbb{R}_{<p}}\mathcal{M}_{<p}(T) = \mathcal{M}_{<p}(S\times T)$ for $S,T\in\operatorname{Prof}$.
\end{enumerate}
Moreover, for $V,W\in\operatorname{Liquid}_p$, the $p$-liquid vector space $V\otimes_{\mathbb{R}_{<p}} W$ corepresents the functor of bilinear maps of condensed abelian groups (or condensed $\mathbb{R}$-modules) out of $V\times W$ with $p$-liquid target.

We also saw that the basic objects, $\mathcal{M}_{<p}(S)$ for $S\in\operatorname{Prof}$, are flat.  In this lecture, we want to carry this further and demonstrate a non-trivial calculation of the liquid tensor product, in the context of spaces of holomorphic functions on disks in the complex plane.  We will also illustrate, in this case, some important principles which we'll revisit from a more thorough perspective when we discuss \emph{nuclear modules}.

All these spaces of holomorphic functions for varying disks are isomorphic by translation and scaling, so we may as well just consider the unit disk $\mathbb{D}$, with
$$\mathcal{O}(\mathbb{D}) = \{\sum_{n\geq 0} c_nT^n \mid c_nr^n\rightarrow 0\ \forall r<1\}\subset \mathbb{C}[[T]].$$
We will explain the structure of condensed $\mathbb{C}$-module on $\mathcal{O}(\mathbb{D})$ and prove the following.

\begin{theorem}
For each $0<p\leq 1$, the space $\mathcal{O}(\mathbb{D})$ is $p$-liquid, and is flat with respect to the $p$-liquid tensor product.  Moreover, for $k\geq 0$, the $k$-fold $p$-liquid tensor product
$$\mathcal{O}(\mathbb{D})^{\otimes_{\mathbb{C}_{<p}} k}$$
identifies with the space of holomorphic functions on the unit polydisk $\mathbb{D}^k\subset\mathbb{C}^k$,
$$\mathcal{O}(\mathbb{D}^k) = \{\sum_{n_1,\ldots,n_k\geq 0} c_{n_1,\ldots,n_k}T_1^{n_1}\ldots T_k^{n_k} \mid c_{n_1,\ldots,n_k}r_1^{n_1}\ldots r_k^{n_k}\rightarrow 0 \forall r_1,\ldots,r_k<1\}\subset \mathbb{C}[[T_1,\ldots,T_n]].$$
\end{theorem}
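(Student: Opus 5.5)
The plan is to write $\mathcal O(\mathbb D)$ as a countable limit of Banach spaces, and then use trace-class transition maps to rewrite it as a countable colimit of flat measure-type objects. For $r<1$, let $B_r\subset \mathbb C[[T]]$ be the Banach space of series $\sum c_nT^n$ with $\sup_n |c_n|r^n<\infty$, and let $M_r$ be the space of those with $\sum_n |c_n|r^n<\infty$, so $M_r\cong \ell^1(\mathbb N)\otimes\mathbb C$. Then
\[
\mathcal O(\mathbb D)=\varprojlim_{r<1} B_r=\varprojlim_{r<1} M_r
\]
as condensed $\mathbb C$-modules. This gives the condensed structure on $\mathcal O(\mathbb D)$ (it is a Fréchet space), and it is $p$-liquid because limits of $p$-Banach spaces are $p$-liquid. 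So the substance of the theorem is flatness and the tensor product computation.

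The key observation is that for $r<r'$ the transition map $M_{r'}\to M_r$ factors through a measure space with $\ell^q$ summability for every $q>0$. The coefficient map $c_n\mapsto c_n r^n$ factors as
\[
M_{r'}\to c_0\text{-type space with weights }r'^n \xrightarrow{\ (r/r')^n\ } \ell^q\text{-type space},
\]
and the geometric decay of $(r/r')^n$ puts the image in $\mathcal M_{q}(\mathbb N\cup\{\infty\})$-like objects for all small $q$. I would aim to show that the pro-systems $(M_r)_r$ and $(P_r)_r$ are pro-isomorphic, where $P_r:=\mathcal M_{<p}(\mathbb N\cup\{\infty\})\otimes\mathbb C$ is a reindexed copy (or a closed subspace of it cut out by killing $\infty$), using the maps $e_n\mapsto r^n e_n$. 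The interleaving maps are bounded because $\sum_n (r/r')^{nq}<\infty$.

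Next I would pass from limits to the needed exactness. A countable limit with such ``trace-class'' transition maps satisfies $\mathrm{R}\varprojlim=\varprojlim$: the Mittag-Leffler condition holds because the images are dense. To get flatness I need $\mathcal O(\mathbb D)\otimes_{\mathbb C_{<p}}-$ to commute with the relevant operations. My first attempt is to show, by an explicit dual computation in the spirit of the previous theorem's description of $\mathcal M_{<p}(S)\otimes V$, that for a quasiseparated liquid $V$,
\[
\mathcal O(\mathbb D)\otimes V=\varprojlim_r \mathcal M_{<p}(\mathbb N\cup\{\infty\};V)_r ,
\]
so that the tensor product commutes with this sequential limit. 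The reason is that a compact $q$-convex subset of the limit is controlled at a slightly larger radius, which is where the trace-class factorization gets used. Flatness then follows exactly as in the proof for $\mathcal M_{<p}(S)$: each term is exact in $V$ on quasiseparated objects, the sequential limit preserves exactness by Mittag-Leffler, and general $V$ is reduced to quasiseparated $V$ via a two-step resolution.

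With that in hand, the case $k=2$ is a direct computation:
\[
\mathcal O(\mathbb D)\otimes\mathcal O(\mathbb D)=\varprojlim_{r,s}\mathcal M_{<p}(\mathbb N^2\text{-type})_{r,s},
\]
which unwinds to double sequences with $\sum_{n,m}|c_{n,m}|^q r^{nq}s^{mq}<\infty$ for all $r,s<1$. This is the same as $c_{n,m}r^ns^m\to0$ for all $r,s<1$, again because geometric decay converts sup bounds into $\ell^q$ bounds after a slight shrinking of the radii. Induction on $k$, using associativity and flatness, gives $\mathcal O(\mathbb D^k)$. The main obstacle is justifying that the liquid tensor product commutes with the sequential limit $\varprojlim_r$, since tensor products commute with colimits but not with limits in general. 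Here I would rely essentially on the trace-class property of the transition maps: each map $M_{r'}\to M_r$ factors through a compact projective $P$. Consequently $\varprojlim_r M_r$ is, as a pro-object, a limit of compact projectives with nuclear transitions. Tensoring with $V$ can then be computed as $\varprojlim_r (P_r\otimes V)$ by checking this on the explicit quasiseparated description.
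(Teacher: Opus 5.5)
Your treatment of $p$-liquidity, of the pro-isomorphism of the towers $(M_r)$, $(B_r)$ with weighted copies of $\mathcal M_{<p}(\mathbb N)$, and of $\varprojlim^1=0$ is fine. There is, however, a genuine gap. The step everything else rests on is false: it is not true that $\mathcal O(\mathbb D)\otimes_{\mathbb C_{<p}}V\cong\varprojlim_r(P_r\otimes V)$ for every quasiseparated $p$-liquid $V$, and it already fails on the generators.

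Take $V=\mathcal M_{<p}(S)$ with $S=\mathbb N\cup\{\infty\}$. Then $P_r\otimes V$ consists of families $(\mu_n)_n$ of measures on $S$ with $\sum_n r^{nq}\|\mu_n\|_q<\infty$ for \emph{some} $q<p$, where $\|\cdot\|_q$ is the $q$-variation. In $\varprojlim_r$ this $q$ is allowed to depend on $r$. Let $\mu_n$ be $N_n^{-1/p}$ times a sum of $N_n\approx e^n$ distinct Dirac masses. Then $\|\mu_n\|_q=N_n^{1-q/p}$, and
\[
\sum_n r^{nq}\|\mu_n\|_q<\infty\iff q>\frac{p}{1-p\log r},
\]
a bound that is $<p$ but tends to $p$ as $r\to 1$. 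So $\sum_n T^n\otimes\mu_n$ lies in $\varprojlim_r(P_r\otimes V)$, but no single $q<p$ works for all $r<1$.

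The actual tensor product has the union outside: it consists of families with $\sum_n r^{nq}\|\mu_n\|_q<\infty$ for all $r<1$ and one fixed $q$. More generally, for quasiseparated $V$ the union over quasicompact $K\subset V$ sits outside the intersection over $r$. Limits can be pulled out of the tensor product only against ``countably $p$-liquid'' $V$, such as Fr\'echet spaces; this is why your final answer for $k=2$ comes out right. The compact projective generators $\mathcal M_{<p}(S)$ are not of this kind. Hence your functor $V\mapsto\varprojlim_r\mathcal M_{<p}(\mathbb N\cup\{\infty\};V)_r$ disagrees with $\mathcal O(\mathbb D)\otimes-$ on generators, and the Lecture III resolution argument cannot be run with it. Trace-class transition maps do not rescue this. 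Two smaller problems as well: $\mathcal M_{<p}(\mathbb N\cup\{\infty\})$ is compact projective only among $\omega_1$-condensed liquid modules, and your Mittag-Leffler step for the non-Banach terms $\mathcal M_{<p}(\mathbb N\cup\{\infty\};V)_r$ is unjustified.

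The missing idea is to keep the exponent $q$ uniform, never commuting $\varprojlim$ past $\bigcup_{q<p}$. The paper uses $\varprojlim^1=0$ (via a pro-isomorphic Banach tower with dense transition maps) for each fixed $q$, then takes the union, to get
\[
0\to\mathcal O(\mathbb D)_{\mathbb R}\to\bigcup_{q<p}\prod_{\mathbb N}\mathcal M_q(\mathbb N)\xrightarrow{\sigma-\mathrm{id}}\bigcup_{q<p}\prod_{\mathbb N}\mathcal M_q(\mathbb N)\to 0 .
\]
It then shows that for countable $A$,
\[
\bigcup_{q<p}\prod_{a\in A}\mathcal M_q(X_a)=\bigcup_C C\cdot\mathcal M_{<p}\Bigl(\bigsqcup_a X_a\Bigr),
\]
a filtered colimit of flat measure spaces on locally profinite sets. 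Combined with K\"unneth for these, and the fact that any positive $A\times B$-indexed sequence is dominated by one of the form $C_aD_b$, this makes the middle and right terms flat with explicit tensor products. Flatness of $\mathcal O(\mathbb D)$ then follows because it is the kernel of a surjection of flat objects. The $k$-fold tensor product is obtained by tensoring this sequence, which gives $\bigcup_{q<p}\varprojlim$. For exponentially decaying weights that limit is independent of $q$ and equals $\mathcal O(\mathbb D^k)$.
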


\begin{remark} Although this theorem looks very relevant to our purposes, it turns out that we (probably) won't actually use it in the remainder of the course!  We're including it here because another purpose of this class is to present some basic information about the liquid category and how to work in it. \end{remark}

Note that, because of the $\forall r<1$ in the definition, $\mathcal{O}(\mathbb{D})$ is an intersection, or inverse limit, of more basic spaces of sequences with some given convergence criterion. (We will be more explicit about this later.)  The essential content of the above theorem is that we can commute these inverse limits past the liquid tensor product.  This illustrates that the $p$-liquid tensor product is ``complete enough'': even though it's only designed to commute with colimits in each variable, it actually also commutes with many inverse limits arising in practice, though for highly non-formal reasons.

Another notable aspect of this theorem is that the $p$-liquid tensor product is, in this example, independent of $p$, and simply gives the natural multi-variable analog of the given space of functions.  This is also far from formal, and in fact it fails for spaces of continuous functions.  Indeed, if $S$ and $T$ are infinite profinite sets, then
$$C(S;\mathbb{R})\otimes_{\mathbb{R}_{<p}}C(T;\mathbb{R}) \neq C(S\times T;\mathbb{R}),$$
for all $p$, and the left-hand side depends on $p$.  The problem is that the topology on spaces of continuous functions comes from the sup norm, which morally corresponds to taking $p=\infty$, so to have the desired K\"{u}nneth result we would need to take a tensor product over ``$\mathbb{R}_\infty$''.  As discussed in the previous lecture, this doesn't exist as an analytic ring. However, replacing $\mathbb{R}$ by $\mathbb{Q}_2$ for an arbitrary prime $2$, such a theory with $p=\infty$ does exist: the solid theory.  And for the solid tensor product, the K\"{u}nneth formula for continuous functions on profinite sets does hold.

In principle, to perform the tensor product calculation in the theorem, we should resolve $\mathcal{O}(\mathbb{D})$ by the basic liquid spaces $\mathcal{M}_{<p}(S)$ and then use the calculational properties (1),(2) of the liquid tensor product to compute.  But in practice, it's much easier to follow a more indirect route. In fact, we will start with something which looks completely unrelated: spaces of measures on \emph{locally profinite} topological spaces.

Before we explain what we mean by this, let's explain what we don't mean.  For a profinite set $S$, the space of measures $\mathcal{M}_{<p}(S)$ can alternatively be described as the $p$-liquidification of $\mathbb{Z}[S]$ since, by definition, $\mathcal{M}_{<p}(S)$ is the free $p$-liquid space on $S$.  But now for any condensed set $X$, we can also take the $p$-liquidification of $\mathbb{Z}[X]$ and consider this as a kind of space of measures on $X$.

But these aren't the kind of measures we'll want.  For example, if $X$ is the filtered union of its profinite subsets, then
$$\mathbb{Z}[X]^{\mathrm{liq}_p} = \bigcup_{S\subset X, S\in\operatorname{Prof}} \mathcal{M}_{<p}(S),$$
and this corresponds to not arbitrary $<p$-summable measures on $X$ (whatever that means), but to \emph{compactly supported} $<p$-summable measures on $X$.

However, if $X$ is \emph{locally profinite} --- equivalently, $X$ is Hausdorff and the compact open subsets of $X$ form a basis for the topology --- then we can directly define a space of (non-compactly supported) measures on $X$ by a simple generalization of the case of profinite $X$.  Namely, a measure on $X$ is a function
$$\mu:\{U\subset X\mid U \text{ compact open }\}\rightarrow\mathbb{R}$$
such that $\mu(\emptyset)=0$ and $\mu(U\sqcup V)=\mu(U)+\mu(V)$ for disjoint $U$ and $V$.  For $0<p\leq 1$, such a $\mu$ is called a $p$-measure, or a measure of bounded $p$-variation, or a $p$-summable measure, if there is a $C>0$ such that
$$\sum_{i\in I}|\mu(U_i)|^p\leq C$$
whenever $(U_i)_{i\in I}$ is a finite set of disjoint compact open subsets of $X$.

The set of $p$-measures is denoted $\mathcal{M}_p(X)$, and we give it the structure of a condensed set (and in fact, condensed $\mathbb{R}$-module) by
$$\mathcal{M}_p(X)=\bigcup_{C>0}\mathcal{M}(X)_{\ell^p\leq C},$$
where $\mathcal{M}(X)_{\ell^p\leq C}$ is the closed (hence compact Hausdorff) subset of $\prod_{U \text{ compact open }} [-C^{1/p},C^{1/p}]$ specified by the above conditions of finite additivity and having $p$-variation $\leq C$.  Finally, we set
$$\mathcal{M}_{<p}(X)=\bigcup_{q<p}\mathcal{M}_q(X).$$

\begin{warning} These spaces of measures $\mathcal{M}_{<p}(X)$ are not covariantly functorial for arbitrary continuous maps $X\rightarrow Y$, but only for \emph{proper} maps.\end{warning}

Generalizing our discussion for profinite $X$, we have the following.

\begin{proposition}\label{locprofflat}
Let $X$ be a locally profinite space and $0<p\leq 1$.  Then:
\begin{enumerate}
\item $\mathcal{M}_{<p}(X)$ is $p$-liquid;
\item $\mathcal{M}_{<p}(X)$ is flat with respect to the $p$-liquid tensor product;
\item If $X,Y$ are both locally profinite, then there is a natural identification
$\mathcal{M}_{<p}(X)\otimes_{\mathbb{R}_{<p}}\mathcal{M}_{<p}(Y)=\mathcal{M}_{<p}(X\times Y)$.
\end{enumerate}
\end{proposition}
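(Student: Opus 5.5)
The natural route is to reduce everything to the profinite case. Write $X=\bigsqcup_{j\in J} X_j$ as a disjoint union of compact open (hence profinite) subsets; every locally profinite space that is, say, paracompact admits such a decomposition. In general one must instead work with the poset of compact opens directly, but the disjoint-union case carries the idea. A measure on $X$ is then a family $(\mu_j)_j$ with $\mu_j\in\mathcal M(X_j)$, and the $q$-variation bound says $\sum_j \|\mu_j\|_q\le C$. This gives
\[
\mathcal M_q(X)=\bigcup_{C>0}\Bigl\{(\mu_j)_j\in \prod_j \mathcal M_q(X_j)\ \Big|\ \textstyle\sum_j \|\mu_j\|_q\le C\Bigr\},
\]
an ``$\ell^q$-direct sum'' of the $\mathcal M_q(X_j)$. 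Equivalently, consider the profinite set $\beta$-type compactification $\overline X=X\cup\{\infty\}$ of the discrete-index kind, namely the one-point compactification $X^+$. Then $\mathcal M_q(X)$ is the kernel of the evaluation $\mathcal M_q(X^+)\to\mathcal M_q(\{\infty\})=\mathbb R$ defined by $\mu\mapsto\mu(\{\infty\})$. Here $\{\infty\}$ is closed but not open, so this evaluation must be checked to be well defined: it equals $\mu(X^+)-\lim$ over exhausting compact opens, which converges because of the $q$-variation bound. It is also continuous on each compact ball.

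I would use the compactification description when $X$ is $\sigma$-compact, and the general direct-sum description otherwise. For (1), $\mathcal M_q(X)\subset\mathcal M_q(X^+)$ is closed in each compact ball and $q$-convex. Hence $\mathcal M_{<p}(X)$ is a quasiseparated condensed $\mathbb R$-module exhausted by compact $q$-convex subsets $\mathcal M(X)_{\ell^q\le C}$ for all $q<p$. Theorem~\ref{thm:qspliquid} then gives $p$-liquidity directly. Alternatively, $\mathcal M_{<p}(X)$ is the kernel of a map $\mathcal M_{<p}(X^+)\to\mathbb R$ in $\mathrm{Liquid}_p$, which is closed under kernels; only the identification of this kernel with $\mathcal M_{<p}(X)$ needs checking.

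For (2), the kernel description yields a short exact sequence $0\to\mathcal M_{<p}(X)\to\mathcal M_{<p}(X^+)\to\mathbb R\to 0$. The evaluation at $\infty$ is surjective, since $\delta_\infty\mapsto 1$. The middle and right terms are flat by the earlier theorem on flatness of $\mathcal M_{<p}(S)$. A kernel of a surjection between flat objects is flat, by the long exact Tor sequence. Without $\sigma$-compactness I would replace $X^+$ by the one-point compactification of $X$ regardless. The one-point compactification $X^+$ is always profinite for locally profinite Hausdorff $X$, so this may work in general, and then the case split is unnecessary.

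For (3), tensor the sequence for $X$ with the flat object $\mathcal M_{<p}(Y)$, and likewise for $Y$. Using $\mathcal M_{<p}(S)\otimes\mathcal M_{<p}(T)=\mathcal M_{<p}(S\times T)$ for profinite $S,T$, we get $\mathcal M_{<p}(X)\otimes\mathcal M_{<p}(Y)$ as the intersection of two kernels inside $\mathcal M_{<p}(X^+\times Y^+)$. These are the measures vanishing on $X^+\times\{\infty\}$ and on $\{\infty\}\times Y^+$, that is, restricting to $0$ under the pushforwards $\mathcal M_{<p}(X^+\times Y^+)\to\mathcal M_{<p}(Y^+)$ and $\to\mathcal M_{<p}(X^+)$ in the appropriate sense. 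The expected main obstacle is the careful identification of these kernels with $\mathcal M_{<p}(X\times Y)$, that is, measures on $X^+\times Y^+$ giving zero mass to the ``boundary'' closed set $(X^+\times Y^+)\setminus(X\times Y)$. To handle it, I would show that for a profinite $S$ and an open $U\subset S$ with closed complement $Z$, the restriction $\mathcal M_q(S)\to\mathcal M_q(U)$ (restricting $\mu$ to compact opens of $U$) is split surjective with kernel $\mathcal M_q(Z)$. The variation bounds on $U$ and $Z$ add up, and the infinite sum over a disjoint exhaustion of $U$ converges by the $\ell^q$ bound. This gives $\mathcal M_{<p}(X\times Y)=\ker(\mathcal M_{<p}(X^+\times Y^+)\to\mathcal M_{<p}(\partial))$, where $\partial$ is the boundary. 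By Mayer--Vietoris for the closed cover $\partial=(X^+\times\infty)\cup(\infty\times Y^+)$, this matches the double kernel computed above.
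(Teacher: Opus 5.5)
\paragraph{The key map in your proposal does not exist.}
Your overall architecture is the paper's: pass to the one-point compactification $X^+$, use a split exact sequence relating $\mathcal M_{<p}(X)$, $\mathcal M_{<p}(X^+)$ and $\mathbb R$, deduce (1) and (2) from the profinite case, and for (3) use K\"unneth for profinite sets and then collapse the boundary. The trouble is the map everything rests on.

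The ``evaluation'' $\mu\mapsto\mu(\{\infty\})$ is not continuous on the compact balls $\mathcal M(X^+)_{\ell^q\le C}$. These balls carry the topology of pointwise convergence on clopen subsets. In $\mathcal M(\mathbb N\cup\{\infty\})_{\ell^q\le 1}$ one has $\delta_n\to\delta_\infty$, since every clopen either contains $\infty$ together with a tail of $\mathbb N$, or is finite. Yet $\delta_n(\{\infty\})=0$ while $\delta_\infty(\{\infty\})=1$. Equivalently: maps $\mathcal M_{<p}(S)\to\mathbb R$ are exactly continuous functions on $S$, and the indicator function of $\{\infty\}$ is not one.

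Dually, your embedding of $\mathcal M_q(X)$ into $\mathcal M_q(X^+)$ by extension with no mass at $\infty$ is discontinuous. Indeed $\delta_n\to 0$ in $\mathcal M(\mathbb N)_{\ell^q\le 1}$, but $\delta_n\to\delta_\infty$ in $\mathcal M(\mathbb N\cup\{\infty\})$, so the image is not closed in the balls. Consequently the short exact sequence in your (2) is not a sequence of condensed $\mathbb R$-modules.

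\paragraph{The repair for (1) and (2).}
Use the paper's first lemma instead.
\begin{itemize}
\item Take the total-mass map $\mathcal M_{<p}(X^+)\to\mathcal M_{<p}(\ast)=\mathbb R$, i.e.\ pushforward along $X^+\to\ast$.
\item Embed $\mathcal M_q(X)$ by $\tilde\mu(X^+\setminus U):=-\mu(U)$ for compact open $U\subset X$. This is continuous on balls, changes $q$-variation by at most a factor $2$, and identifies $\mathcal M_{<p}(X)$ with the kernel. Equivalently, $\mathcal M_{<p}(X)=\mathcal M_{<p}(X^+)/\mathbb R\delta_\infty$ via restriction to compact opens.
\item The sequence splits via $\delta_\infty\mapsto 1$, so $\mathcal M_{<p}(X)$ is a direct summand of $\mathcal M_{<p}(X^+)$, and (1) and (2) follow.
\end{itemize}
For (1) you do not need any embedding: the balls $\mathcal M(X)_{\ell^q\le C}$ are compact and $q$-convex by definition, so Theorem~\ref{thm:qspliquid} applies directly.

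\paragraph{The same confusion breaks (3).}
There is no map $\mathcal M_{<p}(X^+\times Y^+)\to\mathcal M_{<p}(\partial)$ given by restriction to the closed set $\partial$. Also, ``vanishing on $X^+\times\{\infty\}$'' is not the same condition as ``pushforward to $Y^+$ is zero''. Only the latter is what tensoring the corrected total-mass sequence produces.

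Your restriction lemma actually points the right way. The kernel of restriction $\mathcal M_q(S)\to\mathcal M_q(U)$ is $\mathcal M_q(Z)$, so $\mathcal M_{<p}(U)$ is a \emph{cokernel}, namely $\mathcal M_{<p}(S)/\mathcal M_{<p}(Z)$. It is not the kernel of a map to $\mathcal M_{<p}(Z)$.

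Surjectivity of this restriction in the condensed sense cannot come from your ``splitting by summing over a disjoint exhaustion''. That is extension by zero again, which is discontinuous. Instead, factor through the collapse $S\to S/Z=U^+$ and use the one-point case.

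This is exactly the paper's route. With the quotient presentations,
$\mathcal M_{<p}(X)\otimes_{\mathbb R_{<p}}\mathcal M_{<p}(Y)=\mathcal M_{<p}(X^+\times Y^+)/\mathcal M_{<p}(\partial)$.
The identification of this with $\mathcal M_{<p}((X\times Y)^+)/\mathcal M_{<p}(\infty)=\mathcal M_{<p}(X\times Y)$ comes from the pushout $S\sqcup_{\partial S}\partial T=T$ of condensed sets, for the collapse map $S=X^+\times Y^+\to T=(X\times Y)^+$.
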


The proof will use the following two lemmas.  The first lemma says that we can express these measures on locally profinite sets simply in terms of measures on profinite sets, by means of the one-point compactification.

\begin{lemma}
Let $X$ be locally profinite, and let $X\cup\infty\in\operatorname{Prof}$ be the one-point compactification of $X$.  Then there are natural identifications
$$\mathcal{M}_{<p}(X) = \mathcal{M}_{<p}(X\cup\infty)/\mathcal{M}_{<p}(\infty) = \operatorname{ker}\left(\mathcal{M}_{<p}(X\cup\infty)\rightarrow \mathcal{M}_{<p}(\ast)\right).$$
\end{lemma}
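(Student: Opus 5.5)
The plan is to work at the level of fixed $q$ first and then take the union over $q<p$. Both identifications rest on one natural map, so I would set that up before anything else.

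Write $Y=X\cup\infty$ and fix $q$. The compact open subsets of $X$ are exactly the clopen subsets of $Y$ not containing $\infty$. So restriction of set functions defines a map
\[
\rho:\mathcal M_q(Y)\to \mathcal M_q(X),
\]
and it carries $\mathcal M(Y)_{\ell^q\leq C}$ into $\mathcal M(X)_{\ell^q\leq C}$. This map is continuous on these compact pieces, since it is a projection of product coordinates. It is defined on all of $\mathcal M_q(Y)$ and it kills the Dirac measure $\delta_\infty$, hence kills $\mathcal M_q(\infty)=\mathbb R\delta_\infty$. It is surjective via the section
\[
\sigma:\mu\mapsto \tilde\mu, \qquad \tilde\mu(U)=\mu(U) \text{ if } \infty\notin U, \qquad \tilde\mu(U)=-\mu(Y\setminus U) \text{ if } \infty\in U.
\]
This section does depend on $Y\setminus U$ being compact open in $X$. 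The point of choosing $\tilde\mu(Y)=0$ is that $\tilde\mu$ is then exactly the measure in $\ker(\mathcal M_q(Y)\to\mathcal M_q(\ast))$, i.e.\ the one of total mass zero, restricting to $\mu$.

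The next step is to check that $\sigma$ lands in $q$-bounded measures with a controlled bound. Let $Y=U_0\sqcup U_1\sqcup\dots\sqcup U_n$ be a clopen partition with $\infty\in U_0$. Then
\[
\sum_i|\tilde\mu(U_i)|^q\leq 2\sum_{i\geq1}|\mu(U_i)|^q\leq 2C,
\]
using $|\mu(Y\setminus U_0)|^q\leq \sum_{i\geq1}|\mu(U_i)|^q$, which is the $q$-triangle inequality. Hence $\sigma$ maps $\mathcal M(X)_{\ell^q\leq C}$ into $\mathcal M(Y)_{\ell^q\leq 2C}$, continuously, since each coordinate of $\tilde\mu$ is a coordinate of $\mu$. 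Thus $\sigma$ is a map of condensed sets, and it is clearly $\mathbb R$-linear.

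With $\rho\sigma=\mathrm{id}$ and $\ker\rho=\mathbb R\delta_\infty$, we get a split exact sequence
\[
0\to\mathcal M_q(\infty)\to\mathcal M_q(Y)\to\mathcal M_q(X)\to0
\]
of condensed $\mathbb R$-modules. For exactness in the middle: a measure vanishing on all clopens avoiding $\infty$ is determined by its total mass, so it is a multiple of $\delta_\infty$. This gives the first identification $\mathcal M_q(X)=\mathcal M_q(Y)/\mathcal M_q(\infty)$.

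For the second identification, the complement of $\mathbb R\delta_\infty$ cut out by $\sigma$ is precisely the kernel of the total-mass map $\mathcal M_q(Y)\to\mathcal M_q(\ast)=\mathbb R$. Indeed, the total mass restricted to $\mathbb R\delta_\infty$ is an isomorphism, and $\sigma$ lands in mass-zero measures. So $\rho$ restricts to an isomorphism $\ker(\mathcal M_q(Y)\to\mathbb R)\cong\mathcal M_q(X)$, with inverse $\sigma$. Passing to the filtered union over $q<p$ preserves all of this, because unions are exact and the maps are compatible in $q$. This gives both displayed identifications for $\mathcal M_{<p}$.

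The main point requiring care is the condensed structure, namely that $\sigma$ really is a map of condensed sets. That reduces to the continuity on bounded pieces and the uniform bound $2C$ given above. Everything else is pointwise and should be routine.
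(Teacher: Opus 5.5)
Your proposal is correct and follows essentially the same route as the paper. Both arguments extend a measure on compact opens to $X\cup\infty$ by assigning $-\mu(U)$ to the complement of $U$, identify $\mathcal{M}_q(X)$ with the total-mass-zero measures on $X\cup\infty$, and control the $\ell^q$-bound up to a factor $2$ with continuity on the compact pieces. The only difference is presentational: you package this as a restriction map with an explicit section giving a split exact sequence, whereas the paper first gets the direct-summand decomposition from the composite $\mathbb{R}\to\mathcal{M}_{<p}(X\cup\infty)\to\mathbb{R}$ being the identity.
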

\begin{proof}
First note that by functoriality, the composition
$$\mathbb{R}=\mathcal{M}_{<p}(\infty)\rightarrow \mathcal{M}_{<p}(X\cup\infty)\rightarrow\mathcal{M}_{<p}(\ast)=\mathbb{R}$$
is the identity, so we get a direct summand decomposition of $\mathcal{M}_{<p}(X\cup\infty)$, and in particular the last two expressions in the statement of the lemma are equal.  Now we will identify the two outer expressions.

The clopen subsets of $X\cup\infty$ can be categorized into two types: \begin{enumerate}
\item those which lie in $X$: these correspond bijectively to compact open subsets of $X$;
\item those which contain $\infty$: these correspond (by intersecting with $X$) bijectively to the \emph{complements} of compact open subsets of $X$.  
\end{enumerate}
Thus, to promote a measure on $X$ to a measure on $X\cup\infty$ amounts to additionally specifying values $\mu(V)$ for subsets $V\subset X$ which are complements of compact opens, such that additivity and bounded $p$-variation hold.

However, if the measure is to lie in $\operatorname{ker}\left(\mathcal{M}_{<p}(X\cup\infty)\rightarrow \mathcal{M}_{<p}(\ast)\right)$, then the total measure has to be $0$, which means that the value on the complement of a compact open $U$ must be equal to $-\mu(U)$.  This sets up a bijection between finitely additive measures on compact open subsets of $X$ and finitely additive measures on clopen subsets of $X\cup\infty$ with total measure $0$.  Moreover it is easy to see with the triangle inequality that this bijection preserves $\ell^q$-bound up to multiplication by $2$; and the resulting bijections on sets with bounds are continuous with respect to the compact Hausdorff topologies.  This gives the claim.
\end{proof}

The next lemma says that the choice of compactification is immaterial in the first identification above.

\begin{lemma}
Let $f:S\twoheadrightarrow T$ be a surjection in $\operatorname{Prof}$, let $\partial T\subset T$ be a closed subset, and set $\partial S = f^{-1}\partial T$.   If $f:S\setminus\partial S\overset{\sim}{\rightarrow} T\setminus \partial T$ (so that $S$ and $T$ can be viewed as two different compactifications of the same locally profinite space $S\setminus \partial S$), then
$$\mathcal{M}_{<p}(S)/\mathcal{M}_{<p}(\partial S)\overset{\sim}{\rightarrow} \mathcal{M}_{<p}(T)/\mathcal{M}_{<p}(\partial T).$$
\end{lemma}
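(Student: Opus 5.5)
The plan is to reduce the claim to an explicit description of both quotients in terms of finitely additive measures on compact open subsets of the common locally profinite space $X=S\setminus\partial S\cong T\setminus\partial T$. I will do this in the spirit of the previous lemma, and then check that $f_*$ induces the identity on these descriptions.

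First, describe $\mathcal M_{<p}(S)/\mathcal M_{<p}(\partial S)$ concretely. Since $\partial S\subset S$ is closed, pushforward $\mathcal M_q(\partial S)\to\mathcal M_q(S)$ is injective: clopens of $\partial S$ are restrictions of clopens of $S$, and the $\ell^q$-bounds are compatible. I then claim that restriction of a measure on $S$ to clopen subsets $U\subset S$ contained in $X$, i.e. to compact opens of $X$, gives an exact sequence
\[
0\to \mathcal M_{<p}(\partial S)\to \mathcal M_{<p}(S)\to \mathcal M_{<p}(X)\to 0 .
\]
The composite is zero, since a measure supported on $\partial S$ kills clopens disjoint from $\partial S$.

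Exactness in the middle goes as follows. Suppose $\mu$ vanishes on all compact opens of $X$. Define $\nu$ on clopens $V$ of $\partial S$ by $\nu(V)=\mu(\tilde V)$ for any clopen lift $\tilde V\subset S$ with $\tilde V\cap\partial S=V$. This is independent of the lift, because two lifts differ by a clopen disjoint from $\partial S$, i.e. by compact opens of $X$. It is additive, and its $q$-variation is bounded by that of $\mu$ (choose disjoint lifts). Then $\mu-\nu$ vanishes on every clopen, so $\mu=\nu$.

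Surjectivity uses the previous lemma. Via $X\cup\infty$ we have $\mathcal M_{<p}(X)\subset\mathcal M_{<p}(X\cup\infty)$, and the map $S\to X\cup\infty$ collapsing $\partial S$ is a surjection of profinite sets. Pushforward $\mathcal M_q(S)\to\mathcal M_q(X\cup\infty)$ is surjective with bounds: this is the liquid analogue of Tychonoff and finite-level surjectivity of $\ell^q$-balls. Composing with the projection to $\mathcal M_{<p}(X)$, which is restriction to compact opens of $X$, gives surjectivity.

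All these maps must also be checked at the level of condensed structure. The compact Hausdorff pieces map continuously, and the bounds are preserved up to a constant factor as in the previous lemma, so the sequence is exact as condensed modules. For this I will argue on the filtrations $\bigcup_C$ and use that injections and surjections of qs objects can be checked on qc pieces via lifting compact pieces.

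Finally, the same sequence holds for $T$, and $f_*$ is compatible with the restriction maps. Indeed, for a compact open $U\subset X$ we have $f^{-1}(U)=U$ as a clopen of $S$, since $f$ is a homeomorphism off the boundary and $U$ is disjoint from $\partial T$. Hence $(f_*\mu)(U)=\mu(U)$. By the five lemma, or directly by comparing cokernels, both quotients identify with $\mathcal M_{<p}(X)$ compatibly, which gives the isomorphism.

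The main obstacle I expect is the condensed-level surjectivity with uniform bounds, that is, lifting a qc family of measures in $\mathcal M_q(X)_{\le C}$ to a bounded family on $S$. I would handle it at each finite level by a closed nonempty fibre argument and then pass to the limit by compactness.
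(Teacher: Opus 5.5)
Your argument is correct, but it takes a different route from the paper.

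\textbf{The paper's route.} The paper's proof is purely formal. Both quotients corepresent, on $p$-liquid targets, the maps out of $S$ (resp.\ $T$) that vanish on the boundary. So it suffices to show $T=S\sqcup_{\partial S}\partial T$ in condensed sets. This holds because the equivalence relation $S\times_T S\subset S\times S$ is covered by the diagonal and $\partial S\times_{\partial T}\partial S$, which is exactly where the hypothesis that $f$ is a homeomorphism off the boundary enters.

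\textbf{Your route.} You instead compute both sides explicitly. Your exact sequence $0\to\mathcal M_{<p}(\partial S)\to\mathcal M_{<p}(S)\to\mathcal M_{<p}(S\setminus\partial S)\to 0$ shows that for \emph{every} compactification of a locally profinite $X$ the quotient is $\mathcal M_{<p}(X)$. This contains the previous lemma as the case $X\cup\infty$, and makes the independence of the compactification visible directly.

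\textbf{Trade-offs.} Your argument uses concrete features of measure spaces: clopens of a closed subset extend, and bounded measures lift along surjections of profinite sets by compactness. So it only applies to $\mathcal M_{<p}$. The paper's pushout argument works verbatim for any colimit-preserving functor out of condensed sets, such as $\mathbb Z[-]$ itself. As the footnote explains, the pushout even holds in condensed anima, which is what the Mayer--Vietoris sequence in the proof of Proposition~\ref{prop:lightlyprojective} needs.

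\textbf{Two minor remarks.}
\begin{enumerate}
\item The step you flag as the main obstacle is automatic. Take the closed preimage of $\mathcal M(X)_{\ell^q\le C}$ inside some $\mathcal M(S)_{\ell^q\le C'}$. Once it surjects onto $\mathcal M(X)_{\ell^q\le C}$ pointwise, this is a surjection of compact Hausdorff spaces, hence a surjection of condensed sets. No separate lifting of families is needed.
\item The collapse map $S\to X\cup\infty$ is surjective only when $\partial S\neq\emptyset$. If $\partial S=\emptyset$, then $\partial T=\emptyset$ and $f$ is already a homeomorphism, so nothing needs proving in that case.
\end{enumerate}
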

\begin{proof}
Comparing universal properties of mapping out, it suffices to show that
$$S\sqcup_{\partial S}\partial T = T$$
in the category of condensed sets.\footnote{In fact, this pushout also holds in condensed \emph{anima}, a fact which was implicitly used in the proof of \ref{prop:lightlyprojective}.  To deduce this from the same fact in condensed sets, it suffices to see that the pushout $S\sqcup_{\partial S}\partial T$ in condensed anima is a condensed set.  In general, the pushout in condensed anima is given by sheafifying the presheaf pushout.  Sheafification of anima sends sets to sets because it's built from filtered colimits and limits, so it suffices to see that the pushout $S\sqcup_{\partial S}\partial T$ in presheaves of anima is a presheaf of sets.  But this follows because the map $\partial S\rightarrow S$ is injective.}  But indeed the equivalence relation $S\times_T S\subset S\times S$ giving $T$ as a quotient of $S$ is covered by the diagonal $S\rightarrow S\times_T S$  and $\partial S \times_{\partial T}\partial S$ because of the hypothesis $f:S\setminus\partial S\overset{\sim}{\rightarrow} T\setminus \partial T$, and this gives the claim by comparing universal properties of mapping out.
\end{proof}

Now we prove Proposition \ref{locprofflat}.

\begin{proof}
$\mathcal{M}_{<p}(X)$ is $p$-liquid and flat because the first lemma exhibits it as a summand of $\mathcal{M}_{<p}(X\cup\infty)$, for which we know these facts by the previous lecture.  If we calculate $\mathcal{M}_{<p}(X)\otimes_{\mathbb{R}_{<p}}\mathcal{M}_{<p}(Y)$ using the presentations
$$\mathcal{M}_{<p}(X)=\mathcal{M}_{<p}(X\cup \infty)/\mathcal{M}_{<p}(\infty),$$
$$\mathcal{M}_{<p}(Y)=\mathcal{M}_{<p}(Y\cup \infty)/\mathcal{M}_{<p}(\infty)$$
coming from the first lemma, we obtain
$$\mathcal{M}_{<p}(X)\otimes_{\mathbb{R}_{<p}}\mathcal{M}_{<p}(Y) = \mathcal{M}_{<p}((X\cup \infty)\times (Y\cup\infty))/\mathcal{M}_{<p}(X\times\infty \cup \infty\times Y),$$
but this is the same as $\mathcal{M}_{<p}((X\times Y)\cup \infty)/\mathcal{M}_{<p}(\infty)$ by the second lemma.\end{proof}

Let us specialize to our case of main interest.  Let $I$ be a countable set, which we view as a locally profinite space with the discrete topology.  In this case, compact open subsets are the same as finite subsets, and a finitely additive measure on these is uniquely specified by its values on the singeton sets, which can be arbitrary.  Thus we find that
$$\mathcal{M}(I)_{\ell^p\leq C} = \{(x_i)_{i\in I}\mid x_i\in\mathbb{R}, \sum_i |x_i|^p\leq C\},$$
as a closed subset of $\prod_I [-C^{1/p},C^{1/p}]$.   Thus $\mathcal{M}_p(I)$ is simply the set of absolutely $p$-summable sequences of real numbers, equipped with a condensed structure where the ``closed unit balls'' for the $p$-norms are given their natural compact Hausdorff topology.

In fact we also have analogous ``sequence spaces'' $\mathcal{M}_{p}(I)
$ for any $0<p\leq \infty$, not necessarily $\leq 1$, defined by the same description, producing a whole series:

$$\left(\ldots \subset \mathcal{M}_{1/3}(I) \subset \mathcal{M}_{1}(I)\subset \mathcal{M}_{2}(I)\subset \ldots \subset \mathcal{M}_{\infty}(I)\right)\subset \prod_I \mathbb{R},$$
where the largest one is $\mathcal{M}_{\infty}(I) = \bigcup_{C>0}\prod_I [-C,C]$.

But there is more, because each of these $\mathcal{M}_p(I)$ also has a ($p$-)Banach space analog $\ell_p(I)$, which has the same underlying set
$$\ell_p(I) = \{(x_i)_{i\in I}\mid x_i\in\mathbb{R}, \sum_i |x_i|^p< \infty\},$$
but with topology, hence condensed structure, where a basis of neighborhoods of $0$ is given by the open balls
$$\ell_p(I)_{<\epsilon} = \{(x_i)_{i\in I}\mid x_i\in\mathbb{R}, \sum_i |x_i|^p< \epsilon\}$$
for $\epsilon>0$.  This produces another series of sequence spaces
$$\left(\ldots \subset \ell_{1/3}(I) \subset \ell_{1}(I)\subset \ell_{2}(I)\subset \ldots \subset \ell_\infty(I)\right)\subset \prod_I \mathbb{R},$$
which includes termwise in the previous one by maps which are bijections on underlying sets, but not isomorphisms of condensed sets.\footnote{Actually, in this Banach picture, it might be preferable to replace $\ell_\infty(I)$ by its subspace $c_0(I)$, the set of null-sequences, again with topology coming from the sup norm.  The reason has to do with duality; see the exercises.}

The fact that all of these sequence spaces are pairwise non-isomorphic as condensed $\mathbb{R}$-modules encapsulates much of the subtlety of linear algebra in infinite dimensions: first, there are different convexity types parametrized by $p$, and second, there is also a difference between taking compact subsets as fundamental and taking open subsets as fundamental.  However, as noticed and studied in detail by Grothendieck, there are certain maps which completely erase these subtle distinctions.  These are called \emph{nuclear},  \emph{trace-class}, or \emph{($p$-)summable} maps.  The simplest example is the following.  Suppose given a sequence of real numbers $(\lambda_i)_{i\in I}$ which is $p$-summable, i.e.\ $\sum_i |\lambda_i|^p<\infty$.  Then for \emph{any} of the above sequence spaces $V$, we get an endomorphism
$$\cdot\lambda : V\rightarrow V,$$
given by multiplying the $i^{th}$ entry by the scalar $\lambda_i$.  The simple but important phenomenon is the following.

\begin{lemma}\label{basictraceclass}
Let $I$ be a countable infinite set, and let $\lambda = (\lambda_i)_{i\in I}$ be a $p$-summable sequence of real numbers.  Then the map of condensed $\mathbb{R}$-modules
$$\cdot\lambda: \mathcal{M}_{\infty}(I)\rightarrow \mathcal{M}_{\infty}(I)$$
lands inside $\ell^p(I)\subset \mathcal{M}_{\infty}(I)$.
\end{lemma}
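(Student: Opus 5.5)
The plan is to use the description of $\mathcal M_\infty(I)$ as the union $\bigcup_{C>0}\prod_I[-C,C]$ of compact Hausdorff subsets, and of $\ell^p(I)$ as a $p$-Banach space. A map of condensed sets out of a union of compact Hausdorff subobjects is determined by its restrictions to them, and maps from a compact Hausdorff space $K$ into a $p$-Banach space $\ell^p(I)$ (compactly generated) are just continuous maps of topological spaces. So it suffices to show, for each $C>0$, that multiplication by $\lambda$ gives a \emph{continuous} map
$$\prod_I[-C,C]\to \ell^p(I),\qquad (x_i)_i\mapsto(\lambda_ix_i)_i,$$
for the product topology on the source and the $p$-norm topology on the target. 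This map is then automatically compatible with the inclusion $\ell^p(I)\subset\mathcal M_\infty(I)$ and with the given endomorphism of $\mathcal M_\infty(I)$, since on underlying points both are coordinatewise multiplication. Linearity is also clear, because it holds pointwise and everything is qs.

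The map is well defined because $\sum_i|\lambda_ix_i|^p\le C^p\sum_i|\lambda_i|^p<\infty$. For continuity, I fix $\epsilon>0$ and enumerate $I=\{i_0,i_1,\ldots\}$. I then choose $N$ with $C^p\sum_{n>N}|\lambda_{i_n}|^p<\epsilon/2$. For $x,y\in\prod_I[-C,C]$,
$$\|\lambda x-\lambda y\|_{p}\le \sum_{n\le N}|\lambda_{i_n}|^p|x_{i_n}-y_{i_n}|^p+(2C)^p\sum_{n>N}|\lambda_{i_n}|^p.$$
The tail is uniformly small after adjusting $N$ (using $(2C)^p$ in place of $C^p$). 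The first term involves only finitely many coordinates, so it is small on a product-topology neighborhood of $x$. Hence the map is continuous, and in fact uniformly continuous.

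The main obstacle, mild as it is, is the formal step: checking that a map from $\mathcal M_\infty(I)$ factoring pointwise through $\ell^p(I)$, and continuous on each compact ball, really defines a map of condensed sets into $\underline{\ell^p(I)}$. This follows from the fully faithful embedding of compactly generated spaces into condensed sets, from $\mathcal M_\infty(I)=\varinjlim_C\prod_I[-C,C]$ as condensed sets, and from injectivity of $\underline{\ell^p(I)}\to\mathcal M_\infty(I)$. Injectivity in turn holds because this map is injective on points and the source is qs. So the factorization, once it exists, is unique and agrees with the original map.
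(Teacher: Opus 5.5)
Your proposal is correct and is essentially the paper's argument: reduce to continuity of $(x_i)\mapsto(\lambda_i x_i)$ from $\prod_I[-C,C]$ (product topology) to $\ell^p(I)$, handling finitely many coordinates through the product topology and bounding the tail uniformly by $(2C)^p\sum_{i\notin I_0}|\lambda_i|^p$. Your extra discussion of the formal factorization (maps out of the union of compact balls, and injectivity of $\underline{\ell^p(I)}\to\mathcal M_\infty(I)$ from injectivity on points plus quasiseparatedness of the source) just makes explicit what the paper leaves implicit.
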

\begin{proof}
On underlying sets, this is clear: if we multiply a uniformly bounded sequence by a $p$-summable sequence, we still get an $p$-summable sequence.  To see that the factoring holds as condensed sets, we need that for all $C>0$, the map
$$\prod_I [-C;C]\rightarrow \ell^p(I),$$
$$(x_i) \mapsto (\lambda_ix_i)$$
is continuous.  But if we want to guarantee that
$$\sum_i |\lambda_i x_i- \lambda_i y_i|^p= \sum_i|\lambda_i|^p|x_i-y_i|^p<\epsilon,$$
we can arrange this by taking $x_i$ and $y_i$ to be close enough for $i$ in some finite subset $I_0\subset I$, since the remainder
$$\sum_{i\not\in I_0} |\lambda_i|^p|x_i-y_i|^p \leq 2^pC^p\sum_{i\not\in I_0} |\lambda_i|^p$$
goes to $0$ as $I_0$ grows.\footnote{Compare with the fact that the identity map $\mathcal{M}_p(I)\rightarrow \ell^p(I)$ is \emph{not} continuous: if all we know is that $\sum_i |x_i|^p\leq C$, then we cannot guarantee $\sum_i |x_i|^p<\epsilon$ with just a condition on finitely many entries.}
\end{proof}

Thus this map of multiplication by a $p$-summable $\lambda$ takes us all the way from the largest measure sequence space to the $p$-Banach one.  In fact, we have run into a similar phenomenon before, when discussing the space of holomorphic functions on the disk.  We defined this as

$$\mathcal{O}(\mathbb{D}) = \{\sum_{n\geq 0} c_nT^n \mid c_nr^n\rightarrow 0 \forall r<1\}\subset \mathbb{C}[[T]].$$

We could replace the condition $c_nr^n\rightarrow 0$ by any number of other convergence conditions on the sequence $c_nr^n$ without affecting the definition: we could say $c_nr^n$ is bounded, is $1/3$-summable, is $2$-summable, etc.  For fixed $r<1$, there is of course a difference between these convergence criteria; but since we ask it for all $r<1$, they are equivalent.  Indeed, suppose we take the weakest one, where we just ask that the $c_nr^n$ be uniformly bounded.  If this holds for all $r<1$, then in particular it holds for some $r'>r$.  Thus $c_nr^n$ is equal to a uniformly bounded sequence $c_nr'^n$ multiplied by the exponentially decaying sequence $(r/r')^n$, hence it has whatever other summability property we could ask.

This leads us to consider the following situation.  Suppose given, for every $n\in\mathbb{N}$, an $I$-sequence
$$\lambda^{(n)} = (\lambda^{(n)}_i)_{i\in I}$$
of real numbers which is $q$-summable for some $q<p$, with $0<p\leq 1$, and consider the tower of condensed $\mathbb{R}$-modules
$$\ldots\mathcal{M}_{<p}(I)\overset{\cdot \lambda^{(n)}}{\rightarrow}\mathcal{M}_{<p}(I)\overset{\cdot \lambda^{(n-1)}}{\rightarrow}\ldots\overset{\cdot \lambda^{(1)}}{\rightarrow} \mathcal{M}_{<p}(I).$$
For a reason which will become clear in a moment, we will also ask that all $\lambda^{(n)}_i$ be nonzero.  We are interested in studying the inverse limit (=intersection, as the maps are injective)
$$V = \varprojlim_n \mathcal{M}_{<p}(I)$$
of this tower.  (This $V$ depends on the $\lambda^{(n)}$, but we suppress this from the notation.)  We make a series of remarks about this situation:

\begin{enumerate}
\item Up to a pro-isomorphism of the tower, we can assume that all the $\lambda^{(n)}$ are $q$-summable, for \emph{any} given fixed $q<p$.  Indeed, this can be arranged by composing maps in the tower, for the following reason: all the $\lambda^{(n)}$ are $p$-summable, but the Cauchy-Schwartz inequality implies that if $\lambda$ and $\lambda'$ are $p$-summable, then the termwise product $\lambda\cdot\lambda'$ is $p/2$-summable.
\item Again up to a pro-isomoprhism of the tower, we can replace $\mathcal{M}_{<p}(I)$ by any of the above discussed sequence spaces, either of measure-type or of Banach-type.  This follows from the previous remark and Lemma \ref{basictraceclass}.
\item We have $\varprojlim_n^1 \mathcal{M}_{<p}(I)=0$, so that $V=R\varprojlim_n\mathcal{M}_{<p}(I)$.\footnote{Here it is important that infinite products in $\operatorname{CondAb}$ are exact, so the standard Milnor sequence does calculate the $R\varprojlim_n$'s.} Indeed, by the previous remark, we can replace the $\mathcal{M}_{<p}(I)$ by Banach sequence spaces; and moreover the transition maps have dense image, because they contain the basis vectors for these sequence spaces due to the hypothesis that all $\lambda^{(n)}_i\neq 0$.  Thus this $\varprojlim_n^1$ claim follows from the Mittag-Leffler lemma below.
\item For any $p$, there is a short exact sequence of condensed $\mathbb{R}$-modules
$$0\rightarrow V\rightarrow \prod_\mathbb{N} \mathcal{M}_p(I)\overset{\sigma - id}{\longrightarrow}  \prod_\mathbb{N} \mathcal{M}_p(I)\rightarrow 0,$$
where the map $\sigma$ shifts the components using the given transition maps $\cdot \lambda^{(n)}$.  This is a rephrasing of the vanishing of $\varprojlim^1$ for the tower with terms the $\mathcal{M}_p(I)$, which is the same as the previous claim because all the different sequence spaces yield isomorphic pro-towers.
\item There is a short exact sequence of liquid $p$-modules
$$0\rightarrow V\rightarrow \bigcup_{q<p}\prod_\mathbb{N} \mathcal{M}_q(I)\overset{\sigma - id}{\longrightarrow} \bigcup_{q<p} \prod_\mathbb{N} \mathcal{M}_q(I)\rightarrow 0.$$
This follows by taking filtered colimits in the previous.
\end{enumerate}

We used the following Mittag-Leffler lemma above:

\begin{lemma}
Suppose given a tower $\ldots \rightarrow V_n\rightarrow V_{n-1}\rightarrow \ldots\rightarrow V_1$ of Banach spaces such that the transition maps have dense image.  Then
$$R^1\varprojlim_n V_n=0$$
in $\operatorname{CondAb}$.
\end{lemma}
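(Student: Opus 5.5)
We must show that the map $\sigma-\mathrm{id}\colon \prod_{\mathbb N} V_n\to \prod_{\mathbb N} V_n$ is surjective as a map of condensed abelian groups; by the Milnor sequence (using exactness of products in $\operatorname{CondAb}$) its cokernel is $R^1\varprojlim_n V_n$. Since surjectivity can be tested on $S$-valued points for $S\in\operatorname{Extr}$, and every map $S\to \prod_n V_n$ factors through compact subsets, we reduce to the following concrete statement. Fix $S\in \operatorname{Extr}$ (in fact any profinite $S$ will do). Given continuous maps $y_n\colon S\to V_n$, we must produce continuous maps $x_n\colon S\to V_n$ with $x_n - f_{n+1}(x_{n+1}) = y_n$ for all $n$, where $f_{n+1}\colon V_{n+1}\to V_n$ are the transition maps. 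Equivalently, we need $R^1\varprojlim$ of the tower of abelian groups $C(S,V_n)$ to vanish. Now $C(S,V_n)$ is itself a Banach space under the sup norm, and the transition maps $C(S,V_{n+1})\to C(S,V_n)$ again have dense image. Indeed, by partitions of unity on the profinite set $S$, i.e.\ locally constant approximations, any continuous map $S\to V_n$ is uniformly approximated by locally constant maps. A locally constant map takes finitely many values, each of which can be approximated by images of elements of $V_{n+1}$.

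So the key step is the classical Mittag-Leffler statement: for a tower of complete metric abelian groups $W_n$ (here $W_n=C(S,V_n)$) with dense transition maps, $\varprojlim^1 W_n=0$ as abstract groups. I would prove this in the standard way. Given $(y_n)$, first correct each $y_n$ by a small error. Choose $z_{n}\in W_{n}$ inductively so that $y_n - f_{n+1}(z_{n+1})$ lies in a ball of radius $\varepsilon_n$. Then set up the solution as a telescoping series whose convergence is ensured by choosing the $\varepsilon$'s relative to the operator norms of the composite transition maps. Concretely, the problem reduces to showing that the image of $\varprojlim W_n$ (or of $W_{m}$) is dense in each $W_n$ in a uniform way: approximate inductively, and each correction at level $m$ is chosen so that its images in all lower levels $k\le m$ are bounded by $2^{-m}$. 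This is possible because only finitely many (continuous) maps $W_m\to W_k$ are involved. Then the relevant infinite sums converge in each complete $W_k$.

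The main obstacle is the bookkeeping in this last step, i.e.\ the standard Mittag-Leffler argument for complete metric groups. The only genuinely condensed point is the first reduction: that sup-norm $C(S,V_n)$ is Banach with dense transition maps. One should double check that density for locally constant approximations uses only that $S$ is profinite, which it does. Everything else is functional-analytic and routine.
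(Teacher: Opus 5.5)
Your proposal is correct and follows the paper's proof: evaluate on extremally disconnected $S$, note that $C(S,V_n)$ is a Banach space under the sup norm whose transition maps are still dense (approximate by locally constant maps and lift their finitely many values up to $\varepsilon$), and then apply the classical topological Mittag-Leffler lemma, which the paper cites from EGA rather than reproving. In your sketch of that classical step, phrase the inductive choice as picking $z_{n+1}$ so that $z_n - y_n - f_{n+1}(z_{n+1})$ is small; the resulting error sequence is then absorbed by the convergent series $\sum_{k\ge n} f_{n,k}(e_k)$, exactly as you describe.
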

\begin{proof}
We have to see that for all $S\in\operatorname{ExtrDisc}$, we have
$$R^1\varprojlim_n V_n(S)=0$$
in the category of abelian groups.  When $S=\ast$, this follows from the standard topological Mittag-Leffler result, \cite{grothendieck1961elements} 13.2.4.  But in fact the general case reduces to that one: each $V_n(S)$ also has a Banach space structure with respect to the sup norm, and each transition map $V_n(S)\rightarrow V_{n-1}(S)$ is still dense, as the locally constant functions are dense in the continuous functions $S\rightarrow V_{n-1}$, and locally constant functions lift up to $\epsilon$ by our original density hypothesis.
\end{proof}

It is this short exact sequence in (5) we will use to analyze $p$-liquid tensor products with $V$.  In fact, we have the following.

\begin{lemma}
Let $A$ and $B$ be two countable sets, and let $(X_a)_{a\in A}$ and $(Y_b)_{b\in B}$ be locally profinite spaces parametrized by $A$ and $B$ respectively.  Then
$$\left(\bigcup_{q<p} \prod_{a\in A} \mathcal{M}_q(X_a) \right)\otimes_{\mathbb{R}_{<p}}\left(\bigcup_{q<p} \prod_{b\in B} \mathcal{M}_q(Y_b) \right) = \bigcup_{q<p} \prod_{(a,b)\in A\times B} \mathcal{M}_q(X_a \times Y_b),$$
and all of these $p$-liquid spaces are flat.
\end{lemma}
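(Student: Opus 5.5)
The plan is to reduce to Proposition~\ref{locprofflat} by rewriting the countable products as spaces of measures on a single locally profinite space. Set $X=\bigsqcup_{a\in A}(X_a\cup\infty_a)$, the disjoint union of one-point compactifications. This is locally profinite, since $A$ is countable and discrete. The first step is to identify, for each $q$,
\[
\prod_{a\in A}\mathcal M_q(X_a)\quad\text{with}\quad\Big(\prod_{a} \mathcal M_q(X_a\cup\infty_a)\Big)\Big/\prod_a\mathbb R .
\]
By the first lemma each factor splits, $\mathcal M_q(X_a\cup\infty_a)=\mathcal M_q(X_a)\oplus\mathbb R$, with the bounds preserved up to a factor $2$. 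Products are exact in $\mathrm{CondAb}$, so the splitting passes to the product.

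The second step is the main obstacle: the product $\prod_a \mathcal M_q(S_a)$ of spaces of measures on profinite sets is not $\mathcal M_q(\bigsqcup_a S_a)$. Instead it is a union over bound functions $C:A\to\mathbb R_{>0}$ of the compact sets $\prod_a \mathcal M(S_a)_{\ell^q\le C_a}$. To handle this I would use the trace-class trick of Lemma~\ref{basictraceclass}, which explains why we pass to $\bigcup_{q<p}$. Fix $C$ and choose weights $w_a>0$ such that $\sum_a w_a^{q'} C_a^{q'/q}$ is finite for some $q'<q$, for instance $w_a=2^{-a}C_a^{-1/q}$. Then multiplication by $w$ maps $\prod_a\mathcal M(S_a)_{\ell^q\le C_a}$ into a compact piece of $\mathcal M_{q'}(\bigsqcup_a S_a\cup\infty)$.

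This exhibits $\bigcup_{q<p}\prod_a\mathcal M_q(S_a)$ as a filtered colimit, over the directed set of weight sequences $w$ ordered by pointwise domination, of copies of $\mathcal M_{<p}(\bigsqcup_a S_a\cup\infty)$. Each such copy is flat and liquid by the Lecture~III theorem. The transition maps are multiplication by ratios of weights, which are again measures-to-measures maps. To get flatness I would check the following:
\begin{enumerate}
\item the colimit is indeed filtered;
\item it computes the union on the nose, with injective transition maps;
\item it is computed in $\mathrm{Liquid}_p$ as in $\mathrm{CondAb}$, which holds since $\mathrm{Liquid}_p$ is closed under colimits.
\end{enumerate}
Filtered colimits of flat objects are flat, because the tensor product commutes with colimits and filtered colimits are exact. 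Quotienting by the direct summand $\prod_a\mathbb R$, which has the same form with $X_a=\ast$ and is also a retract, keeps flatness. Liquidity follows since everything is a colimit of liquid objects.

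For the tensor product formula, the liquid tensor product commutes with colimits in each variable. It therefore suffices to compute the tensor product of two representing copies:
\[
\mathcal M_{<p}\Big(\bigsqcup_a X_a\Big)\otimes_{\mathbb R_{<p}}\mathcal M_{<p}\Big(\bigsqcup_b Y_b\Big)=\mathcal M_{<p}\Big(\bigsqcup_{a,b}X_a\times Y_b\Big)
\]
by Proposition~\ref{locprofflat}(3). Here it is cleanest to work directly with the locally profinite spaces $\bigsqcup_a X_a$ rather than with compactifications. Products of the weights $w_aw'_b$ form a cofinal family of weights on $A\times B$. This cofinality holds because any bound function on $A\times B$ is dominated by a product $C_aC'_b$ after enlarging, and summability of $w_aw'_b$ is preserved. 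The weighted measures on $X_a\times Y_b$ are compatible with the products. Taking the colimit then recovers $\bigcup_{q<p}\prod_{(a,b)}\mathcal M_q(X_a\times Y_b)$. The remaining step I expect to need care is checking that the product of the two filtered systems is cofinal in the one for $A\times B$, since that requires dominating an arbitrary function $C(a,b)$ by a product; I would first try to absorb this into the choice of weights using the countability of $A$ and $B$.
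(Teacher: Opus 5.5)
Your overall strategy is the paper's. You exhibit $\bigcup_{q<p}\prod_a\mathcal M_q(X_a)$ as a filtered union over weights $w$ of rescaled copies $w^{-1}\cdot\mathcal M_{<p}(\bigsqcup_a X_a)$. Flatness and liquidity then follow from filtered colimits, and the tensor formula from Proposition~\ref{locprofflat}(3) plus cofinality of product weights. The detour through one-point compactifications can be dropped, as you note yourself. However, one step as written is false, and the step you leave open needs its argument.

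\textbf{The claimed gain in exponent is false.} You claim that multiplication by $w$ maps $\prod_a\mathcal M(S_a)_{\ell^q\le C_a}$ into $\mathcal M_{q'}$ for some $q'<q$. This fails as soon as some $X_a$ is infinite. For $X_a=\mathbb N$, the $q$-measure $(n^{-1/q'})_{n\ge 1}$ has infinite $q'$-variation, and rescaling by the scalar $w_a$ cannot change that. Your bound $\sum_a w_a^{q'}C_a^{q'/q}$ is the $q'$-variation only when every $X_a$ is a point. That is the sequence-space situation of Lemma~\ref{basictraceclass}, where summability is gained across the index set, not inside each $X_a$.

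No gain is needed, though. With $w_a=2^{-a}C_a^{-1/q}$, the $q$-triangle inequality gives $w\mu\in\mathcal M(\bigsqcup_a X_a)_{\ell^q\le\sum_a 2^{-aq}}$. This is continuous in $\mu$, because each value $(w\mu)(U)$ on a compact open $U$ is a finite sum of coordinates. So prove $\prod_a\mathcal M_q(X_a)=\bigcup_w w^{-1}\cdot\mathcal M_q(\bigsqcup_a X_a)$ at each fixed $q$, and only then take the union over $q<p$. This is exactly the paper's reduction. The union over $q<p$ plays no role in this step.

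\textbf{The cofinality step is one line, and it is where countability enters.} Given $E\colon A\times B\to\mathbb R_{>0}$, choose injections $A,B\hookrightarrow\mathbb N$. Put $F(n)=\max\{1,\max_{i,j\le n}E_{i,j}\}$, with $i\in A$ and $j\in B$, and set $C=F|_A$, $D=F|_B$. Since $F\ge 1$ is nondecreasing, $C_iD_j\ge\max(F(i),F(j))=F(\max(i,j))\ge E_{i,j}$.

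Apply this to $E=1/v$ for any positive weight $v$ on $A\times B$. It gives $w_aw'_b\le v_{a,b}$ with $w=1/C$ and $w'=1/D$. Hence the copies indexed by product weights exhaust $\bigcup_{q<p}\prod_{(a,b)}\mathcal M_q(X_a\times Y_b)$. With these two fixes your argument is complete and coincides with the paper's.
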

\begin{proof}
We will prove this by writing all the objects as filtered unions of measure spaces on locally profinite sets.  First note that
$$\mathcal{M}_{<p}(\bigsqcup_{a\in A}X_a)\subset \prod_{a\in A}\mathcal{M}_{<p}(X_a)$$
because by additivity a measure on $\bigsqcup_{a\in A}X_a$ is determined by its values on those compact opens which are contained in some $X_a$.  Now, given an $A$-sequence $C=(C_a)_{a\in A}$ of positive real numbers, let us write $$C\cdot \mathcal{M}_{<p}(\bigsqcup_{a\in A}X_a)\subset \prod_{a\in A}\mathcal{M}_{<p}(X_a)$$
for the isomorphic copy of $\mathcal{M}_{<p}(\bigsqcup_{a\in A}X_a)$ where we multiply the $a$-component by $C_a$.  Then we claim that
$$\bigcup_{q<p} \prod_{a\in A} \mathcal{M}_q(X_a) = \bigcup_C C\cdot \mathcal{M}_{<p}(\bigsqcup_{a\in A}X_a),$$
where the union is over all $A$-sequences of positive real numbers, which is a filtered poset under $C\leq C' \Leftrightarrow C_a\leq C'_a \forall a\in A$.

If we know this claim, then first of all we get $p$-liquidity and flatness of $\bigcup_{q<p} \prod_{a\in A} \mathcal{M}_q(X_a)$, because it is a filtered colimit of modules which we already know to be flat $p$-liquid.  But second, using the K\"{u}nneth property for measure spaces on locally profinite sets proved above and commutation of tensor products with colimits, we find that the tensor product formula we're trying to prove reduces to the following fact: any $A\times B$-sequence $E=(E_{a,b})$ of positive real numbers is termwise $\leq$ one of the form $C_a\cdot D_b$ where $C$ is an $A$-sequence and $D$ is a $B$-sequence.  To prove that, choose bijections $A,B\simeq \mathbb{N}$ and let
$$C_n = D_n = \max\{\sup_{i,j\leq n} E_{i,j},1\}.$$

Thus we've reduced to proving
$$\bigcup_{q<p} \prod_{a\in A} \mathcal{M}_q(X_a) = \bigcup_C C\cdot \mathcal{M}_{<p}(\bigsqcup_{a\in A}X_a).$$
By taking the union over $q$, it suffices to show that
$$\prod_{a\in A} \mathcal{M}_q(X_a) = \bigcup_C C\cdot \mathcal{M}_q(\bigsqcup_{a\in A}X_a),$$
or in other words, for any given $q$-summable measures $\mu_a$ on $X_a$ for all $a\in A$, there is an $A$-sequence $\epsilon=(\epsilon_a)$ of positive constants such that
$$\mu(U) := \epsilon_a\cdot \mu_a(U)$$
for $U\subset X_a$ compact open specifies a $q$-summable measure on $\bigsqcup_{a\in A} X_a$.  But indeed, if $\mu_a$ is of $q$-variation $\leq C_a$, then we can arrange this by taking $\epsilon_a = \lambda_a\cdot C_a^{-1}$ where $\lambda_a$ is any $q$-summable $A$-sequence.
\end{proof}

\begin{remark} The countability hypothesis on $A$ and $B$ is crucial here; see the exercises.  \end{remark}

From this we deduce the following.

\begin{theorem}
If $V=\varprojlim_n \mathcal{M}_{<p}(I)$ and $V'=\varprojlim_n \mathcal{M}_{<p}(I')$ are both presented as inverse limits along towers where each transition map is given by termwise multiplication by some $q$-summable $I$-sequence of nonzero real numbers for some $q<p$, then $V$ and $V'$ are flat $p$-liquid modules, and we can pull the inverse limit out of the tensor product calculation, getting
$$V\otimes_{\mathbb{R}_{<p}} V' = \varprojlim_n \mathcal{M}_{<p}(I\times I'),$$
where the $n^{th}$ transition map on the right is given by termwise multiplication by the $I\times I'$-sequence which is the product of sequences giving the $n^{th}$ transition maps for $V$ and $V'$.
\end{theorem}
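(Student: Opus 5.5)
The plan is to tensor the two short exact sequences from remark (5) and use the flatness and K\"unneth statements from the preceding lemma. Write
\[
P_I=\bigcup_{q<p}\prod_{\mathbb N}\mathcal M_q(I),\qquad P_{I'}=\bigcup_{q<p}\prod_{\mathbb N}\mathcal M_q(I'),
\]
so that $0\to V\to P_I\xrightarrow{\sigma-\mathrm{id}}P_I\to 0$ and similarly for $V'$.

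\emph{Flatness of $V$.} The preceding lemma, applied with $A=\mathbb N$ and $X_a=I$ (countable discrete, hence locally profinite), shows that $P_I$ is flat $p$-liquid. In the short exact sequence the two right-hand terms are flat, so the long exact $\mathrm{Tor}$ sequence gives $\mathrm{Tor}_i(V,-)\cong\mathrm{Tor}_{i+1}(P_I,-)=0$ for $i\geq 1$. Hence $V$ is flat. It is $p$-liquid as a kernel in $\mathrm{Liquid}_p$.

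\emph{Computing the tensor product.} Since $V'$ is flat, tensoring the sequence for $V$ with $V'$ gives
\[
0\to V\otimes V'\to P_I\otimes V'\to P_I\otimes V'\to 0 .
\]
Since $P_I$ is flat, $P_I\otimes V'$ is the kernel of $\sigma'-\mathrm{id}$ on $P_I\otimes P_{I'}$. By the lemma,
\[
P_I\otimes P_{I'}=\bigcup_{q<p}\prod_{\mathbb N\times\mathbb N}\mathcal M_q(I\times I').
\]
So $V\otimes V'$ is the joint kernel of the two commuting shift operators $\sigma-\mathrm{id}$ and $\sigma'-\mathrm{id}$ on this space. Concretely, it consists of families $(x_{m,n})$ with $x_{m,n}=\lambda^{(m+1)}x_{m+1,n}$ and $x_{m,n}=\lambda'^{(n+1)}x_{m,n+1}$.

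This joint kernel is the limit of the $\mathbb N\times\mathbb N$-indexed diagram of copies of $\mathcal M_{<p}(I\times I')$, with transition maps given by multiplication by $\lambda^{(m)}\otimes 1$ and $1\otimes\lambda'^{(n)}$. To make this precise one should check that the kernel computed inside the union $\bigcup_q\prod$ agrees with the honest limit of the $\mathcal M_{<p}$'s. Since all transition maps are injective, the limit is an intersection. Restricting to the diagonal is cofinal in $\mathbb N\times\mathbb N$, which identifies the limit with $\varprojlim_n\mathcal M_{<p}(I\times I')$ along multiplication by $\lambda^{(n)}\cdot\lambda'^{(n)}$. This is a product of a $q$-summable and a $q'$-summable sequence of nonzero numbers, so it is again of the required type.

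\emph{Main obstacle.} The delicate step is identifying the kernel inside $\bigcup_q\prod_{\mathbb N\times\mathbb N}\mathcal M_q$ with the inverse limit of the $\mathcal M_{<p}$'s. The issue is that a compatible family might only lie in $\bigcup_q\prod$ in a way that is not uniform in $q$. I would handle this as in remarks (4) and (5). For a fixed $p'$, the sequence with $\prod\mathcal M_{p'}$ in place of the union is already exact, because all sequence spaces give pro-isomorphic towers thanks to Lemma~\ref{basictraceclass}. One then passes to the filtered colimit over $p'<p$, and the limit is the same for every $p'$. The same pro-isomorphism argument shows that the limit of the $\mathcal M_q(I\times I')$-tower is independent of $q$. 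Finally, the $\mathbb N\times\mathbb N$ limit along injective maps is checked on $S$-valued points for extremally disconnected $S$, where it is a plain intersection.
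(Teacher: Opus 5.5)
Your proposal is correct and follows the paper's proof: both use the short exact sequence of remark (5), whose outer terms are flat by the preceding lemma, to get flatness of $V$, and then tensor these sequences and apply the lemma's K\"unneth formula. Your joint-kernel description, the check that compatible families land in $\bigcup_{q<p}\prod\mathcal M_q(I\times I')$ via the pro-isomorphisms of remarks (2)--(4), and the passage to the diagonal spell out details the paper leaves implicit.
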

\begin{proof}
Remark (5) above writes $V$ as the kernel of a map between two $p$-liquid modules which are flat by the above lemma, thus giving the flatness claim.  Then to calculate the tensor product, we can use remark (5) and the tensor product calculation given by the lemma.
\end{proof}

\begin{remark} More generally, the same conclusion (that the liquid tensor product commutes with certain sequential inverse limits) applies in the more general context of nuclear Fr\'echet spaces (cf.~also Lecture VIII).  Suppose $V$ is a nuclear Fr\'echet space; thus $V$ can be written as a sequential inverse limit of Hilbert spaces along trace-class transition maps with dense image.  Each trace-class map between Hilbert spaces factors through a map on the standard $\ell^2$-sequence space which can given by termwise multiplication by a $1$-summable sequence. Then as in the above discussion we deduce that the original tower of Hilbert spaces is pro-isomorphic to a tower of $\mathcal{M}_{p}$-spaces such that all the transition maps factor through $\mathcal{M}_{p/2}\subset \mathcal{M}_{p}$. Given this, the same argument as above shows that nuclear Fr\'echet spaces are flat for the liquid tensor product, and that in taking the liquid tensor product of two nuclear Fr\'echet spaces, we can commute the defining inverse limits past the tensor product; thus, it agrees with the usual complete tensor product of nuclear Fr\'echet spaces.
\end{remark}

We can now specialize this to the case of holomorphic functions on disks.  Well, there is one small wrinkle: this discussion was over $\mathbb{R}$, whereas the tensor product of holomorphic functions we want to take is over $\mathbb{C}$.  But this is not serious: if $\mathcal{O}(\mathbb{D}^k)_\mathbb{R}$ denotes the $\mathbb{R}$-subspace of $\mathcal{O}(\mathbb{D}^k)$ where the coefficients are required to lie in $\mathbb{R}$, then we can write
$$\mathcal{O}(\mathbb{D}^k) = \mathcal{O}(\mathbb{D}^k)_\mathbb{R} \otimes_\mathbb{R}\mathbb{C},$$
so we can just as well do the calculation over $\mathbb{R}$ instead.  And then it falls into the purview of the above theorem.  Indeed, choosing any increasing sequence $(r_n)$ of real numbers $0<r_n<1$ such that $r_n\rightarrow 1$ as $n\rightarrow\infty$, we can write
$$\mathcal{O}(\mathbb{D}^k)_\mathbb{R} = \varprojlim_n \mathcal{M}_{<q}(\mathbb{N}^k),$$
where the $n^{th}$ transition map is multiplication by the exponentially decaying $\mathbb{N}^k$-sequence
$$(d_1,\ldots d_k)\mapsto (r_{n-1}/r_n)^{d_1+\ldots + d_k}.$$
We deduce the desired claims: $\mathcal{O}(\mathbb{D}^k)$ is flat as a $p$-liquid space for all $k$ and $p$, and
$$\mathcal{O}(\mathbb{D}^k)\otimes_{\mathbb{C}_{<p}}\mathcal{O}(\mathbb{D}^l) = \mathcal{O}(\mathbb{D}^{k+l}),$$
independent of $p$.\\

\textbf{Exercise 1.}  We used sequence spaces to produce the condensed structure on $\mathcal{O}(\mathbb{D})$.  Prove the following more direct description: for a profinite set $S$, we have
$$\mathcal{O}(\mathbb{D})(S) = \{f:S\times\mathbb{D}\rightarrow\mathbb{C}\mid f \text{ is continuous}, f(s,-)\text{ is holomorphic } \forall s\in S\}.$$
\\
\textbf{Exercise 2.}   Let $0<p\leq \infty$ and $I$ a countable set, and consider the internal hom in condensed $\mathbb{R}$-modules
$$\mathcal{M}_p(I)^\vee=\underline{\Hom}(\mathcal{M}_p(I),\mathbb{R}).$$
Show that:
\begin{enumerate}
\item If $p\leq 1$, then $\mathcal{M}_p(I)^\vee=c_0(I),$ the Banach space of null-sequences with sup norm.
\item If $p>1$, then $\mathcal{M}_p(I)^\vee = \ell_q(I)$ where $1\leq q< \infty$ is such that $\frac{1}{p}+\frac{1}{q}=1$.\\
\end{enumerate}

\textbf{Exercise 3.}   Let $0<p\leq \infty$ and $I$ a countable set, and consider the internal hom in condensed $\mathbb{R}$-modules
$$\ell_p(I)^\vee=\underline{\Hom}(\ell_p(I),\mathbb{R}).$$
Show that:
\begin{enumerate}
\item If $p\leq 1$, then $\ell_p(I)^\vee=\mathcal{M}_\infty(I)$.
\item If $p\geq 1$, then $\ell_p(I)^\vee = \mathcal{M}_q(I)$ where $1\leq q\leq \infty$ is such that $\frac{1}{p}+\frac{1}{q}=1$.\\
\end{enumerate}

For the next two exercises, let's define a qs condensed $\mathbb{R}$-module $V$ to be \emph{countably $p$-liquid} if for any countable collection $K_1,K_2,\ldots$ of qc subsets of $V$ and any $q<p$, there is a qc $q$-convex $K\subset V$ such that $K_n\subset \mathbb{R}\cdot K$ for all $n$.  This is a strengthening of the condition saying that $V$ is $p$-liquid.\\

\textbf{Exercise 4.}  Show that any $p$-Banach space is countably $p$-liquid, the measure space $\mathcal{M}_p(S)$ for profinite $S$ is countably $p$-liquid, and any countable inverse limit of countably $p$-liquid spaces is countably $p$-liquid.  Show also that $\mathcal{M}_{<p}(S)$ is not countably $p$-liquid, and the countably $p$-liquid spaces are not closed under countable direct sum.\\

\textbf{Exercise 5.}  Show that if $W$ is qs and countably $p$-liquid, then for any $V=\varprojlim_n \mathcal{M}_{<p}(I)$ with transition maps given by nowhere zero $q$-summable $\lambda^{(n)}$ as in the lecture, then we can pull out the inverse limit from the tensor product with $W$:
$$W\otimes_{\mathbb{R}_{<p}} V = \varprojlim_n W\otimes_{\mathbb{R}_{<p}} \mathcal{M}_{<p}(I).$$
Moreover the $\varprojlim^1$ term vanishes. In fact, the same statement applies more generally to nuclear Fr\'echet spaces $V$.\\

\textbf{Exercise 6.}  Show that if $I,J$ are infinite sets with $I$ uncountable, then
$$\left( \prod_I \mathbb{R}\right)\otimes_{\mathbb{R}_{<p}} \left( \prod_J \mathbb{R}\right)\neq \prod_{I\times J}\mathbb{R},$$
i.e.\ the natural map is not an isomorphism.\\

\textbf{Exercise 7.}  Let $V$ be qs $p$-liquid.  Show that
$$V\otimes_{\mathbb{R}_{<p}} \mathcal{O}(\mathbb{D}) = \bigcup_{K\subset V \text{ qc}} \bigcap_{r<1} \bigcup_{C>0} \prod_{n\geq 0} \frac{C}{r^n} K\cdot T^n \subset V[[T]].$$\newpage

\section{Lecture V: From formal nonsense to holomorphic functions}

The goal of this lecture is to prove Theorem~\ref{thm:holomorphicdefinition}:

\begin{theorem}\label{thm:holomorphicdefinitionredux} There is a (necessarily unique) sheaf $\mathcal O$ (of $p$-liquid $\mathbb C$-algebras) on the topological space $\mathbb C$, such that for any open disc
\[
D=D(x,r)=\{z\in \mathbb C\mid |z-x|<r\}\subset \mathbb C,
\]
one has
\[
\mathcal O(D)=\{\sum_{n=0}^\infty a_n (T-x)^n\mid a_n\in \mathbb C, \forall r'<r, a_n r'^n\to 0\},
\]
with obvious transition maps $\mathcal O(D)\to \mathcal O(D')$ for $D'\subset D$.

Moreover, for any such disc $D$, the sheaf cohomology groups $H^i(D,\mathcal O)=0$ for $i>0$.
\end{theorem}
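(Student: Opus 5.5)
Since discs form a basis of the topology of $\mathbb C$ closed under finite intersections only up to covering (the intersection of two discs is not a disc, but is covered by discs), the plan is to use the standard reduction: it suffices to produce the values $\mathcal O(D)$ on discs, with restriction maps, and to prove Čech-type descent for covers of a disc by discs. Concretely, I will show that for every disc $D$ and every cover $D=\bigcup_j D_j$ by discs (which may be taken countable and, after refining, locally finite), the Čech complex
\[
0\to \mathcal O(D)\to \prod_j \mathcal O(D_j)\to \prod_{j,k}\mathcal O(D_j\cap D_k)\to\cdots,
\]
with the intersections $D_j\cap D_k$ themselves computed via covers by discs, is exact in $p$-liquid $\mathbb C$-vector spaces. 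A basis-sheaf with vanishing higher Čech cohomology on basis opens then extends uniquely to a sheaf whose higher cohomology on each disc vanishes, by the usual Cartan/Leray argument. Here I work in the abelian category $\mathrm{Liquid}_p$ (Theorem~\ref{thm:liquidmain}) so that limits and kernels are computed internally.

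The key steps, in order, are as follows. First, define the condensed structure on $\mathcal O(D)$ as in Lecture IV, as $\varprojlim_{r'<r}$ of weighted sequence spaces; it is $p$-liquid and in fact nuclear Fréchet. Second, reduce descent to ``formal'' operations: realize $D$ and its subdiscs through a one-variable ring built from $\mathbb C[T]$, namely overconvergent functions on closed discs $\overline D(x,s)=\varprojlim_{s'>s}$ of Tate-type algebras $\mathbb C\langle T/s'\rangle$. I then aim to prove the analogue of Tate acyclicity for rational/Laurent covers of closed discs and annuli by pure algebra. For a closed disc split by $|T-y|\le t$ and $|T-y|\ge t$, the Čech complex is the Koszul-type sequence
\[
0\to A\to A\langle U\rangle/(U\cdot(T-y)-t)\ \times\ A\langle V\rangle/(t V-(T-y))\to A\langle V,U\rangle/(\ldots)\to 0,
\]
where the quotients are taken in liquid modules. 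Exactness should reduce to explicit decompositions of Laurent series into positive and negative parts, with overconvergence handling the summability (the tensor product computations of Lecture IV, especially flatness of nuclear Fréchet spaces, justify commuting $\varprojlim$ with the quotients). Third, pass to open discs by writing $D$ as an increasing union of closed discs and using $R\varprojlim$, with $\varprojlim^1=0$ by the Mittag-Leffler lemma of Lecture IV because the transition maps have dense image. Fourth, reduce an arbitrary cover by discs of a closed disc to a finite one by compactness, and then to iterated two-piece Laurent covers as in the classical proof of Tate's acyclicity.

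The main obstacle is the second step: identifying the structure on intersections and images. I must show that restricting to subdiscs and forming the quotients $A\langle U\rangle/(U(T-y)-t)$ recovers exactly the expected space of overconvergent functions on the smaller disc or annulus, and that the relevant map is injective and closed. Over $\mathbb R$ this requires liquid control: the ideal $(U(T-y)-t)$ must have quasiseparated quotient and be generated by a non-zero-divisor. I would first attempt this with explicit power series manipulations, namely division by $1-U(T-y)/t$ via geometric series, which converges in the overconvergent setting. The remaining reduction, from general covers to Laurent covers where intersections of discs are not discs, uses rational subsets defined by finitely many inequalities. I expect this to be handled by the same Laurent-cover argument.
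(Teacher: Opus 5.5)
There is a genuine gap, and it sits in your Step~4. Your rings are attached to \emph{presentations}, that is, finite systems of inequalities $|T-y_k|\le t_k$, $|T-y_l|\ge t_l$ over $\mathbb C[T]$. The Tate-style refinement argument, however, needs set-theoretic facts about subsets of $\mathbb C$ to hold for these rings. Suppose finitely many discs $D(y_k,t_k)$ cover $\overline D$ and you iterate the two-piece covers. You then produce the piece $\overline D\cap\bigcap_k\{|T-y_k|\ge t_k\}$. This set is empty in $\mathbb C$, but you never show that its ring $\mathcal O(\overline D)\otimes_{\mathbb C[T]}\bigotimes_k A(\{|T-y_k|\ge t_k\})$ is $0$. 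Similarly, a cell that lies in some $D_j$ only because of its ``$\ge$'' conditions must be shown to receive a map from $\mathcal O(D_j)$. ``Compactness'' is a statement about points of $\mathbb C$, and nothing in your toolkit converts it into a statement about these rings: Laurent splitting, overconvergence and Mittag-Leffler all fail to do so.

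Some archimedean input is logically necessary here. Every step you describe makes sense verbatim over $\mathbb Q_2$. There, $\{|T|\le\tfrac34\}$ and $\{|T-1|\le\tfrac34\}$ cover $\mathbb Z_2$, yet the leftover piece $\{|T|\le 1\}\cap\{|T|\ge\tfrac34\}\cap\{|T-1|\ge\tfrac34\}$ has a nonzero ring (evaluate at a primitive cube root of unity). Correspondingly, the analogous gluing statement on the space $\mathbb Q_2$ is false.

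In the paper, this transfer is exactly what the map of locales $U\to\mathcal M^{\mathrm{Berk}}(\mathbb C[T])\cong\mathbb C$ provides (Lemma~\ref{lem:localemaptoRge0}, Corollary~\ref{cor:maptoberkovich}, Theorem~\ref{thm:ostrowski}). The closed subsets $\{|T-y|\le t\}$ and $\{|T-y|\ge t\}$ become preimages of honest subsets of $\mathbb C$, so containments, emptiness and covers pass to the algebraic side for free. Ostrowski's theorem is the one place where the archimedean nature of $\mathbb C$ enters. To repair your route you would need a Nullstellensatz for your rings: a nonzero ring attached to a compact Laurent domain $K$ must admit a character given by evaluation at a point of $K$. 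This could be obtained, e.g., via an elementary Gelfand--Mazur argument in the style of Ostrowski's proof.

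Beyond this, the paper's route avoids most of your remaining work. The sheaf property and $H^i(D,\mathcal O)=0$ come from the formal sheaf $U\mapsto j_{U\ast}j_U^\ast 1$ on the locale (Proposition~\ref{prop:structuresheafformalnonsense}), pushed forward to $\mathbb C$. One then needs only the single computation of Proposition~\ref{prop:computationunitdisc}, which shows that its values on discs sit in degree $0$. Two-piece Laurent acyclicity is just derived base change of the universal case (Proposition~\ref{prop:propertiessubsets}(2)), so neither Tate-style refinement nor flatness of nuclear Fr\'echet spaces is needed.

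Two further problems in your write-up:
\begin{enumerate}
\item Your mechanism for controlling the ideal $(U(T-y)-t)$ cannot work. You propose inverting $1-U(T-y)/t$ by a geometric series, but that element is a unit multiple of the generator, so convergence would force the quotient to be $0$. Injectivity should instead come from a coefficient argument, or from the derived base change just mentioned.
\item Step~3 only handles exhausting a single open disc by closed ones. Comparing \v{C}ech complexes of open covers (whose intersections are not discs) with finite closed covers of compacta needs a framework containing both kinds of pieces. The paper gets this from the open and closed subsets of $\mathcal S(C)$ and the associated immersions.
\end{enumerate}
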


The proof of this theorem will actually not require the delicate commutation of liquid tensor products with infinite products. But we still need the idea that multiplication by rapidly decaying sequences erases the distinction between all different kinds of sequence spaces.

Let us start with the formal nonsense. It will later be applied to $D(\mathrm{Liq}_p(\mathbb C[T]))$, the derived category of $p$-liquid $\mathbb C[T]$-modules.\footnote{This abstract discussion is closely related to the work of Ben-Bassat--Kremnizer, who emphasized the importance of idempotent algebras (in their language ``homotopy epimorphisms'') in derived analytic geometry, see for example \cite{BenBassatKremnizer}, \cite{BambozziBenBassatKremnizer}. In fact, already in 1972 Taylor \cite{Taylor} suggested very similar ideas; one could say that in this lecture we simply rephrase his ideas in modern language. We believe that one could also carry out the proof of Theorem~\ref{thm:holomorphicdefinition} in the language of Ben-Bassat--Kremnizer, i.e.~without liquid modules, but instead using their (stable $\infty$-category) $\mathcal D(\mathrm{Ind}(\mathrm{Banach}))$ freely generated by Banach spaces, and their site of homotopy epimorphisms, but we have not investigated the details.}

\begin{construction} Let $C$ be a closed symmetric monoidal stable $\infty$-category with all colimits. (The following construction only depends on its homotopy category, which is a closed symmetric monoidal triangulated category with all direct sums, and for most of the discussion such data would be good enough; but some of the later things work better with stable $\infty$-categories.) Consider the collection of idempotent algebra objects in $C$; equivalently, these are pairs of an object $A\in C$ together with a map $1\to A$ such that the induced map $A\to A\otimes A$ is an isomorphism. (Indeed, in that case $A$ admits a unique and functorial $E_\infty$-algebra structure.) For any such $A$ and $A'$, there is at most one map $A\to A'$ commuting with the maps from $1$. Indeed, if there is such a map $f: A\to A'$, then $1\to A\xrightarrow{f} A'\to A\otimes A'$ (where the last map is obtained by tensoring $1\to A$ with $A'$) induces via tensoring with $A'$ maps
\[
A'\to A\otimes A'\to A'\otimes A'=A'\to A\otimes A'\otimes A'=A\otimes A',
\]
where the composite of the first two (resp.~the last two) maps is an isomorphism. It follows that all maps are isomorphisms, and hence $A'$ maps isomorphically to $A\otimes A'$, and the map $f$ is given by the canonical map $A\to A\otimes A'\cong A'$. In particular, the $\infty$-category of such idempotent algebras is naturally a poset.

Actually, we get a ``locale''. Recall that a locale is a ``pointless topological space'': One directly axiomatizes the collection of closed subsets, subject to the condition that one can form finite unions and arbitrary intersections, satisfying some simple conditions (notably infinite intersections distribute over unions with a fixed closed subset). (Usually, one rather axiomatizes the collection of open subsets, but they are in canonical bijection, and in our example it feels more natural to consider the closed subsets.)

\begin{proposition} The above construction defines a locale $\mathcal S(C)$ whose closed subsets $Z\subset \mathcal S(C)$ correspond to idempotent algebras $A$, so that
\begin{enumerate}
\item $Z\cap Z'$ corresponds to $A\otimes A'$;
\item $Z\subset Z'$ if and only if $A\otimes A'=A$;
\item $Z\cup Z' = Z\sqcup_{Z\cap Z'} Z'$ corresponds to $[A\oplus A'\to A\otimes A']$;
\item $\bigcap_i Z_i$ corresponds to $\varinjlim_i A_i$.
\end{enumerate}
\end{proposition}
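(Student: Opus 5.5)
The plan is to verify the four operations directly on idempotent algebras, then check the distributivity axiom. We order idempotent algebras by $A\leq A'$ (meaning $Z\subset Z'$) iff $A\otimes A'\simeq A$. By the uniqueness discussion in the construction, this is equivalent to the existence of a (unique) map $A'\to A$ under $1$. So we have a poset with top element $1$ (the whole locale) and bottom element $0$ (the empty closed subset).

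\textbf{Intersections.} First, $A\otimes A'$ is again idempotent: $(A\otimes A')^{\otimes 2}\cong A^{\otimes 2}\otimes A'^{\otimes 2}\cong A\otimes A'$, compatibly with the unit. It is the greatest lower bound (i.e.\ the meet on closed subsets). It receives maps from both $A$ and $A'$. If $B$ receives maps from both, then $B\otimes A\cong B$ and $B\otimes A'\cong B$, hence $B\otimes A\otimes A'\cong B$. This gives (1) and (2).

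For filtered (or arbitrary) intersections, set $\varinjlim_i A_i$, where the colimit is over the finite tensor products $A_{i_1}\otimes\cdots\otimes A_{i_n}$, which form a filtered diagram. The tensor product commutes with colimits in each variable, so $(\varinjlim A_i)^{\otimes 2}=\varinjlim_{i,j}A_i\otimes A_j\cong\varinjlim A_i$ by cofinality of the diagonal. Hence the colimit is idempotent. The universal property as a meet follows in the same way, giving (4).

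\textbf{Unions.} Define $A\cup A'$ as the fibre $B=\mathrm{fib}(A\oplus A'\to A\otimes A')$, using the difference of the two natural maps. The unit $1\to B$ comes from the compatible units. To show $B$ is idempotent, I would tensor the fibre sequence $B\to A\oplus A'\to A\otimes A'$ with $A$. This yields $B\otimes A\to A\oplus (A\otimes A')\to A\otimes A'$, whose fibre is $A$; so $B\otimes A\cong A$, and symmetrically $B\otimes A'\cong A'$ and $B\otimes A\otimes A'\cong A\otimes A'$. Tensoring the fibre sequence with $B$ then gives
\[
B\otimes B\cong\mathrm{fib}(A\oplus A'\to A\otimes A')=B,
\]
and one checks that the map $B\to B\otimes B$ induced by the unit is this identification.

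Next, $B$ is an upper bound in the $Z$-order, since $B\otimes A\cong A$ means $Z_A\subset Z_B$. It is the least one: if $C$ satisfies $C\otimes A\cong A$ and $C\otimes A'\cong A'$, then tensoring the fibre sequence with $C$ gives $C\otimes B\cong B$. This proves (3). The main subtlety here is checking that the idempotency isomorphism for $B$ is the one induced by the unit, rather than some abstract isomorphism. I would handle this by noting that $B\otimes(-)$ acts as the identity on the fibre sequence once $B\otimes A\cong A$ is known via the unit map.

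\textbf{Locale axioms.} It remains to check distributivity of arbitrary intersections over a finite union with a fixed $Z'$, namely
\[
\bigcap_i (Z_i\cup Z')=\Big(\bigcap_i Z_i\Big)\cup Z'.
\]
On the algebra side, the left side is $\varinjlim_i \mathrm{fib}(A_i\oplus A'\to A_i\otimes A')$ over the filtered system of finite intersections. The right side is $\mathrm{fib}(\varinjlim A_i\oplus A'\to \varinjlim A_i\otimes A')$. These agree because filtered colimits commute with fibres in a stable category and with $\otimes A'$.

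Finite distributivity in the other direction,
\[
(Z_1\cup Z_2)\cap Z'=(Z_1\cap Z')\cup(Z_2\cap Z'),
\]
is immediate because $-\otimes A'$ is exact. The lattice identities $Z\cup\emptyset=Z$ and $Z\cup Z=Z$ follow from the computation $B\otimes A\cong A$ above: for $A'=A$, the fibre of $A\oplus A\to A$ is $A$. I expect the main obstacle to be the coherence bookkeeping in the union step, which is why working in the stable $\infty$-category, rather than only its homotopy category, is convenient for taking fibres functorially.
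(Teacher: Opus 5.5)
Your proposal is correct and follows the same route as the paper. The paper only asserts that every item reduces to direct computations with idempotent algebras, and that the infinite distributive law follows because finite limits (here, the fibre defining the union) commute with colimits in a stable $\infty$-category. You supply the omitted computations, and your use of the natural map $X\to B\otimes X$ on the defining fibre sequence is exactly what makes the idempotency and the minimality of the union canonical rather than abstract isomorphisms.
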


\begin{remark} This construction goes back at least to the work of Balmer--Favi \cite{BalmerFavi}, and has been investigated in detail by Balmer--Krause--Stevenson \cite{BalmerKrauseStevenson}. These works however generally assume that all compact objects are dualizable, which will be very false in our situation of interest.
\end{remark}

\begin{proof} All assertions reduce to direct computations. The infinite distributive law follows from the commutation of finite limits with arbitrary colimits in a stable $\infty$-category (as finite limits are also finite colimits).
\end{proof}

Now to any closed $Z\subset \mathcal S(C)$, corresponding to an idempotent algebra $A\in C$, we can associate a full sub-$\infty$-category
\[
C(Z) = \mathrm{Mod}_A(C) = \{X\in C\mid X\cong X\otimes A\}\subset C.
\]
(Indeed, if $X$ is an $A$-module, then $X\to X\otimes A$ is an isomorphism as $A$ is idempotent; and conversely, if $X\to X\otimes A$ is an isomorphism then $X$ acquires a unique structure of $A$-module from $X\otimes A$.) We write
\[
i_{Z\ast}: C(Z)\to C
\]
for this full inclusion. It admits a left adjoint
\[
i_Z^\ast: C\to C(Z): X\mapsto X\otimes A
\]
and a right adjoint
\[
i_Z^!: C\to C(Z): X\mapsto \underline{\operatorname{RHom}}_C(A,X).
\]

Formally denoting by $U$ the ``complementary open'' of $Z$, we define the Verdier quotient
\[
C(U) = C / C(Z).
\]
This comes with a localization functor denoted
\[
j_U^\ast: C\to C(U)
\]
which admits a fully faithful left adjoint
\[
j_{U!}: C(U)\to C
\]
determined by
\[
j_{U!} j_U^\ast X = [X\to X\otimes A]
\]
and a fully faithful right adjoint
\[
j_{U\ast}: C(U)\to C
\]
determined by
\[
j_{U\ast} j_U^\ast X = \underline{\operatorname{RHom}}_C([1\to A],X).
\]

In particular, there are functorial distinguished triangles
\[
j_{U!} j_U^\ast X\to X\to i_{Z\ast} i_Z^\ast X
\]
and
\[
i_{Z\ast} i_Z^! X\to X\to j_{U\ast} j_U^\ast X.
\]
\end{construction}

In particular, this formal nonsense produces a structure sheaf on $\mathcal S(C)$:

\begin{proposition}\label{prop:structuresheafformalnonsense} The functor $U\mapsto C(U)$ defines a sheaf of $\infty$-categories on the locale $\mathcal S(C)$. In particular, for any $X\in C$, the functor
\[
U\mapsto X(U):=j_{U\ast} j_U^\ast X\in C
\]
defines a sheaf on $\mathcal S(C)$ with values in $C$. In particular, applied to $X=1$ the unit object of $C$, this gives a ``structure sheaf'' on $\mathcal S(C)$.
\end{proposition}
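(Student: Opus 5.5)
To prove that $U\mapsto C(U)$ is a sheaf on $\mathcal S(C)$, we must check two things. First, the presheaf is well defined, with functoriality for inclusions of opens. Second, the descent condition holds for covers. In a locale it suffices to treat two cases: finite covers $U=U_1\cup U_2$, and filtered unions $U=\bigcup_i U_i$. Together these generate all covers, since an arbitrary cover is the filtered union of its finite subcovers. Restriction along $V\subset U$ is defined by choosing closed complements $Z_U\subset Z_V$, with idempotent algebras $A_U\to A_V$ (so $A_U\otimes A_V=A_V$). Then $C(Z_U)\supset C(Z_V)$, and the Verdier quotient map $C/C(Z_U)\to C/C(Z_V)$ gives the restriction. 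It is convenient to identify $C(U)$ with its image under $j_{U\ast}$, that is, the objects $X$ with $\underline{\operatorname{RHom}}(A_U,X)=0$, equivalently $X\cong \underline{\operatorname{RHom}}([1\to A_U],X)$. Restriction then corresponds to the localization functor $X\mapsto \underline{\operatorname{RHom}}([1\to A_V],X)$.

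For a finite cover, the complement of $U_1\cup U_2$ is $Z_1\cap Z_2$, which corresponds to $A_1\otimes A_2$. Likewise $U_1\cap U_2$ has complement $Z_1\cup Z_2$, which corresponds to $A_{12}=[A_1\oplus A_2\to A_1\otimes A_2]$ (the fibre). The key is the pullback square of idempotent algebras expressing $A_{12}$ as $A_1\times_{A_1\otimes A_2}A_2$. Dually, we get a fibre square of objects
\[
[1\to A_1\otimes A_2]\;\simeq\;\text{the pushout of }[1\to A_1]\leftarrow[1\to A_{12}]\to[1\to A_2].
\]
Applying $\underline{\operatorname{RHom}}(-,X)$ turns this into the Mayer--Vietoris pullback $X(U_1\cup U_2)=X(U_1)\times_{X(U_1\cap U_2)}X(U_2)$. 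This settles the second (``in particular'') statement for finite covers. For the statement about categories, the plan is to show that the functor $C(U_1\cup U_2)\to C(U_1)\times_{C(U_{12})}C(U_2)$ is an equivalence. Full faithfulness is the Mayer--Vietoris statement applied to internal mapping objects. For essential surjectivity, given a glued datum $(X_1,X_2,\alpha)$, we take $X=X_1\times_{X_{12}}X_2$, computed in $C$ via the $j_\ast$ embeddings, and check that its restrictions recover the $X_i$. This reduces to the projection formula $j_{U\ast}$, meaning $\underline{\operatorname{RHom}}([1\to A],-)$, commuting with the finite limit, together with the identity $\underline{\operatorname{RHom}}(A_1,X_2)\otimes\ldots$. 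Equivalently, it follows from the fact that idempotent algebras satisfy $A_1\otimes[1\to A_2]$-type compatibilities, so that $\underline{\operatorname{RHom}}([1\to A_1],X_2)=\underline{\operatorname{RHom}}([1\to A_{12}],X_2)$ for $X_2\in C(U_2)$.

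For a filtered union $U=\bigcup U_i$, the complement is $\bigcap Z_i$, corresponding to $A=\varinjlim A_i$. Hence $[1\to A]=\varinjlim[1\to A_i]$, and
\[
\underline{\operatorname{RHom}}([1\to A],X)=\varprojlim_i \underline{\operatorname{RHom}}([1\to A_i],X).
\]
This gives the sheaf condition for the values $X(U)$. At the level of categories, the objects killed by $\underline{\operatorname{RHom}}(A,-)=\varprojlim \underline{\operatorname{RHom}}(A_i,-)$ should be identified with $\varprojlim_i C(U_i)$ via $X\mapsto (X(U_i))_i$, with inverse $(X_i)\mapsto\varprojlim X_i$. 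The step I expect to be the main obstacle is this categorical gluing for infinite filtered covers. There the inverse limit does not obviously interact well with the localizations $j_{U_i\ast}j_{U_i}^\ast$, and one needs to check that $\varprojlim_i X_i$ restricts back to $X_j$. I would first try to verify this by computing $\underline{\operatorname{RHom}}([1\to A_j],\varprojlim_i X_i)=\varprojlim_{i\ge j}\underline{\operatorname{RHom}}([1\to A_j],X_i)$, and noting that each term equals $X_j$ by the finite-case compatibility. The limit is then constant, which gives the claim.

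Finally, the structure sheaf is the case $X=1$.
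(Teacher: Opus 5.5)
Your proposal is correct and follows the paper's proof. You reduce to binary unions and filtered unions, obtain Mayer--Vietoris from the cartesian square $A_{12}=A_1\times_{A_1\otimes A_2}A_2$ (equivalently, from the pushout of the fibres $[1\to A]$), and use $\underline{\operatorname{RHom}}([1\to\varinjlim_i A_i],X)=\varprojlim_i\underline{\operatorname{RHom}}([1\to A_i],X)$; your explicit check that $\varprojlim_i X_i$ restricts back to each $X_j$ is, if anything, more careful than the paper's brief ``passage to limits''. Only fix the direction slip at the start: for $V\subset U$ one has $Z_U\subset Z_V$, so the map goes $A_V\to A_U$ with $A_U\otimes A_V=A_U$ and $C(Z_U)\subset C(Z_V)$, which is exactly what makes the quotient functor $C/C(Z_U)\to C/C(Z_V)$ exist.
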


\begin{proof} The sheaf property can be stated in two pieces. First, for any two open $U$, $U'$, one gets a cartesian diagram
\[\xymatrix{
C(U\cup U')\ar[r]\ar[d] & C(U)\ar[d]\\
C(U')\ar[r] & C(U\cap U').
}\]
To show this, consider the corresponding idempotent algebras $A$ and $A'$. For fully faithfulness, one needs to see that for all $X\in C(U\cup U')$ (considered as a full subcategory of $C$ via $j_{U\cup U'\ast}$), the square
\[\xymatrix{
X\ar[r]\ar[d] & \underline{\operatorname{RHom}}([1\to A],X)\ar[d]\\
\underline{\operatorname{RHom}}([1\to A'],X)\ar[r] & \underline{\operatorname{RHom}}([1\to A\otimes A'],X)
}\]
is cartesian. But this reduces to $X=\underline{\operatorname{RHom}}([1\to B],X)$ where $B=[A\oplus A'\to A\otimes A']$ corresponds to $U\cup U'$. Essential surjectivity reduces to a similar straightforward computation.

The other thing to check is that for a filtered union $U=\bigcup_i U_i$, the functor
\[
C(U)\to \varprojlim_i C(U_i)
\]
is an isomorphism (where in fact all transition functors are full inclusions, so the limit is an intersection). Again, fully faithfulness reduces to $X\to \varprojlim_i \underline{\operatorname{RHom}}([1\to A_i],X)$ being an isomorphism (where $U_i$ corresponds to $A_i$). But the right-hand side is $\underline{\operatorname{RHom}}([1\to A],X)$ for $A=\varinjlim_i A_i$, which corresponds to $U$. As both categories $C(U)$ and $\varprojlim_i C(U_i)$ are full subcategories of $C$, essential surjectivity reduces to the assertion that if $X\in C$ is such that $X=\underline{\operatorname{RHom}}([1\to A_i],X)$ for all $i$, then also $X=\underline{\operatorname{RHom}}([1\to A],X)$, which follows by passage to limits.
\end{proof}

Now we want to apply this formal nonsense to $C=\mathcal D(\operatorname{Liq}_p(\mathbb C[T]))$. To get things off the ground, we need examples of idempotent algebras, which ought to correspond to the closed subsets $\{|T|\leq 1\}$ and $\{|T|\geq 1\}$. For this, we take the corresponding ``rings of overconvergent holomorphic functions''. Concretely, let
\[
A(\{|T|\leq 1\}) := \bigcup_{r>1} \{\sum_{n=0}^\infty a_n T^n\mid a_n\in \mathbb C, a_n r^n\to 0\} = \bigcup_{r>1} \mathcal M_{<p}(\{T^n/r^n\}_{n\geq 0})
\]
and
\[
A(\{|T|\geq 1\}) := \bigcup_{r<1} \{\sum_{n<<\infty} a_n T^n\mid a_n\in \mathbb C, a_n r^n\to 0\} = \bigcup_{r<1} \bigcup_{m\geq 0} \mathcal M_{<p}(\{T^n/r^n\}_{n\leq m}).
\]
The following is the only real computation we need to do.

\begin{proposition}\label{prop:computationunitdisc} The liquid $\mathbb C[T]$-algebras $A(\{|T|\leq 1\})$ and $A(\{|T|\geq 1\})$ are idempotent. The corresponding localization of $\mathbb C[T]$ is given by
\[
\mathcal O(\{|T|>1\}) = \bigcup_{m\geq 0} \bigcap_{r>1} \{\sum_{n=m}^{-\infty} a_n T^n\mid a_n\in \mathbb C, a_n r^n\to 0\} = \bigcup_{m\geq 0} \bigcap_{r>1} \mathcal M_{<p}(\{T^n/r^n\}_{n\leq m})
\]
resp.
\[
\mathcal O(\{|T|<1\}) = \bigcap_{r<1} \{\sum_{n=0}^\infty a_n T^n\mid a_n\in \mathbb C, a_n r^n\to 0\} = \bigcap_{r<1} \mathcal M_{<p}(\{T^n/r^n\}_{n\geq 0}) = \mathcal O(D(0,1)).
\]
\end{proposition}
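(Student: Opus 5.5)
The strategy is to reduce idempotence to an explicit two-term resolution. Write $A=A(\{|T|\leq 1\})$ and let $A_r=\mathcal M_{<p}(\{T^n/r^n\}_{n\geq 0})$ for $r>1$, so that $A=\varinjlim_r A_r$. Since the liquid tensor product over $\mathbb C[T]$ commutes with colimits, it suffices to understand $A_r\otimes^{\mathbb L}_{\mathbb C[T]} A_{r'}$ and then pass to the colimit. The first step is to resolve $A_{r'}$ over $\mathbb C[T]$. Over $\mathbb C$ we can write $A_{r'}\otimes_{\mathbb C}\mathbb C[T]$ as $A_{r'}[U]$, meaning liquid-module polynomials in a new variable $U$. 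The key claim is then that
\[
0\to A_{r'}[U]\xrightarrow{U-T} A_{r'}[U]\to A_{r'}\to 0
\]
is exact, so $A_{r'}$ is perfect over $A_{r'}\otimes_{\mathbb C}\mathbb C[T]$ in the usual Koszul way. It follows that
\[
A_r\otimes^{\mathbb L}_{\mathbb C[T]} A_{r'}=[A_r\otimes_{\mathbb C} A_{r'}\xrightarrow{T_1-T_2} A_r\otimes_{\mathbb C} A_{r'}].
\]
By the Künneth results of Lecture IV, in the form of Proposition~\ref{locprofflat} applied to the discrete countable index set $\mathbb N$, the term $A_r\otimes A_{r'}$ is the flat space $\mathcal M_{<p}(\{T_1^aT_2^b/r^ar'^b\})$ of two-variable series. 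So the derived tensor product is the cone of multiplication by $T_1-T_2$ on a space of two-variable series.

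The second step is to compute this cone in the colimit over $r,r'\to 1^+$. Multiplication by $T_1-T_2$ is injective on power series, since $\mathbb C[[T_1,T_2]]$ is a domain, so the cone is concentrated in a single degree and equals the cokernel. For the cokernel, use the division identity
\[
f(T_1,T_2)=f(T_1,T_1)+(T_1-T_2)\,g,\qquad g=\frac{f(T_1,T_2)-f(T_1,T_1)}{T_1-T_2}=\sum a_{ab}\,T_1^{a}\frac{T_2^b-T_1^b}{T_2-T_1}.
\]
Expanding $(T_2^b-T_1^b)/(T_2-T_1)=\sum_{i+j=b-1}T_1^iT_2^j$ gives coefficient sums with at most $b$ terms. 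These are controlled by shrinking the radius: the series converges with radius $r''<\min(r,r')$, because the polynomial growth factor $b$ is absorbed by the exponential gain $(r''/r)^n$. This is the same "rapid decay erases sequence space differences" principle used before, here with $\ell^q$ estimates under multiplication by geometric sequences. Hence the cokernel system is pro-isomorphic to $A_{r''}$, and in the colimit the map $A\to A\otimes^{\mathbb L}_{\mathbb C[T]}A$ becomes an isomorphism.

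The same argument applies to $\{|T|\geq1\}$ using Laurent tails, with the variable $T^{-1}$ in place of $T$. The only change is that there are finitely many positive powers, and these are handled by the $\bigcup_m$. Alternatively, one can reduce to the previous case by inverting $T$, noting that $\mathbb C[T,T^{-1}]$ is itself idempotent over $\mathbb C[T]$.

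The third step computes the localizations $j_{U*}j_U^*\mathbb C[T]=\underline{\operatorname{RHom}}([\mathbb C[T]\to A],\mathbb C[T])$. Using the Koszul resolution again, $\underline{\operatorname{RHom}}_{\mathbb C[T]}(A_r,\mathbb C[T])$ becomes $R\varprojlim_r$ of duals. By Lecture IV Exercise~2, the dual of $\mathcal M_{<p}(\mathbb N)$ is the space $c_0$ of null sequences. Dualizing the weights $T^n/r^n$ gives $c_0$-type spaces weighted by $r^n$, with the index shifted by one because of the $U-T$ resolution, which turns $T^n$ into $T^{-n-1}$. The complement of $\{|T|\leq1\}$ therefore yields series $\sum_{n\leq m}a_nT^n$ decaying for all $r>1$, which is the claimed formula. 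The inverse limit has vanishing $\varprojlim^1$ by the Mittag-Leffler lemma, since the transition maps are trace-class with dense image. For $\{|T|\geq 1\}$, the dual of the colimit over $m$ and $r<1$ gives $\bigcap_{r<1}$ of power series in $T$. Finally, one checks that the cofiber of $\mathbb C[T]\to\cdot$ matches, so that the resulting object is exactly the power series, not polynomials plus a shift.

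The main obstacle is the second step: establishing, with the correct condensed/liquid structure rather than only pointwise, that the cokernel of $T_1-T_2$ on two-variable overconvergent series is pro-isomorphic to the one-variable space. This requires the explicit bound for the division by $T_1-T_2$, uniform on the compact $\ell^q$-balls.
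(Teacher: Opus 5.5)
Your proposal is correct and is essentially the paper's proof. It proves idempotence of $A(\{|T|\leq 1\})$ via the two-variable overconvergent series modulo the nonzerodivisor $U-T$ with explicit division, handles $\{|T|\geq 1\}$ by inverting $T$ ($T\mapsto T^{-1}$), and computes the localizations via the $U-T$ Koszul computation of duals into weighted $c_0$-spaces followed by Mittag-Leffler.

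The one organizational difference is in the localization step. The paper computes only $\mathcal O(\{|T|<1\})$, where the cokernel of $\mathbb C[T]\hookrightarrow A(\{|T|\geq 1\})$ is the clean space $\bigcup_{r<1}\mathcal M_{<p}(\{T^n/r^n\}_{n<0})$, so the shift $[1]$ cancels directly. It uses internal compact projectivity of $\mathcal M_{<p}(\mathbb N)$ among $\omega_1$-condensed liquid spaces to commute the dual with $\bigoplus_m\mathbb C\cdot T^m$; your proposal should cite this too. It then obtains $\mathcal O(\{|T|>1\})$ by $T\mapsto T^{-1}$, which avoids your vaguer final step of identifying the cofiber of $\underline{\operatorname{RHom}}_{\mathbb C[T]}(A,\mathbb C[T])\to\mathbb C[T]$ with the Laurent-series module.
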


We note that in the first case, the localization gives only those holomorphic functions that are meromorphic at $\infty$ --- generally, the algebraicity at the boundary is remembered (which is what makes GAGA theorems in the affine case possible).

\begin{proof} We first show that $A(\{|T|\leq 1\})$ is idempotent. This implies that $A(\{|T|\geq 1\})$ is idempotent as well, as it is in fact a $\mathbb C[T^{\pm 1}]$-algebra obtained from the first by base change along $T\mapsto T^{-1}$. Now, as $p$-liquid $\mathbb C$-vector spaces, the tensor product of $A=A(\{|T|\leq 1\})$ with itself is
\[
\bigcup_{r_1,r_2>1} \mathcal M_{<p}(\{T^n U^m/r_1^n r_2^m\}_{n,m\geq 0}) = \bigcup_{r>1} \{\sum_{n,m=0}^\infty a_{n,m} T^n U^m\mid a_{n,m}\in \mathbb C, a_{n,m} r^{n+m}\to 0\}
\]
where the equality again follows as all transition maps multiply by rapidly decaying sequences. Now, to compute the tensor product over $\mathbb C[T]$ instead, we need to mod out by the ideal $U-T$ (where $U-T$ is evidently a nonzerodivisor). It is now a simple exercise that
\[
\bigcup_{r>1} \{\sum_{n,m=0}^\infty a_{n,m} T^n U^m\mid a_{n,m}\in \mathbb C, a_{n,m} r^{n+m}\to 0\} / (U-T)\to \bigcup_{r>1} \{\sum_{n=0}^\infty a_n T^n\mid a_n\in \mathbb C, a_n r^n\to 0\}
\]
is an isomorphism. Indeed, for all elements in the kernel, one can explicitly write down division by $U-T$, and checks a very simple convergence condition.

It remains to compute the corresponding localizations. Here, it is enough to do it in the second case: Indeed, the first localization must be a $\mathbb C[T^{\pm 1}]$-algebra, which is necessarily obtained from the second localization via base change along $T\mapsto T^{-1}$. Thus, we need to compute
\[
\underline{\operatorname{RHom}}_{\mathbb C[T]}([\mathbb C[T]\to A(\{|T|\geq 1\})],\mathbb C[T]).
\]
Note that $\mathbb C[T]$ injects into $A(\{|T|\geq 1\})$, with quotient
\[
\bigcup_{r<1} \mathcal M_{<p}(\{T^n/r^n\}_{n<0}).
\]
Commuting the union with the dual, we get
\[
R\varprojlim_{r<1} \underline{\operatorname{RHom}}_{\mathbb C[T]}(\mathcal M_{<p}(\{T^n/r^n\}_{n<0}),\mathbb C[T])[1].
\]
To compute the dual, we first compute the dual over $\mathbb C$ (instead of $\mathbb C[T]$). Then, as $\mathcal M_{<p}(\mathbb N)$ is internally compact projective in $\mathrm{Liq}_p\cap \mathrm{CondAb}_{\omega_1}$, the internal RHom sits in degree $0$, and commutes with the infinite direct sum $\mathbb C[T] = \bigoplus_{m\geq 0}\mathbb C\cdot T^m$. Renaming $T$ by $U$ temporarily, we get
\[
\underline{\operatorname{RHom}}_{\mathbb C}(\mathcal M_{<p}(\{T^n/r^n\}_{n<0}),\mathbb C[U]) = \bigoplus_{m\geq 0} C_0(\{T^n/r^n\}_{n<0},\mathbb C)\cdot U^m.
\]
The space $C_0(\{T^n/r^n\}_{n<0},\mathbb C)$ of continuous maps vanishing at $\infty$ is naturally an $\ell_0$-sequence space with basis $(T^n/r^n)_{n>0}$. To compute the internal RHom over $\mathbb C[T]$ instead, we need to quotient by $T-U$. This will reintroduce the $T^0$-term, and in the limit over $r$ we get
\[
R\varprojlim_{r<1} C_0(\{T^n/r^n\}_{n\geq 0},\mathbb C).
\]
By the last lecture, the derived limit is concentrated in degree $0$, and the precise choice of sequence space does not matter, so we get
\[
\mathcal O(\{|T|<1\}) = \bigcap_{r<1} \{\sum_{n=0}^\infty a_n T^n\mid a_n\in \mathbb C, a_n r^n\to 0\} = \bigcap_{r<1} \mathcal M_{<p}(\{T^n/r^n\}_{n\geq 0}).\qedhere
\]
\end{proof}

In order to apply this, we need to get some understanding of $\mathcal S(\mathbb C[T]) := \mathcal S(\mathcal D(\mathrm{Liq}_p(\mathbb C[T])))$. It is impossible to describe this whole locale, as there are way too many idempotent algebras: One can give many different growth conditions on the coefficients $a_n$ that define idempotent algebras. (This makes it possible to localize to very precise growth conditions, and hence analyze algebras of power series with growth conditions that are not convergent for any value of $T$ in $\mathbb C$! In other words, the formulas still make sense, even if they do not define functions.) We will restrict to the part of the locale that can be probed with obvious variants of the above idempotent algebras.

In fact, we slightly generalize the setting, and consider any (abstract, i.e.~noncondensed) $\mathbb C$-algebra $R$, and the corresponding locale $\mathcal S(R) = \mathcal S(\mathcal D(\mathrm{Liq}_p(R)))$.\footnote{We could also consider liquid $\mathbb C$-algebras, but for the moment the whole discussion will only use the underlying abstract $\mathbb C$-algebra.} For any $f\in R$, we get the map $\mathbb C[T]\to R$ sending $T$ to $f$. This defines a map
\[
\mathcal S(R)\to \mathcal S(\mathbb C[T])
\]
(in terms of idempotent algebras, this amounts to base changing them from $\mathbb C[T]$ to $R$). In particular, we can define closed subsets
\[
\{|f|\leq 1\},\{|f|\geq 1\}\subset \mathcal S(R)
\]
as the preimages of
\[
\{|T|\leq 1\},\{|T|\geq 1\}\subset \mathcal S(\mathbb C[T]).
\]
As further notation, for any $r>0$, we write
\[
\{|f|\leq r\} := \{|f/r|\leq 1\}\subset \mathcal S(R),\ \{|f|\geq r\} := \{|f/r|\geq 1\}\subset \mathcal S(R).
\]

A priori, it is unclear how these different subsets of $\mathcal S(R)$ interact; each such question is a question about the behaviour of the corresponding idempotent algebras. There are some obvious expected properties that are simple to verify:

\begin{proposition}\label{prop:propertiessubsets} Let $f,g\in R$, $\alpha\in \mathbb C$ and $r,s>0$.
\begin{enumerate}
\item $\{|f|\leq 1\}=\bigcap_{r>1} \{|f|\leq r\}$ and $\{|f|\geq 1\}=\bigcap_{r<1} \{|f|\geq r\}$;
\item $\{|f|\leq 1\}\cup \{|f|\geq 1\}=\mathcal S(R)$;
\item for $r<1$, one has $\{|f|\leq r\}\cap \{|f|\geq 1\}=\emptyset$;
\item $\{|f|\leq 1\}\cap \{|g|\leq 1\}\subset \{|fg|\leq 1\}$;
\item $\{|f|\geq 1\}\cap \{|g|\geq 1\}\subset \{|fg|\geq 1\}$;
\item $\{|\alpha|\leq 1\} = \mathcal S(R)$ if $|\alpha|\leq 1$;
\item $\{|\alpha|\geq 1\} = \mathcal S(R)$ if $|\alpha|\geq 1$;
\item $\{|f|\leq r\}\cap \{|g|\leq s\}\subset \{|f+g|\leq r+s\}$.
\end{enumerate}
\end{proposition}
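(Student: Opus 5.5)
The strategy is to reduce every item to a statement about idempotent $\mathbb C[T]$-algebras (or $\mathbb C[T_1,T_2]$-algebras) and then base change along $\mathbb C[T]\to R$, $T\mapsto f$, or along $\mathbb C[T_1,T_2]\to R$, $T_1\mapsto f$, $T_2\mapsto g$. The map $\mathcal S(R)\to\mathcal S(\mathbb C[T])$ comes from the symmetric monoidal base change $-\otimes_{\mathbb C[T]}R$. This preserves colimits, tensor products and cofibres, so it preserves the locale operations: intersection becomes tensor product, union becomes the cofibre $[A\oplus A'\to A\otimes A']$, and filtered intersection becomes a colimit. It therefore suffices to prove each item universally. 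For (4), (5) and (8) the universal case is $\mathcal S(\mathbb C[T_1,T_2])$, and there I use that $\{|T_1|\leq 1\}\cap\{|T_2|\leq 1\}$ corresponds to $A(\{|T_1|\leq 1\})\otimes A(\{|T_2|\leq 1\})$. By the computation in the proof of Proposition~\ref{prop:computationunitdisc}, this is $\bigcup_{r>1}$ of two-variable power series with $a_{n,m}r^{n+m}\to 0$, because liquid tensor products of these measure spaces multiply index sets.

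The items go as follows.

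\textbf{(1)} $\{|T|\leq r\}$ is given by $A_r:=\bigcup_{r'>r}\mathcal M_{<p}(\{T^n/r'^n\})$, and $\varinjlim_{r\searrow 1}A_r=A(\{|T|\leq 1\})$ because the union over $r'>1$ is the same as the union over $r>1$ and $r'>r$. The same argument works for $\geq$.

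\textbf{(6)} and \textbf{(7)} Here $\{|\alpha|\leq 1\}$ corresponds to $A(\{|T|\leq 1\})\otimes_{\mathbb C[T]}\mathbb C$ with $T\mapsto\alpha$. This is the quotient of overconvergent series by $(T-\alpha)$, which is evaluation at $\alpha$ and equals $\mathbb C$ when $|\alpha|<r$ for all $r>1$. Division by $T-\alpha$ is explicit, and the Koszul complex has no $\mathrm{Tor}_1$ since $T-\alpha$ is a nonzerodivisor. This gives the unit algebra, i.e.\ the whole space. Item (7) follows by the symmetry $T\mapsto T^{-1}$.

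\textbf{(3)} We need $A_r\otimes_{\mathbb C[T]}A(\{|T|\geq 1\})=0$. Write the tensor product over $\mathbb C$ as Laurent-type series in $T,U^{-1}$ and then quotient by $T-U$. Here $T-U=-U(1-TU^{-1})$, and $1-TU^{-1}$ is invertible in the tensor product because $\sum (TU^{-1})^k$ converges: the coefficients of $T^kU^{-k}$ are bounded by $(r'/s)^k$ with $r'<s<1$, chosen using $r<1$. So $T-U$ is a unit and the quotient is zero.

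\textbf{(2)} We must show that $[A\oplus A'\to A\otimes A']$ is $\mathbb C[T]$, where $A=A(\{|T|\leq 1\})$ and $A'=A(\{|T|\geq 1\})$. This is a Mayer--Vietoris statement. Compute $A\otimes_{\mathbb C[T]}A'$ as in Proposition~\ref{prop:computationunitdisc}. Identify it with $\bigcup_{r<1<s}$ of Laurent series convergent on the closed annulus $r\leq|T|\leq s$; in the union these are the overconvergent functions on $|T|=1$. The sequence
\[
0\to\mathbb C[T]\to A\oplus A'\to A\otimes A'\to 0
\]
is then exact by splitting a Laurent series into its parts with $n\geq 0$ and $n<0$. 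Both parts land in the right spaces, and the intersection of $A$ and $A'$ inside $A\otimes A'$ is exactly the polynomials.

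\textbf{(4)} and \textbf{(5)} Base change along $\mathbb C[T]\to\mathbb C[T_1,T_2]$, $T\mapsto T_1T_2$. Given $A(\{|T_1T_2|\leq 1\})$, there is a map of $\mathbb C[T_1,T_2]$-algebras into the two-variable algebra described above: overconvergent series in $T_1T_2$ converge there, since $r^{2n}$ growth is dominated. For idempotent algebras, such a map under $1$ exhibits the inclusion $Z\subset Z'$. Item (5) is the same after inverting the variables.

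\textbf{(8)} Map $A(\{|T|\leq r+s\})$, with $T\mapsto T_1+T_2$, to the tensor product of the two algebras for $|T_1|\leq r$ and $|T_2|\leq s$. We need that $\sum a_n(T_1+T_2)^n$ converges in $\bigcup_{r'>r,s'>s}$. Expanding binomially, the coefficient bound $\sum_n|a_n|(r'+s')^n<\infty$ handles the $\ell^q$-type conditions after absorbing the rapid decay, as in Lecture IV.

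Beyond checking that these maps are maps of algebras under $1$, I expect the main obstacle to be item (2). There one has to verify exactness of the Mayer--Vietoris sequence as condensed objects, not just on underlying sets. In particular, the surjectivity of $A\oplus A'\to A\otimes A'$ and the identification of the derived tensor product $A\otimes_{\mathbb C[T]}A'$ (no higher $\mathrm{Tor}$) must hold in $\mathrm{Liq}_p$. I would handle this by using the explicit presentation of $A\otimes_{\mathbb C}A'$ modulo the nonzerodivisor $T-U$, as in the proof of Proposition~\ref{prop:computationunitdisc}. I would reduce all comparisons of growth conditions to multiplication by exponentially decaying sequences, so that the choice of sequence space is irrelevant.
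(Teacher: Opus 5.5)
Your proposal is correct and follows essentially the paper's route: reduce to the universal cases over $\mathbb C[T]$ or $\mathbb C[T,U]$, obtain (1) from filtered colimits of the explicit idempotent algebras, (2) from splitting overconvergent Laurent series on $|T|=1$ into nonnegative and negative parts, and (8) from an explicit map of algebras under $1$ via the binomial estimate $|a_{n+m}|\binom{n+m}{n}r'^ns'^m\leq|a_{n+m}|(r'+s')^{n+m}$; your arguments for (3)--(7), which the paper omits, are sound. One wording slip: $Z\cup Z'$ corresponds to the fibre $A\times_{A\otimes A'}A'$ of $A\oplus A'\to A\otimes A'$, not the cofibre, and the fibre is what your exact sequence in (2) actually computes.
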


\begin{proof} Each of these follows by unraveling definitions, and is a nice exercise. Let us do some to give the spirit. For (1), it suffices to do the first in the universal case $\mathbb C[T]$ with $f=T$, and then recall that intersections of closed subsets correspond to filtered colimits of idempotent algebras, and then this follows directly from the definition of the idempotent algebras $A(\{|T|\leq 1\})$ and $A(\{|T|\geq 1\})$. For (2), again we can assume $f=T\in R=\mathbb C[T]$. The union $\{|T|\leq 1\}\cup \{|T|\geq 1\}$ corresponds to the idempotent algebra
\[
[A(\{|T|\leq 1\})\oplus A(\{|T|\geq 1\})\to A(\{|T|\leq 1\})\otimes_{\mathbb C[T]_{<p}} A(\{|T|\geq 1\})].
\]
Now one first computes that
\[
A(\{|T|\leq 1\})\otimes_{\mathbb C[T]_{<p}} A(\{|T|\geq 1\}) = \bigcup_{r>1,s<1} \{\sum_{n=-\infty}^\infty a_n T^n\mid a_n s^n\xrightarrow{n\to -\infty} 0, a_n r^n\xrightarrow{n\to \infty} 0\}
\]
are the overconvergent holomorphic functions on $\{|T|=1\}$. This is a simple analogue of the idempotence of $A(\{|T|\leq 1\})$. Splitting this according to negative and nonnegative powers of $T$, one sees that the map
\[
A(\{|T|\leq 1\})\oplus A(\{|T|\geq 1\})\to A(\{|T|\leq 1\})\otimes_{\mathbb C[T]_{<p}} A(\{|T|\geq 1\})
\]
is surjective, and that the only elements in the kernel are $\mathbb C[T]$, which gives the idempotent algebra $\mathbb C[T]$ corresponding to all of $\mathcal S(\mathbb C[T])$.

We omit the proofs of (3)--(7). For (8), it suffices to consider the universal case $\mathbb C[f,g]$, i.e.~$\mathbb C[T,U]$ with $f=T$ and $g=U$. The left-hand side corresponds to the algebra
\[
\bigcup_{r'>r,s'>s} \{\sum_{n,m\geq 0} a_{n,m} T^n U^m\mid a_{n,m} r'^n s'^m\to 0\},
\]
the right-hand side to the algebra
\[
\mathbb C[T,U]\otimes_{\mathbb C[V]} \bigcup_{t>r+s} \{\sum_n a_n V^n\mid a_n t^n\to 0\}
\]
where the map $\mathbb C[V]\to \mathbb C[T,U]$ sends $V$ to $T+U$. It is enough to construct a map
\[
\bigcup_{t>r+s} \{\sum_n a_n V^n\mid a_n t^n\to 0\}\to \bigcup_{r'>r,s'>s} \{\sum_{n,m\geq 0} a_{n,m} T^n U^m\mid a_{n,m} r'^n s'^m\to 0\}
\]
linear over $\mathbb C[V]\to \mathbb C[T,U]: V\mapsto T+U$. But this sends $\sum_n a_n V^n$ to
\[
\sum_{n\geq 0} a_n (T+U)^n = \sum_{n\geq 0} \sum_{m=0}^n a_n \binom{n}{m} T^m U^{n-m} = \sum_{n,m\geq 0} a_{n+m} \binom{n+m}{n} T^n U^m.
\]
Now if $a_n t^n\to 0$ for some $t>r+s$ then if we choose $r'>r$ and $s'>s$ so that $t\geq r'+s'$ then also
\[
|a_{n+m}| \binom{n+m}{n} r'^n s'^m\leq |a_{n+m}| (r'+s')^{n+m}\leq |a_{n+m}| t^{n+m}
\]
converges to $0$, giving the desired map.
\end{proof}

Let us unravel what this means for $\mathcal S(\mathbb C[T])$.

\begin{corollary} On the open subset $U=\bigcup_{r>0} \{|T|<r\}\subset \mathcal S(\mathbb C[T])$, one gets a unique continuous map
\[
U\xrightarrow{|T|} \mathbb R_{\geq 0}
\]
so that the preimage of $[0,r]$ is $\{|T|\leq r\}$ and the preimage of $[r,\infty)$ is $\{|T|\geq r\}\cap U$.
\end{corollary}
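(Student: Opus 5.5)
We will construct the map by specifying the preimages of a subbasis of closed subsets of $\mathbb R_{\geq 0}$ and checking the locale axioms. A continuous map of locales $U\to \mathbb R_{\geq 0}$ is a frame homomorphism from the open subsets of $\mathbb R_{\geq 0}$ to the open subsets of $U$. Equivalently, it is a map on closed subsets preserving finite unions and arbitrary intersections. Since $\mathbb R_{\geq 0}$ is generated by the closed sets $[0,r]$ and $[r,\infty)$, and the topology of $\mathbb R_{\geq 0}$ is presented by these generators and the relations among them, it suffices to do two things. First, assign $[0,r]\mapsto \{|T|\le r\}$ and $[r,\infty)\mapsto \{|T|\ge r\}\cap U$. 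Second, verify that the assignment respects the defining relations of the frame of $\mathbb R_{\geq 0}$ (as a presented locale, in the style of the localic reals). Uniqueness is then automatic, because a frame map is determined by its values on generators.

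The relations to check come directly from Proposition~\ref{prop:propertiessubsets}, applied with $f=T/r$ for various $r$ (note $\{|T|\le r\}=\{|T/r|\le 1\}$ by definition). We need the following.

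(a) Monotonicity: $\{|T|\le r\}\subset\{|T|\le s\}$ and $\{|T|\ge s\}\subset \{|T|\ge r\}$ for $r\le s$. This follows from (8) applied with $g=0$, or from (4) and (6) with $f=T/s=(r/s)(T/r)$.

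(b) Continuity from outside: $[0,r]=\bigcap_{s>r}[0,s]$ and $[r,\infty)=\bigcap_{s<r}[s,\infty)$. These correspond to item (1).

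(c) Covering: $[0,r]\cup[r,\infty)=\mathbb R_{\geq 0}$. This is item (2).

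(d) Disjointness: $[0,r]\cap[s,\infty)=\emptyset$ for $r<s$. This is item (3), after rescaling by $s$.

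(e) Exhaustion: the whole space is $\bigcup_r [0,r)$, i.e.\ the union of the complements of $[r,\infty)$. This is exactly where we restrict to $U=\bigcup_{r>0}\{|T|<r\}$. The closed sets $\{|T|\ge r\}\cap U$ must have empty intersection over $r$ inside $U$, and that is the definition of $U$ together with (a).

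We would then deduce the formulas for general closed sets. An arbitrary closed subset of $\mathbb R_{\geq 0}$ is an intersection of finite unions of the generators. We extend the assignment by forming intersections of finite unions, and well-definedness reduces to the relations above together with the infinite distributive law in the locale $\mathcal S(\mathbb C[T])$.

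The main obstacle is this last well-definedness and completeness step. We must make sure that (a)--(e) genuinely present the frame of $\mathbb R_{\geq 0}$. Concretely, we need that every inclusion among finite unions and intersections of generators that holds in $\mathbb R_{\geq 0}$ is forced by (a)--(e). For example, $[0,r]\cap[r,\infty)=\{r\}$ requires no further relation, but the interplay of the intersections in (b) with unions must be handled. We would treat this by invoking the standard presentation of the locale of reals by rational or real cuts (Joyal's presentation). Its relations are exactly monotonicity, roundedness (b), located-ness (c) and (d), and unboundedness (e). Since that locale is spatial and equal to $\mathbb R_{\geq 0}$, the verified relations suffice.
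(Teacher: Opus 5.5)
Your proposal is correct and follows the paper's route. The paper also verifies, via Proposition~\ref{prop:propertiessubsets}, essentially your conditions (a)--(e) (roundedness, monotonicity, disjointness, covering, exhaustion), and then applies Lemma~\ref{lem:localemaptoRge0}, which identifies such families of closed subsets with maps of locales to $\mathbb R_{\geq 0}$. The only difference is how that lemma is justified: the paper sketches a direct proof (reducing to $[0,c]$ and approximating closed sets by finite unions of closed intervals), whereas you cite Joyal's presentation of the localic reals, which is equally valid.
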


Indeed, this follows from Proposition~\ref{prop:propertiessubsets} and the following general lemma.

\begin{lemma}\label{lem:localemaptoRge0} Let $\mathcal S$ be any locale. Then maps $\mathcal S\to \mathbb R_{\geq 0}$ are equivalent to collections of closed subsets $\mathcal S(\leq r),\mathcal S(\geq r)\subset \mathcal S$ for all $r>0$, subject to the following conditions:
\begin{enumerate}
\item For all $r>0$, one has $\mathcal S(\leq r)=\bigcap_{s>r} \mathcal S(\leq s)$ and $\mathcal S(\geq r)=\bigcap_{s<r} \mathcal S(\geq s)$.
\item For $r<s$, one has $\mathcal S(\leq r)\subset \mathcal S(\leq s)$, $\mathcal S(\geq s)\subset \mathcal S(\geq r)$, $\mathcal S(\leq r)\cap \mathcal S(\geq s)=\emptyset$ and $\mathcal S(\geq r)\cup \mathcal S(\leq s)=\mathcal S$.
\item One has $\bigcap_r \mathcal S(\geq r)=\emptyset$ as $r$ gets large.
\end{enumerate}
Here, a map $f: \mathcal S\to \mathbb R_{\geq 0}$ is mapped to $\mathcal S(\leq r)=f^{-1}([0,r])$ and $\mathcal S(\geq r)=f^{-1}([r,\infty))$.
\end{lemma}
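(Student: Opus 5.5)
The plan is to use the standard description of maps of locales: a map $f:\mathcal S\to \mathbb R_{\geq 0}$ is a frame homomorphism $f^{-1}:\mathcal O(\mathbb R_{\geq 0})\to \mathcal O(\mathcal S)$. Such a homomorphism is determined by its values on a subbasis, provided the subbasis relations are respected. Working with closed subsets, I will take the subbasis of closed sets $[0,r]$ and $[r,\infty)$ for $r>0$; complementarily, the subbasic opens are $[0,r)$ and $(r,\infty)$.

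The first step is to recall that $\mathcal O(\mathbb R_{\geq 0})$ has a presentation as a frame. It is generated by symbols $L_r=[0,r)$ and $R_r=(r,\infty)$, with the following relations.
\begin{enumerate}
\item $L_r=\bigcup_{s<r}L_s$ and $R_r=\bigcup_{s>r}R_s$.
\item $L_r\leq L_s$ and $R_s\leq R_r$ for $r<s$.
\item $L_r\cap R_s=\emptyset$ for $r\leq s$, and $L_s\cup R_r=\top$ for $r<s$.
\item $\top=\bigcup_r L_r$.
\end{enumerate}
This is the usual presentation of the formal reals (Joyal, Johnstone's \emph{Stone Spaces}), restricted to $\mathbb R_{\geq 0}$, where we drop the generator family needed for the lower end since $L_r$ already covers $0$. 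Because $\mathbb R_{\geq 0}$ is locally compact Hausdorff, hence sober, the locale so presented is the space itself. Passing to complements turns these relations exactly into (1)--(3) of the lemma, with $\mathcal S(\leq r)$ the complement of $f^{-1}(R_r)$ and $\mathcal S(\geq r)$ the complement of $f^{-1}(L_r)$. Concretely:
\begin{itemize}
\item intersections of closed sets correspond to unions of opens;
\item the disjointness $\mathcal S(\leq r)\cap\mathcal S(\geq s)=\emptyset$ is dual to $L_s\cup R_r=\top$;
\item the covering $\mathcal S(\geq r)\cup\mathcal S(\leq s)=\mathcal S$ is dual to $L_s\cap R_r=\emptyset$, up to a shift in indices handled by (1);
\item condition (3) is dual to $\bigcup_r L_r=\top$.
\end{itemize}

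The second step handles both directions. In the forward direction, I verify that the given $f$ produces closed subsets satisfying (1)--(3). This is immediate because $f^{-1}$ preserves the relations, which hold in $\mathbb R_{\geq 0}$. In the converse direction, the universal property of the presented frame produces a unique frame map, i.e.\ a locale map $f$, with $f^{-1}([0,r])=\mathcal S(\leq r)$ and $f^{-1}([r,\infty))=\mathcal S(\geq r)$. The two constructions are then mutually inverse by uniqueness.

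The main obstacle is justifying the presentation, i.e.\ showing that the frame freely generated by these symbols modulo the relations is exactly the frame of opens of $\mathbb R_{\geq 0}$. I would prove this directly. Every open is a union of intervals $L_s\cap R_r$ with rational endpoints. The relations let one normalize any element of the presented frame to such a union. Compactness of closed intervals, via the Heine--Borel argument phrased with relation (1) and the covering relation (3), shows that two unions which agree as subsets of $\mathbb R_{\geq 0}$ are already equal in the presented frame. If this direct check becomes unwieldy, I would instead cite the known presentation of the locale of reals and restrict it to the closed sublocale $\mathbb R_{\geq 0}$.
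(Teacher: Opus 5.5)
Your proof is correct, but it takes a different route from the paper.

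\textbf{The paper's route.} It argues by hand. It uses (2) and (3) to write $\mathcal S$ as the increasing union of the open sublocales $\mathcal S\setminus\mathcal S(\geq r)$, which reduces the problem to maps into a compact interval $[0,c]$. It then defines the preimage of an arbitrary closed $Z\subset[0,c]$ as the cofiltered intersection of the preimages of finite unions of closed intervals whose interiors contain $Z$. This is well defined because any two such systems are mutually cofinal. The paper asserts that this gives a map of locales, and uses (1) to check that $[0,r]$ and $[r,c]$ pull back to the given closed subsets.

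\textbf{Your route.} You package all of the topology of $\mathbb R_{\geq 0}$ into a frame presentation: Joyal's presentation of the reals, cut down to the closed sublocale by imposing $L_0=0$, which does reduce to exactly your relations. The bijection then follows from the universal property of a presented frame, including uniqueness and the fact that the two constructions are mutually inverse. Your dictionary between relations and conditions (1)--(3) also shows clearly why each condition is needed. The paper's reduction to $[0,c]$ plays the role of your relation $\top=\bigcup_r L_r$.

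\textbf{What it costs.} The real content moves into the presentation theorem. Its proof needs the same Heine--Borel compactness that the paper uses implicitly, in its cofinality claim and in ``it is easy to see that this determines a map of locales''. Citing the presentation for $\mathbb R$ (e.g.\ Johnstone, \emph{Stone Spaces}) and restricting to the closed sublocale, as you suggest, is the efficient way to supply it.

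\textbf{Small slips to fix.}
\begin{itemize}
\item The covering $\mathcal S(\geq r)\cup\mathcal S(\leq s)=\mathcal S$ is dual to $L_r\cap R_s=\emptyset$, not $L_s\cap R_r=\emptyset$; for $r<s$ the latter is the nonempty interval $(r,s)$. The case $r=s$ follows from (1), because finite unions of closed sublocales distribute over arbitrary intersections.
\item Sobriety of $\mathbb R_{\geq 0}$ does not by itself identify the presented frame with $\mathcal O(\mathbb R_{\geq 0})$; it is the compactness argument that does this.
\item Near $0$, the basic opens must include the intervals $L_s=[0,s)$ themselves, not only the sets $L_s\cap R_r$ with $r>0$.
\end{itemize}
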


\begin{proof} It is clear that $f$ is uniquely determined by this collection of closed subsets, and they satisfy these condition. We need to see that conversely, any such collection of closed subsets determines a map $f: \mathcal S\to \mathbb R_{\geq 0}$. Using ((2) and) (3), we can write $\mathcal S$ as the (increasing) union of the open sublocales $\mathcal S\setminus \mathcal S(\geq r)$, and it suffices to construct the map on each of those; we can thus assume that $\mathcal S(\geq r)=\emptyset$ for $r$ large. We can then replace $\mathbb R_{\geq 0}$ by some interval $[0,c]$. Now any closed subset $Z$ of $[0,c]$ can be written as a cofiltered intersection of closed subsets that are finite unions of closed intervals, and whose interior contains $Z$; and any two such cofiltered systems are cofinal. One already knows where to send intervals, hence their finite unions (which for disjoint intervals are disjoint by (2)) and hence their cofiltered intersections (as maps of locales commute with those). It is easy to see that this determines a map of locales. One needs to see that it recovers the given closed subsets, and for this one uses condition (1).
\end{proof}

In fact, we get the following more refined corollary. Here, we consider the Berkovich spectrum
\[
\mathcal M^{\mathrm{Berk}}(R) = \{|\cdot|: R\to \mathbb R_{\geq 0}\mid |fg|=|f||g|, |f+g|\leq |f|+|g|, |\cdot| |_{\mathbb C} = |\cdot|_{\mathbb C}\}
\]
of bounded multiplicative seminorms, equipped with the closed subspace topology from $\prod_{f\in R} \mathbb R_{\geq 0}$ (cf.~\cite{BerkovichSpectral}).

\begin{corollary}\label{cor:maptoberkovich} The map
\[
U\xrightarrow{|f|_{f\in \mathbb C[T]}} \prod_{f\in \mathbb C[T]} \mathbb R_{\geq 0}
\]
factors over the Berkovich spectrum $\mathcal M^{\mathrm{Berk}}(\mathbb C[T])$.
\end{corollary}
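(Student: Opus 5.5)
We have to show that the map $U\to\prod_{f}\mathbb R_{\geq 0}$ factors through the Berkovich spectrum. The plan is to reduce everything to Proposition~\ref{prop:propertiessubsets} and Lemma~\ref{lem:localemaptoRge0}.

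First, for each $f\in\mathbb C[T]$ we construct the map $|f|:U\to\mathbb R_{\geq 0}$. We pull back along $\mathcal S(\mathbb C[T])\to\mathcal S(\mathbb C[T])$, $T\mapsto f$, and take the closed subsets $\{|f|\leq r\}$ and $\{|f|\geq r\}\cap U$. Conditions (1) and (2) of Lemma~\ref{lem:localemaptoRge0} follow from Proposition~\ref{prop:propertiessubsets}(1)--(3); the monotonicity in (2) is (2) and (3) applied to $f/r$ after rescaling. Condition (3) needs that $U$ is exhausted by $U\setminus\{|f|\geq r\}$. On $\{|T|\leq s\}$ we get $\{|f|\leq r\}$ for $r=\sum|c_i|s^i$ by items (4), (6) and (8), and this is disjoint from $\{|f|\geq r'\}$ for $r'>r$ by (3). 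So the maps $|f|$ exist.

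The image lies in the closed subset $\mathcal M^{\mathrm{Berk}}\subset\prod_f\mathbb R_{\geq 0}$, which is cut out by the equations $|fg|=|f||g|$, $|f+g|\leq|f|+|g|$ and $|\alpha|=|\alpha|_{\mathbb C}$. A map of locales into a product factors through a closed subspace iff the preimage of the open complement is empty. Each defining condition involves only finitely many coordinates, so it suffices to check, for each relation, that the induced map $U\to\mathbb R_{\geq 0}^2$ or $\mathbb R_{\geq0}^3$ lands in the corresponding closed subset.

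We check these relations via closed subsets. For subadditivity, the complement of $\{(x,y,z):z\leq x+y\}$ is covered by the open boxes $\{x<r,\ y<s,\ z>r+s\}$ with $r,s$ rational. The preimage of such a box is contained in $\{|f|\leq r\}\cap\{|g|\leq s\}\cap(U\setminus\{|f+g|\leq r+s\})$, which is empty by (8). For constants we use (6), (7) and (3) applied to $\alpha/r$. For $|fg|\leq|f||g|$ we use (4) after rescaling $f/r$, $g/s$, noting that $\{|fg|\leq rs\}=\{|fg/rs|\leq 1\}$. For $|fg|\geq|f||g|$ we use (5) similarly, after intersecting with $U$. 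Each of these gives an empty preimage for each box in a cover of the complement, and hence empty preimage of the whole open complement.

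The main obstacle is the locale bookkeeping. One must justify that the preimage of the box $\{x<r\}$ is contained in the closed set $\{|f|\leq r\}$, and that open sets in $\mathbb R^n$ are unions of such boxes, so that emptiness on a cover suffices; this follows because maps of locales preserve arbitrary unions of opens. One must also handle $|f|=0$ or $|g|=0$ in the multiplicativity relation. There the inequality $|fg|\leq|f||g|$ with an $\epsilon$-argument, together with (4), suffices, since $|fg|\geq 0$ is automatic.
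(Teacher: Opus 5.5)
Your proposal is correct and follows the paper's proof. You show that each $|f|$ is well defined on $U$ via the triangle inequality, then reduce to finitely many coordinates at a time and check each defining relation of $\mathcal M^{\mathrm{Berk}}(\mathbb C[T])$ using Proposition~\ref{prop:propertiessubsets}. Compared with the paper, you spell out the locale bookkeeping (covering the open complements by boxes) and you also handle the condition on constants, which the paper leaves implicit. One small slip: the closed subsets defining $|f|$ on $U$ should be $\{|f|\leq r\}\cap U$, not $\{|f|\leq r\}$.
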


\begin{proof} First note that $U\subset \bigcup_{r>0} \{|f|<r\}$ for all $f\in \mathbb C[T]$, using the triangle inequality. This means that the map is well-defined. The condition to factor over the Berkovich spectrum is a collection of many conditions, each of which pertains only to at most two $|f|,|g|$: Namely, for any $f,g\in \mathbb C[T]$, we need to see that
\[
U\xrightarrow{|f|,|g|,|f+g|,|fg|} \mathbb R_{\geq 0}^4
\]
factors over the closed subset $\{(x,y,s,p)\mid xy=p,s\leq x+y\}\subset \mathbb R_{\geq 0}^4$. This follows easily from the conditions of Proposition~\ref{prop:propertiessubsets}.
\end{proof}

So far, everything would have worked in exactly the same way over a nonarchimedean field like $\mathbb Q_2$ in place of $\mathbb C$. The crucial difference is in the identification of the Berkovich spectrum, which is extremely simple for $\mathbb C$-algebras.

\begin{theorem}[{Ostrowski, \cite{Ostrowski}}]\label{thm:ostrowski} Let $R$ be any $\mathbb C$-algebra. Then any multiplicative seminorm $|\cdot|\in \mathcal M^{\mathrm{Berk}}(R)$ is given as a composite
\[
R\xrightarrow{f} \mathbb C\xrightarrow{|\cdot|_{\mathbb C}} \mathbb R_{\geq 0}
\]
for a unique $\mathbb C$-algebra homomorphism $R\to \mathbb C$, inducing a homeomorphism
\[
\mathcal M^{\mathrm{Berk}}(R) = \mathrm{Hom}_{\mathbb C}(R,\mathbb C).
\]
\end{theorem}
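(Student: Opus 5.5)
The strategy is to reduce to the classical Gelfand--Mazur theorem via the quotient by the kernel of the seminorm. Let $|\cdot|$ be a multiplicative seminorm on $R$ that restricts to the usual absolute value on $\mathbb C$. Its kernel $\mathfrak p=\{f\mid |f|=0\}$ is an ideal: it is closed under addition by the triangle inequality and under multiplication by $R$ by multiplicativity. It is prime, since $|fg|=|f||g|$, and proper, since $|1|=1$. So $|\cdot|$ descends to a multiplicative norm on the domain $R/\mathfrak p$, and then extends to its fraction field $K$ by $|a/b|=|a|/|b|$. Well-definedness, multiplicativity and the triangle inequality on $K$ are routine checks after clearing denominators. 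We thus obtain a field $K\supset\mathbb C$ with an absolute value extending $|\cdot|_{\mathbb C}$. It suffices to show $K=\mathbb C$: then $R\to R/\mathfrak p\subset K=\mathbb C$ is the required homomorphism, and it is unique because a homomorphism $\phi$ with $|f|=|\phi(f)|$ for all $f$ is determined by this data. Indeed, $|f-\phi(f)|=|\phi(f)-\phi(f)|=0$, so $f-\phi(f)\in\mathfrak p$, which forces $\phi(f)$ to be the image of $f$ in $R/\mathfrak p=\mathbb C$.

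The core step is the Gelfand--Mazur argument, and I would run it with only algebraic estimates, in the spirit of the paper. Suppose $x\in K\setminus\mathbb C$. Then $x-\lambda\neq 0$ for every $\lambda\in\mathbb C$. The function $\lambda\mapsto |x-\lambda|$ is continuous, since it is $1$-Lipschitz by the triangle inequality, and it tends to $\infty$ as $|\lambda|\to\infty$, since $|x-\lambda|\geq|\lambda|-|x|$. Hence it attains a minimum $m>0$, and the set $M=\{\lambda\mid |x-\lambda|=m\}$ is nonempty and compact. Pick $\lambda_0\in M$ and translate so that $\lambda_0=0$ and $|x|=m$. We now show that every $\mu$ with $|\mu|<m$ also lies in $M$. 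Fix $n$ and let $\zeta$ be a primitive $n$-th root of unity. The identity $x^n-\mu^n=\prod_{k}(x-\zeta^k\mu)$ gives
\[
|x-\mu|\,m^{n-1}\leq |x-\mu|\prod_{k\neq 0}|x-\zeta^k\mu| = |x^n-\mu^n|\leq m^n+|\mu|^n,
\]
where the first inequality uses $|x-\zeta^k\mu|\ge m$ by minimality. Dividing by $m^{n-1}$ gives $|x-\mu|\leq m\,(1+(|\mu|/m)^n)$. Letting $n\to\infty$ yields $|x-\mu|\leq m$, so $|x-\mu|=m$. Thus $M$ is open, being a union of such discs around each of its points, and closed and nonempty. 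Connectedness of $\mathbb C$ then gives $M=\mathbb C$, which contradicts $|x-\lambda|\to\infty$. Therefore $K=\mathbb C$.

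It remains to identify the topologies. The map $\mathrm{Hom}_{\mathbb C}(R,\mathbb C)\to\mathcal M^{\mathrm{Berk}}(R)$, $\phi\mapsto|\phi(\cdot)|$, is a bijection by the above. To make this a homeomorphism, the topology on $\mathrm{Hom}_{\mathbb C}(R,\mathbb C)$ should be the one induced from $\prod_{f\in R}\mathbb C$, and the map is continuous for it. For continuity of the inverse, I would recover $\phi(f)$ from the values of the seminorm. Since $\phi(f)$ is the unique $\lambda$ with $|f-\lambda|=0$, the coordinates $|f-\lambda|$ over all $\lambda\in\mathbb C$ determine it. Concretely, $\mathrm{Re}\,\phi(f)=\tfrac14(|f+1|^2-|f-1|^2)$, and similarly $\mathrm{Im}\,\phi(f)=\tfrac14(|f+i|^2-|f-i|^2)$. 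These are continuous functions of finitely many coordinates, so the inverse map is continuous.

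The main obstacle is the Gelfand--Mazur step, specifically the fact that the minimum set $M$ is open; the roots-of-unity estimate is the crucial trick there. Everything else is bookkeeping.
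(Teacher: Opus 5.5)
Your proof is correct and follows essentially the same route as the paper: quotient by the kernel, minimize $\lambda\mapsto|x-\lambda|$, and use the factorization of $x^n-\mu^n$ over $n$-th roots of unity to show that the minimum propagates until it contradicts growth at infinity. The differences are cosmetic. The paper normalizes to $|x|=2$, $\mu=1$ and steps along $0,1,2,\dots$, whereas you argue that the set of minimizers is open and closed in the connected space $\mathbb C$. Your passage to the fraction field is harmless but unnecessary, since only definiteness of the norm on $R$ modulo the kernel is used; and your explicit treatment of uniqueness and of the homeomorphism fills in points that the paper leaves implicit.
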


Before giving the proof, we note that this finishes the proof of Theorem~\ref{thm:holomorphicdefinitionredux}: We get a map
\[
\mathcal S(\mathbb C[T])\supset U\to \mathcal M^{\mathrm{Berk}}(\mathbb C[T])\cong \mathrm{Hom}_{\mathbb C}(\mathbb C[T],\mathbb C)=\mathbb C
\]
and the pushforward of (the restriction to $U$ of) the structure sheaf of $\mathcal S(\mathbb C[T])$ to $\mathbb C$ is the desired sheaf, by the simple computation of Proposition~\ref{prop:computationunitdisc}.

\begin{proof}[Proof of Theorem~\ref{thm:ostrowski}] We note that this is usually regarded as a consequence of the Gelfand--Mazur theorem, which in turn is usually proved by complex analysis. To avoid any vicious circles, we use the following elementary argument going back to Ostrowski.

The kernel of $|\cdot|$ is an ideal; quotienting it out, we can assume that $|x|=0$ only if $x=0$. Our goal is then to show that $R=\mathbb C$. Take any $x\in R$. By the triangle inequality, the function $\mathbb C\to \mathbb R_{\geq 0}: z\mapsto |x-z|$ is continuous and gets large for large $|z|$, so attains a minimum. Replacing $x$ by some $x-z$, we can thus assume that for all $z\in \mathbb C$, one has $|x-z|\geq |x|$. Our goal is then to show that $x=0$, for which it suffices that $|x|=0$. If not, we can rescale $x$ by an element of $\mathbb R_{>0}$ so that $|x|=2$. For any integer $n$, consider the equation
\[
x^n-1=\prod_{i=0}^{n-1} (x-\zeta_n^i).
\]
Taking absolute values, we find
\[
2^n+1\geq |x^n-1|=\prod_{i=0}^{n-1} |x-\zeta_n^i|\geq 2^{n-1} |x-1|\geq 2^n
\]
as all $|x-\zeta_n^i|\geq |x|=2$. Taking the limit $n\to \infty$, we find $|x-1|=2$. Thus, by induction
\[
2=|x|=|x-1|=|x-2|=\ldots,
\]
which is impossible (for example as $|x-5|\geq 5-|x|=3$).
\end{proof}

We note that it is exactly in this final line that we used that we are doing archimedean geometry as opposed to non-archimedean geometry! In non-archimedean geometry, everything works in the same way, but there is no analogue of Ostrowski's theorem; there are many other multiplicative seminorms in nonarchimedean geometry. From our point of view, the natural underlying space for analytic spaces is something like the Berkovich spectrum, and it is only a weird coincidence that in complex geometry, this happens to agree with the ``classical points''.\\

\textbf{Exercise 1.} Show that any idempotent algebra in $\mathcal D(\mathbb Z)$ is a localization $\mathbb Z[1/S]$ of $\mathbb Z$ (for some multiplicative set $S$). Deduce that the locale $\mathcal S(\mathcal D(\mathbb Z))$ is given by the opposite of $\operatorname{Spec}(\mathbb Z)$ (i.e., the same constructible topology, but with the order of specializations reversed). Describe the sheaf of $\infty$-categories on $\mathcal S(\mathcal D(\mathbb Z))$.\\

\textbf{Exercise 2.} Show that for any noetherian ring $R$, the locale $\mathcal S(\mathcal D(R))$ is given by the opposite of $\operatorname{Spec}(R)$.\\

\textbf{Exercise 3.} Give an example of a commutative ring $R$ and an idempotent algebra $A\in \mathcal D(R)$ that is not accounted for by the map $\mathcal S(\mathcal D(R))\to \operatorname{Spec}(R)^{\mathrm{op}}$. (Hint: Almost mathematics.)\newpage

\section*{Appendix to Lecture V: Spectral Theory}

In this short appendix, we show how the techniques of the lecture also give a proof of the basic result on spectral theory:

\begin{theorem}\label{thm:spectraltheory} Let $V$ be a nonzero complex quasi-Banach space, and let $T: V\to V$ be an endomorphism. Then the set of all $x\in \mathbb C$ for which $T-x$ is not invertible is a nonempty compact subset of $\mathbb C$.
\end{theorem}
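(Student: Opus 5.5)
Compactness is routine: if $|x|$ is large then $T-x=-x(1-T/x)$ is invertible by a geometric series, since $T$ is bounded. Invertibility is also open, because invertible operators on a quasi-Banach space form an open set (again by a Neumann series). So the spectrum $\sigma(T)$ is closed and bounded. The real content is nonemptiness, and I would prove it with the formalism of the lecture instead of Liouville's theorem.

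Regard $V$ as a quasiseparated $p$-liquid $\mathbb C$-vector space for small $p$. This is legitimate since $V$ is $p_0$-Banach for some $p_0$, hence $p$-liquid for $p\le p_0$ by Lemma~\ref{compactnullsequence} and Theorem~\ref{thm:qspliquid}. Via $T$, $V$ becomes an object of $C=\mathcal D(\mathrm{Liq}_p(\mathbb C[T]))$, nonzero in $C$. Suppose $\sigma(T)=\emptyset$; we derive a contradiction in two steps.

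First, $V$ is supported on the open $U=\bigcup_r\{|T|<r\}$. Choose $r>\|T\|$ (spectral-radius bound in the $p$-norm). Then $V\otimes A(\{|T|\geq r\})=0$, because on the Laurent-type algebra $T$ acts invertibly with $r^{-n}T^{n}$ decaying on $V$. Concretely, writing $A(\{|T|\ge r\})$ as a union of $\mathcal M_{<p}(\{T^n/s^n\}_{n\le m})$ and tensoring, the element $T$ becomes topologically nilpotent after rescaling. Hence $1-T^{-1}\cdot T$ is forced to vanish; more precisely, $T/r'$ for $r<r'$ is a contraction while $T^{-1}$ has norm $<1/r'$ on the completed tensor. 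So $V$ lies in $C(U)$. I expect this step to need care with the explicit tensor computations, in the style of Proposition~\ref{prop:computationunitdisc}.

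Second, for each $x\in\mathbb C$, $V$ vanishes near $x$. Invertibility of $T-x$ is open, so $(T-y)^{-1}$ exists and is bounded for $|y-x|\le\epsilon$, with bounds uniform in $y$. I then claim $V\otimes A(\{|T-x|\le\epsilon\})=0$. Write $A=\bigcup_{r>\epsilon}\mathcal M_{<p}(\{(T-x)^n/r^n\})$. The element $T-x$ acts on $V\otimes A$ invertibly, since it is invertible on $V$, but with inverse of norm $\le\|(T-x)^{-1}\|$. After shrinking $\epsilon$ so that $\epsilon\|(T-x)^{-1}\|<1$, the element $(T-x)^{-1}$ becomes topologically nilpotent on $V\otimes A(\{|T-x|\le \epsilon\})$. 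Then $1=(T-x)^{-n}(T-x)^n$ has norm tending to $0$, forcing the module to vanish. This is the step I regard as the main obstacle: making the norm estimate precise inside $V\otimes_{\mathbb C[T]_{<p}}A$. I would first try an explicit resolution $A\otimes\mathbb C[U]/(U-T)$, as in Proposition~\ref{prop:computationunitdisc}, reducing to showing that $U-T$ acts invertibly on the sequence space $\bigcup_r\mathcal M_{<p}(\{U^n/r^n\},V)$ by the geometric series $\sum_n (U-x)^n (T-x)^{-n-1}$ with sign adjustment.

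Finally, push forward along the map $U\to\mathbb C$ from Corollary~\ref{cor:maptoberkovich} and Theorem~\ref{thm:ostrowski}. The closed discs $\{|T-x|\le\epsilon\}$ with $x$ ranging over the compact disc $|x|\le r$ give finitely many closed subsets whose interiors cover the preimage of that compact disc. Using the sheaf property of $U\mapsto C(U)$ (Proposition~\ref{prop:structuresheafformalnonsense}) together with the union formula for idempotent algebras, $V$ restricted to a finite cover by pieces on which it vanishes is zero. Hence $V=j_{U*}j_U^*V=0$, contradicting $V\neq0$.
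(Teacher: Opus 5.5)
Your proof is correct and follows the same global strategy as the paper. You view $V$ in $\mathcal D(\mathrm{Liq}_p(\mathbb C[T]))$ and show it is killed by the idempotent algebras of closed subsets of $\mathcal S(\mathbb C[T])$ lying away from $\sigma(T)$. You then transport this to $\mathbb C$ via Corollary~\ref{cor:maptoberkovich} and Theorem~\ref{thm:ostrowski}, and conclude because a nonzero object cannot have empty support.

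The difference is in how the vanishing is obtained. You compute $V\otimes_{\mathbb C[T]}A(Z)$ by hand. The paper instead uses the holomorphic functional calculus (Proposition~\ref{prop:banachmodulelarger}):
\begin{itemize}
\item $V$ is a module over $\mathcal O(\{|T|\le r\})$ for $r>\|T\|$;
\item applying the same to $(T-x)^{-1}$, $V$ is a module over $\mathcal O(\{|T-x|\ge \rho^{-1}\})$ for $\rho>\|(T-x)^{-1}\|$.
\end{itemize}
Then $V\otimes_{\mathbb C[T]}A(\{|T|\ge r'\})$ for $r'>r$, and $V\otimes_{\mathbb C[T]}A(\{|T-x|\le\epsilon\})$ for $\epsilon<\rho^{-1}$, are base changes of tensor products of idempotent algebras of disjoint closed subsets. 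They therefore vanish by Proposition~\ref{prop:propertiessubsets}(3). So $V\in C(\sigma(T))$, and $C(\emptyset)=0$ finishes.

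This makes the step you flag as the main obstacle entirely formal. It also proves more: $\sigma(T)$ is exactly the support of $V$, for any $p$-liquid $V$ admitting such a functional calculus (Proposition~\ref{prop:spectraltheorygen}). Your route avoids constructing module structures, but pays with explicit liquid tensor computations.

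Your norm heuristics are not literally meaningful, since $V\otimes_{\mathbb C[T]}A$ carries no norm and need not be quasiseparated. The rigorous version you sketch does work:
\begin{itemize}
\item $V\otimes^{L}_{\mathbb C[T]}A$ is the cone of $U-T$ on $V\otimes_{\mathbb C}A$.
\item $V\otimes_{\mathbb C}A$ sits in degree $0$ and is described explicitly by the Lecture~III formula for $\mathcal M_{<p}(S)\otimes V$ with $V$ quasiseparated.
\item In step two, $U-T=-(T-x)\bigl(1-(T-x)^{-1}(U-x)\bigr)$ is inverted by a geometric series after moving into the pieces of $A$ with radius in $(\epsilon,\|(T-x)^{-1}\|^{-1})$.
\item In step one, the same works using pieces with radius in $(\|T\|,r)$.
\end{itemize}

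Two cosmetic points.
\begin{itemize}
\item $V\otimes A(\{|T|\ge r\})=0$ gives $V\simeq j_{U!}j_U^\ast V$, not $j_{U\ast}j_U^\ast V$. This is harmless, since you then show $j_U^\ast V=0$.
\item The cleanest finish is to note that $\{|T|\ge r\}$ and the $\{|T-x_i|\le\epsilon_i\}$ cover $\mathcal S(\mathbb C[T])$. Indeed, the intersection of their open complements lies in $U$ and is the preimage of an empty subset of $\mathbb C$. So by the union formula, $\mathbb C[T]$ is a finite limit of idempotent algebras of nonempty finite intersections. Each of these is an algebra over one of the covering algebras, hence kills $V$.
\end{itemize}
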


The set of all such $x$ is usually called the spectrum of $T$. The key statement here is the really the nonemptiness of the spectrum.

We will deduce this as a consequence of two propositions.

\begin{proposition}\label{prop:banachmodulelarger} Let $V$ be a complex quasi-Banach space and let $T: V\to V$ be an endomorphism. Then the $\mathbb C[T]$-module structure on $V$ extends to a module structure over $\mathcal O(\{|T|\leq r\})$ for some large enough $r>0$.
\end{proposition}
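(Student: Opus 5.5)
Here $\mathcal O(\{|T|\leq r\})$ should mean the idempotent algebra $A(\{|T|\leq r\})=\bigcup_{r'>r}\{\sum a_nT^n\mid a_n r'^n\to 0\}$ of overconvergent power series on the closed disc of radius $r$, obtained from $A(\{|T|\leq 1\})$ by rescaling. The plan is to make $\sum a_n T^n$ act on $V$ by the norm-convergent series $\sum a_n T^n(v)$, and to check that this defines a map of $p$-liquid modules compatible with the $\mathbb C[T]$-action. Since $A(\{|T|\leq r\})$ is idempotent, an $A$-module structure extending a $\mathbb C[T]$-module structure is unique if it exists. It is then equivalent to the map $V\to V\otimes_{\mathbb C[T]_{<p}} A$ being an isomorphism. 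So it suffices to produce a $\mathbb C[T]$-linear map $A\otimes_{\mathbb C_{<p}} V\to V$ that restricts to the given action on $\mathbb C[T]\otimes V$ and is unital; associativity should then come from density of polynomials and the uniqueness statements.

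\textbf{Step 1 (norm bound).} Fix a $q$-norm $\|\cdot\|$ on $V$ (quasi-Banach spaces are $q$-Banach for some $0<q\leq 1$ by Aoki--Rolewicz). Continuity of $T$ gives $\|Tv\|\leq C\|v\|$, hence $\|T^n v\|\leq C^n\|v\|$. Set $r_0=C^{1/q}$, so that $\|\lambda T^n v\|\leq (|\lambda| r_0^n)^q\|v\|$. Take $r>r_0$; for our purposes any fixed $r>r_0$ works.

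\textbf{Step 2 (construct the pairing).} Write $A=\bigcup_{r'>r}\mathcal M_{<p}(\{T^n/r'^n\}_{n\geq 0})$, using the Lecture IV fact that the choice of sequence space is irrelevant. The tensor product $\mathcal M_{<p}(\mathbb N)\otimes_{\mathbb C_{<p}} V$ should be computed via the flatness theorem of Lecture III for qs $V$. Then $\mathcal M_{<p}(\mathbb N)\otimes V=\bigcup_{q'<p}\bigcup_K\mathcal M_{q'}(\mathbb N,K)$, whose elements are sequences $(v_n)$ lying in $q'$-convex hulls of a compact $K$.

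The map sends $(v_n)$, viewed in the basis $T^n/r'^n$, to $\sum_n r'^{-n}T^n v_n$. This converges because $\|r'^{-n}T^n v_n\|\leq (r_0/r')^{nq}\sup_K\|\cdot\|$, which decays geometrically, so the series is absolutely summable in the $q$-norm regardless of the exponent $q'$. This is exactly the "multiplication by a rapidly decaying sequence" phenomenon of Lemma~\ref{basictraceclass}. One may need $p\leq q$; if instead $p>q$, I would replace $p$ by $\min(p,q)$ or note that geometric decay beats any summability exponent. Continuity on each compact piece follows from the uniform tail estimate, just as in the proof of Lemma~\ref{basictraceclass}.

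\textbf{Step 3 (linearity and compatibility).} Check that the map is $\mathbb C[T]$-linear, i.e. that it intertwines multiplication by $T$ on $A$ with $T$ on $V$. This holds termwise, and in the limit by continuity. It agrees with the given action on polynomials, and it is multiplicative because it is so on the dense subspace of polynomials. Hence $V$ becomes an $A$-module, i.e. a module over $\mathcal O(\{|T|\leq r\})$.

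\textbf{Main obstacle.} I expect the hardest part to be making Step 2 rigorous inside the liquid formalism. One has to identify the tensor product $A\otimes V$ concretely enough that the geometric estimate applies, which requires care with the quasi-Banach (non-locally-convex) setting and with $p$ versus $q$. My first attempt would avoid tensor products altogether by adjunction. A map $A\to\underline{\operatorname{Hom}}(V,V)$ of liquid modules suffices, and $\underline{\operatorname{Hom}}(V,V)$ is itself a qs quasi-Banach-type object (bounded operators with their norm, which is $q$-Banach). The map $\mathcal M_{<p}(\{T^n/r'^n\})\to \mathrm{End}(V)$ is then given on Dirac measures by the null sequence $r'^{-n}T^n$ in the $q$-Banach space $\mathrm{End}(V)$. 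It extends uniquely because $q$-Banach spaces are $q$-liquid (Lemma~\ref{compactnullsequence} together with Theorem~\ref{thm:qspliquid}), provided $p\leq q$.
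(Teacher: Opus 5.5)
Your proposal is correct and is essentially the paper's argument, which simply takes $r$ to be the norm of $T$ and observes that any $\sum a_nT^n$ with $a_nr'^n\to 0$ for some $r'>r$ converges to a well-defined endomorphism of $V$. You additionally supply details the paper omits: the threshold $r_0=C^{1/q}$ for a $q$-norm, replacing $p$ by $\min(p,q)$ so that $V$ is $p$-liquid, and the module-structure bookkeeping via the $q$-Banach algebra of bounded operators. One small slip: the condensed internal Hom $\underline{\operatorname{Hom}}(V,V)$ is not the operator algebra with its norm topology, since its $S$-points are strongly continuous families. However, the norm-topology operator algebra is $q$-Banach and maps into it, and that is all your adjunction argument actually uses.
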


\begin{proof} Let $r$ be the norm of $T$. Then any sum $\sum_{n\geq 0} a_n T^n$ with $a_n r'^n\to 0$ for some $r'>r$ gives a well-defined endomorphism of $V$.
\end{proof}

Thus, it suffices to prove the following slight generalization of Theorem~\ref{thm:spectraltheory}:

\begin{proposition}\label{prop:spectraltheorygen} Let $V$ be a nonzero $p$-liquid $\mathbb C$-vector space such that any endomorphism $U: V\to V$ has the property that the $\mathbb C[U]$-module structure on $V$ extends to a module structure over $\mathcal O(\{|U|\leq r\})$ for some large enough $r>0$. Let $T: V\to V$ be an endomorphism.

Then the set of all $x\in \mathbb C$ for which $T-x$ is not an isomorphism agrees with the minimal closed subset $Z\subset \mathbb C$ such that $V\in C=\mathcal D(\mathrm{Liq}_p(\mathbb C[T]))$ is contained in $C(Z)$. Moreover, $Z$ is a nonempty compact subset of $\mathbb C$.
\end{proposition}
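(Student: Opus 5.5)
We view $V$ as an object of $C=\mathcal D(\mathrm{Liq}_p(\mathbb C[T]))$, concentrated in degree $0$. We use the locale $\mathcal S(\mathbb C[T])$, the open $U=\bigcup_r\{|T|<r\}$, and the map $U\to\mathbb C$ built in the lecture from Corollary~\ref{cor:maptoberkovich} and Theorem~\ref{thm:ostrowski}. The plan has three steps: show $V$ is supported over a disc, characterize which points lie in the support, and then rule out empty support.

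\textbf{Step 1: $V$ lies in $C(\{|T|\leq r\})$.} The hypothesis gives an $\mathcal O(\{|T|\leq r\})$-module structure on $V$ for some $r$. The ring meant here is the idempotent algebra $A(\{|T|\leq r\})$ of overconvergent functions, rescaled from Proposition~\ref{prop:computationunitdisc}. A module over an idempotent algebra $A$ satisfies $V\cong V\otimes A$, so $V\in C(\{|T|\leq r\})$. In particular the support of $V$, meaning the smallest closed $Z$ with $V\in C(Z)$, lies inside the compact set $\{|T|\leq r\}\subset U$. Through $U\to\mathbb C$ we may therefore regard $Z$ as a compact subset of the closed disc of radius $r$ in $\mathbb C$.

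\textbf{Step 2: $x\notin Z$ if and only if $T-x$ is an isomorphism.} After translating we may take $x=0$. For the "if" direction, suppose $T$ is invertible on $V$. Then $T^{-1}$ is again an endomorphism, so the hypothesis applied to $U=T^{-1}$ makes $V$ a module over $\mathcal O(\{|T^{-1}|\leq s\})$ for some $s$. This is the idempotent algebra $A(\{|T|\geq 1/s\})$, obtained by base change along $T\mapsto T^{-1}$. Hence $V\in C(\{|T|\geq 1/s\})$, and this closed set misses a neighbourhood $\{|T|<1/s\}$ of $0$ by Proposition~\ref{prop:propertiessubsets}(3). So $0\notin Z$.

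For the "only if" direction, suppose $0\notin Z$. Since $Z$ is compact, it misses some small disc $\{|T|\leq\epsilon\}$. Using Proposition~\ref{prop:propertiessubsets}(2), rescaled, $Z$ is then contained in $\{|T|\geq\epsilon'\}$ for some $\epsilon'>0$. Equivalently $V\otimes A(\{|T|\geq\epsilon'\})\cong V$. On $A(\{|T|\geq\epsilon'\})$ the element $T$ is invertible, so $T$ is invertible on $V$. The one point to check is that "$V\in C(W)$ for the closed set $W=\{|T|\geq\epsilon'\}$" really follows from "$Z$ misses the open disc". This uses the locale formalism: $V\otimes A_Z\cong V$ and $Z\subset W$ give $A_Z\otimes A_W=A_Z$, hence $V\otimes A_W\cong V$.

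\textbf{Step 3: $Z\neq\emptyset$.} This is the main obstacle. If $Z=\emptyset$, then $V\in C(\emptyset)$, which corresponds to the idempotent algebra $0$, and so $V=V\otimes 0=0$, contradicting $V\neq0$. The subtlety is that $\emptyset$ must mean the empty closed subset of the locale $\mathcal S(C)$. We only know $Z$ through its image in $\mathbb C$ via $U$, and $U$ is far from all of $\mathcal S(C)$. So we must ensure that a closed subset inside $\{|T|\leq r\}$ whose image in $\mathbb C$ is empty is empty in the locale.

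The plan is to argue by covering. If the image is empty, then every point $x$ of the closed disc has a neighbourhood disc $D_x$ with $V\in C(\{|T-x|\geq\epsilon_x\})$. By compactness of the disc, finitely many of these complements already have empty intersection with $\{|T|\leq r\}$. This is a computation with idempotent algebras: intersections of these closed sets correspond to tensor products of algebras of functions on annuli and discs, and that product vanishes, giving $V=0$. Compactness of $Z$ is then immediate from Step 1.
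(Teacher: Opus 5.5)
Your proposal is correct and follows essentially the same route as the paper: boundedness of $Z$ comes from the $\mathcal O(\{|T|\le r\})$-module structure, and applying the hypothesis to $(T-x)^{-1}$ places $V$ in $C(\{|T-x|\ge r'^{-1}\})$ (conversely, $T-x$ is invertible over $\mathcal O(\{|T-x|\ge\epsilon\})$), while nonemptiness comes from $C(\emptyset)=0$. Your covering argument in Step 3 only makes explicit the passage from the support sublocale to its image in $\mathbb C$; the paper avoids that step by taking $Z$ from the outset to be a closed subset of $\mathbb C$, pulled back along $U\to\mathbb C$.
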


\begin{proof} Clearly if $V$ is contained in some $C(Z)$, then $T-x$ is invertible for all $x\not\in Z$. For the converse, we note that if $U=(T-x)^{-1}$ exists, then $V$ is also a module over $\mathcal O(\{|U|\leq r'\})$ for some $r'>0$. Reinterpreting in terms of $T=U^{-1}-x$, this means that $V$ is a module over $\mathcal O(\{|T-x|\geq r'^{-1}\})$. This implies that the spectrum $Z$ is closed, and $V$ is contained in $C(Z)$. Also, the spectrum is bounded as $V$ is a module over $\mathcal O(\{|T|\leq r\})$ for some $r>0$, so the spectrum is actually compact.

But as $V$ is a nonzero object contained in $C(Z)$, it follows that $Z$ must be nonempty!
\end{proof}

The following is a standard corollary. We note that we did not use any of this before. (We only used the existence of infinitely many roots of unity; but for example all $2$-power roots of unity can be written down explicitly by radicals.)

\begin{corollary}[Fundamental Theorem of Algebra; Gelfand--Mazur Theorem] The field of complex numbers $\mathbb C$ is algebraically closed. In fact, any (quasi-)Banach field over $\mathbb C$ is equal to $\mathbb C$ itself.
\end{corollary}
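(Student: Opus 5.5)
We prove the corollary in two steps: first the Gelfand--Mazur statement, then algebraic closedness of $\mathbb C$ as a special case.

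\textbf{Step 1: a (quasi-)Banach field $K\supseteq\mathbb C$ equals $\mathbb C$.} Let $y\in K$ and let $T:K\to K$ be multiplication by $y$. This is a bounded endomorphism of the nonzero complex quasi-Banach space $K$. By Theorem~\ref{thm:spectraltheory}, there is some $x\in\mathbb C$ for which $T-x$ is not invertible as an endomorphism of $K$. But $T-x$ is multiplication by $y-x$. If $y-x\neq 0$, then $y-x$ is invertible in the field $K$, and multiplication by $(y-x)^{-1}$ is a bounded inverse of $T-x$; this is where we use that the multiplication on $K$ is continuous. Hence $y=x\in\mathbb C$.

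One could object that Theorem~\ref{thm:spectraltheory} might mean invertibility merely in the liquid sense. This causes no trouble: a continuous linear inverse is in particular an isomorphism of condensed modules, so $T-x$ would be invertible in either sense.

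\textbf{Step 2: $\mathbb C$ is algebraically closed.} Let $L/\mathbb C$ be a finite field extension, of degree $d$. Choose a $\mathbb C$-basis of $L$ and use it to identify $L\cong\mathbb C^d$. Any norm on $\mathbb C^d$ makes it a Banach space, and multiplication on $L$ is bilinear on a finite-dimensional space, hence continuous. So $L$ is a Banach field over $\mathbb C$, and Step 1 gives $L=\mathbb C$. Since every irreducible polynomial $P\in\mathbb C[X]$ yields the finite extension $\mathbb C[X]/(P)$, every irreducible polynomial has degree $1$. Therefore $\mathbb C$ is algebraically closed.

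\textbf{Main obstacle: avoiding circularity.} The remark before the statement stresses that nothing used so far relies on the fundamental theorem of algebra. So we must check that continuity of bilinear maps and norms on $\mathbb C^d$ only needs completeness of $\mathbb R$, and not any algebraic property of $\mathbb C$. This holds, because $\mathbb C^d\cong\mathbb R^{2d}$ as a topological vector space.

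The rest follows directly from the definitions and Theorem~\ref{thm:spectraltheory}.
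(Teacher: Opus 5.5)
Your proof is correct and follows the paper's argument. You apply the spectral theorem to multiplication by $y\in K$, and then use that in a field every nonzero element is invertible, so the non-invertibility of $y-x$ forces $y=x$. Your Step~2, which treats a finite extension $L/\mathbb C$ as a finite-dimensional Banach field, makes explicit the standard deduction of algebraic closedness that the paper leaves implicit.
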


\begin{proof} Let $K$ be a quasi-Banach field over $\mathbb C$. Take any $T\in K\setminus \mathbb C$. By spectral theory, there is some $x\in \mathbb C$ for which $T-x$ is not invertible. But in a field, this means $T-x=0$, i.e.~$T=x$.
\end{proof}\newpage

\section{Lecture VI: Analytification and GAGA}

Let's recap what happened last time.  To any cocomplete closed symmetric monoidal stable $\infty$-category $C$, we assigned a locale $\mathcal{S}(C)$ whose poset of ``closed subsets'' identifies, by definition, with the opposite of the poset of idempotent algebras in $C$, i.e.\ if there is a (necessarily unique) homomorphism $A\rightarrow A'$ of idempotent algebras, then the corresponding closed subsets have the inclusion relation $Z\supset Z'$ (roughly, think of $A$ as functions on $Z$).

We also showed that $C$ localizes on this locale: to an open $U\subset \mathcal{S}(C)$ we assign the quotient category
$$C(U):=C/\operatorname{Mod}_A(C)$$
where $A$ is the idempotent algebra corresponding the closed complement of $U$, and we had the result that $U\mapsto C(U)$ is a sheaf of (cocomplete closed symmetric monoidal) $\infty$-categories on $\mathcal{S}(C)$.  In particular, by passing to endomorphisms of the unit objects, we get a ``structure sheaf'' on $\mathcal{S}(C)$.

We then specialized this abstract nonsense to the case $C=\mathcal D(\mathrm{Liq}_p(\mathbb C[T]))$, getting the locale $\mathcal{S}(\mathbb C[T]):= \mathcal{S}(\mathcal D(\mathrm{Liq}_p(\mathbb C[T])))$. For every $r>0$, we produced closed subsets
$$\{|T|\leq r\}, \{|T|\geq r\} \subset \mathcal{S}(\mathbb C[T]),$$
given by idempotent algebras of holomorphic functions which overconverge on the disk of radius $r$ (in the first case), or which overconverge on the exterior of the disk of radius $r$ and are meromorphic at $\infty$ (in the second case).

For any (discrete commutative) $\mathbb{C}$-algebra $R$ and any $f\in R$, we can then consider the closed subsets
$$\{|f|\leq r\}, \{|f|\geq r\} \subset \mathcal{S}(R)$$
obtained by base-changing the previous ones along the map $\mathbb{C}[T]\rightarrow R$, $T\mapsto f$.  We also showed that these subsets satisfy relations which reflect the triangle inequality, multiplicativity, and behavior on complex scalars expected of such a formal expression ``$|f|$''.  For example there is an inclusion of closed subsets
$$\{|f|\leq r\}\cap \{|g|\leq r'\}\subset \{|f+g|\leq r+r'\},$$
for all $f,g\in R$ and $r,r'>0$, which reflects the triangle inequality.

Next we consider, for $f\in R$, the open subset
$$\mathcal{S}(R,f) = \bigcup_{r>0} \{|f|<r\}\subset \mathcal{S}(R).$$
We can think of this as the open subset of $\mathcal{S}(R)$ on which $f$ behaves like an analytic variable as opposed to an algebraic variable.  In particular, in the ring $\mathcal{O}(\mathcal{S}(R,f))$, expressions of the form $\varphi(f)$, for any entire holomorphic function $\varphi$, have well-defined meaning, while in $\mathcal{O}(\mathcal{S}(R))=R$ we can only evaluate $f$ on polynomials.

Thanks to the multiplicativity and triangle inequality, we have
$$\mathcal{S}(R,fg), \mathcal{S}(R,f+g)\supset \mathcal{S}(R,f)\cap \mathcal{S}(R,g).$$
It follows that if $R$ is finite type, generated by $X_1,\ldots X_n$, then the ``subset"
$$\mathcal{S}(R,R):=\bigcap_{f\in R} \mathcal{S}(R,f)$$
is is in fact an open subset of $\mathcal{S}(R)$, being equal to the finite intersection
$$\bigcap_{i=1}^n \mathcal{S}(R,X_i).$$
We can think of this $\mathcal{S}(R,R)$ as some kind of ``analytification'' of $\operatorname{Spec}(R)$: we have forced all the functions $f\in R$ to be analytic.

To make an explicit connection to the usual analytification, recall also from the previous lecture that there is a natural map of locales
$$\mathcal{S}(R,R)\rightarrow \mathcal{M}^{\mathrm{Berk}}(R) \cong \operatorname{Spec}(R)(\mathbb{C}) =V(I)\subset \mathbb{C}^n,$$
where  we take $R=\mathbb{C}[X_1,\ldots,X_n]/I$ finite type. This map of locales was uniquely characterized by the fact that for $f\in R$ and $r>0$, this sends the honest open subsets
$$\{|f|<r\},\{|f|>r\}\subset V(I)$$
to the formally defined open subsets of the locale $\mathcal{S}(R,R)$ which we gave the same notation $\{|f|<r\}$, $\{|f|>r\}$, in some sense justifying this notation.

By pushforward along this map of locales $\mathcal{S}(R,R)\rightarrow \operatorname{Spec}(R)(\mathbb{C})$, we deduce a sheaf of (cocomplete closed symmetric stable monoidal) $\infty$-categories on $\operatorname{Spec}(R)(\mathbb{C})$.  This in particular yields a ``structure sheaf'' on $\operatorname{Spec}(R)(\mathbb{C})$ with values in commutative algebra objects of $\mathcal D(\mathrm{Liq}_p)$, by taking endomorphisms of the unit in these categories.  The last thing we saw in the previous lecture was the calculation of this structure sheaf when $R=\mathbb{C}[T]$: namely, its value on an open disk is the usual algebra of convergent power series on that disk, as expected from the analytification of the affine line.

The first thing we want to do here is generalize this calculation to arbitrary finite type $\mathbb{C}$-algebras $R$.  In the previous lecture we did the calculation on $R=\mathbb{C}[T]$ by using the formal fact that the localization to the open subset corresponding to an idempotent algebra $A$ in $C$ is given by $\underline{\operatorname{RHom}}([1\rightarrow A],-)$.  But this method is more complicated to carry out for $R=\mathbb{C}[X_1,\ldots,X_n]$: the corresponding idempotent algebra $A$ cutting out $\mathcal{S}(R,R)$ lives in cohomological degrees $[0,n]$, calculated by iterating the operation $A_1,A_2\mapsto A_1\times_{A_1\otimes A_2} A_2$ describing unions of closed subsets, and it becomes annoying to perform this preliminary calculation, let alone the resulting $\underline{\operatorname{RHom}}$ calculation.

Fortunately, there is another method, which is no more difficult for $\mathbb{C}[X_1,\ldots,X_n]$ than for $\mathbb{C}[T]$.  The idea is very simple: intuitively speaking, we should have
$$\{|T|<1\} = \bigcup_{r<1} \{|T|\leq r\}.$$
This is a cover of an open subset by closed subsets, but since it is refined by the open cover
$$\{|T|<1\} = \bigcup_{r<1} \{|T|< r\},$$
it should be just as good for calculating the structure sheaf.  Taking the idea that the structure sheaf on the closed subset $\{|T|\leq r\}$ should be the corresponding idempotent algebra $\mathcal{O}^{hol}(\{|T|\leq r\})$ of overconvergent holomorphic functions, this gives
$$\mathcal{O}(\{|T|<1\}) = R\varprojlim_{r<1} \mathcal{O}^{hol}(\{|T|\leq r\}),$$
yielding the desired result $\mathcal{O}(\{|T|<1\})=\mathcal{O}^{hol}(\{|T|<1\})$ (using pro-isomorphisms and Mittag-Leffler, as previously discussed).

The problem is that in the locale picture, open subsets and closed subsets are just formal gadgets in formal bijection with one another, and we haven't said what it means for one to be contained in another in a way which justifies the above intuitive calculation. But we will introduce another formal framework which does this, and deduce the following general result.

\begin{theorem}\label{explicitanalytification}
Let $R=\mathbb{C}[X_1,\ldots,X_n]/I$ be a finite type $\mathbb{C}$-algebra.  Take $z=(z_1,\ldots, z_n)\in V(R)\subset \mathbb{C}^n$ and $r=(r_1,\ldots, r_n)$ with $r_i>0$ for all $i$, parametrizing the open polydisk $D(z,r)$ of polyradius $r$ centered at $z$ (implicitly, intersected with $V(I)$).  Then:
\begin{enumerate}
\item For the sheaf of categories on $\operatorname{Spec}(R)(\mathbb{C})=V(I)$ described above, we have
$$C(D(z,r)) = \varprojlim_{r'<r} \operatorname{Mod}_{\mathcal{O}^{hol}(\overline{D}(z,r'))/I}(\mathcal D(\mathrm{Liq}_p(\mathbb C))).$$

\item For the structure sheaf we get
$$\mathcal{O}(D(z,r)) = \mathcal{O}^{hol}(D(z,r))/I.$$
\end{enumerate}

Moreover, for the global sections we similarly have
$$C(V(I)) = \varprojlim_r \operatorname{Mod}_{\mathcal{O}^{hol}(\overline{D}(0,r))/I}(\mathcal D(\mathrm{Liq}_p(\mathbb C)))$$
and
$$\mathcal{O}(V(I)) = \mathcal{O}^{hol}(\mathbb{C}^n)/I.$$
\end{theorem}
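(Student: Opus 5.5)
We propose to make precise the intuition sketched before the statement: an open polydisk is exhausted by closed polydisks, and on closed polydisks the category is simply modules over an idempotent algebra. First we reduce to the case $R=\mathbb C[X_1,\ldots,X_n]$ with $I=0$. Indeed, $\mathcal S(R)\to\mathcal S(\mathbb C[X])$ is given by base change of idempotent algebras along $\mathbb C[X]\to R$. For an idempotent $\mathbb C[X]$-algebra $A$ one has $\operatorname{Mod}_{A\otimes_{\mathbb C[X]}R}=\operatorname{Mod}_R(\operatorname{Mod}_A)$. Moreover, $\mathcal O^{hol}(\overline D)\otimes^{\mathbb L}_{\mathbb C[X]}R=\mathcal O^{hol}(\overline D)/I$ is discrete, by flatness of the overconvergent algebra over $\mathbb C[X]$. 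This flatness we would prove as in Lecture IV: the algebra is a filtered union of flat $\mathcal M_{<p}$-type sequence spaces, and one can use a Koszul/regular-sequence argument for $I$.

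By translation and scaling (the substitutions $X_i\mapsto (X_i-z_i)/r_i$ are automorphisms of $\mathbb C[X]$ compatible with all the closed subsets), we may take $z=0$. Next we identify the closed subset $\{|X_1|\le r_1',\ldots,|X_n|\le r_n'\}$ with the idempotent algebra $\mathcal O^{hol}(\overline D(0,r'))$. By Proposition on intersections, this algebra is the tensor product over $\mathbb C[X]$ of the one-variable algebras $A(\{|X_i|\le r_i'\})$. Using the Künneth computations of Lecture IV for $\mathcal M_{<p}$ of countable sets, together with rapid-decay pro-isomorphisms, this product is $\bigcup_{s>r'}\{\sum a_\nu X^\nu : a_\nu s^\nu\to0\}$. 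The argument is the same as for the idempotence computed in Proposition~\ref{prop:computationunitdisc}.

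The key formal step is an abstract lemma in $\mathcal S(C)$. Let $Z_k\subset U_{k+1}\subset Z_{k+1}$, with $Z_k$ closed and $U_k$ open, and suppose $U=\bigcup_k U_k$. Then $C(U)=\varprojlim_k C(U_k)$ by the sheaf property (Proposition~\ref{prop:structuresheafformalnonsense}). We then want $\varprojlim_k C(U_k)\simeq\varprojlim_k C(Z_k)$, where $C(Z_k)=\operatorname{Mod}_{A_k}$. To prove this, we construct restriction functors $C(U_{k+1})\to C(Z_k)$ and $C(Z_{k})\to C(U_{k})$ and show that the two interleaved towers are cofinal, hence have the same limit. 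The first functor comes from $Z_k\subset U_{k+1}$: the composite $C\to C(Z_k)$ kills $C(Z')$ for the closed complement $Z'$ of $U_{k+1}$, since $Z_k\cap Z'=\emptyset$ gives $A_k\otimes A'=0$. The second functor $C(Z_k)\to C(U_k)$ is given by $i^*$ followed by $j^*$. The compatibility of the two composites with the transition maps reduces to idempotent-algebra identities.

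For us, $Z_k=\overline D(0,r^{(k)})$ and $U_k=\{|X_i|<r^{(k)}_i\ \forall i\}$ for $r^{(k)}\uparrow r$. The needed disjointness $\{|X_i|\le s\}\cap\{|X_i|\ge s'\}=\emptyset$ for $s<s'$ is Proposition~\ref{prop:propertiessubsets}(3), and the union statement is (2). We must also check that the pushforward to $V(I)$ sends $D(z,r)$ to $\bigcup_k U_k$, which follows from the characterization of the map to $\mathcal M^{\mathrm{Berk}}$ via Ostrowski.

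Part (2) then follows by taking endomorphisms of the unit: $\mathcal O(D)=R\varprojlim_k \mathcal O^{hol}(\overline D(0,r^{(k)}))/I$. By the Lecture IV results (pro-isomorphism with towers of Banach sequence spaces with dense transition maps, plus the Mittag-Leffler lemma), $R^1\varprojlim$ vanishes. For nonzero $I$ we need this after quotienting; we would apply Mittag-Leffler to the Fréchet quotients, which are closed because ideals in $\mathcal O^{hol}$ are closed (possibly the hardest point, which we would instead handle by resolving $R$ by finite free $\mathbb C[X]$-modules and using exactness of $\varprojlim$ termwise). The limit is then $\mathcal O^{hol}(D)/I$. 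The global statement is identical, with radii $r\to\infty$ and $\bigcup_r\{|X_i|<r\}=\mathcal S(R,R)$.

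The main obstacle is the abstract interleaving lemma: making rigorous in the locale that "closed contained in open" permits replacing opens by closeds in the limit. We would attempt it first on the level of the fully faithful embeddings into $C$, using $j_*j^*$ and $i_*i^*$ formulas.
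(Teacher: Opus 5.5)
Your architecture is the paper's. The closed polydisks $\overline D(z,r')$, $r'<r$, form a closed cover of $D(z,r)$ refined by the open polydisks $D(z,r')$, so $C(D(z,r))$ is a limit of module categories over the closed-polydisk idempotent algebras; your interleaving lemma is a hands-on version of the paper's Grothendieck topology on $\operatorname{Sym}^{\mathrm{op}}$. It is not the main obstacle. The two factorizations exist by Proposition~\ref{prop:propertiessubsets}(3) and (2). (Note that $C(Z_k)\to C(U_k)$ is the forgetful functor followed by $j^\ast$, not $i^\ast$ followed by $j^\ast$.) The interleaved tower is automatically coherent, because a functor under $C$ between two localizations of $C$ is unique if it exists. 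Your fallback for part (2) is also what the paper does: resolve $R$ by finite free $\mathbb C[X]$-modules, so that $-\otimes^{\mathbb L}_{\mathbb C[X]}R$ commutes with $R\varprojlim$. This removes any need for closedness of ideals or Mittag-Leffler on quotients, since $\mathcal O^{hol}(D)\otimes^{\mathbb L}_{\mathbb C[X]}R$ is connective, whereas $R\varprojlim_k$ of objects in degree $0$ lives in homological degrees $[-1,0]$.

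The genuine gap is the step you treat most briefly: that $\mathcal O^{hol}(\overline D)\otimes^{\mathbb L}_{\mathbb C[X]}R$ is concentrated in degree $0$ for an \emph{arbitrary} ideal $I$. Without it, you only get part (1) with $\mathcal O^{hol}(\overline D(z,r'))/I$ read as a derived base change. Part (2) is then out of reach, since the connectivity argument above needs the terms $\mathcal O^{hol}(\overline D(0,r^{(k)}))\otimes^{\mathbb L}_{\mathbb C[X]}R$ to be discrete.

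Your argument for this flatness fails, for several reasons.
\begin{itemize}
\item The flatness from Lecture IV is flatness for $\otimes_{\mathbb C_{<p}}$, i.e.\ over $\mathbb C$, and says nothing about $\operatorname{Tor}$ over $\mathbb C[X_1,\ldots,X_n]$. In fact $\mathbb C[X]\to\mathcal O^{hol}(\overline D)$ is \emph{not} flat as a map of liquid rings: base change along it turns the injection $\mathbb C[X]\hookrightarrow\mathcal O^{hol}(\overline{D'})$, for a disjoint polydisk $\overline{D'}$, into $\mathcal O^{hol}(\overline D)\to 0$. So the correct statement is flatness against discrete modules, a property of the abstract ring $\mathcal O^{hol}(\overline D)(\ast)$.
\item A Koszul argument only covers complete intersections. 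Even there, regularity of the sequence on $\mathcal O^{hol}(\overline D)$ is not formal once $n\geq 2$.
\item Idempotence does give you the Koszul complexes at closed points, since $\mathbb C_z\otimes^{\mathbb L}_{\mathbb C[X]}\mathcal O^{hol}(\overline D)$ is $\mathbb C_z$ or $0$. For a non-finitely-generated module this does not imply flatness. For example, $\mathbb C(X_1)$, viewed as a $\mathbb C[X_1,X_2]$-module on which $X_2$ acts by $0$, has vanishing Tor against every closed point and is not flat.
\end{itemize}

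The missing input is analytic, and the paper defers it to Lecture X. There one passes to the stalks $\mathcal O_z$ and uses their noetherianity (from Weierstra\ss\ preparation) to check flatness after completion, where $\mathcal O_z$ and $\mathbb C[X]_{\mathfrak m_z}$ have the same completion. Transferring vanishing of abstract Tor to the liquid derived tensor product also needs a further argument. The paper uses Proposition~\ref{coherentisDNF}(2), which rests on the fact that a complex of DNF spaces that is exact on underlying points is exact.
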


Here $\mathcal{O}^{hol}(\overline{D}(z,r'))$ (resp.\ $\mathcal{O}^{hol}(D(z,r))$) is ring of power series in $n$ variables centered at $z$ which overconverge on the polydisk of radius $r'$, (resp.\ converge on the open polydisk of radius $r$), and $ \mathcal{O}^{hol}(\mathbb{C}^n)$ is the ring of power series which converge on all of $\mathbb{C}^n$.

This theorem shows that we can reduce the study of our sheaf of categories to the study of the ``purely algebraic'' categories of modules over various rings (of overconvergent functions, in the base liquid category). This is an important complement to the formal flexibility provided by the locale picture and its associated descent.

\begin{remark}
Note that the value $C(D(z,r))$ differs from the ``naive guess'' of
$$\operatorname{Mod}_{\mathcal{O}^{hol}(D(z,r))/I}(\mathcal D(\mathrm{Liq}_p(\mathbb C))),$$
the category of modules over the value of the structure sheaf on $D(z,r)$.  In fact, $C(D(z,r))$ is smaller than this naive guess: the natural functor
$$\operatorname{Mod}_{\mathcal{O}^{hol}(D(z,r))/I}(\mathcal D(\mathrm{Liq}_p(\mathbb C)))\rightarrow C(D(z,r))$$
is a nontrivial localization.

However, the sheaf of categories $C(-)$ on $V(I)$ is nonetheless canonically determined by just the structure sheaf (of commutative algebras in $\mathcal D(\mathrm{Liq}_p(\mathbb C))$): for example, we can describe $C(-)$ as the \emph{sheafification} of this naive guess
$$D(z,r)\mapsto \operatorname{Mod}_{\mathcal{O}^{hol}(D(z,r))/I}(\mathcal D(\mathrm{Liq}_p(\mathbb C))).$$
(Another description will be given in the exercises.)  This is not a general feature of the formalism, but it follows in this case from the statement of the theorem.
\end{remark}

To prove this theorem, as indicated above, we will find a framework in which we can work with ``open subsets'' and ``closed subsets'' on equal footing.  We will take the following maximalist approach.\footnote{In the lecture, we restricted to a more minimalist context of ``immersions'', but it seems this restriction is actually irrelevant and distracting, so we take the maximalist approach here. In hindsight, one can say that the discussion of this lecture was initiating the path towards the theory of Gestalten developed later with Stefanich, \cite{Gestalten}.}

\begin{definition}
Let $\operatorname{Sym}$ denote the (very large) $\infty$-category of cocomplete closed symmetric monoidal stable $\infty$-categories, with morphisms the symmetric monoidal cocontinuous functors between them.  We denote such functors by formal expressions such as
$$f^*:C\rightarrow D,$$
and we will then write simply $f$ for the same morphism, considered as a morphism in the opposite category $\operatorname{Sym}^{\mathrm{op}}$ instead.
\end{definition}

As desired, we can view both closed subsets and open subsets in this common framework, and the interactions between them are as expected from naive considerations:

\begin{lemma}
Let $C\in\operatorname{Sym}$.
\begin{enumerate}
\item There is a fully faithful finite-limit-preserving functor from the poset of closed subsets of $\mathcal{S}(C)$ to
$(\operatorname{Sym}^{\mathrm{op}})_{/C}$, sending the closed subset $Z$ to the localization
$$i_Z^*:C\rightarrow \operatorname{Mod}_A(C),$$
where $A$ is the corresponding idempotent algebra.
\item There is a fully faithful finite-limit-preserving functor from the poset of open subsets of $\mathcal{S}(C)$ to $(\operatorname{Sym}^{\mathrm{op}})_{/C}$, sending the open subset $U$ to the localization
$$j_U^*:C\rightarrow C/\operatorname{Mod}_A(C)$$
where $A$ is the idempotent algebra corresponding to $U^c$.
\item Let $Z$ be a closed subset of $\mathcal{S}(C)$ and $U$ an open subset of $\mathcal{S}(C)$.  Then
$$ i_Z^* \text{ factors through }
j_U^* \Leftrightarrow Z^c \cup U = \mathcal{S}(C),\text{ i.e. }``Z\subset U''.$$
and
$$j_U^*\text{ factors through } i_Z^*\Leftrightarrow Z^c \cap U = \emptyset , \text{ i.e. }``U\subset Z''.$$
\item Let $Z$ be a closed subset of $\mathcal{S}(C)$ with corresponding idempotent algebra $A$, and let $f^*:C\rightarrow D$ be a map in $\operatorname{Sym}$.  Then the pushout of $i_Z^*$ along $f^*$ exists in $\operatorname{Sym}$ and is given by $i_{Z'}^*$ where $Z'$ corresponds to the idempotent algebra $f^* A$ in $D$.  Similarly, if $U$ is the complement of $Z$, then the pushout of $j_U^*$ along $f^*$ exists and is given by $j_{U'}^*$ where $U'$ is the complement of $Z'$.
\end{enumerate} 
\end{lemma}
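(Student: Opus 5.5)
The whole proof should rest on one principle: a morphism in $\operatorname{Sym}$ out of either localization is detected by a property, not by extra structure. Concretely, I would first establish the following universal properties. For an idempotent algebra $A$, symmetric monoidal cocontinuous functors $\operatorname{Mod}_A(C)\to D$ are the same as those $f^*:C\to D$ with $f^*(1\to A)$ an isomorphism. Dually, functors $C/\operatorname{Mod}_A(C)\to D$ are the same as those $f^*$ with $f^*A=0$. The first holds because $A$ is an idempotent $E_\infty$-algebra, so $\operatorname{Mod}_A(C)$ is the symmetric monoidal localization of $C$ at the single map $1\to A$ (equivalently at all maps $X\to X\otimes A$). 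The second holds because the Verdier quotient is the symmetric monoidal localization killing the $\otimes$-ideal $\operatorname{Mod}_A(C)$, and this ideal is generated by $A$. From these, the space of factorizations is either empty or contractible, so both functors land in posets inside $(\operatorname{Sym}^{\mathrm{op}})_{/C}$.

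Full faithfulness in (1) and (2) is then immediate. The functor $i_Z^*$ factors through $i_{Z'}^*$ exactly when $A'\to A\otimes A'$ becomes invertible after applying $-\otimes A$, i.e.\ when $A\otimes A'\simeq A$. This is $Z\subset Z'$ by the earlier Proposition. For opens, $j_U^*$ factors through $j_{U'}^*$ exactly when $A'$ dies in $C/\operatorname{Mod}_A(C)$, i.e.\ when $A'\in\operatorname{Mod}_A(C)$. That means $A'\simeq A\otimes A'$, i.e.\ $U'^c\subset U^c$.

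For finite limits in $(\operatorname{Sym}^{\mathrm{op}})_{/C}$, which are pushouts in $\operatorname{Sym}$ under $C$, I would first prove (4) and then derive the rest from it. The pushout of $i_Z^*$ along $f^*$ is $i_{Z'}^*$: a functor $g^*$ out of $D$ together with a compatible factorization through $\operatorname{Mod}_A(C)$ is exactly the condition that $g^*f^*(1\to A)$ is an isomorphism. By the universal property above, this is a factorization through $\operatorname{Mod}_{f^*A}(D)$. The open case is identical, since $g^*f^*A=0$ is the defining condition. Taking $f^*=i_{Z'}^*$ then gives $Z\cap Z'\mapsto A\otimes A'$, and similarly for opens. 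The terminal object goes to $C$ itself, i.e.\ $Z=\mathcal S(C)$ with $A=1$, or $U=\mathcal S(C)$ with $A=0$.

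Part (3) uses the same criteria. $i_Z^*$ factors through $j_U^*$ if and only if $i_Z^*A_{U^c}=A\otimes A_{U^c}=0$, which is the empty closed set, i.e.\ $Z\cap U^c=\emptyset$. This is the stated condition $Z^c\cup U=\mathcal S(C)$. Similarly, $j_U^*$ factors through $i_Z^*$ if and only if $1\to A$ becomes invertible in $C/\operatorname{Mod}_{A_{U^c}}(C)$. This happens exactly when its cofiber $[1\to A]$, which corresponds to the open $Z^c$, lies in the ideal generated by $A_{U^c}$. I would unwind that condition as $Z^c\subset U^c$, i.e.\ $Z^c\cap U=\emptyset$.

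The main obstacle is the Verdier-quotient universal property. One needs that the quotient carries a symmetric monoidal structure whose functors out of it are exactly those killing $A$. It also needs to be closed and cocomplete; cocompleteness comes from the fully faithful adjoint $j_{U*}$, which is an identification with $A$-local objects. Finally, to unwind the ideal-membership condition in (3), I would use the fiber sequence $j_{U!}j_U^*\to\mathrm{id}\to i_{Z*}i_Z^*$ to identify the kernel of $j_U^*$ precisely with $\operatorname{Mod}_A(C)$.
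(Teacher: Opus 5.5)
Your proposal is correct and follows essentially the paper's route. Like you, the paper reduces everything to the fact that $i_Z^*$ and $j_U^*$ are localizations whose kernels are the tensor ideals generated by $[1\to A]$ resp.\ $A$ (your universal properties). It then gets the poset structure from these being monomorphisms, checks full faithfulness via $\operatorname{Mod}_A(C)\subset\operatorname{Mod}_{A'}(C)\Leftrightarrow A\otimes A'\simeq A$, and derives finite limits and (3) from (4).

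One small slip in (1): the condition is that $i_Z^*$ inverts $1\to A'$, i.e.\ that $A\to A\otimes A'$ is an isomorphism. It is not that $A'\to A\otimes A'$ becomes invertible after tensoring with $A$, which holds automatically.
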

\begin{proof}
These claims are all straightforward.  For (1), since $i_Z^*$ is a localization, $i_Z$ is a monomorphism in $(\operatorname{Sym}^{\mathrm{op}})_{/C}$, hence the full subcategory of $(\operatorname{Sym}^{\mathrm{op}})_{/C}$ spanned by all these $i_Z$ is indeed equivalent to a poset.  Then the well-definedness and full faithfulness of the functor amounts to the assertion that 
$$\operatorname{Mod}_A(C)\subset \operatorname{Mod}_{A'}(C) \Leftrightarrow Z\subset Z' (\Leftrightarrow A=A'\otimes A),$$
where $A$ corresponds to $Z$ and $A'$ to $Z'$, and this is simple.  The functor in (2) is well-defined and fully faithful for the exact same reason.  

The claim about finite limits in (1) and (2) is a simple consequence of the base-change result (4), as is the claim in (3).  Finally, the base-change result (4) follows from the description of the kernel of $i_Z^*$ (resp.\ $j_U^*$) as the tensor ideal generated by $[1\rightarrow A]$ (resp.\ $A$).
\end{proof}

A map in $\operatorname{Sym}^{\mathrm{op}}$ isomorphic to one of the form $i_Z$ as in (1) will be called a \emph{closed immersion}, and a map in $\operatorname{Sym}^{\mathrm{op}}$ isomorphic to one of the form $j_U$ as in (2) will be called an \emph{open immersion}.  Thus we have seen that the poset of open immersions into a fixed $C$ identifies with the poset of open subsets of $\mathcal{S}(C)$, and the poset of closed immersions into $C$ identifies with the poset of closed subsets of $\mathcal{S}(C)$.  Also, closed and open subsets interact ``as expected'' in the larger poset of subobjects of $C$ in $\operatorname{Sym}^{\mathrm{op}}$.  Moreover, both open and closed immersions are closed under base-change, and these base-changes are calculated in the naive way in terms of corresponding algebras.

There is the following abstract characterization of open and closed immersions.

\begin{proposition}
Let $f^*:C\rightarrow D$ in $\operatorname{Sym}$.  Then:
\begin{enumerate}
\item $f$ is a closed immersion if and only if $f^*$ has a fully faithful right adjoint $f_*$ which preserves colimits and satisfies the projection formula
$$c \otimes f_*d \overset{\sim}{\rightarrow} f_*(f^*c \otimes d).$$
\item $f$ is an open immersion if and only if $f^*$ has a fully faithful left adjoint $f_\natural$ which preserves colimits (this is of course automatic for a left adjoint) and satisfies the projection formula
$$f_\natural(d\otimes f^*c)\overset{\sim}{\rightarrow} c\otimes f_\natural d.$$
\end{enumerate}
Moreover, in each of these cases the specified adjoint commutes with arbitrary base-change.
\end{proposition}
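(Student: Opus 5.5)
For the forward directions I will compute the adjoints directly. If $f=i_Z$ with idempotent algebra $A$, then $f^*=-\otimes A$ and its right adjoint $f_*$ is the inclusion $\operatorname{Mod}_A(C)\subset C$. This inclusion is fully faithful and preserves colimits, since $\operatorname{Mod}_A(C)$ is closed under colimits in $C$. The projection formula $c\otimes d\cong c\otimes A\otimes_A d$ holds for $A$-modules $d$. If $f=j_U$, then $f^*$ is the Verdier quotient, and $f_\natural$ is the fully faithful left adjoint constructed earlier, with $j_{U!}j_U^*X=[X\to X\otimes A]$, i.e.\ $X\otimes I$ where $I=\mathrm{fib}(1\to A)$. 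The projection formula follows because $f_\natural f^*(-)=-\otimes I$ is a tensor functor: $f_\natural(f^*x\otimes f^*c)=x\otimes c\otimes I$.

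For the converses, let $f^*$ have a fully faithful colimit-preserving right adjoint $f_*$ satisfying the projection formula, and set $A:=f_*1_D$. The projection formula gives $f_*f^*c\cong c\otimes A$. The unit $1\to A$ is idempotent, because $A\otimes A\cong f_*f^*f_*1\cong f_*1=A$, using that the counit $f^*f_*\to\mathrm{id}$ is an isomorphism by full faithfulness. Next, $f_*$ identifies $D$ with a full subcategory of $C$. Every $f_*d$ is an $A$-module: $f_*d\otimes A\cong f_*(f^*A\otimes d)$, and $f^*A=f^*f_*1\cong 1$. Conversely, an $A$-module $X=X\otimes A=f_*f^*X$ lies in the image. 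Hence $f_*$ identifies $D\simeq\operatorname{Mod}_A(C)$, compatibly with $f^*\cong -\otimes A$ and with the symmetric monoidal structures. For the open case, set $I:=f_\natural 1_D$ with counit map $I\to 1$. The projection formula gives $f_\natural f^*c\cong c\otimes I$, and full faithfulness gives $I\otimes I\cong f_\natural f^*f_\natural 1\cong I$. Then $A:=\mathrm{cofib}(I\to1)$ is an idempotent algebra, because $I\otimes A=0$ and hence $A\otimes A\cong A$. The kernel of $f^*$ consists of those $c$ with $c\otimes I=0$, i.e.\ $c\cong c\otimes A$, which is $\operatorname{Mod}_A(C)$. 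A functor with a fully faithful left adjoint is a localization, so $f^*$ is the Verdier quotient $C/\operatorname{Mod}_A(C)$, i.e.\ $f=j_U$.

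Base change then follows from part (4) of the preceding lemma. The pushout of $i_Z^*$ along $g^*:C\to C'$ is $-\otimes g^*A$. The right adjoint of this pushout is the forgetful functor from $g^*A$-modules, so $g^*f_*=f'_*g'^*$, both sides being $-\otimes g^*A$ after forgetting. The open case is analogous, with $g^*I$ in place of $g^*A$.

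The main obstacle is not the algebra but checking that the identification $D\simeq\operatorname{Mod}_A(C)$, resp.\ $D\simeq C/\operatorname{Mod}_A(C)$, is an equivalence in $\operatorname{Sym}$, i.e.\ symmetric monoidal under $C$. I would handle this by noting that a localization $f^*$ in $\operatorname{Sym}$ is determined by its kernel as a tensor ideal. The tensor structure on the target is then forced, since $f^*$ is symmetric monoidal and essentially surjective.
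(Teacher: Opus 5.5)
Your proposal is correct and follows essentially the paper's argument. In the converse directions you set $A=f_*1$ (resp.\ $A=\mathrm{cofib}(f_\natural 1\to 1)$), obtain idempotence from the projection formula together with full faithfulness, and identify the essential image of $f_*$ (resp.\ $\ker f^*$) with $\operatorname{Mod}_A(C)$, so that the two localizations agree; base change then follows from part (4) of the preceding lemma via $i_*i^*=-\otimes A$ and $j_\natural j^*=-\otimes\mathrm{fib}(1\to A)$, exactly as in the paper (your showing that $f_*d$ is an $A$-module via the projection formula, rather than via lax monoidality of $f_*$, is only a cosmetic difference).
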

\begin{proof}
For (1), if $f$ is a closed immersion corresponding to an idempotent algebra $A$, then we have that $f_\ast$ is the forgetful functor $\operatorname{Mod}_A(C)\rightarrow C$, and the required properties are clear.  Conversely, if $f^*$ has a colimit-preserving right adjoint $f_*$ satisfying the projection formula, then first of all we have that $1\rightarrow f_*1$ makes $f_*1$ into an idempotent algebra, by the projection formula applied to $c=f_*1$ and $d=1$.  Then to identify $f^*$ with the closed immersion corresponding to $f_*1$, as both are localizations it suffices to see
$$f_*D = \operatorname{Mod}_{f_*1}(C).$$
as full subcategories of $C$.  The inclusion $\subset$ is formal from $f_*$ being right adjoint to the symmetric monoidal $f^*$.  For $\supset$, note that $X\otimes f_*1 = f_*f^*X$ by the projection formula.

For (2), if $f$ is an open immersion corresponding to an idempotent algebra $A$, then we can identify $D$ with the full subcategory of $C$ consisting of those $X$ with $X\otimes A=0$, and in these terms $f^*$ becomes $-\otimes [1\rightarrow A]$ and $f_\natural$ becomes the inclusion.  Then the projection formula is clear.  Conversely, suppose $f^*$ has the left adjoint $f_\natural$ satisfying projection formula.  Let $A$ be the cofiber of $f_\natural 1\rightarrow 1$.  Then $A$ is an idempotent algebra by the projection formula, and we need to see that
$$\operatorname{ker}(f^*) = \operatorname{Mod}_A(C).$$
as full subcategories of $C$.   Certainly $A$, and hence $\operatorname{Mod}_A(C)$, is in the kernel because $f^*f_\natural 1=1$ by fully faithfulness of $f_\natural$.  Conversely, if $X\in \operatorname{ker}(f^*)$, then $X\otimes f_\natural 1=0$ by the projection formula, whence $X\in \operatorname{Mod}_A(C)$.

Finally, the commutation with base-change follows from (4) in the previous lemma and the explicit description of $i_*i^* = -\otimes A$ and $j_\natural j^* = -\otimes [1\rightarrow A]$.
\end{proof}

Next, we deduce the expected permanence properties related to composition.

\begin{corollary}
In $\operatorname{Sym}^{\mathrm{op}}$, we have the following.
\begin{enumerate}
\item A composition of closed immersions is a closed immersion, and a composition of open immersions is an open immersion.
\item Let $f^*:C\rightarrow D$ and $g^*:D\rightarrow D'$ be composable maps such that $f$ is a monomorphism (which holds if it is either a closed or an open immersion).  If $f\circ g$ is an open immersion, then $g$ is an open immersion, and if $f\circ g$ is a closed immersion, then $g$ is a closed immersion.
\end{enumerate}
\end{corollary}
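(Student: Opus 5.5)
The plan is to work throughout with the abstract characterization just proved, namely closed immersions are those $f^*$ admitting a fully faithful colimit-preserving right adjoint with projection formula, and open immersions are those admitting a fully faithful left adjoint with projection formula.

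\emph{Part (1).} Suppose $f^*:C\to D$ and $g^*:D\to D'$ are both closed immersions. Then $(g^*f^*)$ has right adjoint $f_*g_*$. This adjoint is fully faithful and colimit-preserving, since both factors are. The projection formula follows by chaining the two projection formulas:
\[
c\otimes f_*g_*d\cong f_*(f^*c\otimes g_*d)\cong f_*g_*(g^*f^*c\otimes d).
\]
The open case is identical, using $f_\natural g_\natural$ and the left-adjoint projection formula. Alternatively, for closed immersions one can note that if $A\in C$ and $B\in\operatorname{Mod}_A(C)$ are idempotent, then $B$ is idempotent in $C$ because $B\otimes_C B= B\otimes_A(A\otimes B)$. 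I would give the adjoint argument since it treats both cases uniformly.

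\emph{Part (2).} Now $f$ is a monomorphism in $\operatorname{Sym}^{\mathrm{op}}$. First check this for open and closed immersions: $f^*$ is a localization, so the functor $\operatorname{Fun}^{\otimes,L}(D,E)\to \operatorname{Fun}^{\otimes,L}(C,E)$ is fully faithful, and its image consists of those functors inverting the maps $f^*$ inverts, hence it is a monomorphism.

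Suppose $h=f\circ g$, with $h^*=g^*f^*$, is a closed immersion, corresponding to an idempotent algebra $A\in C$. Form the base change of $i_Z^*=h^*$ along $f^*$ using part (4) of the previous lemma. This gives a closed immersion $D\to \operatorname{Mod}_{f^*A}(D)$. It is the pushout of $C\to D$ and $C\to D'$ in $\operatorname{Sym}$.

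Because $f$ is a monomorphism, the pullback $D'\times_C D$ in $\operatorname{Sym}^{\mathrm{op}}$ is $D'$ itself. Concretely, the pushout of $D\leftarrow C\to D'$ in $\operatorname{Sym}$ is $D'$, since $D'$ already receives $g^*$ compatibly and $f^*$ is an epimorphism in $\operatorname{Sym}$. Hence $g^*$ identifies with the base-changed closed immersion $D\to\operatorname{Mod}_{f^*A}(D)$. The open case is the same, using the open half of part (4).

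\textbf{Main obstacle.} The delicate step is justifying that the pushout of $f^*$ with itself along itself is trivial, i.e.\ that $f^*$ being a localization makes it an epimorphism in $\operatorname{Sym}$. I would verify directly that the pushout of $C\to D$ and $C\to D$ is $D$, using fully faithfulness of $\operatorname{Fun}^{\otimes,L}(D,E)\to\operatorname{Fun}^{\otimes,L}(C,E)$. Then general nonsense gives that a base change of $h$ along the monomorphism $f$ recovers $g$.
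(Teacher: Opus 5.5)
Your proposal is correct and follows the paper's argument. Part (1) uses the characterization of closed and open immersions by adjoints satisfying the projection formula. Part (2) notes that base-changing $f\circ g$ along the monomorphism $f$ returns $g$, and then invokes base-change stability of closed and open immersions.
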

\begin{proof}
The claim in (1) follows from the characterization in terms of the projection formula.  The claim in (2) follows immediately from base-change stability of open and closed immersions.
\end{proof}

From these properties it follows for example that if $j_U^*:C\rightarrow D$ is the pullback functor corresponding to an open subset $U\subset \mathcal{S}(C)$, then $\mathcal{S}(D)$ identifies with the locale of open subsets of $U$: a nice sanity check.

Furthermore we get the following, letting us formalize descent with respect to not-necessarily-open covers:

\begin{theorem}
\begin{enumerate}
\item There is a Grothendieck topology on $\operatorname{Sym}^{\mathrm{op}}$ where the covering sieves over a given $C$ are those which contain some set of open immersions whose corresponding open subsets cover $\mathcal{S}(C)$.
\item The identity functor $(\operatorname{Sym}^{\mathrm{op}})^{\mathrm{op}}\rightarrow \operatorname{Sym}$ is a sheaf with respect to this Grothendieck topology.
\item The poset of open immersions also satisfies descent with respect to this Grothendieck topology; same for the poset of closed immersions.
\end{enumerate}
\end{theorem}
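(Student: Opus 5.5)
The plan has three parts: verify the Grothendieck topology axioms, reduce descent for arbitrary covering families to the locale case of Proposition~\ref{prop:structuresheafformalnonsense}, and deduce descent for open and closed immersions from base change.

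\textbf{Part (1).} I check the three axioms directly.
\begin{itemize}
\item The maximal sieve covers, since the identity is the open immersion attached to $U=\mathcal S(C)$.
\item Stability under pullback follows from part (4) of the preceding lemma. If $\{U_i\}$ covers $\mathcal S(C)$ and $f^*:C\to D$ is any map, the base changes $j_{U_i'}$ exist and are open immersions into $D$. The $U_i'$ cover $\mathcal S(D)$, because $U\mapsto U'$ comes from $A\mapsto f^*A$ on idempotent algebras. This assignment preserves the operations that compute unions of opens, i.e.\ intersections of closed subsets (filtered colimits, tensor products, and the finite-limit formula $[A\oplus A'\to A\otimes A']$), since $f^*$ is symmetric monoidal, cocontinuous and exact. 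So $f^{-1}$ is a frame map, and it sends $\mathcal S(C)$ to $\mathcal S(D)$.
\item For transitivity, take a cover $\{U_i\}$ of $C$ and, for each $i$, a cover of $C(U_i)$ by open immersions $\{V_{ij}\}$. The composites are open immersions by the corollary. Since $\mathcal S(C(U_i))$ is the locale of opens of $U_i$ (the sanity check above), the $V_{ij}$ are opens of $\mathcal S(C)$ with union $\bigcup_i U_i=\mathcal S(C)$.
\end{itemize}

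\textbf{Part (2): reduction to open covers.} A sieve is covering exactly when it contains a family of open immersions covering $\mathcal S(C)$. Descent for a sieve that contains a covering sieve for which descent holds universally, i.e.\ after every base change, reduces to descent for that smaller sieve by the standard argument. Every pullback of an open-immersion cover is again such a cover, by (1). So it suffices to show that for any family of open immersions $j_{U_i}$ covering $\mathcal S(C)$, the functor $C\to\varprojlim$ over the Čech nerve is an equivalence. Fibre products of open immersions in $\operatorname{Sym}^{\mathrm{op}}$ are the pushouts in $\operatorname{Sym}$, and by base change they are $C(U_{i_0}\cap\dots\cap U_{i_k})$. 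Hence the Čech limit is exactly the limit appearing in the sheaf property of $U\mapsto C(U)$ on the locale.

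\textbf{Part (2): the locale sheaf property.} This is Proposition~\ref{prop:structuresheafformalnonsense}, which treats binary unions and filtered unions; an arbitrary union is a filtered union of finite ones. One subtlety is that the sheaf condition there is taken in $\infty$-categories, whereas here the limit is taken in $\operatorname{Sym}$. I would argue that limits in $\operatorname{Sym}$ along symmetric monoidal cocontinuous functors are computed underlying. This should hold because each $j_U^*$ has a colimit-preserving left adjoint $j_{U\natural}$, so the limit is computed on the level of underlying categories with induced monoidal structure and internal Homs. This compatibility, together with the reduction step from sieves to families of open immersions, is the step I expect to need the most care.

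\textbf{Part (3).} The poset of open (resp.\ closed) immersions into $C$ is the poset of opens (resp.\ closed subsets) of $\mathcal S(C)$. For descent, take compatible opens $V_i\subset U_i$, meaning $V_i\cap U_j=V_j\cap U_i$, computed via base change. They glue uniquely to $V=\bigcup_i V_i$, with $V\cap U_i=V_i$ by distributivity in the locale. For closed subsets, I pass to complements: the complement of $Z\cap U_i$ is computed by the base-changed algebra $A|_{U_i}$, and gluing the open complements gives the glued closed subset. Descent along arbitrary covering sieves then reduces to open-immersion covers exactly as in part (2).
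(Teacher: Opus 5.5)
Your proposal is correct and follows the paper's route. Part (1) comes from base change and composition of open immersions. Part (2) reduces to \v{C}ech descent for open covers and invokes the sheaf property of $U\mapsto C(U)$ on $\mathcal S(C)$. You helpfully make explicit a point the paper leaves implicit: limits in $\operatorname{Sym}$ are computed on underlying $\infty$-categories. The cleanest reason is that the transition functors $j_U^*$ preserve colimits, so colimits in the limit are computed termwise and cocontinuity of a functor into it can be checked componentwise; the existence of $j_{U\natural}$ is not what matters here.

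The only deviation is in (3). You glue directly using distributivity in the frame of opens of $\mathcal S(C)$, and pass to complements for closed subsets. The paper instead deduces (3) from (2) by gluing idempotent algebras. Both arguments work.
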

\begin{proof}
Part (1) follows immediately from the compatibility under composition and base-change properties discussed above.  Part (2) then follows from the descent on the locale proved in the previous lecture.  Part (3) follows formally from part (2) and the description of base-changes of open and closed immersions in terms of idempotent algebras.
\end{proof}

This concludes our detour into abstract nonsense.  Now we turn to the proof of Theorem \ref{explicitanalytification}, which now almost writes itself: note that for $r'<r$, we have
$$D(z,r')\subset \overline{D}(z,r')\subset D(z,r),$$
so the $\overline{D}(z,r')$ for $r'<r$ give a collection of closed immersions into $D(z,r)$ which is refined by the cover by the open immersions $D(z,r')$.  Thus we can calculate $C(D(z,r))$ by descent, getting
$$C(D(z,r)) = \varprojlim_{r'<r} \operatorname{Mod}_{\mathcal{O}(\overline{D}(z,r'))}(\mathcal{D}(\mathrm{Liq}_p(\mathbb C))),$$
where $\mathcal{O}(\overline{D}(z,r'))$ is the idempotent algebra in liquid $R$-modules which corresponds to the given closed subset $\overline{D}(z,r') = \bigcap_{i=1}^n\{ |X_i - z_i| \leq r'_i\}$.  When $R=\mathbb{C}[X_1,\ldots,X_n]$, a straightforward tensor product calculation as in the previous lecture gives that
$$\mathcal{O}(\overline{D}(z,r')) = \mathcal{O}^{hol}(\overline{D}(z,r'))$$
is indeed the usual ring of overconvergent functions on the given closed disk in $\mathbb{C}^n$, and we deduce the claims of the theorem in that case.  For general $R=\mathbb{C}[X_1,\ldots,X_n]/I$, we get that $\mathcal{O}(\overline{D}(z,r'))$ is given by the (derived) base-change
$$\mathcal{O}^{hol}(\overline{D}(z,r'))\otimes_{\mathbb{C}[X_1,\ldots,X_n]}  R$$
in liquid modules.  For that we have the following, which thereby finishes the proof of Theorem \ref{explicitanalytification}:

\begin{lemma}\label{analyticaffinelemma}
Let $I\subset \mathbb{C}[X_1,\ldots,X_n]$ be an ideal with quotient $R$, let $z=(z_1,\ldots,z_n)\in \mathbb{C}^n$, and let $r=(r_1,\ldots,r_n)\in \mathbb{R}_{>0}^n$.  Then in $\mathcal{D}(\operatorname{Liq}_p)$, we have
$$\mathcal{O}^{hol}(\overline{D}(z,r))\otimes_{\mathbb{C}[X_1,\ldots,X_n]} R = \mathcal{O}^{hol}(\overline{D}(z,r))/I,$$
meaning the derived liquid tensor product on the left is concentrated in degree zero and given by the naive modding out on the right, which is the cokernel of the map
$$\mathcal{O}^{hol}(\overline{D}(z,r))^{\oplus m}\rightarrow \mathcal{O}^{hol}(\overline{D}(z,r))$$
given by $m$ chosen generators of $I$.

Similarly,
$$\mathcal{O}^{hol}(D(z,r))\otimes_{\mathbb{C}[X_1,\ldots,X_n]} R = \mathcal{O}^{hol}(D(z,r))/I,$$
and this is also equal to $\varprojlim_{r'<r} \mathcal{O}^{hol}(\overline{D}(z,r'))/I$.
\end{lemma}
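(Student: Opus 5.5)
Write $P=\mathbb{C}[X_1,\ldots,X_n]$ and $B=\mathcal{O}^{hol}(\overline{D}(z,r))$; translating, we may take $z=0$. The plan is to compute the derived tensor product through a finite free resolution of $R$ over $P$, and to reduce the higher Tor-vanishing to the classical flatness of convergent power series over polynomials.

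\emph{Resolution and flatness.} Hilbert's syzygy theorem gives a finite free resolution $F_\bullet\to R$ by finite free $P$-modules $P^{m_k}$, beginning with $P^m\to P$ given by the chosen generators of $I$. Hence $B\otimes_P R$ is computed by the complex $B\otimes_P F_\bullet$, that is, $B^{m_k}$ with matrices of polynomials as differentials. This is a complex of $p$-liquid modules, and its $H^0$ is literally the cokernel of $B^{\oplus m}\to B$, which gives the naive modding out. It remains to show that $H^{-k}=0$ for $k>0$, i.e.\ that $B$ is flat over $P$ on these resolutions. The intended route is to work on underlying points of extremally disconnected $S$, since exactness in $\mathrm{CondAb}$ can be tested on $S$-valued points. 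Write $B=\varinjlim_{r'>r} B_{r'}$ as a filtered colimit (colimits are exact), with $B_{r'}$ the Banach-type algebra of series $a_\nu$ satisfying $a_\nu r'^\nu\to0$. By the pro-isomorphism remarks of Lecture IV, this colimit may equally be formed using the overconvergent algebras $\mathcal{O}^{hol}(D(0,r'))$ or the algebras of germs. Then $B(S)$ becomes $\varinjlim_{r'}$ of continuous maps $S\to B_{r'}$, which sits inside holomorphic functions on a neighborhood of $\overline{D}$ with parameters in $S$. For exactness of $B(S)\otimes_P F_\bullet$ I would use the classical fact that the ring of germs of holomorphic functions on a compact polydisc (a Stein compact) is flat over $P$, or more precisely over $\mathcal{O}(\mathbb{C}^n)$. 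That fact comes from Oka coherence together with Cartan's Theorem B. Since the paper aims to be analysis-free, I would rather prove the needed Tor-vanishing directly: flatness of $\mathcal{O}_{\mathbb{C}^n,x}$ over $P_{\mathfrak m}$ via Weierstrass division, which is proven algebraically with norm estimates, together with a patching over the compact polydisc. \textbf{This step is the main obstacle}: it amounts to a Cartan-type result and does not look formal. A cleaner alternative I would try first is induction on $n$ using Weierstrass division to compute $B\otimes_P P/(f)$ and to show that $f$ is a nonzerodivisor on $B$ (after a generic linear change of coordinates), and then to pass to general $I$ by a Noether normalization argument.

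\emph{The open polydisc.} For the second part, write $\mathcal{O}^{hol}(D(z,r))=R\varprojlim_{r'<r}\mathcal{O}^{hol}(\overline{D}(z,r'))$. This uses the Lecture IV result that the tower is pro-isomorphic to a tower of sequence spaces with dense transition maps, which gives vanishing $\varprojlim^1$ by the Mittag-Leffler lemma. The derived tensor product with the perfect $P$-module $R$, represented by a finite free complex, commutes with $R\varprojlim$. Applying the first part termwise then identifies $\mathcal{O}^{hol}(D(z,r))\otimes_P R$ with $R\varprojlim_{r'} \mathcal{O}^{hol}(\overline{D}(z,r'))/I$. To see that this limit is concentrated in degree $0$, I need $\varprojlim^1$ of the quotients to vanish. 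This follows from surjectivity of $\varprojlim^1$ of $B_{r'}^{\oplus m}$ onto $\varprojlim^1$ of the quotients, using exactness and the vanishing already shown. Finally, $\varprojlim_{r'}(B_{r'}/I)$ equals $\mathcal{O}^{hol}(D)/I$ by the same complex computation in degree $0$.
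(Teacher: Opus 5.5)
The skeleton of your argument matches the paper's. A finite free resolution $F_\bullet$ of $R$ over the regular ring $P$ reduces the first claim to flatness of $B$ over $P$. The open polydisc is then handled by commuting the finite complex with $R\varprojlim_{r'<r}$ and using Mittag-Leffler. Your $\varprojlim^1$ argument there is fine, and even unnecessary: $R\varprojlim$ of a tower of objects in degree $0$ lives in homological degrees $0$ and $-1$, while $\mathcal O^{hol}(D(z,r))\otimes_P F_\bullet$ lives in degrees $\geq 0$.

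The paper also postpones the flatness, but your route to it has a genuine gap. You want exactness of $B(S)\otimes_P F_\bullet$ for extremally disconnected $S$, and you invoke the classical flatness of germs on a compact polydisc. That classical fact concerns only the abstract ring $B(\ast)$. For $S=\beta\mathbb N$, the ring $B(S)=\varinjlim_{r'}C(S,B_{r'})$ consists of sequences of germs that are holomorphic and uniformly bounded on a common neighbourhood of $\overline D$. To get exactness of $B(S)\otimes_P F_\bullet$ from exactness of $B(\ast)\otimes_P F_\bullet$, you must lift uniformly bounded families of cycles to uniformly bounded families of preimages. That is an open-mapping statement, and your proposal does not supply it. Nor is $B(S)$ a ring of holomorphic germs on a compact subset of some $\mathbb C^N$, so the classical fact does not apply to it directly.

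The missing ingredient is the Baire-category lemma stated right after Proposition~\ref{coherentisDNF}. Since $B$ is a DNF space, $B\otimes_P F_\bullet$ is a complex of DNF spaces, and such a complex is exact as soon as its complex of $\ast$-points is exact. This is how Proposition~\ref{coherentisDNF}(2) obtains $t$-exactness, and it reduces the lemma to flatness of the discrete ring $B(\ast)$ over $P$.

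That abstract flatness is also not established in your proposal. Appealing to Oka coherence plus Cartan's Theorem B is classically legitimate, but it runs against the paper's program, which recovers a Cartan--Oka statement (Corollary~\ref{cor:animateddescends}) as an output. Your alternative, using Weierstra\ss\ division to show a single $f$ is a nonzerodivisor and then Noether normalization, does not control $\mathrm{Tor}^P_k(B,P/J)$ for an arbitrary ideal $J$. What is needed is a local-to-global step, which is how Theorem~\ref{thm:compactsteincoherent}(2) proceeds:
\begin{enumerate}
\item $Z\mapsto \mathcal O(Z)\otimes^L_P P/J$ is a sheaf of pseudocoherent complexes on compact Steins.
\item Its homology modules are finitely presented over the coherent rings $\mathcal O(Z)$. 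Since each $\mathcal O(Z)\to\mathcal O(Z')$ is flat, Lemma~\ref{lem:fingenmodulelocallyzero} lets one test their vanishing on stalks.
\item Each $\mathcal O_z$ is noetherian, so flatness over $P$ can be checked after completion. There $\widehat{\mathcal O}_z=\mathbb C[[X-z]]$ is also the completed local ring of $P$, so flatness is clear.
\end{enumerate}
The coherence and flatness inputs in this chain all come from the Weierstra\ss\ preparation induction of Lecture X, not from analysis.
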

\begin{proof}
We claim that the condensed ring $\mathcal{O}^{hol}(\overline{D}(z,r))$ is flat as a module over the \emph{discrete} ring $\mathbb{C}[X_1,\ldots,X_n](\ast)$, i.e.\ for any discrete module $M$ over $\mathbb{C}[X_1,\ldots,X_n](\ast)$ the derived tensor product
$$\mathcal{O}^{hol}(\overline{D}(z,r))\otimes_{\mathbb{C}[X_1,\ldots,X_n](\ast)}M$$
is concentrated in degree zero.\footnote{However, beware that the map of liquid rings $\mathbb{C}[X_1,\ldots,X_n]\rightarrow \mathcal{O}^{hol}(\overline{D}(z,r))$ is \emph{not} flat! Indeed, if $\overline{D'}$ is a closed polydisk disjoint from $\overline{D}$, then $\mathbb{C}[X_1,\ldots,X_n]\rightarrow \mathcal{O}^{hol}(\overline{D'}(z,r))$ is an injection which base changes to the non-injective map $\mathcal{O}^{hol}(\overline{D}(z,r))\rightarrow 0$.} We will include the argument later when we discuss the finiteness properties of such rings.

For the remainder of the claims, note that since $\mathbb{C}[X_1,\ldots,X_n](\ast)$ is a regular noetherian ring, a finitely generated module such as $R$ has a finite resolution by finite free modules, so the derived tensor product commutes with arbitrary (derived) limits, where in this setting ``derived'' is optional due to Mittag-Leffler.
\end{proof}

Our next topic is globalization and GAGA. To recap, for any finite type $\mathbb{C}$-algebra $R$, we have assigned the open subset
$$\mathcal{S}(R,R)\subset \mathcal{S}(R),$$
with corresponding localization functor in $\operatorname{Sym}$
$$C(R)\rightarrow C(R,R).$$
The source $C(R)$ is the purely algebraic category of $R$-modules in derived liquid vector spaces, while, as we have seen, the target is analytic: it localizes on the topological space $\operatorname{Spec}(R)(\mathbb{C})$ and is simply described in terms of usual rings of convergent power series on disks.

Now, both assignments $R\mapsto C(R)$ and $R\mapsto C(R,R)$ form sheaves with respect to the Zariski topology on finite type $\mathbb{C}$-schemes (we will review an argument for this argument shortly).  Hence they globalize by descent to assignments
$$X\mapsto C(X), C(X,X)$$
for arbitrary finite type $\mathbb{C}$-schemes $X$.  Naturally, the comparison functor also globalizes to a map in $\operatorname{Sym}$
$$f^*:C(X)\rightarrow C(X,X).$$

Then we have the following result.

\begin{theorem}
Suppose $X$ is a finite type $\mathbb{C}$-scheme and consider the above algebraic-to-analytic comparison functor
$$f^*:C(X)\rightarrow C(X,X).$$
If $X$ is proper, then $f$ is an isomorphism.
\end{theorem}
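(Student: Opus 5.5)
Since $f$ is by construction a localization locally on $X$, my plan is to show that for proper $X$ the open subset $\mathcal S(X,X)\subset\mathcal S(X)$ is everything. Here $f$ is glued from the open immersions $j:\mathcal S(R,R)\subset\mathcal S(R)$ for affine opens $\operatorname{Spec}R\subset X$. Both sides are Zariski sheaves, and the gluing is compatible. So $f$ is itself an open immersion in $\operatorname{Sym}^{\mathrm{op}}$, corresponding to an open subset $\mathcal S(X,X)\subset\mathcal S(X)$ of the glued locale. It therefore suffices to prove that the complementary closed subset is empty, i.e.\ that the corresponding idempotent algebra in $C(X)$ vanishes. Equivalently, I must show $C(X)(Z)=0$, where $Z$ is the closed complement.

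I first reduce to projective space, using Chow's lemma together with descent for the Grothendieck topology of the previous theorem. For a closed immersion $X\hookrightarrow Y$ of schemes, the construction is compatible with base change: the idempotent algebra cutting out $\mathcal S(R,R)$ is base changed from $\mathbb C[X_1,\ldots,X_n]$, and for a surjection $\mathbb C[X_1,\dots,X_n]\to R$ one has $\mathcal S(R,R)=$ the preimage of the analytic locus. Hence if $f$ is an isomorphism for $Y$, then the complement is empty for $X$ too. For general proper $X$, I choose a proper surjection $X'\to X$ with $X'$ projective. Nonvanishing of the complement for $X$ should then pull back to nonvanishing for $X'$; this needs some conservativity of pullback along proper surjections, and if that is awkward I would instead rely on the main case, handling $\mathbb P^n$ and closed subschemes of it, and then use the standard reductions. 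So the core claim is that $\mathcal S(\mathbb P^n,\mathbb P^n)=\mathcal S(\mathbb P^n)$.

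For $\mathbb P^n$ I cover by the standard charts $U_i=\operatorname{Spec}\mathbb C[x_j/x_i]$. The key computation is in the $n=1$ case, where the charts are $\mathbb C[T]$ and $\mathbb C[T^{-1}]$. Proposition~\ref{prop:propertiessubsets}(2) gives $\{|T|\leq 1\}\cup\{|T|\geq 1\}=\mathcal S(\mathbb C[T])$ on the chart $U_0$. On $U_0$ the closed set $\{|T|\leq 1\}$ lies in the analytic locus $\{|T|<2\}$. The set $\{|T|\geq 1\}$ is identified, via $T\mapsto T^{-1}$ (as in Proposition~\ref{prop:computationunitdisc}), with $\{|T^{-1}|\leq 1\}$ in the chart $U_1$. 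That set lies in the analytic locus of $U_1$. Gluing, $\mathcal S(\mathbb P^1)$ is covered by two closed subsets, each contained in the analytic open, so the complement is empty. Here I use part (3) of the lemma on open/closed immersions, i.e.\ ``$Z\subset U$''.

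For general $n$, I do the same with the closed subsets $Z_i=\bigcap_{j}\{|x_j/x_i|\leq 1\}\subset U_i$. The main obstacle is proving that these $Z_i$ together with the gluing cover $\mathcal S(\mathbb P^n)$. My approach is to apply Proposition~\ref{prop:propertiessubsets}(2) and (4),(5) repeatedly to the coordinate ratios, arguing by induction on $n$ and splitting according to which $|x_j/x_0|\geq 1$. Each point should then lie in the region where some $x_i$ has maximal absolute value, and on that region the multiplicativity inclusions identify $\{|x_j/x_0|\geq 1\}$ with a region inside $U_j$. The delicate points are checking that these identifications are compatible with the Zariski gluing of the locales, and that the overlap conditions hold.
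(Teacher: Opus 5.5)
Your $\mathbb{P}^n$ argument is sound; it is essentially Exercise 4 of Lecture VI. The closed sets $Z_i=\bigcap_j\{|x_j/x_i|\le 1\}$ cover $\mathcal{S}(\mathbb{P}^n)$: split $V_0$ using $\{|y_j|\le 1\}\cup\{|y_j|\ge 1\}$, order the $y_j$ with $|y_j|\ge 1$ using $\{|u|\ge 1\}=\{|u^{-1}|\le 1\}$ for invertible $u$, and apply Proposition~\ref{prop:propertiessubsets}(4). Each $Z_i$ lies in the analytic locus of $U_i$, and closed subschemes of $\mathbb{P}^n$ follow by base change.

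The genuine gap is the step from projective to proper. A Chow cover $\pi\colon X'\to X$ is not a covering for the open-cover topology on $\operatorname{Sym}^{\mathrm{op}}$, so the descent theorem says nothing about it. What you actually need is that $\pi^\ast A=0$ implies $A=0$, where $A\in C(X)$ is the idempotent algebra of the non-analytic locus. That is, you need conservativity of $\pi^\ast$ on the huge category of liquid quasicoherent sheaves. You leave this open, and the fallback to ``standard reductions'' does not work as stated. Classical d\'evissage inducts on the dimension of support of a \emph{coherent} sheaf, while $A$ is an unbounded, non-coherent object. Knowing that $A$ dies on the open where $\pi$ is an isomorphism only says $A$ is supported on a smaller closed $Z$, not that it is an $\mathcal{O}_Z$-module.

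The conservativity is true but needs a real argument. For each scheme point $x$, pick $x'$ over it; since $k(x')$ is free over $k(x)$, $\pi^\ast A=0$ gives $A\otimes^L_{\mathcal{O}_X}k(x)=0$. A local-to-global (support) principle for the compactly generated $R$-linear categories $\operatorname{Mod}_R(\mathcal{D}(\operatorname{Liq}_p))$, with $R$ noetherian of finite dimension, then forces $A=0$. Alternatively, use descendability of $\mathcal{O}_X\to R\pi_\ast\mathcal{O}_{X'}$, which survives the symmetric monoidal functor $\mathcal{D}_{qc}(X)\to C(X)$.

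A second, smaller gap: your opening claim that $f$ is an open immersion ``because the gluing is compatible'' is not justified. $C(X)$ is glued along \emph{closed} subsets $\mathcal{S}(R[1/g])\subset\mathcal{S}(R)$, and the analytic loci do not restrict to each other. Indeed $\mathcal{S}(R,R)\cap\mathcal{S}(R[1/g])$ is strictly larger than $\mathcal{S}(R[1/g],R[1/g])$. For example, with $R=\mathbb{C}[T]$ and $g=T$, the nonzero idempotent algebra of convergent Laurent germs at $0$ cuts out a closed set in the former that is disjoint from every $\{|T^{-1}|<r\}$. Showing the $\mathcal{S}(R_i,R_i)$ form an open of $\mathcal{S}(X)$ requires axiom (3) and separatedness of $X$. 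For $\mathbb{P}^n$ alone you can bypass this by descent along the finite closed cover $\{Z_i\}$ on both sides.

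The paper avoids all of this. It maps $\mathcal{S}(R)$ to $\operatorname{Spa}(R,\mathbb{C})'$, with axioms (4) and (5) playing the role of your explicit covering, and identifies $\mathcal{S}(R,R)$ as the preimage of $\operatorname{Spa}(R,R)'$. After gluing, the valuative criterion gives $\operatorname{Spa}(X,X)=\operatorname{Spa}(X,k)$ for proper $X$. No Chow lemma and no conservativity are needed.
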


As first sight, this seems a bit shocking: usually GAGA only holds for categories of coherent sheaves, but here we're saying it holds for these rather huge categories of (what amount to) \emph{quasicoherent} liquid sheaves.  But in fact, the notion of quasi-coherent sheaf in analytic geometry has been somewhat missing until now.  Moreover it takes some extra work, which we'll do, to identify the usual categories of coherent sheaves inside our big categories of quasi-coherent sheaves so as to see that the above theorem does imply the classical GAGA.

We will prove this theorem in a more general axiomatic framework.  Suppose given the following data:

\begin{enumerate}
\item A commutative ring $k$. (In our example, $k=\mathbb{C})$
\item A $k$-linear object $C$ of $\operatorname{Sym}$, i.e.\ a map in $\operatorname{Sym}$ of the form $\mathcal{D}(k)\rightarrow C$.  (In our example, $C = \mathcal{D}(\operatorname{Liq}_p(\mathbb{C}))$.)
\item An open subset $U\subset \mathcal{S}(\operatorname{Mod}_{k[T]}(C))$ (in our example, $U=\bigcup_r \{ |T|< r\}$.)
\end{enumerate}

This gives rise to the following notation: for any $k$-algebra $R$ and any $f\in R$, let
$$\mathcal{S}(R,f) \subset \mathcal{S}(R):=\mathcal{S}(\operatorname{Mod}_R(C))$$
denote the preimage of $U$ under the map $k[T]\rightarrow R$ sending $T$ to $f$; this is the ``subset of $\mathcal{S}(R)$ on which $f$ is analytic''.

We impose the following axioms:

\begin{enumerate}
\item For any $f\in k$, we have $\mathcal{S}(k,f)=\mathcal{S}(k)$, i.e.\ ``constants are analytic''.
\item For any $R$ finite type over $k$ and $f,g\in R$, we have
$$\mathcal{S}(R,f)\cap\mathcal{S}(R,g)\subset \mathcal{S}(R,fg),\mathcal{S}(R,f+g),$$
i.e.\ ``the sum and product of analytic functions are analytic''.
\item For any $R$ finite type over $k$ and $f\in R$, the open subset
$$\mathcal{S}(R[1/f],1/f)\subset \mathcal{S}(R[1/f])$$
is even open in $\mathcal{S}(R)$, meaning this composition of an open immersion and closed immersion happens to also be an open immersion.  Denote this open subset of $\mathcal{S}(R)$ by $D^{\mathrm{an}}(f)$; it corresponds to ``inverting $f$ in the analytic sense''.
\item For any $R$ finite type over $k$ and $f\in R$, we have
$$\mathcal{S}(R) = \mathcal{S}(R,f)\cup D^{\mathrm{an}}(f).$$
\item For any $R$ finite type over $k$ and $f,g\in R$, we have
$$D^{\mathrm{an}}(f+g)\subset D^{\mathrm{an}}(f)\cup D^{\mathrm{an}}(g).$$
\end{enumerate}

Given this data and axioms, for $R$ finite type over $k$ we can define
$$\mathcal{S}(R,R)\subset \mathcal{S}(R)$$
as the intersection of the $\mathcal{S}(R,x_i)$ for any finite generating set $x_i$; by axiom (2) above this is independent of the generating set, and $\mathcal{S}(R,R)$ is contained in $\mathcal{S}(R,f)$ for all $f\in R$.  We use $C(-)$ to denote the canonical sheaf of categories on $\mathcal{S}(R)$.  Then we have the following abstract GAGA theorem:

\begin{theorem}
Both assignments
$$R\mapsto C(R),C(R,R)$$
satisfy descent for the Zariski topology, hence they globalize by descent to assignments
$$X\mapsto C(X), C(X,X)$$
for arbitrary finite type $k$-schemes $X$. For the induced comparison functor
$$f^*:C(X)\rightarrow C(X,X),$$
we have that $f$ is an isomorphism if $X$ is proper.
\end{theorem}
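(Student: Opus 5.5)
Zariski descent comes first. For $R$ of finite type and $f_1,\dots,f_m\in R$ generating the unit ideal, I will show that the open immersions $\operatorname{Mod}_R(C)\to\operatorname{Mod}_{R[1/f_i]}(C)$ come from an open cover of $\mathcal S(R)$; Theorem (descent on $\operatorname{Sym}^{\mathrm{op}}$) then gives descent for $R\mapsto C(R)$. Concretely, $R[1/f]=\varinjlim(R\xrightarrow{f}R\xrightarrow{f}\cdots)$, and the cofiber of $R\to R[1/f]$ is the idempotent algebra of the closed complement $V(f)$, namely $R/f^\infty$ (its $\operatorname{Mod}$ is the $f$-torsion, i.e.\ the kernel of $-\otimes R[1/f]$). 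The union of the open subsets is everything exactly when $\bigotimes_i [R\to R[1/f_i]]$ vanishes. This is a Koszul/\v{C}ech computation, and it vanishes because $1=\sum a_if_i$. For $C(R,R)$ I will use base change of open immersions, $\mathcal S(R[1/f])\cap\mathcal S(R,R)$. Axiom (3) combined with $R[1/f]=R[g]/(gf-1)$ and axiom (2) should show that $\mathcal S(R[1/f],R[1/f])=\mathcal S(R[1/f])\cap\mathcal S(R,R)\cap D^{\mathrm{an}}(f)$-type identifications are compatible. In particular, the restriction of the cover to $\mathcal S(R,R)$ is still a cover, so descent follows again from the same theorem. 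This gives the globalization and the comparison $f^*$.

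For GAGA I will reduce to projective space. First, $\mathcal S(X,X)\subset\mathcal S(X)$ is an open immersion locally, hence globally by descent of open immersions. So $f$ is an open immersion, and it suffices to show that $\mathcal S(X,X)$ is all of $\mathcal S(X)$. By Chow's lemma, or more simply by covering via a proper surjection, I would like to reduce to $X=\mathbb P^n_k$; closed immersions $X\hookrightarrow\mathbb P^n$ handle the projective case because the analytic locus pulls back, by axiom (2), applied to generators. For the general proper case I will try to show that the property ``$\mathcal S(X,X)=\mathcal S(X)$'' descends along proper surjections. I expect this to be the delicate point, and if it resists I will fall back on Chow's lemma plus a descent argument for the closed complement.

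The main computation is on $\mathbb P^n$, with standard charts $U_i=\operatorname{Spec}k[x_j/x_i]$. The goal is to show that the opens $\mathcal S(U_i,U_i)$ cover $\mathcal S(\mathbb P^n)$. I will proceed by induction on the coordinates. On $U_0$, axiom (4) gives $\mathcal S(U_0)=\mathcal S(U_0,x_1/x_0)\cup D^{\mathrm{an}}(x_1/x_0)$. By axiom (3), the piece $D^{\mathrm{an}}(x_1/x_0)$ is the locus where $x_0/x_1$ is analytic inside $U_0\cap U_1$, so it lies in the part of $\mathcal S(U_1)$ where $x_0/x_1$ is analytic. Iterating over all variables, and using axiom (2) to combine analyticity (products $x_j/x_1=(x_j/x_0)(x_0/x_1)$), every point of $\mathcal S(\mathbb P^n)$ lies in some chart where all coordinates are analytic. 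To organize this, I would show for each $i$ that
\[
\mathcal S(\mathbb P^n)=\bigcup_i \bigcap_j \{x_j/x_i \text{ analytic on } U_i\},
\]
where the $i$-th set is the locus where $x_i$ is the ``largest'' coordinate, using (4) pairwise and (5) to control sums. Axiom (1) handles the constants. The bookkeeping of which chart each region lands in, namely checking that the formal opens defined in different charts agree on overlaps via axiom (3), is where I expect the main work. Once the cover is established, $f$ is a surjective open immersion, and hence an isomorphism.
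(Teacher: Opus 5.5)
The main gap is the step from $\mathbb P^n$ to an arbitrary proper $X$. You leave it open, and in this setting it is not a formality.

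Your first step already fails as stated. The maps $C(U_i)\to C(U_i,U_i)$ for affine charts are open immersions, but they are not base changes of $C(X)\to C(X,X)$ along the pieces $\mathcal S(U_i)$, which are \emph{closed} in $\mathcal S(X)$. For $X=\mathbb P^1$, the two chartwise analytic loci restrict on $\mathcal S(k[t^{\pm1}])$ to $\{|t|<\infty\}$ and $\{|t^{-1}|<\infty\}$ respectively. So they do not glue to an open subset of $\mathcal S(X)$, and ``open locally, hence open globally'' does not apply.

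A Chow-lemma reduction would also need more than you have. For a proper surjection $\pi\colon X'\to X$, you would need that an idempotent algebra $B\in C(X)$ with $\pi^\ast B=0$ (equivalently $B\otimes\pi_\ast\mathcal O_{X'}=0$) must vanish. That amounts to a descendability property of $\mathcal O_X\to\pi_\ast\mathcal O_{X'}$, an external theorem you would have to supply.

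The paper bypasses all of this:
\begin{enumerate}
\item Lemma~\ref{integralanalytic} makes $\mathcal S(R,R^+)$ well-defined for finite type discrete Huber pairs.
\item Lemma~\ref{Hubertopology} generates the Huber topology by two types of covers, which axiom (4), resp.\ axioms (5) and (1), send to open covers in $\operatorname{Sym}^{\mathrm{op}}$.
\item Hence $C(-)$ becomes a sheaf on $\operatorname{Spa}(X,k)$ whose restriction to the open $\operatorname{Spa}(X,X)$ is $C(X,X)$.
\item The valuative criterion of properness gives $\operatorname{Spa}(X,X)=\operatorname{Spa}(X,k)$ for every proper $X$ at once, with no Chow lemma.
\end{enumerate}
Your ``largest coordinate'' argument on $\mathbb P^n$ is exactly this valuative criterion carried out by hand for $\mathbb P^n$ (cf.\ the $\mathbb P^n$ exercise at the end of Lecture VI). That is why it works there but gives no handle on a general proper $X$.

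Your descent step also reverses open and closed. In this framework $\operatorname{Mod}_R(C)\to\operatorname{Mod}_{R[1/f]}(C)$ is a \emph{closed} immersion. Here $R[1/f]$ itself is the idempotent algebra, the $f$-power-torsion modules form the category of the open complement, and $\operatorname{fib}(R\to R[1/f])$ is $j_!1$ for that open; the cofiber is not an idempotent algebra.

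Your \v{C}ech computation $\bigotimes_i\operatorname{fib}(R\to R[1/f_i])=0$ therefore shows that the closed subsets $\mathcal S(R[1/f_i])$ cover $\mathcal S(R)$. That yields descent for $C(R)$ via finite closed descent, but not via the open-cover theorem you cite.

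It also does not suffice for $C(R,R)$, because $\operatorname{Mod}_{R[1/f_i]}(C(R,R))$ is strictly larger than $C(R[1/f_i],R[1/f_i])$. For $R=\mathbb C[T]$ and $f=T$, the former still sees germs of meromorphic functions at $0$.

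What you need is that the opens $\mathcal S(R,R)\cap D^{\mathrm{an}}(f_i)$ cover $\mathcal S(R,R)$. Replace $f_i$ by $a_if_i$ with $\sum_i a_if_i=1$, which refines the cover. Then axioms (5) and (1) give $\bigcup_i D^{\mathrm{an}}(a_if_i)\supseteq D^{\mathrm{an}}(1)=\mathcal S(R)$. These are the paper's type (2) covers, and you never invoke axiom (5). Since these opens refine the Zariski closed cover, they also give descent for $C(R)$ through the open-cover theorem.
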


Before proving this, let's make sure it applies in our given example situation.  Axiom (1), that constants are analytic, follows from the previous lecture, as does axiom (2), using the triangle inequality and multiplicativity.  For axiom (3), by base change it suffices to consider $T\in \mathbb{C}[T]$, and then we claim that the desired open subset $D^{\mathrm{an}}(T)$ is given by
$$D^{\mathrm{an}}(T)=\bigcup_r \{|T|>r\}.$$
Indeed, the corresponding idempotent algebra is the ring of germs of holomorphic functions at $0$; every $T$-torsion $\mathbb{C}[T]$-module is a module over this ring, so $T$ is inverted in the structure sheaf on this open subset $\bigcup_r \{|T|>r\}$, hence we can also view this as an open in $\mathcal{S}(\mathbb{C}[T,T^{-1}])$.  But essentially by definition, when $T$ is inverted the subset $\{|T|>r\}$ is the same as $\{|T^{-1}|<r^{-1}\}$, and when we take the union over $r$ we exactly get $\mathcal{S}(\mathbb{C}[T,T^{-1}],T^{-1})$ as desired.  For axiom (4), we can again consider just $T\in\mathbb{C}[T]$, and then we need to see that
$$\bigcup_r \{|T|< r\}  \cup \bigcup_r \{|T| > r\} = \mathcal{S}(\mathbb{C}[T]).$$
But already $\{|T|< 2\}\cup \{|T|> 1\}= \mathcal{S}(\mathbb{C}[T])$ by the calculations in the previous lecture.  Finally, axiom (5) follows from the triangle inquality, e.g.
$$\{|f+g|>r\}\subset \{|f|>r/2\}\cup \{|g|>r/2\},$$
and taking the union over $r$ gives the claim.

Having verified that the axiomatics hold in our given situation, let's turn to the proof of the abstract GAGA theorem.  We start with the following simple lemma, which is essentially the affine case of GAGA:

\begin{lemma}\label{integralanalytic}
Let $R$ be a $k$-algebra and $f,c_0,\ldots c_{n-1}\in R$ such that
$$f^n + c_{n-1}f^{n-1}+\ldots + c_0 = 0.$$
Then
$$\bigcap_i \mathcal{S}(R,c_i)\subset \mathcal{S}(R,f),$$
i.e.\ ``if $f$ satisfies a monic polynomial with analytic coefficients, then $f$ is analytic''.
\end{lemma}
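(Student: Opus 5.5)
We reduce to a universal case and then use the axioms (2), (3), (4) to cover $\bigcap_i\mathcal S(R,c_i)$ by pieces on which $f$ is visibly analytic.

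By base change along $R_0=k[c_0,\ldots,c_{n-1},F]/(F^n+c_{n-1}F^{n-1}+\ldots+c_0)\to R$, preimages of the subsets $\mathcal S(-,g)$ are compatible with pullback. So it suffices to treat $R=R_0$, which is finite type and finite free over $A=k[c_0,\ldots,c_{n-1}]$. Write $W=\bigcap_i\mathcal S(R,c_i)$.

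Apply axiom (4) to $f$: $\mathcal S(R)=\mathcal S(R,f)\cup D^{\mathrm{an}}(f)$. It then suffices to show
\[
W\cap D^{\mathrm{an}}(f)\subset \mathcal S(R,f).
\]
On $D^{\mathrm{an}}(f)$ we work in $\mathcal S(R[1/f])$, where $1/f$ is analytic. Dividing the monic relation by $f^{n-1}$ gives
\[
f=-(c_{n-1}+c_{n-2}f^{-1}+\ldots+c_0 f^{-(n-1)}).
\]
On $W\cap D^{\mathrm{an}}(f)$ the right-hand side is a polynomial in functions that are analytic there, namely the $c_i$ and $f^{-1}$. By axiom (2), applied in the finite type algebra $R[1/f]$, the element $f$ is therefore analytic on this intersection, viewed inside $\mathcal S(R[1/f])$.

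The remaining point is to transport this back to $\mathcal S(R)$. This means checking that $\mathcal S(R[1/f],f)\cap D^{\mathrm{an}}(f)=\mathcal S(R,f)\cap D^{\mathrm{an}}(f)$ as open subsets of $\mathcal S(R)$. This holds because $\mathcal S(R[1/f],f)$ is by definition the preimage of $\mathcal S(R,f)$ under the immersion $\mathcal S(R[1/f])\to\mathcal S(R)$, and $D^{\mathrm{an}}(f)$ lies in $\mathcal S(R[1/f])$ by axiom (3). Similarly, the preimage of $\mathcal S(R,c_i)$ is $\mathcal S(R[1/f],c_i)$, so $W\cap D^{\mathrm{an}}(f)$ is contained in $\bigcap_i\mathcal S(R[1/f],c_i)$. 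The main obstacle is exactly this bookkeeping: base change of the open subsets $\mathcal S(-,g)$ along the map $R\to R[1/f]$, whose associated map of locales is not itself open. What makes it work is that the pullback of $\mathcal S(k[T],\cdot)$ along the composite $k[T]\to R\to R[1/f]$ is computed by base change. One also uses that containment of opens inside $D^{\mathrm{an}}(f)$ can be checked after restricting to the locally closed piece, since $D^{\mathrm{an}}(f)$ is open in both $\mathcal S(R)$ and $\mathcal S(R[1/f])$.

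Combining the two cases yields $W\subset\mathcal S(R,f)$.
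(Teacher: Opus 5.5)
Your proof is correct and is essentially the paper's argument: use axiom (4) to write $\mathcal{S}(R)=\mathcal{S}(R,f)\cup D^{\mathrm{an}}(f)$, then on $D^{\mathrm{an}}(f)$ rewrite $f=-(c_{n-1}+c_{n-2}f^{-1}+\ldots+c_0f^{-(n-1)})$ and apply axiom (2). Your reduction to the universal finite-type ring $R_0$ and the base-change bookkeeping between $\mathcal{S}(R[1/f])$ and $\mathcal{S}(R)$ only make explicit what the paper leaves implicit, namely that axioms (2)--(4) are stated only for finite type $R$.
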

\begin{proof}
By axiom (4), to prove $f$ is analytic, it suffices to show that $f$ is analytic once we analytically invert $f$.  But
$$f = -c_{n-1} - \ldots - c_0 f^{-n+1},$$
so this follows from axiom (2).
\end{proof}

It follows from this (and axioms (1) and (2)) that we have a well-defined open subset
$$\mathcal{S}(R,R^+)\subset\mathcal{S}(R)$$
whenever $R^+\subset R$ is the integral closure of a finitely generated $k$-subalgebra of $R$.  Using this, we can plug into the formalism of \emph{discrete Huber pairs}, which we briefly review here, though see \cite[Lecture IX]{Condensed} for more background.

To any pair $(R,R^+)$ of a commutative ring $R$ and integrally closed subring $R^+$, Huber assigns the set $\operatorname{Spa}(R,R^+)$ defined as
$$\{v:R\rightarrow \Gamma\cup\{+\infty\}\mid v(fg)=v(f)+v(g), v(f+g)\geq \min\{v(f),v(g)\}, v(0)=+\infty, v(R^+)\geq 0\}/\sim,$$
the set of equivalence classes of valuations on $R$ which are non-negative on $R^+$.  Here $\Gamma$ is a totally ordered abelian group.  Given such a valuation, the set of $f\in R$ with $v(f)=+\infty$ is always a prime ideal, and in fact $\operatorname{Spa}(R,R^+)$ is an enlargement of the usual spectrum of the ring $R$: besides the prime ideal, we also get an induced valuation on the residue field, giving a corresponding subring of elements $\geq 0$ and maximal ideal of elements $>0$.  In terms of intuition, one can think of such a valuation $v$ as measuring some kind of ``order of vanishing/pole'' of a function at some fictional point.

Moreover, the set $\operatorname{Spa}(R,R^+)$ has the structure of a spectral topological space, with basis of quasi-compact opens given by the \emph{rational open subsets}
$$U(\frac{g_1,\ldots,g_n}{f}) = \{v\mid v(f)\neq +\infty, v(g_i)\geq v(f)\forall i\}.$$
These subsets are parametrized by arbitrary elements $f, g_1,\ldots g_n$ of $R$.  Different choices of these elements can give rise to the same $U(\frac{g_1,\ldots,g_n}{f})$; however, the discrete Huber pair
$$(R[1/f],\widetilde{R^+[g_1/f,\ldots,g_n/f]}),$$
where the tilde denotes integral closure, \emph{is} canonically determined by the rational open subset $U(\frac{g_1,\ldots,g_n}{f})$; and conversely
$$U(\frac{g_1,\ldots,g_n}{f})=\operatorname{Spa}(R[1/f],\widetilde{R^+[g_1/f,\ldots,g_n/f]}),$$
showing that the rational open is likewise determined by the corresponding discrete Huber pair.

More precisely, the poset of rational open subsets of $\operatorname{Spa}(R,R^+)$ is canonically equivalent to the opposite of the category of \emph{finitary localizations} of $(R,R^+)$, meaning those maps discrete Huber pairs
$$(R,R^+)\rightarrow (S,S^+)$$
under $(R,R^+)$ for which $S$ is a finitary localization of $R$ and $S^+$ is generated, as an integrally closed subring of $S$ containing $R^+$, by finitely many elements.  Moreover, this equivalence of posets preserves finite intersections.

We can therefore transfer the Grothendieck topology on the poset of basic quasi-compact open subsets of $\operatorname{Spa}(R,R^+)$ over to this category of finitary localizations of $(R,R^+)$.  By definition, a covering is a collection of maps which is surjective on $\operatorname{Spa}(-,-)$.  However, there is also the following finitary algebraic description:

\begin{lemma}\label{Hubertopology}
Let $(R,R^+)$ be a discrete Huber pair.  Then the Grothendieck topology on the opposite category of finitary localizations of $(R,R^+)$ is generated by covers of the following type:
\begin{enumerate}
\item For $(S,S^+)$ a finitary localization of $(R,R^+)$ and $f\in B$, the maps
$$(S,S^+)\rightarrow (S,\widetilde{S^+[f]})$$
and
$$(S,S^+)\rightarrow (S[1/f],\widetilde{S^+[1/f]})$$
form a cover of $(S,S^+)$.
\item For $(S,S^+)$ a finitary localization of $(R,R^+)$ and $f_1,\ldots,f_n\in S$ with $f_1+\ldots+ f_n=1$, the maps
$$\{(S,S^+)\rightarrow (S[1/f_i],\widetilde{S^+[1/f_i]})\}_{i\in I}$$
form a cover of $(S,S^+)$.
\end{enumerate}
\end{lemma}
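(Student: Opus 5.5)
The plan is to prove two things. First, the families in (1) and (2) really are covers. Second, every cover of a finitary localization by rational opens is refined by an iterated composite of covers of types (1) and (2). Since the topology is defined by surjectivity on $\operatorname{Spa}$, the first direction is a pointwise check on valuations.

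For (1), take $v\in\operatorname{Spa}(S,S^+)$. Either $v(f)\geq 0$, in which case $v$ is nonnegative on $\widetilde{S^+[f]}$ and lies in the first piece. Or $v(f)<0$, in which case $v(f)\neq+\infty$, so $f$ becomes invertible, and $v(1/f)=-v(f)>0$, so $v$ lies in $\operatorname{Spa}(S[1/f],\widetilde{S^+[1/f]})$. For (2), we have $0=v(1)=v(f_1+\ldots+f_n)\geq\min_i v(f_i)$, so some $v(f_i)\leq 0$. Then $f_i$ is not in the support, and $v(1/f_i)\geq 0$.

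For the converse, let $\{U_j\}$ be a covering of $\operatorname{Spa}(S,S^+)$ by rational opens. Since $\operatorname{Spa}$ is spectral, hence quasicompact, I may assume the cover is finite, say $U_j=U(\frac{g_{j1},\ldots,g_{jm}}{h_j})$ for $j=1,\ldots,n$. The first step is to show that $h_1,\ldots,h_n$ generate the unit ideal of $S$. For any prime $\mathfrak p$, the trivial valuation with support $\mathfrak p$ (value $0$ off $\mathfrak p$, $+\infty$ on $\mathfrak p$) lies in $\operatorname{Spa}(S,S^+)$. It therefore lies in some $U_j$, which forces $h_j\notin\mathfrak p$. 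Writing $1=\sum_j a_jh_j$ and applying (2) to the elements $a_jh_j$ gives a cover by the $S[1/(a_jh_j)]$. On each piece all the $h_j$ become elements of the ring and some $h_{j_0}$ is a unit. Moreover, any rational subset is unchanged by passing to such a localization, since it is still cut out by the same inequalities.

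On one such piece $(S',S'^+)$, with $h_{j_0}$ invertible, let $F$ be the finite set of elements $h_j/h_{j_0}$ and $g_{jk}/h_{j_0}$. I apply the binary covers (1) successively to every element of $F$, and then also to the quotients $x/y$ for pairs $x,y$ that have become invertible, iterating finitely many times. Each leaf of the resulting tree is a finitary localization on which, for every relevant pair, it is prescribed whether $v(x)\geq v(y)$ or $v(x)\leq v(y)$, and which of the $h_j$ are inverted. The main claim, and the main obstacle, is that each such leaf is entirely contained in a single $U_j$.

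For the main claim I would argue as follows. The leaf determines a total preorder on the inverted $h_j$ together with the comparison signs of each $g_{jk}$ against each $h_j$. Since some point of the leaf lies in some $U_j$, that same $U_j$ should contain every point of the leaf: all points share the same comparison data, and membership in $U(\frac{g_{j\bullet}}{h_j})$ is determined exactly by that data. The same data also rule out $v(h_j)=+\infty$, because $h_j$ has been inverted on the leaf, or else compared against the unit $h_{j_0}$. The delicate bookkeeping is to make sure that enough comparisons have been forced, including of non-invertible $h_j$ against $h_{j_0}$, that membership is really constant on each leaf. Empty leaves must be handled by allowing the empty cover of the zero ring. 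Once this is in place, the composite of (2) followed by iterated (1) refines $\{U_j\}$, which proves that the two types generate the topology.
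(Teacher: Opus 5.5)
Your check that the families in (1) and (2) are covers agrees with the paper. Your reductions to a finite cover, and (via trivial valuations and a type (2) cover) to the case where some $h_{j_0}$ is a unit, are also fine.

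The gap is your main claim that every leaf of the tree of type (1) covers lies in a single $U_j$. The two pieces of a type (1) cover overlap along $v(x)=v(y)$, so the comparison data on a leaf are non-strict. A leaf carrying $v(g_{jk})\le v(h_j)$ contains points with equality (in $U_j$) and possibly points with strict inequality (not in $U_j$). So membership in $U_j$ is \emph{not} determined by the data. The inference ``some point of the leaf lies in $U_j$, hence all do'' would need a point of the leaf at which every non-forced inequality is strict, and such a point need not exist.

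Here is a counterexample. Let $S=\mathbb{C}[a^{\pm1}]\times\mathbb{C}[b^{\pm1}]$, $S^+=\mathbb{C}[a]\times\mathbb{C}[b]$, $x_1=1$, $x_2=(a,1)$, $x_3=(a,b)$, and consider $U_1=U(\frac{x_1}{x_2})$, $U_2=U(\frac{x_2}{x_3})$.
\begin{itemize}
\item Points over the first factor have $v(x_2)=v(x_3)$, so they lie in $U_2$. Points over the second factor have $v(x_1)=v(x_2)$, so they lie in $U_1$. Hence $\{U_1,U_2\}$ covers $\operatorname{Spa}(S,S^+)$.
\item The $a$-adic point of the first factor is not in $U_1$, and the $b$-adic point of the second factor is not in $U_2$.
\item All $x_i$ are units with $x_i/x_j\in S^+$ for $i>j$. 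So every type (1) split along a quotient of two of them has a branch equal to $(S,S^+)$ itself.
\end{itemize}
With the choice $1=x_2^{-1}\cdot x_2$ in your first step, your tree therefore has the leaf $(S,S^+)$. That leaf carries complete comparison data for all pairs, yet lies in neither $U_j$.

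What does refine this cover is a type (2) cover along an element unrelated to the comparison data, namely $1=x_2+(1-x_2)$. Its first piece is exactly $U_1$. Inverting $1-x_2=(1-a,0)$ kills the second factor, so its second piece lies in $U_2$. Your construction produces such additive splittings only through the unspecified $a_j$ of the first step. The example shows that complete comparison data among the $g_{jk},h_j$ does not force a leaf into a single member of the cover.

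This is precisely the nontrivial content the paper does not reprove. It quotes from \cite[Lecture X]{Condensed} that the topology is generated by the type (1) covers together with the algebraic Zariski covers $\{(S,S^+)\to(S[1/f_i],\widetilde{S^+})\}$ with $(f_1,\ldots,f_n)=S$. It then only observes that such a Zariski cover is refined by the type (2) cover attached to $f_1g_1+\ldots+f_ng_n=1$. To close the gap you must either invoke that generation result, or prove it by a different argument. For example, you could show that a cover not refinable by such trees yields, via a maximality argument and Huber's description of valuations by binary relations \cite{HuberContVal}, a point of $\operatorname{Spa}(S,S^+)$ lying outside every $U_j$.
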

\begin{proof}
First note that each of these are covers.  For (1), this is because of the tautology that for any valuation $v$ we either have $v(f)\geq 0$ or $v(f)<0$ (in which case $v$ extends to a valuation on $R[1/f]$ with $v(1/f)\geq 0$, indeed $<0$).  For (2), for any valuation $v$ we must have $v(f_i)\leq 0$ for some $i$ by the ultrametric triangle inquality and the fact $v(1)=0$; then the valuation extends to $R[1/f_i]$ with $v(1/f_i)\geq 0$.

For the converse, first recall (see \cite[Lecture X]{Condensed} for the argument) that the Grothendieck topology is generated by the covers in (1) together with the algebraic Zariski covers, i.e.\ those of the following type: for $(S,S^+)$ a finitary localization of $(R;R^+)$ and $f_1,\ldots,f_n\in S$ generating the unit ideal, take the cover
$$\{(S,S^+)\rightarrow (S[1/f_i],\widetilde{S^+})\}_{i\in I}.$$
Thus it suffices to see that any such algebraic Zariski cover is refined by a cover as in (2).  However, we get $g_i$ such that $f_1g_1+\ldots f_ng_n=1$; then replacing $f_i$ by $f_ig_i$ we see that the cover (2) provides a refinement.
\end{proof}

By this lemma, given a presheaf on the opposite category of finitary localizations of $(R,R^+)$ which satisfies descent with respect to (1) and (2), we get a unique sheaf on the topological space $\operatorname{Spa}(R,R^+)$ whose value on a rational open is given by the value of our original presheaf on the corresponding finitary localization.

Now we apply this in our axiomatic GAGA situation.  Let $(R,R^+)$ be a discrete Huber pair which is of finite type over $(k,k)$, i.e.\ $R$ is a finite type $k$-algebra and $R^+$ is generated, as an integrally closed $k$-subalgebra of $R$, by finitely many elements.  To this data we can naturally assign the object
$$C(R,R^+)\in \operatorname{Sym},$$
which, we recall, is the the localization of $C(R)=\operatorname{Mod}_R(C)$ corresponding to the open subset $\mathcal{S}(R,R^+)\subset \mathcal{S}(R)$.  Given a finitary localization
$$(R,R^+)\rightarrow (S,S^+),$$
we have the induced map in $\operatorname{Sym}$
$$f^*:C(R,R^+)\rightarrow C(S,S^+).$$
Then $f$ is a monomorphism in $\operatorname{Sym}^{\mathrm{op}}$, being the composition of an open inclusion (forcing the finitely many generating elements of $S^+$ to be analytic) and a closed inclusion (algebraically inverting some $f\in R$ to go from $R$ to $S$).  Moreover this functor from the opposite category of finitary localizations of $(R,R^+)$ to subobjects of $C(R,R^+)$ in $\operatorname{Sym}^{\mathrm{op}}$ preserves finite intersections, by the same reasoning and base-change properties for open and closed immersions.  Finally arrive at the crucial:

\begin{proposition}
Suppose we are in the axiomatic GAGA set-up.  Then the functor
$$(R,R^+)\mapsto C(R,R^+)$$
from finite type discrete Huber pairs over $(k,k)$ to $\operatorname{Sym}$ gives a morphism of sites from the Huber topology on finitary localizations to the topology of open covers in $\operatorname{Sym}^{\mathrm{op}}$ described earlier in the lecture.
\end{proposition}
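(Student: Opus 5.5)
By Lemma~\ref{Hubertopology}, the Huber topology on finitary localizations of $(R,R^+)$ is generated by the two types of covers (1) and (2) listed there. We have already seen that $(R,R^+)\mapsto C(R,R^+)$ sends finitary localizations to monomorphisms in $\operatorname{Sym}^{\mathrm{op}}$ and preserves finite intersections, i.e.\ fibre products. So it remains to check that each generating cover is sent to a covering family in $\operatorname{Sym}^{\mathrm{op}}$. Replacing $(R,R^+)$ by $(S,S^+)$, we may work over a fixed finite type Huber pair. Each $C(S',S'^+)$ is the composite of a closed immersion (algebraically inverting an element) and an open immersion (forcing generators of $S'^+$ to be analytic). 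We will therefore exhibit, inside $\mathcal S(C(S,S^+))$ or an open cover of it, open subsets that refine the given family, and then use base change of open and closed immersions to identify them.

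For covers of type (1), with $f\in S$, the first map $(S,S^+)\to(S,\widetilde{S^+[f]})$ corresponds to the open subset $\mathcal S(S,S^+)\cap \mathcal S(S,f)$. This uses Lemma~\ref{integralanalytic} and axioms (1),(2), so that passing to the integral closure changes nothing. For the second map, axiom (3) identifies $\mathcal S(S[1/f],1/f)$ with the open subset $D^{\mathrm{an}}(f)\subset\mathcal S(S)$. The elements of $\widetilde{S^+[1/f]}$ are generated by $S^+$ and $1/f$, so the image of $C(S[1/f],\widetilde{S^+[1/f]})$ is the open subset $\mathcal S(S,S^+)\cap D^{\mathrm{an}}(f)$; here the closed immersion $S\to S[1/f]$ followed by an open immersion is itself open by axiom (3) and base change. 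Axiom (4) gives $\mathcal S(S)=\mathcal S(S,f)\cup D^{\mathrm{an}}(f)$. Intersecting with $\mathcal S(S,S^+)$ shows that the two opens cover, so the family is a covering in $\operatorname{Sym}^{\mathrm{op}}$.

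For covers of type (2), with $f_1+\dots+f_n=1$, the same argument shows that each target corresponds to the open subset $\mathcal S(S,S^+)\cap D^{\mathrm{an}}(f_i)$. So it suffices to prove $\bigcup_i D^{\mathrm{an}}(f_i)=\mathcal S(S)$. By axiom (5) and induction on $n$, $D^{\mathrm{an}}(1)\subset \bigcup_i D^{\mathrm{an}}(f_i)$. Finally, $D^{\mathrm{an}}(1)=\mathcal S(S[1],1)=\mathcal S(S)$ by axiom (1), since constants are analytic and the closed immersion for inverting $1$ is the identity.

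The main obstacle is bookkeeping rather than mathematics. One has to verify carefully that the map $C(S,S^+)\to C(S[1/f],\widetilde{S^+[1/f]})$ really is an open immersion whose corresponding open subset is $\mathcal S(S,S^+)\cap D^{\mathrm{an}}(f)$. This means checking that the open subset $\mathcal S(S[1/f],g)$ for $g\in S^+$ agrees with the restriction of $\mathcal S(S,g)$. It follows from the definition of $\mathcal S(-,g)$ as a preimage under $k[T]\to S\to S[1/f]$ together with the base-change lemma for open and closed immersions. Once this identification is in place, the covering conditions reduce exactly to axioms (4) and (5).
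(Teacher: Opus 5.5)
Your proposal is correct and follows essentially the same route as the paper: you use preservation of pullbacks and Lemma~\ref{Hubertopology} to reduce to the two generating types of covers, handle type (1) by axiom (4), and handle type (2) by axiom (5) together with $D^{\mathrm{an}}(1)=\mathcal S(R)$ from axiom (1). The only difference is that you spell out the identification of the localizations with the open subsets $\mathcal S(S,S^+)\cap\mathcal S(S,f)$ and $\mathcal S(S,S^+)\cap D^{\mathrm{an}}(f)$, which the paper leaves implicit.
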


\begin{proof}
We have already seen that the functor preserves pullbacks, so it suffices to see that it sends our generating covers (1), (2) in Lemma~\ref{Hubertopology} to open covers in $\operatorname{Sym}^{\mathrm{op}}$.  For the first type (1) of generating covers, this is exactly axiom (4) in our set-up.  For the second type, it follows from axiom (5) together with the equality $D^{\mathrm{an}}(1)=\mathcal{S}(R)$ which follows from the definitions and axiom (1).
\end{proof}

\begin{corollary}
For each discrete Huber pair $(R,R^+)$ of finite type over $(k,k)$, there is a unique sheaf $C(-)$ on $\operatorname{Spa}(R,R^+)$ with values in $\operatorname{Sym}$ whose value on the basic quasicompact open $U(\frac{g_1,\ldots,g_n}{f})$ is given by
$$C\left(U(\frac{g_1,\ldots,g_n}{f})\right) = C(R[1/f],R^+[g_1/f,\ldots g_n/f]).$$
\end{corollary}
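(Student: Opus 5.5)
The plan is to deduce the corollary formally from the preceding proposition together with Lemma~\ref{Hubertopology} and the sheaf property of $C(-)$ on open covers in $\operatorname{Sym}^{\mathrm{op}}$. First I will check that the assignment $U(\frac{g_1,\ldots,g_n}{f})\mapsto C(R[1/f],R^+[g_1/f,\ldots,g_n/f])$ is well defined and functorial. Rational opens correspond bijectively, with finite intersections preserved, to finitary localizations $(R,R^+)\to(S,S^+)$, where $S^+$ is the integral closure of $R^+[g_i/f]$. By Lemma~\ref{integralanalytic}, forcing $R^+[g_i/f]$ or its integral closure to be analytic defines the same open subset $\mathcal S(S,S^+)\subset\mathcal S(S)$. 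So the value depends only on the rational open. An inclusion of rational opens gives a finitary localization and hence a map in $\operatorname{Sym}$. This yields a presheaf on the basis of rational opens, with values in $\operatorname{Sym}$.

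Next I will show this presheaf satisfies descent for every covering in the Huber topology. By the preceding proposition, the functor $(S,S^+)\mapsto C(S,S^+)$ is a morphism of sites. It preserves pullbacks, since finite intersections of rational opens go to fibre products of monomorphisms in $\operatorname{Sym}^{\mathrm{op}}$ computed by base change of open and closed immersions. It also sends covers to covering sieves of open immersions. The earlier theorem on the Grothendieck topology on $\operatorname{Sym}^{\mathrm{op}}$ says the identity $(\operatorname{Sym}^{\mathrm{op}})^{\mathrm{op}}\to\operatorname{Sym}$ is a sheaf. Composing a sheaf with a morphism of sites (a cover-preserving, pullback-preserving functor) gives a sheaf. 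Hence $C(-)$ satisfies descent for all Huber covers of rational opens, and it suffices to check this on the generating covers (1) and (2) of Lemma~\ref{Hubertopology}, which is exactly what the proposition provides.

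Finally, $\operatorname{Spa}(R,R^+)$ is spectral, and the rational opens form a basis of quasicompact opens stable under finite intersections. Any open cover of a rational open by rational opens can therefore be refined to a finite one, which is a Huber cover. The standard comparison lemma then applies: sheaves on a basis closed under finite intersections, with respect to finite covers, are equivalent to sheaves on the space. This holds for sheaves with values in any complete $\infty$-category such as $\operatorname{Sym}$, which has limits computed underlying. Applying it extends the presheaf uniquely to a sheaf on $\operatorname{Spa}(R,R^+)$ with the prescribed values on rational opens.

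I expect the main obstacle to be the middle step. One must verify that "covering sieve generated by open immersions whose open subsets cover $\mathcal S$" is really what the images of Huber covers give, after taking base changes in $\operatorname{Sym}^{\mathrm{op}}$. One must also verify that descent along non-open (closed-immersion-containing) maps like $C(S,S^+)\to C(S[1/f],\ldots)$ is legitimate. I would handle this by noting that each such map is an open immersion relative to the composite, since $D^{\mathrm{an}}(f)$ is open by axiom (3). This reduces everything to locale-level descent on $\mathcal S(C(S,S^+))$.
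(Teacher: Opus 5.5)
Your proposal is correct and follows the paper's route. The paper obtains the corollary directly from three ingredients: the preceding proposition that $(S,S^+)\mapsto C(S,S^+)$ is a morphism of sites (which already handles your "main obstacle" via axioms (3)--(5), exactly as you sketch), Lemma~\ref{Hubertopology}, and the remark that a presheaf on finitary localizations with descent for the generating covers (1), (2) extends uniquely to a sheaf on $\operatorname{Spa}(R,R^+)$; your write-up just makes explicit the steps the paper leaves implicit, namely well-definedness via Lemma~\ref{integralanalytic}, pulling back the sheaf $\operatorname{Sym}$ along the morphism of sites, and the comparison with sheaves on the basis of rational opens.
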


Now we are almost done with the proof of GAGA.  Both assignments
$$R\mapsto \operatorname{Spa}(R,k)$$
and
$$R\mapsto \operatorname{Spa}(R,R)$$
glue with respect to the Zariski topology; indeed it is elementary to check in both cases that Zariski covers of $R$ go to covers of rational open subsets.  Thus these assignments globalize to
$$X\mapsto \operatorname{Spa}(X,k)$$
and
$$X\mapsto \operatorname{Spa}(X,X)$$
for any finite type $k$-scheme $X$, hence likewise the sheaf $C(-)$ in the corollary uniquely extends to these global constructions by descent.  Moreover, the natural open inclusion $\operatorname{Spa}(R,R)\subset  \operatorname{Spa}(R,k)$ glues to a map
$$\operatorname{Spa}(X,X)\rightarrow \operatorname{Spa}(X,k)$$
of topological spaces, and the sheaf of categories on $\operatorname{Spa}(X,X)$ is the pullback of categories on $\operatorname{Spa}(X,k)$. The map of topological spaces is by construction locally an open inclusion on the source, but the valuative criterion for properness exactly says that it is a bijection on underlying sets when $X$ is proper.  Thus we deduce that it is a homeomorphism when $X$ is proper.  This proves the abstract GAGA theorem.\\

\textbf{Exercise 1.} Show that for any open subset $U\subset \mathbb{C}^n$, the category $C(U)$ can be described in terms of the structure sheaf (of liquid rings) $\mathcal{O}(-)$ on $\mathbb{C}^n$ as follows: $C(U)$ identifies with the full subcategory of $\mathcal{O}$-module sheaves (in derived liquid $\mathbb{C}$-vector spaces) consisting of those $\mathcal{M}$ for which
$$\mathcal{M}(D)\otimes_{\mathcal{O}(D)} \mathcal{O}\vert_D\overset{\sim}{\rightarrow} \mathcal{M}\vert_D$$
for every open polydisk $D\subset U$, i.e.\ in some sense $C(\mathbb{C}^n)$ is the category of \emph{quasicoherent} liquid sheaves on the analytic space $\mathbb{C}^n$.\\

\textbf{Exercise 2.} Show that in fact the quasicoherence condition in Exercise 1 is automatic, i.e.~it holds for any sheaf of $\mathcal O$-modules in $\mathcal D(\mathrm{Liq}_p(\mathbb C))$. More generally, if $X$ is locally compact Hausdorff and $C\in \operatorname{Sym}$ with a map $\mathcal S(C)\to X$, inducing (by pushforward) a structure sheaf $\mathcal O_C$ on $X$, any sheaf of $\mathcal O_C$-modules on $X$ is quasicoherent, and their category is equivalent to $C$. (Hint: Given any sheaf $\mathcal M$ of $\mathcal O_C$-modules on $X$, we want to see that $\mathcal M(X)\otimes_C \mathcal O_C\to \mathcal M$ is an isomorphism. It suffices to check after evaluating on compact subsets $Z\subset X$ (i.e., the filtered colimit of sections on open neighborhoods $U_n\subset X$ of $Z$). Now for compact $i: Z\subset X$ with open complement $j: U\to X$, use the triangle $j_! j^\ast \mathcal M\to \mathcal M\to i_\ast i^\ast \mathcal M$ to reduce the question to $i_\ast i^\ast \mathcal M$ or $j_! j^\ast \mathcal M$. For $i_\ast i^\ast \mathcal M$, the claim follows from idempotence of $\mathcal O_C(Z)$, as $\Gamma(X,i_\ast i^\ast \mathcal M) = \Gamma(Z,i^\ast \mathcal M)$ is a module over $\mathcal O_C(Z)$. For $j_! j^\ast \mathcal M$, write this further as a colimit of $i_{n\ast} i_n^! \mathcal M$ for closed subsets $Z_n\subset U$ exhausting $U$, and then use that $\Gamma(X,i_{n\ast} i_n^! \mathcal M) = \Gamma(Z_n,i_n^! \mathcal M)$ is a module over $\mathcal O_C(Z_n)$, and $\mathcal O_C(Z_n)\otimes_C \mathcal O_C(Z)=0$ as $Z_n\cap Z=\emptyset\in \mathcal S(C)$.)

\textbf{Exercise 3.} Show that if $f$ is a closed immersion and $g$ is an open immersion, then $f\circ g$ can be rewritten as $g'\circ f'$ where $g'$ is an open immersion and $f'$ is a closed immersion.\\

\textbf{Exercise 4.} Explicitly prove GAGA for $\mathbb{P}^n_{\mathbb{C}}$ by refining a Zariski cover by an analytic cover.\newpage

\section{Lecture VII: Analytification and GAGA, redux}

As a lot happened in the last two lectures, let us reorganize the material using the following (insanely general, but still useful) definition, a bit analogous to the notion of a ringed space.

\begin{definition}\label{def:categorifiedlocale} A \emph{categorified locale} is a pair $(X,C)$ consisting of a locale $X$ and a cocomplete closed symmetric monoidal stable $\infty$-category $C$ with a map $f: \mathcal S(C)\to X$.
\end{definition}

In particular, this gives rise to a sheaf of (cocomplete closed symmetric monoidal stable) $\infty$-categories on $X$, taking any $U$ to $C(U):=C(f^{-1}(U))$. In particular, looking at the endomorphisms of the unit object, one gets a sheaf of $E_\infty$-algebras on $X$, that one may call the structure sheaf.

In the last lecture, several different functors from finite type $\mathbb C$-algebras $R$ to categorified locales were defined. In all cases, these sent $R=\mathbb C$ to $(\ast,\mathcal D(\mathrm{Liq}_p(\mathbb C)))$, and so in general one gets categorified locales over $(\ast,\mathcal D(\mathrm{Liq}_p(\mathbb C)))$.

Recall that to any $R$ we can associate the derived $\infty$-category $\mathcal D(\mathrm{Liq}_p(R)) = \mathrm{Mod}_R(\mathcal D(\mathrm{Liq}_p))$ of $p$-liquid $R$-modules. Inside $\mathcal S(R):=\mathcal S(\mathcal D(\mathrm{Liq}_p(R)))$, we defined open subsets $\{|f|<r\}$ and $\{|f|>r\}$ for any $f\in R$ and $r>0$. In particular, for any $f\in R$, we can consider the open subset
\[
\mathcal S(R,f)=\{|f|<\infty\}=\bigcup_{r>0} \{|f|<r\}\subset \mathcal S(R),
\]
and then set
\[
\mathcal S(R,R)=\bigcap_{f\in R} \{|f|<\infty\}\subset \mathcal S(R).
\]
As it is enough to take the intersection over a set of algebra generators, this is in fact an open subset of $\mathcal S(R)$. Then the arguments from Lecture V defined a natural continuous map
\[
\mathcal S(R,R)\to \mathcal M^{\mathrm{Berk}}(R) = \Spec(R)(\mathbb C)
\]
to the classical points of $\Spec(R)$ (with their complex topology), so that the loci $\{|f|<r\}$ and $\{|f|>r\}$ are obtained as preimages of the evident loci in $\Spec(R)(\mathbb C)$.

This construction glues for the Zariski topology, so for any scheme $X$ locally of finite type over $C$, one can define a categorified locale
\[
(X(\mathbb C),C^{\mathrm{an}}(X))
\]
which we refer to the as the analytification of $X$. (In fact, we will later identify the usual category of complex-analytic spaces as a full subcategory of categorified locales over $(\ast,\mathcal D(\mathrm{Liq}_p(\mathbb C)))$, and under this identification, this corresponds to the usual complex-analytic space associated to $X$. Most of the work here was done in the last lecture, where $C^{\mathrm{an}}(X)$ was described explicitly.)

One main goal of the last lecture was to give a proof of GAGA. This was in fact understood not merely as an identification of the category of coherent sheaves, but of all of $C^{\mathrm{an}}(X)$, in case $X$ is proper.

Namely, one can also pass to a very algebraic ``categorified locale''. To any $R$, we associate all of $\mathcal D(\mathrm{Liq}_p(R))$, and we only use the map
\[
\mathcal S(R)\to \mathrm{Spec}(R)^{\mathrm{op}}
\]
coming from the usual Zariski localizations (which define idempotent algebras, and thus closed subsets of $\mathcal S(R)$). One has to be slightly careful, as now Zariski localizations give closed embeddings, but one can still glue (finitely many) categorified locales along closed subsets. In particular, if $X$ is separated and of finite type over $\mathbb C$, one can define a categorified locale
\[
(X,C^{\mathrm{alg}}(X)).
\]
This construction did not use any of the ``topology'' of $\mathbb C$. In fact, it could have been carried not with liquid $\mathbb C$-vector spaces, but with all condensed $\mathbb C$-vector spaces; the current version would then simply be a base change to the liquid situation.

Now the GAGA theorem admits the following formulation.

\begin{theorem}\label{thm:GAGAabstract} For proper $X$, there is a natural equivalence of (cocomplete closed symmetric monoidal $\mathcal D(\mathrm{Liq}_p(\mathbb C))$-linear stable) $\infty$-categories
\[
C^{\mathrm{an}}(X)\cong C^{\mathrm{alg}}(X).
\]
\end{theorem}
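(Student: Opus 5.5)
This is a restatement of the abstract GAGA theorem of Lecture VI, so I will reduce to it via the Huber-pair formalism. First, I identify both sides with values of the sheaf $C(-)$ on adic-type spaces. For an affine $X=\Spec R$ with $R$ of finite type over $\mathbb C$, the Corollary in Lecture VI gives a sheaf $C(-)$ on $\operatorname{Spa}(R,\mathbb C)$ with values in $\operatorname{Sym}$. Its global value is $C(R,\mathbb C)=\mathcal D(\mathrm{Liq}_p(R))$, since $\mathcal S(R,\mathbb C)=\mathcal S(R)$ by axiom (1). Its restriction to the open $\operatorname{Spa}(R,R)$ has global value $C(R,R)=C^{\mathrm{an}}(\Spec R)$.

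I will check that these identifications are compatible with Zariski gluing. On the analytic side, $C^{\mathrm{an}}(X)$ is glued along open immersions. On the algebraic side, $C^{\mathrm{alg}}(X)$ is glued along the closed subsets given by Zariski localizations. I expect the main point needing care is this algebraic side: I must show that $C^{\mathrm{alg}}(X)$ agrees with the global sections of the Zariski-descended sheaf on $\operatorname{Spa}(X,\mathbb C)$. For this I would use that an algebraic Zariski cover $R\to\prod R[1/f_i]$ becomes, under Lemma~\ref{Hubertopology}, a cover refined by the covers of type (2). On $\mathcal S(R)$, the loci $\mathcal S(R[1/f_i])$ are closed, and finite unions of closed sets still satisfy descent, because $C(Z\cup Z')=C(Z)\times_{C(Z\cap Z')}C(Z')$ by the same computation as in Proposition~\ref{prop:structuresheafformalnonsense}. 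For separated $X$, the pairwise intersections are again affine, so both gluings use identical data. I expect this bookkeeping to be the main obstacle, specifically matching closed-subset gluing in $\mathcal S(R)$ with open-subset gluing on $\operatorname{Spa}$.

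Next, I form the comparison functor. Restriction along the open inclusion $\operatorname{Spa}(X,X)\subset\operatorname{Spa}(X,\mathbb C)$ gives a symmetric monoidal, $\mathcal D(\mathrm{Liq}_p(\mathbb C))$-linear, cocontinuous functor $C^{\mathrm{alg}}(X)\to C^{\mathrm{an}}(X)$. Naturality in $X$ follows from functoriality of the Huber-pair construction.

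Finally, I conclude using properness. By the valuative criterion, every valuation on $\mathcal O_X$ that is nonnegative on $\mathbb C$ is also nonnegative on $\mathcal O_X$, because its center extends. Hence the inclusion $\operatorname{Spa}(X,X)\to\operatorname{Spa}(X,\mathbb C)$ is surjective and therefore a homeomorphism. A sheaf restricted to the whole space does not change its global sections, so the comparison functor is an equivalence. This equivalence respects all the structure, since it comes from a map in $\operatorname{Sym}$ under $\mathcal D(\mathrm{Liq}_p(\mathbb C))$.
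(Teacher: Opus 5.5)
Your proposal is correct. It is, however, the proof of the abstract GAGA theorem from Lecture VI, not the argument the paper gives for this reformulation in Lecture VII.

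You work with the usual Huber topology on $\operatorname{Spa}(R,\mathbb C)$. There $C(-)$ is a sheaf only via a morphism of sites: a rational open $U(\tfrac{g_1,\ldots,g_n}{f})$ corresponds to a locally closed piece of $\mathcal S(R)$ (closed when inverting $f$, then open when making the $g_i/f$ analytic). Axioms (3)--(5) are what turn Huber covers into open covers.

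The paper instead changes the topology. On $\operatorname{Spa}(R,\mathbb C)'$ the loci $\{|g|\neq 0\}$ and $\{|f|\ll|g|\}$ are closed and $\{|f|\leq O(1)\}$ is open. Theorem~\ref{thm:localizeonSpa} then gives an honest map of locales $\mathcal S(R)\to\operatorname{Spa}(R,\mathbb C)'$, by checking five intersection relations among idempotent algebras. As a result, $C^{\mathrm{alg}}(X)$ becomes a categorified locale over $X^{\mathrm{ad}'/\mathbb C}$, glued along closed subsets exactly as in its definition. The pair $(X^{\mathrm{ad}},C^{\mathrm{an}}(X))$ is an open sub-categorified locale of it, so the closed-versus-open mismatch you single out as the main obstacle is absorbed into the topology.

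In your setup that mismatch is not a real obstacle either. $C^{\mathrm{alg}}(X)$ is by construction the limit of the categories $\mathcal D(\mathrm{Liq}_p(R_{i_0\cdots i_k}))$ over the Zariski \v{C}ech nerve. Moreover, $\operatorname{Spa}(R[1/f],\mathbb C)=U(\tfrac ff)$ is a rational open whose value is $\mathcal D(\mathrm{Liq}_p(R[1/f]))$. So the global sections of the glued sheaf on $\operatorname{Spa}(X,\mathbb C)$ are the same limit, and your finite closed descent, while valid, is not needed.

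Your route stays inside standard adic spaces and the axiomatic framework. The paper's route yields a genuine structure map from $\mathcal S(C^{\mathrm{alg}}(X))$ to $X^{\mathrm{ad}'/\mathbb C}$, refining the map to $\operatorname{Spec}(R)^{\mathrm{op}}$. This allows one to localize algebraic objects near the boundary using growth conditions.

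One small correction: the identification $C(R,\widetilde{\mathbb C})=\mathcal D(\mathrm{Liq}_p(R))$ needs Lemma~\ref{integralanalytic} in addition to axiom (1). The integral closure of $\mathbb C$ in $R$ can contain nilpotents and idempotents, and these must also be shown to be analytic.
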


In fact, the proof will proceed by comparing them as categorified locales; but for this, $C^{\mathrm{alg}}(X)$ first needs a finer ``structural map'':

\begin{theorem}\label{thm:localizeonSpa} For a finite type $\mathbb C$-algebra $R$, there is a natural map of locales
\[
\mathcal S(R)\to \mathrm{Spa}(R,\mathbb C)'.
\]
Here, $\mathrm{Spa}(R,\mathbb C)'$ is the set of valuations $|\cdot|: R\to \Gamma\cup\{0\}$ with values in some totally ordered abelian group $\Gamma$ (satisfying $|fg|=|f||g|$, $|f+g|\leq \mathrm{max}(|f|,|g|)$, $|0|=0$, $|\mathbb C^\times|=1$). We denote the inequalities in $\Gamma$ by the symbol $\ll$, and $\mathrm{Spa}(R,\mathbb C)'$ is given the topology for which the subsets $\{|f|\ll |g|\}$ form a generating family of closed subsets. The preimage of $\{|g|\neq 0\}$ is $\mathcal S(R[\tfrac 1g])$, and the preimage of $\{|f|\ll |g|\}\subset \{|g|\neq 0\}$ is
\[
\bigcap_{r>0} \{|\tfrac fg|\leq r\}\subset \mathcal S(R[\tfrac 1g])\subset \mathcal S(R).
\]
\end{theorem}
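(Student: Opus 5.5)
We construct the map of locales by specifying the preimages of the generating closed subsets and checking the relations needed for a frame map. This follows the strategy of Lemma~\ref{lem:localemaptoRge0} and Corollary~\ref{cor:maptoberkovich}.

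\textbf{The closed subsets.} For $g\in R$, the Zariski locus $\{|g|\neq 0\}$ is sent to the closed subset $\mathcal S(R[\tfrac1g])\subset\mathcal S(R)$. Its idempotent algebra is $R[\tfrac1g]$, which is idempotent since $R[\tfrac1g]\otimes_R R[\tfrac1g]=R[\tfrac1g]$ in liquid modules, because it is a filtered colimit of free rank one modules. For $f,g\in R$, the subset $\{|f|\ll|g|\}$ is sent to
\[
Z_{f,g}:=\bigcap_{r>0}\{|\tfrac fg|\leq r\}\subset\mathcal S(R[\tfrac1g]).
\]
Its idempotent algebra is the filtered colimit over $r\to0$ of the base changes of $A(\{|T|\leq r\})$ along $T\mapsto f/g$; that is, it is the base change of the ring of germs of holomorphic functions at $0$.

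\textbf{Spectral-space reduction.} $\mathrm{Spa}(R,\mathbb C)'$ is a spectral-type space, being the constructible-type variant of $\Spa$. Its locale of closed subsets is therefore generated by the sets $\{|f|\ll|g|\}$ and $\{|g|\neq0\}$, taken under finite unions and arbitrary intersections. By a standard presentation of spectral locales through their distributive lattice of constructible pieces, a locale map into it is the same as an assignment of closed subsets of $\mathcal S(R)$ to the generators that respects the finitary relations holding in $\mathrm{Spa}$. Concretely, the relations to check are the following.
\begin{enumerate}
\item Ultrametric inequality: $Z_{f,h}\cap Z_{g,h}\subset Z_{f+g,h}$. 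This follows from Proposition~\ref{prop:propertiessubsets}(8) after intersecting over $r$, since $r+r\to0$.
\item Multiplicativity: $Z_{f,g}\cap Z_{f',g'}\subset Z_{ff',gg'}$, from Proposition~\ref{prop:propertiessubsets}(4). Together with $\mathbb C^\times$-invariance, this follows from (6) and (7) and rescaling $r$.
\item Totality: $\{|f|\ll|g|\}\cup\{|g|\ll|f|\}\supset\{|f|\neq0\}\cap\{|g|\neq0\}$, with the appropriate strict and nonstrict variants.
\item The Zariski relations, namely $\{|fg|\neq0\}=\{|f|\neq0\}\cap\{|g|\neq0\}$ and $\{|f+g|\neq0\}\subset\{|f|\neq0\}\cup\{|g|\neq0\}$. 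These are the algebraic statements recorded by the map $\mathcal S(R)\to\Spec(R)^{\mathrm{op}}$.
\end{enumerate}

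\textbf{The hard step: totality.} I expect relation (3) to be the main obstacle. After inverting $f$ and $g$, and writing $h=f/g$, it becomes the claim that
\[
\bigcap_r\{|h|\leq r\}\;\cup\;\bigcup_{s}\{0<|h|<s\}\;\cup\;\bigcap_{r}\{|h|\geq 1/r\}
\]
covers $\mathcal S(R[\tfrac1{fg}])$, with the correct open and closed bookkeeping. The subtlety is that points of $\mathrm{Spa}$ where $|h|$ is of ``real rank one'' size, neither infinitesimal nor infinite, must land in the open middle region. I would handle this by pulling back to the universal case $\mathbb C[T^{\pm1}]$ and using the decompositions already computed in Lecture~V: the cover $\{|T|\leq1\}\cup\{|T|\geq1\}=\mathcal S$ from Proposition~\ref{prop:propertiessubsets}(2), and the openness statement $D^{\mathrm{an}}(T)=\bigcup_r\{|T|>r\}$ verified for axiom (3) of the GAGA setup. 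Once relations (1)--(4) hold, the universal property gives the map. The stated descriptions of the preimages then hold by construction.
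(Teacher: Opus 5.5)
Your overall plan matches the paper's: present $\mathrm{Spa}(R,\mathbb C)'$ as the locale generated by the closed sets $\{|f|\ll|g|\}$, subject to relations encoding the valuation axioms, and check those relations for the assigned closed subsets of $\mathcal S(R)$. The paper does this via Huber's description of valuations as binary relations. The gap is that your list of relations is wrong, and the step carrying the real content is missing.

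Your relation (3) is false as stated. For $f=g=1$ the left side $\{|1|\ll|1|\}$ is empty, while $\{|1|\neq 0\}=\mathcal S(R)$. The generators are the \emph{strict} loci $\{|f|<|g|\}$, so comparability of the non-strict order translates into the disjointness $\{|f|\ll|g|\}\cap\{|g|\ll|f|\}=\emptyset$. This is immediate from $\{|h|\leq r\}\cap\{|h|\geq 1\}=\emptyset$ for $r<1$. So your ``hard step'' targets a misformulated statement that is easy once corrected.

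On the other hand, transitivity and the ultrametric inequality for the non-strict order become \emph{covering} relations after passing to complements:
\[
\{|f|\ll|h|\}\subset\{|f|\ll|g|\}\cup\{|g|\ll|h|\},\qquad \{|f|\ll|g+h|\}\subset\{|f|\ll|g|\}\cup\{|f|\ll|h|\}.
\]
These do not follow from your intersection-type relations (1), (2) and the Zariski relations (4). For example, take $R=\mathbb C[T]$. Declare a point to lie in $\{|f|\ll|g|\}$ iff $\deg f<\deg g-1$ (with $\deg 0=-\infty$), and in $\{|g|\neq0\}$ iff $g\neq0$. All your relations hold, with (3) corrected to disjointness. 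Yet the point lies in $\{|1|\ll|T^2|\}$ but in neither $\{|1|\ll|T|\}$ nor $\{|T|\ll|T^2|\}$. So the locale your relations present has points that are not valuations, and your assignment is not shown to factor through $\mathrm{Spa}(R,\mathbb C)'$.

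What you need is to verify $\{|0|\ll|1|\}=\mathcal S(R)$, the disjointness above, $\{|f|\ll|g|\}\cap\{|0|\ll|h|\}=\{|fh|\ll|gh|\}$, and the two covering relations. The covering relations are where Lecture~V enters.
\begin{itemize}
\item For transitivity, invert $h$ and reduce to $h=1$. For each $\epsilon>0$, cover $\mathcal S(R)$ by $\{|g|\leq\epsilon\}\cup\{|g|\geq\epsilon\}$. On $\{|g|\geq\epsilon\}$ the element $g$ is invertible with $|g^{-1}|\leq\epsilon^{-1}$, so there $\{|f|\ll1\}$ lies in $\{|f|\ll|g|\}$. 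Intersecting over $\epsilon$ with the infinite distributive law gives $\{|f|\ll1\}\subset\{|f|\ll|g|\}\cup\{|g|\ll1\}$.
\item For the ultrametric relation, invert $g+h$ and reduce to $g+h=1$. Then $\{|g|\geq\tfrac12\}\cup\{|h|\geq\tfrac12\}=\mathcal S(R)$ by Proposition~\ref{prop:propertiessubsets}. On each piece the corresponding element is invertible with bounded inverse, which gives the inclusion.
\end{itemize}
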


Something slightly weird was done to the topology of $\mathrm{Spa}(R,\mathbb C)$: Usually, in adic spectra, $\{|f|\neq 0\}$ and $\{|f|\leq 1\}$ define open subsets. For us, the first is closed, while the second is open. Intuitively, the valuations considered here keep track only of ``infinite'' values of $|f|$. Namely, $|f|\leq 1$ actually means $|f|=O(1)$, i.e.~$|f|<\infty$ in usual notation. This is the reason we also write $|f|\ll |g|$ in place of $|f|<|g|$.\footnote{Beware however of a trap: While this reads well in the direction $|f|$ is much smaller than $|g|$, which is always true when $f=0$, the condition $0\ll |g|$ might also be read as $|g|\gg 0$, i.e.~$g$ is much larger than zero. It doesn't mean that, $0\ll |g|$ only means $|g|\neq 0$.} For the same reason, we will also write $|f|\leq O(|g|)$ in place of $|f|\leq |g|$. In other words, this map to the adic spectrum is not keeping track of precise absolute values, but only of asymptotics (for any pair of functions in $R$!). This also explains why one gets the strong triangle inequality -- if both $|f_1|,|f_2|\leq O(|g|)$, then also $|f_1+f_2|\leq O(|g|)$.

Another point worth making is that the valuations are not required to be continuous in any way -- we treat $R$ as a discrete $\mathbb C$-algebra in the above definition of $\mathrm{Spa}(R,\mathbb C)$. In fact, we ask $|\mathbb C^\times|=1$ while $|0|=0$.

\begin{proof} The proof is similar to the construction of the map $\mathcal S(R,R)\to \mathcal M^{\mathrm{Berk}}(R)$: One can describe the adic spectrum in terms of possible binary relations $\ll$ on $R$, subject to some simple axioms, see \cite[proof of Proposition 2.2]{HuberContVal}. One then has to see that the closed subsets of $\mathcal S(R)$ we assigned by hand to $\{|f|\ll |g|\}$ satisfy the correct intersection properties:
\begin{enumerate}
\item $\{|0|\ll |1|\}$ is everything;
\item $\{|f|\ll |g|\}\cap \{|g|\ll |f|\}$ is empty;
\item $\{|f|\ll |h|\}\subset \{|f|\ll |g|\}\cup \{|g|\ll |h|\}$;
\item $\{|f|\ll |g+h|\}\subset \{|f|\ll |g|\}\cup \{|f|\ll |h|\}$;
\item $\{|f|\ll |g|\}\cap \{0\ll |h|\} = \{|fh|\ll |gh|\}$.
\end{enumerate}
Properties (1), (2), and (5) are straightforward. For (3), note that $h$ is invertible on the left-hand side, so we can assume that $h$ is invertible, and then replace $f$ by $\tfrac fh$ and $g$ by $\tfrac gh$ to assume that $h=1$. Now we can cover the whole space by the loci $\{|g|\geq 1\}$ and $\{|g|\leq 1\}$ as defined in Lecture V. On the first, $\{|f|\ll 1\}$ is contained in $\{|f|\ll |g|\}$, while on the second $\{|g|\ll 1\}$ is everything and hence $\{|f|\ll 1\}\subset \{|g|\ll 1\}$. It remains to prove (4). On its left-hand side, $g+h$ is invertible, so we can assume $g+h$ is invertible; dividing by it, we can assume $g+h=1$. Then the whole space is covered by the loci $\{|g|\geq \tfrac 12\}$ and $\{|h|\geq \tfrac 12\}$ by the arguments from Lecture V. On the first, $\{|f|\ll 1\}\subset \{|f|\ll |g|\}$, and on the second $\{|f|\ll 1\}\subset \{|f|\ll |h|\}$.
\end{proof}

There is a natural map from $\mathrm{Spa}(R,\mathbb C)'$ to $\Spec(R)^{\mathrm{op}}$. Namely, there is a map
\[
\mathrm{Spa}(R,\mathbb C)'\to \mathrm{Spec}(R)^{\mathrm{op}}
\]
such that the preimage of $\{f\neq 0\}$ is $\{|f|\neq 0\}$. In particular, the categorified locale
\[
(\mathrm{Spec}(R)^{\mathrm{op}},C^{\mathrm{alg}}(R))
\]
naturally refines to a categorified locale
\[
(\mathrm{Spa}(R,\mathbb C)',C^{\mathrm{alg}}(R)).
\]
One can also glue this construction: for any separated finite type $\mathbb C$-scheme $X$, one can glue $\mathrm{Spa}(R,\mathbb C)'$ to a space $X^{\mathrm{ad}'/\mathbb C}$; beware again that the gluing happens along closed subsets. This gives a categorified locale
\[
(X^{\mathrm{ad}'/\mathbb C},C^{\mathrm{alg}}(X))
\]
refining $(X,C^{\mathrm{alg}}(X))$.

On the other hand, recall that for any integrally closed subalgebra $R^+\subset R$, one defines
\[
\mathrm{Spa}(R,R^+)'\subset \mathrm{Spa}(R,\mathbb C)'
\]
as the open subset where $|f|\leq O(1)$ for all $f\in R^+$. (It is enough to demand this condition for a finite subset of $R^+$, generating it as an integrally closed subalgebra of $R$.) Recall also that $\{|f|\leq O(1)\}$ translates in $\mathcal S(R)$ into the condition $\{|f|<\infty\}$, so we find that
\[
\mathcal S(R,R)\subset \mathcal S(R)
\]
is the preimage of
\[
\mathrm{Spa}(R,R)'\subset \mathrm{Spa}(R,\mathbb C)'.
\]

In particular, there is an open immersion of categorified locales
\[
(\mathrm{Spa}(R,R)',C^{\mathrm{an}}(A))\subset (\mathrm{Spa}(R,\mathbb C)',C^{\mathrm{alg}}(A)).
\]
Now we glue this information. Gluing now along open subsets, one gets another adic space $X^{\mathrm{ad}}$ associated to $X$, glued from $\mathrm{Spa}(R,R)$'s. If $X$ is separated finite type, there is a natural open immersion $X^{\mathrm{ad}}\hookrightarrow X^{\mathrm{ad}'/\mathbb C}$ glued from the similar open immersions on affine pieces. Moreover, the valuative criterion of properness says that $X$ is proper if and only if this map is bijective, i.e.~a homeomorphism. In other words, for any separated $X$, we get an open immersion of categorified locales
\[
(X^{\mathrm{ad}},C^{\mathrm{an}}(X))\subset (X^{\mathrm{ad}'/\mathbb C},C^{\mathrm{alg}}(X)).
\]
If $X$ is proper, the two spaces are the same, and thus the categories are the same, giving Theorem~\ref{thm:GAGAabstract}.

In fact, there is probably one small addition one should make, about how $C^{\mathrm{an}}(X)$ is glued. In the previous step, we glued it on $X^{\mathrm{ad}}$, while previously we glued it on $X(\mathbb C)$. What is the relation?

There is a natural continuous map
\[
\Spec(R)(\mathbb C)\to \Spec(R),
\]
where the target is equipped with the Zariski topology, and there is also a natural continuous map
\[
\Spa(R,R)\to \Spec(R),
\]
where the preimage of $\{f\neq 0\}$ is $\{|f|\geq O(1)\}$ (which translates to $\bigcup_{r>0} \{|f|>r\}$ in $\mathcal S(R,R)$). Over these comparison maps, one gets maps of categorified locales
\[
(\Spec(R)(\mathbb C),C^{\mathrm{an}}(R))\to (\Spec(R),C^{\mathrm{an}}(R))\leftarrow (\Spa(R,R),C^{\mathrm{an}}(R))
\]
which glues to comparison maps
\[
(X(\mathbb C),C^{\mathrm{an}}(X))\to (X,C^{\mathrm{an}}(X))\leftarrow (X^{\mathrm{ad}},C^{\mathrm{an}}(X)).
\]
In other words, to glue $C^{\mathrm{an}}(X)$, Zariski covers suffice, and then one can refine the localizations either to $X(\mathbb C)$ or to $X^{\mathrm{ad}}$.

What is striking about this proof of GAGA is its formality -- the key is to construct the map to the adic spectrum, which reduces to some formal calculations with idempotent algebras. Once one has that, general nonsense reduces this version of GAGA to the purely valuation-theoretic assertion that the map $X^{\mathrm{ad}}\to X^{\mathrm{ad}'/\mathbb C}$ is a bijection.

We also note that $\mathcal S(A)$ has the striking property that it is globally algebraic -- the global functions are precisely $R$ -- while it is locally analytic -- the functions on the open subset $\mathcal S(R,R)$ are analytic functions. Using the map
\[
\mathcal S(R)\to \mathrm{Spa}(R,\mathbb C)'
\]
one can also localize near the ``boundary'' of $\mathcal S(R)$ using specific growth conditions on functions. This makes it possible to construct algebraic functions, or also coherent sheaves etc., by gluing them after such ``analytic'' localizations. This is somewhat reminiscent of the ``affine GAGA'' of Bakker--Brunebarbe--Tsimerman \cite{BakkerBrunebarbeTsimerman}, built on o-minimal techniques.

\section{Lecture VIII: Nuclear modules}

In the previous lectures, we proved a GAGA theorem of the following type: if $X$ is a proper $\mathbb{C}$-scheme, then two a priori different $\infty$-categories of ``$p$-liquid derived quasicoherent sheaves'' on $X$ actually agree.  The first is Zariski-glued from the categories
$$C(R)=\operatorname{Mod}_R(\mathcal{D}(\operatorname{Liq}_p))$$
of derived $R$-modules in $p$-liquid vector spaces, while the second is Zariski-glued from the analytic localizations of these,
$$C(R,R),$$
which localize on the underlying topological space $\operatorname{Spec}(R)(\mathbb{C})$ and therefore also Zariski-glue.

The usual GAGA theorem is a similar comparison theorem, but for coherent sheaves instead.  The above categories of quasi-coherent sheaves are much bigger than the categories of coherent sheaves: first of all, there's no finiteness condition on the $R$-modules in definition of $C(R)$, and second of all, our base category is $\operatorname{Liq}_p$ and not the usual algebraic category of $\mathbb{C}$-vector spaces.  In this lecture we want to work towards explaining how to recover the usual categories of coherent sheaves inside these larger categories, so as to deduce the usual GAGA from the one we have just proved.

The general mechanism for this fits into the familiar ansatz that ``finite = compact + discrete'': we will recover the category of coherent sheaves as the intersection of two different full subcategories of our large categories, one which corresponds to a kind of compactness condition and the other to a kind of discreteness condition.  In this lecture, we talk about the appropriate discreteness condition.

The first thing to emphasize is that there is a naive notion of discreteness for $p$-liquid vector spaces.  Indeed, the forgetful functor
$$\operatorname{Liq}_p\rightarrow\operatorname{Mod}_\mathbb{C},$$
$$X\mapsto X(\ast),$$
admits an exact fully faithful left adjoint $M\mapsto M^\delta := M\otimes_{\mathbb{C}(\ast)}\mathbb{C}$; more concretely, if $M$ is a direct sum of copies of $\mathbb{C}$ in the category of abstract vector spaces, then $M^\delta$ is the ``same'' direct sum of copies of the object $\mathbb{C}$ in the category of $p$-liquid spaces.  (Note however that $M^\delta$ is not discrete as an underlying condensed set; it is only discrete ``relative to $\mathbb{C}$'').

We can call a $p$-liquid vector space ``discrete'' if it is in the essential image of this functor.  But this category of discrete $p$-liquid vector spaces, which is equivalent to the category of abstract $\mathbb{C}$-vector spaces, is not robust enough for our purposes.  The reason is that the most relevant liquid vector spaces for us are the rings of overconvergent holomorphic functions on closed polydisks, and those are not discrete.

But it turns out that there is an enlargement of the category of discrete $p$-liquid vector spaces, which contains these rings of overconvergent holomorphic functions on closed polydisks (and rings of convergent holomoprhic functions on open polydisks), but is still constrained enough that it can be used as a replacement for discreteness in the ansatz ``finite = compact + discrete''.  This is the category of \emph{nuclear modules} which is the subject of this lecture.\footnote{The notion of nuclearity is due to Grothendieck and arose from his study of tensor products of locally convex topological vector spaces, \cite{GrothendieckTensor}.  It appears for us here in a slightly different guise, but the fundamental ideas are the same.}

The definition of nuclear module is based on the notion of nuclear (or trace-class) map, which we can give in the context of an arbitrary closed symmetric monoidal $\infty$-category $C$.  First, some notation: for $x,y\in C$, we will denote by
$$y^x$$
the internal hom object from $x$ to $y$.  We will also set
$$x^\vee := 1^x$$
where $1$ denotes the unit object of $C$, and
$$x(\ast):= Hom(1,x),$$
the mapping anima from the unit to $x$.

Note for all $x,y\in C$ that there is a natural map
$$x^\vee\otimes y\rightarrow y^x,$$
and in particular a natural map
$$(x^\vee\otimes y)(\ast)\rightarrow \operatorname{Hom}(x,y).$$

\begin{definition} Let $f:x\rightarrow y$ be a map in $C$.  We say that $f$ is \emph{trace-class} or \emph{nuclear} if $f$ lies in the image of the natural map
$$(x^\vee\otimes y)(\ast)\rightarrow \operatorname{Hom}(x,y).$$
\end{definition}

Note that this is a $\pi_0$ condition, i.e.\ we are asking that the component of the anima $\operatorname{Hom}(x,y)$ containing $f$ is hit by the map from $(x^\vee\otimes y)(\ast)$.  The following lemma can be proved by some diagram chasing.

\begin{lemma}\label{traceclassproperties}
Let $C$ be a closed symmetric monoidal $\infty$-category.
\begin{enumerate}
\item If $f:x\rightarrow y$ is trace-class and $g:y\rightarrow y'$ and $h:x'\rightarrow x$ are arbitrary, then $g\circ f\circ h$ is trace-class.
\item If $f:x\rightarrow y$ and $f':x'\rightarrow y'$ are trace-class, then $f\otimes f': x\otimes x'\rightarrow y\otimes y'$ is trace-class.
\item If $f:x\rightarrow y$ is trace-class and $c\in C$ is arbitrary, then the commutative square
\[\xymatrix{
c^\vee\otimes x \ar[r]\ar[d] & c^\vee\otimes y\ar[d]\\
x^c \ar[r] & y^c
}\]
admits a diagonal map $x^c\rightarrow c^\vee\otimes y$ making both triangles commute.
\end{enumerate}
\end{lemma}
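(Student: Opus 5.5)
The plan is to work entirely with two pieces of structure: the natural pairing maps of the closed monoidal category, and the fact that trace-class maps are exactly those $f:x\to y$ admitting a map $\phi:1\to x^\vee\otimes y$ whose image under
$$(x^\vee\otimes y)(\ast)\to \operatorname{Hom}(x,y)$$
is $f$. Concretely, $f$ is recovered as the composite
$$x\simeq 1\otimes x\xrightarrow{\phi\otimes \mathrm{id}} x^\vee\otimes y\otimes x\xrightarrow{\mathrm{ev}} y,$$
where $\mathrm{ev}$ evaluates $x^\vee$ on $x$. Each part amounts to writing down an explicit witness $\phi'$ built from $\phi$ and checking that it maps to the required morphism. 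Since everything is a $\pi_0$ statement, it suffices to exhibit the witnesses and check the resulting equalities of maps up to homotopy, using naturality of the map $x^\vee\otimes y\to y^x$.

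For (1), precomposition with $h$ induces $h^\vee:x^\vee\to x'^\vee$, and postcomposition with $g$ is $g:y\to y'$. I take $\phi'=(h^\vee\otimes g)\circ\phi$. Naturality of $x^\vee\otimes y\to y^x$ in both variables, contravariant in $x$ and covariant in $y$, shows that $\phi'$ maps to $g\circ f\circ h$.

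For (2), given witnesses $\phi$ and $\phi'$, I form
$$\phi\otimes\phi':1\to (x^\vee\otimes y)\otimes(x'^\vee\otimes y').$$
I then compose with the lax monoidal map $x^\vee\otimes x'^\vee\to (x\otimes x')^\vee$, which is adjoint to evaluating each factor and multiplying. It remains to check that the resulting element maps to $f\otimes f'$, using compatibility of the maps $a^\vee\otimes b\to b^a$ with tensor products, that is, the map $b^a\otimes b'^{a'}\to (b\otimes b')^{a\otimes a'}$.

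For (3), the diagonal $x^c\to c^\vee\otimes y$ should be the composite
$$x^c\simeq x^c\otimes 1\xrightarrow{\mathrm{id}\otimes\phi} x^c\otimes x^\vee\otimes y\to c^\vee\otimes y.$$
The last map uses the composition pairing $x^c\otimes 1^x\to 1^c$, which is $x^c\otimes x^\vee\to c^\vee$. The two triangle identities must then be verified separately.

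For the lower triangle, I compose the diagonal with $c^\vee\otimes y\to y^c$. This becomes "compose $x^c$ with the map $x\to y$ determined by $\phi$", which is $f\circ-$ by (1)-type naturality and the definition of the witness.

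For the upper triangle, I precompose the diagonal with $c^\vee\otimes x\to x^c$. This gives
$$c^\vee\otimes x\to c^\vee\otimes x\otimes x^\vee\otimes y\to c^\vee\otimes y,$$
where the second map evaluates $x^\vee$ on $x$. That composite is $\mathrm{id}_{c^\vee}\otimes f$, again by the definition of $\phi$.

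I expect the main obstacle to be bookkeeping the coherence in (3): making sure the composition pairing restricted along $c^\vee\otimes x\to x^c$ really agrees with evaluation. I would handle this by adjunction, reducing each identity to an equality of maps out of $(\text{source})\otimes c$ into $1$ or $y$, where both sides visibly become iterated evaluations.
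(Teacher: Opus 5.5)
Your proposal is correct and is the same approach as the paper, which only says the lemma follows by diagram chasing; your explicit witnesses supply that chase. These are $(h^\vee\otimes g)\circ\phi$, the tensor product $\phi\otimes\phi'$ pushed along $x^\vee\otimes x'^\vee\to(x\otimes x')^\vee$, and the diagonal $x^c\to x^c\otimes x^\vee\otimes y\to c^\vee\otimes y$ via the composition pairing, and in (3) both triangle identities reduce by adjunction to iterated evaluations as you describe.
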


To continue, we add some hypotheses to our closed symmetric monoidal $\infty$-category $C$, namely we require:

\begin{enumerate}
\item $C$ is stable and compactly generated;
\item The unit $1\in C$ is a compact object.
\end{enumerate}

Recall that an object $x\in C$ is called compact if $\operatorname{Hom}(x,-):C\rightarrow \operatorname{An}$ commutes with filtered colimits, and $C$ is \emph{compactly generated} if it has all colimits, and there is a collection of compact objects $C_0\subset C$ such that $C$ is the smallest cocomplete full subcategory of $C$ containing $C_0$.  We can close $C_0$ up under finite colimits, and then we even have the canonical expression
$$x=\varinjlim_{c_0\in C_0, c_0\rightarrow x} c_0$$
of $x$ as a filtered colimit of objects in $C_0$ for all $x\in C$, and indeed $C\simeq \operatorname{Ind}(C_0)$.

\begin{remark}
Note that we do \emph{not} require that the collection of compact objects be closed under tensor product.  We would love to be able to impose this very convenient hypothesis, but unfortunately it is (probably) not satisfied in our main example of $C=\mathcal{D}(\operatorname{Liq}_p)$.  Indeed, an object of $\mathcal{D}(\operatorname{Liq}_p)$ is compact if and only if it can be represented by a finite complex of compact projective objects of $\operatorname{Liq}_p$.  As recorded in the appendix to Lecture III, the collection of compact projective objects is not closed under tensor products; and there seems to be no reason why the tensor product of two compact projective objects should be represented by a finite complex of compact projective objects, either.

The main difficulty caused by this lack of closure under tensor products is the following: a compact object $c\in C$ need not be ``compact in the sense of internal hom'', i.e.\ the functor $x\mapsto x^c:C\rightarrow C$ need not commute with filtered colimits.
\end{remark}

In the above abstract context we have the following simple lemma.

\begin{lemma}\label{factorthroughcompact}
Let $C$ be a compactly generated closed symmetric monoidal $\infty$-category with compact unit object.  For any trace-class map $f:x\rightarrow y$ in $C$, there is a compact object $c\in C$ and a factoring of $f$ as
$$x\rightarrow c\rightarrow y$$
where $x\rightarrow c$ is also trace-class.
\end{lemma}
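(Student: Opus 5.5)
Since $f$ is trace-class, it is the image of a class $\phi\in\pi_0((x^\vee\otimes y)(\ast))$, i.e.\ of a map $\phi: 1\to x^\vee\otimes y$. I will use compactness of the unit twice: once to factor $\phi$ through a compact object, and once to lift the resulting map to a trace-class one.

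First, write $y=\varinjlim_{c\in C_0,\, c\to y} c$ as a filtered colimit of compact objects. Tensor products commute with colimits in each variable, so
\[
x^\vee\otimes y=\varinjlim_{c\to y} x^\vee\otimes c .
\]
The unit $1$ is compact, so $\phi: 1\to x^\vee\otimes y$ factors through some $\psi: 1\to x^\vee\otimes c$ for a compact $c$ with a map $g: c\to y$, up to homotopy. Here it matters that $\operatorname{Hom}(1,-)$ commutes with filtered colimits on the level of mapping anima, hence on $\pi_0$.

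Next, let $f_0: x\to c$ be the image of $\psi$ under $(x^\vee\otimes c)(\ast)\to \operatorname{Hom}(x,c)$. By definition $f_0$ is trace-class. The natural map $x^\vee\otimes(-)\to(-)^x$ is natural in the second variable. Hence composing $f_0$ with $g$ gives the image of $(\mathrm{id}\otimes g)\circ\psi\simeq\phi$, which is $f$. This yields the factorization $x\xrightarrow{f_0} c\xrightarrow{g} y$ with $f_0$ trace-class.

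The only point needing care is this naturality check: the composite $1\to x^\vee\otimes c\to x^\vee\otimes y\to y^x$ must equal $1\to x^\vee\otimes c\to c^x\to y^x$. This is just functoriality of the natural transformation $x^\vee\otimes(-)\to(-)^x$. Its adjoint is $x^\vee\otimes(-)\otimes x\to(-)$, built from evaluation, so it is natural. Apart from this, the argument is formal.
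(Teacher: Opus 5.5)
Your proposal is correct and is essentially the paper's argument: write $y$ as a filtered colimit of compact objects, use that $x^\vee\otimes-$ and $(-)(\ast)=\operatorname{Hom}(1,-)$ commute with filtered colimits to lift the class to $(x^\vee\otimes c)(\ast)$, and conclude by naturality of $x^\vee\otimes(-)\to(-)^x$. One small inaccuracy in your roadmap: compactness of the unit is used only once, and the second step is pure naturality rather than a second use of compactness.
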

\begin{proof}
By definition, the map $f$ comes from a class in $\pi_0( (x^\vee\otimes y)(\ast))$.  As the functor $x^\vee\otimes -$ and the functor $(-)(\ast)$ both commute with filtered colimits, and $y$ is a filtered colimit of compact objects, we deduce that $f$  lifts to a class in $\pi_0( (x^\vee\otimes c)(\ast))$ for some compact $c$ mapping to $y$, giving the claim.
\end{proof}

\begin{definition}
Let $C$ be a stable compactly generated closed symmetric monoidal $\infty$-category with compact unit object.
\begin{enumerate}
\item For $x\in C$, we say that $x$ is \emph{nuclear} if for all compact objects $c\in C$, the natural map
$$(c^\vee\otimes x)(\ast)\rightarrow \operatorname{Hom}(c,x)$$
is an isomorphism.
\item For $x\in C$, we say that $x$ is \emph{basic nuclear} it is isomorphic to the colimit of a sequence
$$x_0\rightarrow x_1\rightarrow\ldots$$
of trace-class maps between objects of $C$.
\end{enumerate}
\end{definition}

Note that by replacing $c$ by its shifts, we see that condition (1) is insensitive to whether we use mapping spectra or mapping anima.  It's more convenient to use mapping spectra because then we stay in the stable context, so we'll do that from now on.

\begin{theorem}
Let $C$ be a stable compactly generated closed symmetric monoidal $\infty$-category with compact unit object.  Then:
\begin{enumerate}
\item The full subcategory $\operatorname{Nuc}$ of nuclear objects in $C$ is stable, closed under colimits, and closed under tensor products,
\item The stable $\infty$-category $\operatorname{Nuc}$ is $\aleph_1$-compactly generated, and the $\aleph_1$-compact objects are exactly the basic nuclear modules.
\end{enumerate}
\end{theorem}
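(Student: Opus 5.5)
Part (1) follows directly from the definition. For fixed compact $c$, both sides of
$$(c^\vee\otimes x)(\ast)\rightarrow \operatorname{Hom}(c,x)$$
are exact functors of $x$. Each also commutes with filtered colimits: on the left because $c^\vee\otimes-$ and $(-)(\ast)=\operatorname{Hom}(1,-)$ do (the unit is compact), and on the right because $c$ is compact. Hence the class of $x$ for which the map is an isomorphism is stable and closed under all colimits, and so is the intersection over all compact $c$.

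For tensor products I would not argue directly. Instead I would first prove part (2), in the form that every nuclear object is a filtered colimit of basic nuclear ones. Then closure under tensor products reduces, via colimit-closure, to basic nuclear objects. Given two sequences of trace-class maps $x_n\to x_{n+1}$ and $y_n\to y_{n+1}$, the tensor product $\varinjlim x_n\otimes y_n$ is again a sequential colimit of trace-class maps by Lemma~\ref{traceclassproperties}(2), hence basic nuclear.

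The key steps for (2) are as follows. \textbf{Step A:} a basic nuclear object is nuclear. By Lemma~\ref{factorthroughcompact} each trace-class map $x_n\to x_{n+1}$ factors as $x_n\to c_n\to x_{n+1}$ with $c_n$ compact and the first map trace-class. Hence $x=\varinjlim x_n=\varinjlim c_n$, a sequential colimit of compact objects along trace-class maps, and by colimit-closure it suffices to check the nuclearity map for such a colimit. Fix a compact $c$. Lemma~\ref{traceclassproperties}(3), applied to the trace-class map $c_n\to c_{n+1}$, gives diagonal fillers
$$c_n^c\rightarrow c^\vee\otimes c_{n+1},$$
using the internal hom $c_n^c$ (with $(c_n^c)(\ast)=\operatorname{Hom}(c,c_n)$). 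These fillers show that the two ind-systems $\{c^\vee\otimes c_n\}_n$ and $\{c_n^c\}_n$ are interleaved. After applying $(-)(\ast)$ the colimits agree, and compactness of $c$ identifies $\varinjlim_n\operatorname{Hom}(c,c_n)$ with $\operatorname{Hom}(c,x)$. \textbf{Step B:} basic nuclear objects are $\aleph_1$-compact in $\operatorname{Nuc}$, since they are countable colimits of compact objects of $C$ and $\operatorname{Nuc}\subset C$ is closed under colimits.

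\textbf{Step C:} every nuclear $x$ is an $\aleph_1$-filtered colimit of basic nuclear objects. Write $x=\varinjlim c_i$ over compacts. For any compact $c\to x$, nuclearity lifts this map to a class in $(c^\vee\otimes x)(\ast)=\varinjlim_i (c^\vee\otimes c_i)(\ast)$. So the map factors as $c\to c'\to x$ with $c\to c'$ trace-class and $c'$ compact. Iterating produces a sequence $c\to c'\to c''\to\cdots$ of trace-class maps over $x$, i.e.\ a basic nuclear object mapping to $x$ through which $c$ factors. Doing this countably many times at once for countable families shows that the basic nuclear objects over $x$ form an $\aleph_1$-filtered system with colimit $x$.

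Finally, the $\aleph_1$-compact objects of $\operatorname{Nuc}$ are exactly the retracts of basic nuclear objects. I expect the main obstacle to be showing that such a retract is itself basic nuclear. I would try to do this by splicing the idempotent into the sequence: replace the $n$th term by the image of the retraction, using that trace-class maps are stable under composition with arbitrary maps by Lemma~\ref{traceclassproperties}(1). A secondary delicate point is Step A, where I must check carefully that the diagonal fillers give a genuine equivalence of the colimits, not just maps in both directions.
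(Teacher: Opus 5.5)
Your part (1), Step A, Step B and the iterated factorization $c\to c'\to c''\to\cdots$ over $x$ are exactly the ingredients the paper uses. Step C, however, has a genuine gap.

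The iterated factorization only shows that each compact object mapping to $x$ factors through some basic nuclear object over $x$. By a diagonal construction, the same holds for a countable direct sum of such objects. That is not yet $\aleph_1$-filteredness of the basic nuclear objects over $x$. You must also be able to equalize pairs of maps $y\rightrightarrows y'$ between basic nuclear objects over $x$, compatibly with the maps to $x$ and with the higher coherences an $\infty$-categorical diagram requires. The natural way to do that is to pass to the cofiber of the difference. So you need to know that cofibers of maps between basic nuclear objects are again basic nuclear (or at least map to one over $x$), and nothing in your proposal addresses cofibers.

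The same missing fact lies behind the retract problem you flag. Your splicing idea does not work as stated: the idempotent $e$ on $z=\varinjlim z_n$ does not act on the individual terms $z_n$. One only gets lifts $z_n\to z_{m(n)}$.

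The paper fills this gap by proving that basic nuclear objects are closed under all countable colimits.
\begin{enumerate}
\item Cofibers: for a map between two sequential colimits of compact objects along trace-class maps, reindex so the map is levelwise and take levelwise cofibers. The new transition maps need not be trace-class, but composites of two consecutive ones are.
\item Countable direct sums: a diagonal argument.
\item Retracts then come for free, since a retract of $z$ is $\varinjlim(z\xrightarrow{e}z\xrightarrow{e}\cdots)$.
\end{enumerate}
With this closure in hand, your iterated factorization is needed only to show that a nuclear $x$ with $\operatorname{Hom}(y,x)=0$ for all basic nuclear $y$ vanishes. Standard generation arguments then give (2).

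A repair within your own toolkit also works. Suppose $y=\varinjlim c_n$ is nuclear with all $c_n$ compact.
\begin{enumerate}
\item The map $c_n\to y$ is trace-class, so it factors as $c_n\to d\to y$ with $d$ compact and $c_n\to d$ trace-class.
\item Compactness of $d$, and then of $c_n$, shows that some transition map $c_n\to c_m$ factors through $c_n\to d$. It is therefore trace-class by Lemma~\ref{traceclassproperties}(1).
\item Passing to a subsequence exhibits $y$ as basic nuclear.
\end{enumerate}
Apply this to the colimit of levelwise cofibers, which is nuclear by (1) and is a sequential colimit of compacts. This gives cofiber-closure directly, and hence also closure under retracts.
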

\begin{proof}
Stability and cocompleteness of $\operatorname{Nuc}$ follow from the fact that both functors $(c^\vee\otimes -)(\ast)$ and $\operatorname{Hom}(c,-)$ (with values in spectra) commute with all colimits.  For closure under tensor products, this follows from (2), since basic nuclears are clearly closed under tensor products.

So we just need to show (2).  First note that basic nuclears are nuclear: indeed, if $x$ is basic nuclear given by $\varinjlim x_n$ as in the definition, then for $c\in C$, the two ind-systems
$$x_0^c \rightarrow x_1^c\rightarrow\ldots$$
and
$$c^\vee \otimes x_0\rightarrow c^\vee\otimes x_1\rightarrow\ldots$$
are isomorphic as a consequence of Lemma \ref{traceclassproperties}.  When $c$ is compact, we can take maps from the unit object and then pull in the colimits.  We deduce that $x$ is nuclear.

Next, note that any basic nuclear is a sequential colimit of \emph{compact} objects under trace-class maps.  Indeed, if $x=\varinjlim x_n$ along trace-class maps, by Lemma \ref{factorthroughcompact} we can factor each $x_n\rightarrow x_{n+1}$ as $x_n\rightarrow c_n\rightarrow x_{n+1}$ where $c_n$ is compact and $x_n\rightarrow c_n$ is trace-class.  Then the $c_n$ with transition maps given by the compositions $c_n\rightarrow x_{n+1}\rightarrow c_{n+1}$ provide an equivalent ind-system where the terms are compact and the maps are trace-class, as desired.

It follows that any basic nuclear is $\aleph_1$-compact in $C$, hence in particular in $\operatorname{Nuc}$.  Next we claim that the basic nuclear objects form a stable subcategory closed under countable colimits.  It suffices to show it's closed under cones and countable direct sums.  For cones, the argument is in \cite[Lecture XIII]{Analytic}: briefly, given a map between sequential colimits of compact objects under trace-class maps, we can assume the map is given term-wise, and then we term-wise pass to the cofiber; the transition maps in the cofiber isn't necessarily trace-class, but the composition of any two consecutive transition maps is, and that's enough.  For countable direct sums, we can choose a representative of each term as a sequential colimit along trace-class maps, then rewrite the countable direct sum as a sequential colimit of finite direct sums and pass to the diagonal in the sequential colimit of sequential colimits.

Now we can prove (2).  In the previous two paragraphs, we saw that every basic nuclear is $\aleph_1$-compact, and that the basic nuclears are stable under countable colimits.  Thus it suffices to show that for any nuclear $x$, if $\operatorname{Hom}(y,x)=0$ for all basic nuclear $y$, then $x=0$. Thus suppose $\operatorname{Hom}(y,x)=0$, and let $c$ be compact.  We will show that any map $c\rightarrow x$ is $0$, which will imply $x=0$ because $C$ is compactly generated.  But indeed, by repeatedly applying Lemma \ref{factorthroughcompact}, we deduce that $c\rightarrow x$ factors through a basic nuclear; hence by hypothesis it is the zero map.\end{proof}

By this, every nuclear object is an $\aleph_1$-filtered colimit of basic nuclear objects.  Thus, to understand nuclear objects, to a large degree we reduce to understanding basic nuclear objects.

Now we turn to our case of interest, which is
$$C=\mathcal{D}(\operatorname{Liq}_p).$$
We would like to gain an understanding of the basic nuclear objects in $C$.  By the above, every basic nuclear is a sequential colimit of compact objects with trace-class transition maps.  Actually, it suffices to only look at those compact objects which can be represented by finite complexes where each term is of the form $\mathcal{M}_{<p}(S)$ for $S$ profinite, as these already contain a generating family of compact objects closed under shifts and finite colimits.

Using this we can make the following further reduction.

\begin{lemma}\label{basicnuclearcomplex}
Every basic nuclear in $\mathcal{D}(\operatorname{Liq}_p)$ is represented by a complex which is a sequential colimit of complexes
$$\varinjlim_n C_\bullet^{(n)}$$
such that:
\begin{enumerate}
\item Each term $C_k^{(n)}$ of each complex $C_\bullet^{(n)}$ is a measure space $\mathcal{M}_{<p}(S)$ with $S\in\operatorname{Prof}$;
\item each map $C_\bullet^{(n)}\rightarrow C_\bullet^{(n+1)}$ is term-wise given by a trace-class map between measure spaces.
\end{enumerate}
\end{lemma}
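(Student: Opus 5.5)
The plan is to start from the description just obtained: a basic nuclear $x$ is a sequential colimit $c_0\to c_1\to\cdots$ of compact objects with trace-class transition maps. Each $c_n$ is represented by a finite complex $D^{(n)}_\bullet$ whose terms are measure spaces $\mathcal M_{<p}(S)$. The task is to rectify this diagram in the $\infty$-category into an honest sequence of maps of complexes whose components are trace-class. I would first make the transition maps strict. Each $D^{(n)}_\bullet$ is a bounded complex of projectives in $\operatorname{Liq}_p$ (the $\mathcal M_{<p}(S)$ for $S$ extremally disconnected are projective, and we may take all terms of that form). So any map $c_n\to c_{n+1}$ in $\mathcal D(\operatorname{Liq}_p)$ is represented by an actual chain map $D^{(n)}_\bullet\to D^{(n+1)}_\bullet$, and the sequential colimit of chain complexes computes the homotopy colimit, since filtered colimits are exact.

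Next I would arrange the termwise trace-class property. The idea is to pass to a subsequence and compose maps, using Lemma~\ref{traceclassproperties} to absorb arbitrary maps on either side. Concretely, I would replace the sequence by a sequence of complexes $C^{(n)}_\bullet$ built by mapping-cone / telescope manipulations. I would aim to show that a composite of finitely many trace-class maps between bounded complexes of projectives is chain-homotopic to a chain map that is termwise trace-class. This is the analogue of the cone argument quoted from \cite[Lecture XIII]{Analytic}, where a composite of consecutive maps becomes trace-class. Since the complexes have bounded length, say $\le L$, I expect a composite of about $L+1$ trace-class maps to suffice.

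The mechanism is as follows. A trace-class map $f\colon c\to c'$ is given by a class in $(c^\vee\otimes c')(\ast)$. Here $c^\vee\otimes c'$ is computed by the bicomplex with entries $\mathcal M_{<p}(S)^\vee\otimes\mathcal M_{<p}(T)$. The dual $\mathcal M_{<p}(S)^\vee=C(S,\mathbb R)$ is concentrated in degree $0$ by the internal projectivity statements recalled in Theorem~\ref{thm:liquidmain}; at least we can use that the derived dual of a bounded complex of $\mathcal M_{<p}(S)$'s is the termwise dual up to higher $\underline{\operatorname{Ext}}$-corrections. A global section of the total complex gives components in each bidegree, i.e.\ termwise trace-class maps $D_k\to D'_k$. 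These components fail to commute with the differentials only up to homotopy coming from off-diagonal terms. Composing with a further trace-class map lets one kill the error one degree at a time, using Lemma~\ref{traceclassproperties}(3) to lift maps from internal Hom back into the tensor product.

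The main obstacle is exactly this strictification. It must cope both with the failure of $\underline{\operatorname{Ext}}^i(\mathcal M_{<p}(S),-)$ to vanish (the Warning after Proposition~\ref{prop:tensorproductcompactprojective}) and with the correction homotopies. The first thing I would try is to enlarge each complex by adding contractible pieces $\mathcal M_{<p}(S)\xrightarrow{\mathrm{id}}\mathcal M_{<p}(S)$, so that homotopies become honest maps. A chain map plus a null-homotopic error can then be rewritten as a strict termwise map into a bigger complex quasi-isomorphic to the original, at the cost of passing to a cofinal subsequence.
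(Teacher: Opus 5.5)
\paragraph{The vanishing you hedge on is the whole proof.} Your setup is right: represent each $c_n$ by a finite complex of $\mathcal M_{<p}(S)$'s, and view a trace-class map as a class in $H_0((c^\vee\otimes c')(\ast))$. But the central step is missing, and the obstacle you try to overcome does not exist.

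The key input is that $\underline{\operatorname{RHom}}(\mathcal M_{<p}(S),\mathbb R)=\underline{\operatorname{RHom}}(\mathbb Z[S],\mathbb R)=C(S,\mathbb R)$ sits in degree $0$. This does not follow from Theorem~\ref{thm:liquidmain}, which only lets you replace $\mathcal M_{<p}(S)$ by $\mathbb Z[S]$. It is the separate fact that $\underline{\operatorname{Ext}}^i(\mathbb Z[S],\mathbb R)=0$ for $i>0$, proved in \cite[Lecture III]{Condensed}. The Warning after Proposition~\ref{prop:tensorproductcompactprojective} concerns general coefficients and is irrelevant when the target is $\mathbb R$. You hedge on exactly this point (``up to higher $\underline{\operatorname{Ext}}$-corrections''). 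Your whole strictification program is built to absorb those corrections, and they are zero.

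\paragraph{What the argument should be.} Combine the vanishing with flatness of $\mathcal M_{<p}(T)$ (Lecture III of these notes). Then $c^\vee\otimes^{L} c'$ is computed on the nose by the total complex $\operatorname{Hom}(D_\bullet,\mathbb R)\otimes D'_\bullet$, and $(-)(\ast)$ is exact. So every trace-class class in $H_0$ is represented by a degree-$0$ cycle in $(\operatorname{Hom}(D_\bullet,\mathbb R)\otimes D'_\bullet)(\ast)$.

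\begin{itemize}
\item A degree-$0$ element has components only in $(D_k^\vee\otimes D'_k)(\ast)$. The terms $D_k^\vee\otimes D'_l$ with $l\neq k$ sit in total degree $l-k\neq 0$, so there are no off-diagonal terms.
\item The cycle condition is literally the chain-map condition. So the cycle already is an honest chain map $D_\bullet\to D'_\bullet$, trace-class in each degree, representing the given map in the derived category.
\item You need no compatibility with previously chosen representatives. A sequential diagram is just a sequence of maps, so each transition map can be represented independently. The strict colimit of complexes then computes the colimit, as you note.
\end{itemize}

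No composites, subsequences or added contractible summands are needed.

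\paragraph{Why the proposal as written does not prove the lemma.} Your claim that the extracted components ``fail to commute with the differentials only up to homotopy'' is not correct once the Ext vanishing is used. The fix you propose instead, composing about $L+1$ trace-class maps and absorbing homotopies via contractible summands, is only asserted (``I expect''). You give no argument that composition actually kills the correction terms.
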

\begin{proof}
We will need to use the following non-trivial facts about these $\mathcal{M}_{<p}(S)$: first, the (derived) dual
$$\mathcal{M}_{<p}(S)^\vee = \underline{\operatorname{RHom}}(\mathcal{M}_{<p}(S),\mathbb{R})= \underline{\operatorname{RHom}}(\mathbb{Z}[S],\mathbb{R})$$
sits in degree $0$; and second, $\mathcal{M}_{<p}(S)$ is flat.  The first was proved in \cite[Lecture III]{Condensed} and the second was proved in Lecture III of these notes.

To prove the claim, we need to see that every trace-class map in the derived category between finite complexes $M_\bullet,N_\bullet$ with terms of the form $\mathcal{M}_{<p}(S)$ can be represented by an honest map of complexes which is trace-class in each term.  But indeed, a trace-class map in the derived category comes from a class in
$$H_0( (M_\bullet)^\vee \otimes^L N_\bullet)(\ast)),$$
where we a priori use the derived dual and derived tensor product.  But by the facts we just recalled, these are the same as the underived dual and underived tensor product.  Thus we see that classes in $H_0( (M_\bullet)^\vee \otimes^L N_\bullet)(\ast))$ always come from degree $0$ cycles in the tensor product of complexes
$$(\operatorname{Hom}(M_\bullet,\mathbb{R})\otimes N_\bullet)(\ast).$$
Unwinding, such a degree $0$ cycle exactly gives a map of complexes $M_\bullet\rightarrow N_\bullet$ which is trace-class in each term and induces our original map in the derived category.\end{proof}

Next we analyze these trace-class maps $\mathcal{M}_{<p}(S)\rightarrow \mathcal{M}_{<p}(T)$.  It will be convenient to allow $S$ and $T$ also to be locally profinite as in Lecture IV, because we'll soon want to switch to sequence spaces.

\begin{proposition}\label{explicittraceclass}
Let $S$ and $T$ be locally profinite spaces.  Then a map $f:\mathcal{M}_{<p}(S)\rightarrow \mathcal{M}_{<p}(T)$ in $\mathcal{D}(\operatorname{Liq}_p)$ is trace-class if and only there is a $q<p$ and a $q$-summable sequence of real numbers $(\lambda_n)$ such that $f$ factors as the following kind of composite:
$$\mathcal{M}_{<p}(S)\rightarrow c_0(\mathbb{N})\overset{\cdot\lambda}{\rightarrow}\ell_q(\mathbb{N})\subset \mathcal{M}_{<p}(\mathbb{N})\rightarrow \mathcal{M}_{<p}(T),$$
where the two outer maps are arbitrary and the displayed map is termwise multiplication by our sequence $(\lambda_n)$ (see Lemma \ref{basictraceclass}).

\end{proposition}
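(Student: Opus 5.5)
The "if" direction is formal. By Lemma~\ref{traceclassproperties}(1), composing a trace-class map with arbitrary maps on either side stays trace-class, so it suffices to show that $\cdot\lambda: c_0(\mathbb N)\to \mathcal M_{<p}(\mathbb N)$ is trace-class. I would exhibit the preimage class explicitly. By Exercise 3 of Lecture IV (dually, Exercise 2), $c_0(\mathbb N)^\vee=\mathcal M_1(\mathbb N)$ contains the coordinate functionals $e_n^\ast$, with uniformly bounded norms. Also $\mathcal M_1(\mathbb N)\otimes_{\mathbb R_{<p}}\mathcal M_{<p}(\mathbb N)$ contains the $\lambda$-weighted diagonal element
\[
\sum_n \lambda_n\, e_n^\ast\otimes \delta_n .
\]
This sum converges because $(\lambda_n)$ is $q$-summable. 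One makes this precise by writing it as the image of the single measure $\sum_n\lambda_n\delta_{(n,n)}\in\mathcal M_q(\mathbb N\times\mathbb N)=\mathcal M_{<p}(\mathbb N)\otimes\mathcal M_{<p}(\mathbb N)$ (Proposition~\ref{locprofflat}), pushed along $\mathcal M_{<p}(\mathbb N)\to \mathcal M_1(\mathbb N)$ in the first factor. Its image in $\operatorname{Hom}(c_0,\mathcal M_{<p}(\mathbb N))$ is multiplication by $\lambda$.

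For "only if", let $f$ be trace-class. It comes from a class in $(\mathcal M_{<p}(S)^\vee\otimes \mathcal M_{<p}(T))(\ast)$. As recalled in the proof of Lemma~\ref{basicnuclearcomplex}, $\mathcal M_{<p}(S)^\vee=C_0(S,\mathbb R)$ sits in degree $0$; for locally profinite $S$ one uses the one-point compactification lemma. Since $\mathcal M_{<p}(T)$ is flat, the derived tensor product is underived. $C_0(S)$ is a separable-or-not Banach space, and $\mathcal M_{<p}(T)$ is qs $p$-liquid. By the theorem on $\mathcal M_{<p}(T)\otimes V$ for qs $p$-liquid $V$, with the roles swapped, an element of the tensor product is a point of some $\mathcal M_q(T,K)$, where $K\subset C_0(S)$ is a compact $q$-convex set and $q<p$.

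Next I apply Grothendieck's Lemma~\ref{compactnullsequence} to enlarge $K$ to the closed $1$-convex hull of a nullsequence $(v_n)$ in $C_0(S)$. I want to rewrite the element as $\sum_n \lambda_n v_n\otimes \mu_n$, with $(\lambda_n)$ $q'$-summable for some $q<q'<p$ and $\mu_n$ bounded in some $\mathcal M_{q'}(T)_{\leq 1}$. This is the main obstacle: extracting an honest summable decomposition from a point of $\mathcal M_q(T,K)$, which is only a compatible system over finite quotients $T_i$.

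My first attempt would be to use the discretization trick from the appendix to Lecture II: pass to $\mathcal M(T;\mathbb Z((X))_r)$, where elements are canonically series $\sum_k c_k X^k$. Evaluating at $\lambda$ and slightly changing $q$ produces exactly such a series with geometrically decaying scalars. Alternatively, one could choose a nullsequence with slower decay, so that $v_n=\epsilon_n w_n$ with $w_n$ still null. Either way the extra decay is absorbed into $\lambda$, which remains $q'$-summable.

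Given the decomposition, the factorization follows. The map $\mathcal M_{<p}(S)\to c_0(\mathbb N)$ is $\mu\mapsto(\int v_n\,\mu)_n$, which lands in null sequences because $v_n\to 0$ uniformly and $\mu$ lies in some bounded set, and is continuous on compact pieces. Then multiply by $\lambda$, landing in $\ell_{q'}$ by Lemma~\ref{basictraceclass}. Finally, the map $\mathcal M_{<p}(\mathbb N)\to\mathcal M_{<p}(T)$ sends $\delta_n\mapsto \mu_n$; it exists by the universal property of measures on $\mathbb N\cup\infty$ modulo $\infty$, since $(\mu_n)$ is bounded in a compact $q'$-convex set. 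Composing these three maps recovers $f$, since both sides agree on Dirac measures.
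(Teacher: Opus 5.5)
Your ``if'' direction is fine. The gap is in ``only if'', exactly at the step you flag as the main obstacle, and neither of your suggested remedies closes it.

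A point of $\mathcal M_q(T,K)$ is only a compatible family $(m_i)_i$, with $m_i$ in the $q$-convex hull of the copies of $K$ inside $C_0(S)^{T_i}$. An element of $K$ has many representations $\sum_n a_nv_n$, so you cannot read off measures $\mu_n$ on $T$ level by level; you must choose the coefficients compatibly in $i$. The discretization from the appendix to Lecture II only expands \emph{scalar} measures into series of integral Dirac combinations. Re-choosing the nullsequence only changes decay rates. Neither produces compatible choices.

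The missing idea is to lift \emph{before} taking points. View $C_0(S)$ as a $q'$-Banach space for some $q'<p$. Then Lemma~\ref{compactnullsequence} gives a surjection of condensed modules $\bigoplus_{(v_n)}\mathcal M_{<p}(\mathbb N)\twoheadrightarrow C_0(S)$, $\delta_n\mapsto v_n$, with one summand for each nullsequence. Indeed, the image of $\mathcal M(\mathbb N)_{\ell^{q'}\le 1}$ is the compact closed $q'$-convex hull, and maps from extremally disconnected sets lift along surjections of compact Hausdorff spaces. Since $-\otimes_{\mathbb R_{<p}}\mathcal M_{<p}(T)$ is right exact and $\ast$ is extremally disconnected, every point of $(C_0(S)\otimes_{\mathbb R_{<p}}\mathcal M_{<p}(T))(\ast)$ lifts to a finite sum of points of $\mathcal M_{<p}(\mathbb N\times T)$ (Proposition~\ref{locprofflat}). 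Finitely many nullsequences can then be interleaved into one.

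A point $\nu\in\mathcal M_q(\mathbb N\times T)$ is explicitly of the form $(\lambda_n\mu_n)_n$. Let $c_n$ be the $q$-variation of $\nu|_{\{n\}\times T}$; then $\sum_n c_n<\infty$. So $\lambda_n=c_n^{1/q}$ is $q$-summable, and $\mu_n=\lambda_n^{-1}\nu|_{\{n\}\times T}$ (for $c_n\neq 0$) has $q$-variation $1$.

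Within your own setup, the same lift comes from the Tychonoff argument in item (3) of the proof of the Lecture III theorem computing $\mathcal M_{<p}(S)\otimes_{\mathbb R_{<p}}V$. Apply it to $\mathcal M(\mathbb N)_{\ell^{q}\le 1}\twoheadrightarrow K$, where $K$ is the closed $q$-convex hull of a nullsequence. You need the $q$-convex hull here, not the $1$-convex hull, or the coefficients are only $1$-summable.

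Your last step also needs repair. A map $\mathcal M_{<p}(\mathbb N)=\mathcal M_{<p}(\mathbb N\cup\infty)/\mathcal M_{<p}(\infty)\to\mathcal M_{<p}(T)$ with $\delta_n\mapsto\mu_n$ corresponds to a map $\mathbb N\cup\infty\to\mathcal M_{<p}(T)$ sending $\infty\mapsto 0$. So it exists only if $\mu_n\to 0$ in some compact $\mathcal M(T)_{\ell^{q'}\le C}$. Boundedness is not enough: test on $\delta_k\to 0$ in $\mathcal M(\mathbb N)_{\ell^{q'}\le 1}$. A merely bounded sequence only gives a map out of $\mathcal M_{<p}(\beta\mathbb N)$. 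The fix, as in the paper, is to write $\|v_n\|=\alpha_n\beta_n$ with two nullsequences $\alpha,\beta$. Then replace $(v_n,\mu_n)$ by $(v_n/\beta_n,\beta_n\mu_n)$, so that both outer data are nullsequences.
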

\begin{proof}
Trace-class maps $\mathcal{M}_{<p}(S)\rightarrow \mathcal{M}_{<p}(T)$ come from classes in
$$(C_0(S;\mathbb{R})\otimes_{\mathbb{R}_{<p}} \mathcal{M}_{<p}(T))(\ast),$$
where here again we use that the derived dual of $\mathcal{M}_{<p}(S)$ lives in degree $0$, given by the Banach space $C_0(S;\mathbb{R})$ of continuous functions $S\rightarrow\mathbb{R}$ vanishing at $\infty$, and that $\mathcal{M}_{<p}(T)$ is flat.  By Lemma \ref{compactnullsequence} from Lecture II, $C_0(S;\mathbb{R})$ admits a surjection from a direct sum of copies of $\mathcal{M}_{<p}(\mathbb{N})$, corresponding to nullsequences in the Banach space $C_0(S;\mathbb{R})$.  On the other hand, we have
$$\mathcal{M}_{<p}(\mathbb{N})\otimes_{\mathbb{R}_{<p}}\mathcal{M}_{<p}(T)=\mathcal{M}_{<p}(\mathbb{N}\times T)$$
by Lemma \ref{locprofflat}, and this is the subset of
$$\prod_\mathbb{N} \mathcal{M}_{<p}(T)$$
consisting of those sequences of $<p$-measures on $T$ which are of the form $$(\lambda_n\mu_n)_{n\in\mathbb{N}}$$
where $(\lambda_n)$ is a sequence of real numbers which is $<p$-summable and the $\mu_n$ are uniformly bounded, i.e.\ there is a $q<p$ and $C>0$ such that $\mu_n \in  \mathcal{M}(T)_{\ell^q\leq C}$ for all $n$.

In total, we deduce that trace-class maps $f:\mathcal{M}_{<p}(S)\rightarrow \mathcal{M}_{<p}(T)$ are exactly those of the following form: take the following data:
\begin{enumerate}
\item A null-sequence $\varphi_1,\varphi_2,\ldots$ in $C_0(S;\mathbb{R})$;
\item A $<p$-summable sequence $\lambda_1,\lambda_2,\ldots$ of real numbers;
\item A uniformly bounded sequence $\mu_1,\mu_2,\ldots$ of $<p$-measures on $T$;
\end{enumerate}
and produce from them the map $f:\mathcal{M}_{<p}(S)\rightarrow \mathcal{M}_{<p}(T)$ given by
$$f\vert_S(s)=\sum_{n=1}^\infty \lambda_n \varphi_n(s) \mu_n.$$
Up to replacing ``uniformly bounded sequence $\mu_1,\mu_2,\ldots$ of $<p$-measures on $T$'' by ``nullsequence $\mu_1,\mu_2,\ldots$ of $<p$-measures on $T$'', this corresponds exactly to writing $f$ as a composite of three maps as in the statement.  So to finish it suffices to show that we can arrange it so that $\mu_1,\mu_2,\ldots$ is a nullsequence.  For this we can write the nullsequence of norms of the $\varphi_1,\varphi_2,\ldots$ as a product of two null-sequences and move one of them over as coefficients on the $\mu_n$. \end{proof}

It follows that any nuclear object in $\mathcal{D}(\operatorname{Liq}_p)$ lies in the stable cocomplete subcategory generated by $\mathcal{M}_{<p}(\mathbb{N})$, and in particular any nuclear object is $\omega_1$-condensed.  However, by Proposition \ref{prop:lightlyprojective}, $\mathcal{M}_{<p}(\mathbb{N})$ is a compact projective object in $\omega_1$-condensed $p$-liquid modules.  It follows that we can run the above arguments inside the smaller stable cocomplete category generated by $\mathcal{M}_{<p}(\mathbb{N})$ to deduce that, in the description of basic nuclears given by Lemma \ref{basicnuclearcomplex}, we can use $\mathcal{M}_{<p}(\mathbb{N})$'s instead of $\mathcal{M}_{<p}(S)$'s as the terms in our complex.

But now that we're in this sequence space context, we have the following further refinement:

\begin{proposition}\label{basicnuclearcomplexinjective}
Every basic nuclear in $\mathcal{D}(\operatorname{Liq}_p)$ is represented by a complex which is a sequential colimit of complexes
$$\varinjlim_n C_\bullet^{(n)}$$
such that:
\begin{enumerate}
\item each $C_k^{(n)}$ is equal to $\mathcal{M}_{<p}(\mathbb{N})$;
\item each map $C_\bullet^{(n)}\rightarrow C_\bullet^{(n+1)}$ is term-wise given by an \emph{injective} trace-class map between $\mathcal{M}_{<p}(\mathbb{N})$'s.
\end{enumerate}
\end{proposition}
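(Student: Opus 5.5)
We start from the representation already available from Lemma~\ref{basicnuclearcomplex}, together with the remark after Proposition~\ref{explicittraceclass}. Every basic nuclear is $\varinjlim_n C_\bullet^{(n)}$, where each $C^{(n)}_k$ is a finite direct sum of copies of $\mathcal M_{<p}(\mathbb N)$ and the maps are termwise trace-class.

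\textbf{Step 1: one copy per term.} Since $\mathcal M_{<p}(\mathbb N)^{\oplus m}\cong\mathcal M_{<p}(\mathbb N\times\{1,\dots,m\})\cong\mathcal M_{<p}(\mathbb N)$, each term is itself a single $\mathcal M_{<p}(\mathbb N)$. This gives condition (1), except that some terms may be $0$. To fill those in, add to the complex the contractible complex $\mathcal M_{<p}(\mathbb N)\xrightarrow{\mathrm{id}}\mathcal M_{<p}(\mathbb N)$ in the relevant degrees, or pad every degree in the finite range. The transition maps on these summands must still be trace-class, so we use multiplication by a fixed $q$-summable nowhere-zero sequence $\lambda$ in both degrees. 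This is a chain map of the contractible pieces and is injective.

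\textbf{Step 2: making the transition maps injective.} The idea is to add, in each term, a trace-class injection alongside the given map and kill its effect in the colimit. Let $f_n: C^{(n)}_\bullet\to C^{(n+1)}_\bullet$ be the termwise trace-class chain map. Replace each $C^{(n)}_\bullet$ by
\[
D^{(n)}_\bullet = C^{(n)}_\bullet\oplus \mathrm{Cone}(\mathrm{id}_{C^{(n)}_\bullet}),
\]
whose terms are again $\mathcal M_{<p}(\mathbb N)$ by Step 1. The new transition map is built from $f_n$ on the first summand and termwise multiplication by $\lambda$ (a chain map on the cones, injective by Lemma~\ref{basictraceclass}). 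The difficulty is coupling the two summands so that the total map is injective. The natural choice is the graph-type map
\[
(x,(y,z))\mapsto \bigl(f_n x,\ (\lambda x+\lambda y,\lambda z)\bigr),
\]
where $\lambda x$ denotes $x$ viewed inside the cone's copy of $C^{(n)}$. This is injective because $\lambda\cdot$ is injective on sequence spaces, and it is trace-class as a sum of trace-class maps. It must also be a chain map, and the colimit must be unchanged. The latter holds because the added cone summands are contractible, so the colimit agrees with $\varinjlim C^{(n)}_\bullet$ up to a colimit of contractible complexes, provided the comparison map to $\varinjlim C^{(n)}$ (projection to the first summand) is a quasi-isomorphism levelwise.

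\textbf{Main obstacle.} Step 2 is where I expect the real work: the coupling term must simultaneously commute with differentials and be compatible with the projection. If the naive graph map fails to be a chain map, the fallback is a mapping-telescope construction. Form
\[
\varinjlim_n \bigl(C^{(0)}\oplus\dots\oplus C^{(n)}\bigr),
\]
with transition maps given by inclusion plus $f_n$ composed with $\lambda$-twisting, and then use Step 1 to collapse the finite sums back to a single $\mathcal M_{<p}(\mathbb N)$. Injectivity follows from the telescope's triangular structure, whose diagonal entries are the injective maps $\cdot\lambda$. The colimit is the homotopy colimit because filtered colimits are exact. The remaining point is trace-classness, which holds since every component map factors through some $\cdot\lambda$ or some $f_n$ (Lemma~\ref{traceclassproperties}).
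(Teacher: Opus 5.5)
Your overall plan is the right one: pad with contractible summands that receive an injective trace-class map, then observe that the colimit is unchanged. However, the step you flag as the main obstacle is a genuine gap, and your explicit formula fails there.

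A chain map into the contractible complex $\mathrm{Cone}(\mathrm{id}_{D_\bullet})$ is not arbitrary. Take $\mathrm{Cone}(\mathrm{id}_{D_\bullet})_k=D_k\oplus D_{k-1}$ with $d(y,z)=(dy+z,-dz)$. Then a map $x\mapsto(\beta x,\alpha x)$ is a chain map exactly when $\alpha=\beta d-d\beta$. Consequently:
\begin{itemize}
\item your coupling $x\mapsto(\lambda x,0)$ is a chain map only if termwise multiplication by $\lambda$ intertwines the differentials of $C^{(n)}_\bullet$ and $C^{(n+1)}_\bullet$;
\item the same holds for your cone-to-cone map $(y,z)\mapsto(\lambda y,\lambda z)$.
\end{itemize}
There is no reason for this intertwining to hold for arbitrary differentials between copies of $\mathcal M_{<p}(\mathbb N)$.

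The telescope fallback does not close the gap, as far as it is specified. Its diagonal $\cdot\lambda$ entries have the same defect. Untwisted inclusion components are identities of $\mathcal M_{<p}(\mathbb N)$, which are not trace-class. Finally, without the telescope differential built in, a colimit of $C^{(0)}\oplus\dots\oplus C^{(n)}$ along such maps computes $\bigoplus_n C^{(n)}$ rather than $\varinjlim_n C^{(n)}$.

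The missing idea is that these contractible complexes are cofree on their terms. For \emph{any} termwise map $g$, not required to commute with differentials, there is a chain map into $\mathrm{Cone}(\mathrm{id}_{D_\bullet})$ with first component $g$, namely $x\mapsto(gx,(gd-dg)x)$. This is exactly what the paper uses, in the form of disks:
\begin{itemize}
\item it adds to the target the acyclic complex $\bigoplus_i[D_i\xrightarrow{\mathrm{id}}D_i][i]$;
\item it maps $C_\bullet$ to the $i$-th summand by $g_i$ in degree $i$ and by the correction term $g_i\circ d$ in degree $i+1$;
\item here $g_i$ is any injective trace-class map, e.g.\ multiplication by a nowhere-zero $<p$-summable sequence.
\end{itemize}
Injectivity comes from the $g_i$. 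Trace-classness comes from the ideal property in Lemma~\ref{traceclassproperties}. The projection onto the old summand is a levelwise quasi-isomorphism compatible with the transition maps.

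Doing this inductively in $n$, applied to the already modified source, automatically handles the previously added acyclic summands. So no separate cone-to-cone map is needed. With this correction in place of $(\lambda x+\lambda y,\lambda z)$, your Step 1 and your colimit comparison go through as you wrote them.
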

\begin{proof}
The new information is that we can arrange the transition maps to be termwise injective.  Working our way along inductively, it suffices to show that if $f:C_\bullet\rightarrow D_\bullet$ is term-wise trace class map of finite complexes with each term equal to $\mathcal{M}_{<p}(\mathbb{N})$,  then by direct summing an acyclic complex to $D_\bullet$ we can arrange that $f$ be term-wise injective, but still trace-class.  For that, choose arbitrarily an injective trace-class map $g_i:C_i\rightarrow D_i$ for all $i$ (e.g.\ take $g_i$ to be termwise multiplication by a $<p$-summable sequence of nonzero real numbers), and replace $D_\bullet$ by
$$D'_\bullet = D_\bullet\oplus \bigoplus_i \left[ D_i\overset{id}{\rightarrow} D_i\right][i]$$
and take the new map $f':C_\bullet\rightarrow D'_\bullet$ to be the old map $f$ on the $D_\bullet$ component, and on the $i$-component to be the map which is $g_i$ in degree $i$ and $g_i\circ d$ in degree $i+1$.
\end{proof}

In particular, every basic nuclear is represented by a complex where each term is a sequential colimit of sequence spaces $\mathcal{M}_{<p}(\mathbb{N})$ under injective trace-class maps.  Next we will identify this class of spaces with the classical \emph{dual nuclear Frechet} spaces.  But first, we start by recalling the definition and some background on these.

The most convenient form of the definition of a DNF (dual nuclear Frechet) space involves Hilbert spaces, which we may as well take to be $\ell_2(\mathbb{N})$.  In the Hilbert space context, there is a somewhat more direct analog of the notion of trace-class map, based on the \emph{singular value decomposition}, which we start with.

A map $f:\ell_2(\mathbb{N})\rightarrow \ell_2(\mathbb{N})$ is said to be a \emph{compact operator} if $f$ factors through the inclusion $\ell_2(\mathbb{N})\subset \mathcal{M}_2(\mathbb{N})$.  If $f$ is a compact operator, then attached to $f$ is a unique non-increasing nullsequence of non-negative real numbers
$$\sigma_1\geq \sigma_2\geq\ldots,$$
called the \emph{singular values} of $f$, such that after orthonormal basis change in both source and target (independently), $f$ is represented by the diagonal matrix with entries the $\sigma_n$.  This is called the singular value decomposition of $f$.

The proof is not difficult: the function $\vert f\vert: \mathcal{M}(\mathbb{N})_{\ell^2\leq 1}\rightarrow\mathbb{R}_{\geq 0}$ attains a maximum value which we call $\sigma_1$, and it follows that there are $e_0,e'_0\in \ell_2(\mathbb{N})$ of norm $1$ such that $f(e_0) = \sigma_1e'_0$.  Then we pass to the orthogonal complement of $e_0$ in the source $\ell_2(\mathbb{N})$ and $e'_0$ in the target $\ell_2(\mathbb{N})$, and continue inductively.

This leads to the following definition, see \cite{schatten2013norm} for this and the unjustified claims below:

\begin{definition}
Let $0<p<\infty$.  A map $f:\ell_2(\mathbb{N})\rightarrow \ell_2(\mathbb{N})$ is said to be \emph{$p$-Schatten} if it is compact, and its singular values satisfy
$$\sum_n \sigma_n^p<\infty.$$
\end{definition}

We have the following properties:

\begin{enumerate}
\item  If $f$ is $p$-Schatten and $g,h$ are arbitrary, then $g\circ f\circ h$ is $p$-Schatten.
\item  If $f$ is $p$-Schatten and $q>p$, then $f$ is $q$-Schatten.
\item  If $f$ is $p$-Schatten and $g$ is $q$-Schatten, then $f\circ g$ is $r$-Schatten where $\frac{1}{r}=\frac{1}{p}+\frac{1}{q}$.  (In particular, the composition of two $p$-Schatten maps is $p/2$-Schatten.)
\end{enumerate}

The first two are simple to prove, while the third is a bit tricky due to the change of basis involved in putting $f$ and $g$ into diagonal form.  But in the end it does reduce to the H\"older inequality just as for compositions of diagonal matrices.

The case of $2$-Schatten maps is somehow nicest from a Hilbert space perspective, and has a separate name: a $2$-Schatten map is also called a Hilbert-Schmidt map.  In terms of infinite matrices, an operator is Hilbert-Schmidt if and only if the total $\ell^2$-norm of the matrix is finite.

Here, then, is the definition of a dual nuclear Frechet space.

\begin{definition}
A condensed abelian group $V$ is called a DNF space if it is a sequential filtered colimit of Hilbert spaces along injective $2$-Schatten transition maps.
\end{definition}

By the above properties, we get an equivalent definition if we replace $2$ by $p$ in this definition, for any $0<p<\infty$.  Now let us make the connection with our previous discussion.  Recall that we showed that every basic nuclear in $\mathcal{D}(\operatorname{Liq}_p)$ can be represented by a complex where each term is a sequential colimit of sequence spaces $\mathcal{M}_{<p}(\mathbb{N})$ under injective trace-class transition maps.

\begin{lemma}
A $p$-liquid $V$ is a DNF space if and only if it is isomorphic to a sequential colimit of sequence spaces  $\mathcal{M}_{<p}(\mathbb{N})$ under injective trace-class transition maps.
\end{lemma}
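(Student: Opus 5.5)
The plan is to show that both classes of sequential ind-systems are pro-/ind-equivalent, so that their colimits agree. The translation between them rests on two facts from Lecture IV. First, Lemma~\ref{basictraceclass}: multiplication by a $q$-summable sequence takes the largest sequence space to $\ell_q$. Second, up to refinement of a tower, every sequence space ($\ell_2$, $\mathcal M_2(\mathbb N)$, $\mathcal M_{<p}(\mathbb N)$, $c_0$, \dots) is interchangeable once transition maps are ``summable''. Since all maps in both classes are injective, colimits are increasing unions, and passing to cofinal subsequences or composites does not change the colimit.

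For the direction from DNF to the liquid description, start from $V=\varinjlim_n H_n$ with $H_n\cong\ell_2(\mathbb N)$ and injective $2$-Schatten transition maps. Composing consecutive maps makes them $q$-Schatten for any $q<p$, by property (3) of Schatten maps. Each such map $f_n$ then has a singular value decomposition: after orthonormal changes of basis $u_n,v_n$ it becomes $\cdot\sigma^{(n)}$ with $\sigma^{(n)}$ $q$-summable. Write $\sigma^{(n)}=\alpha\beta$ with $\alpha,\beta$ both summable, and nonzero wherever needed; if $f_n$ has a kernel, injectivity forces the relevant coordinates to be absent, so keep only the coimage, which is again a copy of $\ell_2$. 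The map then factors as
\[
\ell_2\xrightarrow{\cdot\alpha}\mathcal M_{<p}(\mathbb N)\xrightarrow{\cdot\beta}\ell_2 .
\]
The first arrow lands in $\mathcal M_{<p}$ by Lemma~\ref{basictraceclass}, since $\ell_2\subset\mathcal M_\infty$. The second arrow is continuous because $\mathcal M_q\to\ell_q\subset\ell_2$ after multiplying by a bounded sequence. Interleaving these factorizations, the sequence $H_n\to\mathcal M_{<p}(\mathbb N)\to H_{n+1}$ gives a cofinal system of $\mathcal M_{<p}(\mathbb N)$'s. The maps between consecutive $\mathcal M_{<p}(\mathbb N)$'s are composites through $\ell_2$ that contain a multiplication $\cdot\beta\alpha$ by a summable sequence, which makes them trace-class by Proposition~\ref{explicittraceclass}. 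They are injective because all the factors are.

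For the converse, take $\varinjlim\mathcal M_{<p}(\mathbb N)$ along injective trace-class maps $f_n$. By Proposition~\ref{explicittraceclass}, each $f_n$ factors as
\[
\mathcal M_{<p}(\mathbb N)\to c_0\xrightarrow{\cdot\lambda}\ell_q\to\mathcal M_{<p}(\mathbb N).
\]
Composing two consecutive $f_n$ gives a map
\[
\ell_q\to\mathcal M_{<p}\to c_0\to\ell_q\to\mathcal M_{<p},
\]
and from this I want to extract a map $\ell_2\to\ell_2$ of Schatten type. Concretely, $\ell_q\subset\ell_2$, the inclusion $\mathcal M_{<p}\to\ell_2$ is bounded, and $c_0\xrightarrow{\cdot\lambda}\ell_q\subset\ell_2$ is a diagonal operator with $q$-summable diagonal, hence $q$-Schatten as an operator from $\ell_2$ through $c_0$. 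Composing with bounded maps preserves Schatten class by property (1). So set $H_n=\ell_2$ placed between the $n$th and $(n+1)$st stages, with $H_n\to H_{n+1}$ the composite through $\mathcal M_{<p}$; this is a $q$-Schatten, hence $2$-Schatten, map. The two ind-systems interleave, so their colimits coincide.

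The main obstacle is \emph{injectivity} of the new $\ell_2\to\ell_2$ maps, since $\ell_2\to\mathcal M_{<p}$ is not defined on all of $\ell_2$. I would route the Hilbert stage through $\mathcal M_{<p}\hookrightarrow\ell_2$ (injective) followed by $f_{n+1}$ (injective). If the entry map from $\ell_2$ fails to be injective or defined, I would first pre-multiply by a nowhere-zero summable sequence, which is injective with dense image. A non-injective Schatten map can also be corrected by replacing the source with the orthogonal complement of its kernel, which does not change the colimit because the map from the earlier $\mathcal M_{<p}$-stage already lands in that complement, the composite being injective. Checking these density and kernel compatibilities carefully is where I expect the work to be.
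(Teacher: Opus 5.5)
\emph{Forward direction.} Your direction from DNF spaces to $\mathcal M_{<p}(\mathbb N)$-systems is essentially the paper's argument: singular value decomposition, split the singular values, and route each transition map through a sequence space. There is one slip. The map $\mathcal M_{<p}(\mathbb N)\xrightarrow{\cdot\beta}\ell_2(\mathbb N)$ is continuous because $\beta$ is summable (Lemma~\ref{basictraceclass}). It is not because of a bounded multiplier: a bounded multiplier does not make $\mathcal M_q\to\ell_q$ continuous.

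\emph{Converse direction: a genuine gap.} The ``inclusion'' $\mathcal M_{<p}(\mathbb N)\to\ell_2(\mathbb N)$ that you call bounded, and route the Hilbert stage through, is not a map of condensed sets. On the compact piece $\mathcal M(\mathbb N)_{\ell^q\le 1}$ the Dirac measures $\delta_n$ converge to $0$, while $\lVert\delta_n\rVert_{\ell_2}=1$; this is the phenomenon in the footnote to Lemma~\ref{basictraceclass}. As you noticed, there is also no map $\ell_2\to\mathcal M_{<p}(\mathbb N)$. So you never produce condensed maps $\mathcal M_{<p}(\mathbb N)\to H_n\to\mathcal M_{<p}(\mathbb N)$ composing to $f_n$. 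Consequently the ``interleaving'' does not exist, and ``$H_n\to H_{n+1}$, the composite through $\mathcal M_{<p}$'' is undefined. Your Schatten estimate concerns an operator on $\ell_2$ that is not part of any ind-system computing $V$.

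\emph{The missing idea.} Your instinct to pre-multiply by a summable sequence is close, but the multiplier must be peeled off from $\lambda$ itself, so that the composite is still $f_n$. That is, insert the Hilbert space \emph{inside} the diagonal factor:
\[
c_0(\mathbb N)\xrightarrow{\cdot\lambda^{q/2}}\ell_2(\mathbb N)\xrightarrow{\cdot\lambda^{(2-q)/2}}\ell_q(\mathbb N).
\]
Both arrows are bounded operators: the first because $\lambda^{q/2}$ is $2$-summable, the second by H\"older. The transition maps between consecutive $\ell_2$'s then have the form $\ell_2\to c_0\xrightarrow{\cdot\lambda^{q/2}}\ell_2$. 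Here the first map is given by a uniformly bounded sequence of functionals, so these transitions are Hilbert--Schmidt.

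\emph{Injectivity.} Your repair does not work as stated. Write $f_n=b_na_n$ with $a_n\colon\mathcal M_{<p}(\mathbb N)\to H_n$ and $b_n\colon H_n\to\mathcal M_{<p}(\mathbb N)$. Injectivity of $f_n$ only gives $\operatorname{im}(a_n)\cap\ker(b_n)=0$, not $\operatorname{im}(a_n)\subset\ker(b_n)^\perp$; typically $a_n$ even has dense image. Moreover, cutting one transition map down to the orthogonal complement of its kernel does not make the subsequent composites injective.

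What works is the paper's route: replace $H_n$ by its image in $V$, i.e.\ by $H_n/\ker(H_n\to V)$. This kernel is closed because $V$ is quasi-separated. In your interleaved situation it is simply $\ker(b_n)$, since $\mathcal M_{<p}(\mathbb N)\to V$ is injective. Hence the quotient is again a Hilbert space, identified with $\ker(b_n)^\perp$ via orthogonal projection. The composite of $a_n$ with the quotient map is still a condensed map. The induced transition maps are injective, and they remain Schatten because Schatten maps form an ideal.
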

\begin{proof}
First, assume $V$ is a DNF space, and write it as a union
$$V=\varinjlim_n H_n$$
of Hilbert spaces under injective transition maps, such that the singular values of each transition map can be written as a product
$$\sigma_k = \alpha_k \lambda_k \beta_k,$$
where $(\alpha_k)$ and $(\beta_n)$ are nullsequences and $(\lambda_k)$ is a $p/2$-summable sequence.  Then by choosing suitable orthonormal bases, we can factor each transition map $H_n\rightarrow H_{n+1}$ as
$$H_n\overset{\cdot\alpha}{\rightarrow} c_0(\mathbb{N})\overset{\cdot\lambda}{\rightarrow} \ell_{p/2}(\mathbb{N})\subset \mathcal{M}_{<p}(\mathbb{N}) \overset{\beta}{\rightarrow} H_{n+1},$$
where we use termwise multilication by the two null-sequences on the outside maps and the $p/2$-summable sequence $(\lambda_n)$ in the middle.

Note that all these maps are injective because each $\sigma_n\neq 0$ due to injectivity of our original transition maps.   Then if we consider just the $\mathcal{M}_{<p}(\mathbb{N})$ terms and the induced transition maps between them, then these transition maps are injective, but also trace class by the criterion of Lemma \ref{explicittraceclass}.  And we see that the ind-system of Hilbert spaces is isomorphic to the ind-system of $\mathcal{M}_{<p}(\mathbb{N})$'s, in particular proving that $V$ is a sequential union of $\mathcal{M}_{<p}(\mathbb{N})$'s under injective trace class maps.

Conversely, suppose $V$ is a sequential colimit of $\mathcal{M}_{<p}(\mathbb{N})$'s along injective trace-class transition maps.  Factor the $n^{th}$ transition map as
$$\mathcal{M}_{<p}(\mathbb{N})\rightarrow c_0(\mathbb{N})\overset{\cdot\lambda}{\rightarrow} \ell_q(\mathbb{N})\subset \mathcal{M}_{<p}(\mathbb{N})\rightarrow \mathcal{M}_{<p}(\mathbb{N})$$
according to Lemma \ref{explicittraceclass}, and then further factor $ c_0(\mathbb{N})\overset{\cdot\lambda}{\rightarrow} \ell_q(\mathbb{N})$ as
$$ c_0(\mathbb{N})\overset{\cdot\lambda^{\frac{q}{2}}}{\rightarrow} \ell_2(\mathbb{N})\overset{\cdot\lambda^{\frac{2-q}{2}}}{\rightarrow} \ell_q(\mathbb{N}).$$
(Implicitly, $q<p$ and $\lambda$ depend on $n$, but this is not relevant for the argument.)  Then for the ind-system formed by these $ \ell_2(\mathbb{N})$ spaces, each transition map factors as
$$ \ell_2(\mathbb{N})\rightarrow  c_0(\mathbb{N})\overset{\cdot \alpha}{\rightarrow} \ell_2(\mathbb{N})$$
where the second map is given by termwise multiplication with a $2$-summable sequence $(\alpha_j)$ ($= \lambda^{\frac{q}{2}}$).  Now, the first map corresponds to a sequence of elements of the dual space of $ \ell_2(\mathbb{N})$ which is in particular uniformly bounded.  However, the dual space of $ \ell_2(\mathbb{N})$ identifies (on underlying Banach spaces) with $\ell_2(\mathbb{N})$ again via the inner product.   Thus we get a uniformly bounded sequence of elements $x_i$ in $\ell_2(\mathbb{N})$ such that our transition map is given by the matrix whose $(i,j)$-entry is
$$\alpha_j\cdot \langle \delta_i,x_j\rangle.$$
This matrix is clearly Hilbert-Schmidt, i.e.\ the transition maps are $2$-Schatten.  However, they need not be injective.  Nonetheless, our original $V$ is a filtered union of qs spaces hence and hence is also qs, so we can conclude that $V$ is a DNF space thanks to the following lemma.
\end{proof}

\begin{lemma}
Let $0<p<\infty$.  Suppose $V=\varinjlim_n H_n$ is a sequential colimit of Hilbert spaces with $p$-Schatten transition maps.  If $V$ is qs, then $V$ is also a sequential colimit of Hilbert spaces with injective $p$-Schatten transition maps, meaning $V$ is a DNF space.
\end{lemma}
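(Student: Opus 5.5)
The plan is to replace each $H_n$ by its quotient by the kernel of $H_n\to V$, and to check that the resulting quotients are again Hilbert spaces with injective $p$-Schatten transition maps. Write $f_n: H_n\to H_{n+1}$ for the transition maps and let $K_n=\ker(H_n\to V)$. Since filtered colimits in $\operatorname{CondAb}$ are exact and evaluation at $\ast$ commutes with filtered colimits, on underlying vector spaces $K_n(\ast)=\bigcup_m \ker(H_n\to H_m)$, an increasing union of closed subspaces. The key point is that $K_n$ should be a \emph{closed} subspace of $H_n$.

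\textbf{Step 1: closedness of $K_n$ from quasiseparatedness.} The kernel $K_n$ is the fibre product $H_n\times_V \{0\}$. As $V$ is qs, its diagonal is quasicompact, so for the closed unit ball $B\subset H_n$ (compact in the weak topology, which is the natural qcqs structure for the $\ell^2$-ball as in Lecture IV), the subset $B\cap K_n$ is quasicompact. Hence it is closed in $B$ by the lemma on qc subobjects of profinite/compact Hausdorff sets from Lecture II. Weakly closed in the ball gives norm closed, so $K_n(\ast)$ is a closed subspace.

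To make this work one must use the compact-ball picture of $H_n$. The maps $f_n$ are $p$-Schatten, hence compact, so each factors through the qs space $\mathcal M_2(\mathbb N)$. Replacing the $H_n$ by the pro-isomorphic (really ind-isomorphic) system alternating with $\mathcal M_2(\mathbb N)$ does not change $V$. I would run the argument there, where the unit balls are honestly compact Hausdorff.

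\textbf{Step 2: the kernel union stabilizes up to closure.} A countable increasing union of closed subspaces $\ker(H_n\to H_m)$ that is itself closed must, by Baire, equal some $\ker(H_n\to H_m)$ for a single $m$. This step is not strictly needed, but it shows $K_n=\ker(H_n\to H_m)$, which is useful.

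\textbf{Step 3: pass to quotients.} Set $H_n'=H_n/K_n\cong K_n^\perp$, a Hilbert space. The maps $f_n$ induce injective maps $H_n'\to H_{n+1}'$, because $f_n^{-1}(K_{n+1})=K_n$. Each induced map is the composite $K_n^\perp\subset H_n\xrightarrow{f_n}H_{n+1}\to K_{n+1}^\perp$ (orthogonal projection), so it is $p$-Schatten by property (1) of Schatten maps. Finally, $\varinjlim H_n'=V$: the map $\varinjlim H_n\to\varinjlim H_n'$ is surjective, and its kernel $\varinjlim K_n$ vanishes, since each $K_n$ maps to zero in $V$ and filtered colimits are exact.

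The main obstacle is Step 1, namely identifying the condensed kernel with a closed subspace carrying the subspace condensed structure. I would first try working with the $\mathcal M_2(\mathbb N)$-interleaved system, where quasicompactness of $B\cap K_n$ gives weak closedness directly, and then transfer closedness back to $H_n$ through the compact operator.
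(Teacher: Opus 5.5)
Your strategy is the paper's. You pass to $H_n/K_n$ with $K_n=\ker(H_n\to V)$, use quasiseparatedness of $V$ to see that $K_n$ is a closed subspace, identify the quotient with the Hilbert space $K_n^\perp$, and note that the induced (now injective) transition maps are $f_n$ composed with bounded maps, hence still $p$-Schatten. Steps 2 and 3 are fine, and Step 2 is indeed unnecessary.

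The one misstep is the first formulation of Step 1. The Hilbert space $H_n$ carries its norm topology, i.e.\ it is the Banach-type object $\ell_2(\mathbb N)$, not $\mathcal M_2(\mathbb N)$. Its closed unit ball is not compact in that topology, so it is not a quasicompact subobject of $\underline{H_n}$, and you cannot conclude that $B\cap K_n$ is quasicompact.

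Your repair by interleaving with $\mathcal M_2(\mathbb N)$ does work. Take $L_n=\ker(\mathcal M_2(\mathbb N)\to V)$, which is closed on the compact balls. Then $K_n=\iota^{-1}(L_n)$ for the norm-to-weak continuous inclusion $\iota:H_n\to\mathcal M_2(\mathbb N)$, so the transfer is just a preimage under $\iota$ and does not go through the compact operator.

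Still, this is a detour, and the paper argues directly. Since $V$ is qs, $\{0\}\to V$ and hence $K_n\to H_n$ are quasicompact maps. So for every norm-compact $T\subset H_n$, the fibre product $T\times_{H_n}K_n$ is a qc subobject of $T$, hence a closed subset. As $H_n$ is metrizable and thus compactly generated, $K_n(\ast)$ is closed. The same argument (a qc subobject of $T$ containing all points of $T$ is all of $T$) shows that the condensed structure on $K_n$ is the subspace structure. That is exactly what you need for $H_n/K_n\cong K_n^\perp$.
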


\begin{proof}
Let $H'_n\subset V$ denote the image of $H_n$ in $V$.  Then $H_n'$ is also a Hilbert space!  Indeed, the inclusion $\operatorname{ker}(H_n\rightarrow V)\subset H_n$ is a quasi-compact map since $V$ is qs; thus, since Hilbert spaces are compactly generated, this kernel corresponds to a closed subspace of the Hilbert space $H_n$.  But every closed subspace of a Hilbert space is a Hilbert space, and moreover the inclusion has a complement given by the orthogonal subspace.  It follows that $H'_n$, which is the quotient of $H_n$ by this kernel, is also a Hilbert space, since it identifies with this orthogonal complement.  Moreover, the inclusion $H'_n\subset H'_{n+1}$ then factors through $H_n\rightarrow H_{n+1}\rightarrow H'_{n+1}$, so it is also $p$-Schatten.
\end{proof}

By similar arguments we prove the following permanence properties of DNF spaces, due to Grothendieck.

\begin{lemma}\label{DNFlemmas}\leavevmode
\begin{enumerate}
\item Let $V$ be a DNF space, and $W$ an arbitrary qs condensed $\mathbb{R}$-module.  For any homomorphism $f:V\rightarrow W$, we have that $\operatorname{ker}(f)$ and $\operatorname{im}(f)$ are DNF spaces.
\item Any extension of DNF spaces is a DNF space.
\end{enumerate}
\end{lemma}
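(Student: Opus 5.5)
Both parts will be reduced to the lemma just proved: a qs sequential colimit of Hilbert spaces along $p$-Schatten maps is DNF. The point is that DNF spaces are also the sequential colimits of Hilbert spaces along $p$-Schatten maps whose transition maps need not be injective, provided the colimit is quasiseparated. So in each case I only need to write the relevant object as a qs colimit of Hilbert spaces along Schatten maps, and forget about injectivity.

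For (1), write $V=\varinjlim_n H_n$ with injective $2$-Schatten transitions. The image $\operatorname{im}(f)$ is a subobject of the qs object $W$, hence qs. It is also the colimit of the images $f(H_n)\subset W$. Exactly as in the proof of the previous lemma, $\ker(H_n\to W)$ is a quasicompact subobject of $H_n$, since $W$ is qs. It is therefore a closed subspace, and $f(H_n)\cong H_n/\ker \cong \ker^\perp$ is again a Hilbert space. The induced maps $f(H_n)\to f(H_{n+1})$ are injective. Each factors as $f(H_n)\cong\ker^\perp\subset H_n\to H_{n+1}\to f(H_{n+1})$, so it is $2$-Schatten by property (1) of Schatten maps, and $\operatorname{im}(f)$ is DNF. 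For $\ker(f)$, set $K_n=\ker(f)\cap H_n=\ker(f|_{H_n})$. This is again closed in $H_n$, because $0\subset W$ is qc and $W$ is qs, so $K_n$ is a Hilbert space. The transitions $K_n\to K_{n+1}$ are restrictions of $H_n\to H_{n+1}$ composed with the orthogonal projection onto $K_{n+1}$, hence injective and $2$-Schatten. Since filtered colimits are exact, $\ker(f)=\varinjlim K_n$.

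For (2), let $0\to V'\to V\to V''\to0$ be an extension with $V',V''$ DNF. First I check that $V$ is qs. Extensions of qs condensed modules are qs: the diagonal question reduces to $V'\subset V$ being a quasicompact immersion, which holds because $V'$ is the fiber over $0\in V''$ and $0\subset V''$ is quasicompact. Next I build compatible Hilbert pieces. The plan is to use that $\mathcal M_{<p}(\mathbb N)$ is projective among $\omega_1$-condensed liquid modules (Proposition~\ref{prop:lightlyprojective}). Write $V''=\varinjlim \mathcal M_{<p}(\mathbb N)$ along injective trace-class maps, using the previous lemma, and lift each term to $V$. Then $V$ is the colimit of $V'_n\oplus\mathcal M_{<p}(\mathbb N)$ with suitably modified transition maps. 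The new transition maps are upper triangular, with diagonal entries trace-class and an off-diagonal term $\mathcal M_{<p}(\mathbb N)\to V'$. That off-diagonal map lands in some $V'_m$, because $\mathcal M_{<p}(\mathbb N)$ is compact in the relevant sense; this relies on the compactness of a qc image in a sequential union of Hilbert spaces. After reindexing, the composite of two consecutive transitions is trace-class, which suffices as in the cone argument for basic nuclears. Finally, via the factorization in the previous lemma, pass to Hilbert spaces and apply the qs lemma.

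The main obstacle will be (2). It needs care in showing that the off-diagonal components factor through a finite stage. It also needs the transition maps to stay Schatten after reindexing, which means checking that a triangular map with Schatten diagonal entries and bounded off-diagonal entries has Schatten square.
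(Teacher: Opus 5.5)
Your proposal is correct and takes essentially the paper's route. Part (1) is the adaptation of the preceding kernel/orthogonal-complement argument, with closedness of kernels in the Hilbert spaces coming from quasiseparatedness of $W$; the paper leaves exactly this to the reader. Part (2) is the paper's argument: split the extension over each $\mathcal M_{<p}(\mathbb N)$ using projectivity among $\omega_1$-condensed liquid modules, then absorb the off-diagonal parts of the resulting block upper-triangular transition maps into a finite stage of the subobject. Your separate quasiseparatedness check and your use of two-step composites are harmless variants; the paper instead notes that every map from $\mathcal M_{<p}(\mathbb N)$ into a DNF space is already trace-class into some finite stage. The ``obstacle'' you flag at the end is routine, by the ideal property of trace-class and Schatten maps.
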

\begin{proof}
The proof of (1) is very similar to the argument in the previous lemma; we skip it.  For (2), write $V=\varinjlim_n V_n$ as a sequential colimit of sequence spaces $\mathcal{M}_{<p}(\mathbb{N})$ with injective trace-class transition maps.  Any extension $0\rightarrow W\rightarrow \widetilde{V}\rightarrow V\rightarrow 0$ is split over each $\mathcal{M}_{<p}(\mathbb{N})$, because these $\mathcal{M}_{<p}(\mathbb{N})$ are compact projective in $\omega_1$-condensed $p$-liquid spaces.  Thus, choosing such a splitting for each $n$ with no mind for compatibility, we can write
$$\widetilde{V} = \varinjlim_n (V_n\oplus W),$$
where the transition map
$$V_n\oplus W\rightarrow V_{n+1}\oplus W$$
is given by block upper triangular matrix: the map $V_n\rightarrow V_{n+1}$ is the original transition map, the map $W\rightarrow W$ is the identity, the map $W\rightarrow V_{n+1}$ is $0$, and the map $V_n\rightarrow W$ is just some arbitrary map.  However, any map $V_n\rightarrow W$ is trace class and moreover factors through a trace class map to some $W_{\varphi(n)}$ with respect to an analogous presentation of $W$ by $\mathcal{M}_{<p}(\mathbb{N})$'s.  We can assume $\varphi:\mathbb{N}\rightarrow\mathbb{N}$ to be an increasing function, and then we can write
$$\widetilde{V} = \varinjlim_n (V_n\oplus W_{\varphi(n)}),$$
where now this is an expression which proves that $\widetilde{V}$ has the same form as $V$ and $W$, meaning it's DNF.
\end{proof}

Now we can finally give the desired characterization of basic nuclear liquid modules.

\begin{theorem}\label{basicnuclearcharacterization}
Let $X \in \mathcal{D}(\operatorname{Liq}_p)$.  The following are equivalent:
\begin{enumerate}
\item $X$ is basic nuclear, i.e.\ $X$ is a sequential colimit along trace-class maps;
\item $X$ can be represented by a complex of DNF spaces;
\item Each homology group $H_n(X)$ is isomorphic to a quotient of DNF spaces;
\item $X$ lies in the smallest countably cocomplete stable subcategory generated by the DNF spaces.
\end{enumerate}
\end{theorem}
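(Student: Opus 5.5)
I would prove the cycle (1) $\Rightarrow$ (2) $\Rightarrow$ (3) $\Rightarrow$ (4) $\Rightarrow$ (1), using almost entirely the results already established in this lecture.

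For (1) $\Rightarrow$ (2), Proposition~\ref{basicnuclearcomplexinjective} represents $X$ as $\varinjlim_n C_\bullet^{(n)}$. Here each term is $\mathcal{M}_{<p}(\mathbb{N})$, and the transition maps are termwise injective and trace-class. Since filtered colimits are exact, $X$ is represented by the termwise colimit complex. By the preceding lemma, its terms $\varinjlim_n C_k^{(n)}$ are exactly DNF spaces.

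For (2) $\Rightarrow$ (3), let $X$ be represented by a complex $D_\bullet$ of DNF spaces. Then $H_n(X)=\ker(d)/\operatorname{im}(d)$. By Lemma~\ref{DNFlemmas}(1), both $\ker(d)$ and $\operatorname{im}(d)$ are DNF, since DNF spaces are quasiseparated. One caveat: we must first check that $D_\bullet$ can be taken bounded, or at least that the argument does not need boundedness for (3).

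For (3) $\Rightarrow$ (4), each homology group $H_n(X)$ is a cone of a map of DNF spaces, so it lies in the countably cocomplete stable subcategory generated by DNF spaces. Then $X$ is recovered from its homology through its Postnikov tower. This tower is the countable colimit of the truncations $\tau_{\geq -m}$, and each truncation $\tau_{\geq -m}$ is itself a sequential limit in the other direction. To avoid the limits, I would instead write $X=\varinjlim_m \tau_{\leq m} X$ in homological indexing. The intent is to express $X$ as a colimit of bounded pieces, where each bounded piece is a finite extension of shifted homology groups.

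Unbounded complexes need care. I would use the fact that $\mathcal{D}(\operatorname{Liq}_p)$ is left-complete with exact countable products, but colimits suffice in one direction only. So I would first observe that a basic nuclear object has homology bounded... which is not true in general. This point is the main subtlety, and I would resolve it by combining the colimit $\varinjlim_m\tau_{\geq -m}$ (homologically) with the fact that each piece $\tau_{\leq m}$ can be obtained as a colimit of its stupid truncations once we know a DNF-term representative, so that the class is closed under the needed operations.

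For (4) $\Rightarrow$ (1), it suffices to note two things. First, DNF spaces are basic nuclear: they are sequential colimits of $\mathcal{M}_{<p}(\mathbb{N})$ along trace-class maps, by the lemma identifying DNF spaces. Second, the basic nuclear objects form a stable subcategory closed under countable colimits, which was shown in the proof of the theorem on $\operatorname{Nuc}$. The main obstacle I expect is the passage from homology to the object in (3) $\Rightarrow$ (4) for unbounded $X$. There I would try to write $X$ as the sequential colimit of the objects $\tau_{\geq -m}\tau_{\leq m}X$, each of which is finitely built from the groups $H_n(X)$.
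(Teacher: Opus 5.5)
You follow the paper for (1)$\Rightarrow$(2), (2)$\Rightarrow$(3) and (4)$\Rightarrow$(1). Your boundedness worry in (2)$\Rightarrow$(3) is moot, since $H_n$ of any complex is computed degreewise as $\ker(d)/\operatorname{im}(d)$.

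The gap is in (3)$\Rightarrow$(4). Your reduction $X=\varinjlim_m\tau_{\geq -m}X$ is correct. Each $H_n(X)=\operatorname{cone}(W'\to W)$, and stable subcategories are closed under extensions. Together these settle every $X$ whose homology is bounded above. The problem is a connective $X$ with infinitely many nonzero $H_n(X)$. There the maps go $X\to\tau_{\leq m+1}X\to\tau_{\leq m}X$, so $X=\varprojlim_m\tau_{\leq m}X$ is a limit. Neither $\varinjlim_m\tau_{\leq m}X$ nor $\varinjlim_m\tau_{\geq -m}\tau_{\leq m}X$ is a diagram you can form, because there are no transition maps in that direction. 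Your fallback, stupid truncations of a representative with DNF terms, presupposes (2). It therefore only proves (2)$\Rightarrow$(4), and leaves (3) implying nothing. Yet (3)$\Rightarrow$(1) is the substantive content: it gives the corollary that basic nuclearity can be tested on homology. No rearrangement of truncations can repair this. In spectra, which share every formal property you use, all Eilenberg--MacLane spectra are $K(n)$-acyclic for $n\geq 1$. Hence the sphere spectrum $\mathbb S$ does not lie in the smallest localizing subcategory containing all $H\pi_k(\mathbb S)[k]$.

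The missing idea is a cellular resolution by DNF spaces, which rests on a non-formal lifting property.
\begin{enumerate}
\item After shifting, take $X$ connective. Use (3) to choose a DNF space $V$ with a surjection $V\to H_0X$ whose kernel is DNF.
\item $V$ is a sequential colimit of copies of $\mathcal M_{<p}(\mathbb N)$ along injective maps, so it has a two-term resolution by direct sums of these. They are compact projective among $\omega_1$-condensed $p$-liquid modules (Proposition~\ref{prop:lightlyprojective}). Hence $\operatorname{Ext}^i(V,-)=0$ for $i\geq 2$ against the $\omega_1$-condensed quotients of DNF spaces that occur here.
\item Consequently $V\to H_0X$ lifts to a map $V\to X$. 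Its cone is $1$-connective, and its $H_1$ is an extension of the DNF kernel by $H_1(X)$.
\item The same $\operatorname{Ext}^2$-vanishing lifts such an extension along $W\to W/W'$ to an extension of DNF spaces. That extension is DNF by Lemma~\ref{DNFlemmas}(2), so the cone again satisfies (3).
\end{enumerate}
Iterating writes $X$ as a sequential colimit of finite iterated cones of DNF spaces, which is (4).
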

\begin{proof}
We have already seen that (1) $\Rightarrow$ (2).  The claim (2) $\Rightarrow$ (3) follows from the first part of the previous lemma.  For the claim (3) $\Rightarrow$ (4), by considering canonical truncations and shifting we can assume $X$ is concentrated in homologically non-negative degrees.  It suffices to show that there's a DNF space $V$ with a map $V\rightarrow X$ surjective on $H_0$, and that the cone of this map also satisfies the same condition (3).  For then we can continue inductively and get resolution of $X$ by DNF spaces, proving (4).  However, by assumption we can find a DNF $V$ with a surjective map $V\rightarrow H_0X$ whose kernel is a DNF.  This map lifts to $V\rightarrow X$ because the obstructions to this lifting all live in $\operatorname{Ext}^i(V;-)$ for $i\geq 2$ which vanishes because $V$ is a sequential colimit of compact projectives.  To see that the cone also satisfies (3), we need to show that an extension of a DNF space $V$ by a quotient $W/W'$ of DNF spaces is still a quotient of DNF spaces.  However, the same $\operatorname{Ext}^2$-vanishing used above shows that any extension of $V$ by $W/W'$ lifts to an extension of $V$ by $W$.  Such an extension $\widetilde{V}$ is a DNF space by the previous lemma, and then our original extension is $\widetilde{V}/W'$, a quotient of DNF spaces as desired.  Finally, for (4) $\Rightarrow$ (1), we have already seen that DNF spaces are basic nuclear and that the full subcategory of basic nuclears is stable and closed under countable colimits.
\end{proof}

We have a series of remarkable corollaries; the proofs are immediate.

\begin{corollary}
An object $X \in \mathcal{D}(\operatorname{Liq}_p)$ is basic nuclear if and only if each homology group $H_n(X)[0]$ is basic nuclear.  Likewise, an object $X \in \mathcal{D}(\operatorname{Liq}_p)$ is nuclear if and only if each homology group $H_n(X)[0]$ is nuclear.
\end{corollary}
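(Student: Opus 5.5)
The basic nuclear half follows almost directly from Theorem~\ref{basicnuclearcharacterization}, and the nuclear half from the $\aleph_1$-compact generation of $\operatorname{Nuc}$ together with a colimit argument.

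\emph{Basic nuclear case.} If $X$ is basic nuclear, then by (1)$\Rightarrow$(3) each $H_n(X)$ is a quotient of DNF spaces. Applying (3)$\Rightarrow$(1) to the object $H_n(X)[0]$, whose only nonzero homology is that quotient, shows $H_n(X)[0]$ is basic nuclear. Conversely, if each $H_n(X)[0]$ is basic nuclear, then by (1)$\Rightarrow$(3) each $H_n(X)$ is a quotient of DNF spaces, and (3)$\Rightarrow$(1) for $X$ gives that $X$ is basic nuclear. The only care needed is that condition (3) refers solely to the homology groups, so it cannot tell $X$ apart from $\bigoplus_n H_n(X)[n]$.

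\emph{Nuclear case, $\Leftarrow$.} First reduce bounded complexes to the stated condition. If all $H_n(X)[0]$ are nuclear, then each truncation $\tau_{\geq a}\tau_{\leq b}X$ is a finite extension of shifts of the $H_n(X)[0]$. It is therefore nuclear, because $\operatorname{Nuc}$ is stable. Next, $\tau_{\geq a}X=\varinjlim_b \tau_{\geq a}\tau_{\leq b}X$, so it is nuclear since $\operatorname{Nuc}$ is closed under colimits. For the other direction in $a$, I would use that $X=\varinjlim_a \tau_{\geq -a}X$ is a colimit in homological indexing, since Postnikov towers converge in $\mathcal{D}(\operatorname{Liq}_p)$ (an abelian category with exact products and coproducts). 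Which truncations are limits and which are colimits needs to be checked against the convention. If one direction is a limit rather than a colimit, I would instead use the definition of nuclearity directly. For compact $c$, which is a finite complex of compact projectives, $\operatorname{Hom}(c,-)$ only sees a bounded range of truncations of $X$. The hard point is that $(c^\vee\otimes -)(\ast)$ must also see only a bounded range. This holds because $c^\vee$ is a bounded complex of the flat Banach spaces $C(S)$, so tensoring with it has bounded Tor-amplitude.

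\emph{Nuclear case, $\Rightarrow$.} Let $X$ be nuclear. Then $X=\varinjlim_i X_i$ is an $\aleph_1$-filtered colimit of basic nuclear objects. Filtered colimits are exact in $\operatorname{Liq}_p$, so $H_n(X)=\varinjlim_i H_n(X_i)$. By the basic nuclear case, each $H_n(X_i)[0]$ is basic nuclear, and $H_n(X)[0]=\varinjlim_i H_n(X_i)[0]$ is a colimit of nuclear objects, hence nuclear.

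The main obstacle is the boundedness argument in the converse direction: controlling unbounded $X$ via the bounded Tor-amplitude of $c^\vee$ and the bounded $\operatorname{Ext}$-amplitude of compact $c$.
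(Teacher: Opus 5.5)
You handle the basic nuclear case exactly as the paper intends: it is the ``immediate'' deduction from Theorem~\ref{basicnuclearcharacterization}, (1)$\Leftrightarrow$(3). Your proof that a nuclear $X$ has nuclear homology is also fine: you write $X$ as an $\aleph_1$-filtered colimit of basic nuclear objects and use exactness of filtered colimits. The problem is the converse in the nuclear case.

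In homological indexing the Postnikov maps go $\tau_{\le b+1}X\to\tau_{\le b}X$. So $\tau_{\ge a}X=\varprojlim_b \tau_{\ge a}\tau_{\le b}X$ is an inverse limit, not a colimit. Only $\tau_{\le b}X=\varinjlim_a\tau_{\ge a}\tau_{\le b}X$ and $X=\varinjlim_a\tau_{\ge -a}X$ are colimits. Since $\operatorname{Nuc}$ is not closed under limits, your main route does not reach objects with homology in arbitrarily high homological degrees.

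Your fallback via the definition of nuclearity is the right repair, but its justification is not available. The paper never proves that Banach spaces such as $C(S)$ are flat for $\otimes_{\mathbb R_{<p}}$. It only uses flatness of $\mathcal M_{<p}(S)$ and objects built from them, and in Proposition~\ref{prop:internallynuclear} it takes Tor bounds from the nuclear side instead.

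You do not need flatness or bounded Tor-amplitude at all; you only need the limit direction.
\begin{itemize}
\item Products in $\operatorname{Liq}_p$ are exact, so $\mathcal D(\operatorname{Liq}_p)$ is left complete and $X=\varprojlim_b\tau_{\le b}X$. The functor $\operatorname{Hom}(c,-)$ commutes with this limit.
\item For $(c^\vee\otimes -)(\ast)$, the fibre of the comparison map $(c^\vee\otimes X)(\ast)\to\varprojlim_b (c^\vee\otimes\tau_{\le b}X)(\ast)$ is $\varprojlim_b (c^\vee\otimes\tau_{\ge b+1}X)(\ast)$.
\item Here $c^\vee$ is a finite complex of duals of compact projectives, each sitting in degree $0$, so $c^\vee\in\mathcal D_{\ge -N}$ for some $N$. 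Derived tensor products of connective objects are connective, and $(-)(\ast)$ is $t$-exact. Hence the $b$-th term is $(b+1-N)$-connective.
\item The limit of such a tower vanishes by the Milnor sequence.
\end{itemize}
So nuclearity of every $\tau_{\le b}X$, which your colimit over $a$ does give, implies nuclearity of $X$. The only input is the automatic right $t$-exactness of $c^\vee\otimes-$.
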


\begin{corollary}
The collection of basic nuclears living in degree $0$ is an abelian subcategory of $\operatorname{Liq}_p$, closed under extensions and countable colimits.  It consists exactly of the quotients of DNF spaces.

The collection of nuclear objects living in degree $0$ is likewise an abelian subcategory of $\operatorname{Liq}_p$, closed under extensions and arbitrary colimits.  It consists exactly of the ($\aleph_1$-filtered) colimits of quotients of DNF spaces.
\end{corollary}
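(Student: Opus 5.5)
The plan is to deduce both halves from Theorem~\ref{basicnuclearcharacterization}, the preceding corollary, and the permanence properties of DNF spaces in Lemma~\ref{DNFlemmas}.

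\textbf{Basic nuclear objects in degree $0$.} Let $\mathcal A\subset\operatorname{Liq}_p$ be the full subcategory of objects $V$ with $V[0]$ basic nuclear. By (1)$\Leftrightarrow$(3) of Theorem~\ref{basicnuclearcharacterization}, $\mathcal A$ consists exactly of the quotients of DNF spaces; this gives the description of $\mathcal A$.

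\begin{itemize}
\item \emph{Cokernels, extensions and countable colimits.} Basic nuclears form a stable subcategory of $\mathcal D(\operatorname{Liq}_p)$ closed under countable colimits, as shown in the proof of the theorem on $\operatorname{Nuc}$.
\begin{itemize}
\item For a map $f\colon V\to W$ in $\mathcal A$, the cone of $f$ is basic nuclear. By the preceding corollary, each homology group is then basic nuclear, so $\operatorname{coker} f$ and $\ker f$ lie in $\mathcal A$.
\item An extension $0\to V\to E\to W\to 0$ in $\operatorname{Liq}_p$ gives a triangle, so $E[0]$ is basic nuclear.
\item A countable colimit in $\operatorname{Liq}_p$ is $H_0$ of the derived colimit. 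The derived colimit is basic nuclear, so by the corollary its $H_0$ is as well.
\end{itemize}
\item \emph{Abelian subcategory.} Since $\mathcal A$ is closed under kernels and cokernels computed in $\operatorname{Liq}_p$, it is an abelian subcategory.
\end{itemize}

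\textbf{Nuclear objects in degree $0$.} Closure under kernels, cokernels and extensions is the same argument, now using that $\operatorname{Nuc}$ is stable and the corollary's statement for nuclear objects. Closure under arbitrary colimits works the same way, because $\operatorname{Nuc}$ is closed under all colimits.

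\textbf{Description of nuclear objects in degree $0$.} This is the step needing care. Every nuclear $X$ is an $\aleph_1$-filtered colimit of basic nuclears $X_i$, since $\operatorname{Nuc}$ is $\aleph_1$-compactly generated with $\aleph_1$-compact objects the basic nuclears. If $X$ sits in degree $0$, then filtered colimits are exact in $\operatorname{Liq}_p$, so
\[
X = H_0(X) = \varinjlim_i H_0(X_i).
\]
Each $H_0(X_i)$ is basic nuclear by the corollary, hence a quotient of a DNF space. Conversely, such colimits are nuclear by the colimit closure above. Filteredness is preserved when passing from $X_i$ to $H_0(X_i)$, so no reindexing is needed.
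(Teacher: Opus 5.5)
Your proposal is correct and follows the same route as the paper. The paper simply declares these corollaries immediate from Theorem~\ref{basicnuclearcharacterization}, the stability and (countable) colimit closure of the (basic) nuclear objects, and the $\aleph_1$-compact generation of $\operatorname{Nuc}$; you have written out the routine details, namely identifying colimits in $\operatorname{Liq}_p$ with $H_0$ of derived colimits and using that $H_0$ commutes with filtered colimits.
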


\begin{corollary}
The collections of nuclear and basic nuclear modules in $\mathcal{D}(\operatorname{Liq}_p)$ are independent of $p$, and also independent of which uncountable strong limit cut-off cardinal one chooses in the foundations for condensed sets.  The derived liquid tensor product is likewise independent of $p$ on the class of nuclear modules.
\end{corollary}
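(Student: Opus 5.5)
The plan is to reduce everything to the description of basic nuclear objects in Theorem~\ref{basicnuclearcharacterization}, which involves only DNF spaces, and then check that DNF spaces and the relevant operations on them do not see $p$ or the cutoff cardinal. First I will show that the class of DNF spaces is the same whatever $p$ we use. By definition a DNF space is a sequential colimit of Hilbert spaces along injective $2$-Schatten maps, and this condition does not mention $p$ at all. Hilbert spaces are $1$-Banach, hence $p$-liquid for every $p\le 1$ by Lemma~\ref{compactnullsequence} and Theorem~\ref{thm:qspliquid}. Each $\mathrm{Liquid}_p$ is closed under colimits in $\mathrm{CondAb}$ and $D(\mathrm{Liquid}_p)\subset D(\mathrm{CondAb})$ is fully faithful, so a DNF space gives the same object of $D(\mathrm{CondAb})$ for every $p$. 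By condition (4) of Theorem~\ref{basicnuclearcharacterization}, the basic nuclear objects are then the smallest countably cocomplete stable subcategory of $D(\mathrm{CondAb})$ generated by this fixed class, and that subcategory is independent of $p$.

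For nuclear objects I will use the earlier theorem that $\mathrm{Nuc}$ is generated under ($\aleph_1$-filtered) colimits by basic nuclear objects. It follows that $\mathrm{Nuc}_p$ is the closure under colimits in $D(\mathrm{CondAb})$ of the $p$-independent class of basic nuclears, again computed inside $D(\mathrm{CondAb})$ because all inclusions preserve colimits. The same reasoning handles the cutoff cardinal. DNF spaces are $\omega_1$-condensed, since they are built from metrizable compacta. The pullback functors between the various $\operatorname{CondSet}_\kappa$ are fully faithful and preserve colimits, so both the generating class and its colimit closure are identified across cardinals.

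For the tensor product, note that $-\otimes_{\mathbb R_{<p}}-$ commutes with colimits in each variable. It therefore suffices to compare the tensor product of two DNF spaces for different $p$, or more simply of two basic nuclears. Write $X=\varinjlim x_n$ and $Y=\varinjlim y_n$ with $x_n,y_n$ equal to $\mathcal M_{<p}(\mathbb N)$ and injective trace-class transitions (Proposition~\ref{basicnuclearcomplexinjective}). Then $X\otimes Y=\varinjlim \mathcal M_{<p}(\mathbb N\times\mathbb N)$ along tensor products of the transition maps, which are again trace-class. By Lemma~\ref{basictraceclass} and the pro-isomorphism remarks of Lecture IV, these ind-systems are ind-isomorphic to ind-systems of Hilbert spaces $\ell_2(\mathbb N\times\mathbb N)$ along multiplication by summable sequences, which do not depend on $p$.

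The main obstacle is making this last step precise for $p'<p$. I need to check that the comparison map $\mathcal M_{<p'}(S)\otimes_{\mathbb R_{<p'}}(-)\to \mathcal M_{<p}(S)\otimes_{\mathbb R_{<p}}(-)$, that is, the liquidification from $p'$-liquid to $p$-liquid, is an isomorphism on these sequential colimits. I would try to argue that the $p$-liquidification of a $p'$-liquid basic nuclear object is itself, because it is already $p$-liquid, and that liquidification is symmetric monoidal. This would reduce the comparison to the agreement of the two ind-systems of Hilbert spaces described above.
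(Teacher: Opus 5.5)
Your treatment of the first two assertions is the paper's ``immediate'' argument. By Theorem~\ref{basicnuclearcharacterization}(4), the basic nuclear objects are the closure under countable colimits, inside $\mathcal D(\operatorname{CondAb})$, of the $p$-independent class of DNF spaces. The nuclear objects are the colimit closure of the basic nuclear ones. The same bookkeeping with the fully faithful, colimit-preserving change-of-cardinal functors handles the cutoff.

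The tensor-product part has a gap. It is left unfinished, and the step you say it ``reduces to'' does not work as stated.

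\begin{itemize}
\item Lemma~\ref{basictraceclass} and the pro-isomorphism remarks of Lecture~IV concern towers of \emph{diagonal} multiplication maps on a fixed sequence space. The transition maps of a DNF presentation are trace-class but not simultaneously diagonal. So nothing you cite makes $\varinjlim_n\mathcal M_{<p}(\mathbb N\times\mathbb N)$ ind-isomorphic to a system of $\ell_2(\mathbb N\times\mathbb N)$'s along multiplications by summable sequences.
\item You also cannot tensor factorizations through $\ell_2(\mathbb N)$, because $\ell_2(\mathbb N)\otimes_{\mathbb R_{<p}}\ell_2(\mathbb N)\to\ell_2(\mathbb N\times\mathbb N)$ is not an isomorphism. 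Otherwise the bounded bilinear form $(x,y)\mapsto\sum_i x_iy_i$ would extend to a continuous trace on Hilbert--Schmidt matrices.
\item The comparison you wrote is between the wrong objects. What is needed is the map $X\otimes_{\mathbb R_{<p'}}Y\to X\otimes_{\mathbb R_{<p}}Y$ for fixed nuclear $X,Y$. Comparing tensoring with $\mathcal M_{<p'}(S)$ against tensoring with $\mathcal M_{<p}(S)$ mixes two different objects.
\end{itemize}

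The idea in your last paragraph already finishes the proof, with no Hilbert-space comparison at all. You need one ingredient you did not invoke: nuclear objects are closed under the tensor product. This is in the Lecture~VIII theorem, since basic nuclears visibly are.

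For $p'<p$ one has $\operatorname{Liq}_p\subset\operatorname{Liq}_{p'}$. Comparing right adjoints gives $L_p\cong L_p\circ L_{p'}$ on $\mathcal D(\operatorname{CondAb})$. Hence for $X,Y\in\mathcal D(\operatorname{Liq}_{p'})$,
\[
L_p\bigl(X\otimes_{\mathbb R_{<p'}}Y\bigr)\cong L_p\bigl(X\otimes^{\mathbb L}Y\bigr)\cong L_pX\otimes_{\mathbb R_{<p}}L_pY .
\]
Now let $X,Y$ be nuclear. Part one gives $L_pX=X$ and $L_pY=Y$. Moreover $X\otimes_{\mathbb R_{<p'}}Y$ is nuclear for $p'$, hence by part one nuclear for $p$, in particular $p$-liquid, so $L_p$ fixes it. Therefore $X\otimes_{\mathbb R_{<p'}}Y\cong X\otimes_{\mathbb R_{<p}}Y$.
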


We finish by discussing a technical issue which will arise in the following lecture.  Recall that the definition of nuclear was that
$$(c^\vee\otimes x)(\ast)\rightarrow \operatorname{Hom}(c,x)$$
should be an isomorphism for all compact objects $c$.  One could ask whether the a priori stronger condition
$$c^\vee\otimes x \overset{\sim}{\rightarrow} x^c$$
then automatically holds.  In the abstract set-up we see no reason why this should be true.  But now we can nonetheless prove it for liquid modules.  In fact we have the more general:

\begin{proposition}\label{prop:internallynuclear}
Let $V$ be a nuclear object in $\mathcal{D}(\operatorname{Liq}_p)$, and let $S$ be a compact Hausdorff space.  Then
$$\underline{\operatorname{RHom}}(\mathbb{Z}[S],\mathbb{R})\otimes^L_{<p} V\overset{\sim}{\rightarrow} \underline{\operatorname{RHom}}(\mathbb{Z}[S], V).$$

\end{proposition}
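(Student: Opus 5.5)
Both sides commute with all colimits in $V$, so the plan is to reduce to $V$ a single sequence space and then check the claim by a direct computation. Two facts make this work. First, $\underline{\operatorname{RHom}}(\mathbb Z[S],\mathbb R)$ is a fixed object; it is the Banach space $C(S;\mathbb R)$ placed in degree $0$, by the computation from \cite{Condensed} already used in Lemma~\ref{basicnuclearcomplex}. So the left-hand side commutes with all colimits in $V$. Second, both sides are exact in $V$. The whole issue is therefore that $\underline{\operatorname{RHom}}(\mathbb Z[S],-)$ need not commute with filtered colimits, since $\mathbb Z[S]$ is not compact when $S$ is not extremally disconnected. The nuclearity of $V$ has to be used exactly to control this.

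First I reduce to $V$ basic nuclear. Every nuclear object is an $\aleph_1$-filtered colimit of basic nuclears. By the results above, everything lives in the subcategory generated by $\mathcal M_{<p}(\mathbb N)$, so it is $\omega_1$-condensed. For metrizable $S$, I expect $\underline{\operatorname{RHom}}(\mathbb Z[S],-)$ to commute with $\aleph_1$-filtered colimits of $\omega_1$-condensed objects. To handle general $S$, I write $\mathbb Z[S]$ via a hypercover by profinite sets and then write each profinite set as a limit of metrizable quotients. I would argue directly instead: the value on an extremally disconnected $T$ is $R\Gamma(S\times T, V)$. For a compact Hausdorff space $K$ and an $\omega_1$-condensed $V$ that is an $\aleph_1$-filtered colimit, $R\Gamma(K,V)$ should be computable as a colimit over metrizable quotients of $K$, where the colimit can be pulled out. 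This step is somewhat delicate. If it fails, I would instead use that both sides send $\aleph_1$-filtered colimits of basic nuclears to colimits on $\ast$-points and check the claim after evaluating on each $T$.

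Next I handle $V$ basic nuclear. By Proposition~\ref{basicnuclearcomplexinjective} (finite complexes, exactness), I may assume $V=\varinjlim_n \mathcal M_{<p}(\mathbb N)$ along trace-class maps $f_n$. By Lemma~\ref{traceclassproperties}(3), applied with $c=\mathbb Z[S]$ (the lemma holds for any object $c$, not just compact ones), each $f_n$ gives a diagonal map
\[
\underline{\operatorname{RHom}}(\mathbb Z[S],\mathcal M_{<p}(\mathbb N))\to C(S;\mathbb R)\otimes \mathcal M_{<p}(\mathbb N),
\]
compatible with the comparison maps. Hence the ind-systems $\{C(S)\otimes \mathcal M_{<p}(\mathbb N)\}_n$ and $\{\underline{\operatorname{RHom}}(\mathbb Z[S],\mathcal M_{<p}(\mathbb N))\}_n$ are ind-isomorphic. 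The left-hand side of the statement is the colimit of the first system. It remains to show that the colimit of the second system computes $\underline{\operatorname{RHom}}(\mathbb Z[S],V)$.

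This commutation is the main obstacle. I would attack it by evaluating at an extremally disconnected $T$ and using Proposition~\ref{prop:lightlyprojective} together with Lemma~\ref{lem:restrictionsurjectivelight}, which give good cohomological control against $\omega_1$-condensed coefficients. More concretely, I would resolve $\mathbb Z[S]$ by $\mathbb Z[S_\bullet]$ for a hypercover of $S$ by profinite sets. By Proposition~\ref{prop:lightlyprojective}, the terms $\underline{\operatorname{RHom}}(\mathbb Z[S_k],\mathcal M_{<p}(\mathbb N))$ for metrizable $S_k$ have no higher Ext, so their sections are explicit: continuous maps into $\mathcal M_{<p}(\mathbb N)$ with values in bounded sets. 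The sequential colimit can then be exchanged with these explicit sections, because the images of compacts land in a single stage. I would also rewrite $\mathcal M_{<p}(\mathbb N)$ via the pro/ind-isomorphism with Banach sequence spaces from Lecture IV: the ind-system is equivalent to one of $c_0(\mathbb N)$'s along multiplication by summable sequences. With that replacement, $\underline{\operatorname{RHom}}(\mathbb Z[S],c_0)=C(S,c_0)$ is again a Banach space concentrated in degree $0$, and it equals $C(S)\hat\otimes c_0$. This should give the identification directly, and it may make the whole argument cleaner than working with measure spaces throughout.
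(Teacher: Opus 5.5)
Your outline is the paper's. You reduce to basic nuclear $V$, then to a sequential colimit $V=\varinjlim_n x_n$ of copies of $\mathcal M_{<p}(\mathbb N)$ along trace-class maps. You use Lemma~\ref{traceclassproperties}(3) with $c=\mathbb Z[S]$ (valid for non-compact $c$, as you say) to make the ind-systems $\{C(S;\mathbb R)\otimes x_n\}$ and $\{\underline{\operatorname{RHom}}(\mathbb Z[S],x_n)\}$ isomorphic. Finally you commute $\underline{\operatorname{RHom}}(\mathbb Z[S],-)$ with the colimit.

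The gap is that both non-formal steps are left without a working argument. These are the first reduction, which you call delicate, and the last step, which you call the main obstacle. The one idea that settles both is missing. Choose a hypercover $S_\bullet\to S$ by \emph{extremally disconnected} profinite sets. Then $\mathbb Z[S]$ is resolved by $\cdots\to\mathbb Z[S_1]\to\mathbb Z[S_0]$, with one compact projective in each degree. The same holds for $\mathbb Z[S]\otimes\mathbb Z[T]=\mathbb Z[S\times T]$ for every extremally disconnected $T$, since $S\times T$ is again compact Hausdorff. Consequently:
\begin{enumerate}
\item[(a)] $\mathbb Z[S\times T]$ is a countable colimit of compact objects, hence $\aleph_1$-compact. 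So $\underline{\operatorname{RHom}}(\mathbb Z[S],-)$ commutes with $\aleph_1$-filtered colimits of arbitrary objects, and no $\omega_1$-condensedness or metrizable quotients are needed.
\item[(b)] $\mathbb Z[S\times T]$ is pseudocoherent. So $\underline{\operatorname{RHom}}(\mathbb Z[S],-)$ commutes with filtered colimits of objects in a fixed $\mathcal D_{\le 0}$, because each homological degree of the answer involves only finitely many terms of the resolution. This applies to $\varinjlim_n x_n$, since the $x_n$ sit in degree $0$.
\end{enumerate}
No vanishing of higher internal $\mathrm{Ext}$ is needed. Your substitute for (b) does not work as stated:
\begin{itemize}
\item A non-metrizable $S$ has no hypercover by metrizable profinite sets.
\item For hypercovers by general profinite sets, the internal $\mathrm{Ext}$ groups need not vanish (see the Warning after Proposition~\ref{prop:tensorproductcompactprojective}).
\item Proposition~\ref{prop:lightlyprojective} only concerns $\mathbb N\cup\{\infty\}$, not arbitrary metrizable $S_k$.
\end{itemize}
The Banach rewriting does not help either. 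It computes the terms but not the colimit. Moreover, levelwise $C(S,c_0)$ (the injective tensor product) is not $C(S;\mathbb R)\otimes_{<p}c_0$, as noted in Lecture IV for $C(S)\otimes_{<p}C(T)$. Only the colimits agree, which is exactly what the intertwining already gives.

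There is also a smaller gap: reducing from basic nuclear $V$ to a single sequential colimit is not just ``finite complexes and exactness''. The complexes $C^{(n)}_\bullet$ in Proposition~\ref{basicnuclearcomplexinjective} are finite, but their lengths need not be bounded in $n$. A basic nuclear object can have infinitely many nonzero homology groups, e.g.\ $\bigoplus_{k\ge 0}\mathbb R[k]$. You need the truncation step of the paper:
\begin{enumerate}
\item Write $V=\varinjlim_n\tau_{\ge -n}V$. The right-hand side respects this, because $\underline{\operatorname{RHom}}(\mathbb Z[S],-)$ preserves $\mathcal D_{\le m}$.
\item For $V$ bounded below, write $V=\varprojlim_k\tau_{\le k}V$. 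The right-hand side respects this because it commutes with limits. The left-hand side respects it because $C(S;\mathbb R)$ is connective, so $C(S;\mathbb R)\otimes^L_{<p}\tau_{>k}V$ lies in homological degrees $>k$. (The paper instead invokes bounded Tor-dimension of quotients of DNF spaces.)
\end{enumerate}
This reduces you to a single homology group, which is a quotient of DNF spaces, and hence to a DNF space. Finally, your opening claim that both sides commute with all colimits in $V$ is false for the right-hand side, as your next sentences acknowledge.
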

\begin{proof}
First note that $\underline{\operatorname{RHom}}(\mathbb{Z}[S],\mathbb{R})=C(S;\mathbb{R})$ is concentrated in degree $0$, by \cite[Lecture III]{Condensed}.  Now, the functor $\underline{\operatorname{RHom}}(\mathbb{Z}[S], -)$ commutes with $\aleph_1$-filtered colimits, because $\mathbb{Z}[S]$ is a countable colimit of compact projective $\mathbb{Z}[S_i]$'s via taking a hypercover of $S$ by extremally disconnected profinite sets.  Thus we reduce to proving the claim for basic nuclear $V$.  Taking colimits of canonical truncations, we can also assume $V$ is homologically concentrated in nonnegative degrees.  Now we claim that on both sides, we can pull out the limit of canonical truncations of $V$.  On the right this is clear as $\underline{\operatorname{RHom}}(\mathbb{Z}[S], -)$ commutes with all limits.  On the left, this is because each homology group of $V$ is a quotient of DNF's, hence a quotient of flat objects, hence has uniformly bounded tor-dimension, whereas we have already seen that $\underline{\operatorname{RHom}}(\mathbb{Z}[S],\mathbb{R})$ lives in degree $0$.  Thus we can reduce to the case of $V$ a quotient of DNF's, which reduces us to the case of $V$ a DNF.  To handle that case, by the intertwining argument and Lemma \ref{traceclassproperties} part 3 it suffices to show that $\underline{\operatorname{RHom}}(\mathbb{Z}[S], -)$ commutes with the sequential colimit presenting the DNF $V$. But as $\mathbb Z[S]$ and in fact all $\mathbb Z[S\times S']$ for profinite $S'$ are pseudocoherent, $\underline{\operatorname{RHom}}(\mathbb{Z}[S], -)$ commutes with filtered colimits of coconnective objects.
\end{proof}

\begin{corollary}\label{cor:relativenuclear} Let $A$ be a commutative algebra in $\mathcal D(\mathrm{Liq}_p)$ such that the underlying object of $A$ is nuclear in $D(\mathrm{Liq}_p)$. Then an object $M\in C=\mathrm{Mod}_A(\mathcal D(\mathrm{Liq}_p))$ is nuclear (in the sense of $C$) if and only if the underlying object of $\mathcal D(\mathrm{Liq}_p)$ is nuclear (in the sense of $\mathcal D(\mathrm{Liq}_p)$).
\end{corollary}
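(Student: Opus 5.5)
We compare the two notions of compact object and the two duals. In $C=\mathrm{Mod}_A(\mathcal D(\mathrm{Liq}_p))$ the unit is $A$, which is compact, and $C$ is compactly generated by the free modules $A\otimes c$ for $c$ compact in $\mathcal D(\mathrm{Liq}_p)$. In fact one may take $c=\mathbb Z[S]^{\mathrm{liq}}=\mathcal M_{<p}(S)$, which generate under shifts and finite colimits. Nuclearity of $M$ in $C$ only needs to be tested on a set of compact generators closed under shifts, since both sides of the defining map are exact in $c$ and compatible with retracts. So it suffices to compare, for $c'=A\otimes c$, the map
$$(\underline{\mathrm{RHom}}_A(A\otimes c,A)\otimes_A M)(\ast)\to \mathrm{Hom}_A(A\otimes c,M)=\mathrm{Hom}(c,M).$$

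Next we identify the left side. We have $\underline{\mathrm{RHom}}_A(A\otimes c,A)=\underline{\mathrm{RHom}}(c,A)$. The key input is Proposition~\ref{prop:internallynuclear}, applied to the nuclear object $A$ with $c=\mathcal M_{<p}(S)$: it gives $\underline{\mathrm{RHom}}(c,A)\cong c^\vee\otimes A$, using $\underline{\mathrm{RHom}}(\mathcal M_{<p}(S),-)=\underline{\mathrm{RHom}}(\mathbb Z[S],-)$ on liquid objects. This is exactly the point where nuclearity of $A$ enters. Hence
$$\underline{\mathrm{RHom}}_A(A\otimes c,A)\otimes_A M\cong c^\vee\otimes A\otimes_A M=c^\vee\otimes M,$$
so the map above becomes $(c^\vee\otimes M)(\ast)\to\mathrm{Hom}(c,M)$. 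One should check that this identification is compatible with the natural maps; that check is a diagram chase using the unit of the adjunction.

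For the direction "underlying nuclear $\Rightarrow$ nuclear in $C$", this identification says directly that $(c^\vee\otimes M)(\ast)\to\mathrm{Hom}(c,M)$ is an isomorphism for all generating $c$. For the converse, the compact objects of $\mathcal D(\mathrm{Liq}_p)$ are built from the $\mathcal M_{<p}(S)$ by finite colimits, shifts and retracts. The same identification then shows that nuclearity of $M$ in $C$, tested on $A\otimes c$, yields nuclearity of the underlying object tested on all compact $c$. So both directions follow from the single isomorphism, valid for every $c$ in the generating set.

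The main obstacle is the reduction to generators. Neither $(c^\vee\otimes -)(\ast)$ nor $\mathrm{Hom}(c,-)$ is obviously compatible with arbitrary compact $c$ in $C$, because compact objects of $C$ are only retracts of finite colimits of free modules. I would handle this by noting that the class of $c$ for which the comparison map is an isomorphism is closed under cofibers, shifts and retracts, since both functors are exact in $c$. The other delicate point is making sure Proposition~\ref{prop:internallynuclear} applies, with $\mathcal M_{<p}(S)$ in place of $\mathbb Z[S]$, via liquidification.
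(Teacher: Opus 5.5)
Your proposal is correct and is essentially the paper's argument. Like you, the paper tests nuclearity in $C$ on the free modules $A\otimes\mathbb Z[S]^{\mathrm{liq}}$ and applies Proposition~\ref{prop:internallynuclear} to the nuclear object $A$ to rewrite $\underline{\operatorname{RHom}}(\mathbb Z[S],A)\otimes_A M$ as $\underline{\operatorname{RHom}}(\mathbb Z[S],\mathbb C)\otimes M$, so that the two nuclearity conditions coincide; the paper leaves implicit your reduction to generators, via exactness in $c$ and closure under cofibers, shifts and retracts, and that is the right way to fill in that step.
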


\begin{proof} For $M\in \mathrm{Mod}_A(\mathcal D(\mathrm{Liq}_p))$, nuclearity over $A$ means that for all extremally disconnected $S$, the map
\[
(\underline{\operatorname{RHom}}(\mathbb Z[S],A)\otimes^L_{<p,A} M)(*)\to M(S)
\]
is an isomorphism. But the left-hand side agrees with $(\underline{\operatorname{RHom}}(\mathbb Z[S],\mathbb C)\otimes^L_{<p} M)(*)$, leading to the definition of nuclearity of $M$ in $\mathcal D(\mathrm{Liq}_p)$.
\end{proof}
\textbf{Exercise 1.}  Recall that the category $\operatorname{Solid}_\mathbb{Z}$ of solid abelian groups has compact projective generators of the form $\prod_I\mathbb{Z}$, with
$$\operatorname{Hom}(\prod_I\mathbb{Z},\prod_J\mathbb{Z})=\prod_J\bigoplus_I\mathbb{Z},$$
and with a tensor product such that
$$\prod_I\mathbb{Z} \otimes \prod_J\mathbb{Z}=\prod_{I\times J}\mathbb{Z}.$$
Show that an object in $\mathcal{D}(\operatorname{Solid}_\mathbb{Z})$ is nuclear if and only if it is discrete, i.e.\ lies in the cocomplete stable full subcategory generated by the unit $\mathbb{Z}$.\\

\textbf{Exercise 2.}  Show that the definition of compactness of an operator $f:\ell_2(\mathbb{N})\rightarrow \ell_2(\mathbb{N})$ given in the lecture, namely $f$ is compact if $f$ factors through $\mathcal{M}_2(\mathbb{N})$, is equivalent to the usual definition: $f$ is compact if there is an open neighborhood $U\subset \ell_2(\mathbb{N})$ of the origin and a compact subset $K\subset  \ell_2(\mathbb{N})$ such that $f(U)\subset K$.\newpage

\section{Lecture IX: Coherent Sheaves, I}

The goal of this lecture and the next is to develop the theory of coherent sheaves on complex-analytic spaces, by isolating them inside the very large (derived) category of ``liquid quasicoherent sheaves''. In this lecture, our goal is to isolate the pseudocoherent complexes of (abstract) modules inside the category of all liquid modules, and show that this category localizes on $\mathbb C^n$. In the next lecture, we will go from the level of complexes to the abelian level.

Let us start with the general nonsense. Take any closed symmetric monoidal compactly generated stable $\infty$-category $C$, such that $1\in C$ is compact. If $A=\mathrm{End}(1)$ is the endomorphism algebra of $1$ (a ``commutative algebra in $\infty$-category land'', known as an $E_\infty$-algebra), then one gets a full embedding
\[
\mathcal D(A)\hookrightarrow C
\]
of the category of $A$-modules in spectra into $C$. If $A$ happens to be concentrated in degree $0$, then $\mathcal D(A)$ is just the usual derived $\infty$-category of (abstract) $A$-modules. For example, if $C=\mathcal D(\mathrm{Liq}_p)$, one gets a full embedding
\[
\mathcal D(\mathbb R)\hookrightarrow \mathcal D(\mathrm{Liq}_p)
\]
from the derived $\infty$-category of (abstract) $\mathbb R$-vector spaces into that of $p$-liquid $\mathbb R$-vector spaces.

\begin{definition}\label{def:discreteobject} An object $V\in C$ is discrete if it lies in the essential iamge of $\mathcal D(A)\hookrightarrow C$; equivalently, if it lies in the subcategory generated under shift and colimits by $1\in C$.
\end{definition}

As the subcategory of discrete objects is just the category of abstract modules over $A$, it is very easy to understand. In particular, if one intersects it with the class of compact objects, one just gets perfect complexes of $A$-modules:

\begin{proposition}\label{prop:discretecompact} An object $V\in C$ is compact and discrete if and only if it lies in the subcategory generated under retracts, shifts, and finite colimits by $1\in C$, i.e.~corresponds to a perfect complex of $A$-modules.
\end{proposition}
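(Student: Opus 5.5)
The plan is to reduce everything to the fully faithful embedding $\iota:\mathcal D(A)\hookrightarrow C$, which sends $A$ to $1$. The key observation is that $\iota$ preserves all colimits and has a right adjoint $V\mapsto \operatorname{Hom}(1,V)$ (the mapping spectrum, viewed as an $A$-module). This right adjoint also commutes with filtered colimits, because $1$ is compact in $C$.

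For the easy direction, I note that $1$ is compact and that compact objects are closed under retracts, shifts and finite colimits. Hence everything in the thick subcategory generated by $1$ is compact. It is also discrete, since it lies in the image of the thick subcategory generated by $A$, i.e.~of the perfect complexes $\mathrm{Perf}(A)\subset\mathcal D(A)$.

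For the converse, let $V=\iota(M)$ be compact in $C$. I first argue that $M$ is compact in $\mathcal D(A)$. Given a filtered system $N_i$ in $\mathcal D(A)$, full faithfulness and colimit preservation of $\iota$ give
\[
\operatorname{Hom}_{\mathcal D(A)}(M,\varinjlim N_i)=\operatorname{Hom}_C(\iota M,\varinjlim \iota N_i)=\varinjlim \operatorname{Hom}_C(\iota M,\iota N_i)=\varinjlim\operatorname{Hom}_{\mathcal D(A)}(M,N_i),
\]
where the middle equality uses compactness of $V$ in $C$. It then remains to invoke the standard fact that the compact objects of $\mathcal D(A)=\mathrm{Mod}_A(\mathrm{Sp})$ are exactly the perfect modules, namely the thick subcategory generated by $A$. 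This holds because $A$ is a compact generator, so $\mathcal D(A)\simeq\operatorname{Ind}(\mathrm{Perf}(A))$ and compact objects of an Ind-category are retracts of objects of the generating small subcategory. Applying $\iota$ then places $V$ in the thick subcategory generated by $1$.

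The only step needing any care is the identification of the essential image of $\iota$ with the localizing subcategory generated by $1$, together with the fully faithfulness of $\iota$. Both are standard consequences of compactness of $1$: the functor $\operatorname{Hom}(1,-)$ is colimit-preserving and conservative on the localizing subcategory generated by $1$, so the unit and counit are equivalences there. I expect no genuine obstacle beyond citing this (Schwede--Shipley/Lurie-type) recognition statement, so the proof should be essentially formal.
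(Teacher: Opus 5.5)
Your proposal is correct and follows the paper's proof essentially verbatim. The easy direction comes from compactness of $1$ and closure of compact objects under retracts, shifts and finite colimits. For the converse you transfer compactness from $C$ to $\mathcal D(A)$ using that $\mathcal D(A)\hookrightarrow C$ is fully faithful and colimit-preserving, and then use that the compact objects of $\mathcal D(A)$ are the perfect complexes. Your extra remarks on the full faithfulness of $\iota$ and on $\mathcal D(A)\simeq\operatorname{Ind}(\mathrm{Perf}(A))$ just spell out facts the paper takes as given.
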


\begin{proof} Recall that $\mathcal D(A)$ is compactly generated, with compact objects the perfect complexes of $A$-modules. As compact objects of $C$ are stable under retracts, shifts, and finite colimits, all perfect complexes of $A$-modules give compact (and discrete) objects of $C$. Conversely, if $V\in C$ is discrete and compact, then it is also compact as an object of $\mathcal D(A)$ (as $\mathcal D(A)\subset C$ is stable under all colimits), and hence corresponds to a perfect complex of $A$-modules.
\end{proof}

On the other hand, it is a very fragile category -- many natural operations leave the category of discrete objects. For this reason, the more general class of nuclear objects was introduced in the last lecture. It turns out that the previous proposition has a natural analogue when replacing discrete by nuclear, in which case it singles out the important class of dualizable objects of $C$. Recall that $V\in C$ is dualizable if there exists some $V'\in C$ together with maps $\alpha: 1\to V'\otimes V$, $\beta: V\otimes V'\to 1$ such that the composites
\[
V\xrightarrow{1\otimes \alpha} V\otimes V'\otimes V\xrightarrow{\beta\otimes 1} V
\]
and
\[
V'\xrightarrow{\alpha\otimes 1} V'\otimes V\otimes V'\xrightarrow{1\otimes \beta} V'
\]
are the identity. In that case, for any $W$, one has a canonical isomorphism $W^V=V'\otimes W$; in particular $V'=V^\vee$ is the dual of $V$ (making $\beta$ the evaluation map).

\begin{proposition}\label{prop:nuclearcompact} An object $V\in C$ is compact and nuclear if and only if it is dualizable.
\end{proposition}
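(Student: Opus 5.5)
The plan is to prove both directions directly, using the definition of nuclearity (via compact objects $c$) and the fact that the unit $1$ is compact.

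\emph{Dualizable $\Rightarrow$ compact and nuclear.} Suppose $V$ is dualizable with dual $V^\vee$. For any $W$ we have $W^V\cong V^\vee\otimes W$. Hence
\[
\operatorname{Hom}(V,-)=\operatorname{Hom}(1,V^\vee\otimes -).
\]
This commutes with filtered colimits because $1$ is compact and $V^\vee\otimes-$ commutes with colimits, so $V$ is compact.

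For nuclearity, let $c$ be compact. We must show $(c^\vee\otimes V)(\ast)\to\operatorname{Hom}(c,V)$ is an isomorphism. Since $V$ is dualizable, $c^\vee\otimes V\cong (V^\vee)^\vee\otimes c^\vee$, and internal homs turn dualizable objects into tensors:
\[
c^\vee\otimes V = \underline{\operatorname{Hom}}(c,1)\otimes V\cong \underline{\operatorname{Hom}}(c\otimes V^\vee,1)\cong\underline{\operatorname{Hom}}(c,V).
\]
Here the last step uses $\underline{\operatorname{Hom}}(V^\vee,1)=V$ and adjunction. One checks that this composite is the natural map. Applying $(-)(\ast)$ then gives the claim, in fact even internally.

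\emph{Compact and nuclear $\Rightarrow$ dualizable.} Apply the nuclearity condition with $c=V$ itself, which is allowed because $V$ is compact. The map
\[
(V^\vee\otimes V)(\ast)\to\operatorname{Hom}(V,V)
\]
is then surjective on $\pi_0$, so $\mathrm{id}_V$ is trace-class. Choose a lift $\alpha:1\to V^\vee\otimes V$ of $\mathrm{id}_V$, and let $\beta:V\otimes V^\vee\to 1$ be evaluation.

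The first triangle identity $V\to V\otimes V^\vee\otimes V\to V$ equals the image of $\alpha$ under the map to $\operatorname{Hom}(V,V)$, hence is $\mathrm{id}_V$ by construction. The main obstacle is the second identity
\[
V^\vee\xrightarrow{\alpha\otimes1}V^\vee\otimes V\otimes V^\vee\xrightarrow{1\otimes\beta}V^\vee .
\]
I would avoid checking it directly and instead use Lemma~\ref{traceclassproperties}(3) with $f=\mathrm{id}_V$. For every $W$ this shows that $V^\vee\otimes W\to W^V$ admits a section which is natural in $W$, and which is built from $\alpha$ as $W^V\to V^\vee\otimes V\otimes W^V\to V^\vee\otimes W$. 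Combined with the first identity, this makes $V^\vee\otimes W\to W^V$ an isomorphism for all $W$: the composite $W^V\to V^\vee\otimes W\to W^V$ is the identity by the first triangle identity, and the other composite is checked by the same diagram chase.

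Taking $W=1$ and then $W=V^\vee$ identifies the second triangle composite with the identity of $V^\vee$, by Yoneda. Alternatively, one can invoke the standard fact that if $V^\vee\otimes W\to \underline{\operatorname{Hom}}(V,W)$ is an isomorphism for all $W$, then $V$ is dualizable. The delicate point is making sure these homotopies are coherent enough, but only the homotopy category is needed for dualizability, so a $\pi_0$-level diagram chase suffices.
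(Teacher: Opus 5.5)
Your converse direction is the paper's argument. Compactness of $V$ lets you apply nuclearity with $c=V$, you lift $\mathrm{id}_V$ to $\alpha\in(V^\vee\otimes V)(\ast)$, and the first triangle identity holds by construction. Your forward direction differs from the paper's.
\begin{itemize}
\item The paper notes that $\mathrm{id}_V$ is trace-class, so $V=\varinjlim(V\xrightarrow{\mathrm{id}}V\xrightarrow{\mathrm{id}}\cdots)$ is basic nuclear. It then uses the Lecture VIII theorem that basic nuclear objects are nuclear.
\item You compute $c^\vee\otimes V\cong\underline{\operatorname{Hom}}(c,V)$ directly from dualizability.
\end{itemize}
Your route is more elementary and gives the isomorphism internally for every $c$, compact or not. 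The paper's route reuses machinery already in place and yields basic nuclearity.

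\textbf{The gap: your treatment of the second triangle identity is circular as written.} Write $\mathrm{can}_W\colon V^\vee\otimes W\to W^V$, and let $s_W\colon W^V\to V^\vee\otimes W$ be your map built from $\alpha$. Let $\phi=(\mathrm{id}_{V^\vee}\otimes\beta)\circ(\alpha\otimes\mathrm{id}_{V^\vee})$ be the second triangle composite.
\begin{itemize}
\item You correctly get $\mathrm{can}_W\circ s_W=\mathrm{id}$ from the first identity.
\item However, naturality of $\alpha\otimes-$ together with $\mathrm{ev}\circ(\mathrm{id}_V\otimes\mathrm{can}_W)=\beta\otimes\mathrm{id}_W$ gives $s_W\circ\mathrm{can}_W=\phi\otimes\mathrm{id}_W$.
\item So ``the other composite is checked by the same diagram chase'' is exactly the identity you are trying to prove.
\item Your Yoneda step and the ``standard fact'' alternative both rest on that isomorphism for all $W$, so neither closes the loop.
\item Separately, Lemma~\ref{traceclassproperties}(3) with $f=\mathrm{id}_V$ concerns $c^\vee\otimes V\to V^c$, with $V$ in the covariant slot. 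It therefore does not produce $s_W$; your explicit formula does, and needs no lemma.
\end{itemize}

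\textbf{The repair uses only the half you did prove.} At $W=1$, the map $\mathrm{can}_1$ is the unit isomorphism $V^\vee\otimes 1\cong 1^V=V^\vee$, because $\beta$ is the universal evaluation. Also $s_1=\phi$, so $\mathrm{can}_1\circ s_1=\mathrm{id}$ forces $\phi=\mathrm{id}_{V^\vee}$. Equivalently, let $\psi$ be the first triangle composite. By interchange, $\beta\circ(\mathrm{id}_V\otimes\phi)=\beta\circ(\psi\otimes\mathrm{id}_{V^\vee})=\beta$, and the universal property of $\beta$ gives $\phi=\mathrm{id}_{V^\vee}$. This is the ``simple exercise'' the paper leaves to the reader. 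As you note, it is a homotopy-category statement, so no coherence issues arise.
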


\begin{proof} Assume first that $V$ is dualizable. Then $\mathrm{Hom}(V,-)=(V'\otimes -)(*)$ commutes with all colimits, so $V$ is compact. Also, $\mathrm{Hom}(V,V)=(V'\otimes V)(*)$, and in particular any map $f: V\to V$ is trace-class, for example the identity. Then $V$ is the sequential colimit of $V$, with transition maps the identity, which is trace-class; so $V$ is basic nuclear.

Conversely, assume $V$ is compact and nuclear. Let $V'=V^\vee$, with $\beta$ the evaluation map. We want to produce a coevaluation map $\alpha: 1\to V^\vee\otimes V$ in order to show that $V$ is dualizable. Such an $\alpha$ is an element of $(V^\vee\otimes V)(*)$. As $V$ is compact and $V$ is nuclear, this agrees with $\mathrm{Hom}(V,V)$. But here we have the identity of $V$, giving the desired $\alpha$. It is a simple exercise to check that the diagrams commute.
\end{proof}

We note that this gives a way to construct dualizable objects.

\begin{proposition}\label{prop:coneoffredholmdualizable} Let $V\in C$ be compact and let $f: V\to V$ be a trace-class map. Then the cone $[V\xrightarrow{1-f} V]$ of the endomorphism $1-f$ on $V$ is a dualizable object of $C$. Conversely, any dualizable object of $C$ is a retract of an object of this form.
\end{proposition}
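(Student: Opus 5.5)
By Proposition~\ref{prop:nuclearcompact}, the statement reduces to showing that the cone $W=[V\xrightarrow{1-f}V]$ is both compact and nuclear. Compactness is immediate, since compact objects are closed under finite colimits. For nuclearity I would show that $W$ is basic nuclear. The idea is that $1-f$ is "invertible up to trace-class", via the formal geometric series $\sum_n f^n$. Concretely, I want to write $W$ as a sequential colimit along trace-class maps, and I would use the endomorphism of $W$ induced by $f$.

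The map $f$ commutes with $1-f$, so it induces an endomorphism $\bar f$ of $W$, namely $f$ acting on both terms of the cone. I claim $\bar f\simeq \mathrm{id}_W$. To see this, note that $1-f$ acts on $W$ by the endomorphism $(1-f,1-f)$ of the cone. This endomorphism is nullhomotopic: the identity of $V$, viewed as a map from the target copy of $V$ to the source copy, supplies the homotopy. Equivalently, multiplication by $1-f$ kills the cofiber of multiplication by $1-f$ up to homotopy. Hence $\bar f\simeq \mathrm{id}_W$.

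Next I check that $\bar f$ is trace-class. Write $W$ as a cofiber $V\to V\to W$. Then $\bar f$ factors as $W\to \Sigma V\xrightarrow{\Sigma f}\Sigma V\to W$ plus a term $W\leftarrow V\xrightarrow{f}V\to W$. I would make this precise with the octahedral axiom, or more cleanly by checking it at the level of the class in $(W^\vee\otimes W)(*)$. Here $W^\vee\otimes W$ is built from the four pieces $V^\vee\otimes V$ by finite limits and colimits, and the class of $f$ in $(V^\vee\otimes V)(*)$ induces a class lifting $\bar f$. By Lemma~\ref{traceclassproperties}(1), composites of trace-class maps with arbitrary maps remain trace-class.

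Given this, the identity of $W$ is trace-class, so $W=\varinjlim(W\xrightarrow{\mathrm{id}}W\to\cdots)$ is basic nuclear, hence nuclear, hence dualizable. I expect the main obstacle to be the careful verification that $\bar f$ is trace-class and homotopic to the identity as maps of cones. The subtlety is that maps between cones are not determined by maps of the underlying diagrams. I would handle this by working in the stable $\infty$-category of arrows and applying the exact functor $\mathrm{cofib}$. The pair $(f,f)$ is a map in $\mathrm{Fun}(\Delta^1,C)$, and it is trace-class there, so its image is trace-class.

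For the converse, let $V$ be dualizable. Then $\mathrm{id}_V$ is trace-class and $V$ is compact. Take $f=\tfrac12\mathrm{id}$ if $2$ is invertible in $A$; in general I would use a direct-sum trick instead. Consider $V\oplus V$ with $f=\begin{pmatrix}0&\mathrm{id}\\0&0\end{pmatrix}$, which is trace-class. Then $1-f$ is invertible, so its cone is $0$. That choice does not recover $V$, so instead I take $f=\mathrm{id}_V$ itself. The cone of $1-f=0$ is $V\oplus\Sigma V$, and $V$ is a retract of it. This gives the converse.
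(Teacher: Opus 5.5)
Your reduction to ``compact and nuclear'' is fine, and so is your proof that the endomorphism $\bar f$ of $W=[V\xrightarrow{1-f}V]$ induced by $(f,f)$ is homotopic to $\mathrm{id}_W$. Your converse (take $f=\mathrm{id}_V$, so that $V$ is a retract of $[V\xrightarrow{0}V]\simeq V\oplus\Sigma V$) is exactly the paper's.

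The gap is the step that carries all the content: that $\bar f$, hence $\mathrm{id}_W$, is trace-class. None of your three justifications proves it.
\begin{itemize}
\item The displayed factorization does not typecheck. The cofiber sequence $V\xrightarrow{1-f}V\xrightarrow{\mathrm{can}}W\xrightarrow{\partial}\Sigma V$ supplies no maps $\Sigma V\to W$ or $W\to V$.
\item The arrow-category argument rests on a false principle. The functor $\mathrm{cofib}\colon\mathrm{Fun}(\Delta^1,C)\to C$ is not symmetric monoidal for the pointwise tensor product (it kills the unit $1\xrightarrow{\mathrm{id}}1$). Moreover, a map of arrows with trace-class components need not induce a trace-class map on cofibers, because the induced map depends on the commuting homotopy. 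For instance, maps of arrows from $(V\to 0)$ to $(0\to\Sigma V)$ have both components zero, but they are classified by $\Omega\mathrm{Map}(V,\Sigma V)\simeq\mathrm{Map}(V,V)$. The one corresponding to $\mathrm{id}_V$ induces $\pm\mathrm{id}_{\Sigma V}$ on cofibers, which is not trace-class once $V$ is compact but not dualizable.
\item Building a class in $(W^\vee\otimes W)(*)$ from a lift $\phi\in(V^\vee\otimes V)(*)$ of $f$ does not give $\bar f$. You would use the fiber sequence $W^\vee\otimes W\to V^\vee\otimes W\xrightarrow{(1-f)^\vee\otimes 1}V^\vee\otimes W$, which first requires the identity $(1\otimes f)\phi=(f^\vee\otimes 1)\phi$ in $\pi_0(V^\vee\otimes V)(*)$. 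Even then, it only yields a trace-class $e$ with $e\circ\mathrm{can}\simeq\mathrm{can}\circ f$. Nothing forces $e\simeq\bar f$; this is exactly the non-uniqueness of maps between cones that you flagged yourself.
\end{itemize}

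The paper avoids proving $\bar f$ trace-class altogether. Since $\bar f$ is invertible, taking the colimit along $(f,f)$ shows $W\simeq[\tilde V\xrightarrow{1-f}\tilde V]$, where $\tilde V=\varinjlim(V\xrightarrow{f}V\xrightarrow{f}\cdots)$ is basic nuclear. Hence $W$ is nuclear, because nuclear objects form a stable subcategory.

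Your route can also be completed from the $e$ above. Since $(\mathrm{id}_W-e)\circ\mathrm{can}\simeq\mathrm{can}\circ(1-f)\simeq 0$, you can write $\mathrm{id}_W-e=k\circ\partial$ for some $k\colon\Sigma V\to W$. Using $\partial\circ\bar f\simeq\Sigma f\circ\partial$, this gives $\mathrm{id}_W\simeq\bar f=e\circ\bar f+k\circ\Sigma f\circ\partial$. This is trace-class, because trace-class maps form an ideal closed under sums.
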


\begin{proof} Clearly, the cone of $1-f$ is compact (as a finite colimit of compact $V$'s). Now note that the map $f: V\to V$ induces an isomorphism on $[V\xrightarrow{1-f} V]$ (as on the cone, $f$ becomes equal to $1$). Thus, passing to the sequential colimit over multiplication by $f$, one can write the cone also as the cone of $[W\xrightarrow{1-f} W]$ where $W$ is the sequential colimit of $V\xrightarrow{f} V\xrightarrow{f} V\ldots$. Then $W$ is (basic) nuclear, and hence so is the cone.

For the converse, note that if $V$ is dualizable, then it is compact, and the identity of $V$ is trace-class, so $[V\xrightarrow{0} V]$ belongs to the class of dualizable objects produced in this way. But $V$ is a retract of $[V\xrightarrow{0} V]$.
\end{proof}

\begin{remark} The proof shows more generally that if $V\in C$ is compact and $f: V\to V$ is an endomorphism such that some power $f^n: V\to V$ is trace-class, then the cone $[V\xrightarrow{1-f} V]$ is dualizable.
\end{remark}

\begin{example} Let us give an example of a dualizable object that is not discrete. Consider $A=\mathbb C[T]$, and $C=\mathcal D(\mathrm{Liq}_p(A))$. Let $V$ be the DNF space of sequences of at most polynomial growth, and turn it into a $\mathbb C[T]$-module by letting $T$ act via diagonal multiplication by $(1,2,3,\ldots)$. We claim that $V$ is dualizable. Indeed, if $V_0=\mathcal M_{<p}(\beta \mathbb N)\otimes \mathbb C[T]$ is the compact projective generator, and we consider the endomorphism $f: V_0\to V_0$ given by diagonal multiplication by $(T,\tfrac T2,\tfrac T3,\ldots)$, then $f^N$ is trace-class for sufficiently large $N$ (depending on $p$). Moreover, to compute the cone
\[
V_0\xrightarrow{1-(T,\tfrac T2,\tfrac T3,\ldots)} V_0
\]
we can first take the colimit along diagonal multiplication by $(1,\tfrac 12,\tfrac 13,\ldots)$, leading to the space $V[T]$ of polynomials of sequences of at most polynomial growth. Then we get the cone
\[
V[T]\xrightarrow{1-(T,\tfrac T2,\tfrac T3,\ldots)} V[T]
\]
which can also be written as the cone
\[
V[T]\xrightarrow{(1,2,3,\ldots)-T} V[T]
\]
and hence is given by $V$ with $T$ acting by multiplication by $(1,2,3,\ldots)$.
\end{example}

In many cases, it will turn out that all dualizable objects of $C$ are in fact discrete, i.e.~are perfect complexes of $A$-modules. By Proposition~\ref{prop:coneoffredholmdualizable} this is the case if and only if cones $[V\xrightarrow{1-f} V]$ are discrete. Here, $1-f$ is a perturbation of the isomorphism $1: V\to V$ by a trace-class operator. Classically, if $V$ is say a Hilbert space, such things are Fredholm operators, and Fredholm operators have finite-dimensional kernel and cokernel, leading to the desired discreteness. Abstractly, we make the following definition:

\begin{definition}\label{def:fredholmcategory} Let $C$ be a closed symmetric monoidal compactly generated stable $\infty$-category with compact unit, as above. Then $C$ is Fredholm if all dualizable objects of $C$ are discrete; equivalently, if for all compact $V\in C$ and trace-class endomorphisms $f: V\to V$, the cone $[V\xrightarrow{1-f} V]$ is discrete.
\end{definition}

We will be interested in verifying this condition in our situations of interest. Thus, assume from now on that $C=\mathcal D(\mathcal A)$ for some closed symmetric monoidal abelian category $\mathcal A$ generated by compact projective objects $\mathcal A^{cp}\subset \mathcal A$, and with $1\in \mathcal A$ compact projective.\footnote{In this situation, $C$ can also be described as the finite-product preserving functors from $(\mathcal A^{cp})^{\mathrm{op}}$ to spectra. One could more generally allow such functor categories from additive $\infty$-categories. This generalization is relevant when one allows animated rings.} In this situation, $A=\mathrm{End}(1)$ is concentrated in degree $0$, i.e.~is a usual commutative ring.

We make the following auxiliary hypothesis:

\begin{assumption}\label{ass:dualstatic} For all compact projective $T\in \mathcal A$, the dual $T^\vee\in\mathcal D(\mathcal A)$ is concentrated in degree $0$. Equivalently, for all compact projective $T,T'\in \mathcal A$, one has $\mathrm{Ext}^i_{\mathcal A}(T_1\otimes T_2,A)=0$ for $i>0$, where $A=\mathrm{End}(1)$ as before.
\end{assumption}

It turns out that this is a pretty mild assumption. Under this assumption, we note that for $T\in \mathcal A$ compact projective, trace-class endomorphisms come from $(T^\vee\otimes T)(*)$, where all operations can be performed in $\mathcal A$ (as opposed to $\mathcal D(\mathcal A)$), making things more concrete.

When $C=\mathcal D(\mathcal A)$, one can consider a generalization of the notion of compact objects. Recall that $X\in C$ is compact if and only if it can be represented by a finite complex of compact projective objects. Often, when building such projective resolutions, one can ensure finiteness in each degree, but not necessarily termination. This leads to the notion of pseudocompact objects.

\begin{definition}\label{def:pseudocompact} An object $X\in \mathcal D(\mathcal A)$ is pseudocompact if it can be represented by a complex of compact projectives that is bounded to the right. This is equivalent to $\mathrm{Hom}(X,-)$ commuting with all direct sums of objects in $\mathcal D_{\leq n}(\mathcal A)$, for all $n$.
\end{definition}

In other words, a pseudocompact is one that can be approximated arbitrarily well (with respect to the $t$-structure) by compact objects. It turns out that under Assumption~\ref{ass:dualstatic}, a similar assertion holds true for compact objects replaced by dualizable objects.

\begin{proposition}\label{prop:almostdualizable} Assume Assumption~\ref{ass:dualstatic}. Then:
\begin{enumerate}
\item The subcategory of dualizable objects of $C$ is generated under shifts, cones, and retracts by the cone $[T\xrightarrow{1-f} T]$ for compact projective $T\in \mathcal A$ with a trace-class endomorphism $f$.
\item Any nuclear and pseudocompact object $X$ of $C$ can be written as a sequential colimit $X=\varinjlim_n X_n$ where each $X_n$ is dualizable, and the cone of $X_n\to X$ lives in $\mathcal D_{\geq n}(\mathcal A)$.
\end{enumerate}
In particular, by (1), $C$ is Fredholm if and only if for all compact projective $T\in \mathcal A$ with trace-class endomorphism $f: T\to T$, the cone $[T\xrightarrow{1-f} T]$ is discrete. In that case, by (2) all nuclear and pseudocoherent objects of $C$ are discrete, and hence are given by $\mathcal D_{pc}(A)\subset C$.
\end{proposition}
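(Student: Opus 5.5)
For (1), I would first use Proposition~\ref{prop:coneoffredholmdualizable}: every dualizable object is a retract of a cone $[V\xrightarrow{1-f}V]$ with $V$ compact and $f$ trace-class. So it suffices to show that such cones lie in the subcategory $\mathcal E$ generated under shifts, cones and retracts by the cones $[T\xrightarrow{1-g}T]$ with $T\in\mathcal A$ compact projective. Represent $V$ by a finite complex $V_\bullet$ of compact projectives. Under Assumption~\ref{ass:dualstatic}, the derived dual $V^\vee$ is computed termwise by the complex $\underline{\mathrm{Hom}}(V_\bullet,A)$ of objects in degree $0$. The trace-class class of $f$ in $H_0((V^\vee\otimes V)(*))$ should therefore be representable, after possibly enlarging $V_\bullet$ by an acyclic complex as in the proof of Proposition~\ref{basicnuclearcomplexinjective}, by a degree-$0$ cycle. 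That is, $f$ becomes a chain map $f_\bullet$ that is trace-class in each degree.

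The subtle point is that $V^\vee\otimes V$ is a derived tensor product and need not be computed termwise. However, a class in $H_0$ of $(V_\bullet^\vee\otimes^L V_\bullet)(*)$ can be approximated at the level of cycles after replacing the terms by resolutions. Since we only need the finitely many terms to carry trace-class maps, I expect it suffices to choose, for each pair of degrees, a lift through the underived tensor product, using that each $V_i^\vee$ is static. Once $f_\bullet$ is a termwise trace-class chain map, the cone of $1-f_\bullet$ is filtered by the stupid filtration of $V_\bullet$. Its graded pieces are $[V_i\xrightarrow{1-f_i}V_i][i]$, so the cone is an iterated extension of generators and hence lies in $\mathcal E$. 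This lifting step is the main obstacle, and I would model it on Lemma~\ref{basicnuclearcomplex}.

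For (2), let $X$ be nuclear and pseudocompact. Write $X$ as a right-bounded complex $P_\bullet$ of compact projectives, and let $\sigma_{\le n}P$ be the stupid truncation. These are compact, and the cone of $\sigma_{\le n}P\to X$ lies in $\mathcal D_{\ge n}$. Nuclearity of $X$ means that $(c^\vee\otimes X)(*)\to\mathrm{Hom}(c,X)$ is an isomorphism for compact $c$. Applying this to the map $\sigma_{\le n}P\to X$, with Lemma~\ref{factorthroughcompact} to factor through compacts, I would show that the map $\sigma_{\le n}P\to X$ is trace-class up to an error in degrees $\ge n$, because $c^\vee$ is static in degree $0$ and tensoring is compatible with the $t$-structure up to a bounded shift.

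The next step is to factor $\sigma_{\le n}P\to X$ through $\sigma_{\le m}P$ for some $m\gg n$, with the map $\sigma_{\le n}P\to\sigma_{\le m}P$ trace-class after composition with the projection. Consider the map from $\sigma_{\le m}P$ to itself built from $1$ minus the trace-class part, and take its cone. By Proposition~\ref{prop:coneoffredholmdualizable}, this cone is dualizable, and it agrees with $X$ in degrees $<n$. Choosing the $m$'s inductively produces a compatible sequence $X_n$ with $\varinjlim X_n=X$.

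The final assertions then follow. By (1), the Fredholm property reduces to the generators $[T\xrightarrow{1-f}T]$, since discreteness is closed under shifts, cones and retracts. By (2), a nuclear pseudocompact $X$ is a colimit of discrete $X_n$ that agree with $X$ in increasing ranges, so $X$ is discrete and pseudocoherent over $A$.
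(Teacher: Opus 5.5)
\textbf{Part (2) has a genuine gap, at the step where you take a single cone.} To get a map $[W\xrightarrow{1-f}W]\to X$ extending $g\colon W=\sigma_{\le m}P\to X$, you need $gf=g$. Note also that your setup contains no endomorphism of $\sigma_{\le m}P$ at all: your trace-class map goes $\sigma_{\le n}P\to\sigma_{\le m}P$, and nothing goes back.

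Now $H_i(g)$ is an isomorphism for $i<m$, so $gf=g$ forces $H_i(f)=\mathrm{id}$, i.e.\ $1-f$ is zero on $H_i(W)$ in that range. The long exact sequence then gives $0\to H_i(W)\to H_i(\mathrm{cone})\to H_{i-1}(W)\to 0$. Hence for $0<i<m$ the map $H_i(\mathrm{cone})\to H_i(X)$ has kernel $\cong H_{i-1}(X)$. The cone always picks up a shifted copy of $\ker(1-f)$, so a cone of this kind agrees with $X$ only in degree $0$, not in degrees $<n$. Moreover, cones chosen separately for each $n$ come with no transition maps $X_n\to X_{n+1}$ over $X$, so $\varinjlim X_n=X$ has no content yet.

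\textbf{The missing idea is to kill the error one degree at a time, so that the spurious $\ker(1-f)$ created at each step is absorbed by the next.} Suppose $X_n$ is an iterated extension of shifted generators, hence dualizable, and $Y_n=\mathrm{cone}(X_n\to X)\in\mathcal D_{\ge n}(\mathcal A)$. Then $Y_n$ is again nuclear and pseudocompact. Shifting, assume $n=0$.
\begin{enumerate}
\item By pseudocompactness, pick a compact projective $T$ and $g\colon T\to Y_0$ surjective on $H_0$.
\item Since $T^\vee$ is static and $\mathrm{cone}(g)\in\mathcal D_{\ge 1}$, the map $\pi_0(T^\vee\otimes T)(*)\to\pi_0(T^\vee\otimes Y_0)(*)=\mathrm{Hom}(T,Y_0)$ is surjective. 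This is where Assumption~\ref{ass:dualstatic} is really used, and your sketch never uses it in a load-bearing way. It yields a trace-class $f$ with $gf=g$.
\item Then $g$ factors through $[T\xrightarrow{1-f}T]$, still surjective on $H_0$. So the new error $Y_1$ lies in $\mathcal D_{\ge 1}$: the extra $\ker(1-f)$ now sits in degree $\ge 1$.
\item Set $X_1=\mathrm{fib}(X\to Y_1)$. It is an extension of $X_0$ by a generator and comes with a map $X_0\to X_1$ over $X$.
\end{enumerate}
This construction also gives (1) for free, which is how the paper argues: a dualizable object is compact and nuclear, hence a retract of some $X_n$.

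\textbf{Your separate route to (1) can be completed, but not as you propose.} It goes via Proposition~\ref{prop:coneoffredholmdualizable}, a termwise trace-class chain model of $f$, and the stupid filtration. Modelling on Lemma~\ref{basicnuclearcomplex} does not work, since that lemma relies on flatness, which is not assumed here. No lifting or acyclic enlargement is needed either. Instead, push the class of $f$ forward along the canonical map $V^\vee\otimes^L V\to\mathrm{Tot}(V^\vee_\bullet\otimes V_\bullet)$. The evaluation into $\mathrm{Hom}^\bullet(V_\bullet,V_\bullet)$ factors through this map because the $V_i$ are projective. A degree-$0$ cycle in the target is then a chain map representing $f$ whose components are trace-class.
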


\begin{proof} We show that any nuclear and pseudocompact object $X$ of $C$ can be written as a sequential colimit $X=\varinjlim_n X_n$ where each $X_n$ lies in the subcategory generated under shifts and cones by objects $[T\xrightarrow{1-f} T]$ as in (1), and the cone of $X_n\to X$ lies in $\mathcal D_{\geq n}(\mathcal A)$. This clearly gives part (2). But in (1), the object $X$ is compact, and hence for some $n$ it is a retract of $X_n$, giving also (1).

To prove this claim, assume we already have $X_n$ so that $Y_n:=[X_n\to X]\in \mathcal D_{\geq n}(\mathcal A)$. We will find some $T_n$ and $f_n$ with a map $[T_n\xrightarrow{1-f_n} T_n][n]\to Y_n$ whose cone $Y_{n+1}$ lies in $\mathcal D_{\geq n+1}(\mathcal A)$. In that case, the fibre $X_{n+1}$ of the composite $X\to Y_n\to Y_{n+1}$ is an extension of $X_n$ and a shift of $[T_n\xrightarrow{1-f_n} T_n]$, giving the desired induction step.

Shifting, we can assume now that $n=0$. Our task is to show that given $X\in \mathcal D_{\geq 0}(\mathcal A)$ that is pseudocompact and nuclear, there is some compact projective $T_0$ and some trace-class endomorphism $f_0: T_0\to T_0$ such that there is a map $[T_0\xrightarrow{1-f_0} T_0]\to X_0$ that is surjective in homological degree $0$. By pseudocompactness, we can represent $X$ by a complex
\[
\ldots\to T_2\to T_1\to T_0\to 0
\]
of compact projective objects $T_i$ in nonnegative homological degrees. In particular, we get a map $g: T_0\to X$. Note that by nuclearity of $X$, this comes from some class in $(T_0^\vee\otimes X)(*)$. But as $T_0^\vee$ sits in degree $0$ and $g: T_0\to X$ is surjective in degree $0$, the map
\[
(T_0^\vee\otimes T_0)(*)\to (T_0^\vee\otimes X)(*)
\]
induced by $g: T_0\to X$ is surjective. In other words, we can find a trace-class endorphism $f_0: T_0\to T_0$ such that $gf_0=g$ in $D(\mathcal A)$. This means that the map $g: T_0\to X$ can be factored over a map $[T_0\xrightarrow{1-f_0} T_0]\to X$. This is necessarily surjective in degree $0$ as $g$ is.
\end{proof}

This finishes the abstract nonsense. In order to apply this, we need to verify the Fredholm property.

\begin{proposition}\label{prop:banachalgebrafredholm} Let $A$ be a ($p$-)Banach algebra over $\mathbb R$. Then $C=\mathcal D(\mathrm{Liq}_p(A))$ satisfies Assumption~\ref{ass:dualstatic}, and is Fredholm. More generally, this holds true if $A$ is a filtered colimit of $p$-Banach algebras.
\end{proposition}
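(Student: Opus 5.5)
We need two things for $C=\mathcal D(\mathrm{Liq}_p(A))$: Assumption~\ref{ass:dualstatic}, and that for every compact projective $T$ with a trace-class endomorphism $f$ the cone $[T\xrightarrow{1-f}T]$ is discrete. The latter suffices by Proposition~\ref{prop:almostdualizable}(1).

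\textbf{Assumption~\ref{ass:dualstatic}.} The compact projectives are retracts of $A\otimes_{\mathbb R_{<p}}\mathcal M_{<p}(S)$ with $S$ extremally disconnected. For such $T$ we have
\[
T^\vee=\underline{\operatorname{RHom}}_A(A\otimes\mathcal M_{<p}(S),A)=\underline{\operatorname{RHom}}(\mathbb Z[S],A).
\]
Its value on $S'$ is $R\Gamma(S\times S',A)$. I would show this is concentrated in degree $0$ for a $p$-Banach (or $p$-liquid quasiseparated Banach-type) $A$, by the same argument used for $C(S,\mathbb R)=\underline{\operatorname{RHom}}(\mathbb Z[S],\mathbb R)$ in \cite[Lecture III]{Condensed}. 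The plan is to write $A$ as the completion of a sequential colimit of $\mathbb R$-Banach pieces, or to compute directly via hypercovers by profinite sets. The point is that the cohomology of a compact Hausdorff space with coefficients in a $p$-Banach space vanishes in positive degrees, since Čech complexes of locally constant approximations are exact with norm control. This gives $T^\vee=C(S,A)$, a $p$-Banach space.

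\textbf{Fredholm property.} Since $T^\vee$ is static, a trace-class $f$ is an element of $(C(S,A)\otimes_{A}T)(*)$. Using flatness and the explicit computation as in Proposition~\ref{explicittraceclass}, I would write
\[
f=\sum_n \lambda_n\,\varphi_n\otimes t_n,
\]
where $\varphi_n$ is a nullsequence in $C(S,A)$, $t_n$ is a bounded sequence in $T$, and $\lambda_n$ is $q$-summable. Truncating the sum gives $f=f_{\mathrm{fin}}+f_{\mathrm{small}}$, with $f_{\mathrm{fin}}$ of finite rank, i.e.\ factoring through $A^N$, and $f_{\mathrm{small}}$ of small norm.

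The key step is to make $1-f_{\mathrm{small}}$ invertible by a geometric series. For this I would not work on $T$ itself but pass, as in the proof of Proposition~\ref{prop:coneoffredholmdualizable}, to a Banach model. The map $f$ factors through a $p$-Banach $A$-module $B=c_0$-type module over $A$, via $T\to B\to T$. Replace the cone of $1-f$ on $T$ by the cone of $1-g$ on $B$, where $g$ is the composite $B\to T\to B$. This is legitimate because the cones $[T\xrightarrow{1-f}T]$ and $[B\xrightarrow{1-g}B]$ agree: the standard fact that $1-uv$ and $1-vu$ have isomorphic cones.

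On $B$, the operator $g$ is a norm limit of finite-rank $A$-linear maps. Choose $g_{\mathrm{fin}}$ of finite rank with $\lVert g-g_{\mathrm{fin}}\rVert<1$. Then $1-(g-g_{\mathrm{fin}})$ is invertible by the Neumann series, which converges because $B$ is $p$-Banach. So the cone of $1-g$ is isomorphic to the cone of $1-h$ for a finite-rank $h$. Finite rank means $h$ factors through $A^N$, and applying the same trick once more gives the cone of $1-h'$ on $A^N$, which is a perfect, hence discrete, complex of $A$-modules.

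\textbf{Filtered colimits.} For $A=\varinjlim A_i$, the compact projective $T$ and its trace-class endomorphism $f$ are defined over some $A_i$. The $A_i$-cone base changes to the $A$-cone, and discreteness is preserved by the symmetric monoidal base change, since it sends the unit to the unit. The static-dual assumption passes to the colimit because $\underline{\operatorname{RHom}}(\mathbb Z[S],-)$ commutes with filtered colimits of coconnective objects.

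\textbf{Main obstacle.} I expect the hardest part to be justifying the factorization of $f$ through a genuine $p$-Banach $A$-module on which finite-rank operators approximate $g$ in norm. That is, identifying $C(S,A)\otimes_{A}\mathcal M_{<p}(S)\otimes A$ concretely over a non-field Banach algebra $A$; I would first try to reduce that identification to the $\mathbb R$-case of Proposition~\ref{explicittraceclass}.
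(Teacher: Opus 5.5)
Your proposal is correct and follows essentially the paper's argument. You reduce the filtered-colimit case to a single $p$-Banach $A_i$ and get static duals from $\underline{\operatorname{RHom}}(\mathbb Z[S],A)=C(S,A)$. You factor the trace-class map through a sequence space as in Proposition~\ref{explicittraceclass} and use that the cones of $1-uv$ and $1-vu$ agree. Then you split off a finite-rank part, invert $1-(\mathrm{tail})$ by a geometric series using completeness of $A$, and are left with a cone on $A^N$.

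The only difference is bookkeeping: you run an operator-norm Neumann series on $c_0(\mathbb N,A)$ after transporting the cone, whereas the paper writes an explicit block factorization on $\mathcal M_{<p}(\mathbb N)\otimes_{<p}A$. The obstacle you flag is harmless: $T^\vee\otimes_A T=C(S,A)\otimes_{\mathbb R_{<p}}\mathcal M_{<p}(S)$ is a tensor product over $\mathbb R$, so the argument of Proposition~\ref{explicittraceclass} applies verbatim to the $p$-Banach space $C(S,A)$.
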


\begin{proof} First, we reduce the case of filtered colimits to the case of $p$-Banach algebras, so assume $A=\varinjlim_i A_i$ is a filtered colimit of Banach algebras. Any compact projective $T$ in $\mathrm{Liq}_p(A)$ is the base change of a compact projective $T_i$ in $\mathrm{Liq}_p(A_i)$ (as any idempotent endomorphism of some $\mathcal M_{<p}(S)\otimes_{<p} A$ is already defined over some $A_i$). Its dual is then
\[
\underline{\mathrm{RHom}}_{A_i}(T_i,A) = \varinjlim_{j\geq i} \underline{\mathrm{RHom}}_{A_i}(T_i,A_j),
\]
which is the filtered colimit of the duals of $T_i\otimes^L_{<p,A_i} A_j$. (The justification for the commutation with the colimit is as in Proposition~\ref{prop:internallynuclear}.) Also, any trace-class endomorphism $f: T\to T$ is also the base change of some trace-class endomorphism $f_i: T_i\to T_i$ for large enough $i$, in which case $[T\xrightarrow{1-f} T]$ is the base change of $[T_i\xrightarrow{1-f_i} T_i]$. Thus, we can assume that $A$ is a $p$-Banach algebra.

For any $p$-Banach vector space $V$, one has $\underline{\mathrm{RHom}}(\mathbb Z[S],V) = C(S,V)$, concentrated in degree $0$, for any profinite $S$; this gives Assumption~\ref{ass:dualstatic} (as any other compact projective is a retract of a base change of $\mathbb Z[S]$). Now we have to verify the key assertion that for all compact projective $T\in \mathrm{Liq}_p(A)$ with a trace-class endomorphism $f: T\to T$, the cone $[T\xrightarrow{1-f} T]$ is discrete. Note that if $f: T\to T$ factors as $T\xrightarrow{g} T'\xrightarrow{h} T$, then letting $f'=gh: T'\to T'$, the map $g$ induces an isomorphism $[T\xrightarrow{1-f} T]\cong [T'\xrightarrow{1-f'} T']$ (with inverse induced by $h$). In particular, we can assume $T=\mathcal M_{<p}(S)\otimes_{<p} A$ is a standard generator, with $S$ extremally disconnected. The trace-class maps are then induced by elements of
\[
C(S,A)\otimes_{<p} \mathcal M_{<p}(S).
\]
By the arguments in the proof of Proposition~\ref{explicittraceclass}, any trace-class map factors over $\mathcal M_{<p}(\mathbb N)\otimes_{<p} A$. Thus, we can also consider a trace-class endomorphism of $\mathcal M_{<p}(\mathbb N)\otimes_{<p} A$, which is induced by an element of
\[
c_0(\mathbb N,A)\otimes_{<p} \mathcal M_{<p}(\mathbb N).
\]
This can be thought of as a matrix, whose rows are given by $\lambda_0 v_0,\lambda_1 v_1,\ldots$, for a uniformly bounded sequence of nullsequences $v_0,v_1,\ldots\in c_0(\mathbb N,A)$ and a $<p$-summable sequence $\lambda_0,\lambda_1,\ldots$. We can assume that all $v_i$ have supremum norm $\leq 1$. Choose some $N$ so that $s=\lambda_N+\lambda_{N+1}+\ldots<1$, and write the matrix in block form
\[
\left(\begin{array}{cc} E & F\\ G & H\end{array}\right)
\]
where $E$ is an $N\times N$-matrix containing the coefficients $\{0,\ldots,N-1\}^2$. Then $1-H$ is invertible, as the geometric series $1+H+H^2+\ldots$ converges: The $i$-th row of $H^j$ is bounded in supremum norm by $\lambda_{N+i} s^{j-1}$ (this is clear by definition for $j=1$, and then follows by an easy induction), and the geometric series $1+s+s^2+\ldots$ converges as $s<1$. (It is at this step that we use that $A$ is a Banach algebra -- we need to guarantee the existence of this infinite sum.)

Now by easy matrix manipulations, this means that one can write
\[
\left(\begin{array}{cc} 1-E & -F\\ -G & 1-H\end{array}\right) = \left(\begin{array}{cc} 1 & X\\ 0 & 1\end{array}\right) \left(\begin{array}{cc} E' & 0\\ 0 & 1-H\end{array}\right) \left(\begin{array}{cc} 1 & 0\\ Y & 1\end{array}\right),
\]
where the first and last factor are invertible, as well as the lower part $1-H$ of the middle matrix. This means that the cone of $1-f$ on $\mathcal M_{<p}(\mathbb N)\otimes_{<p} A$ can be rewritten as the cone of $E'$ on the finite free $A$-module $\mathcal M_{<p}(\{0,\ldots,N-1\})\otimes_{<p} A = A^N$, which is thus discrete.
\end{proof}

Finally, we apply this machinery to complex analysis. We will use the following definition.\footnote{It agrees with the similar notion that exists in the literature.}

\begin{definition}\label{def:steincompact} A compact subset $Z\subset \mathbb C^n$ is Stein if the corresponding idempotent $\mathbb C[X_1,\ldots,X_n]$-algebra $\mathcal O(Z)\in \mathcal D(\mathrm{Liq}_p(\mathbb C[X_1,\ldots,X_n]))$ is concentrated in degree $0$.
\end{definition}

Note that, by the cofinality of open and closed neighborhoods, one can write
\[
\mathcal O(Z) = \varinjlim_{U\supset Z} R\Gamma(U,\mathcal O)
\]
and by the computation of the structure sheaf, this sits in cohomological degrees $\geq 0$. Thus, the condition here is really that $\mathcal O(Z)$ sits in cohomological degrees $\leq 0$, i.e.~nonnegative homological degrees.

Any polydisc is Stein, and if $Z_1$ and $Z_2$ are Stein, then their intersection $Z_1\cap Z_2$ is also Stein, as it corresponds to the idempotent algebra $\mathcal O(Z_1\cap Z_2)=\mathcal O(Z_1)\otimes_{<p,\mathbb C[X_1,\ldots,X_n]}^L \mathcal O(Z_2)$, which sits in nonnegative homological degrees. Using a similar argument with filtered colimits, it follows that any intersection of Stein compact subsets is again Stein.

\begin{proposition}\label{prop:steincompactnuclear} For any compact subset $Z\subset \mathbb C^n$, the idempotent algebra $\mathcal O(Z)$ is basic nuclear. Moreover, $H^0(\mathcal O(Z))$ is a sequential colimit of Banach algebras.
\end{proposition}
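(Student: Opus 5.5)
Write $Z=\bigcap_k Z_k$ as a countable decreasing intersection of compact sets $Z_k\subset \mathbb C^n$, each a finite union of closed polydiscs with $Z_{k+1}$ contained in the interior of $Z_k$. This is possible since $Z$ is compact and metrizable. By the locale formalism (intersections of closed subsets correspond to filtered colimits of idempotent algebras), $\mathcal O(Z)=\varinjlim_k \mathcal O(Z_k)$. Basic nuclear objects are closed under countable colimits, so it suffices to show that each $\mathcal O(Z_k)$ is basic nuclear. It is then enough to show this for closed polydiscs and their finite intersections, which are again closed polydiscs, or empty. Indeed, a finite union $Z_k=\bigcup_j D_j$ has idempotent algebra given by the iterated fibre product (Čech-type finite limit) of the $\mathcal O(D_{j_1}\cap\cdots\cap D_{j_m})$, and basic nuclears form a stable subcategory closed under finite limits.

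For a closed polydisc $\overline D(z,r)$, the idempotent algebra is $\mathcal O^{hol}(\overline D(z,r))=\bigcup_{r'>r}\mathcal M_{<p}(\{(X-z)^\alpha/r'^\alpha\}_{\alpha\in\mathbb N^n})$ (by the computation in Lecture V and its multivariable version in Lecture VI). The transition maps for $r'>r''>r$ are termwise multiplication by the exponentially decaying sequence $(r''/r')^\alpha$. By Lemma~\ref{basictraceclass} and Proposition~\ref{explicittraceclass}, these maps are trace-class. Choosing a sequence $r'_m\searrow r$ therefore exhibits $\mathcal O^{hol}(\overline D(z,r))$ as a sequential colimit along trace-class maps, so it is basic nuclear (indeed a DNF space). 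This settles the first claim.

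For the second claim, consider $H^0(\mathcal O(Z))=\varinjlim_k H^0(\mathcal O(Z_k))$, using exactness of filtered colimits. I would refine the choice of the $Z_k$ so that the colimit is cofinally computed by Banach algebras. The natural candidate is $B_k$, the algebra of bounded holomorphic functions on the interior $U_k$ of $Z_k$ with the sup norm; equivalently one can use the algebra of continuous functions on $Z_k$ that are holomorphic on the interior. Since $Z_{k+1}\subset U_k$, there are maps
\[
H^0(\mathcal O(Z_k))\to B_k\to H^0(\mathcal O(Z_{k+1}))\to B_{k+1}.
\]
The first map comes from $H^0(\mathcal O(Z_k))\to \Gamma(U_k,\mathcal O)$ composed with restriction to a slightly smaller bounded open set; to make boundedness work, I would insert an intermediate open set between $Z_{k+1}$ and $U_k$. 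The second map, $B_k\to H^0(\mathcal O(Z_{k+1}))$, is the delicate one. It requires that a bounded holomorphic function on $U_k$ defines a section of $H^0(\mathcal O(Z_{k+1}))$, and this is where the sheaf description is used. We have $\mathcal O(Z_{k+1})=\varinjlim_{U\supset Z_{k+1}}R\Gamma(U,\mathcal O)$, so there is a map $\Gamma(U_k,\mathcal O)\to H^0(\mathcal O(Z_{k+1}))$. This map must be checked to be continuous, that is, a map of condensed algebras from the Banach space $B_k$. That follows by writing the sections on polydisc pieces as power series and using the Cauchy-type estimates that are built into the description $\mathcal O(D)=\bigcap_{r'<r}\mathcal M_{<p}$. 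Because the two chains of maps interleave, the two ind-systems are cofinal, and $H^0(\mathcal O(Z))\cong\varinjlim_k B_k$ is a sequential colimit of Banach algebras.

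I expect the main obstacle to be the interleaving step. Without Cauchy's integral formula, producing continuous algebra maps between Banach algebras of bounded functions and the liquid $H^0$ requires reading off coefficient bounds from the explicit sequence-space descriptions of $\mathcal O$ on polydiscs. It also requires handling the higher $H^i$ of the Čech complex so that only $H^0$ is affected. An alternative route, which I would try if the interleaving fails, is to take $B_k$ to be the closure of the image of $H^0(\mathcal O(Z_k))$ in the sup-norm Banach algebra $C(Z_{k+1})$.
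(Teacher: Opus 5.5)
Your proof of basic nuclearity is the paper's argument: exhaust $Z$ by finite unions of polydiscs, pass to the filtered colimit of idempotent algebras, reduce via the \v{C}ech-type finite limit, and use the explicit trace-class presentation of polydisc algebras. One step in it is false, though: finite intersections of closed polydiscs are \emph{not} closed polydiscs. Already in $\mathbb C$, $\overline D(0,1)\cap\overline D(1,1)$ is a lens. Such overlaps cannot be avoided in general when you exhaust a compact $Z$ by a decreasing sequence of finite unions of polydiscs. So your reduction leaves the algebras $\mathcal O(D_{j_1}\cap\cdots\cap D_{j_m})$ unaccounted for, since they are not covered by your power-series presentation. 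The missing idea, which is how the paper passes from intersections of polydiscs to polydiscs, is that intersections of closed subsets correspond to tensor products of idempotent algebras. Hence $\mathcal O(D\cap D')\simeq \mathcal O(D)\otimes^L_{\mathbb C[X_1,\ldots,X_n]}\mathcal O(D')$. This is a finite Koszul complex on $\mathcal O(D)\otimes^L\mathcal O(D')$ for the elements $X_i\otimes 1-1\otimes X_i$. Basic nuclear objects are closed under tensor products (tensor products of trace-class maps are trace-class) and under finite colimits. So these intersection algebras are basic nuclear, and the rest of your argument goes through.

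For the $H^0$ statement you take a genuinely different route. The paper again reduces to polydiscs. For a finite union of polydiscs, $H^0(\mathcal O(Z))$ is the subalgebra of compatible tuples in $\prod_j H^0(\mathcal O(D_j))$. Using Banach presentations of the polydisc algebras (e.g.\ weighted coefficient norms on slightly larger polydiscs), this is levelwise a closed subalgebra of a Banach algebra, hence Banach. Since $\mathcal O(Z)$ sits in cohomological degrees $\geq 0$, $H^0$ only involves kernels, so your worry about higher $H^i$ of the \v{C}ech complex is moot.

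You instead use $H^0(\mathcal O(Z))=\varinjlim_{U\supset Z}\Gamma(U,\mathcal O)$ and interleave with the Banach algebras $H^\infty(U_k)$ of bounded holomorphic functions. This is sound and intrinsic, needs no polydisc decomposition for this part, and handles the passage from the $Z_k$ to $Z$ inside the same interleaving. The price is analytic input the paper avoids. You need completeness of $H^\infty(U_k)$, i.e.\ that uniform limits of holomorphic functions are holomorphic. You also need continuity of $H^\infty(U_k)\to\Gamma(U_k,\mathcal O)$, i.e.\ that the coefficient bounds defining the condensed structure on $\Gamma(U_k,\mathcal O)$ are controlled by sup norms. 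Both come down to the Cauchy inequality $|a_\alpha|\rho^\alpha\le\sup_{\overline D(z,\rho)}|f|$ for $f=\sum_\alpha a_\alpha(X-z)^\alpha$. This is \emph{not} built into the description $\mathcal O(D)=\bigcap_{r'<r}\mathcal M_{<p}(\ldots)$, so you have to prove it. You can do so without integrals by averaging $f(z+\rho\zeta)\zeta^{-\alpha}$ over $\zeta\in\mu_N^n$ and letting $N\to\infty$.
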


\begin{proof} One can write $Z$ as a sequential intersection of subsets that are finite unions of polydiscs. The claim is stable under sequential intersections, so one can assume $Z$ is a finite union of polydiscs. By the formula for the idempotent algebras for finite unions, this reduces the claim of nuclearity to intersections of polydiscs, and thus to polydiscs (for the claim about $H^0$, one can directly reduce to the polydiscs, as closed subalgebras of Banach algebras are Banach algebras). But for polydiscs, the presentation
\[
\mathcal O(\{|X_1|,\ldots,|X_n|\leq 1\}) = \varinjlim_{r>1} \mathcal M_{<p}(\{X_1^{i_1}\cdots X_n^{i_n}/r^{i_1+\ldots+i_n}\}_{\{i_1,\ldots,i_n\geq 0\}})
\]
gives a presentation as a sequential colimit along trace-class maps, showing that it is basic nuclear. One can also write down a similar presentation as a sequential colimit of Banach algebras.
\end{proof}

Recall also the following result that follows from the abstract nonsense of Lecture V. Here, we endow the category of compact subsets of $\mathbb C^n$ with the topology of finite covers (so $\{Z_i\subset Z\}$ form a cover if a finite family of $Z_i$'s cover $Z$).

\begin{proposition}\label{prop:liquidquasicoherent} The association taking any compact subset $Z\subset \mathbb C^n$ to $\mathrm{Mod}_{\mathcal O(Z)}(\mathcal D(\mathrm{Liq}_p))$ defines a sheaf of $\infty$-categories.
\end{proposition}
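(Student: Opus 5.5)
The plan is to reduce this to the descent statement of Lecture VI, namely that $C\mapsto$ (open subsets of $\mathcal S(C)$) gives a sheaf of $\infty$-categories on the locale. We apply it with $C=\mathcal D(\mathrm{Liq}_p(\mathbb C[X_1,\ldots,X_n]))$. A compact $Z\subset\mathbb C^n$ determines a closed subset of $\mathcal S(C)$. Concretely, $Z$ is a cofiltered intersection of finite unions of closed polydiscs, and we take the corresponding filtered colimit of the idempotent algebras assembled via the union formula $[A\oplus A'\to A\otimes A']$. We first check that this assignment is well defined: two cofinal systems of such neighbourhoods give the same colimit, using the relations of Proposition~\ref{prop:propertiessubsets}. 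Equivalently, we may define it as the pullback of $Z$ along the map of locales $\mathcal S(\mathbb C[X],\mathbb C[X])\to\mathbb C^n$, where $\mathcal S(\mathbb C[X],\mathbb C[X])$ is the open locus of Lecture VI.

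Using the second description, closed subsets of $\mathbb C^n$ map to closed subsets of the locale compatibly with finite unions and arbitrary intersections. Hence a finite cover $Z=\bigcup Z_i$ maps to a finite union of closed subsets of $\mathcal S(C)$, and $\mathrm{Mod}_{\mathcal O(Z)}(\mathcal D(\mathrm{Liq}_p))=C(Z)$ is the category attached to that closed subset. It remains to verify descent for finite closed covers, namely
\[
C(Z_1\cup Z_2)\simeq C(Z_1)\times_{C(Z_1\cap Z_2)}C(Z_2),
\]
together with $C(\emptyset)=0$; the general finite case follows by induction. Here the key input is that $A_{Z_1\cup Z_2}=A_1\times_{A_1\otimes A_2}A_2$, which is item (3) of the locale proposition. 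Note that this is a closed, not open, descent statement.

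For full faithfulness, take $M\in\mathrm{Mod}_{A_{12}}$ with $A_{12}=A_{Z_1\cup Z_2}$. Tensoring the pullback square of algebras with $M$ gives the cartesian square
\[
M\simeq M\otimes A_1\times_{M\otimes A_1\otimes A_2}M\otimes A_2 ,
\]
since tensoring is exact. Mapping spaces in the fibre product are then computed via the adjunctions $\mathrm{Hom}_{A_i}(M\otimes A_i,N_i)=\mathrm{Hom}_{A_{12}}(M,N_i)$.

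For essential surjectivity, given a gluing datum $(N_1,N_2,N_1\otimes A_2\simeq N_2\otimes A_1)$, we set $N=N_1\times_{N_{12}}N_2$ and check $N\otimes A_i\simeq N_i$. This uses $A_1\otimes A_1=A_1$ and that $A_2\otimes A_1$ acts on $N_{12}$. Alternatively, one can invoke the closed-immersion descent ``Sym'' statement of Lecture VI, which is formal once one notes that $\mathrm{Mod}_{A_1}$ and $\mathrm{Mod}_{A_2}$ are jointly conservative on $\mathrm{Mod}_{A_{12}}$. That joint conservativity holds because $A_{12}\to A_1\oplus A_2$ has cofibre $A_1\otimes A_2$, which is built from $A_1$-modules.

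I expect the main obstacle to be the well-definedness and lattice compatibility of $Z\mapsto\mathcal O(Z)$, specifically that finite unions in $\mathbb C^n$ correspond to unions in $\mathcal S(C)$. The map to $\mathbb C^n$ is only a map of locales from an open subset, so we must check that pulling back compact sets commutes with finite unions. For locale maps this holds by definition on opens; for closed sets we pass to complements, and for compact $Z$ we use intersections of closed neighbourhoods.
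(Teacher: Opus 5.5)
Your proposal is correct and follows the argument the paper intends. The paper justifies the statement only by appeal to the abstract nonsense of Lecture V. You identify a compact $Z$ with a closed subset of $\mathcal S(\mathcal D(\mathrm{Liq}_p(\mathbb C[X_1,\ldots,X_n])))$ via the locale map to $\mathbb C^n$, use that $Z_1\cup Z_2$ corresponds to $A_1\times_{A_1\otimes A_2}A_2$, and check the Mayer--Vietoris square for modules directly from idempotence; that is exactly the closed-subset analogue of the computation in the proof of Proposition~\ref{prop:structuresheafformalnonsense}.

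You should drop the side remark invoking a ``closed-immersion descent'' statement from Lecture VI. That theorem concerns the open-cover topology and does not by itself give descent for arbitrary finite closed covers. Your direct computation already covers this case, so the remark is not needed.
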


In the following, we restrict to the site of compact Stein subsets (where this result then also applies).

The main theorem of this lecture is the following.

\begin{theorem}\label{thm:pseudocoherentglues} The association taking any compact Stein subset $Z\subset \mathbb C^n$ to $\mathcal D_{pc}(\mathcal O(Z))$ defines a sheaf of $\infty$-categories.
\end{theorem}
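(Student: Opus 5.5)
The plan is to identify $\mathcal D_{pc}(\mathcal O(Z))$, for Stein compact $Z$, with an intrinsic full subcategory of $\mathrm{Mod}_{\mathcal O(Z)}(\mathcal D(\mathrm{Liq}_p))$ defined by conditions that are themselves local, and then deduce descent from Proposition~\ref{prop:liquidquasicoherent}.

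Concretely, let $C_Z=\mathrm{Mod}_{\mathcal O(Z)}(\mathcal D(\mathrm{Liq}_p))$. Since $Z$ is Stein, $\mathcal O(Z)$ is a static ring, and by Proposition~\ref{prop:steincompactnuclear} it is a sequential colimit of Banach algebras. Proposition~\ref{prop:banachalgebrafredholm} then shows that $C_Z$ satisfies Assumption~\ref{ass:dualstatic} and is Fredholm, with $\mathrm{End}(1)=\mathcal O(Z)$ viewed as an abstract ring. Proposition~\ref{prop:almostdualizable} now identifies the nuclear pseudocompact objects of $C_Z$ with $\mathcal D_{pc}(\mathcal O(Z))$; the converse inclusion holds because perfect complexes are dualizable, hence compact and nuclear, and pseudocoherent complexes are bounded-right limits of them. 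By Corollary~\ref{cor:relativenuclear} together with Proposition~\ref{prop:steincompactnuclear}, nuclearity in $C_Z$ can be tested on the underlying object of $\mathcal D(\mathrm{Liq}_p)$. Consequently
\[
\mathcal D_{pc}(\mathcal O(Z)) = \{M\in C_Z \mid M \text{ nuclear in } \mathcal D(\mathrm{Liq}_p),\ M \text{ pseudocompact in } C_Z\}.
\]
It is also worth checking that base change along $Z'\subset Z$ corresponds to the usual derived base change $-\otimes^L_{\mathcal O(Z)}\mathcal O(Z')$ on $\mathcal D_{pc}$. This holds because the liquid tensor product with $\mathcal O(Z')$ of a complex of finite free modules is computed termwise.

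By Proposition~\ref{prop:liquidquasicoherent}, $Z\mapsto C_Z$ is a sheaf for finite covers. It therefore suffices to show that, for a finite Stein cover $Z=\bigcup Z_i$, an object $M\in C_Z$ lies in $\mathcal D_{pc}(\mathcal O(Z))$ if and only if each restriction $M\otimes\mathcal O(Z_i)$ does. The forward direction is base change. For the converse, I would induct on the number of pieces and, for two pieces, use the fibre square
\[
M = M_1\times_{M_{12}} M_2,
\]
where $M_{12}$ is the restriction to $Z_1\cap Z_2$, which is Stein. Nuclearity passes to finite limits, since nuclear objects form a stable subcategory closed under colimits. This gives nuclearity of the underlying object of $M$.

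The main obstacle is pseudocompactness. Pseudocompactness in $C_Z$ is not obviously preserved under finite limits of objects that are pseudocompact only over different rings $\mathcal O(Z_i)$, because the forgetful functor from $C_{Z_i}$ to $C_Z$ need not preserve pseudocompactness. I would switch to a $t$-structure formulation: $M$ is pseudocompact if and only if $M$ is bounded to the right and, for each $n$, $\mathrm{Hom}(M,-)$ commutes with direct sums in $\mathcal D_{\le n}$. Boundedness to the right follows from the fibre square. For the Hom condition, adjunction gives
\[
\mathrm{Hom}_{C_Z}(M,N) = \mathrm{Hom}(M_1,N_1)\times_{\mathrm{Hom}(M_{12},N_{12})}\mathrm{Hom}(M_2,N_2),
\]
which follows from the sheaf property applied to internal Homs. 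The condition then reduces to showing that base change $N\mapsto N\otimes\mathcal O(Z_i)$ preserves $\mathcal D_{\le n}$ up to a uniform shift and commutes with direct sums. Direct sums are fine. The shift bound is exactly where Steinness is used: $\mathcal O(Z_i)$ has finite Tor-amplitude over $\mathcal O(Z)$ in liquid modules. I would try to prove this by writing $\mathcal O(Z_i)$ via the finite union and intersection formulas for idempotent algebras, starting from polydiscs, whose Koszul-type presentations have bounded amplitude. If this bound fails, the fallback is to build the resolution directly. Using Proposition~\ref{prop:almostdualizable}(2) on each piece and Fredholmness, one glues dualizable approximations $X_n$ degree by degree, using that $\mathrm{Ext}$ into $\mathcal D_{\geq n}$ objects is controlled. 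This produces a sequential colimit of dualizable, hence discrete, objects on $Z$.
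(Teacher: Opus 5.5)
Your decomposition is the paper's (Propositions~\ref{prop:nuclearglues}, \ref{prop:pseudocompactglues}, \ref{prop:nuclearpseudocompactdiscrete}): Fredholmness identifies $\mathcal D_{pc}(\mathcal O(Z))$ with the nuclear pseudocompact objects, nuclearity glues by finite limits, and pseudocompactness glues once restriction $-\otimes^L_{<p,\mathcal O(Z)}\mathcal O(Z')$ has bounded Tor-dimension. The gap is that last input, which carries the whole argument, and your proposed proof of it does not work.

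The intersection and union formulas only reach finite Boolean combinations of polydiscs. The estimate for $\mathcal O(Z_1\cap Z_2)=\mathcal O(Z_1)\otimes^L_{<p,\mathbb C[X_1,\ldots,X_n]}\mathcal O(Z_2)$ is the \emph{sum} of the estimates for the factors, and the union formula inherits it, so the bound grows with the number of polydiscs. A general compact Stein $Z'$ is only a sequential intersection of finite unions $K_m$ of polydiscs with unboundedly many pieces. Bounding the Tor-amplitude of $\mathcal O(Z')=\varinjlim_m\mathcal O(K_m)$ therefore needs a bound uniform in $m$, which you do not get; the ``fallback'' is not an argument.

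The missing idea is the paper's. By idempotence the base change factors through the forgetful functor to liquid $\mathbb C[X_1,\ldots,X_n]$-modules. There $-\otimes\mathcal O(Z')$ is the filtered colimit, over open neighbourhoods $U\supset Z'$, of the localizations $j_{U\ast}j_U^\ast=\underline{\mathrm{RHom}}([1\to\mathcal O(W)],-)$, with $W$ the closed complement of $U$. Since $\mathcal O(W)=R\Gamma(W,\mathcal O)$ with $\mathcal O$ in degree $0$, and $W\subset\mathbb C^n$ has finite cohomological dimension, the amplitude is bounded by a constant depending only on $n$. This bound is uniform in $U$, so it survives the filtered colimit. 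In particular the shift bound does not use Steinness of $Z'$ at all. Steinness enters only to make $\mathcal O(Z)$ static, so that the $t$-structure and the Fredholm machinery are available.

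A second, smaller problem: inducting on the number of pieces via two-term fibre squares needs the partial unions $Z_1\cup\dots\cup Z_{k-1}$ to be Stein, and this fails in general. For example, the Hartogs figure $\{|z|\le 1,|w|\le\frac12\}\cup\{\frac12\le|z|\le 1,|w|\le 1\}\subset\mathbb C^2$ is a union of two Stein compacta but is not Stein, yet adding $\{|z|\le\frac12,|w|\le 1\}$ gives the Stein bidisc. Use instead the full finite \v{C}ech diagram over nonempty $J\subseteq I$, all of whose terms $Z_J$ are Stein, as the paper does. Both $M=\lim_J M_J$ and $\mathrm{Hom}_{\mathcal O(Z)}(M,N)=\lim_J\mathrm{Hom}_{\mathcal O(Z_J)}(M_J,N\otimes^L_{<p,\mathcal O(Z)}\mathcal O(Z_J))$ are then finite limits. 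This gives nuclearity, right-boundedness, and (with the Tor bound) commutation with direct sums in $\mathcal D_{\le n}$. Also, a pseudocoherent complex is the \emph{colimit} of its perfect stupid truncations, not a limit; this is harmless, since nuclear objects are closed under colimits.
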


Here, $\mathcal D_{pc}(\mathcal O(Z))\subset \mathcal D(\mathcal O(Z))$ is the full subcategory of the derived $\infty$-category of (abstract!) $\mathcal O(Z)$-modules that can be represented by a complex of finite projective $\mathcal O(Z)$-modules that is bounded to the right.

The theorem will be the consequence of three individual results. First:

\begin{proposition}\label{prop:nuclearglues} The association taking any compact subset $Z\subset \mathbb C^n$ to
\[
\mathrm{Mod}_{\mathcal O(Z)}(\mathrm{Nuc})\subset \mathrm{Mod}_{\mathcal O(Z)}(\mathcal D(\mathrm{Liq}_p))
\]
defines a sheaf of $\infty$-categories. Moreover, $\mathrm{Mod}_{\mathcal O(Z)}(\mathrm{Nuc})$ agrees with the nuclear objects in $\mathrm{Mod}_{\mathcal O(Z)}(\mathcal D(\mathrm{Liq}_p))$.
\end{proposition}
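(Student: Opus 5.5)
The plan is to first prove the second assertion, since it lets us describe the categories $\mathrm{Mod}_{\mathcal O(Z)}(\mathrm{Nuc})$ intrinsically, and then deduce the sheaf property from Proposition~\ref{prop:liquidquasicoherent}. By Proposition~\ref{prop:steincompactnuclear}, $\mathcal O(Z)$ is (basic) nuclear as an object of $\mathcal D(\mathrm{Liq}_p)$. Hence Corollary~\ref{cor:relativenuclear} applies to $A=\mathcal O(Z)$. It says that an object $M\in\mathrm{Mod}_{\mathcal O(Z)}(\mathcal D(\mathrm{Liq}_p))$ is nuclear in the sense of this module category exactly when its underlying object is nuclear in $\mathcal D(\mathrm{Liq}_p)$. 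Unwinding the notation, $\mathrm{Mod}_{\mathcal O(Z)}(\mathrm{Nuc})$ means modules whose underlying object lies in $\mathrm{Nuc}$. Since $\mathrm{Nuc}$ is closed under colimits and tensor products, $\mathcal O(Z)$-modules in $\mathrm{Nuc}$ are the same as $\mathcal O(Z)$-modules in $\mathcal D(\mathrm{Liq}_p)$ with nuclear underlying object. This gives the second claim.

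For the sheaf property, $Z\mapsto\mathrm{Mod}_{\mathcal O(Z)}(\mathcal D(\mathrm{Liq}_p))$ is already a sheaf by Proposition~\ref{prop:liquidquasicoherent}, so it suffices to check that the full subcategories of nuclear-underlying modules form a subsheaf. Concretely, I would check three things.
\begin{enumerate}
\item The restriction functors $-\otimes_{\mathcal O(Z)}\mathcal O(Z')$ for $Z'\subset Z$ preserve nuclearity. This holds because $\mathcal O(Z')$ is nuclear and nuclear objects are closed under tensor products and colimits (the relative tensor product is a geometric realization of tensor products).
\item For a finite cover, an object glued from nuclear pieces is nuclear. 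By the descent description, the glued module $M$ on $Z=Z_1\cup Z_2$ is the pullback $M_1\times_{M_{12}}M_2$ in the stable category, with $M_{12}$ the restriction to $Z_1\cap Z_2$. Since $\mathrm{Nuc}$ is stable, finite limits of nuclear objects are nuclear. Iterating handles arbitrary finite covers.
\item Filtered intersections $Z=\bigcap Z_i$ behave well. By Proposition~\ref{prop:liquidquasicoherent}, modules over $\mathcal O(Z)=\varinjlim\mathcal O(Z_i)$ are the colimit of the module categories. An object over $Z$ is then recovered as a colimit of base changes of nuclear objects, and so is nuclear; conversely, restriction of a nuclear object stays nuclear by the first point.
\end{enumerate}

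The step I expect to be the main obstacle is the finite descent step. One must be careful that the sheaf of module categories really computes the global object as the finite limit of its restrictions, rather than only via some $\underline{\mathrm{RHom}}$ construction. In the idempotent-algebra formalism, for $Z=Z_1\cup Z_2$ we have
\[
\mathcal O(Z)=\mathcal O(Z_1)\times_{\mathcal O(Z_1\cap Z_2)}\mathcal O(Z_2),
\]
and tensoring with $M$ gives $M\simeq M_1\times_{M_{12}}M_2$ because tensor products are exact. So nuclearity of $M$ follows from nuclearity of the three restrictions together with stability of $\mathrm{Nuc}$. I would write this argument out carefully and treat the filtered-intersection case as a formal consequence of closure of $\mathrm{Nuc}$ under colimits.
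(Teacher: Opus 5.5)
Your proposal is correct and follows essentially the same route as the paper: the second claim via Corollary~\ref{cor:relativenuclear} (using that $\mathcal O(Z)$ is nuclear), the presheaf property via the bar-construction description of the relative tensor product, full faithfulness inherited from Proposition~\ref{prop:liquidquasicoherent}, and essential surjectivity by recovering $V$ as a finite (\v{C}ech) limit of nuclear restrictions in the stable category $\mathrm{Nuc}$. Your step (3) on filtered intersections is unnecessary, since the sheaf condition here is only for finite covers, and Proposition~\ref{prop:liquidquasicoherent} does not itself make the claim you attribute to it.
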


\begin{proof} The final sentence comes from Corollary~\ref{cor:relativenuclear}.

First, note that it defines a presheaf, as the base change $-\otimes^L_{<p,\mathcal O(Z)} \mathcal O(Z')$ is given by a colimit of
\[
\ldots\to -\otimes^L_{<p} \mathcal O(Z)\otimes^L_{<p} \mathcal O(Z')\to -\otimes^L_{<p} \mathcal O(Z')
\]
and all occuring operations preserve nuclear $\mathbb C$-vector spaces. The fully faithfulness part of the sheaf axiom follows from Proposition~\ref{prop:liquidquasicoherent}. For essential surjectivity, we have to see that if $V\in \mathrm{Mod}_{\mathcal O(Z)}(\mathcal D(\mathrm{Liq}_p))$ is such that for a finite cover of $Z$ by $Z_i$'s, all $V_i=V\otimes_{<p,\mathcal O(Z)}^L \mathcal O(Z_i)$ are nuclear, then $V$ is nuclear. But $V$ can be recovered via a finite Cech complex from $V_i$ and the further base changes to all finite intersections, and all of these are nuclear.
\end{proof}

Note that if $Z$ is Stein, then
\[
\mathrm{Mod}_{\mathcal O(Z)}(\mathcal D(\mathrm{Liq}_p)) = \mathcal D(\mathrm{Liq}_p(\mathcal O(Z)))
\]
is the derived category of $p$-liquid $\mathcal O(Z)$-modules. In that case, we denote the nuclear objects by $\mathcal D_{nuc}(\mathrm{Liq}_p(\mathcal O(Z)))$.

The second input:

\begin{proposition}\label{prop:pseudocompactglues} The association taking any compact Stein subset $Z\subset \mathbb C^n$ to
\[
\mathcal D_{pc}(\mathrm{Liq}_p(\mathcal O(Z)))\subset \mathcal D(\mathrm{Liq}_p(\mathcal O(Z)))=\mathrm{Mod}_{\mathcal O(Z)}(\mathcal D(\mathrm{Liq}_p)))
\]
defines a sheaf of $\infty$-categories.
\end{proposition}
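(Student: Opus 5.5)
Since Proposition~\ref{prop:liquidquasicoherent} already gives descent for the full categories $\mathcal D(\mathrm{Liq}_p(\mathcal O(Z)))$, full faithfulness is automatic. It remains to check two things. First, base change along $Z'\subset Z$ preserves pseudocompactness. Second, if $V\in\mathcal D(\mathrm{Liq}_p(\mathcal O(Z)))$ has pseudocompact base changes $V_i$ along a finite cover $Z=\bigcup Z_i$, then $V$ itself is pseudocompact.

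\textbf{Presheaf property.} Base change is a left adjoint that preserves compact projectives: it sends $\mathcal M_{<p}(S)\otimes\mathcal O(Z)$ to $\mathcal M_{<p}(S)\otimes\mathcal O(Z')$. Because $Z'$ is Stein, this base change is right $t$-exact. Hence a right-bounded complex of compact projectives goes to another such complex, and pseudocompactness is preserved.

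\textbf{Descent.} I will use the criterion of Definition~\ref{def:pseudocompact}: $V$ is pseudocompact iff $\mathrm{Hom}(V,-)$ commutes with direct sums of objects in $\mathcal D_{\leq n}$ for every $n$. By descent, $V$ is the finite limit of the \v{C}ech diagram formed by the base changes $V_{i_0\ldots i_k}$ (each pushed forward to $\mathcal O(Z)$), and this diagram has length bounded by the number of opens in the cover.

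\begin{itemize}
\item[(a)] A finite limit of pseudocompact objects is pseudocompact, since pseudocompacts form a stable subcategory closed under finite limits and colimits.
\item[(b)] For an object $V'$ over $\mathcal O(Z')$, I would write $\mathrm{Hom}_{\mathcal O(Z)}(V',W)=\mathrm{Hom}_{\mathcal O(Z')}(V',\underline{\mathrm{RHom}}_{\mathcal O(Z)}(\mathcal O(Z'),W))$. I then need $W\mapsto \underline{\mathrm{RHom}}(\mathcal O(Z'),W)$ to send $\mathcal D_{\leq n}$ into $\mathcal D_{\leq n+c}$ for a uniform constant $c$, and to commute with direct sums of uniformly bounded objects.
\end{itemize}

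Condition (b) is the main obstacle: $\mathcal O(Z')$ is not compact over $\mathcal O(Z)$. I would first try to exploit that $\mathcal O(Z')$ is basic nuclear (Proposition~\ref{prop:steincompactnuclear}), i.e.\ a sequential colimit of compact objects $c_k$ along trace-class maps. Then $\underline{\mathrm{RHom}}(\mathcal O(Z'),W)=R\varprojlim_k \underline{\mathrm{RHom}}(c_k,W)$. Using Lemma~\ref{traceclassproperties}(3), the pro-system $\underline{\mathrm{RHom}}(c_k,W)$ is pro-isomorphic to $c_k^\vee\otimes W$, which commutes with direct sums. This holds provided $R\varprojlim$ of these systems commutes with direct sums of uniformly bounded objects; I expect this to follow from a Mittag-Leffler-type vanishing of $\varprojlim^1$ of bounded amplitude, using that the $c_k$ can be taken to be sequence spaces $\mathcal M_{<p}(\mathbb N)$.

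If that route proves too delicate, the alternative is more concrete. Resolve each $V_i$ by finite free $\mathcal O(Z_i)$-modules; this is possible because nuclear pseudocompact objects are discrete by Propositions~\ref{prop:almostdualizable} and~\ref{prop:banachalgebrafredholm}. Then glue the resolutions degree by degree through the finite \v{C}ech complex, which is again a finite limit of right-bounded complexes. Finally, I would lift these to compact projectives over $\mathcal O(Z)$ by approximating generators, in the spirit of the proof of Proposition~\ref{prop:almostdualizable}.
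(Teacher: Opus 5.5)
The presheaf argument and the reduction to essential surjectivity are fine. The descent step has a genuine gap, however: the second half of your condition (b) is false, so that route cannot be completed.

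Suppose $\underline{\mathrm{RHom}}_{\mathcal O(Z)}(\mathcal O(Z'),-)$ commuted with uniformly bounded direct sums. Then restriction of scalars would send pseudocompact $\mathcal O(Z')$-modules to pseudocompact $\mathcal O(Z)$-modules, so $\mathcal O(Z')$ itself would be pseudocompact over $\mathcal O(Z)$. It is also nuclear (Proposition~\ref{prop:steincompactnuclear}, Corollary~\ref{cor:relativenuclear}). Proposition~\ref{prop:nuclearpseudocompactdiscrete} would then make $\mathcal O(Z')(\ast)$ a finitely generated $\mathcal O(Z)(\ast)$-module. This already fails for $Z'$ a point of a closed disc $Z$: the fibre of $\mathcal O(Z')$ at every other point of $Z$ vanishes. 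By Nakayama, such a finitely generated module would be killed by a nonzero element, but $\mathcal O(Z')$ is nonzero and torsion-free.

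Your hoped-for Mittag-Leffler step is exactly where this breaks. Each tower of Banach spaces with dense transition maps has vanishing $\varprojlim^1$, but sequential $R\varprojlim$ does not commute with infinite direct sums of such towers. Note also that the amplitude half of (b) is trivial, since $\mathcal O(Z')$ is connective.

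The underlying problem is that you expand $V$ in the \emph{source}. Viewed over $\mathcal O(Z)$, the pieces $V_J$ are not pseudocompact; only their finite limit is. That choice forces the right adjoint $\underline{\mathrm{RHom}}_{\mathcal O(Z)}(\mathcal O(Z_J),-)$ into the argument. Your fallback does not help either. The $V_i$ are only assumed pseudocompact, not nuclear, so you cannot replace them by finite free resolutions. And lifting glued resolutions to compact projectives over $\mathcal O(Z)$ is precisely the content that is missing.

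The fix, which is the paper's argument, is to expand the \emph{test object} instead. Put $Z_J=\bigcap_{j\in J}Z_j$, which is again compact Stein. Proposition~\ref{prop:liquidquasicoherent} writes any $M$ as the finite limit $\varprojlim_J M\otimes^L_{\mathcal O(Z)}\mathcal O(Z_J)$, and the base-change/forgetful adjunction gives
\[
\mathrm{Hom}_{\mathcal O(Z)}(V,M)\simeq\varprojlim_J \mathrm{Hom}_{\mathcal O(Z_J)}\bigl(V_J,\,M\otimes^L_{\mathcal O(Z)}\mathcal O(Z_J)\bigr).
\]
Each $V_J$ is pseudocompact by the presheaf property.

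Now the functor that appears is the left adjoint $-\otimes^L_{\mathcal O(Z)}\mathcal O(Z_J)$. It commutes with direct sums for free, and finite limits commute with direct sums. What you need is that it has bounded Tor-amplitude, i.e.\ it sends $\mathcal D_{\leq n}$ into $\mathcal D_{\leq n+c}$ with $c$ independent of $n$, so that a uniformly bounded family stays uniformly bounded.

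This is the paper's key observation. By idempotence, base change to $Z'$ is the forgetful functor to $\mathbb C[X_1,\ldots,X_n]$-modules followed by $\varinjlim_{U\supset Z'}\underline{\mathrm{RHom}}([1\to\mathcal O(F)],-)$. Here $U$ runs over open neighbourhoods of $Z'$ and $F$ is the closed complement of $U$. Since $\mathcal O(F)=R\Gamma(F,\mathcal O)$, its amplitude is bounded by the cohomological dimension of $F\subset\mathbb C^n$.

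So compared with your (b), the roles are swapped. For $\underline{\mathrm{RHom}}(\mathcal O(Z'),-)$ the amplitude bound is trivial and commuting with sums is false. For base change, commuting with sums is trivial and the amplitude bound is the real input.
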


\begin{remark} The same result holds for actual compactness, in which case the argument is even easier. The variant for pseudocompact objects is however sometimes slightly more useful.
\end{remark}

\begin{proof} The key observation is that for $Z'\subset Z$, the functor $-\otimes_{<p,\mathcal O(Z)}^L \mathcal O(Z')$ is of bounded Tor-dimension. By idempotence, this can also be written as the composite of the (exact) forgetful functor to $\mathcal D(\mathrm{Liq}_p(\mathbb C[X_1,\ldots,X_n]))$, and the base change of $Z'$. The latter can also be written as the filtered colimit over the base change to all open neighborhoods $U$ of $Z'$. But if $U$ corresponds to a complementary closed $W$, then the base change to $U$ is given by $\underline{\mathrm{RHom}}([1\to \mathcal O(W)],-)$. Thus, it suffices to bound the amplitude of $\mathcal O(W)$ to the right. But $\mathcal O(W)=R\Gamma(W,\mathcal O)$, and the sheaf $\mathcal O$ is concentrated in degree $0$. As $W\subset \mathbb C^n$ has finite cohomological dimension, the result follows.

Again, the association clearly defines a presheaf of $\infty$-categories that satisfies the fully faithfulness part of the sheaf axiom. For essential surjectivity, we need to see that if $V\in \mathcal D(\mathrm{Liq}_p(\mathcal O(Z))$ is such that $V_i=V\otimes_{<p,\mathcal O(Z)}^L \mathcal O(Z_i)$ is pseudocompact for some finite cover of $Z$ by $Z_i$'s, then $V$ is pseudocompact. But one can compute $\mathrm{Hom}_{\mathcal O(Z)}(V,-)$ as a finite Cech-type limit of $\mathrm{Hom}_{\mathcal O(Z_J)}(V_J,-\otimes_{<p,\mathcal O(Z)}^L \mathcal O(Z_J))$ over all nonempty finite subsets $J$ of $I$. By the observation on finite Tor-dimension, any direct sum of objects in $\mathcal D_{\leq n}(\mathcal O(Z))$ stays of a similar form after base change to any $Z_J$ (for some possibly different $n$), so the result follows.
\end{proof}

Thus Theorem~\ref{thm:pseudocoherentglues} reduces to:

\begin{proposition}\label{prop:nuclearpseudocompactdiscrete} For any compact Stein subset $Z\subset \mathbb C^n$, one has an equality of full subcategories:
\[
\mathcal D_{pc}(\mathcal O(Z)) = \mathcal D_{nuc}(\mathrm{Liq}_p(\mathcal O(Z)))\cap \mathcal D_{pc}(\mathrm{Liq}_p(\mathcal O(Z)))\subset \mathcal D(\mathrm{Liq}_p(\mathcal O(Z))).
\]
\end{proposition}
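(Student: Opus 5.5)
The plan is to apply Proposition~\ref{prop:almostdualizable} to $C=\mathcal D(\mathrm{Liq}_p(\mathcal O(Z)))$, with $\mathcal A=\mathrm{Liq}_p(\mathcal O(Z))$ and $A=\mathrm{End}(1)=\mathcal O(Z)(\ast)$. Since $Z$ is Stein, $\mathcal O(Z)$ is concentrated in degree $0$. By Proposition~\ref{prop:steincompactnuclear}, $H^0(\mathcal O(Z))=\mathcal O(Z)$ is a sequential colimit of Banach algebras. Proposition~\ref{prop:banachalgebrafredholm} then shows that $C$ satisfies Assumption~\ref{ass:dualstatic} and is Fredholm. With that in hand, the final clause of Proposition~\ref{prop:almostdualizable} says that every nuclear pseudocompact object is discrete and lies in $\mathcal D_{pc}(A)$. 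This is the inclusion ``$\supseteq$'' of the statement. Two routine checks are needed first. One is that nuclearity in $C$ agrees with nuclearity of the underlying object of $\mathcal D(\mathrm{Liq}_p)$; this is Corollary~\ref{cor:relativenuclear}, using that $\mathcal O(Z)$ is nuclear. The other is that the unit of $C$ is compact projective, which is clear.

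For ``$\subseteq$'', take $X\in\mathcal D_{pc}(\mathcal O(Z))$, represented by a right-bounded complex of finite projective $\mathcal O(Z)(\ast)$-modules. Each such module is a retract of $\mathcal O(Z)^m$, hence compact projective in $\mathcal A$. So the same complex shows that $X$ is pseudocompact in $C$; by Definition~\ref{def:pseudocompact} it is also discrete.

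It remains to show that $X$ is nuclear. By Proposition~\ref{prop:nuclearcompact}, each finite free $\mathcal O(Z)^m$ is dualizable, hence nuclear. Nuclear objects are stable under shifts, cones, retracts and all colimits. The right-bounded complex is the colimit of its stupid truncations, each of which is a finite complex of retracts of finite frees, so $X$ is nuclear. More simply, every discrete object is a colimit of shifts of $1$, and $1$ is nuclear because it is dualizable.

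The identification of discrete pseudocompact objects of $C$ with $\mathcal D_{pc}(A)$ needs one more point. A right-bounded complex of compact projectives of $\mathcal A$ that happens to be discrete must be quasi-isomorphic to a complex of finite projective $A$-modules. This is implicit in Proposition~\ref{prop:almostdualizable}(2): $X$ is a colimit of dualizable, hence discrete compact, objects $X_n$, and these are perfect over $A$ by Proposition~\ref{prop:discretecompact}. Because the cone of $X_n\to X$ lies in $\mathcal D_{\geq n}$, the $X_n$ approximate $X$ in the $t$-structure, and this gives pseudocoherence over $A$ by the standard criterion.

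The main obstacle is verifying the hypotheses of Proposition~\ref{prop:banachalgebrafredholm} for $\mathcal O(Z)$ itself. The Stein hypothesis is used exactly to ensure that $\mathcal O(Z)$ is a static ring equal to the colimit of Banach algebras given by Proposition~\ref{prop:steincompactnuclear}. One must also check that liquid modules over this condensed ring are the module category there, since Proposition~\ref{prop:banachalgebrafredholm} requires $A$ to be an honest filtered colimit of $p$-Banach algebras.
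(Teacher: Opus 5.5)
Your proposal is correct and follows the paper's argument. The paper's proof is just the reduction, via Proposition~\ref{prop:almostdualizable}, to the Fredholm property of $\mathcal D(\mathrm{Liq}_p(\mathcal O(Z)))$, which it gets from Propositions~\ref{prop:steincompactnuclear} and~\ref{prop:banachalgebrafredholm} exactly as you do. Your extra checks (the easy inclusion, and the passage from discrete pseudocompact objects to $\mathcal D_{pc}(A)$, which uses that $(-)(\ast)=\mathrm{Hom}(1,-)$ is $t$-exact so the cones of $X_n\to X$ lie in $\mathcal D_{\geq n}(A)$) only make explicit what the paper leaves implicit, and you do not need Corollary~\ref{cor:relativenuclear}, since $\mathcal D_{nuc}$ here already means nuclearity in $C$ itself.
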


\begin{proof} By Proposition~\ref{prop:almostdualizable}, it suffices to see that $\mathcal D(\mathrm{Liq}_p(\mathcal O(Z)))$ is Fredholm. This follows from Proposition~\ref{prop:steincompactnuclear} and Proposition~\ref{prop:banachalgebrafredholm}.
\end{proof}\newpage

\section{Lecture X: Coherent Sheaves, II}

In the last lecture, we proved the following theorem.

\begin{theorem} The association taking any compact Stein subset $Z\subset \mathbb C^n$ to $\mathcal D_{pc}(\mathcal O(Z))$ defines a sheaf of $\infty$-categories (with respect to the finite cover topology).
\end{theorem}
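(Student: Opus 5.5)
This is the theorem proved at the end of Lecture IX, restated here, so the plan follows the reduction carried out there. The idea is to realize $\mathcal D_{pc}(\mathcal O(Z))$ as an intersection of two full subcategories of $\mathcal D(\mathrm{Liq}_p(\mathcal O(Z)))$, each of which we already know glues. We work inside the sheaf of $\infty$-categories $Z\mapsto \mathrm{Mod}_{\mathcal O(Z)}(\mathcal D(\mathrm{Liq}_p))$ of Proposition~\ref{prop:liquidquasicoherent}, restricted to compact Stein $Z$, where this category is $\mathcal D(\mathrm{Liq}_p(\mathcal O(Z)))$.

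\emph{Step 1 (nuclear objects glue).} By Proposition~\ref{prop:nuclearglues}, the nuclear objects form a subsheaf, with nuclearity over $\mathcal O(Z)$ equal to nuclearity of the underlying object by Corollary~\ref{cor:relativenuclear}. The point is that base change $-\otimes^L_{<p,\mathcal O(Z)}\mathcal O(Z')$ preserves nuclearity, because every $\mathcal O(Z)$ is basic nuclear (Proposition~\ref{prop:steincompactnuclear}). Also, for a finite cover, an object is recovered as a finite \v{C}ech limit of nuclear objects, and nuclear objects are stable under finite limits.

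\emph{Step 2 (pseudocompact objects glue).} By Proposition~\ref{prop:pseudocompactglues}, the pseudocompact objects form a subsheaf. The key input is that for $Z'\subset Z$ the base change has bounded Tor-dimension, which follows from the finite cohomological dimension of closed subsets $W\subset\mathbb C^n$ via $\mathcal O(W)=R\Gamma(W,\mathcal O)$. Pseudocompactness is the condition that $\mathrm{Hom}(V,-)$ commutes with direct sums of objects in $\mathcal D_{\leq n}$. Given that, it descends through the finite \v{C}ech expression
\[
\mathrm{Hom}_{\mathcal O(Z)}(V,-)=\varprojlim_J \mathrm{Hom}_{\mathcal O(Z_J)}\bigl(V_J,\,-\otimes^L_{<p,\mathcal O(Z)}\mathcal O(Z_J)\bigr).
\]

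\emph{Step 3 (identify the intersection).} An intersection of two subsheaves of full subcategories is again a subsheaf. So it remains to show, as in Proposition~\ref{prop:nuclearpseudocompactdiscrete}, that nuclear and pseudocompact objects are exactly $\mathcal D_{pc}(\mathcal O(Z))$. By Proposition~\ref{prop:almostdualizable}, this holds once $\mathcal D(\mathrm{Liq}_p(\mathcal O(Z)))$ satisfies Assumption~\ref{ass:dualstatic} and is Fredholm. Both follow from Proposition~\ref{prop:banachalgebrafredholm}, because $\mathcal O(Z)=H^0(\mathcal O(Z))$ (using that $Z$ is Stein) is a sequential colimit of Banach algebras by Proposition~\ref{prop:steincompactnuclear}.

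I expect the main obstacle to be the Fredholm property in Step 3. That is where genuine analysis enters: one truncates a trace-class matrix into a finite block plus a tail $H$ whose geometric series converges in the Banach algebra. A secondary point to check carefully is that the identification of discrete pseudocoherent complexes is compatible with base change. This is clear, since abstract finite projective modules base change to abstract finite projective modules.
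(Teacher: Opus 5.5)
Your proposal is correct and follows the paper's own argument from Lecture~IX essentially step for step. Nuclear objects glue (Proposition~\ref{prop:nuclearglues}), and pseudocompact objects glue via the bounded Tor-dimension of restriction (Proposition~\ref{prop:pseudocompactglues}). Their intersection is identified with $\mathcal D_{pc}(\mathcal O(Z))$ in Proposition~\ref{prop:nuclearpseudocompactdiscrete}, using Proposition~\ref{prop:almostdualizable} together with the Fredholm property supplied by Propositions~\ref{prop:steincompactnuclear} and~\ref{prop:banachalgebrafredholm}.
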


Here, a compact subset $Z\subset \mathbb C^n$ was defined to be Stein if $H^i(Z,\mathcal O)=0$ for $i>0$, and $\mathcal D_{pc}(\mathcal O(Z))\subset \mathcal D(\mathcal O(Z))$ was the full subcategory of objects that admit a bounded to the right (but not necessarily finite) representative by a complex consisting of finite projective $\mathcal O(Z)$-modules.

In this lecture, we will refine this result by proving a descent result on the abelian level. As a first preparation, we note that with an essentially formal argument, one can control the amplitude of the complexes to the right.

\begin{corollary}\label{cor:animateddescends} The association taking any compact Stein subset $Z\subset \mathbb C^n$ to the $\infty$-category $\mathcal D_{pc,\geq 0}(\mathcal O(Z))$ of pseudocoherent complexes in nonnegative homological degrees defines a sheaf of $\infty$-categories (with respect to the finite cover topology).
\end{corollary}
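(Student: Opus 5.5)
Since the theorem of the previous lecture already gives descent for all of $\mathcal D_{pc}(\mathcal O(Z))$, the plan is to show that the condition of lying in homological degrees $\geq 0$ can be checked locally on a finite cover. Then $Z\mapsto \mathcal D_{pc,\geq 0}(\mathcal O(Z))$ is a full subpresheaf, closed under the base change functors, of a sheaf of $\infty$-categories, and it is itself a sheaf. Full faithfulness is inherited automatically from the ambient sheaf. So only two things need proof: that the base change preserves $\geq 0$, and that an object whose restrictions all lie in degrees $\geq 0$ lies there itself.

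\textbf{Preservation.} For Stein $Z'\subset Z$, the base change $\mathcal O(Z')\otimes^L_{\mathcal O(Z)} -$ on pseudocoherent complexes is the usual derived tensor product of abstract modules. On the one hand the liquid tensor product of finite projectives is computed algebraically; on the other, pseudocoherent objects are identified with $\mathcal D_{pc}$ of abstract modules by Proposition~\ref{prop:nuclearpseudocompactdiscrete}. A connective complex of finite projectives therefore maps to a connective complex, because the restriction map $\mathcal O(Z)\to \mathcal O(Z')$ is a map of static rings, $Z'$ being Stein. So the presheaf is well-defined.

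\textbf{Locality.} Let $Z=\bigcup_i Z_i$ be a finite cover by compact Stein sets, and let $V\in\mathcal D_{pc}(\mathcal O(Z))$ with each $V_i=V\otimes^L\mathcal O(Z_i)$ in $\mathcal D_{\geq 0}$. We must show $V\in \mathcal D_{\geq 0}$. Because $V$ is pseudocoherent, it is bounded to the right, so there is a lowest nonvanishing homology $H_m(V)$. If $m<0$, then $H_m(V)$ is a nonzero finitely presented $\mathcal O(Z)$-module, and right exactness of the tensor product gives
\[
H_m(V_i)=H_m(V)\otimes_{\mathcal O(Z)}\mathcal O(Z_i).
\]
It therefore suffices to prove the following: a finitely presented $\mathcal O(Z)$-module $M$ with $M\otimes\mathcal O(Z_i)=0$ for all $i$ is zero. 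To see this, view $M[0]$ as an object of $\mathcal D_{pc}(\mathcal O(Z))$. The sheaf property of the previous theorem recovers $M[0]$ as the finite \v{C}ech limit of its restrictions to the $Z_J$. Each restriction $M\otimes^L\mathcal O(Z_J)$ has $H_0 = M\otimes\mathcal O(Z_J)$, which vanishes since it is a further base change of some $M\otimes\mathcal O(Z_i)=0$. However, these restrictions may have higher Tor groups, so this does not immediately give vanishing.

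I expect this to be the main obstacle. The way around it is to use the lowest degree of $V$ directly. All the $V_J$ lie in $\mathcal D_{\geq 0}$ (preservation applied to $Z_J\subset Z_i$), and $V$ is the finite limit $\varprojlim_J V_J$ over a \v{C}ech cube of dimension $d=|I|-1$. Hence $V\in\mathcal D_{\geq -d}$, so the connectivity is bounded below a priori. Now $H_m(V)=M$ with $m\geq -d$, and $M$ is a finitely presented module killed by every base change. Apply the same limit argument to $\tau_{\leq m}V = M[m]$, which is pseudocoherent since $M$ is finitely presented and $\mathcal O(Z)$ is coherent enough (or argue directly with $\mathrm{Hom}(V,M[m])$). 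Its restrictions live in degrees $\geq m$, with lowest homology $M\otimes \mathcal O(Z_J)=0$. So the restrictions lie in $\geq m+1$, and their \v{C}ech limit lies in $\geq m+1-d$.

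Iterating only shifts the bound down, so instead I would compute $\mathrm{Hom}(V, M[m])$. The truncation map $V\to M[m]$ is nonzero. By the sheaf property this Hom is a limit of $\mathrm{Hom}(V_J, M_J[m])$, and these vanish because $V_J\in\mathcal D_{\geq 0}$ while $M_J[m]=(M\otimes^L\mathcal O(Z_J))[m]$ is concentrated in degrees $\leq m<0$, by $t$-structure orthogonality. The limit is a finite limit of spaces of maps that are all contractible, hence zero. This contradicts nonvanishing of the truncation, so $V\in\mathcal D_{\geq 0}$, which completes the descent.
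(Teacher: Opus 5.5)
There is a genuine gap, and it sits exactly at the obstacle you flagged yourself. In your final step you assert that $M_J[m]=(M\otimes^L\mathcal O(Z_J))[m]$ is concentrated in degrees $\leq m$. That is false in general.

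The derived base change $M\otimes^L\mathcal O(Z_J)$ is connective, with $H_k=\mathrm{Tor}_k(M,\mathcal O(Z_J))$ for $k\geq 0$. Your hypothesis only kills $H_0$, so $M_J[m]$ lies in $\mathcal D_{\geq m+1}$, not in $\mathcal D_{\leq m}$. Orthogonality against $V_J\in\mathcal D_{\geq 0}$ would require $\mathrm{Tor}_k(M,\mathcal O(Z_J))=0$ for all $k\geq -m\geq 1$. That is precisely the higher Tor vanishing you said was unavailable, reintroduced silently.

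It really is unavailable at this stage. Flatness of $\mathcal O(Z)\to\mathcal O(Z')$, and coherence of $\mathcal O(Z)$ (which you also invoke to make $M[m]$ pseudocoherent), are parts of Theorem~\ref{thm:compactsteincoherent}, whose proof uses the present corollary. So the $\mathrm{Hom}(V,M[m])$ argument does not close, and the \v{C}ech bound $V\in\mathcal D_{\geq -d}$ does not help either.

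The missing idea is the one in Lemma~\ref{lem:fingenmodulelocallyzero}: pass to a nonzero \emph{cyclic} quotient $\mathcal O(Z)/I$ of $M=H_m(V)$.
\begin{enumerate}
\item Such a quotient exists: kill all but one element of a minimal generating set.
\item The hypothesis $M\otimes\mathcal O(Z_i)=0$ passes to this quotient by right exactness.
\item Now $\mathcal O(Z)/I$ is a commutative $\mathcal O(Z)$-algebra. Hence $(\mathcal O(Z)/I)\otimes^L\mathcal O(Z_J)$ is a connective commutative algebra whose $H_0$ vanishes.
\item So $1=0$ in $H_0$, and since every $H_k$ is an $H_0$-module, the whole derived algebra is $0$, all higher Tor included.
\item The finite \v{C}ech complex for liquid modules (Proposition~\ref{prop:liquidquasicoherent}) then forces $\mathcal O(Z)/I=0$, a contradiction.
\end{enumerate}

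The rest of your outline matches the paper's argument and is fine: preservation of connectivity under base change, inherited full faithfulness, reduction to the lowest homology group, and $H_m(V_i)=H_m(V)\otimes\mathcal O(Z_i)$.
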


This is some version of the Cartan--Oka Theorem on the vanishing of positive degree cohomology of coherent sheaves on Stein spaces.

\begin{proof} The pullback functors are well-defined, and by the previous theorem, it satisfies the fully faithfulness part of the sheaf axiom. It remains to see that if $V\in \mathcal D_{pc}(\mathcal O(Z))$ is such that $V_i=V\otimes_{<p,\mathcal O(Z)}^L \mathcal O(Z_i)$ (for some finite cover of $Z$ by $Z_i$'s) lies in $\mathcal D_{\geq 0}$ for all $i$, then $V\in \mathcal D_{\geq 0}$. This follows from the next lemma applied to the lowest nonzero homology group of $V$.
\end{proof}

\begin{lemma}\label{lem:fingenmodulelocallyzero} Let $Z\subset \mathbb C^n$ be compact Stein, and let $M$ be a finitely generated $\mathcal O(Z)$-module. Assume that for all $z\in Z$, the (non-derived!) base change $M\otimes_{<p,\mathcal O(Z)} \mathcal O_z=0$ vanishes. Then $M=0$.
\end{lemma}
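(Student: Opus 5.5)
Throughout, $\mathcal O_z=\varinjlim_{Z'\ni z}\mathcal O(Z')$ is the germ algebra at $z$, the filtered colimit over compact Stein neighbourhoods $Z'$ of $z$ in $Z$. This is the idempotent algebra of the closed point $\{z\}$, and small polydiscs around $z$ intersected with $Z$ are cofinal and Stein.

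The plan is a compactness argument followed by a gluing argument using the Stein sheaf property. First, fix generators $m_1,\ldots,m_k$ of $M$. Since $\mathcal O_z$ is a filtered colimit and $-\otimes_{<p,\mathcal O(Z)}-$ commutes with filtered colimits, we have $M\otimes\mathcal O_z=\varinjlim_{Z'} M\otimes_{<p,\mathcal O(Z)}\mathcal O(Z')$. The images of the finitely many $m_j$ vanish in this colimit, so for each $z$ there is a compact Stein neighbourhood $Z_z$ of $z$ in which every $m_j$ dies in $M\otimes_{\mathcal O(Z)}\mathcal O(Z_z)$. That module is generated by the images of the $m_j$, hence it is zero. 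By compactness, finitely many $Z_{z_1},\ldots,Z_{z_r}$ cover $Z$, and each is Stein, being an intersection of Stein sets with $Z$. So $M_i:=M\otimes_{<p,\mathcal O(Z)}\mathcal O(Z_i)=0$ for all $i$.

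Next we pass to the derived setting to conclude $M=0$. Choose a presentation $\mathcal O(Z)^a\to\mathcal O(Z)^k\to M\to 0$. Regard $M$ as an object of $\mathcal D(\mathrm{Liq}_p(\mathcal O(Z)))$ via the discrete module, equivalently the cokernel computed in liquid modules; this is discrete, since the embedding of abstract modules is exact. Consider the derived base change $V_i=M\otimes^L_{<p,\mathcal O(Z)}\mathcal O(Z_i)$. Its $H_0$ is the underived base change $M_i=0$, so $V_i\in\mathcal D_{\geq 1}$. By Proposition~\ref{prop:liquidquasicoherent}, descent for the finite cover of $Z$ by the $Z_i$, $M$ is recovered as the finite \v{C}ech limit of the $V_J$ over nonempty $J\subseteq\{1,\ldots,r\}$. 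Each $V_J$ is the base change of $V_i$ for $i\in J$ and lies in $\mathcal D_{\geq 1}$ up to a bounded shift coming from the bounded Tor-dimension of the base change (proof of Proposition~\ref{prop:pseudocompactglues}). This alone does not force $H_0(M)=0$, because the \v{C}ech limit lowers homological degree.

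I expect this to be the main obstacle, and I would get around it by induction on $r$ using the Mayer--Vietoris square for $Z=Z_1\cup Z'$, with $Z'=Z_2\cup\cdots\cup Z_r$ handled inductively. For two pieces we have a fibre sequence $M\to V_1\oplus V'\to V_{1}\otimes\mathcal O(Z_1\cap Z')$. Here $Z'$ need not be Stein, but its idempotent algebra still sits in homological degrees $\ge0$ for compact $Z'$, as noted after Definition~\ref{def:steincompact}. Hence $H_0(M)$ injects into $H_0(V_1)\oplus H_0(V')$. We have $H_0(V_1)=M_1=0$, and $H_0(V')=0$ by the inductive claim. The inductive claim should be the general statement that $H_0(M\otimes^L\mathcal O(W))$ is computed by gluing the $H_0$'s over pieces, which holds because right exactness makes $H_0$ of a base change equal to the underived base change. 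Therefore $H_0(M)=M=0$.
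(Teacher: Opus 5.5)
Your first step is fine and matches the paper: the compactness argument gives a finite cover $Z=\bigcup Z_i$ by compact Stein sets with $M\otimes_{<p,\mathcal O(Z)}\mathcal O(Z_i)=0$. You also diagnose the obstacle correctly. The derived base changes $V_i$ are only known to lie in $\mathcal D_{\geq 1}$, and the \v{C}ech limit can push their $H_1$ down into $H_0(M)$.

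Your workaround does not work. The fibre sequence $M\to V_1\oplus V'\to M\otimes^L\mathcal O(Z_1\cap Z')$ gives an exact sequence
\[
H_1(V_1)\oplus H_1(V')\to H_1\bigl(M\otimes^L\mathcal O(Z_1\cap Z')\bigr)\to H_0(M)\to H_0(V_1)\oplus H_0(V').
\]
So injectivity on $H_0$ requires the first map to be surjective. That is control over $H_1$ of the derived base changes, which is exactly what is unknown. Already for $r=2$ you only get $H_0(M)=\mathrm{coker}\bigl(H_1(V_1)\oplus H_1(V_2)\to H_1(V_{12})\bigr)$.

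Connectivity of the idempotent algebra of $Z'$ says nothing about this. You also have its direction backwards. The remark after Definition~\ref{def:steincompact} says $\mathcal O(Z')$ sits in \emph{cohomological} degrees $\geq 0$, i.e.\ homological degrees $\leq 0$; lying in homological degrees $\geq 0$ is precisely the Stein condition.

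Your ``inductive claim'' that $H_0$ of base changes glues is descent for the underived base change. That is essentially flatness of $\mathcal O(Z)\to\mathcal O(Z')$. The paper proves this flatness (Theorem~\ref{thm:compactsteincoherent}(3)) using this very lemma, so it is not available here.

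The missing idea is to eliminate the higher homology with a multiplicative structure. Since the underived tensor product is right exact, the hypothesis passes to quotients of $M$. A nonzero finitely generated module has a nonzero cyclic quotient: take a minimal generating set and kill all but one generator. So you may assume $M=\mathcal O(Z)/I$.

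Then $M$ is a commutative $\mathcal O(Z)$-algebra, so each $V_i=M\otimes^L_{<p,\mathcal O(Z)}\mathcal O(Z_i)$ is a derived commutative algebra. The condition $H_0(V_i)=0$ means $1=0$ there. Every $H_j(V_i)$ is an $H_0(V_i)$-module, so $V_i=0$ entirely, not just its $H_0$. The finite \v{C}ech limit of zero objects (Proposition~\ref{prop:liquidquasicoherent}) then gives $M=0$ with no degree bookkeeping at all. In the cyclic case your compactness step also simplifies: $1=0$ in $M\otimes\mathcal O_z$ forces $1=0$ in $M\otimes\mathcal O(Z')$ for some neighbourhood $Z'$.
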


This lemma holds true in very large generality for ``analytic rings''. We note that, crucially, the tensor products here are underived.

\begin{proof} Assume $M\neq 0$. The hypothesis passes to any quotient of $M$, so we can assume that $M$ is generated by one element, so $M=\mathcal O(Z)/I$ for some ideal $I\subset \mathcal O(Z)$. In that case, $M$ acquires the structure of an $\mathcal O(Z)$-algebra. Thus, $M\otimes_{<p,\mathcal O(Z)}^L \mathcal O(Z')$ is a derived algebra, and in particular is $0$ as soon as $1=0$ in its $H_0$. By assumption, this happens in all the stalks, and thus in some neighborhood. Thus, $M\otimes_{<p,\mathcal O(Z)}^L \mathcal O(Z_i)=0$ for some finite cover of $Z$ by $Z_i$'s. But $M$ can be recovered via a Cech complex from these base changes, so $M=0$.
\end{proof}

\begin{remark} Our arguments so far were of a very formal nature; we used no finiteness results of the algebras $\mathcal O(Z)$, and one could obtain results for many variants. For example, \cite{Andreychev} argues in a very similar way to obtain such descent results on analytic adic spaces, without any noetherian hypotheses. In such situations, it may also happen that the localization maps like $\mathcal O(Z)\to \mathcal O(Z')$ are not flat. In such a large generality, getting descent for $\mathcal D_{pc,\geq 0}$ is the best one can hope for; basically, one is forced to replace modules by animated modules, and then the results say that for pseudocoherent animated modules, descent works as expected.
\end{remark}

It turns out that to go from Corollary~\ref{cor:animateddescends} to the usual theory of coherent sheaves, one needs to prove no further results about modules, but instead about the rings $\mathcal O(Z)$. Here, we have the following theorem; it is a strong form of Oka's (first) coherence theorem \cite{OkaVII}.

\begin{theorem}\label{thm:compactsteincoherent} Let $Z\subset \mathbb C^n$ be compact Stein.
\begin{enumerate}
\item The ring $\mathcal O(Z)$ is coherent, i.e.~any finitely generated ideal is finitely presented.
\item The maximal ideals of $\mathcal O(Z)$ correspond to the points of $Z$. The complete local rings of $\mathcal O(Z)$ at closed points are formal power series algebras, and are flat over $\mathcal O(Z)$. The ring $\mathcal O(Z)$ is flat over $\mathbb C[X_1,\ldots,X_n]$.
\item If $Z'\subset Z$ is compact Stein, the restriction map $\mathcal O(Z)\to \mathcal O(Z')$ is flat.
\item If $Z$ is contained in the interior of some compact $Z'$ (not necessarily Stein), any ideal of $\mathcal O(Z')$ becomes finitely generated after base change to $\mathcal O(Z)$.
\item If $Z$ is a point, then $\mathcal O(Z)$ is noetherian.
\item If $\mathcal O(Z)$ is noetherian, then it is regular and excellent.
\end{enumerate}
\end{theorem}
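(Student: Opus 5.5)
The plan is to prove (5) first, then let the general case rest on it together with the descent machinery of Lecture IX. The order is (5), then (2), then (4), then (1), then (3), then (6).

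\textbf{Step (5).} Take $Z=\{z\}$. Then $\mathcal O(Z)=\mathcal O_z$ is the colimit of $\mathcal O^{hol}(\overline D(z,r))$ over $r\to 0$, which is the ring of convergent power series $\mathbb C\{X_1,\ldots,X_n\}$. Noetherianity of this ring is classical, via Weierstra\ss\ preparation and division with induction on $n$. I would prove division directly on overconvergent polydisc algebras. Writing a distinguished polynomial $g$ in $X_n$, division of $f$ by $g$ is a Banach-space statement on a small polydisc. It is solved by a geometric series in the Banach algebra $\mathcal O^{hol}(\overline D(z,r'))$, exactly as in the proof of Proposition~\ref{prop:banachalgebrafredholm}. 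So no Cauchy integrals are needed. Preparation and the Hilbert basis theorem then give noetherianity.

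\textbf{Step (2).} For each $z\in Z$ we have a homomorphism $\mathcal O(Z)\to\mathbb C$. Conversely, let $\mathfrak m$ be a maximal ideal. If $\mathfrak m\otimes\mathcal O_z\neq\mathcal O_z$ failed for every $z$, then Lemma~\ref{lem:fingenmodulelocallyzero}, applied to suitable finitely generated subideals, forces $1\in\mathfrak m$. Alternatively, Ostrowski's Theorem~\ref{thm:ostrowski} applied to the Banach-algebra presentation in Proposition~\ref{prop:steincompactnuclear} shows that $\mathcal O(Z)/\mathfrak m=\mathbb C$ and that the point lies in $Z$. The completion at $z$ is $\mathbb C[[X-z]]$. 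Flatness of $\mathcal O(Z)\to\mathcal O_z$ and of $\mathbb C[X]\to\mathcal O(Z)$ reduces to Tor-vanishing. The derived base changes have already been computed (Lemma~\ref{analyticaffinelemma}), and the liquid and abstract tensor products agree on finitely presented modules.

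\textbf{Step (4).} Given an ideal $I\subset\mathcal O(Z')$, cover $Z$ by finitely many small polydiscs inside $Z'$. At each point the stalk $I\mathcal O_z$ is finitely generated by (5), with generators coming from $I$ itself. These generators are defined on a neighbourhood. The finitely many resulting ideals generate $I\mathcal O(Z)$ after restriction. To see this, use Lemma~\ref{lem:fingenmodulelocallyzero} on the quotient of $I\mathcal O(Z)$ by the generated ideal; making that quotient finitely generated requires shrinking, which is why interior containment is assumed.

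\textbf{Step (1).} Let $I=(f_1,\ldots,f_m)$ be finitely generated. Its relation module is the kernel of $\mathcal O(Z)^m\to\mathcal O(Z)$. Locally this kernel is finitely generated by (5). I then glue using Corollary~\ref{cor:animateddescends}. The module $\mathcal O(Z)/I$ has a pseudocoherent resolution once we know the relation module is finitely generated. The local stalks are coherent (noetherian), and their resolutions spread out to neighbourhoods, which are themselves Stein. Descent of $\mathcal D_{pc,\ge0}$ then gives pseudocoherence over $\mathcal O(Z)$, hence finite presentation.

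\textbf{Step (3).} Flatness of $\mathcal O(Z)\to\mathcal O(Z')$ can be checked against finitely presented modules, which are pseudocoherent by (1). The derived base change of such a module stays in degree $\ge 0$ by Corollary~\ref{cor:animateddescends}. Its lower Tor groups vanish because they vanish at all stalks, where flatness holds by (2). Lemma~\ref{lem:fingenmodulelocallyzero} then finishes.

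\textbf{Step (6).} Regularity follows from the local rings at maximal ideals having regular completions $\mathbb C[[X]]$. Excellence follows from results of Matsumura type, using that the completions are formal power series with the Jacobian criterion available.

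\textbf{Main obstacle.} I expect the hardest part to be step (5), the division theorem done purely algebraically, together with the spreading-out in (1) and (4). In particular, one must show that local generators defined near $z$ extend to a Stein compact neighbourhood without loss. My first attempt would rely on the Banach geometric-series argument and on nuclearity, Proposition~\ref{prop:nuclearpseudocompactdiscrete}.
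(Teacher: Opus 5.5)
There is a genuine gap, and it is exactly where you place the ``main obstacle'': nothing in your plan passes from information at a stalk to information on a neighbourhood.

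\textbf{Step (1).} Part (5) only tells you that $\ker(\mathcal O_z^m\to\mathcal O_z)$ is finitely generated. Germs generating it extend to some $\mathcal O(Z_i)$. But nothing says they generate the relation module over $\mathcal O(Z_i)$, or even at stalks $z'\neq z$ nearby. That uniformity \emph{is} Oka's coherence theorem. Similarly, a finite free resolution of $M_z$ over $\mathcal O_z$ can be written as a complex over some $\mathcal O(Z_i)$. Its homology, however, is only known to die in the colimit $\mathcal O_z=\varinjlim\mathcal O(Z_i)$. Unless you know these homology groups are finitely generated and compatible with restriction (i.e.\ (1) and (3)), this gives no neighbourhood on which they vanish. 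So you never obtain the descent datum in $\mathcal D_{pc,\geq 0}$ that Corollary~\ref{cor:animateddescends} needs. Proposition~\ref{prop:nuclearpseudocompactdiscrete} does not help, since it takes pseudocompactness as input.

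\textbf{Step (4).} The same problem appears twice. The compactness argument needs generators of $I\mathcal O_z$ to keep generating $I\mathcal O_{z'}$ for $z'$ near $z$. And Lemma~\ref{lem:fingenmodulelocallyzero} applies to $I\mathcal O(Z)/J\mathcal O(Z)$ only once you know this quotient is finitely generated, which is equivalent to the conclusion. ``Shrinking'' addresses neither point.

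\textbf{Circular flatness claims.} Lemma~\ref{analyticaffinelemma} explicitly postpones the flatness of $\mathcal O^{hol}(\overline D)$ over $\mathbb C[X_1,\ldots,X_n]$ to the present theorem. The comparison of liquid and abstract tensor products (Proposition~\ref{coherentisDNF}(2)) is proved later using (1). Flatness of $\mathcal O(Z)\to\mathcal O_z$ is not covered by that lemma at all: it is the case $Z'=\{z\}$ of (3). Yet your Step (3) reduces (3) to precisely this stalk statement.

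\textbf{The missing idea.} Use Weierstra\ss\ preparation on a whole neighbourhood, not just at the stalk, and prove (1), (3), (4) together by induction on $n$. The paper proceeds as follows.
\begin{enumerate}
\item Take $M=\mathcal O(Z)/I$ with $I=I'\mathcal O(Z)$ and pick $0\neq f\in I$. Near each point, Theorem~\ref{thm:weierstrassprep} arranges that $\mathcal O(Z_i)/f$ is finite free over $\mathcal O(W)$ for a polydisc $W\subset\mathbb C^{n-1}$.
\item Then $M_i$ is a finitely generated $\mathcal O(W)$-module, so it is pseudocoherent by induction. All its base changes along $W'\subset W$ sit in degree $0$ by (3) in dimension $n-1$. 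Finiteness over $\mathcal O(W)$ transfers this control uniformly to all stalks over $Z_i$.
\item The $M_i$ glue in $\mathcal D_{pc,\geq 0}$ to an $M'$ that sits in degree $0$ and is finitely presented.
\item Now Lemma~\ref{lem:fingenmodulelocallyzero} applies legitimately to the cokernel and kernel of $M\to M'$. The kernel is finitely generated, being the kernel of a map from a finitely generated module to a finitely presented one.
\end{enumerate}
This yields (1), (3) and (4) at once, and (2), (5), (6) then follow. Your uniform Banach division argument from Step (5) could supply the finite-freeness of $\mathcal O(Z_i)/f$ over $\mathcal O(W)$. But it only helps if you use it in this inductive way, not merely to get noetherian stalks.
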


\begin{remark}\label{rem:siunoetherian} It is not in general true that $\mathcal O(Z)$ is noetherian, as for example if $Z$ is totally disconnected (and infinite), when there are infinitely many idempotents. In some sense, this is the only obstruction: A theorem of Siu \cite{SiuNoetherian} says that $\mathcal O(Z)$ is noetherian as soon as any closed analytic subspace has only finitely many connected components. We will come back to the question of noetherianity of $\mathcal O(Z)$ later.
\end{remark}

Before going into the proof of Theorem~\ref{thm:compactsteincoherent}, we deduce the following corollary.

\begin{corollary}\label{cor:coherentsheavesdescend} Assume that parts (1) and (3) of Theorem~\ref{thm:compactsteincoherent} hold in dimension $n$. Then the functor taking any compact Stein $Z\subset \mathbb C^n$ to the category of finitely presented $\mathcal O(Z)$-modules defines a sheaf of categories (for the finite cover topology). It is naturally a subsheaf of $Z\mapsto \mathcal D_{pc,\geq 0}(\mathcal O(Z))$.
\end{corollary}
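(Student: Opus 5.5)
The plan is to identify finitely presented $\mathcal O(Z)$-modules with a full subcategory of $\mathcal D_{pc,\geq 0}(\mathcal O(Z))$ that is preserved by base change and detected locally. Then Corollary~\ref{cor:animateddescends} gives the sheaf property.

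First, the embedding. By part (1) of Theorem~\ref{thm:compactsteincoherent}, $\mathcal O(Z)$ is coherent. So every finitely presented $\mathcal O(Z)$-module $M$ has a resolution by finite free modules that is infinite to the right. Hence $M[0]\in \mathcal D_{pc,\geq 0}(\mathcal O(Z))$, and the category of finitely presented modules is exactly the full subcategory of $\mathcal D_{pc}(\mathcal O(Z))$ on objects with homology concentrated in degree $0$. This inclusion is fully faithful, since both sides are full subcategories of $\mathcal D(\mathcal O(Z))$ and modules embed fully faithfully into the derived category.

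Second, compatibility with base change. For compact Stein $Z'\subset Z$, the pullback in Corollary~\ref{cor:animateddescends} is $-\otimes^L_{<p,\mathcal O(Z)}\mathcal O(Z')$. On pseudocoherent complexes, which have finite free representatives, this agrees with the abstract derived tensor product $-\otimes^L_{\mathcal O(Z)}\mathcal O(Z')$, because the liquid tensor product of finite free modules is computed termwise. By part (3) the map $\mathcal O(Z)\to\mathcal O(Z')$ is flat. So for $M$ finitely presented, the base change is $M\otimes_{\mathcal O(Z)}\mathcal O(Z')$ in degree $0$, which is again finitely presented. This shows that $Z\mapsto$ \{finitely presented $\mathcal O(Z)$-modules\} is a subpresheaf of $Z\mapsto\mathcal D_{pc,\geq 0}(\mathcal O(Z))$.

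Third, the sheaf property. Full faithfulness of the descent functor is inherited from Corollary~\ref{cor:animateddescends}, since both sides are full subcategories. For essential surjectivity, take a finite cover $\{Z_i\}$ of $Z$ and descent data of finitely presented modules. Corollary~\ref{cor:animateddescends} glues these to some $V\in\mathcal D_{pc,\geq 0}(\mathcal O(Z))$ with $V\otimes^L\mathcal O(Z_i)=M_i[0]$. It remains to show $H_k(V)=0$ for $k>0$. By flatness, $H_k(V)\otimes_{\mathcal O(Z)}\mathcal O(Z_i)=H_k(V\otimes^L\mathcal O(Z_i))=0$. Moreover $H_k(V)$ is finitely presented, since $\mathcal O(Z)$ is coherent and $V$ is pseudocoherent. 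The vanishing then has to be transported to stalks in order to apply Lemma~\ref{lem:fingenmodulelocallyzero}. Each $z\in Z$ lies in some $Z_i$. The intersection $\{z\}=Z_i\cap\{z\}$ is Stein, and $\mathcal O_z$ is an $\mathcal O(Z_i)$-algebra. So $H_k(V)\otimes\mathcal O_z$ factors through $H_k(V)\otimes\mathcal O(Z_i)=0$ and vanishes. The lemma then gives $H_k(V)=0$.

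The main obstacle I expect is the bookkeeping in the second step. One must check that on pseudocoherent objects the liquid and abstract base changes agree, and that the non-derived liquid tensor product appearing in Lemma~\ref{lem:fingenmodulelocallyzero} matches the abstract one on finitely presented modules, which holds by right exactness applied to a finite presentation. Everything else follows formally from flatness and coherence.
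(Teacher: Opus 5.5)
Your proof is correct. It differs from the paper's only in the essential surjectivity step.

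The paper works purely with amplitude. The glued object is the limit of a finite \v{C}ech diagram whose terms are finitely presented modules in degree $0$, so it lies in homological degrees $\leq 0$. Corollary~\ref{cor:animateddescends} puts it in degrees $\geq 0$, so it is a module. It is finitely presented because the degree-$0$ objects of $\mathcal D_{pc,\geq 0}$ are exactly the finitely presented modules.

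You instead kill the higher homology directly. Flatness identifies $H_k(V)\otimes\mathcal O(Z_i)$ with $H_k(V\otimes^L\mathcal O(Z_i))=0$. Coherence makes $H_k(V)$ finitely presented, and Lemma~\ref{lem:fingenmodulelocallyzero} finishes via stalks.

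The paper's route is more economical. It uses flatness only to make the presheaf well defined, which is what puts the \v{C}ech terms in degree $0$. It needs neither finite presentation of the higher homology nor the local vanishing lemma at this point; that lemma has already been spent inside Corollary~\ref{cor:animateddescends}. Your route reruns that corollary's local-to-global mechanism one homology group at a time. In exchange, it makes explicit how flatness controls the homology of the glued object.

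Your handling of liquid versus abstract tensor products is right. A liquid quotient of $\mathcal O_z^n$ whose underlying abstract module vanishes is itself zero, since it is generated by the images of the basis vectors.
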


\begin{proof} Over a coherent ring, any finitely presented module admits a resolution that is finite free in each degree, thus giving a full embedding of $\mathrm{Coh}(\mathcal O(Z))$ into $\mathcal D_{pc,\geq 0}(\mathcal O(Z))$. As $\mathcal O(Z)\to \mathcal O(Z')$ is flat whenever $Z'\subset Z$ is compact Stein, this defines a subpresheaf. It automatically satisfies the fully faithfulness part of the sheaf axiom. For essential surjectivity, note that if one glues objects in degree $0$, the resulting glued object lives in homological degrees $\leq 0$ (as it is computed by a Cech complex). But by Corollary~\ref{cor:animateddescends}, it also lives in degrees $\geq 0$, so it still lives in degree $0$. But the objects of $\mathcal D_{pc,\geq 0}(\mathcal O(Z))$ that are concentrated in degree $0$ are precisely the finitely presented $\mathcal O(Z)$-modules.
\end{proof}

\begin{remark} On a coherent scheme, the usual way to think about coherent sheaves is that on affines, they correspond to finitely presented modules, and in general one glues such. The previous corollary says that one can also think of coherent sheaves in complex-analytic geometry in this way, if one regards compact Stein subsets as the analogues of affine subsets. Together with Remark~\ref{rem:siunoetherian}, one can even pretend to be on a noetherian scheme.
\end{remark}

Before we start the proof, we recall the Weierstra\ss\ Preparation Theorem.

\begin{theorem}[Weierstra\ss\ Preparation]\label{thm:weierstrassprep} Let $\mathcal O_0=\mathcal O(\{(0,\ldots,0\})$ be the ring of germs of holomorphic functions at the origin $(0,\ldots,0)\in \mathbb C^n$. Let $f\in \mathcal O_0$ be such that
\[
f(X_1,0,\ldots,0)\neq 0.
\]
Let $k$ be the order of vanishing of $f(X_1,0,\ldots,0)$ at $X_1=0$. Then one can uniquely write
\[
f=gu
\]
where $u\in \mathcal O_0$ is invertible and $g\in \mathcal O_0$ is a monic polynomial in $X_1$ of degree $k$ whose coefficients are germs of holomorphic functions in $X_2,\ldots,X_n$ vanishing at the origin.
\end{theorem}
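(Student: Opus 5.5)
The plan is to prove Weierstra\ss\ Preparation through the Weierstra\ss\ Division Theorem, established algebraically on Banach algebras of overconvergent functions on small polydiscs, in the spirit of the paper: geometric series in a Banach algebra, as in the proof of Proposition~\ref{prop:banachalgebrafredholm}. We write $X'=(X_2,\ldots,X_n)$.

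First we normalize the data. Write $f(X_1,0)=X_1^k\cdot v(X_1)$ with $v(0)\neq 0$. Choose a radius $\rho>0$ so small that $f$ overconverges on the closed polydisc $\{|X_1|\leq\rho,|X'|\leq\rho\}$ and $v$ has no zeros on $|X_1|\leq \rho$. Then $|f(X_1,0)|\geq c>0$ on the circle $|X_1|=\rho$. Now shrink the radius $\epsilon$ in the $X'$-directions. In the Banach norm
\[
\|\sum a_{i}(X')X_1^i\|=\sum_i \|a_i\|_{\epsilon}\rho^i,
\]
the element $f$ then splits as $f=f_0+h$, where $f_0=f(X_1,0)$ and $\|h\|$ is arbitrarily small, since $h$ vanishes at $X'=0$.

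The key step is division by $X_1^k$ with a unit correction. In the Banach algebra $B$ of such series, the map
\[
(q,r)\mapsto qX_1^k+r,\qquad r\in\bigoplus_{i<k}B' X_1^i,
\]
is a Banach isomorphism, with norm control by $\rho$; here $B'$ denotes the coefficient Banach algebra in $X'$. Dividing by $f_0=X_1^k v$ is then also an isomorphism, because $v$ is invertible in the one-variable algebra on $|X_1|\leq\rho$. Concretely, we divide by $X_1^k$ and then multiply by $v^{-1}$. The difficulty is that $v^{-1}$ times a remainder of degree $<k$ is no longer of degree $<k$, so the clean way is division in the ring of convergent power series on the annulus-free disc. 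Dividing by $f=f_0+h$ is then a small perturbation of an isomorphism, hence an isomorphism by the geometric series, since $\|h\|$ is small relative to the norm of the inverse of division by $f_0$. The result is that every $\phi\in B$ can be written uniquely as $\phi=qf+r$ with $r$ a polynomial in $X_1$ of degree $<k$ with coefficients in $B'$.

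Preparation follows by applying division to $\phi=X_1^k$. This gives $X_1^k=qf+r$, so $g:=X_1^k-r$ is monic of degree $k$ and $g=qf$. Setting $X'=0$ and using uniqueness of division by $f_0$ shows that $r(X_1,0)=0$, so the coefficients of $g$ vanish at the origin, and that $q(0)\neq0$, so $q$ is a unit near $0$. We then take $u=q^{-1}$, passing to the colimit over shrinking polydiscs to land in $\mathcal O_0$. For uniqueness, suppose $gu=g'u'$. Then $g'=g\cdot u/u'$, and uniqueness in division of $X_1^k$ by $g$ forces $g=g'$. This requires first showing that division by a Weierstra\ss\ polynomial is unique, which follows by the same perturbation argument.

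The main obstacle I expect is the norm estimate in the division step: choosing $\rho$ and then $\epsilon$ so that the inverse of division by $f_0$ has bounded norm on $B$. I would first try to get this directly from explicit Laurent coefficient estimates on the circle $|X_1|=\rho$ using Cauchy-type bounds. Alternatively, I would carry out division by $X_1^k v$ formally over the one-variable overconvergent algebra and then tensor with $B'$, using that $\mathcal O$ on a polydisc is a liquid tensor product of one-variable algebras (Lecture IV).
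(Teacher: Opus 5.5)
Your argument is correct, but it takes a different route from the paper. The paper first solves $f=gu$ in formal power series by successive approximation modulo $(X_2,\ldots,X_n)^i$. It then gets convergence by induction on $n$. By the case of $f(X_1,\ldots,X_{n-1},0)$, the map $\bigoplus_{j<k}\mathcal O_0'X_1^j\to\mathcal O_0/f$ is an isomorphism modulo $X_n$. After approximating by Banach algebras, Banach's open mapping theorem bounds the inverse of $X_n$, so the map is an isomorphism once $|X_n|$ is small (following Ullrich).

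You instead prove division by $f$ in all the variables $X'$ at once. You treat it as a Neumann-series perturbation of division by $f_0=f(X_1,0)$ in weighted $\ell^1$ Banach algebras (essentially the Grauert--Remmert proof), and deduce preparation and uniqueness from it. Your route avoids the formal step, the induction on $n$ and the open mapping theorem, and it gives explicit radii. The paper's route fits its theme that the algebra is done formally and convergence comes from a soft functional-analytic argument.

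The step you flag as the main obstacle is not one. With $\|(q,r)\|=\|q\|+\|r\|$, the inverse of $T_0\colon(q,r)\mapsto qf_0+r$ sends $\phi=q'X_1^k+r$ to $(v^{-1}q',r)$. Its norm is at most $\max(1,\rho^{-k}\|v^{-1}\|)$, which is independent of $\epsilon$ because $v^{-1}$ involves only $X_1$. So fix $\rho$ and then choose $\epsilon$ with $\|h\|\,\|T_0^{-1}\|<1$.

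Your worry about $v^{-1}$ times a remainder also does not arise. The remainder for division by $f_0$ is the same as for division by $X_1^k$; only the quotient is multiplied by $v^{-1}$. Finally, choosing $\rho$ with $\|v/v(0)-1\|<1$ makes $v$ invertible by a geometric series, without appealing to its nonvanishing on the closed disc.
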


\begin{proof} Inside formal power series, it is easy to solve for the coefficients of $g$ and $u$ inductively, arguing modulo powers of $(X_2,\ldots,X_n)$. More precisely, we will find inductively $g_i$ and $u_i$ so that $f-g_iu_i\in (X_2,\ldots,X_n)^i \mathcal O_0$. We may start with $g_1=X_1^k$ and $u_1=f(X_1,0,\ldots,0)/X_1^k$. Given $g_i$ and $u_i$, we first compute
\[
(f-g_iu_i)u_1^{-1}\in (X_2,\ldots,X_n)^i\mathcal O_0,
\]
let $h_i$ be the part of the power series where $X_1$ appears to a power less than $k$, and then set $g_{i+1}=g_i+h_i$. Then modulo $X_1^k$, we have
\[
f - g_{i+1} u_i = f - g_iu_i - (f-g_iu_i)u_1^{-1}u_i = (f - g_iu_i)(1 - u_1^{-1} u_i)\in (X_2,\ldots,X_n)^{i+1}.
\]
Thus modulo $(X_2,\ldots,X_n)^{i+1}$, we have
\[
f - g_{i+1} u_i\equiv X_1^k v_i
\]
for some $v_i\in (X_2,\ldots,X_n)^i\mathcal O_0$, and we set $u_{i+1} = u_i + v_i$. Then modulo $(X_2,\ldots,X_n)^{i+1}$, we have
\[
f - g_{i+1} u_{i+1} = f - g_{i+1} u_i - g_{i+1} v_i\equiv X_1^k v_i - g_{i+1} v_i\equiv (X_1^k-g_{i+1})v_n\equiv 0,
\]
giving the desired induction step. Uniqueness can also easily be shown.

One possibility to conclude is now to analyze this algorithm and deduce bounds on the coefficients. Let us give an abstract argument. We argue by induction on $n$. Using this induction, we already know that $f(X_1,\ldots,X_{n-1},0)$ admits the desired factorization, or equivalently that $g(X_1,\ldots,X_{n-1},0)$ and $u(X_1,\ldots,X_{n-1},0)$ have the desired growth of coefficients. Thus, the map
\[
\bigoplus_{j=0}^{k-1} \mathcal O_0' \cdot X_1^j\to \mathcal O_0/f
\]
is an isomorphism after quotienting by $X_n$. But this can be approximated by a similar map of Banach algebras, where again it is an isomorphism after quotienting by $X_n$. But by Banach's Open Mapping Theorem, this means that the inverse of $X_n$ has bounded norm, and in particular the map becomes an isomorphism after localizing to the locus where $|X_n|$ is sufficiently small. But this gives the desired result.\footnote{This argument is nicely presented in a paper of Ullrich, \cite{UllrichWeierstrassPreparation}.}
\end{proof}

Now we can prove the finiteness results on rings of holomorphic functions.

\begin{proof}[Proof of Theorem~\ref{thm:compactsteincoherent}] We argue by induction on $n$. For $n=0$, there is nothing to prove.

Now assume the result in dimension $n-1$. Let $Z'$ be any compact subset of $\mathbb C^n$ that contains $Z$ in its interior, and let $I'$ be any ideal of $\mathcal O(Z')$. Let $I=I'\mathcal O(Z)$ and $M=\mathcal O(Z)/I$. We claim that there is a finite cover of $Z$ by $Z_i$ such that the (non-derived!) base changes $M_i=M\otimes_{\mathcal O(Z)} \mathcal O(Z_i)$ lie in $\mathcal D_{pc,\geq 0}(\mathcal O(Z_i))$, and have the property that all further base changes $M_i\otimes^L_{<p,\mathcal O(Z_i)} \mathcal O(Z_i')$ are concentrated in degree $0$. To show this, it suffices to find a cover of $Z$ by $Z_i$ such that each $M_i$ admits a (possibly infinite) resolution by finite free $\mathcal O(Z_i)$-modules and has the property that $M_i\otimes^L_{<p,\mathcal O(Z_i)} \mathcal O_z$ is concentrated in degree $0$ for all $z\in Z_i$. To find such a cover, we can argue locally around a point $z\in Z$. We can assume $z=0\in \mathbb C^n$ is the origin. We can also assume that both $Z$ and $Z'$ are polydiscs. If $I=0$, there is nothing to show. Otherwise, we find some nonzero $f\in I$, and after shrinking $Z$ and changing coordinates, we can apply Theorem~\ref{thm:weierstrassprep} to assume that it is a monic polynomial in $X_1$. In that case $\mathcal O(Z)/f$ is finite free over a similar polydisc $W$ one dimension lower. By the theorem in one dimension lower, it follows that $\mathcal O(Z)/I$ is pseudocoherent over $\mathcal O(W)$, hence over $\mathcal O(W)[X_1]$, and then also over $\mathcal O(Z)$ (as any resolution over $\mathcal O(W)[X_1]$ base changes to a resolution over the idempotent liquid $\mathcal O(W)[X_1]$-algebra $\mathcal O(Z)$). Moreover, all further base changes along compact Stein subsets $W'\subset W$ stay in degree $0$, and in particular this is true for all stalks. But as $\mathcal O(Z)/f$ is finite over $\mathcal O(W)$, this also recovers (up to direct summands) all the stalks over $Z$.

Now given such a cover of $Z$ by $Z_i$'s, the $M_i$ actually define a descent datum in $\mathcal D_{pc,\geq 0}$, and hence glue back to some $M'\in \mathcal D_{pc,\geq 0}(\mathcal O(Z))$, that is necessarily concentrated in degree $0$, i.e.~just a $\mathcal O(Z)$-module. Being pseudocoherent, it is in particular finitely presented. Also, $M_i=M'\otimes_{\mathcal O(Z)}^L \mathcal O(Z_i)$ is also the derived base change, as $M'$ is the result of gluing the $M_i$ in the setting of $\mathcal D_{pc,\geq 0}$. Now the cokernel of $M\to M'$ is a finitely generated $\mathcal O(Z)$-module that vanishes locally, so vanishes by Lemma~\ref{lem:fingenmodulelocallyzero}. Thus, $M\to M'$ is surjective. Let $K\subset M$ be the kernel. This is again a finitely generated $\mathcal O(Z)$-module (as the kernel of a map from a finitely generated module to a finitely presented module), and using that $M'\otimes_{\mathcal O(Z)}^L \mathcal O(Z_i)$ is concentrated in degree $0$, one also sees that it vanishes locally. Thus, also $K=0$, and $M\to M'$ is an isomorphism. But $M'$ is pseudocoherent, i.e.~has a resolution by finite free $\mathcal O(Z)$-modules, hence so is $M=\mathcal O(Z)/I$. This implies in particular that $I$ is finitely presented, and hence that $\mathcal O(Z)$ is coherent. Moreover, $M'$ gives rise to a sheaf that is concentrated in degree $0$, hence so does $M$. Thus, for all $Z'\subset Z$ compact Stein, the base change $M\otimes_{\mathcal O(Z)}^L \mathcal O(Z')$ is concentrated in degree $0$. As this is true for all finitely presented $M=\mathcal O(Z)/I$, we see that $\mathcal O(Z)\to \mathcal O(Z')$ is flat.

At this point, we have proved parts (1), (3) and (4). Part (5) follows directly from Weierstra\ss preparation and induction on $n$. For part (2), consider a maximal ideal $I\subset \mathcal O(Z)$, and let $A=\mathcal O(Z)/I$. By Lemma~\ref{lem:fingenmodulelocallyzero}, there is some $z\in Z$ such that the base change $A_z=\mathcal O_z/I\mathcal O_z$ is nonzero. But $\mathcal O_z$ is local with maximal ideal the kernel of the evaluation at $z$. Thus, there is a further quotient $A_z\to \mathbb C$; as $I$ was maximal, this means it is the kernel of evaluation at $z$. It is clear that the complete local ring is a power series ring; in fact, the derived base change
\[
\mathbb C\otimes_{\mathbb C[X_1,\ldots,X_n]}^L \mathcal O(Z)=\mathbb C
\]
as $\mathbb C$ is a module over $\mathcal O(Z)$ and $\mathcal O(Z)$ is idempotent. This means that the regular sequence $(X_1-z_1,\ldots,X_n-z_n)$ generates the maximal ideal, and then one easily computes the completion. To show that the completion is flat, we factor it as $\mathcal O(Z)\to \mathcal O_z\to \mathbb C[[X_1-z_1,\ldots,X_n-z_n]]$. The first map is flat by (3), so it suffices to show that the second map is flat, where it follows from (5). To show that $\mathcal O(Z)$ is flat over $\mathbb C[X_1,\ldots,X_n]$ for all $Z$, note that for any ideal $I\subset \mathbb C[X_1,\ldots,X_n]$, we get a sheaf $Z\mapsto \mathcal O(Z)\otimes_{\mathbb C[X_1,\ldots,X_n]}^L \mathbb C[X_1,\ldots,X_n]/I$, and to show that this is concentrated in degree $0$, we can check on stalks. These are given by $\mathcal O_z\otimes_{\mathbb C[X_1,\ldots,X_n]}^L \mathbb C[X_1,\ldots,X_n]/I$. As $\mathcal O_z$ is noetherian, the flatness can now be checked on complete local rings, where it is clear (as it agrees with the complete local rings of the polynomial algebra).

Finally, part (6) follows from (2) and a criterion of Matsumura \cite[Theorem 102]{Matsumura}, using the derivations $\frac{\partial}{\partial X_i}$ for $i=1,\ldots,n$, which are ``orthogonal'' to a system of parameters at each closed point, by (2).\footnote{Excellence was proved by Bingener \cite[Theorem 1.10]{Bingener} and Scheja--Storch \cite[Satz 8.10]{SchejaStorch}.}
\end{proof}\newpage

\section*{Appendix to Lecture X: Noetherianity}

In this appendix, we discuss some results on the Noetherianity of $\mathcal O(Z)$ for compact Stein $Z$. The goal is to prove the following theorem of Frisch \cite{Frisch}.\footnote{A simple proof of this was given by Langmann in \cite{LangmannFrisch}, but we believe that it is incorrect, as the proof of \cite[Lemma 1]{LangmannFrisch} seems to work only in the complex-analytic case, but not in the real-analytic case where it is applied later. The proof we give below fixes Langmann's argument.}

\begin{theorem}\label{thm:noetherian} Assume that $Z\subset \mathbb C^n$ is a compact Stein subset given by finitely many inequalities of the form $|f(z)|\leq 1$ for real-analytic $f$. Then $\mathcal O(Z)$ is noetherian.
\end{theorem}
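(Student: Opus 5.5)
The plan is to reduce noetherianity to a finiteness statement that can be checked locally on $Z$. By Theorem~\ref{thm:compactsteincoherent}, $\mathcal O(Z)$ is coherent, its maximal ideals are exactly the points of $Z$, and each localization $\mathcal O_z$ is noetherian.

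The standard criterion I would aim for is the following. Suppose every ascending chain of ideals $I_1\subset I_2\subset\ldots$ in $\mathcal O(Z)$ has the property that the associated chain of coherent sheaves on $Z$ stabilizes. Then, since $\mathcal O(Z)$ is coherent, finitely generated ideals correspond, via Corollary~\ref{cor:coherentsheavesdescend}, to coherent ideal sheaves. An ideal is then determined by its stalks, because $\mathcal O(Z)\to\prod_z\mathcal O_z$ is faithfully flat on finitely presented modules by Lemma~\ref{lem:fingenmodulelocallyzero}. So it suffices to show that any increasing chain of coherent ideal sheaves $\mathcal I_1\subset\mathcal I_2\subset\ldots$ on a neighborhood of $Z$ stabilizes on $Z$.

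Locally this is clear from noetherianity of the stalks together with coherence: at each $z$ the stalks stabilize, and by coherence they then agree on a neighborhood. The chain therefore stabilizes near each point, but the index can depend on the point, and compactness alone does not give a uniform bound. The issue is that the locus where $\mathcal I_m\neq\mathcal I_{m+1}$ could accumulate.

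To handle this I would use the Frisch/Siu idea. Consider the supports $V_m=\mathrm{supp}(\mathcal O/\mathcal I_m)$, which form a decreasing chain of analytic subsets near $Z$. Stabilization of ideals reduces, via primary decomposition in the stalks and induction on dimension, to stabilization of the set of irreducible components (including embedded ones) of these analytic sets meeting $Z$. The key input is then that $Z$ is semianalytic, being cut out by finitely many inequalities $|f|\le 1$ with $f$ real-analytic. For any analytic set $V$ defined near $Z$, the intersection $V\cap Z$ has only finitely many connected components, by Łojasiewicz's finiteness for semianalytic sets: $V\cap Z$ is compact semianalytic and hence locally connected with finitely many components. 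This is exactly the condition in Siu's theorem quoted in Remark~\ref{rem:siunoetherian}. With finitely many components, each global irreducible component of $V$ through $Z$ is determined by finitely many local germs, and a decreasing chain of analytic germs at finitely many points stabilizes.

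I would organize the induction on $\dim V_m$. First, the top-dimensional components of $V_m$ meeting $Z$ are finite in number and can only decrease, so they stabilize. Then the argument passes to the ideal quotient along those components and repeats with lower dimension.

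The main obstacle is this finiteness of components. Concretely, I must show that a connected component of $V\cap Z$, where $V$ is an irreducible analytic germ-set near $Z$, meets only finitely many local irreducible branches in a controlled way. The real-analytic nature of $f$ is essential here, and this is where the footnote's warning about Langmann applies: the argument must genuinely use semianalytic geometry and not complex-analytic maximum principles. I would first try to use the triangulability of compact semianalytic sets to get finitely many cells and conclude from there.
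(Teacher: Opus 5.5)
Your local step is where the whole difficulty sits, and as written it is not correct. A chain $I_1\subset I_2\subset\cdots$ of finitely generated ideals of $\mathcal O(Z)$ gives coherent ideal sheaves $\mathcal I_m$. Each $\mathcal I_m$ is defined only on its own neighborhood $U_m$ of $Z$, and these neighborhoods may shrink to $Z$. If the stalks at $z$ stabilize from $m_0$ on, coherence gives $\mathcal I_m=\mathcal I_{m_0}$ on a neighborhood of $z$ that depends on $m$, and these neighborhoods need not contain a common one.

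Here is a concrete failure. Take $Z=\{0\}\cup\{1/k : k\geq 1\}\subset\mathbb C$, which is non-noetherian by Remark~\ref{rem:siunoetherian}. Let $\mathcal I_m$ be generated by the idempotent that is $1$ near $1,\tfrac12,\ldots,\tfrac1m$ and $0$ near the remaining points. Every stalk at $0$ is zero, yet the chain is stationary on no neighborhood of $0$.

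Conversely, for chains living on one common neighborhood, stationarity on $Z$ is the classical Noether property (cf.\ Theorem~\ref{thm:compactsteincoherent}(4)). That property holds for \emph{every} compact $Z$, including the one above, so chains of ideals of $\mathcal O(Z)$ cannot be reduced to that case for free.

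Also, once the chain is stationary on a neighborhood of each point, compactness together with Lemma~\ref{lem:fingenmodulelocallyzero} gives a uniform index trivially. So the obstacle is not uniformity over $Z$, as you claim. It is local stationarity at points of $Z$ near which the domains of definition shrink; for a real cube in $\mathbb C^n$ that is every point. This is exactly where the real-analytic hypothesis must be used. In the paper this local statement is Lemma~\ref{lem:locallyfingenreal}, and it does not follow from coherence.

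If you are willing to quote Siu's theorem together with {\L}ojasiewicz's result that compact semianalytic sets have finitely many connected components \cite{Lojasiewicz}, the statement follows in two lines, since every $V\cap Z$ is compact semianalytic. The rest of your plan is then unnecessary, but this is precisely the classical route the paper deliberately avoids.

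Your attempt to replace it by an induction on components is not yet an argument, for two reasons:
\begin{enumerate}
\item Irreducible components of $V_m$ near $Z$ depend on the neighborhood and can split as it shrinks. So finiteness of $\pi_0(V_m\cap Z)$ does not directly count the components of the germ of $V_m$ along $Z$; the relevant count is $\pi_0$ of the preimage of $Z$ in the normalization.
\item Stabilization of supports says nothing about embedded or nilpotent structure, so passing to an ideal quotient and repeating is not justified.
\end{enumerate}

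The paper proceeds differently. It reduces to a real cube and inducts on dimension. Writing $I=(f_0,f_1,\ldots)$, noetherianity of the faces (the induction hypothesis) gives $N$ with $(f_0,\ldots,f_N)=I$ near the boundary. Hence each quotient $(f_0,\ldots,f_{N'})/(f_0,\ldots,f_N)$ is supported in the open cube and extends by zero to a fixed larger cube. This manufactures the common neighborhood your argument is missing.

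Lemma~\ref{lem:locallyfingenreal}, a real substitute for Theorem~\ref{thm:compactsteincoherent}(4), then finishes the proof. The semianalytic input enters only in that lemma, through three ingredients:
\begin{enumerate}
\item finite projections to lower-dimensional cubes;
\item passage to the normalization (Proposition~\ref{prop:locallyfingenreal0});
\item finiteness of $\pi_0$ of real loci, proved by hand (Lemmas~\ref{lem:nonvanishingfinitepi0} and~\ref{lem:locallyfinitepi0}).
\end{enumerate}
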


\begin{remark} The theorem implies, via excellence, Oka's Second and Third Coherence Theorem (as passing to the reduced quotient, and to normalizations, now passes to complete local rings, and hence also to localizations $\mathcal O(Z)\to \mathcal O(Z')$).
\end{remark}

We note that a consequence of this is that $Z$ can have only finitely many connected components. Such basic facts about (closed) semi-analytic sets, as established in \cite{Lojasiewicz}, usually enter as ingredients into the proof of Theorem~\ref{thm:noetherian}. The argument below avoids appeal to \cite{Lojasiewicz} (but still starts by proving some simple facts about semi-analytic sets by hand).

First, we note that we can restrict to the real-analytic case. More precisely, we can embed $Z\subset \mathbb C^n\cong \mathbb R^{2n}$ into $\mathbb C^{2n}$. Then, given that $\mathcal O(Z\subset \mathbb C^{2n})$ is noetherian, the map $\mathcal O(Z\subset \mathbb C^n)\to \mathcal O(Z\subset \mathbb C^{2n})$ is faithfully flat,\footnote{By the classification of maximal ideals, it suffices to check flatness; but this can be done on local rings, and then on complete local rings as the target is noetherian.} and hence $\mathcal O(Z\subset \mathbb C^n)$ is noetherian.

Thus, assume from now on that $Z\subset \mathbb R^n$. Using a Zariski closed immersion, we can then reduce to a cube $Z=[-1,1]^n\subset \mathbb R^n$. We will now argue by induction on $n$, and work with the algebra $\mathcal O^{\mathbb R}(Z)$ of real-analytic functions on $Z$ (so $\mathcal O(Z) = \mathcal O^{\mathbb R}(Z)\otimes_{\mathbb R}\mathbb C$).

We will use the analogue of Weierstra\ss\ Preparation in this situation, so we need to discuss finite algebras over $\mathcal O^{\mathbb R}([-1,1]^n)$; let $A$ be such an algebra. The first result we need is the following lemma on connected components.

\begin{lemma}\label{lem:locallyfinitepi0} Let $A$ be an $\mathcal O^{\mathbb R}([-1,1]^n)$-algebra that is finitely presented as a module. Let $0<c<1$. Then the image of the map
\[
\pi_0(A(\mathbb R)\times_{[-1,1]^n} [-c,c]^n)\to \pi_0(A(\mathbb R))
\]
is finite.
\end{lemma}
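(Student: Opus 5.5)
We argue by induction on $n$, following the strategy announced in the text, namely a real-analytic Weierstra\ss\ preparation. Here $A(\mathbb R)$ denotes the set of $\mathbb R$-algebra maps $A\to\mathbb R$. Equivalently, it is the real points of the finite map $\mathrm{Spec}\,A\to$ (germ of a neighbourhood of) $[-1,1]^n$, with its natural topology. Since $A$ is finitely presented as a module over $\mathcal O^{\mathbb R}([-1,1]^n)$, the projection $\pi:A(\mathbb R)\to[-1,1]^n$ is proper, and its fibres are finite.

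For $n=0$ the base is a point and $A$ is a finite-dimensional $\mathbb R$-algebra. Then $A(\mathbb R)$ is finite, so there is nothing to prove.

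\textbf{Step 1: local finiteness.} The main step is to show that every point $x\in[-c',c']^n$, for any $c<c'<1$, has a closed neighbourhood $N_x$ such that $\pi^{-1}(N_x)$ meets only finitely many connected components of $A(\mathbb R)$. Compactness of $[-c,c]^n$ then gives finitely many such $N_x$ covering it. Since each point of $\pi^{-1}([-c,c]^n)$ lies in some $\pi^{-1}(N_x)$, the image in the statement is finite.

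\textbf{Step 2: reducing near a point.} To produce $N_x$, we work near $x$ with germs. Write $A$ as generated by finitely many elements $a_1,\dots,a_m$, each satisfying a monic polynomial over the base (Cayley--Hamilton for finite modules). Then $A(\mathbb R)$ embeds as a closed subset of $[-1,1]^n\times\mathbb R^m$, cut out by finitely many real-analytic equations.

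We project further along one coordinate at a time. Pick a nonzero polynomial relation, for instance the discriminant or resultant of the characteristic polynomials. After a linear change of coordinates we apply Theorem~\ref{thm:weierstrassprep}, in its real form, to the base coordinate $x_n$. This exhibits a neighbourhood of $x$ in the relevant analytic set as finite over a polydisc in one dimension less, via an algebra $A'$ finite over $\mathcal O^{\mathbb R}([-1,1]^{n-1})$ (suitably rescaled). The induction hypothesis applied to $A'$ says that only finitely many components of $A'(\mathbb R)$ meet a compact region. One then lifts along the finite map to obtain finitely many components upstairs.

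\textbf{Step 3: lifting, the expected obstacle.} Lifting from components downstairs to components upstairs is where we expect the main difficulty. A finite proper map can split a single component into several, but only boundedly many: over a connected set, the number of real roots of a monic polynomial of degree $d$ is at most $d$. Over the complement of a discriminant locus, the roots vary continuously and the number of sheets is locally constant. The discriminant locus is itself the real points of a lower-dimensional finite situation, so the inductive hypothesis controls its components too.

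We would first try to make this count rigorous by stratifying: over each (finitely many, by induction) component of the discriminant locus and of its complement meeting the compact region, bound the number of components of the preimage by $d$. The delicate point is that the complement of the discriminant is open, and its pieces need not be compact. To handle this, we expect to shrink to slightly smaller cubes repeatedly, and this is exactly why the statement allows passing from $[-1,1]^n$ to $[-c,c]^n$.
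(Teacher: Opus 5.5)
Your architecture is right: cover $[-c,c]^n$ by finitely many small boxes, choose a bad locus $\{f=0\}$ in the base over whose complement $\pi\colon A(\mathbb R)\to[-1,1]^n$ is a finite covering, and treat what lies over $\{f=0\}$ by Weierstra\ss\ preparation and induction on $n$. But there is a genuine gap where you write ``(finitely many, by induction)'' for the components of the complement $\{f\neq 0\}$ meeting the compact region.

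Your inductive hypothesis is about real points of finite algebras over an $(n-1)$-cube, i.e.\ closed sets proper over a lower-dimensional base. The open set $\{f\neq 0\}\subset[-1,1]^n$ is not of this form, and its components are not seen by any such algebra. For instance, the real points of $\mathcal O^{\mathbb R}([-1,1]^n)[y]/(y^2-f)$ miss $\{f<0\}$ entirely and glue components of $\{f>0\}$ together along $\{f=0\}$.

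Local finiteness of $\pi_0(\{f\neq 0\})$ is a separate, genuinely analytic fact: it fails for smooth functions such as $e^{-1/x^2}\sin(1/x)$. Shrinking cubes does not help, since the difficulty is finiteness, not compactness. The paper isolates this as Lemma~\ref{lem:nonvanishingfinitepi0} and proves it by its own induction:
\begin{enumerate}
\item reduce to the interior via the faces, and multiply by the face equations;
\item use Weierstra\ss\ preparation to make $\{g=0\}$ locally finite over an $(n-1)$-cube;
\item pick $h\neq 0$ there such that $\{g=0\}$ is a finite covering over $\{h\neq 0\}$;
\item over each of the finitely many (by induction) components of $\{h\neq 0\}$, $\{g\neq 0\}$ is fibrewise a continuously varying finite union of open intervals, hence has finitely many components;
\item by openness of $\{g\neq 0\}$, no component lies over $\{h=0\}$.
\end{enumerate}
Without this, or a {\L}ojasiewicz-type result, your count in Step~3 does not close.

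There are also some smaller points.
\begin{itemize}
\item The bound ``over a connected set, at most $d$ components'' holds only where $\pi$ is a covering. For $A=\mathcal O^{\mathbb R}([-1,1])[y]/(y^2-\cos(10x))$, the real locus is three disjoint ovals over a connected base, while $d=2$.
\item So you should not count sheets over the discriminant locus. Instead, note that any component of $A(\mathbb R)$ not meeting $\pi^{-1}(\{f\neq 0\})$ is a connected component of $(A/f)(\mathbb R)=\pi^{-1}(\{f=0\})$. Then apply the induction to $A/f$.
\item It is $A/f$, and not $A(\mathbb R)$ itself as Step~2 seems to say, that becomes locally finite over an $(n-1)$-cube after Weierstra\ss\ preparation.
\item The discriminant of characteristic polynomials of generators may vanish identically. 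Even where it is nonzero, the closed subset $A(\mathbb R)$ of the product of root sets need not be a covering. Pass to $A_{\mathrm{red}}$ and choose $f$ with $A[f^{-1}]$ finite \'etale over $\mathcal O^{\mathbb R}([-1,1]^n)[f^{-1}]$, as the paper does.
\end{itemize}
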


(Theorem~\ref{thm:noetherian} in fact implies that the target is finite.)

\begin{proof} We need to see that $A(\mathbb R)$ has only finitely many connected components whose image in $[-1,1]^n$ meets a given compact subset contained in the interior $(-1,1)^n$ of the cube. This can be checked locally around points of the interior. We argue by induction on $n$. After passing to a quotient by a nilpotent ideal, there is some $f\in \mathcal O^{\mathbb R}([-1,1]^n)$ such that
\[
\mathcal O^{\mathbb R}([-1,1]^n)[f^{-1}]\to A[f^{-1}]
\]
is finite \'etale. In particular, over $\{f\neq 0\}\subset [-1,1]^n$, the map $A(\mathbb R)\to [-1,1]^n$ becomes a finite covering space. As $\{f\neq 0\}$ has only finitely many connected components by Lemma~\ref{lem:nonvanishingfinitepi0} below, there are only finitely many connected components of $A(\mathbb R)$ meeting the locus where $f\neq 0$. Thus, it suffices to prove the similar result for $A/f$ instead. But $A/f$ locally admits a finitely presented projection to an $n-1$-dimensional cube, so we conclude by induction.
\end{proof}

\begin{lemma}\label{lem:nonvanishingfinitepi0} Let $f\in \mathcal O^{\mathbb R}([-1,1]^n)$. Then
\[
\{f\neq 0\}\subset [-1,1]^n
\]
has only finitely many connected components.
\end{lemma}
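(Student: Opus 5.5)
We argue by induction on $n$, using the same induction that is running in the surrounding proof of Theorem~\ref{thm:noetherian}: we assume the appendix's results in dimension $n-1$, in particular Lemma~\ref{lem:locallyfinitepi0} for algebras finite over $\mathcal O^{\mathbb R}([-1,1]^{n-1})$. The case $n=0$ is trivial. If $f=0$ there is nothing to prove, so assume $f\neq 0$. Since $[-1,1]^n$ is compact, it suffices to show that every point $x\in[-1,1]^n$ has a small closed cube neighbourhood $Q$ (rescaled to a standard cube) such that $\{f\neq 0\}\cap Q$ has only finitely many connected components. Indeed, components of $\{f\neq0\}$ are unions of components of the pieces $\{f\neq 0\}\cap Q_i$ for a finite cover by such $Q_i$, so the count is bounded by the total number of local pieces. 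To make this local-to-global step clean we may instead cover by slightly larger cubes $Q_i'$ and count components of $\{f\neq 0\}\cap Q_i'$ meeting $Q_i$, which is the form in which Lemma~\ref{lem:locallyfinitepi0} provides finiteness.

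Locally at $x$ (translated to the origin), we apply Weierstra\ss\ preparation in its real-analytic form; this is obtained from Theorem~\ref{thm:weierstrassprep} by noting that the decomposition $f=gu$ is unique and hence invariant under complex conjugation. After a generic linear change of coordinates, $f(X_1,0,\ldots,0)\neq0$. On a small cube $Q=I\times Q''$ we then have $f=gu$, where $u$ is nonvanishing on $Q$ and
\[
g=X_1^k+a_{k-1}X_1^{k-1}+\ldots+a_0
\]
with $a_j\in\mathcal O^{\mathbb R}(Q'')$, for $Q''$ an $(n-1)$-dimensional cube. Thus $\{f\neq0\}\cap Q=\{g\neq0\}\cap Q$. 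Let $\Delta\in\mathcal O^{\mathbb R}(Q'')$ be the discriminant of the squarefree part of $g$. One can arrange, after first replacing $g$ by its squarefree part (which is harmless, since it has the same zero set), that $\Delta\neq0$. Over the open set $\{\Delta\neq0\}\subset Q''$, the real roots of $g$ in $I$ vary continuously and never collide, although they may cross the boundary $\partial I$. Consequently, over each connected component $V$ of $\{\Delta\neq0\}$, together with the finitely many analytic loci where a root meets $\partial I$, the set $\{g\neq0\}$ splits into at most $k+1$ strips. The finiteness of components of $\{\Delta\neq 0\}\subset Q''$ follows from the induction hypothesis.

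It remains to control what happens over $\{\Delta=0\}$, and this is the main obstacle. Components of $\{g\neq0\}\cap Q$ lying over different components of $\{\Delta\neq0\}$ may merge across $\{\Delta=0\}$, but merging only decreases the count. The difficulty is that new components might lie entirely over $\{\Delta=0\}$. This cannot happen, because $\{\Delta=0\}$ has empty interior in $Q''$ and $\{g\neq0\}$ is open in $Q$: any component of $\{g\neq 0\}$ is a nonempty open set, so it contains points lying over the dense set $\{\Delta\neq0\}$. Hence every component of $\{g\neq0\}\cap Q$ contains one of the finitely many strips described above. The points that need checking are the boundary-crossing loci and the need to shrink cubes, using the interior-versus-closed cube bookkeeping that appears in Lemma~\ref{lem:locallyfinitepi0}. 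Beyond those, the total count is bounded by $(k+1)$ times the number of components of $\{\Delta\neq0\}$, suitably refined by the boundary loci $\{g(\pm1,\cdot)=0\}$, which are again handled by the inductive hypothesis.
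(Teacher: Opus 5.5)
Your local mechanism is the paper's: Weierstra\ss\ preparation, a nonzero function $\Delta$ in $n-1$ variables over whose nonvanishing locus the zeros form a finite cover, at most $k+1$ strips over each of the finitely many components of $\{\Delta\neq 0\}$ (by induction), and openness plus density of $\{\Delta\neq0\}$ to exclude components lying over $\{\Delta=0\}$. Your covering count (each component contains some component of some $\{f\neq0\}\cap Q_i$) is also correct. At points of the open cube $(-1,1)^n$ the argument works. The crossing of roots through $\partial I$ can even be avoided by shrinking $Q''$, since the Weierstra\ss\ polynomial equals $X_1^k$ at $X''=0$. Note also that you only need the present lemma in dimension $n-1$, not Lemma~\ref{lem:locallyfinitepi0}.

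The gap is at points $x$ on the boundary of $[-1,1]^n$. There Weierstra\ss\ preparation generally requires a linear change of coordinates. For instance, $f=(X_1-1)(X_2-1)$ vanishes identically on both coordinate lines through the corner $(1,1)$. In the new coordinates the neighbourhood $Q\cap[-1,1]^n$ of $x$ is no longer of the form $I\times Q''$. Its fibres in the $X_1$-direction are cut out by slanted faces that the roots can cross, so the $(k+1)$-strip count is lost. If instead you let $Q=I\times Q''$ stick out of the cube and use the analytic extension of $f$, the local-to-global step breaks. A component of $\{f\neq0\}\cap Q$ can contain several components of $\{f\neq0\}\cap Q\cap[-1,1]^n$: take $f=X_1-1+X_2^2$ near $(1,0)$. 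Your remarks on ``boundary-crossing loci'' and ``interior-versus-closed bookkeeping'' concern $\partial I$ and the interior setting of Lemma~\ref{lem:locallyfinitepi0}, not this issue.

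The missing step is the paper's opening reduction, which runs as follows.
\begin{itemize}
\item Every component of $\{f\neq0\}$ is relatively open in $[-1,1]^n$, hence meets $(-1,1)^n$. So it suffices to bound the number of components of $\{f\neq0\}\cap(-1,1)^n$.
\item Extend $f$ to some $[-C,C]^n$ with $C>1$ and put $g=f\cdot\prod_i(1-X_i^2)$.
\item Since $\{g\neq0\}$ avoids the hyperplanes $X_i=\pm1$, the components of $\{f\neq0\}\cap(-1,1)^n$ are exactly the components of $\{g\neq0\}\subset[-C,C]^n$ that meet $[-1,1]^n$.
\end{itemize}
Now every point of $[-1,1]^n$ is interior for $g$. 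You may therefore use small boxes in arbitrary coordinates, contained in $(-C,C)^n$, with interiors covering $[-1,1]^n$. Your covering count and local strip argument then go through verbatim for $g$.
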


\begin{proof} By induction, we can assume that this is true for the intersections with all the faces. It is then enough to prove that $\{f\neq 0\}\cap (-1,1)^n$ has only finitely many connected components. But $f$ extends to $[-C,C]^n$ for some $C>1$, and if we multiply $f$ by the hyperplane equations for all the faces, it is enough to prove that for any $g\in \mathcal O^{\mathbb R}([-C,C]^n)$, the locus $\{g\neq 0\}$ has only finitely many connected components that meet $[-1,1]^n$. But then we can again locally find a projection to an $n-1$-dimensional cube so that the locus $g=0$ is finite over it. Arguing as in the previous proof, there is some nonzero function $h$ on the $n-1$-dimensional cube such that over the nonvanishing locus $h\neq 0$, the locus $g=0$ defines a finite topological cover. In that case, the locus $\{g\neq 0,h\neq 0\}\subset \mathbb R\times \{h\neq 0\}$ mapping to $\{h\neq 0\}$ is, over each connected component, a finite union of open intervals, and hence has also only finitely many connected components. But by openness, no connected component of $\{g\neq 0\}$ can map into $\{h=0\}$.
\end{proof}

Coming back to the finite $\mathcal O^{\mathbb R}([-1,1]^n)$-algebra $A$, we note that inside the corresponding compact Hausdorff space $A(\mathbb C)$ of $\mathbb C$-valued points, we have the closed subspace $A(\mathbb R)\subset A(\mathbb C)$ of real points, giving rise to an idempotent $p$-liquid $A$-algebra $A^{\mathbb R}$ of real-analytic functions on $A(\mathbb R)$. Note that $A^{\mathbb R}$ is in general far from a finite $\mathcal O^{\mathbb R}([-1,1]^n)$-algebra.

The key step is the following proposition, showing a local finite generation result in the interior of the cube.

\begin{proposition}\label{prop:locallyfingenreal0} Assume that $\mathcal O^{\mathbb R}([-1,1]^n)$ is noetherian. Let $A$ be a finite $\mathcal O^{\mathbb R}([-1,1]^n)$-algebra and let $I\subset A^{\mathbb R}$ be an ideal. Then there is some $\epsilon>0$ such that, denoting
\[
A_\epsilon^{\mathbb R}=A^{\mathbb R}\otimes^{\mathrm{liq}}_{\mathcal O^{\mathbb R}([-1,1]^n)} \mathcal O^{\mathbb R}([-\epsilon,\epsilon]^n),
\]
the ideal $IA_\epsilon^{\mathbb R}$ is finitely generated.
\end{proposition}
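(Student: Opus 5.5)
The plan is to reduce the finite generation of $I A_\epsilon^{\mathbb R}$ to a local statement at finitely many points, and then glue the local generators using the coherence and descent results of Lecture X. Since $A$ is finite over the noetherian ring $\mathcal O^{\mathbb R}([-1,1]^n)$, it is itself noetherian, and $A(\mathbb R)$ is a compact subset of $A(\mathbb C)$. The algebra $A^{\mathbb R}$ is the idempotent algebra of real-analytic germs along $A(\mathbb R)$. Concretely, it is the colimit of $\mathcal O(U)$ over complex neighbourhoods $U$ of $A(\mathbb R)$ inside the analytic space attached to $A$, which is locally cut out in some $\mathbb C^N$ after choosing finitely many algebra generators of $A$.

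\textbf{Step 1: local generation.} Fix a point $x\in A(\mathbb R)$ lying over $[-\epsilon_0,\epsilon_0]^n$. The stalk $A^{\mathbb R}_x=\mathcal O_{A,x}$ is a quotient of a convergent power series ring, hence noetherian by Theorem~\ref{thm:compactsteincoherent}(5). So $I\mathcal O_{A,x}$ is generated by finitely many elements $f_1,\dots,f_k\in I$.

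\textbf{Step 2: spreading to a neighbourhood.} Let $J=(f_1,\dots,f_k)$. Over a small compact Stein neighbourhood $W_x$ of $x$, the quotient $(I+J)/J$ is an ideal of the coherent ring $\mathcal O(W_x)/J$. I would argue that this quotient vanishes on a neighbourhood of $x$. Here lies the subtlety: $I$ is not finitely generated, so Lemma~\ref{lem:fingenmodulelocallyzero} does not apply directly. The fix I would try is to use noetherianity of $\mathcal O^{\mathbb R}([-1,1]^n)$ and of $A$. Consider the ideal of $A$ of elements $a$ such that $aI\subset J$ near $x$, and use the finite-cover topology of Corollary~\ref{cor:coherentsheavesdescend} to show that the coherent subsheaf generated by $I$ has finitely many local generators at every point of a neighbourhood.

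\textbf{Step 3: compactness and connected components.} By compactness, finitely many neighbourhoods $W_{x_1},\dots,W_{x_m}$ cover the part of $A(\mathbb R)$ over $[-\epsilon,\epsilon]^n$ for some $\epsilon\le\epsilon_0$. It then remains to glue. The ideal sheaf $\mathcal I$ generated locally by $I$ is coherent on this compact set. Since it is locally generated by global sections coming from $I$, the sum $J_{\mathrm{tot}}$ of all the finitely many chosen generators satisfies $J_{\mathrm{tot}}\mathcal O = \mathcal I$ locally everywhere. Finally, $I A_\epsilon^{\mathbb R}=J_{\mathrm{tot}}A_\epsilon^{\mathbb R}$, which I would deduce from Lemma~\ref{lem:fingenmodulelocallyzero} applied to the finitely generated module $(J_{\mathrm{tot}}+(g))/J_{\mathrm{tot}}$ for each $g\in I$.

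Lemma~\ref{lem:locallyfinitepi0} is needed because $A(\mathbb R)$ over $[-\epsilon,\epsilon]^n$ may not be compact in a useful way. $A_\epsilon^{\mathbb R}$ is really the germ along the preimage of the small cube, and we need only finitely many components in order to choose an $\epsilon$ and finitely many charts uniformly.

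\textbf{Main obstacle.} I expect the main difficulty to be Step 2, namely making $I$ locally finitely generated on a whole neighbourhood rather than just at a single stalk. Noetherianity of stalks gives generators at $x$, but propagating them requires controlling $I$ at nearby points where $I$ could acquire new components. This is exactly where noetherianity of $\mathcal O^{\mathbb R}([-1,1]^n)$ should enter: apply it to the ideal of $A$ obtained as $I\cap A$ (or its finitely generated approximations), and use finiteness of $A^{\mathbb R}$-germs over it via a Weierstra\ss{} projection.
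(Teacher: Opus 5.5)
Your plan has a genuine gap at Step~2, and Step~3 inherits it. Noetherianity of the stalk at $x$ gives $f_1,\dots,f_k\in I$ with $I\mathcal O_x=J\mathcal O_x$ at that one point, but nothing you invoke carries this equality over to the stalks at nearby points. Step~3 asserts that the subsheaf generated by $I$ is coherent, i.e.\ locally generated near every point by finitely many elements of $I$. That assertion is essentially the proposition itself, so the argument is circular.

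Your suggested fixes do not supply the missing step:
\begin{itemize}
\item Corollary~\ref{cor:coherentsheavesdescend} only glues modules already known to be finitely presented locally.
\item Ideals of $A$ such as $I\cap A$, or your colon ideal $\{a\in A: aI\subset J \text{ near } x\}$, are finitely generated. But they have no reason to generate $I$, even stalkwise, because $A^{\mathbb R}$ is far from finite over $A$.
\item A Noether-chain argument for coherent sheaves on a fixed open set is unavailable. The elements of $I$ are germs along $A(\mathbb R)$ defined on ever-smaller complex neighbourhoods, so they do not all live on one analytic space.
\end{itemize}
Consequently, in your last step you only know $g\in J_{\mathrm{tot}}$ at the finitely many centres $x_i$, not at every point over $[-\epsilon,\epsilon]^n$. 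So Lemma~\ref{lem:fingenmodulelocallyzero} has no hypothesis to work with. You have also misplaced the role of Lemma~\ref{lem:locallyfinitepi0}: the part of $A(\mathbb R)$ over $[-\epsilon,\epsilon]^n$ is compact anyway, so finitely many charts come for free.

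What is missing is an identity-theorem argument that makes information at a single stalk suffice. The paper proceeds as follows.
\begin{itemize}
\item It shrinks the cube so that exactly one point of $A(\mathbb C)$ lies over $0^n$, and that point is real.
\item The stalk $A_0^{\mathbb R}=A\otimes_{\mathcal O^{\mathbb R}([-1,1]^n)}\mathcal O^{\mathbb R}(0^n)$ is then finite over a noetherian ring. After dividing $A$ by a finitely generated ideal one may assume $IA_0^{\mathbb R}=0$, and the task becomes showing $IA_\epsilon^{\mathbb R}=0$ for small $\epsilon$.
\item It passes to the reduction of $A$ and to its normalization $\tilde A$, which is finite because $\mathcal O^{\mathbb R}([-1,1]^n)$ is noetherian, hence excellent. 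This, not a Weierstra\ss{} projection of $I\cap A$, is where the noetherian hypothesis is used.
\item Since $A^{\mathbb R}\otimes_A\tilde A$ is normal and maps flatly to $A_0^{\mathbb R}\otimes_A\tilde A$, anything killed at the stalk vanishes on every connected component of $\Spec(A^{\mathbb R}\otimes_A\tilde A)$ meeting the fibre over $0$.
\item Lemma~\ref{lem:locallyfinitepi0} is what controls these components: only finitely many of them meet the part over $[-\epsilon,\epsilon]^n$. Hence for $\epsilon$ small each of them meets the fibre over $0$.
\end{itemize}
Therefore $I$ dies in $A_\epsilon^{\mathbb R}\otimes_A\tilde A$, and hence in $A_\epsilon^{\mathbb R}$.
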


\begin{proof} We argue by induction on $n$ (noting that the noetherianity assumption for some $n$ implies it for smaller $n$, by passing to quotients). Passing to a smaller cube around the origin, we can assume that $A(\mathbb C)$ has a unique point above $0^n\in [-1,1]^n$, and that this point is real. In particular,
\[
A_0^{\mathbb R} = A^{\mathbb R}\otimes_{\mathcal O^{\mathbb R}([-1,1]^n)} \mathcal O^{\mathbb R}(0^n) = A\otimes_{\mathcal O^{\mathbb R}([-1,1]^n)} \mathcal O^{\mathbb R}(0^n)
\]
is the stalk at this point, and is a finite $\mathcal O^{\mathbb R}(0^n)$-algebra, where the latter is noetherian, regular and excellent. The ideal $IA_0^{\mathbb R}$ is therefore finitely generated, and at the expense of replacing $A$ by a quotient by a finitely generated ideal, we can assume that $IA_0^{\mathbb R}=0$. The goal is then to show that $IA_\epsilon^{\mathbb R}=0$ for some $\epsilon>0$. Note that as $\mathcal O^{\mathbb R}([-1,1]^n)$ is noetherian, we can replace $A$ by its reduction (arguing modulo powers of the nilradical).

Let $\tilde{A}$ be the normalization of $A$, which is finite by excellence of $\mathcal O^{\mathbb R}([-1,1]^n)$. For any $0<\epsilon<1$, the map
\[
\pi_0(\Spec(A_\epsilon^{\mathbb R}\otimes_A \tilde{A}))\to \pi_0(\Spec(A^{\mathbb R}\otimes_A \tilde{A}))
\]
has finite image by Lemma~\ref{lem:locallyfinitepi0} (writing $\tilde{A}$, as a finite $A$-algebra, by adjoining $2$ real variables each algebraic variable (as its real and imaginary part), which are also implicitly added into the coordinates of the base cube, which can be taken large in these directions).\footnote{Here, the connected components of the spectrum can also be understood in terms of connected components of the maximal spectrum, with its archimedean topology.} Taking $\epsilon$ small enough, it then follows that the image agrees with the image of
\[
\pi_0(\Spec(A_0^{\mathbb R}\otimes_A \tilde{A}))\to \pi_0(\Spec(A^{\mathbb R}\otimes_A \tilde{A})).
\]
But now the kernel of $A^{\mathbb R}\to A_0^{\mathbb R}$ is contained in the kernel of $A^{\mathbb R}\otimes_A \tilde{A}\to A_0^{\mathbb R}\otimes_A \tilde{A}$, and as this map is flat and the source is normal (as normality passes to analytic localizations, as it can (by excellence) be checked on complete local rings), any element in the kernel of this map must vanish on the whole connected components containing $\Spec(A_0^{\mathbb R}\otimes_A \tilde{A})$. But then it also vanishes on the image of $\Spec(A_\epsilon^{\mathbb R}\otimes_A \tilde{A})$, and hence the kernel of $A^{\mathbb R}\otimes_A \tilde{A}\to A_0^{\mathbb R}\otimes_A \tilde{A}$ is equal to the kernel of $A^{\mathbb R}\otimes_A \tilde{A}\to A_\epsilon^{\mathbb R}\otimes_A \tilde{A}$. This implies that $IA_\epsilon^{\mathbb R}\otimes_A \tilde{A}=0$ and hence also $IA_\epsilon^{\mathbb R}=0$.
\end{proof}

Finally, we can prove \cite[Lemma 1]{LangmannFrisch}, at least assuming the inductive hypothesis.

\begin{lemma}\label{lem:locallyfingenreal} Assume that $\mathcal O^{\mathbb R}([-1,1]^n)$ is noetherian. Let $I\subset \mathcal O^{\mathbb R}([-1,1]^{n+1})$ be any ideal. Then for $0<c<1$,  the ideal $I\mathcal O^{\mathbb R}([-c,c]^{n+1})$ of $\mathcal O^{\mathbb R}([-c,c]^{n+1})$ is finitely generated.
\end{lemma}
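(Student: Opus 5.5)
Fix $0<c<c'<1$. The plan is to reduce to Proposition~\ref{prop:locallyfingenreal0} through a real Weierstra\ss\ preparation, and then patch local finite generation over $[-c,c]^{n+1}$ using coherence. Coherence means that local finite generation, in the sheaf sense, glues to global finite generation.

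\textbf{Reduction to a local statement.} By Theorem~\ref{thm:compactsteincoherent}(1) and (3), complexified and descended to real parts, the finitely presented modules over compact Stein subsets form a sheaf (Corollary~\ref{cor:coherentsheavesdescend}). It therefore suffices to find a finite cover of $[-c,c]^{n+1}$ by small cubes $Q_i$ such that $I\mathcal O^{\mathbb R}(Q_i)$ is finitely generated for each $i$.

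To pass from local to global, consider the finitely presented modules $\mathcal O^{\mathbb R}(Q_i)/I\mathcal O^{\mathbb R}(Q_i)$. On the overlaps they are compatible because the restriction maps are flat, so they glue to a finitely presented quotient $M'$ of $\mathcal O^{\mathbb R}([-c,c]^{n+1})$. Now compare $M=\mathcal O^{\mathbb R}([-c,c]^{n+1})/I$ with $M'$ using Lemma~\ref{lem:fingenmodulelocallyzero}, exactly as in the proof of Theorem~\ref{thm:compactsteincoherent}. This shows $M=M'$, so $I$ is finitely generated. By compactness, the question is thus local around each point $x\in[-c,c]^{n+1}$. Translating, take $x=0$ in the interior of $[-1,1]^{n+1}$.

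\textbf{Local step.} If $I=0$ near $0$ we are done. Otherwise pick $f\in I$ with nonzero germ at $0$. After a linear real change of coordinates, $f(0,\dots,0,X_{n+1})\not\equiv 0$. The real analogue of Theorem~\ref{thm:weierstrassprep} (take the real part of the complex statement, using uniqueness and complex conjugation) writes $f=gu$ on a small cube $[-\delta,\delta]^{n+1}$, where $u$ is a unit and $g$ is monic in $X_{n+1}$. Then
\[
A=\mathcal O^{\mathbb R}([-\delta,\delta]^n)[X_{n+1}]/(g)
\]
is a finite algebra over the $n$-cube; rescale so that this cube is $[-1,1]^n$. The key identification to check is that $\mathcal O^{\mathbb R}([-\delta,\delta]^{n+1})/(f)$ agrees with $A^{\mathbb R}$ localized near the real points over the base, up to restricting away from real roots of $g$ that lie outside $|X_{n+1}|\le\delta$. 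This is the idempotent algebra of real-analytic functions on the real points of $A$ lying in the slab. Since that locus is a union of connected pieces, it should be cut out as a direct factor once $\delta$ is shrunk. The image of $I$ then becomes an ideal of (a factor of) $A^{\mathbb R}$. Proposition~\ref{prop:locallyfingenreal0}, which uses the noetherian hypothesis on the $n$-cube, gives $\epsilon>0$ such that this ideal is finitely generated after base change to $[-\epsilon,\epsilon]^n$. Adding $f$ to these generators yields finite generation of $I$ on a neighbourhood $[-\epsilon,\epsilon]^n\times[-\delta',\delta']$ of $0$.

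\textbf{Expected obstacle.} The main difficulty is the identification step: that the analytic localization to the small cube of $\mathcal O^{\mathbb R}/(f)$ equals the base change $A^{\mathbb R}_\epsilon$, restricted to the relevant real sheets, as liquid algebras. Real roots of $g$ can lie outside the $X_{n+1}$-interval, and complex non-real roots contribute nothing real. I would handle this by first arranging that all complex roots of $g$ over $0$ sit at $X_{n+1}=0$ (the Weierstra\ss\ polynomial vanishes only there). Then, for $\epsilon$ small, all roots over $[-\epsilon,\epsilon]^n$ lie in $|X_{n+1}|<\delta'$. With that arranged, $A^{\mathbb R}_\epsilon$ is literally the quotient of $\mathcal O^{\mathbb R}([-\epsilon,\epsilon]^n\times[-\delta',\delta'])$ by $f$, using the idempotence of $A^{\mathbb R}$ and the finiteness of $A$.
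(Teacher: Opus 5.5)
Your proposal is correct and follows the paper's proof. You reduce, via Lemma~\ref{lem:fingenmodulelocallyzero} and the same gluing used in the proof of Theorem~\ref{thm:compactsteincoherent}, to finite generation on a small closed neighbourhood of each interior point; you then pick a nonzero $f\in I$, use Weierstra\ss\ preparation to get a finite projection to an $n$-cube, and apply Proposition~\ref{prop:locallyfingenreal0}. Your fix in the final paragraph is the right way to make the paper's terse ``finite projection'' step precise: shrink the base cube so that all roots of the Weierstra\ss\ polynomial lie in $|X_{n+1}|<\delta$, so that $A^{\mathbb R}$ is literally $\mathcal O^{\mathbb R}(\text{base}\times[-\delta,\delta])/(f)$ before you invoke the proposition. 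The earlier ``direct factor'' remark is unnecessary, and it is not true in general, since real roots can cross $|X_{n+1}|=\delta$ as the base point varies.
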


\begin{proof} Using Lemma~\ref{lem:fingenmodulelocallyzero}, it is enough to prove that any point in the interior of $(-1,1)^{n+1}$ admits a closed neighborhood $Z$ such that $I\mathcal O^{\mathbb R}(Z)$ is finitely generated. If $I=0$, there is nothing to prove, so take some nonzero $f\in I$. Then locally around any point, we can find a finite projection to an $n$-dimensional cube, reducing us to Proposition~\ref{prop:locallyfingenreal0}.
\end{proof}

Now the rest of the proof of Theorem~\ref{thm:noetherian} is as in \cite{LangmannFrisch}. Namely, assume by induction that $\mathcal O^{\mathbb R}([-1,1]^n)$ is noetherian, and consider any sequence $f_0,f_1,\ldots,\in \mathcal O^{\mathbb R}([-1,1]^{n+1})$ generating an ideal $I$. By induction, for every face $Z$ of the cube, $\mathcal O^{\mathbb R}(Z)$ is noetherian: Indeed, if the face is given by say $x_1=1$, then its $x_1-1$-adic completion is $\mathcal O^{\mathbb R}([-1,1]^n)[[x_1-1]]$, which is noetherian by induction, and faithfully flat over $\mathcal O^{\mathbb R}(Z)$. In particular, there is some $N$ such that $(f_0,\ldots,f_N)\mathcal O^{\mathbb R}(Z) = I\mathcal O^{\mathbb R}(Z)$ for all faces $Z$ of the cube. For every $N'>N$, the quotient
\[
(f_0,\ldots,f_{N'})/(f_0,\ldots,f_N)\subset \mathcal O^{\mathbb R}([-1,1]^{n+1})/(f_0,\ldots,f_N)
\]
defines a coherent sheaf whose support is a compact subset of $(-1,1)^n$. Note that
\[
\mathcal O^{\mathbb R}([-1,1]^{n+1})/(f_0,\ldots,f_N)
\]
extends to a coherent sheaf on a small neighborhood $[-1-\epsilon,1+\epsilon]^{n+1}$ of $[-1,1]^n$ (as all $f_0,\ldots,f_N$ extend to a small neighborhood), and so do these coherent subsheaves $(f_0,\ldots,f_{N'})/(f_0,\ldots,f_N)$ (via extending by zero outside $[-1,1]^{n+1}$). Each of them thus defines a coherent submodule of $\mathcal O^{\mathbb R}([-1-\epsilon,1+\epsilon]^{n+1})/(f_0,\ldots,f_N)$, and in particular an ideal of $\mathcal O^{\mathbb R}([-1-\epsilon,1+\epsilon]^{n+1})$ via taking the preimage. Let $\tilde{I}\subset \mathcal O^{\mathbb R}([-1-\epsilon,1+\epsilon]^{n+1})$ be their union. By construction, one has $I=\tilde{I} \mathcal O^{\mathbb R}([-1,1]^{n+1})$. We conclude with Lemma~\ref{lem:locallyfingenreal}.
\newpage

\section{Lecture XI: Complex analytic spaces}

The goal in this lecture will be to formally introduce a category of complex analytic spaces in which we will work.  But first we want to add some small things to the discussion of the previous lecture.  Recall from the previous lecture the following:

\begin{enumerate}
\item For every compact Stein $K\subset\mathbb{C}^n$, the ring $\mathcal{O}(K)(\ast)$ is coherent, and in particular an object in $\mathcal{D}(\operatorname{Mod}_{\mathcal{O}(K)(\ast)})$ is pseudocompact if and only if it is homologically bounded below and each homology group is finitely presented as an $\mathcal{O}(K)(\ast)$-module.  (In fact, in the appendix we also saw that for enough $K$, including closed polydisks, the ring $\mathcal{O}(K)(\ast)$ is even noetherian, so for these $K$ we can replace ``finitely presented'' by ``finitely generated''.)
\item The association $K\mapsto \mathcal{D}_{pc}(\mathcal{O}(K)(\ast))$ satisfies descent for the topology of finite covers of compact Steins.
\item For $K'\subset K$, the restriction map $\mathcal{O}(K)(\ast)\rightarrow \mathcal{O}(K')(\ast)$ is flat.
\end{enumerate}

It follows that assigning to every $K$ the category of finitely presented $\mathcal{O}(K)(\ast)$-modules is a sheaf of abelian categories with respect to the topology of finite covers.  We would now like prove the following ``hygienic'' properties of this sheaf of abelian categories.

\begin{proposition}\label{coherentisDNF}
Suppose $K$ is a compact Stein in $\mathbb{C}^n$.  Then:
\begin{enumerate}
\item For any finitely presented $\mathcal{O}(K)$-module $M$, we have that $M$, viewed as an underlying object of $\operatorname{Liq}_p$, is a DNF space (see Lecture VIII).   In particular $M$ is quasi-separated.
\item For any $\mathcal{O}(K)(\ast)$-module $M$, we have that $\mathcal{O}(K)\otimes^L_{\mathcal{O}(K)(\ast)} M\in\mathcal{D}(\operatorname{Liq}_p)$ lives in degree $0$.
\end{enumerate}
\end{proposition}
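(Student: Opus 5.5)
For (1), I would first treat $M=\mathcal O(K)$ itself and then move to general finitely presented modules using the permanence properties of DNF spaces (Lemma~\ref{DNFlemmas}).

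By Proposition~\ref{prop:steincompactnuclear} and the Stein hypothesis, $\mathcal O(K)$ is basic nuclear and concentrated in degree $0$. Theorem~\ref{basicnuclearcharacterization} then says it is a quotient of DNF spaces. To upgrade this to an honest DNF space, I would use a more explicit description. Write $K=\bigcap_m Z_m$ as a decreasing intersection of finite unions of closed polydiscs, chosen so that $Z_{m+1}$ lies in the interior of $Z_m$. Then
\[
\mathcal O(K)=\varinjlim_m H^0\mathcal O(Z_m).
\]
Each $H^0\mathcal O(Z_m)$ is a closed subalgebra of a finite product of polydisc algebras $\mathcal O(\{|X|\le r\})$, and each of those is itself a colimit of sequence spaces under injective trace-class maps (Lecture IV).

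My plan is to restrict holomorphic functions from a neighbourhood of $Z_m$ to a neighbourhood of $Z_{m+1}$. I would show that $\mathcal O(K)$ is a sequential union of Banach (or Hilbert) spaces along trace-class transition maps. Since $\mathcal O(K)$ is a filtered colimit of qs spaces with injective transition maps, it is qs. The lemma preceding Lemma~\ref{DNFlemmas} then makes the transition maps injective, so $\mathcal O(K)$ is DNF.

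Next, for finitely presented $M=\operatorname{coker}(\mathcal O(K)^m\xrightarrow{A}\mathcal O(K)^k)$, the source and target are DNF, since finite direct sums are extensions. By Lemma~\ref{DNFlemmas}(1), the image of $A$ is DNF. It then remains to show that the quotient $\mathcal O(K)^k/\operatorname{im}(A)$ is DNF, and I expect this to be the main obstacle. A quotient of DNF spaces is DNF only when it is quasi-separated, that is, when the image is closed. I would obtain closedness from coherence and from descent for $\mathcal D_{pc}$:
\begin{itemize}
\item $\operatorname{im}(A)$ is finitely presented because $\mathcal O(K)$ is coherent, and its formation commutes with base change to the stalks $\mathcal O_z$ by flatness.
\item Locally on a finite cover $Z_i$, Weierstra\ss\ division gives a continuous splitting at the level of Banach approximations.
\item Gluing over the finite cover by the Čech complex, which is a finite limit of qs objects, shows the quotient is qs.
\end{itemize}
Once qs is established, I would argue that $M$ is the image of $\mathcal O(K)^k$ in the qs space $M$, and Lemma~\ref{DNFlemmas}(1) gives that $M$ is DNF.

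For (2), I would take a free resolution $F_\bullet\to M$ over the discrete ring $\mathcal O(K)(\ast)$. Then $\mathcal O(K)\otimes^L M$ is computed by $\mathcal O(K)\otimes F_\bullet$, so I need exactness in positive degrees. Writing $M$ as a filtered colimit of finitely presented modules, and using that the tensor product commutes with filtered colimits, I can reduce to $M$ finitely presented. Coherence gives a resolution by finite free modules, and $\mathcal O(K)\otimes_{\mathcal O(K)(\ast)}\mathcal O(K)(\ast)^k=\mathcal O(K)^k$. So the question becomes whether a complex of finite free $\mathcal O(K)$-modules, exact on $\ast$-points, is exact as condensed modules.

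Each kernel and image in this complex is a finitely presented module. By (1) these are DNF, with the condensed structure induced as a subspace. Exactness on $\ast$-points together with qs-ness of all the terms then gives exactness condensedly. The reason is that DNF spaces are $\omega_1$-condensed and are determined by their compact subsets, and a map between them that is bijective on $\ast$-points is an isomorphism by the open mapping argument for DNF spaces, which I would check via Banach open mapping on the Hilbert pieces.
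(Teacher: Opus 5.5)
The gap is in the step you flag yourself: showing that $M=\mathcal O(K)^k/\operatorname{im}(A)$ is quasi-separated. As proposed, this step does not go through, for two reasons.

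First, passing to a finite cover of $K$ by compact Steins $Z_i$ does not simplify anything. Showing that $\mathcal O(Z_i)^k/\operatorname{im}(A)$ is quasi-separated is the same statement with $K$ replaced by $Z_i$. ``Weierstra\ss\ division gives a continuous splitting'' handles at best one relation near one point. For a general submodule you need an induction on the number of generators, using that extensions of DNF spaces are DNF (Lemma~\ref{DNFlemmas}(2)). You also need an induction on the dimension: after Weierstra\ss\ preparation, $\mathcal O_x/f$ is finite free over a local ring in one variable fewer. Nothing of this sort is set up.

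Second, the \v{C}ech gluing presupposes that $M$ is the equalizer of $\prod_i M_i\rightrightarrows\prod_{i,j}M_{ij}$ in condensed modules, with $M_J$ the condensed cokernel over $\mathcal O(Z_J)$. Descent gives this only if the derived liquid base changes $M\otimes^L_{\mathcal O(K)}\mathcal O(Z_J)$ are concentrated in degree $0$. Otherwise their higher homology contributes to $H_0$ of the totalization, and $M\to\prod_i M_i$ need not even be injective. Flatness of $\mathcal O(K)(\ast)\to\mathcal O(Z_J)(\ast)$ gives degree-$0$ concentration only in the abstract $t$-structure. That this matches the liquid $t$-structure is exactly statement (2), which you prove afterwards using (1). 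So the argument is circular as written.

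The paper instead goes all the way down to stalks. It shows that every finitely generated $\mathcal O_{\mathbb C^n,x}$-module is quasi-separated (equivalently DNF) by the double induction just described. It then embeds $M$ into $\prod_{x\in K}M_x$, using that products and subobjects of qs objects are qs. Condensed injectivity of this embedding needs no \v{C}ech exactness:
\begin{itemize}
\item The kernel $W'$ of $\mathcal O(K)^k\to\prod_x M_x$ is DNF by Lemma~\ref{DNFlemmas}(1), and it contains the DNF space $W=\operatorname{im}(A)$.
\item One has $W(\ast)=W'(\ast)$, because $M(\ast)\to\prod_x M_x(\ast)$ is injective. This follows from flatness together with Lemma~\ref{lem:fingenmodulelocallyzero} applied to cyclic submodules of $M(\ast)$.
\item The Baire-category argument you use in (2) then forces $W=W'$.
\end{itemize}

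The rest of your proposal is fine, and your argument for (2) is essentially the paper's. For $M=\mathcal O(K)$, the explicit trace-class presentation is unnecessary. Quasi-separatedness together with Proposition~\ref{prop:steincompactnuclear}, Theorem~\ref{basicnuclearcharacterization} and Lemma~\ref{DNFlemmas}(1) already gives that $\mathcal O(K)$ is DNF.
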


The claim in (2) is a bit subtle: we know that $\mathcal{D}(\mathcal{O}(K)(\ast))$ embeds fully faithfully into $\mathcal{D}(\operatorname{Mod}_{\mathcal{O}(K)}(\operatorname{Liq}_p))$ via $M\mapsto \mathcal{O}(K)\otimes^L_{\mathcal{O}(K)(\ast)} M$ for formal reasons (the unit object in $\mathcal{D}(\operatorname{Mod}_{\mathcal{O}(K)}(\operatorname{Liq}_p))$ is compact and has endomorphism ring $\mathcal{O}(K)(\ast)$), but there's nothing formal guaranteeing that this functor is $t$-exact.  Nonetheless, that's what (2) claims.  Note that this property, combined with flatness of $\mathbb{C}[x_1,\ldots,x_n](\ast)\rightarrow \mathcal{O}(K)(\ast)$, was used above in Lemma \ref{analyticaffinelemma} in the calculation of the structure sheaf of the analytification of a finite type $\mathbb{C}$-algebra.

\begin{proof}
For (1), note that the DNF spaces are exactly the basic nuclear objects concentrated in degree $0$ which are quasi-separated.  (Indeed, without being quasi-separated, we know from Theorem \ref{basicnuclearcharacterization} that we have a quotient of DNF spaces; but by Lemma \ref{DNFlemmas} a quasiseparated quotient of DNF spaces is DNF.)  So it suffices to show that $M$ is basic nuclear and quasi-separated. For $M=\mathcal O(K)$, basic nuclearity is Proposition~\ref{prop:steincompactnuclear}, and general $M$'s are a quotient of a finite direct sum of such, which is still basic nuclear. Thus, it remains to see that $M$ is quasiseparated. We can embed $M$ into a product of copies of the local rings at $x\in K$. It suffices to see that these are qs, as a product of qs spaces is qs, and a subobject of a qs space is qs. Thus, it suffices to show that any finitely generated module $M$ over the local ring $\mathcal{O}_{\mathbb{C}^n,x}$ is qs, or equivalently DNF.  Because an extension of DNF spaces is DNF (Lemma \ref{DNFlemmas}), we can use induction to reduce to the case of one generator.  Then $M = \mathcal{O}_{\mathbb{C}^n,x}/I$ for an ideal $I$. If $I=0$, we're done by the explicit formula for the local ring; otherwise there is $0\neq f\in I$, and after a linear change of coordinates we can use Weierstra\ss\ preparation to conclude that $\mathcal{O}_{\mathbb{C}^n,x}/f$ is finite free over a local ring in one dimension lower.  Then $\mathcal{O}_{\mathbb{C}^n,x}/I$ is finitely generated over a local ring in one dimension lower, so we conclude by induction on the dimension.

For (2), by writing every $\mathcal{O}(K)(\ast)$-module as a filtered colimit of finitely presented ones, we reduce to the case when $M$ is finitely presented.  Then by coherence of $\mathcal{O}(K)(\ast)$, we can resolve $M$ by a complex of finite free $\mathcal{O}(K)(\ast)$-modules.  It follows that $\mathcal{O}(K)\otimes^L_{\mathcal{O}(K)(\ast)} M$ is calculated by a complex where each term is a finite direct sum of copies of $\mathcal{O}(K)$.  These terms are then DNF spaces, and by construction this complex has the property that, after applying $(-)(\ast)$, it is acyclic in positive degrees.  We want to see that the same holds also before applying $(-)(\ast)$.  But this is a consequence of the following lemma.
\end{proof}

\begin{lemma}
Suppose $C_\bullet$ is a complex of DNF spaces.  If $C_\bullet(\ast)$ is exact, then $C_\bullet$ is exact.
\end{lemma}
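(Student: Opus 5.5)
The plan is to reduce the claim to the Banach-space case of the open mapping theorem, using the structure of DNF spaces and the fact that kernels and images of maps between DNF spaces are again DNF (Lemma~\ref{DNFlemmas}(1)). It suffices to show exactness at each spot: take a piece $C_{k+1}\xrightarrow{d} C_k\xrightarrow{d} C_{k-1}$. Let $Z=\ker(C_k\to C_{k-1})$ and $B=\operatorname{im}(C_{k+1}\to C_k)$. Because $C_{k-1}$ is qs, Lemma~\ref{DNFlemmas}(1) shows that $Z$ is a DNF space. Likewise $B$ is DNF, since it is the image of a map from a DNF space to the qs space $C_k$. We then have an injection $B\hookrightarrow Z$ of DNF spaces which is a bijection on underlying sets, because $C_\bullet(\ast)$ is exact and evaluation at $\ast$ commutes with kernels. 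So the whole statement comes down to this: \emph{a map $f\colon B\to Z$ of DNF spaces that is bijective on $\ast$-points is an isomorphism.} Injectivity of $f$ as condensed map is already clear from $B\subset C_k$.

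To prove surjectivity, I will write $Z=\varinjlim_n H_n$ and $B=\varinjlim_m H'_m$ as increasing unions of Hilbert spaces along injective (compact) transition maps. On $\ast$-points, $Z(\ast)=\bigcup_m (f^{-1}H_n\cap H'_m)(\ast)$. Each intersection $f^{-1}(H_n)\cap H'_m$ is a closed subspace of the Hilbert space $H'_m$, since $H_n\subset Z$ is qc-compatible and everything is qs. Hence $H_n(\ast)=\bigcup_m (H_n\cap f(H'_m))(\ast)$ is a countable union of images of Banach spaces under continuous linear injections.

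By Baire category, in the form of the open mapping theorem for Fréchet/Banach spaces, some $H_n\cap f(H'_m)$ is all of $H_n$, and the induced bijection $(\text{closed subspace of }H'_m)\to H_n$ is a topological isomorphism. This gives a continuous inverse $H_n\to H'_m\subset B$ that agrees with $f^{-1}$ on points. These inverses are compatible as $n$ varies, so they assemble into a map $Z\to B$ of condensed sets, and it inverts $f$. Uniqueness and compatibility are automatic because all objects involved are qs with injective structure maps, so maps are determined on $\ast$-points of compact pieces.

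The main obstacle is the Baire step. One has to check that each $f(H'_m)\cap H_n$ really is the image of a Banach space mapping continuously into $H_n$; I will take $\{x\in H'_m : f(x)\in H_n\}$ with the graph norm, which is closed because $f$ is continuous into $Z$ and $H_n\to Z$ is a closed embedding on compacts. Once that is in place, the open mapping theorem finishes the argument, and the passage from $\ast$-points to condensed maps is handled by the qs/compactly generated comparison from Lecture~II.
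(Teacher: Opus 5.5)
Your argument is correct, and it differs from the paper's in how the Baire argument is set up. The paper neither passes to the image nor builds an inverse. For $f\colon V\to W$ the map $C_{k+1}\to\ker(d_k)$, it writes $V=\bigcup_n K_n$ as a countable union of compact Hausdorff subsets and $W=\bigcup_m W_m$ as a union of Banach spaces. The sets $f(K_n)\cap W_m$ are closed in $W_m$ (since $W$ is qs) and cover $W_m$ on points. Baire then gives some $f(K_n)$ containing a ball of $W_m$, so after translating and scaling every bounded, in particular every compact, subset of $W_m$ lies in the image of a compact subset of $V$. That is exactly surjectivity of $f$ as a map of condensed sets.

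You instead use Lemma~\ref{DNFlemmas}(1) for images to reduce to a monomorphism $B\hookrightarrow Z$ that is bijective on $\ast$-points. You then cover each $H_n$ by the images of the graph-norm Banach spaces $E_{n,m}=\{x\in H'_m : f(x)\in H_n\}$. Banach's category form of the open mapping theorem gives topological isomorphisms $E_{n,m}\cong H_n$, and these assemble into an explicit inverse $Z\to B$. Your route yields a slightly stronger intermediate statement: an open mapping theorem for DNF spaces, with each Hilbert piece $H_n$ of $Z$ mapping continuously into a single piece $H'_m$ of $B$. The cost is the extra reduction and the graph-norm construction. The paper's route is shorter, does not need injectivity, and works for any map from a countable union of compact Hausdorff spaces to a filtered union of Banach spaces that is surjective on points.

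One slip to repair. Your intermediate claim that $f^{-1}(H_n)\cap H'_m$ is a closed subspace of $H'_m$ is false in general. For $f=\mathrm{id}$ and $m>n$ it is the image of $H_n$ under a compact injective operator, which is not closed unless $H_n$ is finite-dimensional. What makes your graph-norm space complete is that the graph $\{(x,y)\in H'_m\times H_n : f(x)=y \text{ in } Z\}$ is closed in $H'_m\times H_n$, being an equalizer of two maps into the qs space $Z$. With that justification the rest of your argument goes through as written.
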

\begin{proof}
Considering the kernels of the differentials, which are also DNF spaces by Lemma \ref{DNFlemmas}, it suffices to show that if $f:V\rightarrow W$ is a map of DNF spaces which is surjective on applying $-(\ast)$, then $f$ is surjective.  For that, we will only use that $W=\bigcup_i W_i$ is a filtered union of Banach spaces and $V=\bigcup_n K_n$ is a countable union of compact Hausdorff spaces.  Indeed, for $n$ and $m$, consider the closed subspace $f(K_n)\cap W_m \subset W_m$.  Fixing $m$, the union over all $n$ of these subsets is set-theoretically equal to $W_m$ by hypothesis.  By the Baire category theorem, it follows that one of these subsets has non-empty interior.  Thus there exists an open ball in $W_m$ which is contained in some $f(K_n)$.  Translating and scaling, we see that every open ball in $W_m$ is contained in the image of a compact subset of $V$.  Thus, for all $m$, every compact subset of $W_m$ is contained in the image of a compact subset of $V$.  Every compact subset of $W$ is contained in $W_m$ for some $m$, so this gives the conclusion.\end{proof}

\begin{remark}
Recall that the basic nuclear objects in $\mathcal{D}(\operatorname{Liq}_p)$ are exactly those which can be represented by a complex of DNF spaces.  Thus, another interpretation of the above lemma is that the functor
$$X\mapsto X(\ast)$$
from basic nuclears to $\mathcal{D}(\mathbb{C})$ is conservative, i.e.\@ detects isomorphisms.
\end{remark}

Now we turn to our main task, a definition of complex analytic space.  Classically, the definition is based on the formalism of locally ringed spaces.  More precisely, a complex analytic space is a locally ringed space $(X,\mathcal{O}_X)$ which is locally isomorphic to one of the following form: take the following data:
\begin{enumerate}
\item An open polydisk $\mathbb{D}^n\subset\mathbb{C}^n$, which may as well be the unit polydisk;
\item Finitely many holomorphic functions $f_1,\ldots,f_m\in\mathcal{O}(\mathbb{D}^n)$,
\end{enumerate}
and produce the following locally ringed space:
\begin{enumerate}
\item $X = Z(f_1,\ldots,f_m)\subset \mathbb{D}^n$;
\item $\mathcal{O}_X = \mathcal{O}_{\mathbb{D}^n}/(f_1,\ldots, f_m)$, a sheaf of rings on $\mathbb{D}^n$ supported on $X$ which therefore corresponds to a sheaf of rings on $X$.
\end{enumerate}

The definition we give will be more general, in two different directions:

\begin{enumerate}
\item We want to allow (certain) compact Stein subsets $K\subset\mathbb{C}^n$ also to count as complex analytic spaces, meaning in some sense we want to allow our complex analytic spaces to have ``boundaries''.  This makes for more convenient reduction to algebra, as the rings of functions $\mathcal{O}(K)$ have nice finiteness properties like those proved in the previous lectures.
\item We want to allow derived structure sheaves as well, in order to have clean formulations of base-change theorems.
\end{enumerate}

Except for this derived nonsense and the fact that we are in the complex analytic context, we essentially follow the set-up of Grosse-Kl\"onne's theory of \emph{dagger spaces}, \cite{grosse2000rigid}, in terms of which local models we allow.

\begin{definition}
A commutative algebra $A$ in $\mathcal{D}_{\geq 0}(\operatorname{Liq}_p)$ is called \emph{affinoid} if there exists an $n\geq 0$ and a homomorphism
$$\mathcal{O}(\overline{\mathbb{D}}^n)\rightarrow A$$
of commutative algebras in $\mathcal{D}_{\geq 0}(\operatorname{Liq}_p)$, such that $A \in \mathcal{D}_{pc}(\mathcal{O}(\overline{\mathbb{D}}^n))$ as an underlying $\mathcal{O}(\overline{\mathbb{D}}^n)$-module in $\mathcal{D}(\operatorname{Liq}_p)$.
\end{definition}

In other words, by the noetherianness of $\mathcal{O}(\overline{\mathbb{D}}^n)$ proved in the appendix to the previous lecture, an affinoid algebra is a connective algebra over the ring $\mathcal{O}(\overline{\mathbb{D}}^n)$ of holomoprhic fuctions on some closed polydisk, such that each homology group is finitely generated as a module over $\mathcal{O}(\overline{\mathbb{D}}^n)$.

The following will be useful for reducing some questions about $A$ to $H_0A$.

\begin{lemma}\label{replacebyH0}
Let $d\geq 0$, let $B$ be an idempotent $\mathbb{C}[x_1,\ldots,x_d]$-algebra in $\mathcal{D}_{\geq 0}(\operatorname{Liq}_p)$, and let $A$ be a commutative algebra object in $\mathcal{D}_{\geq 0}(\operatorname{Liq}_p)$.  Then the map
$$\operatorname{Hom}(B,A)\rightarrow \operatorname{Hom}(B, H_0A)$$
is a bijection on $\pi_0$.  Here we take homomorphisms of commutative algebra objects in $\mathcal{D}_{\geq 0}(\operatorname{Liq}_p)$.
\end{lemma}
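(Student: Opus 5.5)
The plan is to factor the comparison through the $\mathbb{C}[x_1,\ldots,x_d]$-algebra structure. A map of commutative algebras $B\to A$ restricts along $\mathbb{C}[x_1,\ldots,x_d]\to B$ to a map $\mathbb{C}[x_1,\ldots,x_d]\to A$, which is the same as a $d$-tuple of points of $\pi_0 A(\ast)=H_0A(\ast)$. Idempotence of $B$ then makes the remaining data a property rather than extra structure.

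First, I will show that for any commutative algebra $A'$ under $R=\mathbb{C}[x_1,\ldots,x_d]$, the space of $R$-algebra maps $B\to A'$ is either empty or contractible. It is nonempty exactly when $A'$ lies in $\mathrm{Mod}_B$, i.e.\ when $A'\to A'\otimes_R B$ is an isomorphism. This is the standard fact that $R\to B$ is an epimorphism of $E_\infty$-rings, since $B\otimes_R B\simeq B$, and it follows from the Construction in Lecture V. Second, I write $\operatorname{Hom}(B,A)$ as fibered over $\operatorname{Hom}(R,A)$. Since $R$ is free on $d$ generators, $\operatorname{Hom}(R,A)\simeq \Omega^\infty(A(\ast))^d$. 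Its $\pi_0$ is $(H_0A)(\ast)^d$, which coincides with $\pi_0\operatorname{Hom}(R,H_0A)$; the higher homotopy may differ, but that is harmless. The fibers are the spaces of $R$-algebra maps $B\to A$ for each chosen $R$-algebra structure on $A$, so each fiber is empty or contractible. Hence $\pi_0\operatorname{Hom}(B,A)$ is the subset of $(H_0A)(\ast)^d$ consisting of those tuples for which $A\otimes_R B\simeq A$, and likewise for $H_0A$. (Any higher homotopy in the base does not affect $\pi_0$ of the total space, since the fibers are $(-1)$-truncated.)

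It therefore remains to show, for a fixed $R$-algebra structure on $A$ (equivalently on $H_0A$ through $\tau$), that $A\in\mathrm{Mod}_B$ if and only if $H_0A\in\mathrm{Mod}_B$. The forward direction holds because $A\to H_0A$ is a map of $R$-algebras, so $H_0A$ is an $A$-algebra and hence a $B$-module. For the converse, $\mathrm{Mod}_B\subset\mathrm{Mod}_R(\mathcal{D}(\mathrm{Liq}_p))$ is closed under limits and colimits and, by idempotence, is a tensor ideal: $M\otimes_R B\otimes_R N\in\mathrm{Mod}_B$. Every homotopy group $H_iA$, and more precisely each layer $H_iA[i]$ of the Postnikov tower, is an $H_0A$-module (through $\tau_{\le0}$). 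Since $H_0A\in\mathrm{Mod}_B$, each layer is a $B$-module: $H_iA[i]\simeq H_iA[i]\otimes_{H_0A}H_0A$, and $H_0A=H_0A\otimes_R B$ absorbs $B$. Taking extensions, each truncation $\tau_{\le n}A$ is a $B$-module. The class of $B$-modules is the class of $M$ with $\underline{\mathrm{RHom}}_R([R\to B],M)$ and $M\otimes_R[R\to B]$ both controlled; concretely, $\mathrm{Mod}_B$ equals the kernel of $j^\ast=-\otimes_R\mathrm{fib}(R\to B)$. It is closed under limits because it is also the image of the limit-preserving right adjoint $i_\ast$. Hence $A=\varprojlim_n\tau_{\le n}A$ is a $B$-module, using left-completeness of $\mathcal{D}(\mathrm{Liq}_p)$, which holds because countable products are exact.

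The main obstacle is this last step. The worry is whether $\mathrm{Mod}_B$ is closed under the Postnikov limit, and whether "each layer is an $H_0A$-module" is correctly justified in the derived sense. For closure under limits I would use that $i_\ast$ is both a left and a right adjoint (to $i^\ast$ and to $i^!$), so its essential image is closed under all limits and colimits. The layer statement is the standard fact that the Postnikov layers of a connective $E_\infty$-algebra are modules over $H_0$.
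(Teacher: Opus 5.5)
Your proposal is correct and is essentially the paper's argument. Idempotence makes $\operatorname{Hom}(B,-)$ a union of components of $\operatorname{Hom}(\mathbb{C}[x_1,\ldots,x_d],-)\simeq(-)^d$; the latter uses characteristic $0$ for freeness, as the paper's footnote notes. This reduces everything to showing that $A$ lies in $\operatorname{Mod}_B$ once $H_0A$ does, which follows because $\operatorname{Mod}_B$ is closed under limits and colimits and each $H_iA$ is an $H_0A$-module. Your additional care about Postnikov convergence (left-completeness of $\mathcal{D}(\operatorname{Liq}_p)$), and about why the essential image of $i_\ast$ is closed under limits, just makes explicit what the paper leaves implicit.
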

\begin{proof}
Because of idempotency, $\operatorname{Hom}(\mathcal{O}(K),-)$ is a full sub-anima of $\operatorname{Hom}(\mathbb{C}[x_1,\ldots,x_d],-)$.  But the latter identifies with $(-)^d$ since $\mathbb{C}[x_1,\ldots,x_d]$ is free on $d$ generators.\footnote{Here we implicitly use that $\mathbb{C}$ has characteristic zero: otherwise the free commutative algebra in $\mathcal{D}_{\geq 0}(\operatorname{Liq}_p)$ would have contributions from the derived functors of $\Sigma_n$-coinvariants.}  As $A^d\rightarrow H_0A^d$ is evidently a bijection on $\pi_0$, we reduce to showing the following: given a map $f:\mathbb{C}[x_1,\ldots,x_d]\rightarrow A$, we have that $f$ factors through $B$ if and only if the composition $\mathbb{C}[x_1,\ldots,x_d]\rightarrow A\rightarrow H_0A$ factors through $B$.  ``Only if'' is obvious.  For ``if'', by idempotency it suffices to show that $A$, viewed as an $\mathbb{C}[x_1,\ldots,x_d]$-module, lies in the full subcategory of $B$-modules.  Again by idempotency the collection of $B$-modules is closed under limits and colimits, so we reduce to the claim that $H_iA$ is an $B$-module for any $i$, which follows from the hypothesis as $H_iA$ is a module over $H_0A$.
\end{proof}

For our definition of complex analytic space, instead of using the standard formalism of locally ringed space, we will use the formalism of \emph{categorified locale} discussed in Lecture VII.  Recall that a categorified locale is a triple
$$(X, C, f:\mathcal{S}(C)\rightarrow X)$$
where $X$ is a locale, $C$ is a cocomplete closed symmetric monoidal stable $\infty$-category, and $f$ is a map of locales.  Essentially, the idea is this: the locale $\mathcal{S}(C)$ is much too huge to work with in its entirety, so we use a well-chosen map $f$ to a simpler locale $X$ in order to single out part of the ``geometry'' of $C$.

Our next task will be to assign a canonical categorified locale to any affinoid algebra $A$, where the category $C$ will be
$$C_A=\operatorname{Mod}_A(\mathcal{D}(\operatorname{Liq}_p)),$$
and the locale will be the topological space
$$X_A = \operatorname{Hom}(A(\ast),\mathbb{C}) = \mathcal{M}^{\mathrm{Berk}}((H_0A)(\ast)),$$
following the pattern from Lecture V.

We will denote the locale $\mathcal{S}(C_A)$ also just by $\mathcal{S}(A)$.  Recall from Lecture V that for a liquid ring $A$ and $f\in A(\ast)$, there is a native notion of ``$f$ being analytic'': it means that
$$\mathcal{S}(A)=\bigcup_r \{ |f|<r\},$$
where the open subsets $\{ |f|<r\}$ are base-changed from defining idempotent algebras over $\mathbb{C}[T]$ via the homomorphism $\mathbb{C}[T]\rightarrow A$ classifying $f$.  It is actually equivalent to say that there exists an $c>0$ for which
$$\mathcal{S}(A)= \{ |f|\leq c\}.$$
Indeed, since the unit object in $C_A$ is compact, the locale $\mathcal{S}(A)$ is quasicompact.

For affinoid rings, it turns out that every element is analytic.

\begin{lemma}\label{affinoidsareanalytic}
Let $A$ be an affinoid algebra, and let $f\in A(\ast)$.  Then
$$\mathcal{S}(A)=\bigcup_r \{ |f|<r\},$$
or equivalently there exists $c>0$ such that
$$\mathcal{S}(A)= \{ |f|\leq c\}.$$
\end{lemma}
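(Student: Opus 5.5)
We plan to reduce the statement to the case of the polydisk algebra itself and then use the integrality argument of Lemma~\ref{integralanalytic}. Fix a structure map $\mathcal{O}(\overline{\mathbb{D}}^n)\rightarrow A$ exhibiting $A$ as affinoid. The first step is to replace $A$ by $H_0A$. An idempotent algebra $B$ over $\mathbb{C}[T]$ base changes to $B\otimes_{\mathbb{C}[T]}A$. By Lemma~\ref{replacebyH0} and the fact that all $H_iA$ are $H_0A$-modules, $A$ is a module over such a $B$ if and only if $H_0A$ is. So the closed subset $\{|f|\leq c\}$ equals all of $\mathcal{S}(A)$ exactly when $H_0A$ is a $B$-algebra for $B=\mathcal{O}(\{|T|\leq c\})$, i.e.\ exactly when the composite $\mathbb{C}[T]\rightarrow H_0A$ classifying $f$ extends over $B$. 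Hence we may assume $A=H_0A$. This is a finitely generated module over the noetherian ring $R=\mathcal{O}(\overline{\mathbb{D}}^n)(\ast)$, and it is also an $R$-algebra.

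Next we handle the coordinates. For $A=\mathcal{O}(\overline{\mathbb{D}}^n)$ and $f=x_i$ a coordinate, the ring is a module over $\mathcal{O}(\{|x_i|\leq 1\})$ by construction, since the polydisk is the intersection of the closed subsets $\{|x_i|\leq 1\}$ in $\mathcal{S}(\mathbb{C}[x_1,\ldots,x_n])$. By base change, each image of $x_i$ is analytic in any $A$ over $\mathcal{O}(\overline{\mathbb{D}}^n)$.

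The main step is to pass from the coordinates to an arbitrary $f\in A(\ast)$. Since $A(\ast)$ is a finite $R$-module, $f$ satisfies a monic equation $f^k+c_{k-1}f^{k-1}+\dots+c_0=0$ with $c_j\in R$, by Cayley--Hamilton. Lemma~\ref{integralanalytic} then reduces the claim to the analyticity of the $c_j$. That lemma only uses axioms (2) and (4), which were verified for liquid rings in Lecture VI via Proposition~\ref{prop:propertiessubsets}. So it remains to show that every $g\in R$ is analytic on $\mathcal{S}(\mathcal{O}(\overline{\mathbb{D}}^n))$, and then pull back along $R\rightarrow A$.

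We expect this last point to be the main obstacle, because $g$ is a convergent series rather than a polynomial in the $x_i$. We would construct the needed map directly. Say $g$ overconverges on the polydisk of radius $r>1$, and choose $c>\sup_{|z_i|\leq r'}|g(z)|$ for some $1<r'<r$. We want $\mathcal{O}(\overline{\mathbb{D}}^n)$ to be a $\mathbb{C}[T]$-algebra over $\mathcal{O}(\{|T|\leq c\})$ via $T\mapsto g$. That means defining $\sum a_m T^m\mapsto \sum a_m g^m$ for $a_m c'^m\to 0$ with some $c'>c$.

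The difficulty is that the coefficient norms of $g^m$ on a radius-$r'$ polydisk are governed by the sup norm, and overconvergence loses a little radius at each step. The plan is to bound the coefficients of $g^m$ on radius $r''<r'$ by Cauchy estimates, but Cauchy estimates are analysis. Instead we will try the algebraic version of the proof of Proposition~\ref{prop:propertiessubsets}(8), using $\ell^1$ weighted coefficient norms $\sum|b_\alpha|\rho^{|\alpha|}$. These norms are submultiplicative, so $\|g^m\|_\rho\leq\|g\|_\rho^m$, and we take $c>\|g\|_\rho$ for some $\rho>1$. This gives the convergent map, which is $\mathbb{C}[T]$-linear and lands in the overconvergent algebra. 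The composite then makes $\mathcal{O}(\overline{\mathbb{D}}^n)$ a module over the idempotent algebra, hence $\mathcal{S}=\{|g|\leq c\}$.
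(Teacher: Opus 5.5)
Your proposal is correct and follows essentially the same route as the paper: reduce to $H_0A$ via Lemma~\ref{replacebyH0}, use integrality of $f$ over $\mathcal{O}(\overline{\mathbb{D}}^n)$ together with Lemma~\ref{integralanalytic} to reduce to $A=\mathcal{O}(\overline{\mathbb{D}}^n)$, and then factor $T\mapsto g$ through $\mathcal{O}(\{|T|\leq c\})$ by substitution of power series, with $c$ controlled by a submultiplicative $\ell^1$ coefficient norm. The only cosmetic differences are that your separate treatment of the coordinates is superfluous, and that you take $c>\|g\|_\rho$ for some $\rho>1$ where the paper takes $c=\sum|a_\alpha|$.
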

\begin{proof}
By Lemma \ref{replacebyH0}, we can assume $A=H_0A$.  Then by definition $A$ is a finite algebra over some $\mathcal{O}(\overline{\mathbb{D}}^n)$.  In particular, by the noetherian property, any $f\in A$ is the root of a monic polynomial with coefficients in $\mathcal{O}(\overline{\mathbb{D}}^n)$.  By Lemma \ref{integralanalytic}, this reduces us to $A=\mathcal{O}(\overline{\mathbb{D}}^n)$ itself.  But if we let
$$f=\sum a_{i_1,\ldots,i_n} z_1^{i_1}\ldots z_n^{i_n},$$
then taking $c = \sum |a_{i_1,\ldots,i_n}|$ we can factor
$$\mathbb{C}[T]\overset{T\mapsto f}{\longrightarrow} \mathcal{O}(\overline{\mathbb{D}}^n)$$
through $\mathcal{O}(c\cdot \overline{\mathbb{D}})$ by substitution of power series.
\end{proof}

As a corollary, we note the following, showing in particular that affinoid algebras in degree $0$ are also the same thing as quotients of rings of the form $\mathcal{O}(\overline{\mathbb{D}}^n)$ by ideals:

\begin{lemma}\label{affinoidsarezariskiclosed}
Suppose $A$ is affinoid.  Then there exists an $n\geq 0$ and a homomorphism
$$\mathcal{O}(\overline{\mathbb{D}}^n)\rightarrow A$$
such that $A \in \mathcal{D}_{pc}(\mathcal{O}(\overline{\mathbb{D}}^n))$ and the induced map
$$\mathcal{O}(\overline{\mathbb{D}}^n)\twoheadrightarrow H_0A$$
is surjective.
\end{lemma}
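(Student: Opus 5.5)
We start from an arbitrary presentation $\mathcal{O}(\overline{\mathbb{D}}^n)\rightarrow A$ as in the definition of affinoid, with $A\in\mathcal{D}_{pc}(\mathcal{O}(\overline{\mathbb{D}}^n))$. By pseudocompactness and the noetherianity of $\mathcal{O}(\overline{\mathbb{D}}^n)$, the module $H_0A$ is finitely generated over $\mathcal{O}(\overline{\mathbb{D}}^n)$. We choose generators $g_1,\ldots,g_m\in (H_0A)(\ast)$ and lift them to elements $f_1,\ldots,f_m\in A(\ast)$; the lifting is possible because $\pi_0$ of $A(\ast)$ surjects onto $(H_0A)(\ast)$. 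By Lemma~\ref{affinoidsareanalytic}, there is a constant $c>0$ with $\mathcal{S}(A)=\{|f_j|\leq c\}$ for every $j$. Rescaling each $f_j$ by $1/c$, which does not affect generation, we may arrange $\mathcal{S}(A)=\{|f_j|\leq 1\}$.

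Next we produce a map from a larger polydisk. The condition $\mathcal{S}(A)\subset\{|f_j|\leq 1\}$ means that $A$, as a $\mathbb{C}[T_j]$-algebra via $T_j\mapsto f_j$, is a module over the idempotent algebra $\mathcal{O}(\overline{\mathbb{D}})$ in the variable $T_j$. By idempotence and Lemma~\ref{replacebyH0} (or directly from the idempotent-algebra formalism of Lecture V), the algebra map $\mathbb{C}[T_j]\rightarrow A$ therefore factors uniquely through $\mathcal{O}(\overline{\mathbb{D}})$. Tensoring these factorizations together with the given map gives a homomorphism
$$\mathcal{O}(\overline{\mathbb{D}}^{n})\otimes_{\mathbb{C}_{<p}}\mathcal{O}(\overline{\mathbb{D}}^m)=\mathcal{O}(\overline{\mathbb{D}}^{n+m})\rightarrow A.$$
The identification of the tensor product with $\mathcal{O}(\overline{\mathbb{D}}^{n+m})$ follows from the tensor product calculations of Lecture IV/VI for overconvergent functions on polydisks. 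The induced map to $H_0A$ is surjective: the images of the first $n$ coordinates give the $\mathcal{O}(\overline{\mathbb{D}}^n)$-module structure, and the images of the new coordinates are the chosen module generators $g_j$.

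It remains to check that $A$ is pseudocompact over $\mathcal{O}(\overline{\mathbb{D}}^{n+m})$. Each $H_iA$ is finitely generated over $\mathcal{O}(\overline{\mathbb{D}}^n)$, and the $\mathcal{O}(\overline{\mathbb{D}}^n)$-action factors through $\mathcal{O}(\overline{\mathbb{D}}^{n+m})$. Hence each $H_iA$ is a finitely generated module over the noetherian (coherent suffices) ring $\mathcal{O}(\overline{\mathbb{D}}^{n+m})$. Since $A$ is connective, hence bounded below, this gives pseudocompactness.

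The one subtle point is that pseudocompactness in the liquid category $\mathcal{D}_{pc}(\mathcal{O}(\overline{\mathbb{D}}^{n+m}))$ must match the abstract module-theoretic notion. We handle this with Proposition~\ref{prop:nuclearpseudocompactdiscrete} together with Proposition~\ref{coherentisDNF}(2). The latter shows that the finitely presented abstract modules $H_iA$ embed $t$-exactly, so a resolution by finite free modules can be built homology degree by homology degree, yielding a bounded-to-the-right complex of finite free $\mathcal{O}(\overline{\mathbb{D}}^{n+m})$-modules representing $A$. The main obstacle is making this comparison precise, in particular verifying that the $H_iA$ coincide as liquid modules with the discrete modules attached to their underlying abstract modules, which uses nuclearity of $A$.
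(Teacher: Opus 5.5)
Your proposal is correct and follows the paper's proof. You choose finitely many generators of $H_0A$ over $\mathcal{O}(\overline{\mathbb{D}}^n)$, use Lemma~\ref{affinoidsareanalytic} to see they are analytic, and adjoin them as extra polydisk coordinates to get $\mathcal{O}(\overline{\mathbb{D}}^{n+m})\to A$ surjective on $H_0$. The paper leaves pseudocompactness over the larger polydisk implicit. Your check goes through via noetherianity, not coherence: the parenthetical ``coherent suffices'' is wrong when only finite generation of the homology is known. The liquid-versus-abstract comparison you flag is actually formal: $A$ is pseudocoherent over $\mathcal{O}(\overline{\mathbb{D}}^n)[T_1,\ldots,T_m]$ by a Koszul resolution, and $\mathcal{O}(\overline{\mathbb{D}}^{n+m})$ is idempotent over that ring.
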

\begin{proof}
We start with an arbitrary $\mathcal{O}(\overline{\mathbb{D}}^n)\rightarrow A$ such that $A \in \mathcal{D}_{pc}(\mathcal{O}(\overline{\mathbb{D}}^n))$.  Then $H_0A$ is finite over $\mathcal{O}(\overline{\mathbb{D}}^n)$, so we can choose finitely many generators $f_1,\ldots,f_k$.  By the previous lemma, each of these generators is analytic, so we can add them to the presentation, getting $\mathcal{O}(\overline{\mathbb{D}}^{n+k})\rightarrow A$ surjective on $H_0$, as desired.
\end{proof}

Now we state the result giving the categorified locale structure on an affinoid.

\begin{theorem}\label{affinoidgiveslocale}
Suppose $A$ is an affinoid algebra.  Then:
\begin{enumerate}
\item  There is a unique map of locales
$$\pi_A:\mathcal{S}(A)\rightarrow X_A:=\operatorname{Hom}_\mathbb{C}(A(\ast),\mathbb{C})= \operatorname{Hom}_\mathbb{C}(H_0A(\ast),\mathbb{C})\subset \prod_{f\in (H_0A)(\ast)} \mathbb{C}$$
such that for $f\in (H_0A)(\ast)$, the map to the $f$-factor in the target is given by
$$\mathcal{S}(A)\rightarrow \mathcal{S}(\mathbb{C}[T],\mathbb{C}[T])\rightarrow\mathbb{C},$$
where the first map is induced by the homomorphism $\mathbb{C}[T]\rightarrow A$ classifying $f$ and the second map is from Lecture V.
\item If we write $H_0A=\mathcal{O}(\overline{\mathbb{D}}^n)/I$ for an ideal $I$, then $X_A=Z(I)\subset \overline{\mathbb{D}}^n$, the common zero set of $I$ in the closed unit polydisk with its standard Euclidean topology.  In particular, $X_A$ is a compact Hausdorff space.
\item The map $\pi_A$ is ``surjective'' in the sense that for two closed subsets $Z,W$ of $X_A$, we have $Z\subset W\Leftrightarrow \pi_A^{-1}Z\subset \pi_A^{-1}W$.
\end{enumerate}
\end{theorem}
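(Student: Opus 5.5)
The plan is to build $\pi_A$ out of the map to the Berkovich spectrum from Lecture V, and then to use the finiteness results of Lectures IX--X to identify the target and prove surjectivity.

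\textbf{Step 1 (construction of $\pi_A$).} By Lemma~\ref{affinoidsareanalytic}, every $f\in A(\ast)$ is analytic, so $\mathcal S(A)=\mathcal S(A,A(\ast))$ in the notation of Lecture VI. The argument of Corollary~\ref{cor:maptoberkovich} applies verbatim to any liquid ring in which all elements are analytic: Proposition~\ref{prop:propertiessubsets} only uses base change from $\mathbb C[T]$ and $\mathbb C[T,U]$. It gives a map of locales $\mathcal S(A)\to \mathcal M^{\mathrm{Berk}}(A(\ast))\subset \prod_f \mathbb R_{\geq 0}$. To land in $\prod_f\mathbb C$ rather than $\prod_f\mathbb R_{\geq 0}$, I use the maps $\mathcal S(\mathbb C[T],\mathbb C[T])\to\mathbb C$ for each $f$. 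The product map factors through $\operatorname{Hom}_{\mathbb C}(A(\ast),\mathbb C)$ because the ring relations $f+g$, $fg$ and scalars are closed conditions involving only two variables. Each such condition is checked in the universal case $\mathbb C[T,U]\to A$, where it is the statement that $\mathcal S(\mathbb C[T,U],\mathbb C[T,U])\to \mathbb C^2$ is compatible with addition and multiplication. That in turn follows from functoriality of the Lecture V construction along $\mathbb C[V]\to\mathbb C[T,U]$, $V\mapsto T+U$ or $TU$. Uniqueness holds because a map to a subspace of a product is determined by its components. Replacing $A$ by $H_0A$ changes nothing: $\mathcal S(A)$ and $\mathcal S(H_0A)$ have the same closed subsets of the form considered here, since idempotent algebras base changed from $\mathbb C[T]$ detect modules only through $H_0$, as in Lemma~\ref{replacebyH0}.

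\textbf{Step 2 (identifying $X_A$).} Using Lemma~\ref{affinoidsarezariskiclosed}, write $H_0A=\mathcal O(\overline{\mathbb D}^n)/I$. Theorem~\ref{thm:compactsteincoherent}(2) says that maximal ideals of $\mathcal O(\overline{\mathbb D}^n)$ are evaluations at points of the polydisk. A $\mathbb C$-algebra map $H_0A(\ast)\to\mathbb C$ has maximal kernel, hence is evaluation at a point $z\in\overline{\mathbb D}^n$ killing $I$. The topology comparison follows because the coordinates $z_i$ already embed the compact set $Z(I)$ continuously and injectively into $\mathbb C^n$, and every other component $f$ is a continuous function of $z$. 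So $X_A=Z(I)$ is compact Hausdorff.

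\textbf{Step 3 (surjectivity), the main obstacle.} I would first reduce to the following claim: if $Z\subset X_A$ is closed and $\pi_A^{-1}(Z)=\emptyset$ in $\mathcal S(A)$, then $Z=\emptyset$. For general $Z\subset W$, suppose $Z\not\subset W$ and pick a point $x\in Z\setminus W$. A small closed polydisk neighbourhood $\{x\}\subset Z'$ disjoint from $W$ has preimage disjoint from $\pi_A^{-1}W$, so $\pi_A^{-1}(\{x\})\subset\pi_A^{-1}W\cap\pi_A^{-1}(Z')=\emptyset$, and it suffices to handle a single point. The preimage of $x\in \overline{\mathbb D}^n$ is the closed subset with idempotent algebra $A\otimes \mathcal O_x$, where $\mathcal O_x=\mathcal O(\{x\})$ is the germ algebra (the colimit over closed polydisks). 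This tensor product is nonzero when $x\in Z(I)$: its $H_0$ is $\mathcal O_x/I\mathcal O_x$, which surjects onto $\mathbb C$ via evaluation at $x$ because $I\subset\mathfrak m_x$. The computation of $H_0$ of the base change uses right exactness of the liquid tensor product together with $\mathcal O(\overline{\mathbb D}^n)\otimes\mathcal O_x=\mathcal O_x$ by idempotence. Since the idempotent algebra is nonzero, the point's preimage is nonempty, and hence $Z\subset W$. The delicate part is justifying that every closed subset of $X_A$ pulls back compatibly with these polydisk-germ algebras, i.e.\ that $\pi_A^{-1}$ of the closed polydisk $\{|z-x|\leq r\}\cap X_A$ is the subset base changed from $\mathcal O(\overline{D}(x,r))$. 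I expect this to follow from the characterization of $\pi_A$ on the basic subsets $\{|f|\leq r\}$.
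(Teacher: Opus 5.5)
Your proposal is correct and is essentially the paper's proof: Steps 1 and 2 match its arguments for (1) and (2). In (3) the paper likewise takes $x\in Z\setminus W$, observes $\pi_A^{-1}(\{x\})=\pi_A^{-1}(\{x\}\cap W)=\emptyset$ (so your auxiliary neighbourhood $Z'$ is unnecessary), and then contradicts this by showing that the idempotent algebra $\mathcal O(\{x\})$ is nonzero. The only difference is that the paper gets this directly from the evaluation homomorphism $\mathcal O(\{x\})\to\mathbb C$ at $x$, whereas you compute $H_0$ of $A\otimes\mathcal O_x$. The identification you flag as delicate is routine: it follows from the defining property of $\pi_A$ on the sets $\{|z_i-x_i|\leq r\}$, together with the fact that intersections of closed subsets correspond to colimits of idempotent algebras.
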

\begin{proof}
Consider part (1).  First, the map $\mathcal{S}(A)\rightarrow \mathcal{S}(\mathbb{C}[T])$ induced by $f$ does indeed land in  $\mathcal{S}(\mathbb{C}[T],\mathbb{C}[T])$: this is a restatement of Lemma \ref{affinoidsareanalytic}.  Then the rest follows as in Lecture V: we need to see that the map $\mathcal{S}(A)\rightarrow \prod_{f\in (H_0A)(\ast)} \mathbb{C}$ lands inside the closed subset $\operatorname{Hom}(A(\ast),\mathbb{C})$, but all the required relations come from polynomial algebras mapping to $A$, and then it follows from the material in Lecture V (plus Lemma \ref{replacebyH0} to reduce to $H_0A$).

For part (2), the claim set-theoretically follows from the description of maximal ideals in $\mathcal{O}(\overline{\mathbb{D}}^n)$ in the previous lecture.  This resulting bijection $X_A\rightarrow Z(I)$ is continuous because it corresponds to projection onto the $\mathbb{C}$-factors given by the coordinate functions $x_1,\ldots,x_n\in H_0A(\ast)$.  The inverse is also continuous because every $f\in (H_0A)(\ast)$ does indeed evaluate to a continuous function $Z(I)\rightarrow \mathbb{C}$.

For part (3), the direction $\Rightarrow$ holds by definition.  For $\Leftarrow$, let $\mathcal{O}(Z)$ be the idempotent algebra in $C_A$ corresponding to $Z$, and $\mathcal{O}(W)$ the idempotent algebra corresponding to $W$.  Thus our assumption is that there is a map $\mathcal{O}(W)\rightarrow \mathcal{O}(Z)$.  If $x\not\in W$, then $\mathcal{O}(\{x\})\otimes_A \mathcal{O}(W)=0$, hence $\mathcal{O}(\{x\})\otimes_A \mathcal{O}(Z)=0$.  But if $x\in Z$ then we have a homomorphism $\mathcal{O}(Z)\rightarrow \mathcal{O}(\{x\})$.  To get a contradiction and thereby finish the proof, we therefore need to check that $\mathcal{O}(\{x\})\neq 0$ for any $x\in X_A$.  But indeed it has a homomorphism to $\mathbb{C}$ given by evaluation at $x$.
\end{proof}

We have just assigned a categorical locale $(X_A,C_A,\pi_A:\mathcal{S}(A)\rightarrow X_A)$ to each affinoid $A$.  In fact, we will generally not view this as a bare categorified locale, but rather as a categorified locale over the base categorified locale
$$(\ast,\mathcal{D}(\operatorname{Liq}_p)).$$
With this extra structure, we have:

\begin{corollary}
The assignment $A\mapsto (X_A,C_A,\pi_A)$ defines a fully faithful contravariant functor from the $\infty$-category of affinoid algebras to the $\infty$-category of categorified locales over $(\ast, \mathcal{D}(\operatorname{Liq}_p))$.
\end{corollary}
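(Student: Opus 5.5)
The plan is to check full faithfulness directly. Let $A,B$ be affinoid. A morphism of categorified locales $(X_B,C_B,\pi_B)\to(X_A,C_A,\pi_A)$ over $(\ast,\mathcal D(\mathrm{Liq}_p))$ consists of a symmetric monoidal cocontinuous $\mathcal D(\mathrm{Liq}_p)$-linear functor $F:C_A\to C_B$ together with a map of locales $g:X_B\to X_A$ compatible with $\pi_A,\pi_B$ and the induced map $\mathcal S(C_B)\to\mathcal S(C_A)$. The first step is the standard identification: $\mathcal D(\mathrm{Liq}_p)$-linear symmetric monoidal cocontinuous functors $\operatorname{Mod}_A(\mathcal D(\mathrm{Liq}_p))\to\operatorname{Mod}_B(\mathcal D(\mathrm{Liq}_p))$ are the same as maps of commutative algebras $A\to B$, via $F\mapsto F(A)$ and $\varphi\mapsto -\otimes_A B$. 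This is the usual universal property of module categories, since $C_A$ is generated under colimits by $A\otimes V$ for $V\in\mathcal D(\mathrm{Liq}_p)$. It already gives an equivalence $\operatorname{Hom}(A,B)\simeq\operatorname{Hom}_{\mathrm{Sym}/\mathcal D(\mathrm{Liq}_p)}(C_A,C_B)$.

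The remaining step is to show that the extra datum $g$ is uniquely determined and always exists, so that the forgetful map to the categorical part is an equivalence. Existence: given $\varphi:A\to B$, take $g=\operatorname{Hom}(\varphi(\ast),\mathbb C):X_B\to X_A$, which is continuous. Compatibility $\pi_A\circ\mathcal S(\varphi)=g\circ\pi_B$ follows from the characterization in Theorem~\ref{affinoidgiveslocale}(1): both maps to $\prod_{f\in (H_0A)(\ast)}\mathbb C$ have $f$-component given by the composite through $\mathcal S(\mathbb C[T],\mathbb C[T])\to\mathbb C$ for $T\mapsto \varphi(f)$. Uniqueness: the locale $X_B$ is an honest compact Hausdorff space, and by Theorem~\ref{affinoidgiveslocale}(3) $\pi_B$ is ``surjective'', meaning pullback of closed subsets along $\pi_B$ is injective. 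Therefore a map $g:X_B\to X_A$ with $g^{-1}(Z)$ determined by $\pi_B^{-1}g^{-1}(Z)=\mathcal S(\varphi)^{-1}\pi_A^{-1}(Z)$ is unique. Hence the space of lifts is a point, being a subset of a poset of locale maps. I should also check that the compatibility datum is a property rather than structure; in the $\infty$-categorical sense this holds because maps of locales form a poset.

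I expect the subtle point to be the $\infty$-categorical bookkeeping. Categorified locales form an $\infty$-category in which the locale part is $1$-truncated (a poset-enriched category), so the mapping anima decomposes as a fiber product over the locale maps. The argument above shows the projection onto the $\mathrm{Sym}$ part has contractible fibers. I also need to make sure the paper's convention is that morphisms of categorified locales have this form, with $F$ symmetric monoidal and colimit preserving. Surjectivity of $\pi_B$ is used crucially, and it rests on $\mathcal O(\{x\})\neq 0$, which is already established.
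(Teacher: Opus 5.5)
Your proposal is correct and follows essentially the same route as the paper. The paper identifies $\mathcal D(\operatorname{Liq}_p)$-linear symmetric monoidal cocontinuous functors $C_A\to C_B$ with algebra maps $A\to B$ (citing Lurie, Higher Algebra, Corollary 4.8.5.21), notes that the locale map exists by functoriality, and deduces that it is uniquely determined from the ``surjectivity'' of $\pi_B$ in part (3) of Theorem~\ref{affinoidgiveslocale}; your coordinatewise existence check via part (1) simply spells out what the paper calls ``evidently functorial''.
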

\begin{proof}
The first bit of data in a map of such categorified locales $(X_B,C_B,\pi_B)\rightarrow (X_A,C_A,\pi_A)$ is a symmetric monoidal colimit-preserving functor
$$C_A\rightarrow C_B.$$
Taking into account our base categorified locale, this map should be linear over the base category $\mathcal{D}(\operatorname{Liq}_p)$.  Certainly, given a homomorphism $A\rightarrow B$ we get such a functor by relative tensor product.  Conversely, considering endomorphisms of the unit objects we get a homomorphism $A\rightarrow B$ from any such functor $C_A\rightarrow C_B$.  These two associations are mutually inverse by \cite[Corollary 4.8.5.21]{lurie2017higher}.

Moreover, the construction $A\mapsto (X_A,C_A,\pi_A)$ is evidently functorial.  So to conclude it suffices to show that the map of locales $X_B\rightarrow X_A$ completing the data of a map of categorified locales is uniquely determined by the functor $C_A\rightarrow C_B$.  But this follows from part (3) of the theorem.
\end{proof}

\begin{remark}
It seems that in the categorified locale picture, in contrast to the locally ringed space picture, there is automatically a tighter relation between the algebra of the category and the topology of the locale.  This means that there is no analog of the usual technical annoyance that one needs to impose an at first sight slightly obscure ``locality'' condition on the straightforward notion of a map of ringed spaces when defining a map of schemes.
\end{remark}

\begin{example}
Given two closed polydisks $\overline{\mathbb{D}}^n$ and $\overline{\mathbb{D}}^m$, maps of the corresponding categorified locales are the same as holomorphic maps $\overline{\mathbb{D}}^n\rightarrow \overline{\mathbb{D}}^m$ in the usual sense, i.e.\ we give an $m$-tuple of elements of $\mathcal{O}(\overline{\mathbb{D}}^n)$ such that the induced map
$$\overline{\mathbb{D}}^n\rightarrow \mathbb{C}^m$$
lands inside $\overline{\mathbb{D}}^m$.
\end{example}

Let us also treat fiber products.

\begin{proposition}
If $B\leftarrow A\rightarrow C$ are maps of affinoid algebras as indicated, then $B\otimes_AC$ is also affinoid, where we use the derived relative tensor product in $p$-liquid vector spaces over $\mathbb{C}$.  For the associated topological spaces we have
$$X_{B\otimes_AC} = X_B\times_{X_A}X_{C},$$
and we also get a fiber product in the $\infty$-category of categorified locales.
\end{proposition}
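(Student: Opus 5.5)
Three things have to be checked: that $B\otimes_A C$ is affinoid, that $X_{B\otimes_A C}=X_B\times_{X_A}X_C$, and that we get a fiber product of categorified locales.

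\emph{Affinoidness.} By Lemma~\ref{affinoidsarezariskiclosed}, choose presentations $\mathcal O(\overline{\mathbb D}^m)\to B$ and $\mathcal O(\overline{\mathbb D}^k)\to C$ that are pseudocoherent and surjective on $H_0$. Then consider
\[
\mathcal O(\overline{\mathbb D}^m)\otimes_{<p}\mathcal O(\overline{\mathbb D}^k)=\mathcal O(\overline{\mathbb D}^{m+k})\longrightarrow B\otimes_A C.
\]
The first identification is the Künneth calculation for overconvergent polydisc algebras from Lectures IV--VI.

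To see that $B\otimes_A C$ is pseudocoherent over $\mathcal O(\overline{\mathbb D}^{m+k})$, I would not resolve $A$ directly. Instead I would factor $A\to B\otimes_A C$ through $B\otimes_{<p} C$. Write
\[
B\otimes_A C=(B\otimes_{<p}C)\otimes_{A\otimes_{<p}A}A,
\]
so that $B\otimes_A C$ is obtained from $B\otimes_{<p}C$ by base change along the diagonal $A\otimes_{<p} A\to A$.

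First, $B\otimes_{<p}C$ is pseudocoherent over $\mathcal O(\overline{\mathbb D}^{m+k})$. Tensoring resolutions of $B$ and $C$ by finite free modules gives a resolution of $B\otimes_{<p}C$ by finite free $\mathcal O(\overline{\mathbb D}^{m+k})$-modules; connectivity ensures the tensored resolutions are bounded to the right in each degree.

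Second, the diagonal must be controlled. Choose a presentation $\mathcal O(\overline{\mathbb D}^n)\to A$ that is surjective on $H_0$, with coordinates $x_i$. Then $A\otimes A\to A$ is pseudocoherent up to the Koszul complex on the $x_i\otimes 1-1\otimes x_i$. This should follow from the idempotence computation
\[
\mathcal O(\overline{\mathbb D}^n)\otimes_{\mathcal O(\overline{\mathbb D}^{2n})}\mathcal O(\overline{\mathbb D}^n)=\mathcal O(\overline{\mathbb D}^n)
\]
in Proposition~\ref{prop:computationunitdisc}, applied to the diagonal polydisc. Concretely, the diagonal of $\mathcal O(\overline{\mathbb D}^n)$ is the quotient of $\mathcal O(\overline{\mathbb D}^{2n})$ by a regular sequence, by the same ``divide by $U-T$'' argument. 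Base change along it then preserves pseudocoherence, since it amounts to tensoring with a finite Koszul complex.

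The remaining issue is that $A$ is only a finite algebra over $\mathcal O(\overline{\mathbb D}^n)$. I would handle this by using the images of the $x_i$ inside $B$ and $C$ and working relative to $\mathcal O(\overline{\mathbb D}^n)$ throughout. This part (pseudocoherence through the relative tensor product) is the step I expect to be the main technical obstacle. The cleanest route seems to be resolving $A$ as a simplicial object of finite free $\mathcal O(\overline{\mathbb D}^n)$-algebras in a bar construction and checking that the degreewise bounds stay uniform.

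\emph{Topological spaces.} By Theorem~\ref{affinoidgiveslocale}(2), the point sets are $\mathbb C$-algebra maps from $H_0$. We have
\[
H_0(B\otimes_A C)=H_0B\otimes_{H_0A}H_0C,
\]
so points of $X_{B\otimes_A C}$ are pairs of compatible characters. This gives a continuous bijection onto $X_B\times_{X_A}X_C$. The source is compact and the target is Hausdorff, so it is a homeomorphism.

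\emph{Categorified locales.} On categories we have
\[
\mathrm{Mod}_{B\otimes_AC}=\mathrm{Mod}_B\otimes_{\mathrm{Mod}_A}\mathrm{Mod}_C,
\]
which is the pushout in $\operatorname{Sym}$ by Lurie. To show this gives a fiber product of categorified locales, I would test against an arbitrary $(Y,D,\pi)$ equipped with maps to the two factors. The functor $C_{B\otimes_A C}\to D$ is supplied by the pushout. The map of locales $Y\to X_{B\otimes_AC}$ comes from the homeomorphism just established. Its compatibility with $\pi$ is checked on the closed sets pulled back from the factors; these generate the topology of $X_B\times_{X_A}X_C$.
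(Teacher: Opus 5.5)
Your affinoidness argument is complete only when $A=P:=\mathcal{O}(\overline{\mathbb D}^n)$ is itself a polydisc algebra. In that case it works: $B\otimes_PC\simeq(B\otimes_{<p}C)\otimes_{P\otimes_{<p}P}P$, and by idempotency the Koszul complex on the $x_i\otimes1-1\otimes x_i$ resolves $P$ over $P\otimes_{<p}P=\mathcal{O}(\overline{\mathbb D}^{2n})$. This composes with your resolution of $B\otimes_{<p}C$. It is the same content as the paper's idempotency-plus-Koszul step, and it even spares you the paper's lifting of $\mathbb C[x_1,\dots,x_n]\to B,C$ to the presentations.

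For a general affinoid $A$, however, you stop, and the bar-construction plan gives no mechanism. Free $P$-algebras are not pseudocompact over $P$, and the terms $B\otimes A^{\otimes k}\otimes C$ are pseudocompact only over polydisc algebras of unbounded dimension, so there is no ring over which bounds could be made uniform. What is missing is a direct reduction to the polydisc base, which is the paper's first step. Choose $P\to A$ pseudocompact (Lemma~\ref{affinoidsarezariskiclosed}) and write
\[
B\otimes_AC\simeq(B\otimes_PC)\otimes_{A\otimes_PA}A .
\]
The polydisc case makes $B\otimes_PC$ and $A\otimes_PA$ affinoid. Both $A\otimes_PA$ and $A$ are pseudocompact over $P$, so their homology groups are finitely generated over $P$, hence over $H_0(A\otimes_PA)$. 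That ring is a finite $P$-algebra and therefore noetherian, so $A\in\mathcal D_{pc}(A\otimes_PA)$. Hence $B\otimes_AC$ is pseudocompact over $B\otimes_PC$, and composing pseudocompact maps finishes. The same noetherian argument would also show directly, in your own setup, that $A$ is pseudocompact over $A\otimes_{<p}A$.

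There is a smaller gap in the topological step. $X_{B\otimes_AC}$ is defined through abstract characters of the underlying ring $(B\otimes_AC)(\ast)$. The functor $(-)(\ast)$ does not commute with the liquid tensor product: already $(\mathcal{O}(\overline{\mathbb D})\otimes_{<p}\mathcal{O}(\overline{\mathbb D}))(\ast)=\mathcal{O}(\overline{\mathbb D}^2)(\ast)$ is much larger than the algebraic tensor product. So $H_0(B\otimes_AC)=H_0B\otimes_{H_0A}H_0C$ does not by itself identify characters with compatible pairs.

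You need automatic continuity: for affinoid $A$, every abstract character of $H_0A(\ast)$ comes from a unique map of condensed algebras $H_0A\to\mathbb C$. Uniqueness holds because $H_0A$ is quasiseparated (Proposition~\ref{coherentisDNF}). Existence holds because every character is evaluation at a point of $Z(I)$ (Theorem~\ref{affinoidgiveslocale}). Apply this to $A$, $B$, $C$ and to $B\otimes_AC$, the last of which needs the first step. The universal property of the relative tensor product then gives the bijection; this is how the paper argues. The compactness of $X_{B\otimes_AC}$ that you use also depends on the first step. Your treatment of the fiber product of categorified locales is fine.
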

\begin{proof}
Let's first consider the claim about affinoids being closed under relative tensor products.  Writing $A$ as pseudocompact over some $\mathcal{O}(\overline{\mathbb{D}}^n)$ and using that a composition of pseudocompact maps is pseudocompact, we reduce to $A=\mathcal{O}(\overline{\mathbb{D}}^n)$.  Then by idempotency it suffices to show the analogous claim with $A=\mathbb{C}[x_1,\ldots,x_n]$.  But then if we choose presentations for $B$ and $C$ as in Lemma \ref{affinoidsarezariskiclosed}, meaning we get pseudocompact maps $\mathcal{O}(\overline{\mathbb{D}}^m)\rightarrow B$ and $\mathcal{O}(\overline{\mathbb{D}}^p)\rightarrow B$ which are surjective on $H_0$, then we see that the maps $\mathbb{C}[x_1,\ldots,x_n]\rightarrow B$ and $\mathbb{C}[x_1,\ldots,x_n]\rightarrow C$ lift to maps $\mathbb{C}[x_1,\ldots,x_n]\rightarrow \mathcal{O}(\overline{\mathbb{D}}^m)$ and $\mathbb{C}[x_1,\ldots,x_n]\rightarrow \mathcal{O}(\overline{\mathbb{D}}^p)$, letting us reduce to showing that
$$\mathcal{O}(\overline{\mathbb{D}}^m)\otimes_{\mathbb{C}[x_1,\ldots,x_n]}\mathcal{O}(\overline{\mathbb{D}}^p)$$
is affinoid.  But we can rewrite this as
$$\left(\mathcal{O}(\overline{\mathbb{D}}^m) \otimes \mathcal{O}(\overline{\mathbb{D}}^p)\right)\otimes_{\mathbb{C}[x_1,\ldots,x_n;x'_1,\ldots x'_n]} \mathbb{C}[x_1,\ldots,x_n],$$
and then we conclude using pseudocompactness (in fact compactness) of the surjection
$$\mathbb{C}[x_1,\ldots,x_n;x'_1,\ldots x'_n]\rightarrow \mathbb{C}[x_1,\ldots,x_n].$$
For the claim about topological spaces, recall that
$$X_A=\operatorname{Hom}(H_0A(\ast),\mathbb{C})$$
with a natural topology which we proved to be compact Hausdorff in Theorem \ref{affinoidgiveslocale}.  Because a bijective continuous map of compact Hausdorff spaces is a homeomorphism, we can check the claim set-theoretically.  But then using the universal property of relative tensor product, it suffices to show that
$$\operatorname{Hom}(H_0A(\ast),\mathbb{C})=\operatorname{Hom}(H_0A,\mathbb{C}),$$
i.e.\ the homomorphisms of condensed $\mathbb{C}$-algebras $H_0A\rightarrow \mathbb{C}$ are in bijection with the homomorphisms of discrete $\mathbb{C}$-algebras $H_0A(\ast)\rightarrow \mathbb{C}$ via applying $(-)(\ast)$.  The map is injective because $H_0A$ is quasiseparated (Lemma \ref{coherentisDNF}), and the map is surjective because we classified all the homomorphisms $H_0A(\ast)\rightarrow \mathbb{C}$: they all come from evaluations at points of $Z(I)$, and these evaluations also define homomorphisms $H_0A\rightarrow\mathbb{C}$. 
The final claim about pullbacks of categorical locales follows by combining the two previous claims.
\end{proof}

Using this one can see that the notion of affinoid is rather robust.  In fact, though the definition of affinoid is based only on one kind of compact Stein, namely closed polydisks, there are many other compact Steins which also turn out to be affinoid.  For example, if $A$ is affinoid and $f\in A$, then for all $r>0$ the closed sub-categorified locales of $X_A$ defined by
$$\{|f|\leq r\},\{|f|\geq r\}$$
are also (the categorified locales associated to) affinoids.  Indeed, we can get to $\{|f|\leq r\}$ by first taking the product with $r\cdot\overline{\mathbb{D}}$ (i.e., on the level of affinoids, adjoining a variable $T$ with $|T|\leq r$), and then passing to the ``Zariski-closed subset'' given on the level of affinoids by modding out $T-f$; and for the second one we can take the product with $r^{-1}\cdot\overline{\mathbb{D}}$ and then mod out by $Tf-1$.   Then also, if $g$ is invertible we can impose
$$|f|\leq |g|$$
and still remain affinoid, by imposing $|f/g|\leq 1$.  By the above result on fiber products, we can also take finite intersections of affinoids and we still get affinoids, so we can also simultaneously impose finitely many such inequalities.  Then when we do this, we get more functions to play with, and so on.

\begin{warning}
Be careful to distinguish two ways of ``setting $f$ equal to $0$'', given an affinoid $A$ and $f\in A$.  One is by considering the closed subset $\{|f|\leq 0\}$ in the locale picture, and the other is by modding out by $f$ on the level of affinoids, passing from $A$ to $A/(f)$.  The first corresponds to the idempotent $A$-algebra given by ``germs of functions in neighborhood of $Z(f)$''.  The second does \emph{not} correspond to an idempotent algebra: although $A/(f)$ is of course idempotent over $A$ with the underived tensor product, it is not idempotent with the derived tensor product, which is the one we have to use to get the locale picture to work.

We've already gotten used to the idea that, in this locale picture, Zariski opens are actually closed; now we have to get used to the idea that Zariski closed subsets are not really closed subsets (nor are they open subsets).
\end{warning}

Finally, let us give the definition of complex analytic space we'll use.

\begin{definition}
A (generalized) complex analytic space is a categorified locale over $(\ast,\mathcal{D}(\operatorname{Liq}_p))$ which is locally (in the sense of open subsets) isomorphic to an open subset of the categorified locale over  $(\ast,\mathcal{D}(\operatorname{Liq}_p))$ associated to an affinoid $A$.
\end{definition}

The affinoids in degree zero are exactly those of the form
$$A=\mathcal{O}(\overline{\mathbb{D}}^n)/I$$
for some ideal $I$ (automatically finitely generated, by noetherianness).  For an example of an open subset of the corresponding categorified locale, we can take the intersection with the \emph{open} polydisk.  If we only consider those generalized complex analytic spaces which are locally isomorphic to such a local model, we get an equivalent category to the usual category of complex analytic spaces.  (This local model is slightly less general than the usual local model for complex analytic spaces, because we require that the generators of our ideal actually come by restriction from holomorphic functions on the \emph{closed} polydisk.  But once we globalize this doesn't matter, because any  open polydisk $D$ is covered by smaller open polydisks whose closures still lie inside $D$.)\\

\textbf{Exercise 1.}  Let $A$ be an affinoid and $x\in X_A$ be a point of the interior.  Show that, potentially after shrinking $A$ to an affinoid neighborhood of $x$, we can find an $n\geq 0$ and a pseudocoherent homomorphism $\varphi:\mathcal{O}(\overline{\mathbb{D}}^n)\rightarrow A$ (as in the definition of affinoid) such that $\varphi$ is \emph{injective}.  (This is an analog of the Noether normalization theorem.)\\

\textbf{Exercise 2.} Let $A$ be an affinoid and $x\in X_A$ be a point of the interior.  Using the previous exercise, show that the topological dimension of $X_A$ at $x$ identifies with the Krull dimension of the ring $H_0(A_x)(\ast)$, where $A_x$ is the idempotent algebra corresponding to the closed subset $\{x\}\subset X_A$.\newpage

\section{Lecture XII: Proper pushforward}

Picking up from the previous lecture, we discuss the notion of (generalized) complex analytic space.  Recall the definition: a complex analytic space is a categorified locale over $(\ast,\mathcal{D}(\operatorname{Liq}_p))$ which is locally isomorphic to an open subset of the categorified locale $\mathcal M(A)$ over  $(\ast,\mathcal{D}(\operatorname{Liq}_p))$ associated to an affinoid $A$.  Recall that this categorified locale is the triple
$$(\mathcal M(A),C_A,\pi_A)$$
where $C_A=\operatorname{Mod}_A(\mathcal{D}(\operatorname{Liq}_p))$, $\operatorname{Spa}(A) = \operatorname{Hom}_{\mathbb{C}}(A(\ast),\mathbb{C})$ ($=Z(I)\subset \overline{\mathbb{D}^n}$ if $H_0A = \mathcal{O}(\overline{\mathbb{D}^n})/I$), and
$$\pi_A:\mathcal{S}(C_A)\rightarrow \mathcal M(A)$$
is determined by the fact that if $f\in A(\ast)$, then the open subsets $\{|f|< 1\},\{|f|>1\}\subset \mathcal M(A)$ pull back to the corresponding open subsets of $\mathcal{S}(C_A)$, defined by base-change from the open subsets $\{|T|< 1\},\{|T|>1\}$ in $\mathcal{S}(\mathbb{C}[T])$ constructed in Lecture V.

We will generally abuse notation and refer to a categorified locale $(X,C,\pi)$ just by $X$, with the rest of the data being implicit.  We will also use the phrase ``affinoid'' to refer to a categorified locale over $(\ast,\mathcal{D}(\operatorname{Liq}_p))$ isomorphic to one of the form $\mathcal M(A)$.  

In general, we can study complex analytic spaces by reduction to affinoids via a two step process:

\begin{enumerate}
\item First, reduce a general complex analytic space to a quasi-affinoid complex analytic space, i.e.\ one which is isomorphic to an open subset of an affinoid.  This is done by working locally.

\item Second, given a quasi-affinoid $U$, we can reduce to affinoids in either of two ways: first, choose an ambient affinoid $X$ in which $U$ is an open subset; or second, use the fact that a quasi-affinoid $U$ is indeed locally affinoid in the following sense: there is a closed cover of $U$ by affinoids which is refined by an open cover.  (This follows from the fact that closed polydisks give a neighborhood basis of any point $x\in \operatorname{Spa}(A)$.)
\end{enumerate}

Let's give an example of this kind of reduction in action.

\begin{proposition}
Let $X\overset{f}{\rightarrow}Z\overset{g}{\leftarrow}Y$ be maps of complex analytic spaces as indicated.  Then the pullback $X\times_ZY$ exists in the $\infty$-category of categorified locales and is itself a complex analytic space, in particular giving a pullback in the $\infty$-category of complex analytic spaces.  The forgetful functor to topological spaces also preserves this pullback.  
\end{proposition}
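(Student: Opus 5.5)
We follow the two-step reduction just described: first work locally on $Z$, then on $X$ and $Y$, until everything is affinoid, where the Proposition on fiber products of affinoids from Lecture XI applies.

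The first step is to show that pullbacks of categorified locales are local on the target and stable under open immersions. Concretely, if $U\subset Z$ is open, then the restriction of a putative pullback $X\times_Z Y$ over $U$ should be $f^{-1}U\times_U g^{-1}U$. If such local pullbacks exist for an open cover of $Z$ (and of $X$, $Y$), then the local pieces agree on overlaps. We then glue them using the descent statement from Lecture VI: $C\mapsto$ open immersions satisfy descent, and $\operatorname{Sym}$ is a sheaf for open covers. The underlying locales glue as well, since pullbacks of locales are compatible with open covers. We also need that an open immersion of categorified locales pulls back to an open immersion. This follows from the base-change description of open immersions via idempotent algebras: the pushout in $\operatorname{Sym}$ of $j_U^*$ along $f^*$ is $j_{U'}^*$.

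This reduces us to $X,Y,Z$ open subsets of affinoids $\mathcal M(A_X)$, $\mathcal M(A_Y)$, $\mathcal M(A_Z)$. The maps $f,g$ need not extend to the ambient affinoids. So we cover $X$ by affinoid closed subsets (closed polydisk pieces) refined by an open cover, and similarly for $Y$ and $Z$. By shrinking, we arrange that each affinoid piece $K\subset X$ maps into an affinoid piece $L\subset Z$, using compactness of $K$ and continuity on underlying spaces.

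For affinoids $\mathcal M(B)\to\mathcal M(A)\leftarrow\mathcal M(C)$, the Lecture XI proposition gives the pullback $\mathcal M(B\otimes_A C)$ in categorified locales, and its underlying space is $X_B\times_{X_A}X_C$. The open pieces $V\subset\mathcal M(B)$, $W\subset\mathcal M(C)$ then have pullback equal to the open subset of $\mathcal M(B\otimes_A C)$ given by the preimage of $V\times W$, by the open-immersion stability above. This is a quasi-affinoid, hence a complex analytic space. The underlying-space claim follows because open subsets commute with fiber products of topological spaces.

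The main obstacle is making precise that the closed-but-refined-by-open affinoid pieces suffice to compute the pullback. Inside a fixed open piece $V\subset K$, the pullback computed via $K\subset X$ must be independent of choices and must satisfy the universal property among all categorified locales, not just analytic ones. I would handle this by proving directly that for any categorified locale $T$ with maps to $X$ and $Y$ over $Z$, the preimage of the open refinement covers $T$. Maps from $T$ then factor through the affinoid pullbacks locally, via the full faithfulness corollary for affinoids together with descent of $\operatorname{Sym}$ along open covers of $\mathcal S(C_T)$. The independence of choices is then formal, since pullbacks, once they exist, are unique.
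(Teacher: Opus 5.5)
Your proposal is correct and follows essentially the paper's route. Both arguments take the affinoid case from Lecture XI, reduce to quasi-affinoids by working locally, cover $X$ and $Y$ by closed affinoids refined by open covers, and take the evident open subset of the affinoid fiber product; your explicit check of the universal property against arbitrary test objects $T$ only spells out what the paper leaves implicit. The one simplification in the paper is that it never covers $Z$: once $Z$ is open in an affinoid $Z'$, the open immersion $Z\hookrightarrow Z'$ is a monomorphism, so $X\times_Z Y = X\times_{Z'} Y$. This removes your shrinking step, and also the need, left implicit in your sketch, to place the affinoid pieces of $X$ and $Y$ over a common affinoid piece of $Z$.
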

\begin{proof}
In the case where $X,Y,Z$ are all affinoid, we saw this in the previous lecture; in that case $X\times_ZY$ is also affinoid.  Now suppose $X,Y,Z$ are all quasi-affinoid.  Putting $Z$ inside an affinoid, we can reduce to where $Z$ is an affinoid.  Finding a cover of $X,Y$ by closed sub-affinoids which is refined by an open cover, we reduce to where $X,Y$ are open subsets of affinoids $X',Y'$ which still map to $Z$.  Then the pullback $X\times_ZY$ is given by the obvious open subset of $X'\times_Z Y'$.
In the general case, we work locally on $Z$, then on $X$ and $Y$ to reduce to the quasi-affinoid case.
\end{proof}

\begin{warning}
It is special to this complex analytic situation that the pullback in complex analytic spaces is also a pullback in categorified locales (and on underlying topological spaces).  In nonarchimedean geometry or scheme theory, this does not hold: the product topological space of the spectrum of two $k$-algebras is generally not fine enough to give the spectrum of the tensor product.  The essential reason for this special behavior of the complex analytic situation is the Gelfand-Mazur theorem, showing that ``points'' are the same as homomorphisms to $\mathbb{C}$.
\end{warning}

We can also give an abstract formulation of this kind of descent procedure for reducing arbitrary complex analytic spaces to affinoids.

\begin{proposition}
Consider the following two sites:
\begin{enumerate}
\item The $\infty$-category of all complex analytic spaces, equipped with the topology of open covers;
\item The $\infty$-category of affinoid complex analytic spaces, equipped with the topology of (finite) closed covers which can be refined by open covers.
\end{enumerate}
Then the associated $\infty$-toposes are equivalent, via restriction from sheaves on the first site to sheaves on the second site.

In other words, giving a presheaf on the category of all complex analytic spaces which satisfies descent for open covers is equivalent to giving a presheaf on the category of all affinoid complex analytic spaces which satisfies descent for those closed covers which can be refined by open covers.
\end{proposition}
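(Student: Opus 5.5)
The plan is to factor the comparison through an intermediate site and prove each step by a comparison lemma of the form: a fully faithful functor $u$ between sites, such that every object of the target is covered by objects in the image of $u$ and covers are detected by $u$, induces an equivalence of sheaf $\infty$-topoi via restriction. The intermediate site is the $\infty$-category of quasi-affinoid complex analytic spaces with the topology of open covers.

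\textbf{Step 1: all spaces to quasi-affinoids.} By definition every complex analytic space has an open cover by quasi-affinoids. Open subsets of quasi-affinoids are again quasi-affinoid, and fibre products exist and are compatible with open immersions by the preceding proposition. Hence the inclusion of quasi-affinoids is a dense full subsite with the induced topology, and the standard comparison lemma gives the first equivalence. The one thing to check is that the topology of open covers on the big site restricts to the topology of open covers on the small one, which is clear.

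\textbf{Step 2: quasi-affinoids with open covers to affinoids with closed covers refined by open covers.} Here the two sites carry genuinely different kinds of covers, so the argument is more hands-on. I would pass through an auxiliary site $\mathcal{T}$: quasi-affinoids, together with closed sub-affinoids of them, equipped with the topology generated by open covers and by finite closed covers refinable by open covers.

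First I show that sheaves on $\mathcal{T}$ restrict to sheaves on quasi-affinoids. Every closed cover in $\mathcal{T}$ is refined by an open cover, so any sheaf for open covers is automatically a sheaf for these closed covers. Here I use that a sieve containing a covering sieve is covering, which holds for the generated topology.

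Conversely, I extend a sheaf $\mathcal{F}$ on quasi-affinoids to $\mathcal{T}$. For a closed affinoid $K\subset U$, set
\[
\mathcal{F}(K)=\varinjlim_{V\supset K}\mathcal{F}(V),
\]
the colimit over open neighbourhoods $V$ of $K$. One must then check that this restriction recovers the original value when $K$ is abstractly affinoid, which requires that $K$ be independent of its embedding.

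This is where I expect the main obstacle to lie: the site in (2) uses affinoids intrinsically, not germs. So I would instead prove the statement directly, using the cofinality of covers. On the affinoid site, a sheaf $\mathcal{G}$ extends to quasi-affinoids by
\[
\mathcal{G}(U)=\varprojlim_{K\subset U}\mathcal{G}(K),
\]
the limit over affinoid $K$ contained in $U$, or equivalently by using a closed affinoid cover of $U$ refined by an open cover, as noted in the reduction paragraph above.

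The key verifications are then the following.
\begin{enumerate}
\item This extension is a sheaf for open covers. Given an open cover of $U$, any affinoid $K\subset U$ admits a finite closed affinoid cover subordinate to it and refined by opens, by compactness and the existence of polydisk neighbourhood bases. Descent for such covers then gives the gluing.
\item Restricting the extension back to affinoids recovers $\mathcal{G}$. For an affinoid $K$, the set of affinoids contained in $K$ has $K$ itself as its final object.
\item Extension followed by restriction is the identity on quasi-affinoid sheaves. For this I write $U$ as a union of the interiors of closed affinoid subsets and use sheafiness for that open cover, together with descent of intersections. Intersections of affinoids are affinoid by the fibre product result.
\end{enumerate}

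Compactness is what makes finite closed covers suffice in item 1. The hardest point is that limits over the non-filtered poset of affinoids inside $U$ must be handled as a right Kan extension. Its sheaf property must then be checked via hypercovers, or else reduced to Čech descent using finiteness of the covers involved.
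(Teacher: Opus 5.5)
Your proposal follows the paper's route: pass through the intermediate site of quasi-affinoids with open covers (your Step 1 is exactly the paper's descent step), then compare with affinoids by the comparison lemma. The paper invokes that lemma in one line, noting that affinoids form a basis of the quasi-affinoid site closed under fibre products; your items 1--3 (right Kan extension, with affinoid intersections supplying the needed cofinality) just unpack it. The germ-colimit detour you abandon is indeed unnecessary, since affinoids are already objects of the quasi-affinoid site and restriction is literal. No hypercovers are needed either, only \v{C}ech descent for covering sieves.
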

\begin{proof}
Consider the intermediate site of all quasiaffinoids with the open cover topology.  By descent, the restriction of sheaves from the site (1) to this site is an equivalence of $\infty$-toposes.  On the other hand, inside the site of quasi-affinoids, the affinoids form a basis closed under fiber products, so the restriction functor on sheaves is an equivalence.
\end{proof}

A basic example is the following.  By definition, to every complex analytic space $X$ is attached its large $\infty$-category $C_X$ of ``derived liquid quasicoherent sheaves''.  For the formal reasons explained in Lecture V, this assignment $X\mapsto C_X$ is a sheaf of $\infty$-categories for the open cover topology.  It follows that this assignment $X\mapsto C_X$ is determined by its restriction to affinoids $\mathcal{M}(A)$, where we have
$$C_{\mathcal{M}(A)} = C_A = \operatorname{Mod}_A(\mathcal{D}(\operatorname{Liq}_p)),$$
with pullback functoriality coming from relative tensor product along maps $A\rightarrow B$.  Thus quasicoherent sheaves are determined on affinoids, where they are simply given by modules over the ring of functions.

We can also use this descent procedure to single out a nice full subcategory of $C_X$ consisting of ``derived coherent'' objects, which on affinoids gives the $\infty$-category $\mathcal{D}_{pc}(A)$ of bounded below complexes of $A(\ast)$-modules which can be resolved by finite free modules, or equivalently by noetherianness, where each homology group is finitely generated.  Recall from Lecture X that this assignment
$$A \mapsto \mathcal{D}_{pc}(A)\subset \operatorname{Mod}_A(\mathcal{D}(\operatorname{Liq}_p))$$
satisfies descent for the topology of finite covers of affinoids; in particular it satisfies descent for the topology of finite covers refined by open covers, so it globalizes to
$$X \mapsto \mathcal{D}_{pc}(X)$$
for complex analytic spaces $X$.  To be specific, $\mathcal{D}_{pc}(X)$ is the full subcategory of $C_X$ consisting of those $\mathcal{F}\in C_X$ such that $f^\ast \mathcal{F} \in \mathcal{D}_{pc}(A)$ for any map $f:\mathcal{M}(A)\rightarrow X$ from an affinoid.  (It is enough to check this condition on a collection of maps from affinoids which is refined by an open cover of $X$.)

Moreover, since the transition maps $A(\ast)\rightarrow B(\ast)$ are flat when $\mathcal{M}(B)\subset \mathcal{M}(A)$ is a closed sub-affinoid of the affinoid $X_A$ (\ref{thm:compactsteincoherent}), there is a t-structure on $\mathcal{D}_{pc}(X)$ defined by gluing the local t-structures on $\mathcal{D}_{pc}(A)$, and in particular we have an abelian category of coherent sheaves, given by those $\mathcal{F}\in\mathcal{D}_{pc}(X)$ which lie in degree $0$.

\begin{warning}
Although for $A$ affinoid the big $\infty$-category $C_A$ does have a natural $t$-structure induced from the $t$-structure on the derived category of $p$-liquid vector spaces, the transition maps $C_A\rightarrow C_B$ are \emph{not} $t$-exact in general for $A\rightarrow B$ corresponding to a closed sub-affinoid $\mathcal{M}(B)\subset \mathcal{M}(A)$; see the footnote in the proof of Lemma \ref{analyticaffinelemma} for the counterexample.  In particular, unlike with the situation of the smaller category $\mathcal{D}_{pc}(X)$, there is no way to get a t-structure on $C_X$ by descent from the affinoid case.  However, the functors $C_A\rightarrow C_B$ are $t$-bounded with a bound depending only on the dimension by the argument in the proof of Proposition \ref{prop:pseudocompactglues}, and this is a reasonable substitute for the t-structure in some situations.
\end{warning}

Next we discuss some properties of maps of complex analytic spaces.

\begin{definition}
Let $f:X\rightarrow Y$ be a map of complex analytic spaces.  We say that $f$ is
\begin{enumerate}
\item an \emph{open inclusion} if $f$ identifies $X$ with $(U,C_Y(U),\pi_Y\vert_{\pi_Y^{-1}U})$ for some open subset $U\subset Y$;
\item a \emph{closed inclusion} if $f$ identifies $X$ with $(Z,\operatorname{Mod}_\mathcal{A}(C_Y),\pi_Y\vert_{\pi_Y^{-1}Z})$ for some closed subset $Z\subset Y$ with corresponding idempotent algebra $\mathcal{A}$ in $C_Y$;
\item a \emph{Zariski closed immersion} if for every affinoid $A$ with a map $\mathcal{M}(A)\rightarrow Y$, the pullback $\mathcal{M}(A)\times_YX$ is represented by an affinoid $\mathcal{M}(B)$, and moreover the map $A\rightarrow B$ satisfies the following two conditions: one, it exhibits $B$ as an object in $D_{pc}(A)$, and two, the induced map $H_0A\rightarrow H_0B$ is surjective;
\item \emph{separated} if the map on underlying topological spaces is separated, i.e.\@ the diagonal $X\rightarrow X\times_YX$ is a closed inclusion of topological spaces;
\item \emph{proper} if the map on underlying topological spaces is proper;
\item \emph{smooth} if, locally on both source and target, $f$ is isomorphic to a projection map $Y\times\mathbb{D}^n\rightarrow Y$, where $\mathbb{D}^n$ is the open polydisk (open subset of the affinoid $\overline{\mathbb{D}^n}$ given by $|z_1|,\ldots,|z_n|<1$, where the $z_i$ are the coordinate functions);
\item \emph{boundaryless} if, locally on both source and target, $f$ is isomorphic to a Zariski closed immersion followed by a projection map $Y\times\mathbb{D}^n\rightarrow Y$, where $\mathbb{D}^n$ is the open polydisk.
\end{enumerate}
\end{definition}

Note that a boundaryless map $f:X\rightarrow\ast$ to point is simply the derived analog of a usual complex analytic space.

It is easy to see that all of these properties of maps are closed under base-change and composition, and satisfy descent for the open cover topology on the target.  From this and descent we can deduce the following classification of Zariski closed immersions as ``relative spectra'':

\begin{lemma}
Let $Y$ be a complex analytic space.  The $\infty$-category of Zariski closed immerions $i:X\rightarrow Y$ identifies with the opposite of the $\infty$-category of commutative algebra objects $B\in D_{pc}(Y)_{\geq 0}$ such that the unit map $\mathcal{O}_Y\rightarrow B$ is surjective on $H_0$.
\end{lemma}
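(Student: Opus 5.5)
The plan is to prove the statement first over affinoid $Y=\mathcal M(A)$ and then globalize by descent. For the affinoid case, a Zariski closed immersion $X\to \mathcal M(A)$ is by definition (take the identity $\mathcal M(A)\to Y$) of the form $\mathcal M(B)\to\mathcal M(A)$, where $A\to B$ is a map of affinoid algebras with $B\in \mathcal D_{pc}(A)$ and $H_0A\to H_0B$ surjective. Conversely, let $B$ be a connective commutative $A$-algebra that is pseudocompact over $A$ and surjective on $H_0$. Composing a presentation $\mathcal O(\overline{\mathbb D}^n)\to A$ with $A\to B$ gives a pseudocompact map (pseudocompact maps compose), so $B$ is again affinoid. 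By the Corollary identifying affinoid algebras fully faithfully with their categorified locales over $(\ast,\mathcal D(\operatorname{Liq}_p))$, the $\infty$-category of such $\mathcal M(B)\to\mathcal M(A)$ is then the opposite of the $\infty$-category of such algebras $B$ under $A$.

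It remains to check in the affinoid case that $\mathcal M(B)\to\mathcal M(A)$ really is a Zariski closed immersion, i.e.\ stable under pullback along an arbitrary map $\mathcal M(A')\to \mathcal M(A)$. Here the fiber product result for affinoids gives $\mathcal M(A')\times_{\mathcal M(A)}\mathcal M(B)=\mathcal M(A'\otimes_A B)$. Pseudocompactness is preserved by base change, since a right-bounded resolution by finite free $A$-modules base changes to one over $A'$. Surjectivity on $H_0$ is preserved by right exactness: $H_0(A'\otimes_AB)=H_0A'\otimes_{H_0A}H_0B$. Thus the affinoid case identifies Zariski closed immersions into $\mathcal M(A)$ with the stated algebras, and the identification is compatible with base change along maps of affinoids.

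For general $Y$, use that both sides are sheaves for the open cover topology, equivalently (by the comparison of sites just proven) for closed affinoid covers refined by open covers. On the geometric side, Zariski closed immersions are stable under base change and satisfy descent along open covers of the target, as noted before the lemma. The geometric object also glues: $X$ is built from categorified locales, and $X\mapsto C_X$ is a sheaf. On the algebraic side, commutative algebras in $C_Y$ satisfy descent because $Y\mapsto C_Y$ does. The conditions ``lies in $\mathcal D_{pc}(Y)_{\ge 0}$'' and ``unit is surjective on $H_0$'' are local. For the first, $\mathcal D_{pc}$ descends by Lecture X. For the second, surjectivity on $H_0$ is a statement about coherent sheaves in the glued t-structure, which is well defined thanks to the flatness of $A(\ast)\to B(\ast)$ for closed sub-affinoids.

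The main obstacle is the compatibility of the two base changes. The algebra side is pulled back along $C_A\to C_{A'}$ by $A'\otimes_A-$, which is \emph{not} t-exact. So I must check that for $B$ in $\mathcal D_{pc}(A)_{\ge 0}$ the liquid tensor product agrees with the algebraic one and stays connective with the expected $H_0$. For this I would represent $B$ by a right-bounded complex of finite free $A$-modules in degrees $\ge 0$. This computes the derived liquid tensor product termwise and visibly lands in $\mathcal D_{pc,\ge 0}(A')$. That reduces the global statement to the affinoid case together with descent.
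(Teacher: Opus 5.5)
Your proposal is correct and follows the paper's argument. Both reduce by descent (for $\mathcal D_{pc}$ with its glued $t$-structure) to affinoid $Y=\mathcal M(A)$, and there the only point is that pseudocoherence and $H_0$-surjectivity of $A\to B$ survive base change along maps of affinoids $A\to A'$. You simply spell out in more detail what the paper calls ``clear'': the full faithfulness of $A\mapsto\mathcal M(A)$, and the use of a right-bounded finite free resolution to control the non-$t$-exact liquid base change.
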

\begin{proof}
By descent for $D_{pc}(Y)$ together with its t-structure, we reduce to case $Y=\mathcal{M}(A)$.  Then to prove the claim we only need to see that if $A\rightarrow B$ is such that $B\in \mathcal{D}_{pc}(A)$ and $H_0A\twoheadrightarrow H_0B$, then the same holds after base-change along any map of affinoids $A\rightarrow A'$.  But this is clear.\end{proof}

Next we discuss separated maps.  Recall that separatedness was defined by a condition on the underlying map of topological spaces.  The following lemma shows that this matches with the analog of the usual definition in scheme theory.

\begin{lemma}
Let $f:X\rightarrow Y$ be a map of complex analytic spaces.  Then $f$ is separated if and only if the diagonal $\Delta_f:X\rightarrow X\times_YX$ is a Zariski closed immersion.
\end{lemma}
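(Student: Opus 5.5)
Both conditions are local on $Y$ and stable under base change. Separatedness is defined topologically, and the forgetful functor to topological spaces preserves pullbacks by the proposition on fiber products. Zariski closed immersions satisfy descent for open covers of the target. So I would first reduce to the case where $Y$ is quasi-affinoid, and then, working on $X$, to pieces that are open subsets of affinoids.

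The direction ``Zariski closed immersion $\Rightarrow$ separated'' I would handle directly. It suffices to show that a Zariski closed immersion $i:X\to W$ is a closed inclusion on underlying topological spaces. By the definition, this reduces to $W=\mathcal M(A)$ and $X=\mathcal M(B)$ with $H_0A\twoheadrightarrow H_0B$. Theorem~\ref{affinoidgiveslocale}(2) then identifies $X_B\to X_A$ with the inclusion $Z(I')\subset Z(I)$ of common zero sets inside a closed polydisk, which is a closed embedding of compact Hausdorff spaces. Closedness is local on the target, so this globalizes.

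For the converse, assume the diagonal of $f$ is topologically closed. Locally on $X\times_YX$, I would choose affinoid charts: take $x\in X$ with affinoid neighbourhoods $\mathcal M(B)\subset X$ mapping into an affinoid $\mathcal M(A)\subset Y$, refined by open sets. Then
\[
\mathcal M(B)\times_{\mathcal M(A)}\mathcal M(B)=\mathcal M(B\otimes_AB)
\]
is affinoid, and the diagonal over it is $\mathcal M(B)\to\mathcal M(B\otimes_AB)$, induced by the multiplication $B\otimes_AB\to B$. This map is surjective on $H_0$, since $H_0(B\otimes_AB)=H_0B\otimes_{H_0A}H_0B$. It is also pseudocoherent: $B$ is pseudocompact over $B\otimes_AB$ because both are pseudocompact over a common $\mathcal O(\overline{\mathbb D}^n)$, using the idempotency reduction to polynomial rings from the fiber product proof. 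Hence the diagonal is locally a Zariski closed immersion near points of the form $(x,x)$, i.e. on the open sets $U\times_YU$ for affinoid-covered opens $U\subset X$.

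The main obstacle is points of $X\times_YX$ not lying in any $U\times_YU$, and this is exactly where the hypothesis enters. Because the diagonal is topologically closed, its complement $V$ is open. The opens $U\times_YU$ together with $V$ cover $X\times_YX$. Over $V$, the pullback of $X$ along the diagonal is empty, which is trivially a Zariski closed immersion, corresponding to the algebra $0$. On overlaps the local algebras $\Delta_\ast\mathcal O_X$ agree, because Zariski closed immersions correspond to algebras in $\mathcal D_{pc}(X\times_YX)_{\geq 0}$ by the preceding lemma, and these glue by descent. So the diagonal is a Zariski closed immersion.

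Conversely, without separatedness the would-be algebra $\Delta_\ast\mathcal O_X$ is not supported on a closed set, so the argument genuinely uses closedness. I expect the delicate step to be checking that $\Delta_\ast\mathcal O$, built on the $U\times_YU$ and extended by zero, really lies in $\mathcal D_{pc}$ near the boundary of the diagonal. Closedness of the diagonal is what makes the zero extension compatible there.
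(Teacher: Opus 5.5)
Your argument is correct and has the same core as the paper's. A Zariski closed immersion is a closed inclusion on underlying spaces, checked on affinoids. Conversely, Zariski-closedness of the diagonal is checked locally on the target: affinoid charts handle diagonal points, and away from the diagonal the preimage is empty.

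The difference is in the decomposition. The paper first reduces to $Y$ quasi-affinoid. It then shows $\Delta_Y$ is a Zariski closed immersion by reducing to the single computation $\mathcal O(\overline{\mathbb D}^2)/(z_1-z_2)=\mathcal O(\overline{\mathbb D}^1)$. Using standard diagrams, it passes from $\Delta_f$ to the absolute diagonal $\Delta_X$ of the now Hausdorff space $X$. Finally it covers $X\times X$ by sets $U\times U$ and $U_1\times U_2$ with $U_1\cap U_2=\emptyset$.

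You instead stay with the relative diagonal and cover $X\times_YX$ by the sets $U\times_YU$ together with the open complement $V$ of the diagonal. This avoids the cancellation step and any appeal to absolute Hausdorffness. In exchange, you need $B$ to be pseudocoherent over $B\otimes_AB$ for an arbitrary map of affinoids $A\to B$. Your justification works, but it takes more bookkeeping than the paper's single explicit computation. You must present $B\otimes_AB$ as pseudocoherent over some $\mathcal O(\overline{\mathbb D}^n)$ and then use noetherianity: finitely generated homology over $H_0\mathcal O(\overline{\mathbb D}^n)$ gives pseudocoherence over the finite algebra $H_0(B\otimes_AB)$.

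The ``delicate step'' you anticipate at the end does not arise. Being a Zariski closed immersion is local on the target. Once $\Delta_f$ restricts to a Zariski closed immersion over each member of the open cover $\{U\times_YU\}\cup\{V\}$, you are done. No gluing of algebras or extension by zero needs to be checked near the boundary of the diagonal; separatedness enters only through the openness of $V$.
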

\begin{proof}
Note that if $i$ is a Zariski closed immersion of complex analytic spaces, then $i$ is a closed inclusion on underlying topological spaces.  Indeed, we can work locally on the target to reduce to the affinoid case, when the claim is clear.  Since fiber products of complex analytic spaces commute with the forgetful functor to topological spaces, this shows that if $\Delta_f$ is Zariski closed then $f$ is separated.  Now suppose $f$ is separated.  To show that $\Delta_f$ is a Zariski closed immersion, we can work locally on $Y$ and therefore assume $Y$ is quasi-affinoid.  In this case, $\Delta_{Y}$ is a Zariski closed immersion, as one easily sees by reduction to an affinoid containing $Y$ as an open subset, then further reducing to $Y=\overline{\mathbb{D}^1}$ where the claim follows from
$$\mathcal{O}(\overline{\mathbb{D}^2})/(z_1-z_2) = \mathcal{O}(\overline{\mathbb{D}^1}).$$
Then to show $\Delta_f$ is a closed immersion, by standard diagrams it is enough to show that $\Delta_{X}$ is a closed immersion.

Therefore, it suffices to show that if the underlying topological space of $X$ is Hausdorff, then $\Delta_X:X\rightarrow X\times X$ is a Zariski closed immersion.  We check this locally on $X\times X$.  Let $(x_1,x_2)\in X\times X$.  If $x_1=x_2=x$, then choose a quasi-affinoid open neighborhood $U$ of $x$ and use $U\times U$ as an open neighborhood of $X\times X$.  The pullback of $\Delta_X$ is $\Delta_U$, which we've already seen to be a Zariski closed immersion in the previous paragraph.  If $x_1\neq x_2$, choose quasi-affinoid $U_1\ni x_1, U_2\ni x_2$ with $U_1\cap U_2=\emptyset$.  Then with $U_1\times U_2$ as a neighborhood of $(x_1,x_2)$, the pullback of $\Delta_X$ is $\emptyset\rightarrow U_1\times U_2$, also clearly Zariski closed.
\end{proof}

Recall that for a general complex analytic space, the reduction to affinoids goes through this somewhat annoying two-step process: work open-locally to reduce to quasi-affinoids, then cover a quasi-affinoid by affinoids with a closed cover (refined by an open cover) to reduce to affinoids.  In particular, the basic affinoids are only locally closed subspaces, not closed subspaces in general.  However, when $X\rightarrow\ast$ is separated (i.e.\@ $X$ is Hausdorff), basic affinoids are closed and we can do the descent in just one step:

\begin{lemma}\label{Hausdorffbasis}
Let $X$ be a Hausdorff complex analytic space.  Then:
\begin{enumerate}
\item The collection of affinoid closed subspaces of $X$ is closed under finite intersection (= fiber product).
\item For any $x\in X$, the affinoid closed neighborhoods of $x$ form a neighborhood basis for the topology at $x$.
\item If $K\subset X$ is a compact subset, then there is another compact subset $K'\subset X$ such that:
\begin{enumerate}
\item $K'$ contains an open neighborhood of $K$, or in symbols $K\Subset K'$;
\item $K'$ admits a finite cover by affinoid closed subspaces refined by an open cover.
\end{enumerate}
In particular such a $K'$ is itself a complex analytic space.
\end{enumerate}
\end{lemma}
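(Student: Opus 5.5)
For (1), take affinoid closed subspaces $\mathcal M(A)\to X$ and $\mathcal M(B)\to X$. Their intersection is the fiber product $\mathcal M(A)\times_X\mathcal M(B)$. Since $X$ is Hausdorff, the preceding lemma says $\Delta_X$ is a Zariski closed immersion. Hence this fiber product is the pullback of $\Delta_X$ along $\mathcal M(A)\times\mathcal M(B)=\mathcal M(A\otimes B)\to X\times X$. By the definition of Zariski closed immersion, that pullback is an affinoid $\mathcal M(D)$ with $A\otimes B\to D$ pseudocompact and surjective on $H_0$. Its underlying space is the fiber product of the underlying spaces, since the forgetful functor preserves pullbacks. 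That is exactly the set-theoretic intersection, which is compact, hence closed in $X$ because $X$ is Hausdorff. It remains to check that it is a closed subspace in the categorified-locale sense, meaning it corresponds to an idempotent algebra in $C_X$. I would verify this locally: work on an open $U$ containing the intersection inside a quasi-affinoid, and use that a composition and base change of closed inclusions is a closed inclusion (Corollary in Lecture VI).

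For (2), fix $x\in X$ and an open neighborhood $W$ of $x$. Choose a quasi-affinoid open $V\subset W$ containing $x$, embedded as an open subset of some affinoid $\mathcal M(A)$ with $H_0A=\mathcal O(\overline{\mathbb D}^n)/I$. By Theorem~\ref{affinoidgiveslocale}(2) the underlying space is $Z(I)\subset\overline{\mathbb D}^n$. Small closed polydisks $\{|z_i-x_i|\le r\}$ around $x$ give closed sub-affinoids $\{|z_i-x_i|\le r\}$ of $\mathcal M(A)$, as in the discussion after fiber products in Lecture XI. For $r$ small they lie in $V$, so they are closed in $V$. Being compact, they are also closed in the Hausdorff space $X$, and their interiors contain $x$. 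I expect the subtle point to be that ``closed in $V$ as a categorified locale'' promotes to ``closed in $X$''. My plan here is to argue that a compact subset $K\subset V$ gives a closed subset of the locale of $X$: the pair (closed subset $K$ of $V$, the open $X\setminus K$) forms a cover of $X$, and the closed immersion descends by part (3) of the descent theorem in Lecture VI.

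For (3), given compact $K$, cover $K$ by finitely many interiors of affinoid closed neighborhoods $K_1,\dots,K_m$ from (2). Set $K'=\bigcup K_j$. This is compact and contains the open set $\bigcup \operatorname{int}K_j\supset K$, so $K\Subset K'$. The cover $\{K_j\}$ of $K'$ is refined by the open cover $\{\operatorname{int}K_j\}$ only near $K$, not near the boundary of $K'$. I would fix this by enlarging each $K_j$ slightly: choose affinoid closed neighborhoods $K_j\Subset K_j^+$ and use the open cover of $K'$ by the sets $\operatorname{int}K_j^+\cap K'$. In the categorified-locale structure on $K'$, which is the closed subspace given by a finite union, these sets form an open cover refining the closed cover $\{K_j^+\cap K'\}$. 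Those intersections are affinoid by (1). So I would take $K'$ with the closed cover $\{K_j^+\cap K'\}$, and $K'$ is a complex analytic space by the descent proposition relating the two sites.
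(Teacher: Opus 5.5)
\textbf{Parts (1) and (2).} These are the paper's argument. The diagonal of a Hausdorff $X$ is a Zariski closed immersion, so intersections of affinoid closed subspaces are affinoid. Small closed polydiscs in a quasi-affinoid chart are compact, hence closed in $X$. Your extra checks that these are closed in the categorified-locale sense are harmless.

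\textbf{The gap is in (3).} You are right that for $K'=\bigcup_j K_j$ the relative interiors of the $K_j$ need not cover $\partial K'$. But your repair fails at ``those intersections are affinoid by (1)''. Part (1) concerns the intersection of \emph{two} affinoid closed subspaces, whereas $K_j^+\cap K'=\bigcup_i(K_j^+\cap K_i)$ is only a finite union of affinoids. Suppose a point of $\partial K'$ lies where $\partial K_1$ and $\partial K_2$ cross, and also lies in $\operatorname{int}K_1^+$. Near that point $K_1^+\cap K'$ agrees with $K'$, so it has a reentrant corner. This already happens for two overlapping closed discs in $\mathbb C$. Such a corner is not of the form $\{|g_1|\le 1,\dots,|g_n|\le 1\}$, and an affinoid closed subset of a plane domain has that form near each of its points.

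\textbf{No choice of $K'$ gives (3)(b) in general.} The refinement condition forces $K'$ to coincide, near each boundary point $y$, with an affinoid $L\subset K'$. Affinoids are holomorphically convex: every character of $\mathcal O(L)(\ast)=\varinjlim_{V\supset L}\mathcal O(V)$ is evaluation at a point of $L$ (Theorem~\ref{affinoidgiveslocale}). Here is a counterexample.
\begin{enumerate}
\item Take $X=\mathbb C^2\setminus\{0\}$ and $K$ the unit sphere. Let $\overline B$ be the closed unit ball, and let $y$ be a point of maximal norm $R$ in $\overline B\setminus\operatorname{int}K'$. Then $0<R<1$ and $K'\supset\{R<|z|\le 1\}$.
\item If $y\in\operatorname{int}_{K'}L$, then $L\supset\{|z|\ge R\}$ near $y$.
\item Take analytic discs through points $w$ near $y$, complex-tangent to $\{|z|=|w|\}$, with boundary circles in $\{|z|>R\}$. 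By the Kontinuit\"atssatz they give evaluation characters of $\mathcal O(L)(\ast)$ at all points near $y$.
\item These points include points outside $K'$, which exist because $y\notin\operatorname{int}K'$. This contradicts holomorphic convexity of $L$.
\end{enumerate}

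\textbf{What can actually be proved.} The paper's one-line ``(3) follows from (2)'' delivers a weaker form, and this is all its later uses need (Lemma~\ref{pushforwardlemma}, proper base change, compact supports). Take $K'=\bigcup_j K_j$ with $K\subset\bigcup_j\operatorname{int}K_j$; its pairwise intersections are affinoid by (1). Combine this with descent along the finite closed cover. Descent holds for any finite closed cover, because the idempotent algebra of $Z_1\cup Z_2$ is $A_1\times_{A_1\otimes A_2}A_2$.
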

\begin{proof}
Since $\Delta_X$ is a closed immersion, it is relatively affinoid, and (1) follows.  For (2), we know for general $X$ that the affinoid closed subsets of a quasi-affinoid open neighborhood of $X$ form a neighborhood basis; but these are also closed in $X$ because they are compact and $X$ is Hausdorff.  And (3) follows immediately from (2).
\end{proof}

Now we turn to our main goal: discussion of pushforward functors on these categories $C_X$ attached to a complex analytic space $X$.  First, a preliminary remark.  Let us denote by $\Gamma:C_X\rightarrow \mathcal{D}(\operatorname{Liq}_p)$ the functor represented by $\mathcal{O}_X\in C_X$.  By descent, we have
$$\Gamma(\mathcal{F}) = \varprojlim_{f:\mathcal{M}(A)\rightarrow X} \Gamma(f^\ast\mathcal{F});$$
where on an affinoid $\mathcal{M}(A)$ this global sections functor $\Gamma$ is just the forgetful functor
$$\operatorname{Mod}_A( \mathcal{D}(\operatorname{Liq}_p))\rightarrow  \mathcal{D}(\operatorname{Liq}_p).$$
In the limit above, it also suffices to restrict to those $f$ given by a closed immersion from an affinoid to some quasi-affinoid open subset of $X$, and if $X$ is Hausdorff, then thanks to the previous lemma it even suffices to restrict to just closed immersions from affinoids.

Now, coming back to general $X$, we can define a functor
$$sh:C_X\rightarrow \operatorname{Sh}(X; \mathcal{D}(\operatorname{Liq}_p))$$
by
$$(sh \mathcal{F})(U) = \Gamma(U;\mathcal{F}\mid_U).$$

\noindent For formal reasons, this promotes to
$$sh:C_X\rightarrow \operatorname{Mod}_{\mathcal{O}_X}(\operatorname{Sh}(X; \mathcal{D}(\operatorname{Liq}_p)),$$

\noindent where $\mathcal{O}_X$ is the image of the unit object under $sh$.  By Exercise 2 in Lecture VI, this functor is actually an equivalence when $X$ is Hausdorff. We will only need the following lemma.

\begin{lemma}\label{pushforwardlemma}
Let $X$ be a Hausdorff complex analytic space and $\mathcal{F}\in C_X$.  Then for a closed inclusion $f:\mathcal{M}(A)\rightarrow X$ from an affinoid, we have
$$\Gamma(f^\ast\mathcal{F}) = \varinjlim_{U\supset \mathcal{M}(A)} \Gamma(\mathcal{F}\mid_U) \in \operatorname{Mod}_A( \mathcal{D}(\operatorname{Liq}_p))$$
where the filtered colimit is over open neighborhoods of $\mathcal{M}(A)$ in $X$.  More generally, this holds with $\mathcal{M}(A)$ replaced by any compact subspace of $X$, replacing $\operatorname{Mod}_A$ by $\operatorname{Mod}_{\Gamma(\mathcal{O}_X)}$.
\end{lemma}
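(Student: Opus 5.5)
The plan is to reduce the statement to the formal nonsense of Lecture V about closed subsets of locales and idempotent algebras, and then to use compactness. Write $K=\mathcal M(A)$ (or a general compact subspace), viewed as a closed subset of the underlying space of $X$ since $X$ is Hausdorff. By descent we can compute everything inside some quasi-affinoid open, and in fact inside a compact neighbourhood $K'\Supset K$ that admits a finite affinoid closed cover refined by an open cover (Lemma~\ref{Hausdorffbasis}(3)). Concretely, the claim is that for the closed subset $Z=\pi^{-1}(K)\subset \mathcal S(C_X)$, the global sections of $i_Z^\ast\mathcal F$ agree with $\varinjlim_{U\supset K}\Gamma(\mathcal F|_U)$. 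Here $i_Z^\ast \mathcal F=\mathcal F\otimes\mathcal A_K$, where $\mathcal A_K$ is the idempotent algebra corresponding to $K$. For $K=\mathcal M(A)$ a closed affinoid, $f^\ast$ is exactly $i_Z^\ast$ by the definition of a closed inclusion of categorified locales, so $\Gamma(f^\ast\mathcal F)=\Gamma(X,\mathcal F\otimes \mathcal A_K)$.

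First I would use the locale identity $K=\bigcap_{K\Subset L}L$, an intersection over compact neighbourhoods $L$ of $K$ that is cofiltered since $X$ is locally compact Hausdorff. By Proposition~(the locale of idempotent algebras, item (4)), this gives $\mathcal A_K=\varinjlim_L \mathcal A_L$. Since $\Gamma$ on $C_{K'}$ is corepresented by the compact unit (after reducing to $K'$, which is finitely covered by affinoids), $\Gamma$ commutes with this filtered colimit. The colimit over compact neighbourhoods $L$ is cofinal with the colimit over open neighbourhoods $U$, because each $U$ contains some $L$ and each $L$ contains some $U$. So it suffices to compare $\Gamma(\mathcal F\otimes\mathcal A_L)$ with $\Gamma(\mathcal F|_U)$ for $U\subset L\subset U'$, and to show that these give pro/ind-isomorphic systems.

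For that comparison, both $U\mapsto\Gamma(\mathcal F|_U)=\Gamma(j_{U\ast}j_U^\ast\mathcal F)$ and $L\mapsto \Gamma(i_{L\ast}i_L^\ast\mathcal F)$ come with natural maps. If $U\subset L$, then $j_U^\ast$ factors through $i_L^\ast$ (``$U\subset L$'' in the lemma on open and closed immersions, part (3)), giving $\Gamma(i_L^\ast\mathcal F)\to\Gamma(\mathcal F|_U)$. If $L\subset U'$, then $i_L^\ast$ factors through $j_{U'}^\ast$, giving $\Gamma(\mathcal F|_{U'})\to\Gamma(i_L^\ast\mathcal F)$. Interleaving a chain $U\subset L\subset U'\subset L'$ shows that the two ind-systems are isomorphic, so the colimits agree. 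The $A$-module (resp.\ $\Gamma(\mathcal O_X)$-module) structure is compatible, since all maps are induced by symmetric monoidal functors.

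The main obstacle is justifying that $\Gamma$ commutes with the filtered colimit $\varinjlim_L\mathcal A_L\otimes\mathcal F$ when $X$ itself is not quasi-compact. To handle this, I would first restrict to $K'$ via descent: sections near $K$ only depend on $\mathcal F|_{K'^\circ}$, and $\Gamma(K',-)$ is a finite limit of affinoid forgetful functors, which commute with colimits. The general compact-subspace case is then identical, replacing $A$ by $\mathcal A_K$ viewed as a $\Gamma(\mathcal O_X)$-algebra.
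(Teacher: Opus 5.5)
Your proposal is correct and follows essentially the paper's argument. Like the paper, you interleave open and compact neighbourhoods of $\mathcal M(A)$ using the factorization properties of open and closed localizations, identify the intersection of the compact neighbourhoods with the filtered colimit of the corresponding idempotent algebras, and shrink $X$ via Lemma~\ref{Hausdorffbasis} so that $\Gamma$ becomes a finite limit of forgetful functors commuting with colimits. The only difference is cosmetic: the paper first proves $\varinjlim_U j_\ast j^\ast\mathcal F\simeq f_\ast f^\ast\mathcal F$ in $C_X$ and then applies $\Gamma$, whereas you apply $\Gamma$ first and compare the ind-systems of sections.
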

\begin{proof}
We claim that
$$ \varinjlim_{j:U\subset X, U\supset  \mathcal{M}(A)} j_\ast j^\ast \mathcal{F}\rightarrow f_\ast f^\ast \mathcal{F} $$
is an isomorphism in $C_X$.  Here the transition maps and comparison map come from the fact that the poset of locally closed subsets of $X$ embeds fully faithfully contravariantly in the poset of localizations of $C_X$, via $g_\ast g^\ast$ where $g$ is the inclusion of the locally closed subset, by the material in Lecture VI.

If this isomorphism claim is known, then we can deduce the claim of the lemma as follows: using Lemma \ref{Hausdorffbasis}, we can shrink $X$ to assume $X$ is a finite union of affinoids with affinoid intersections.  Then the global sections functor on $C_X$ commutes with colimits, being a finite limit of forgetful functors from module categories.  Whence the claim of the lemma by taking global sections.

Now, to prove the above isomorphism claim, we can replace the filtered colimit over open neighborhoods of $\mathcal{M}(A)$ by a filtered colimit over arbitrary locally closed subsets containing an open neighborhood of $\mathcal{M}(A)$, by cofinality of the former in the latter.  But then by a similar cofinality we can replace this with the filtered colimit over aribitrary compact neighborhoods of $\mathcal{M}(A)$.  The intersection of such compact neighborhoods reduces to $\mathcal{M}(A)$ itself, so for the corresponding idempotent algebras in $C_X$ we get a filtered colimit.  Since for closed subsets the localization $g_\ast g^\ast$ is just given by tensoring with the idempotent algebra, this proves the claim.
\end{proof}

Now let us discuss the pushforward functor for a general map of complex analytic spaces.

\begin{proposition}\label{describepushforward}
Let $f:X\rightarrow Y$ be a map of complex analytic spaces.  Then:
\begin{enumerate}
\item the pullback functor $f^\ast: C_Y\rightarrow C_X$ admits a right adjoint $f_\ast:C_X\rightarrow C_Y$;
\item the formation of $f_\ast$ commutes with base-change along open inclusions;
\item if $Y$ is Hausdorff and $i:\mathcal{M}(A)\rightarrow Y$ is a closed inclusion from an affinoid, then
$$i^\ast f_\ast\mathcal{F} = \varinjlim_{U\supset \mathcal{M}(A)} \Gamma(\mathcal{F}\mid_{f^{-1}U}) \in \operatorname{Mod}_A( \mathcal{D}(\operatorname{Liq}_p)),$$
where $U$ runs over open neighborhoods of $\mathcal{M}(A)$ in $X$.
\end{enumerate}
\end{proposition}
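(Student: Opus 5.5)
For (1), I will use the descent description of $C_X$ and $C_Y$. By the descent results of Lecture VI and the comparison of sites above, $C_Y=\varprojlim C_{\mathcal M(B)}$ over affinoids mapping to $Y$ (refined by open covers), and similarly for $X$. Each $C_{\mathcal M(A)}$ is presentable and the transition functors are symmetric monoidal and colimit preserving. Hence $C_X$ and $C_Y$ are presentable, being limits of presentable categories along colimit-preserving functors. The functor $f^\ast$ is itself colimit preserving, since it is a map in $\operatorname{Sym}$. The adjoint functor theorem then gives the right adjoint $f_\ast$. I would check the presentability remark briefly: limits in $\mathrm{Pr}^L$ are computed in large categories.

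For (2), let $j:V\subset Y$ be open, with pullback $j':f^{-1}V\subset X$. I want the base change map $j^\ast f_\ast\to f'_\ast j'^\ast$ to be an isomorphism. The plan is to use the fully faithful left adjoints $j_\natural$ and $j'_\natural$ from Lecture VI. These satisfy the projection formula and commute with base change, so $f^\ast j_\natural\simeq j'_\natural f'^\ast$. Passing to right adjoints turns this into $j^\ast f_\ast\simeq f'_\ast j'^\ast$, since $j^\ast$ is right adjoint to $j_\natural$. The one point to check is that the base change identity $f^\ast j_\natural\cong j'_\natural f'^\ast$ holds. This follows because $j_\natural j^\ast=-\otimes[1\to A]$, with $A$ the idempotent algebra of the complement, and $f^\ast$ is symmetric monoidal, so it sends $A$ to the idempotent algebra of $f^{-1}(Y\setminus V)$.

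For (3), I apply Lemma~\ref{pushforwardlemma} on the Hausdorff space $Y$ to $\mathcal G=f_\ast\mathcal F$:
\[
i^\ast f_\ast\mathcal F=\Gamma(i^\ast f_\ast \mathcal F)=\varinjlim_{U\supset\mathcal M(A)}\Gamma(U, f_\ast\mathcal F|_U).
\]
By (2), $(f_\ast\mathcal F)|_U=f_{U\ast}(\mathcal F|_{f^{-1}U})$. Its global sections are $\operatorname{Hom}$ from the unit $\mathcal O_U$, and by adjunction with $f_U^\ast\mathcal O_U=\mathcal O_{f^{-1}U}$ they equal $\Gamma(f^{-1}U,\mathcal F|_{f^{-1}U})$. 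Here I use internal mapping objects in $\mathcal D(\operatorname{Liq}_p)$, i.e.\ the $\mathcal D(\operatorname{Liq}_p)$-linearity of the adjunction, which follows from the projection formula for the base category. Plugging this in gives the formula. The $U$ in the statement should be read as open neighborhoods in $Y$.

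The main obstacle I expect is making the $\mathcal D(\operatorname{Liq}_p)$-enriched adjunction precise, so that $\Gamma$ (rather than only its $(\ast)$-points) commutes with $f_\ast$. I would handle it by noting that $f^\ast$ is $\mathcal D(\operatorname{Liq}_p)$-linear, so $f_\ast$ is lax linear, and that $\Gamma$ is the internal Hom from the unit. The statement that $i^\ast f_\ast\mathcal F$ is an $A$-module is automatic.
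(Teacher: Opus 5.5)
Your arguments for (2) and (3) are the paper's. In (2) you base change $j_\natural$ along $f$ and pass to right adjoints. In (3) you apply Lemma~\ref{pushforwardlemma} to $f_\ast\mathcal F$ and combine it with (2). You are also right that $U$ must range over open neighbourhoods of $\mathcal M(A)$ in $Y$, not in $X$.

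The identification $\Gamma(U,f_{U\ast}\mathcal G)\simeq\Gamma(f^{-1}U,\mathcal G)$ needs no enriched-adjunction machinery. $\Gamma$ is by definition the right adjoint of the structure functor $M\mapsto M\otimes\mathcal O$ out of $\mathcal D(\operatorname{Liq}_p)$. Since $f_U^\ast(M\otimes\mathcal O_U)\simeq M\otimes\mathcal O_{f^{-1}U}$, composing right adjoints gives $\Gamma_U\circ f_{U\ast}\simeq\Gamma_{f^{-1}U}$.

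The gap is in (1), in the step ``each $C_{\mathcal M(A)}$ is presentable''. In the paper's setting, condensed objects are the colimit over all cutoff cardinals $\kappa$. Then $\mathcal D(\operatorname{Liq}_p)$, and hence $\operatorname{Mod}_A(\mathcal D(\operatorname{Liq}_p))$, is \emph{not} presentable. Its generators $\mathcal M_{<p}(S)$ run over a proper class of extremally disconnected $S$, with no small generating set. So the adjoint functor theorem does not apply as you use it. Your remark about limits in $\mathrm{Pr}^L$ does not address this, because the individual categories are already not in $\mathrm{Pr}^L$.

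This is exactly the point the paper's proof handles, in three steps:
\begin{enumerate}
\item Fix $\kappa$. Then all categories are presentable and your argument gives a right adjoint $f^\kappa_\ast$.
\item Prove (2) and (3) for $f^\kappa_\ast$; your arguments go through verbatim.
\item Observe that (2) and (3) give an explicit formula for $f^\kappa_\ast\mathcal F$. One restricts to quasi-affinoid (hence Hausdorff) opens of $Y$. On closed affinoids there, one takes a filtered colimit of the section functors $\Gamma(f^{-1}U,-)$. By descent, this data determines $f^\kappa_\ast\mathcal F$.
\end{enumerate}
This formula involves only restrictions, filtered colimits, and the limits computing sections. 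The transition functors $C^\kappa\to C^{\kappa'}$ preserve these for $\kappa$ in the cofinal class from Lecture II. Hence the $f^\kappa_\ast$ are compatible and assemble to a right adjoint on the uncut categories. With this addition your proof is complete.
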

\begin{proof}
Since $f^\ast$ commutes with colimits, the existence of $f_\ast$ is automatic modulo set-theoretic issues.  In particular, if we work in the $\kappa$-condensed context, then $f_\ast$ exists.  Then (2) and (3) would give a formula for $f_\ast$ which in particular shows that $f_\ast$ is independent of $\kappa$ and therefore gives us existence of $f_\ast$ without the $\kappa$-bound.  Thus, for the purposes of proving all three points, we can just as well assume $f_\ast$ exists and prove (2) and (3) hold.

For (2), recall from Lecture VI that for an open inclusion $j$, the pullback $j^\ast$ has a left adjoint $j_\natural$ which commutes with arbitrary base-change.  Passing to right adjoints in this statement proves (2).  Then (3) follows from the previous lemma applied to $f_\ast\mathcal{F}$, plus the base-change claim (2).
\end{proof}

Now we can prove the first version of a proper base change theorem.

\begin{theorem}
Let $f:X\rightarrow Y$ be a proper map of complex analytic spaces.  Then the functor $f_\ast:C_X\rightarrow C_Y$ commutes with colimits and arbitrary base-change, and satisfies the projection formula: for $\mathcal{G}\in C_Y$ and $\mathcal{F}\in C_X$, we have
$$\mathcal{G}\otimes f_\ast\mathcal{F}\overset{\sim}{\rightarrow}f_\ast((f^\ast\mathcal{G})\otimes \mathcal{F}).$$
\end{theorem}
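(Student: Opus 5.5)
The plan is to reduce to the case where $Y$ is affinoid and then to an explicit finite-limit formula for $f_\ast$. First, every property in question (commutation with colimits, base change, projection formula) is local on $Y$ for the open cover topology. For $f_\ast$ this follows from Proposition~\ref{describepushforward}(2), which says its formation commutes with restriction to opens; the categories $C_Y$ satisfy descent for open covers, and tensor products and pullbacks are compatible with restriction. So we may assume $Y$ is quasi-affinoid, and then, shrinking further, Hausdorff, with an affinoid closed neighbourhood basis. By Lemma~\ref{Hausdorffbasis} and the descent for closed covers refined by open covers, it suffices to check the claims after applying $i^\ast$ for closed inclusions $i:\mathcal M(A)\to Y$ of affinoids. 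This uses that the $i^\ast$, over such a cover, are jointly conservative and commute with colimits and tensor products.

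Next, since $f$ is proper, $f^{-1}(\mathcal M(A))$ is compact. Proposition~\ref{describepushforward}(3) gives $i^\ast f_\ast\mathcal F=\varinjlim_{U\supset\mathcal M(A)}\Gamma(f^{-1}U,\mathcal F)$. By properness the preimages $f^{-1}U$ are cofinal among open neighbourhoods of the compact set $K=f^{-1}\mathcal M(A)$, which is itself compact by properness. Lemma~\ref{pushforwardlemma}, in its version for compact subspaces, then identifies this colimit with sections over $K$. Concretely, this is the global sections of the restriction of $\mathcal F$ to the closed subspace $K\subset f^{-1}(V)$, for $V$ a Hausdorff neighbourhood. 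In particular $i^\ast f_\ast\mathcal F=\Gamma(K,\mathcal F|_K)$, which is also $g_\ast$ of the base change $g:X\times_Y\mathcal M(A)\to\mathcal M(A)$. This is exactly the base change compatibility with closed inclusions of affinoids. Combined with open-local base change, it yields base change along general maps $Y'\to Y$, after reducing $Y'$ in the same way to affinoids mapping into affinoids, where the same neighbourhood formula applies.

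So the core case is $Y=\mathcal M(A)$ with $X$ a compact space. By Lemma~\ref{Hausdorffbasis}(3), possibly after enlarging slightly, $X$ is a finite union of affinoid closed subspaces $\mathcal M(B_j)$ with affinoid intersections, refined by an open cover. Here I would have to handle that $X$ need not be Hausdorff, only $f$ separated over a Hausdorff base; if needed I would first treat separated $f$ and use the relative diagonal being Zariski closed. Descent then writes $f_\ast\mathcal F$ as a finite \v{C}ech limit of the forgetful functors $\mathrm{Mod}_{B_J}\to\mathrm{Mod}_A$ applied to $\mathcal F\otimes_{B}B_J$. Finite limits in a stable category are finite colimits, so $f_\ast$ commutes with all colimits. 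The projection formula reduces to $\mathcal G\otimes_A M\cong(\mathcal G\otimes_A B_J)\otimes_{B_J}M$ termwise, which is trivially true for modules, and tensoring commutes with the finite limit.

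The main obstacle is the step identifying $\varinjlim_U\Gamma(f^{-1}U,\mathcal F)$ with a genuine finite-limit expression over a compact piece. Concretely, one must show that passing to the colimit over neighbourhoods commutes with the finite \v{C}ech limits, and that the non-Hausdorffness of $X$ does not obstruct using closed affinoid covers. I would try first to build a cofinal system of compact neighbourhoods $K_n$ of $K$, each a finite union of affinoids, so that the colimit is a filtered colimit of finite-limit formulas. Each of those formulas commutes with colimits, hence so does their colimit, and base change holds by idempotence of the algebras of the $K_n$ as in Lemma~\ref{pushforwardlemma}.
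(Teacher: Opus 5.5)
Your proposal is correct and follows the paper's proof essentially step by step. You localize on $Y$ to make it Hausdorff. You then use properness (cofinality of the $f^{-1}U$) together with Proposition~\ref{describepushforward} and Lemma~\ref{pushforwardlemma} to get $i^\ast f_\ast\simeq g_\ast i^{\prime\ast}$ for closed affinoids $i:\mathcal M(A)\to Y$. Finally, over an affinoid base, you write $f_\ast$ as a finite \v{C}ech limit of forgetful functors over a finite affinoid cover of the compact space $X$.

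The worries in your last paragraph are already handled by those two cited results. Also, $X$ is automatically Hausdorff once $Y$ is, because proper maps are separated, so neither enlarging $X$ nor a separate treatment of non-Hausdorff $X$ is needed.
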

\begin{proof}
Working locally on $Y$, we can assume that $Y$, hence also $X$, is Hausdorff.  Then if $i:\mathcal{M}(A)\rightarrow Y$ is a closed inclusion from an affinoid, the open neighborhoods $f^{-1}U$ of $f^{-1}\mathcal{M}(A)$ in $X$ are cofinal in all open neighborhoods of $f^{-1}\mathcal{M}(A)$, by properness of $f$.  Then combining Lemma \ref{pushforwardlemma} and Proposition \ref{describepushforward} we deduce that $f_\ast$ commutes with $i^\ast$.  This in turn reduces us to the case where $Y=\mathcal{M}(A)$ is affinoid; similarly, for checking the commutation with base-change, we only need to consider base-change along maps of affinoids.

But when $Y$ is affinoid, it is compact, so $X$ is also compact, hence a finite union of affinoids with affinoid intersection by Lemma \ref{Hausdorffbasis}.  Thus the pushforward from $X$ to $Y$ is a finite limit of pushforwards from affinoids mapping to $Y$, letting us reduce to the case where $X$ is also affinoid.  But then our categories are just categories of modules and pushforwards are forgetful functors, and the claims follow from obvious base-change properties in that algebraic context.
\end{proof}

Our next goal will be to ``repair'' the failure of the proper base-change theorem to hold outside the proper case, by introducing a modification to the functor $f_\ast$, denoted $f_!:C_X\rightarrow C_Y$, for which the conclusion of proper base-change does hold (and with $f_!=f_\ast$ for $f$ proper).  Although in principle we should define such a $f_!$ for any separated map $f:X\rightarrow Y$ (or indeed for any map whatsoever, as long as we don't ask for a natural transformation $f_!\rightarrow f_\ast$), for simplicity we will only consider the case where both $X$ and $Y$ are Hausdorff (in which case any map $f:X\rightarrow Y$ is separated).

The construction of $f_!$ will be by formal extension from the full subcategory of \emph{compactly supported} objects $\mathcal{F}\in C_X$.  To define this condition, note that for $\mathcal{F}\in C_X$, the set of open subsets $U\subset X$ for which $\mathcal{F}|_U=0$ is closed under arbitrary unions, by the sheaf property.  Hence there is a maximal $U$ for which $\mathcal{F}|_U=0$.  The complement of this $U$ is a closed subset $Z=\operatorname{Supp}\mathcal{F}$ of $X$ called the \emph{support} of $\mathcal{F}$.  If $i:Z\rightarrow X$ denotes the closed inclusion, then it follows that $\mathcal{F}$ lies in the essential image of the fully faithful functor $i_\ast$.

\begin{definition}
Let $X$ be a Hausdorff complex analytic space.  We say that an $\mathcal{F}\in C_X$ is \emph{compactly supported} if $\operatorname{Supp}\mathcal{F}$ is compact; or equivalently, if $\mathcal{F}$ is pushed forward from a compact subset.  By Lemma \ref{Hausdorffbasis}, this is also equivalent to saying that $\mathcal{F}$ is pushed forward from a compact closed sub-analytic space, a finite union of affinoids.
\end{definition}

The first thing to note is the following.

\begin{lemma}
The base-change and projection formula for $f_\ast$ from the proper base-change theorem hold for arbitrary maps $f:X\rightarrow Y$ of complex analytic spaces, provided we restrict to the full subcategory of $C_X$ consisting of those objects whose support is proper over $Y$.  (In particular, if $Y$ is Hausdorff then they hold on compactly supported objects of $C_X$.)  Similarly, the claim about $f_\ast$ preserving colimits holds if all the terms in the colimit diagram have support in a common subset of $X$ which is proper over $Y$.
\end{lemma}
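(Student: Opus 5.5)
The plan is to reduce to the proper case by replacing $X$ with a neighborhood of the support that is proper over $Y$, and then to invoke the proper base-change theorem just proved. Let $\mathcal F\in C_X$ have support $Z$ with $Z\to Y$ proper. Work locally on $Y$, which is allowed because $f_\ast$ commutes with base change along open inclusions (Proposition~\ref{describepushforward}(2)), and because properties of $\mathcal F$ and of the comparison maps can be checked open-locally on $Y$. So we may assume $Y$ is Hausdorff, and even (after further shrinking) that $Z$ is Hausdorff.

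\textbf{Key step 1: factor $f$ through a proper piece.} Take a closed subspace $K\subset X$ containing $Z$ in its interior and still proper over $Y$. Over a relatively compact open of $Y$, the set $Z$ is compact, and Lemma~\ref{Hausdorffbasis}(3) supplies a compact $K'\Supset Z$ which is itself a complex analytic space. If $X$ is not Hausdorff near $Z$, first replace $X$ by a Hausdorff open neighborhood of $Z$; this is possible locally over $Y$ using proper $Z$. Let $i:K\to X$ be the closed inclusion. Then $\mathcal F\cong i_\ast i^\ast\mathcal F$, since $\mathcal F$ is pushed forward from its support, and $f\circ i$ is proper. Hence
\[
f_\ast\mathcal F=(fi)_\ast i^\ast\mathcal F .
\]

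\textbf{Key step 2: apply the proper theorem.} Both $fi$ and $i$ are proper, so both satisfy base change and the projection formula by the proper base-change theorem just proved. Composing these gives the projection formula
\[
\mathcal G\otimes f_\ast\mathcal F\simeq (fi)_\ast\bigl((fi)^\ast\mathcal G\otimes i^\ast\mathcal F\bigr)=f_\ast\bigl(f^\ast\mathcal G\otimes\mathcal F\bigr).
\]
The second identification uses $i_\ast(i^\ast A\otimes B)=A\otimes i_\ast B$ with $A=f^\ast\mathcal G$. Base change works the same way. Pulling back along $g:Y'\to Y$, the support of $g'^\ast\mathcal F$ lies in the preimage of $Z$, which is proper over $Y'$. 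The preimage of $K$ is a closed neighborhood of that support, and it is proper over $Y'$. So the same factorization applies on the pulled-back side.

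\textbf{Colimits.} Suppose all terms $\mathcal F_j$ of a diagram are supported in a common $Z$ proper over $Y$. Choose one $K$ as above that works for all of them. Each $\mathcal F_j$ equals $i_\ast i^\ast\mathcal F_j$, and $i^\ast$ commutes with colimits. Then $f_\ast\varinjlim\mathcal F_j=(fi)_\ast\varinjlim i^\ast\mathcal F_j$, and $(fi)_\ast$ preserves colimits by the proper theorem.

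\textbf{Main obstacle.} The delicate point is constructing $K$ with $Z$ in its interior and $K$ proper over $Y$. Locally on a Hausdorff $Y$ with $U\Subset Y$, the preimage $Z\cap f^{-1}\overline U$ is compact. A compact neighborhood of it exists only if $X$ is locally compact Hausdorff near it, which holds after shrinking. We then need that $f^{-1}(U)\cap K$ is proper over $U$. Note that $i_\ast$ must be the closed-subset localization, namely tensoring with the idempotent algebra, and not merely a closed immersion of spaces; the identity $\mathcal F=i_\ast i^\ast\mathcal F$ holds because $\operatorname{Supp}\mathcal F\subset K$, as recorded before the definition of compact support.
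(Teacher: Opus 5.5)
Your argument is correct wherever $X$ is Hausdorff near $\operatorname{Supp}\mathcal F$, which is the only case the paper uses. It is organized differently from the paper's proof.

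The paper reruns the proof of the proper base-change theorem:
\begin{itemize}
\item a tube-lemma argument reduces to an affinoid base;
\item the support is then compact, and Lemma~\ref{Hausdorffbasis} places it inside a finite union of affinoids with affinoid intersections;
\item one concludes with the finite-limit computation of forgetful functors between module categories.
\end{itemize}

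You instead use the proper theorem as a black box. You factor $f_\ast\mathcal F=(fi)_\ast i^\ast\mathcal F$ through a closed analytic subspace $K\supset\operatorname{Supp}\mathcal F$ that is proper over $Y$. You then transport base change, the projection formula and colimit-preservation from $fi$ to $f$, using only formal properties of the closed-subset localization $i_\ast i^\ast=-\otimes\mathcal A_K$: the projection formula for $i$, $g'^\ast i_\ast i^\ast\simeq i'_\ast i'^\ast g'^\ast$, and $i_\ast$ preserving colimits.

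Both routes rest on the same input, Lemma~\ref{Hausdorffbasis}(3), to produce a compact complex-analytic neighbourhood of the support. Your route has an advantage: the tube-lemma step, which for non-proper $f$ must be adapted to neighbourhoods of $\operatorname{Supp}\mathcal F\cap f^{-1}\mathcal M(A)$, is absorbed into the proper theorem. The lemma thus becomes visibly a formal consequence of the proper case.

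Your fix in the last paragraph is what is needed, since $Z\cap f^{-1}(U)$ itself is not compact. You take $K'$ around the compact set $Z\cap f^{-1}(\overline U)$ and put $K=K'\cap f^{-1}(U)$. This $K$ is open in $K'$, hence analytic, and it is proper over $U$.

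One assertion is false, however: that, locally over $Y$, you can replace $X$ by a Hausdorff open neighbourhood of $Z$. Take $Y=\ast$, $X$ the complex line with doubled origin, and $Z=\{0_1,0_2\}$. This $Z$ is closed, finite and discrete, hence proper over $Y$. But every open neighbourhood of $Z$ contains the two inseparable origins, and shrinking $Y$ does not help.

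The paper's sketch has the same blind spot, since Lemma~\ref{Hausdorffbasis} assumes $X$ Hausdorff. To cover the lemma as literally stated you need one more step:
\begin{itemize}
\item After shrinking $Y$ to be Hausdorff and taking $U\Subset Y$, cover the compact Hausdorff set $Z\cap f^{-1}(\overline U)$ by finitely many compact pieces $Z_1,\dots,Z_n$, each closed in $X$ and with $Z_k$ inside a Hausdorff open $W_k\subset X$.
\item Then $\mathcal F\simeq\varprojlim_{\emptyset\neq I}\mathcal F\otimes\mathcal A_{Z_I}$ with $Z_I=\bigcap_{k\in I}Z_k$, by the finite-union formula for idempotent algebras.
\item Each term is pushed forward from some $W_k$, where your argument applies.
\item All three assertions are stable under finite limits.
\end{itemize}
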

\begin{proof}
This follows by the same argument used to prove the proper base-change theorem.  Namely, using the tube lemma we reduce to a base affinoid.  Then the support is compact, and we invoke Lemma \ref{Hausdorffbasis} to place our compact subset inside a finite union of affinoids with affinoid intersections to conclude.
\end{proof}

The technical result we have to prove to get a good $f_!$ functor by extension from the compactly supported case is that every $\mathcal{F}\in C_X$ is canonically approximated by compactly supported sheaves.

\begin{lemma}\label{LKE}
Let $\mathcal{F}\in C_X$.  Then the natural map
$$\varinjlim_{\mathcal{G}\rightarrow \mathcal{F}}\mathcal{G}\overset{\sim}{\rightarrow} \mathcal{F}$$
is an isomorphism, where the colimit runs over all compactly supported $\mathcal{G}$ with a map to $ \mathcal{F}$.  Moreover, this colimit is filtered, and the Ind-system giving the diagram on the left is isomorphic both to the Ind-system giving
$$\varinjlim_{i:K\subset X} i_\ast i^!\mathcal{F},$$
where $K$ runs over all compact subsets of $X$, and to the Ind-system giving
$$\varinjlim_{i:V\subset X}\operatorname{Fib}(\mathcal{F}\rightarrow i_\ast i^\ast\mathcal{F}),$$
where $V$ runs over all closed subsets of $X$ whose complement is contained in a compact subset.

All of these Ind-systems are, more precisely, isomorphic objects in the Ind-category of the full subcategory of compactly supported objects of $C_X$.
\end{lemma}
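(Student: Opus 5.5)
The plan is to compare the three Ind-systems first and only then deduce the colimit statement. The poset of compact subsets $K\subset X$ is filtered, and so is the poset of closed $V$ whose complement has compact closure. The map $V\mapsto \overline{X\setminus V}$ and the map $K\mapsto$ (complement of the interior of a compact neighbourhood $K'\Supset K$, as in Lemma~\ref{Hausdorffbasis}) are mutually cofinal. For each $K$ the object $i_\ast i^!\mathcal F$ is compactly supported. The object $\operatorname{Fib}(\mathcal F\to i_\ast i^\ast\mathcal F)=j_\natural j^\ast\mathcal F$, with $j$ the open complement $U=X\setminus V$, is supported in $\overline U$, which is compact; hence it is compactly supported too. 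Both systems therefore live in $\mathrm{Ind}$ of the compactly supported subcategory.

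To show these two Ind-objects agree, I would use interleaving. If $U\subset K$, there is a natural map $j_\natural j^\ast\mathcal F\to i_\ast i^!\mathcal F$, coming from the counit $j_\natural j^\ast\to \mathrm{id}$ composed into $i_\ast i^!$: since $i_\ast i^!$ is right adjoint to the inclusion of objects supported on $K$, and $j_\natural j^\ast\mathcal F$ is supported on $\overline U\subset K$, the counit factors. Conversely, if $K\subset U'$ is open, then $i_\ast i^!\mathcal F$ is killed by restriction to $X\setminus K\supset V'$, so it maps to $j'_\natural j'^\ast\mathcal F$. These maps compose to the transition maps by uniqueness of factorizations through localizations, using the Lecture~VI calculus of open and closed immersions. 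That gives an isomorphism of Ind-objects.

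Next I would show that the system $\{i_\ast i^!\mathcal F\}_K$ is cofinal in the comma category of compactly supported $\mathcal G\to\mathcal F$. Any such $\mathcal G$ is supported in some compact $K$, so $\mathcal G=i_\ast i^\ast\mathcal G$ and $\mathcal G\to\mathcal F$ factors uniquely through $i_\ast i^!\mathcal F$ by adjunction. A full-faithfulness argument then gives uniqueness up to contractible choice. This shows the comma category is filtered, and hence the first Ind-system agrees with the others.

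Finally, I would prove $\varinjlim_V j_\natural j^\ast\mathcal F\to\mathcal F$ is an isomorphism. Its cofibre is $\varinjlim_V i_{V\ast}i_V^\ast\mathcal F$, which is $\mathcal F\otimes\varinjlim_V \mathcal A_V$, where $\mathcal A_V$ is the idempotent algebra of $V$. The $V$ are closed sets with empty intersection, so by Lecture~V the filtered colimit $\varinjlim_V\mathcal A_V$ is the idempotent algebra of $\bigcap V=\emptyset$, namely $0$. Here I use that $\bigcap V=\emptyset$ holds in the locale $\mathcal S(C_X)$, which follows because $X$ is locally compact so these $V$ have empty intersection as closed sublocales.

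I expect the main obstacle to be this last step, which needs the locale intersection to vanish rather than just the point-set intersection. I would argue it via sheaf descent: check the vanishing on an open cover by relatively compact opens $W$, on each of which $V\cap W$ is eventually empty.
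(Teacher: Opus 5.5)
Your proposal is correct and follows the paper's proof: cofinality of $K\mapsto i_\ast i^!\mathcal F$ in the comma category, a second cofinality comparison with the $\operatorname{Fib}(\mathcal F\to i_\ast i^\ast\mathcal F)$ system, and vanishing of $\varinjlim_V i_\ast i^\ast\mathcal F$ because the idempotent algebras of the $V$ have colimit $0$. The differences are minor. The paper gets the factorization from the nullhomotopy of $\mathcal G\to\mathcal F\to j_\ast j^\ast\mathcal F$ rather than from the coreflection adjunction, and gets filteredness directly from closure of compactly supported objects under finite colimits.

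The step you flag as the main obstacle is automatic. Since $\pi_X\colon \mathcal S(C_X)\to X$ is a map of locales, preimage preserves arbitrary intersections of closed subsets, so $\bigcap_V \pi_X^{-1}(V)=\pi_X^{-1}(\emptyset)=\emptyset$. No descent argument is needed, though yours also works.
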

\begin{proof}
The indexing $\infty$-category for the first colimit is filtered because the collection of compactly supported objects is closed under finite colimits.  Note that the indexing poset for the second colimit maps to the indexing $\infty$-category for the first poset by
$$(i:K\subset X) \mapsto (i_\ast i^!\mathcal{F}\rightarrow \mathcal{F}),$$
where this is well-defined and functorial because $i_\ast i^!$ is the co-localization functor associated to $K\subset X$.  To prove the Ind-systems are isomorphic, it suffices to show cofinality for this functor between filtered indexing categories. But indeed, if $\mathcal{G}\rightarrow\mathcal{F}$ with $\mathcal{G}$ compactly supported, then for $K=\operatorname{Supp}\mathcal{G}$, setting $j:X\setminus K \subset X$, we get $j_\ast j^\ast\mathcal{G}=0$, hence the composition
$$\mathcal{G}\rightarrow\mathcal{F}\rightarrow j_\ast j^\ast \mathcal{F}$$
gets a canonical nullhomotopy, hence $\mathcal{G}\rightarrow\mathcal{F}$ factors through
$$i_\ast i^!\mathcal{F} = \operatorname{Fib}(\mathcal{F}\rightarrow j_\ast j^\ast \mathcal{F}),$$
giving the required cofinality.

To show that the last two Ind-systems are isomorphic, note that $i_\ast i^!\mathcal{F} = \operatorname{Fib}(\mathcal{F}\rightarrow j_\ast j^\ast\mathcal{F})$, then use another cofinality argument.

Finally, to show that all of these three colimits give $\mathcal{F}$, looking at the third colimit, it suffices to show that
$$\varinjlim_{i:V\subset X} i_\ast i^\ast \mathcal{F}=0,$$
But the intersection of all such $V$ is $\emptyset$, so for the corresponding idempotent algebras we find that their  colimit is $0$, whence the claim.
\end{proof}

Before we state the theorem constructing the proper pushforward functors, note that if $f:X\rightarrow Y$ is any map of Hausdorff complex analytic spaces, then $f_\ast:C_X\rightarrow C_Y$ sends compactly supported objects to compactly supported objects.  Indeed, this follows from compatibility of $f_\ast$ with pullbacks along open inclusions, together with the fact that the image of a compact subset is compact.  

\begin{theorem}\label{thm:lowershriek}
Let $f:X\rightarrow Y$ be a map of Hausdorff complex analytic spaces.  Then there is a unique colimit-preserving functor
$$f_!:C_X\rightarrow C_Y$$
equipped with an isomorphism of $f_!$ with $f_\ast$ when restricted to the full subcategory of compactly supported objects of $C_X$.  Moreover:
\begin{enumerate}
\item There is a unique natural transformation $f_!\rightarrow f_\ast$ restricting to the given isomorphism on compactly supported objects of $C_X$; and $f_!\mathcal{F}\overset{\sim}{\rightarrow} f_\ast\mathcal{F}$ is an isomorphism more generally when $\operatorname{Supp}\mathcal{F}$ is proper over $Y$.
\item There is a unique functorial isomorphism $\mathcal{G}\otimes f_! \mathcal{F}\simeq f_!((f^\ast\mathcal{G})\otimes\mathcal{F})$ for $\mathcal{G}\in C_Y$, $\mathcal{F}\in C_X$ which restricts to the projection formula isomorphism $\mathcal{G}\otimes f_\ast \mathcal{F}\simeq f_\ast((f^\ast\mathcal{G})\otimes\mathcal{F})$ for compactly supported $\mathcal{F}$.
\item Given another map $g:Y\rightarrow Z$ of Hausdorff complex analytic spaces, there is a unique isomorphism $(g\circ f)_! \simeq g_!\circ f_!$ which, on compactly supported objects of $C_X$, restricts to the natural isomorphism $(g\circ f)_\ast \simeq g_\ast \circ f_\ast$.
\item Given another map $\varphi: Y'\rightarrow Y$ of Hausdorff complex analytic spaces, there is a unique ``base-change'' isomorphism $\varphi^\ast \circ f_!\simeq f'_!\circ \varphi'^\ast$ which, on compactly supported objects, restricts to the natural isomorphism $\varphi^\ast \circ f_\ast\simeq f'_\ast \circ \varphi'^\ast$ from the proper base-change theorem.
\item If $f$ is an open inclusion, then there is a unique isomoprhism $f_!\simeq f_\natural$ compatibile with the natural maps of both of these functors to $f_\ast$.  (Recall $f_\natural$ denotes the left adjoint of $f^\ast$.)
\end{enumerate}
\end{theorem}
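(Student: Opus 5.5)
The plan is to define $f_!$ by left Kan extension from the full subcategory $C_X^c\subset C_X$ of compactly supported objects, using Lemma~\ref{LKE}. Concretely, for $\mathcal F\in C_X$ I would set
$$f_!\mathcal F:=\varinjlim_{i:K\subset X} f_\ast i_\ast i^!\mathcal F,$$
with $K$ running over compact subsets of $X$. By Lemma~\ref{LKE}, this is the same as applying $f_\ast$ termwise to the canonical Ind-object $\varinjlim_{\mathcal G\to\mathcal F}\mathcal G$ in $\mathrm{Ind}(C_X^c)$ and taking the colimit in $C_Y$. Equivalently, $f_!$ is the left Kan extension of $f_\ast|_{C_X^c}$ along $C_X^c\subset C_X$.

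The first step is to check that $f_!$ is well-defined and agrees with $f_\ast$ on $C_X^c$. If $\mathcal F$ is compactly supported, its Ind-system has a final object $\mathcal F$ itself, so $f_!\mathcal F=f_\ast\mathcal F$.

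The second step is colimit preservation, which I expect to be the main obstacle. The functor $\mathcal F\mapsto (\text{Ind-system } \varinjlim_K i_\ast i^!\mathcal F)$ need not obviously commute with colimits, because $i^!$ is a right adjoint. I would instead use the third description in Lemma~\ref{LKE}: $\varinjlim_V \mathrm{Fib}(\mathcal F\to i_{V\ast}i_V^\ast\mathcal F)$, where $V$ runs over closed sets with relatively compact complement. Each functor $\mathcal F\mapsto \mathrm{Fib}(\mathcal F\to \mathcal F\otimes \mathcal O(V))$ is colimit-preserving and lands in objects supported on a fixed compact set $\overline{X\setminus V}$. The preceding lemma says $f_\ast$ preserves colimits of diagrams supported in a common subset proper over $Y$; compact suffices since $Y$ is Hausdorff. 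Hence each term $f_\ast\mathrm{Fib}(\cdots)$ is colimit-preserving in $\mathcal F$, and a filtered colimit of such functors is colimit-preserving.

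For uniqueness, any colimit-preserving $f'_!$ agreeing with $f_\ast$ on $C_X^c$ must satisfy $f'_!\mathcal F=\varinjlim_{\mathcal G\to\mathcal F}f_\ast\mathcal G$ by Lemma~\ref{LKE}, so it equals the left Kan extension. Making this uniqueness precise at the $\infty$-categorical level means saying that $C_X$ is a localization of $\mathrm{Ind}(C_X^c)$, via colimit-preserving functors, in a way that is compatible with the Ind-system.

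The remaining properties (1)--(5) follow by the same pattern. In each case, both sides are colimit-preserving in $\mathcal F$ (or in $\mathcal F$ and $\mathcal G$ separately) and agree canonically on $C_X^c$. The uniqueness principle then extends the isomorphism.
\begin{enumerate}
\item The map $f_!\to f_\ast$ is the colimit of $f_\ast\mathcal G\to f_\ast\mathcal F$. When $\operatorname{Supp}\mathcal F$ is proper over $Y$, I would check it is an isomorphism locally on $Y$ via the proper base-change theorem; over an affinoid the support is compact.
\item This follows from the preceding lemma, since $\mathcal G\otimes-$ and $f^\ast\mathcal G\otimes-$ preserve colimits and preserve compact support.
\item Composition works because $f_\ast$ preserves compact support, as noted before the theorem.
\item Base change works because $\varphi'^\ast$ preserves compactly supported objects (their supports pull back to closed sets proper over a compact, hence compact after restricting over compacts in $Y'$). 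I would check this locally, using proper base change from the preceding lemma.
\item For an open inclusion $f$, $f_\natural$ is colimit-preserving and agrees with $f_\ast$ on objects compactly supported in $X$. This is because such objects extend by zero: $j_\natural$ and $j_\ast$ agree on objects whose support is closed in $Y$, as one sees from the triangles in Lecture V.
\end{enumerate}
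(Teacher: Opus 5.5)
This is essentially the paper's argument: left Kan extension of $f_\ast$ from compactly supported objects via Lemma~\ref{LKE}, colimit preservation through $\varinjlim_V f_\ast\operatorname{Fib}(\mathcal F\to i_\ast i^\ast\mathcal F)$ with terms supported on a fixed compact set, and (1)--(5) by uniqueness of colimit-preserving extensions (with (5) from $i_\ast i^\ast f_\ast\mathcal F=0$ when $\mathcal F$ is supported on a compact subset of the open). One slip: in (4), $\varphi'^\ast$ does not preserve compact support but only gives support proper over $Y'$, so argue (1) directly as the paper does (if $\operatorname{Supp}\mathcal F$ is proper over $Y$, every term of the $V$-system is supported in $\operatorname{Supp}\mathcal F$, so $f_\ast$ commutes with that colimit by the preceding lemma), then get (4) from $f'_!\varphi'^\ast\mathcal F\simeq f'_\ast\varphi'^\ast\mathcal F$ plus base change for properly supported objects, instead of localizing on $Y$ in (1), which would itself presuppose a form of (4).
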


Note that there should also be coherence data relating these different isomorphisms of functors.  We will not need more than a small finite number of these coherences, and what we do need can be easily produced by hand using the uniqueness claims.

\begin{proof}
Lemma \ref{LKE} implies, by the objectwise formula for left Kan extensions, that any colimit-preserving functor out of $C_X$ identifies with the left Kan extension of its restriction to the full subcategory of compactly supported objects.  More precisely, the $\infty$-category of colimit preserving functors out of $C_X$ is equivalent to that of functors out of the full subcategory of compactly supported objects whose left Kan extension to $C_X$ commutes with colimits.  Thus to prove the claim giving and characterizing $f_!$, we need to see that the left Kan extension of $f_\ast$ from the compactly supported objects does preserve colimits.

For this we use the more economical presentation of the colimit over all compactly supported objects mapping to $\mathcal{F}$ to calculate the left Kan extension.  For the functor $f_!$ we then get the definition
$$f_!\mathcal{F} :=  \varinjlim_{i:V\subset X} f_\ast \operatorname{Fib}(\mathcal{F}\rightarrow i_\ast i^\ast\mathcal{F}),$$
where $V$ runs over all closed subsets of $X$ whose complement is contained in a compact subset.  To see that this preserves colimits in $\mathcal{F}$, it suffices to fix such a $V$ and show that $f_\ast \operatorname{Fib}(\mathcal{F}\rightarrow i_\ast i^\ast\mathcal{F})$ preserves colimits in $\mathcal{F}$.  But $\operatorname{Fib}(\mathcal{F}\rightarrow i_\ast i^\ast\mathcal{F})$ preserves colimits in $\mathcal{F}$ and is supported on a compact subset independent of $\mathcal{F}$, so the conclusion comes from the proper base change theorem proved earlier.

The abstract nonsense with left Kan extensions then gives the natural transformation in (1), and repeating the argument we just gave proves it's an isomorphism on properly supported objects, proving (1).  Part (2) is clear from left Kan extension, and so is (3) from left Kan extension and the proper base-change theorem.  For (4), by left Kan extension it suffices to show that the natural map $f_\natural\rightarrow f_\ast$ is an iso on compactly supported objects of $C_X$.  But indeed
$$f_\natural \mathcal{F} = \operatorname{Fib}(f_\ast \mathcal{F}\rightarrow i_\ast i^\ast f_\ast \mathcal{F})$$
where $i$ is the inclusion of the closed complement $Z$ of $X\subset Y$, and if $\mathcal{F}$ is supported in the compact $K\subset U$ then we get $i_\ast i^\ast f_\ast \mathcal{F}=0$ from $K\cap Z=\emptyset$.
\end{proof}

Now that we have this well-behaved colimit preserving functor $f_!:C_X\rightarrow C_Y$, we can ask about its right adjoint.  It turns out the right adjoint is also fairly well-behaved when the morphism $f:X\rightarrow Y$ is boundaryless, i.e.\@ locally on $X$ and $Y$ identifies with a Zariski closed immersion followed by projection off an open polydisk.  To phrase this precisely, we need the notion of an object of $C_X$ being ``homologically bounded''.  As we don't have a global t-structure on $C_X$, this needs to be phrased with a bit of care.  But the two possible definitions thankfully coincide:

\begin{lemma}\label{locallyboundeddimension}
Let $X$ be a complex analytic space, $\mathcal{F}\in C_X$, $x\in X$, and $d\in\mathbb{Z}$.  The following conditions are equivalent:
\begin{enumerate}
\item There is a quasi-affinoid open neighborhood $U$ of $x$ such that for all affinoid closed subspaces $\mathcal{M}(A)\subset U$, we have $\Gamma(\mathcal{F}|_{\mathcal{M}(A)})\in \mathcal{D}(\operatorname{Liq}_p)_{\leq d}$.
\item There is a quasi-affinoid open neighborhood $U$ of $x$ such that the $\mathcal{D}(\operatorname{Liq}_p)$-valued sheaf $sh(\mathcal{F})|_U$ on $U$ is $d$-truncated, i.e.~for all $y\in U$ the stalk at $y$ lies in $\mathcal{D}(\operatorname{Liq}_p)_{\leq d}$.
\end{enumerate}

Furthermore, if $X$ is affinoid and $\Gamma(\mathcal{F})\in \mathcal{D}(\operatorname{Liq}_p)_{\leq d}$, then $\mathcal{F}$ satisfies the above conditions at all $x\in X$ with $d$ replaced by $d-N-1$, where $N$ is the cohomological dimension of the topological space $X$.
\end{lemma}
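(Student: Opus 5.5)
The plan is to compare the two conditions through Lemma~\ref{pushforwardlemma}, which identifies sections on an affinoid closed subspace with a filtered colimit of sections over open neighbourhoods, and through the finite cohomological dimension of compact subsets of $X$. We use two facts about $\mathcal{D}(\operatorname{Liq}_p)$. First, filtered colimits are exact, so $\mathcal{D}(\operatorname{Liq}_p)_{\leq d}$ is closed under them. Second, $\mathcal{D}(\operatorname{Liq}_p)_{\leq d}$ is closed under limits, and in particular under finite limits after a shift bounded by the cohomological dimension.

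\emph{(1) $\Rightarrow$ (2).} Take the same $U$ and a point $y\in U$. By Lemma~\ref{Hausdorffbasis}(2), applied inside the Hausdorff quasi-affinoid $U$, the affinoid closed neighbourhoods $\mathcal{M}(A)$ of $y$ form a neighbourhood basis. The stalk of $sh(\mathcal{F})$ at $y$ is $\varinjlim_{V\ni y}\Gamma(\mathcal{F}|_V)$. By cofinality together with Lemma~\ref{pushforwardlemma}, this equals $\varinjlim_{\mathcal{M}(A)\ni y}\Gamma(\mathcal{F}|_{\mathcal{M}(A)})$. Each term lies in $\mathcal{D}_{\leq d}$ by (1), and filtered colimits preserve $\mathcal{D}_{\leq d}$, so the stalk lies in $\mathcal{D}_{\leq d}$.

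\emph{(2) $\Rightarrow$ (1).} Shrink $U$ so that it has an affinoid closed neighbourhood inside the original $U$. For $\mathcal{M}(A)\subset U$, Lemma~\ref{pushforwardlemma} gives
\[
\Gamma(\mathcal{F}|_{\mathcal{M}(A)}) = \varinjlim_{W\supset \mathcal{M}(A)} \Gamma(W, sh\mathcal{F}),
\]
which is $R\Gamma(\mathcal{M}(A), sh\mathcal{F}|_{\mathcal{M}(A)})$, computed as sheaf cohomology on the compact Hausdorff space $\mathcal{M}(A)$. A sheaf whose stalks are $d$-truncated has $R\Gamma$ in $\mathcal{D}_{\leq d}$: in homological grading, truncation at $d$ bounds the sheaf from above, and taking derived global sections only moves things to lower homological degree. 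Concretely, write $sh\mathcal{F}$ as the limit of its Postnikov-type truncations $\tau_{\geq n}$. The pieces are bounded sheaves with cohomology sheaves in degrees $\leq d$, and cohomology of each such sheaf lands in $\mathcal{D}_{\leq d}$ because sheaf cohomology is concentrated in nonnegative cohomological degrees. Passing to the limit preserves $\leq d$. This gives (1). The subtlety here is hypercompleteness: we need the stalk condition to imply truncatedness of the sheaf itself. That holds because $X$ is locally of finite covering dimension, being locally a closed subset of $\mathbb{C}^n$.

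\emph{Final claim.} Let $X$ be affinoid. For a closed affinoid $\mathcal{M}(B)\subset X$ with open complement $j$, there is a fibre sequence
\[
j_! j^\ast \to \mathrm{id} \to i_\ast i^\ast.
\]
Applying $\Gamma$, the middle term lies in $\mathcal{D}_{\leq d}$ by hypothesis. The term $\Gamma(j_!j^\ast\mathcal{F})$ is compactly supported cohomology of $U$, and we would like to control it as well. It is cleaner to run the argument directly on stalks: a stalk at $y$ is a colimit of $\Gamma$ over affinoid neighbourhoods, and each such $\Gamma$ is the cofibre of $\Gamma(j_!\ldots)\to\Gamma(\mathcal{F})$. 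So the key step is to bound $\Gamma_c(U,-)$, i.e.\ $\Gamma(j_!j^\ast\mathcal{F})$, in terms of $\Gamma(\mathcal{F})$. Writing $j_!j^\ast\mathcal{F}$, as in the exercise hint of Lecture~VI, as a colimit of $i_{n\ast}i_n^!\mathcal{F}$ appears to be circular.

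Instead I would use a Čech argument. Use that $\Gamma(\mathcal{F}|_{\mathcal{M}(B)})=\Gamma(\mathcal{F})\otimes_A B$, and that $B$ has a resolution over $A$ whose length is controlled by $N$. This holds because the idempotent algebra of a compact subset of $X$ is $R\Gamma$ of germs of $\mathcal{O}$ on the complement, with amplitude at most $N$, exactly as in the proof of Proposition~\ref{prop:pseudocompactglues}. The relevant fibre $[1\to\mathcal{O}(W)]$ has tor-amplitude bounded by $N+1$, so tensoring shifts $\mathcal{D}_{\leq d}$ by at most $N+1$. I expect this tor-amplitude bound to be the main obstacle. The direction of the shift also needs checking carefully against the stated $d-N-1$, which suggests the grading convention is cohomological in this statement.
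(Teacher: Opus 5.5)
Your argument for (1)$\Leftrightarrow$(2) is the paper's. Stalks are filtered colimits of the $\Gamma(\mathcal F|_{\mathcal M(A)})$ over affinoid closed neighbourhoods, and conversely these are sections of a stalkwise $d$-truncated sheaf. The only extra input is hypercompleteness, which you correctly get from finite covering dimension.

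The Postnikov paragraph is unnecessary, and as phrased it mixes up limits and colimits. A shorter route: $\tau_{>d}$ preserves limits and commutes with filtered colimits. Hence $W\mapsto\tau_{>d}\Gamma(W,sh\mathcal F)$ is a sheaf with vanishing stalks, hence zero. So every term in the colimit of Lemma~\ref{pushforwardlemma} already lies in $\mathcal D(\operatorname{Liq}_p)_{\leq d}$.

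\textbf{The gap is in the final claim}, where you never actually obtain a bound, and the mechanism you propose cannot give one. Tensoring $M=\Gamma(\mathcal F)$ with $\operatorname{Fib}(A\to\mathcal O(W))$ computes $j_!j^\ast M$ for the open complement of $W$, not the restriction to $\mathcal M(B)$. Moreover, an amplitude bound on $\mathcal O(W)$ gives no Tor-amplitude bound. So ``$B$ has a resolution of length $\leq N+1$ over $A$'' is exactly the unproved point.

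The missing idea, which is what the proof of Proposition~\ref{prop:pseudocompactglues} really uses, is to express localization through the right adjoint $\underline{\operatorname{RHom}}$:
\[
M\otimes_A B=\varinjlim_{U\supset\mathcal M(B)} j_{U\ast}j_U^\ast M=\varinjlim_{U\supset\mathcal M(B)}\underline{\operatorname{RHom}}_A\bigl(\operatorname{Fib}(A\to\mathcal O(W_U)),M\bigr),\qquad W_U=X\setminus U .
\]
By the amplitude bound from that proof, $\mathcal O(W_U)=R\Gamma(W_U,\mathcal O)$ sits in homological degrees $\geq -N$, so the fibre is $(-N-1)$-connective. Then $\underline{\operatorname{RHom}}$ from it into $\mathcal D(\operatorname{Liq}_p)_{\leq d}$ lands in $\mathcal D(\operatorname{Liq}_p)_{\leq d+N+1}$, and filtered colimits preserve this. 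Only the amplitude of $\mathcal O(W)$ enters, never a Tor-amplitude.

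The route you discarded as circular also works, for the same reason. Take $j$ to be the inclusion of $X\setminus\mathcal M(B)$ and $Z_n$ compact subsets exhausting it. Lemma~\ref{LKE} gives $j_!j^\ast M=\varinjlim_n\underline{\operatorname{RHom}}_A(\mathcal O(Z_n),M)\in\mathcal D(\operatorname{Liq}_p)_{\leq d+N}$. Then $M\otimes_A B$ is the cofibre of $j_!j^\ast M\to M$, which lies in $\mathcal D(\operatorname{Liq}_p)_{\leq d+N+1}$.

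Your worry about the sign is justified. The paper's grading is homological: compare the proof of Theorem~\ref{boundarylessuppershriek}, where $\underline{\operatorname{RHom}}_A(B,-)$ preserves $\mathcal D_{\leq 0}$ because $B\in\mathcal D_{\geq 0}$. The argument yields $d+N+1$, so ``$d-N-1$'' in the statement must be a slip. Indeed, for $\mathcal F=\mathcal O_X$ on a closed polydisk with $d=0$, it would put the nonzero ring $B=\Gamma(\mathcal O|_{\mathcal M(B)})$ in $\mathcal D(\operatorname{Liq}_p)_{\leq -N-1}$.
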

\begin{proof}
Suppose (1) is satisfied.  Then as the affinoid closed subspaces $\mathcal{M}(A)\subset U$ form a neighborhood basis at $x$, the stalk of $sh(\mathcal{F})|_U$ at $x$ is a filtered colimit of the $\Gamma(\mathcal{F}|_{\mathcal{M}(A)})$.  Thus (2) is satisfied.  Now suppose (2) is satisfied.  Then for any affinoid closed subspace $\mathcal{M}(A)\subset U$, $\Gamma(\mathcal{F}|_{\mathcal M(A)})$ is the global sections of a $d$-truncated sheaf and thus itself $d$-truncated. Finally, the last claim about affinoid $A$ follows from the proof of Proposition \ref{prop:pseudocompactglues}.
\end{proof}

If $\mathcal{F}$ satisfies these two equivalent conditions, we say that $\mathcal{F}$ is homologically $d$-bounded at $x$.  If for all $x\in X$ there is a $d\in\mathbb{Z}$ such that $\mathcal{F}$ is homologically $d$-bounded at $x$, we say that $\mathcal{F}$ is homologically bounded.  If a collection $(\mathcal{F}_i)_{i\in I}$ of objects of $C_X$ is such that for all $x\in X$ there is a $d\in\mathbb{Z}$ such that $\mathcal{F}_i$ is homologically $d$-bounded at $x$ for all $i\in I$, we say that $(\mathcal{F}_i)_{i\in I}$ is \emph{uniformly homologically bounded}.

\begin{theorem}\label{boundarylessuppershriek}
Suppose $f:X\rightarrow Y$ is a boundaryless morphism between Hausdorff complex analytic spaces.  If $(\mathcal{F}_i)_{i\in I}$ is a collection of objects of $C_Y$ which is uniformly homologically bounded, then the collection $(f^!\mathcal{F})_{i\in I}$ of objects of $C_X$ is also uniformly homologically bounded, and the natural map
$$\bigoplus_i f^!(\mathcal{F}_i)\overset{\sim}{\rightarrow} f^!(\bigoplus_i \mathcal{F}_i)$$
is an isomorphism.
\end{theorem}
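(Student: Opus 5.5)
The approach is to reduce, using the local nature of both the hypothesis and the conclusion, to the two basic types of boundaryless morphisms: Zariski closed immersions and projections $Y\times\mathbb D^n\to Y$. Both claims are local on $X$. Since $f_!$ commutes with base change along open inclusions by Theorem~\ref{thm:lowershriek}(4) and (5), $f^!$ commutes with restriction to opens $V\subset X$, $U\subset Y$ with $f(V)\subset U$. Indeed, $j_V^\ast f^! = (f|_V)^! j_U^\ast$, by passing to right adjoints in $f_! j_{V\natural}=j_{U\natural}(f|_V)_!$ and using that $j^\ast$ is right adjoint to $j_\natural=j_!$ with $j^\ast = j^!$. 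So we may assume $f$ is globally a composite $X\xrightarrow{i}Y\times\mathbb D^n\xrightarrow{\pi}Y$. Then $f^!=i^!\pi^!$, and it suffices to treat $i$ and $\pi$ separately. We may also shrink $Y$ to a quasi-affinoid and later to affinoid pieces, since Lemma~\ref{locallyboundeddimension} lets boundedness be tested on affinoid closed subspaces.

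For the Zariski closed immersion $i$, which is proper, $i_!=i_\ast$, and locally $i$ corresponds to $A\to B$ with $B\in\mathcal D_{pc}(A)$. I would compute $i^!\mathcal F=\underline{\mathrm{RHom}}_A(B,\mathcal F)$ on affinoids. Writing $B$ as a bounded-to-the-right complex of finite free $A$-modules, the object $\underline{\mathrm{RHom}}_A(B,-)$ is a limit of finite direct sums of shifts. For homologically bounded-above inputs, only finitely many terms of the resolution contribute in each degree. This gives preservation of uniform bounds (with a shift depending only on $A$) and commutation with direct sums, since finite products commute with sums and the truncation is uniform. Here uniform homological boundedness is exactly what makes the convergence argument work, as in the pseudocoherent commutation with filtered colimits of coconnective objects used in Proposition~\ref{prop:internallynuclear}.

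For the projection $\pi:Y\times\mathbb D^n\to Y$, I would identify $\pi^!$ explicitly. Factoring $\mathbb D^n$ as an open in $\overline{\mathbb D}^n$, we have $\pi=\bar\pi\circ j$ with $\bar\pi$ proper, so $\pi^!=j^\ast\bar\pi^!$, and $\bar\pi^!$ is right adjoint to $\bar\pi_\ast$, the forgetful functor from $\mathcal O(\overline{\mathbb D}^n)\otimes A$-modules. Thus $\bar\pi^!\mathcal F=\underline{\mathrm{RHom}}_A(\mathcal O(\overline{\mathbb D}^n)\otimes A,\mathcal F)$. The key step is to show that after restricting to the open polydisc, this is $\pi^\ast\mathcal F\otimes\omega$ for an invertible $\omega$, presumably $\mathcal O[n]$ twisted by differentials, which gives both claims at once. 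I would try this one variable at a time, $n=1$. I would use that $\mathcal O(\overline{\mathbb D})$ is a sequential colimit of $\mathcal M_{<p}(\mathbb N)$ along trace-class injective maps, so its dual is $R\varprojlim$ of $c_0$-type sequence spaces, identifying via the Lecture~V computation with functions on the complement $\{|T|\ge 1\}$, essentially $\mathcal O(\{|T|>1\})$ up to a shift. Then, on the open disc, the cone $[\mathcal O\to A(\{|T|\ge1\})]$ makes this look like $\mathcal O(\mathbb D)[1]\otimes\mathcal F$, using nuclearity (Proposition~\ref{prop:internallynuclear}) to move $\mathcal F$ out of the internal Hom.

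The main obstacle is this last identification, specifically commuting $\underline{\mathrm{RHom}}_A(\mathcal O(\overline{\mathbb D})\otimes A,-)$ with the tensor by $\mathcal F$ and with direct sums. The Hom out of a colimit becomes an $R\varprojlim$, and limits do not commute with sums in general. My plan is to show the $\varprojlim^1$ terms vanish and that the pro-system is pro-isomorphic to one of Banach type with trace-class maps. Then, for uniformly bounded families, each term commutes with sums, and Mittag-Leffler as in Lecture~IV preserves this through the limit. Failing that, I would fall back to the Lemma~\ref{locallyboundeddimension} stalkwise criterion, combined with the finite cohomological dimension of $X$.
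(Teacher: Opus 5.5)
Your reduction (locality via $j^!=j^\ast$, factoring into a Zariski closed immersion and a disc projection, closure under composition) is fine and matches the paper. So is your treatment of the Zariski closed immersion via $i^!=\underline{\mathrm{RHom}}_A(B,-)$, with $B$ connective and resolved by finite free $A$-modules. The gap is in the projection case, which is the real content of the theorem. You correctly aim for $\pi^!\mathcal F\simeq\pi^\ast\mathcal F[1]$ (one disc at a time), but the route you propose does not get there.

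First, Proposition~\ref{prop:internallynuclear} requires the \emph{target} to be nuclear, whereas $\mathcal F$ is an arbitrary object of $C_Y$. For general $\mathcal F$, the trace-class presentation $\mathcal O(\overline{\mathbb D})=\varinjlim_n V_n$ only gives a pro-isomorphism $\{\underline{\mathrm{RHom}}(V_n,\mathcal F)\}_n\simeq\{V_n^\vee\otimes\mathcal F\}_n$. Hence $\underline{\mathrm{RHom}}(\mathcal O(\overline{\mathbb D}),\mathcal F)\simeq R\varprojlim_n(V_n^\vee\otimes\mathcal F)$, and moving this limit past $-\otimes\mathcal F$ is exactly what Lecture IV can do only for special second factors.

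Second, ``each term commutes with sums and Mittag-Leffler carries this through the limit'' is not a valid implication. Take $M_{i,n}=\mathbb C$ for $n\geq i$ and $0$ otherwise, with identity or zero transition maps. Every tower is Mittag-Leffler and concentrated in degree $0$, yet the natural map $\bigoplus_i\varprojlim_n M_{i,n}=\bigoplus_i\mathbb C\to\varprojlim_n\bigoplus_i M_{i,n}=\prod_i\mathbb C$ is not an isomorphism. The direct-sum statement has to come from an explicit formula for $\pi^!$. Your fallback via Lemma~\ref{locallyboundeddimension} only addresses boundedness.

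The missing idea is to bypass duals of $\mathcal O(\overline{\mathbb D})$ entirely. Instead, test $\pi^!$ against the essential image of $j_!$ inside modules over the polynomial ring, where the computation is purely algebraic. For $Y=\mathcal M(A)$ (reduce to this by descent once base change is known), idempotency embeds $C_{\overline{\mathbb D}\times Y}$ fully faithfully into $\mathrm{Mod}_{A[T]}(\mathcal D(\mathrm{Liq}_p))$. The essential image of $j_!$ consists of those $N$ with $N\otimes_{\mathbb C[T]}\mathcal O(\{|T|\geq 1\})=0$. For such $N$ (identifying $C_{\mathbb D\times Y}$ with this essential image) and $M\in\mathrm{Mod}_A(\mathcal D(\mathrm{Liq}_p))$,
\[
\mathrm{RHom}(N,\pi^!M)=\mathrm{RHom}_A(N,M)=\mathrm{RHom}_{A[T]}(N,\underline{\mathrm{RHom}}_A(A[T],M)).
\]
Since $A[T]=\bigoplus_{n\geq 0}A\cdot T^n$, one gets $\underline{\mathrm{RHom}}_A(A[T],M)=\prod_{n\geq 0}M\simeq M((T^{-1}))/M[T]$. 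As $M((T^{-1}))$ is a module over $\mathcal O(\{|T|\geq 1\})$, it receives no nonzero maps from $N$. Therefore $\mathrm{RHom}(N,\pi^!M)=\mathrm{RHom}_{A[T]}(N,M[T])[1]$, i.e.\ $\pi^!\simeq\pi^\ast[1]$ compatibly with base change. Both conclusions are then immediate, with no nuclearity, Mittag-Leffler, or trace-class input.
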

\begin{proof}
Since $j^!=j^\ast$ for open inclusions $j$, we can work locally on $X$ and $Y$, hence reduce to the case where $f$ is the composition of a Zariski closed immersion and projection off an open polydisk.  The collection of maps $f$ satisfying the conclusion is closed under composition, so we separately reduce to the case of a Zariski closed immersion and projection off the one-dimensional unit disk $\mathbb{D}$.

For Zariski closed immersions, by Lemma \ref{locallyboundeddimension} it suffices to show that if $A\rightarrow B$ gives a Zariski closed immersion of affinoids $f:\mathcal{M} B\rightarrow \mathcal{M} A$, then
$$f^!:\operatorname{Mod}_A (\mathcal{D}(\operatorname{Liq}_p))\rightarrow \operatorname{Mod}_B (\mathcal{D}(\operatorname{Liq}_p))$$
sends $\operatorname{Mod}_A (\mathcal{D}(\operatorname{Liq}_p))_{\leq 0}$ to $\operatorname{Mod}_B (\mathcal{D}(\operatorname{Liq}_p))_{\leq 0}$.  But indeed we have
$$f^!M = \underline{\operatorname{RHom}}_A(B,M),$$
so this follows from the fact that $B\in \operatorname{Mod}_A (\mathcal{D}(\operatorname{Liq}_p))_{\geq 0}$.

For projections $f:\mathbb{D}\times Y\rightarrow Y$, we claim that $f^!$ commutes with base-change, satisfies
$$f^\ast\mathcal{G}\otimes f^!(\mathcal{O}_Y)\overset{\sim}{\rightarrow} f^!(\mathcal{G}),$$
and for $Y=\ast$ we have $f^!(\mathcal{O}_{\ast}) = \mathcal{O}_{\mathbb{D}}[1]$.  Given these claims we easily deduce that $f^!=f^\ast [1]$ for general $Y$, which implies the statement of the theorem.  

To prove the claims, by descent, we can reduce to the case $Y=\mathcal{M}(A)$ affinoid.  Then we factor $f$ as the open inclusion $j:\mathbb{D}\times Y\subset \overline{\mathbb{D}}\times Y$ followed by the (proper) projection $\pi$ to $Y$.  The functor $j_!$ is fully faithful, and furthermore
$$C_{\overline{\mathbb{D}}\times Y}=  \operatorname{Mod}_{\mathcal{O}(\overline{\mathbb{D}})\otimes A}(\mathcal{D}(\operatorname{Liq}_p))$$
embeds fully faithfully into $\operatorname{Mod}_{A[T]}(\mathcal{D}(\operatorname{Liq}_p))$ by the forgetful functor, due to idempotency.  The essential image of $j_!$ in this latter category consists exactly of those $N\in \operatorname{Mod}_{A[T]}(\mathcal{D}(\operatorname{Liq}_p))$ which become $0$ on base-change along $\mathbb{C}[T]\rightarrow \mathcal{O}(|T|\geq 1)$.  Moreover, since $\pi$ is a map of affinoids we have that $\pi_!=\pi_\ast$ is just a forgetful functor on module categories.

Now, let $N\in \operatorname{Mod}_{A[T]}(\mathcal{D}(\operatorname{Liq}_p))$ which dies on base-change along $\mathbb{C}[T]\rightarrow \mathcal{O}(|T|\geq 1)$, and let $M\in \operatorname{Mod}_{A}(\mathcal{D}(\operatorname{Liq}_p))$.  Then from the above we get
$$\operatorname{RHom}_{C_{\mathbb{D}\times Y}}(N,f^!M) = \operatorname{RHom}_{A}(N,M).$$
Now we calculate further, using only that $N$ dies on base-change along $\mathbb{C}((T^{-1}))$ so that any $A[T]$-module map from $N$ to an $A\otimes\mathbb{C}((T^{-1}))$-module vanishes:
\begin{eqnarray*}
\operatorname{RHom}_{C_{\mathbb{D}\times Y}}(N,f^!M) &=& \operatorname{RHom}_{A}(N,M)\\
&=& \operatorname{RHom}_{A[T]}(N,\underline{\operatorname{RHom}}_{A}(A[T],M))\\
&=& \operatorname{RHom}_{A[T]}(N,M((T^{-1}))/M[T])\\
&=& \operatorname{RHom}_{A[T]}(N,M[T])[1].
\end{eqnarray*}
This gives $f^!\simeq f^\ast[1]$ via an isomorphism compatible with base-change, proving all the claims.
\end{proof}

Now we can prove the main theorem of this lecture, the finiteness of proper pushforwards.

\begin{theorem}[Grauert's Coherence Theorem]\label{thm:grauert}
Let $f:X\rightarrow Y$ be a boundaryless proper map between complex analytic spaces.  Then the functor $f_\ast:C_X\rightarrow C_Y$ sends $\mathcal{D}_{pc}(X)$ to $\mathcal{D}_{pc}(Y)$.
\end{theorem}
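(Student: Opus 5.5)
We reduce to the ``finite = compact + nuclear'' characterization of Lecture IX, working locally on $Y$. Since $\mathcal D_{pc}$ is defined by descent and $f_\ast$ commutes with base change along open inclusions and, by the proper base change theorem, with pullback to affinoid closed subspaces $\mathcal M(A)\subset Y$, it suffices to treat $Y=\mathcal M(A)$ affinoid (more precisely a small affinoid neighborhood of a point of a Hausdorff $Y$). We must show that for $\mathcal F\in\mathcal D_{pc}(X)$ the $A$-module $f_\ast\mathcal F=\Gamma(X,\mathcal F)$ lies in $\mathcal D_{pc}(A)$. By Proposition~\ref{prop:nuclearpseudocompactdiscrete} (applied with $\mathcal O(Z)$ replaced by the affinoid $A$, using Proposition~\ref{prop:banachalgebrafredholm} and Proposition~\ref{prop:almostdualizable}), this amounts to two claims: $f_\ast\mathcal F$ is nuclear, and $f_\ast\mathcal F$ is pseudocompact in $\operatorname{Mod}_A(\mathcal D(\operatorname{Liq}_p))$.

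\emph{Nuclearity.} Since $X$ is compact, Lemma~\ref{Hausdorffbasis} lets us write $X$ as a finite union of affinoids with affinoid intersections. Then $f_\ast\mathcal F$ is a finite Čech limit of the underlying modules of $\mathcal F|_{\mathcal M(B)}\in\mathcal D_{pc}(B)$. Each of these is nuclear, because $B$ is basic nuclear (Proposition~\ref{prop:steincompactnuclear}) and pseudocoherent modules are built from finite free ones. Nuclear objects are closed under finite limits (stability), and nuclearity over $A$ agrees with nuclearity over $\mathbb C$ by Corollary~\ref{cor:relativenuclear}.

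\emph{Pseudocompactness.} This is the main obstacle: a finite Čech limit of pseudocompact objects need not be pseudocompact. Here we use properness and boundarylessness via the right adjoint $f^!$. Pseudocompactness of $f_\ast\mathcal F=f_!\mathcal F$ means that $\operatorname{Hom}_A(f_!\mathcal F,-)=\operatorname{Hom}_X(\mathcal F,f^!(-))$ commutes with direct sums of objects in $\mathcal D_{\leq n}$. By Theorem~\ref{boundarylessuppershriek}, $f^!$ sends such uniformly bounded families to uniformly homologically bounded families and commutes with their direct sums. It then remains to show that $\operatorname{Hom}_X(\mathcal F,-)$, for $\mathcal F\in\mathcal D_{pc}(X)$ on compact $X$, commutes with direct sums of uniformly homologically bounded objects. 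We compute this by descent as a finite limit over the affinoid cover. On each affinoid $\mathcal M(B)$, $\operatorname{Hom}_B(\mathcal F|_B,-)$ commutes with such sums by pseudocompactness of $\mathcal F|_B$. Restriction to the affinoids preserves uniform boundedness up to a shift by Lemma~\ref{locallyboundeddimension}, and finite limits commute with direct sums in the stable setting.

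The delicate points are, first, that the pieces of the cover are closed affinoids while Theorem~\ref{boundarylessuppershriek} is local and uses open pieces. We handle this by covering $X$ with affinoids refined by an open cover and passing between the two conditions of Lemma~\ref{locallyboundeddimension}. Second, we need $f_!=f_\ast$, which holds because $f$ is proper (Theorem~\ref{thm:lowershriek}(1)).
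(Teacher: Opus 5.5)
Your proposal is correct and follows the paper's proof essentially step by step. You reduce to affinoid $Y$ and invoke Proposition~\ref{prop:nuclearpseudocompactdiscrete}; you get nuclearity from a finite affinoid cover of $X$ together with Corollary~\ref{cor:relativenuclear}; and you get pseudocompactness from $f_\ast=f_!$, the adjunction with $f^!$, Theorem~\ref{boundarylessuppershriek}, and descent over a finite closed affinoid cover, where your care about closed versus open pieces via Lemma~\ref{locallyboundeddimension} only makes explicit what the paper leaves implicit.
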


\begin{remark} The same proof applies also for ``families of complex-analytic spaces over other bases'', as in the work of Houzel \cite{Houzel}.
\end{remark}

\begin{proof}
Working locally on $Y$, we can assume $Y=\mathcal{M}(A)$ is affinoid, in particular compact Hausdorff, hence $X$ is also compact Hausdorff.  By Proposition \ref{prop:nuclearpseudocompactdiscrete}, it suffices to show that if $\mathcal{F}\in C_X$ is nuclear (meaning its pullback to every affinoid is nuclear), then so is $f_\ast\mathcal{F}$, and if $\mathcal{F}\in C_X$ is pseudocompact (meaning its pullback to every affinoid is pseudocompact), then so is $f_\ast\mathcal{F}$.

For the preservation of nuclearity, we can cover $X$ by finitely many affinoids with affinoid intersections to reduce to the fact that forgetful functors from module categories over nuclear algebras preserve nuclearity, Lemma \ref{cor:relativenuclear}.  For the preservation of pseudocompactness, suppose $(M_i)_{i\in I}$ is a uniformly homologically bounded collection of elements of $ \operatorname{Mod}_{A}(\mathcal{D}(\operatorname{Liq}_p))$.  First note
$$\operatorname{RHom}_A(f_*\mathcal{F},\bigoplus_i M_i) = \operatorname{RHom}_{C_X}(\mathcal{F},f^!\bigoplus_i M_i)$$
because $f_*=f_!$ by properness.  But by Theorem \ref{boundarylessuppershriek}, $f^!$ commutes with this direct sum, and for any affinoid closed $i:\mathcal{M}(B)\subset X$ the $B$-modules $(i^\ast f^! M_i)_{i\in I}$ are also uniformly homologically bounded.  Applying this to a finite cover of $X$ by closed affinoids with affinoid intersection and using descent, we deduce that
$$\operatorname{RHom}_{C_X}(\mathcal{F},\bigoplus_i f^!M_i) = \bigoplus_i \operatorname{RHom}_{C_X}(\mathcal{F},f^!M_i) = \bigoplus_i \operatorname{RHom}_{A}(f_\ast\mathcal{F},M_i),$$
whence the conclusion.
\end{proof}

Taking $Y=\ast$, this shows in particular that if $X$ is a compact complex analytic space in the classical sense, and $\mathcal{F}$ is a coherent sheaf on $X$, then the total cohomology $\bigoplus_i H^i(X,\mathcal{F})$ is finite dimensional as a $\mathbb{C}$-vector space.  The proof we gave has some kinship with the classical Cartan--Serre method for establishing such finite-dimensionality claims. But it is ``local'' in the sense that the key point is a more general property of the functor $f_!$ for possibly non-proper maps, namely that its right adjoint $f^!$ commutes with (homologically bounded) direct sums.\newpage

\section{Lecture XIII: Serre Duality, and GAGA for coherent sheaves}

In the first lecture, we promised to prove four theorems: Finiteness of Coherent Cohomology; Serre Duality; GAGA; Hirzebruch--Riemann--Roch. The first of those theorems was proved in the last lecture, even in the relative case. Moreover, we already did some of the work towards GAGA, and in the previous lecture also towards Serre Duality. The goal of this lecture is to finish the discussion of Serre Duality and GAGA. Next week, we will discuss the proof of Hirzebruch--Riemann--Roch.

As before, we fix some $0<p\leq 1$. Recall that we defined a (generalized) complex-analytic space to be a categorified locale $(X,C_X,\pi_X: \mathcal S(C_X)\to X)$ over $(\ast,\mathcal D(\mathrm{Liq}_p(\mathbb C)),\pi: \mathcal S(\mathcal D(\mathrm{Liq}_p(\mathbb C)))\to \ast)$ that is locally isomorphic to an open subspace of the categorified locale
\[
\mathcal{M}(A):=(\mathrm{Hom}_{\mathbb C}(A(\ast),\mathbb C),\mathrm{Mod}_A(\mathcal D(\mathrm{Liq}_p)),\pi_A)
\]
associated to some affinoid algebra $A$.  We note that this notion depends on the choice of $p$. But actually for a different $p'\leq p$, there is a base change functor from the notion for $p'$ to the notion for $p$, and this turns out to be an equivalence of $\infty$-categories (Exercise!); thus, the $\infty$-category of (generalized) complex-analytic spaces is independent of the choice of $p$. (But given $X$, the $\infty$-category $C_X$ depends on $p$.)

In the last lecture, several kinds of maps $f: X\to Y$ of complex-analytic spaces were introduced:
\begin{enumerate}
\item The map $f$ is a Zariski closed immersion if for every affinoid $A$ with a map $\mathcal{M}(A)\to Y$, the pullback $X\times_Y \mathcal{M}(A)$ is affinoid, given by some $\mathcal{M}(B)$, and $A\to B$ makes $B\in \mathcal D_{pc,\geq 0}(A)$ with $H_0 A\to H_0 B$ surjective.
\item The map $f$ is separated if the diagonal $\Delta_f: X\to X\times_Y X$ is a Zariski closed immersion; equivalently, if $|X|\to |X\times_Y X|=|X|\times_{|Y|} |X|$ is a closed immersion of topological spaces.
\item The map $f$ is proper if it is separated and quasicompact; equivalently, if $|f|: |Y|\to |X|$ is proper.
\item The map $f$ is smooth if locally on source and target $f$ is isomorphic to a projection $Y\times \mathbb D^n\to Y$ from an open polydisc.
\item The map $f$ is boundaryless if locally on source and target $f$ is isomorphic to a Zariski closed immersion followed by a smooth map.
\end{enumerate}

Moreover, we introduced for any separated map $f: X\to Y$ the functor
\[
f_!: C_X\to C_Y,
\]
the pushforward with compact support, with a natural transformation $f_!\to f_\ast$. Actually, we worked under the small restriction that $Y$ is itself separated over $\ast$, i.e. $|Y|$ is Hausdorff. (Then $f$ is separated if and only if also $|X|$ is Hausdorff.) Then $f_!$ is the colimit-preserving approximation to $f_\ast$. In fact, it agrees with $f_\ast$ on all objects that are compactly supported over $Y$. In particular, if $f$ is proper, then $f_!=f_\ast$ is just the usual pushforward. Moreover, $f_!$ satisfies arbitrary base-change and a projection formula. Writing $X$ as an increasing union of closed subspaces proper over $Y$, one can then write $f_!$ explicitly as a filtered colimit of sections with support in those closed subspaces.

Let
\[
f^!: C_Y\to C_X
\]
be the right adjoint of $f_!$. (It is unique when it exists, and if we fixed a cutoff cardinal $\kappa$, the existence of $f^!$ would follow from the adjoint functor theorem for presentable $\infty$-categories. It is not hard to show that in fact these functors for different sufficiently large $\kappa$ are compatible; in any case, in all situations below where we compute $f^!$, that formula simultaneously shows that it is well-defined.) To prove Serre duality, we need to identify $f^!$ for smooth morphisms. First, note that there is always a natural transformation
\[
f^! \mathcal O_X\otimes_{\mathcal O_Y} f^\ast M\to f^! M
\]
from a twisted version of $f^\ast$ to $f^!$. Indeed, this is adjoint to the natural transformation
\[
f_!(f^! \mathcal O_X\otimes_{\mathcal O_Y} f^\ast M)=f_! f^!\mathcal O_X\otimes_{\mathcal O_X} M\to M
\]
coming from the projection formula and the counit $f_! f^! \mathcal O_X\to \mathcal O_X$.

\begin{proposition}\label{prop:serreduality0} Let $f: X\to Y$ be a smooth morphism of complex-analytic spaces. Then the natural transformation
\[
f^!\mathcal O_X\otimes_{\mathcal O_Y} f^\ast \to f^!
\]
is an isomorphism. Moreover, $f^! \mathcal O_X$ is locally isomorphic to $\mathcal O_Y[d]$ where $d$ is the dimension of $f$, and the formation of $f^!$ commutes with any base change in $Y$.
\end{proposition}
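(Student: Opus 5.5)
The plan is to reduce to the one-dimensional computation already carried out in the proof of Theorem~\ref{boundarylessuppershriek}. There, for the projection $f:\mathbb D\times Y\to Y$ off the open unit disc, with $Y=\mathcal M(A)$ affinoid, we showed $f^!\simeq f^\ast[1]$ via an isomorphism compatible with base change. The crucial input was $\operatorname{RHom}_{A[T]}(N,\underline{\operatorname{RHom}}_A(A[T],M))=\operatorname{RHom}_{A[T]}(N,M[T])[1]$ for $N$ killed by base change to $\mathcal O(|T|\geq 1)$. This gives $f^!\mathcal O_Y\simeq\mathcal O_X[1]$, and it identifies the comparison map $f^!\mathcal O_Y\otimes f^\ast M\to f^!M$ with this isomorphism. (Here $f^!\mathcal O_X$ in the statement should be read as $f^!\mathcal O_Y$.) I will first make sure that the comparison map is the natural transformation of the statement. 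Both sides are built from the projection formula for $f_!$ (Theorem~\ref{thm:lowershriek}(2)) and the counit. So this is a matter of tracing adjunctions: the computed isomorphism is $A[T]$-linear and natural in $M$, hence agrees with the canonical map. Base change for $Y$ affinoid follows because the formula $f^!=f^\ast[1]$ visibly commutes with $-\otimes_A A'$.

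Next I globalize in $Y$ and compose. First, $f^!$ is local on the source, since $j^!=j^\ast$ for open $j$ and $(g\circ f)^!=f^!g^!$ by Theorem~\ref{thm:lowershriek}(3). Second, $f^!$ is local on the target: for open $V\subset Y$ we have $f^!$ commuting with $j_V^\ast$, by base change for $f_!$ along open inclusions (Theorem~\ref{thm:lowershriek}(4)) together with the left adjoint $j_\natural=j_!$. So the question reduces, locally on source and target, to a projection $Y\times\mathbb D^n\to Y$ with $Y$ quasi-affinoid, and then affinoid by descent as in Theorem~\ref{boundarylessuppershriek}. Writing this projection as a composite of $n$ one-dimensional projections, the composite of twisted pullbacks gives $f^!\simeq f^\ast[n]$. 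Here I use that the comparison maps compose correctly, which again follows from uniqueness in Theorem~\ref{thm:lowershriek}. Hence $f^!\mathcal O_Y$ is locally $\mathcal O_X[d]$, and the transformation $f^!\mathcal O_Y\otimes f^\ast\to f^!$ is an isomorphism locally on $X$, hence globally.

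For arbitrary base change $\varphi:Y'\to Y$, the base change map $\varphi'^\ast f^!\to f'^!\varphi^\ast$ is obtained from base change for $f_!$ by adjunction. Since $f^!=f^!\mathcal O_Y\otimes f^\ast$ on both sides, it suffices to check $\varphi'^\ast f^!\mathcal O_Y\simeq f'^!\mathcal O_{Y'}$. This check is local on $X'$ and $Y'$, where it is the affinoid statement above.

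The main obstacle is bookkeeping rather than analysis: verifying that the abstractly defined maps (the comparison map and the base change map) coincide with the explicit isomorphisms from the disc computation, including compatibility under composition of projections. Throughout, we rely on the standing Hausdorff hypotheses under which $f_!$ was constructed, which hold locally.
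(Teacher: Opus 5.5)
Your proposal is correct and follows essentially the paper's own argument. The paper likewise works locally on source and target to reach $Y\times\mathbb D^n\to Y$, reduces to $n=1$ by induction, passes to affinoid $Y$ via base change and descent, and reads everything off from the identification $f^!\simeq f^\ast[1]$ at the end of the proof of Theorem~\ref{boundarylessuppershriek}; you are also right that $f^!\mathcal O_X$ should read $f^!\mathcal O_Y$.

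One citation should be changed. For an open $j\colon V\subset Y$ with preimage $j'\colon f^{-1}V\subset X$, the isomorphism $j'^\ast f^!\simeq f_V^!\, j^\ast$ comes from passing to right adjoints in $f_!\, j'_!\simeq j_!\, f_{V!}$, i.e.\ from parts (3) and (5) of Theorem~\ref{thm:lowershriek}, not from the base change isomorphism (4). Part (4) directly yields only $f^!j_\ast\simeq j'_\ast f_V^!$.
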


\begin{proof} We can work locally on $X$ and $Y$. One can then reduce to the case $X=Y\times \mathbb D^n$. By induction on $n$, one can assume $n=1$. It suffices to prove all claims when $Y$ is affinoid. (Indeed, when one knows base change compatibility in that case, one can use this to show by descent to affinoid $Y$ that $f^!$ is always given by the desired formula, and commutes with base change to affinoids.) Now the result follows from the computations in the end of the proof in Theorem~\ref{boundarylessuppershriek}.
\end{proof}

To finish the proof of Serre duality, we need to identify $f^! \mathcal O_X$ with $\Omega^d_{Y/X}[d]$. Here, in case $Y$ is not derived, one can define $\Omega^1_{Y/X}$ as usual as $I/I^2$ where $I\subset \mathcal O_{X\times_Y X}$ is the ideal sheaf of the diagonal. When $Y$ is derived, one has to be more careful with the meaning of $I^2$. It can, however, still be defined:

\begin{proposition}\label{prop:powersofideal} There is a functorial definition of a filtered graded $A$-algebra $(I^n)_{n\geq 0}$ (in $\mathcal D(A)$) for any Zariski closed immersion $A\to B$ of affinoid algebras, such that $I^0=A$ and $I^1=I$ is the homotopy fibre of $A\to B$. All $I^n$ lie in $\mathcal D_{pc,\geq 0}(A)$. The filtered graded $A$-algebra $(I^n)_{n\geq 0}$ commutes with any base change in $A$, and if $A$ and $B$ are concentrated in degree $0$ and $A(\ast)\to B(\ast)$ is a local complete intersection, then $(I^n)_{n\geq 0}$ sits in degree $0$ and agrees with the usual filtered $A$-algebra of powers of $I=\mathrm{ker}(A\to B)$.
\end{proposition}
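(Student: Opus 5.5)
We construct the filtration via deformation to the normal cone, but only in a universal situation where it can be written down by hand, and then obtain the general case by base change. Since every Zariski closed immersion $A\to B$ of affinoids is surjective on $H_0$ with $B\in\mathcal D_{pc,\geq 0}(A)$, we would like a resolution of $B$ as an $A$-algebra by ``free'' steps, meaning iterated quotients by elements. The basic model is the map $\mathbb C[x]\to\mathbb C$, $x\mapsto 0$. Here we set $I^n=x^n\mathbb C[x]$, and the Rees algebra $\bigoplus_n I^n t^{-n}=\mathbb C[x,t^{-1}x]$ is a polynomial algebra. For a quotient $A\to A/\!\!/f$ (a derived quotient by $f\in A(\ast)$), we define $(I^n)$ by base change along $\mathbb C[x]\to A$, $x\mapsto f$. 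Flatness of $x^n\mathbb C[x]$ over $\mathbb C[x]$ shows that $I^n\cong A$ placed in degree $0$ shifted appropriately, namely $I^n=f^nA$ in the derived sense, which lies in $\mathcal D_{pc,\geq 0}(A)$.

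In general we proceed in three steps. First, by Lemma~\ref{affinoidsarezariskiclosed} and its proof we choose generators $f_1,\dots,f_m$ of $\ker(H_0A\to H_0B)$. We then form $A'=A/\!\!/(f_1,\dots,f_m)$, the base change of $\mathbb C[x_1,\dots,x_m]\to\mathbb C$. On $A'$ the filtration is the base-changed tensor product filtration, and the map $A'\to B$ is an isomorphism on $H_0$. Second, we kill the higher homotopy of $B$ relative to $A'$ by attaching cells in positive degrees, which gives an animated $A$-algebra $\mathrm{Sym}$-cell resolution $P_\bullet\to B$. This is possible because everything is pseudocoherent over a noetherian $H_0$, so each stage needs only finitely many cells. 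Cells in degree $\geq 1$ are placed in filtration weight $1$, which is the derived analogue of $I/I^2$. Because we work in characteristic $0$, animated and $E_\infty$ symmetric powers agree, as noted after Lemma~\ref{replacebyH0}. Third, to obtain functoriality and independence of choices, we do not use the explicit model at all. Instead we define $(I^n)$ canonically as the left Kan extension of the functor $(\mathbb C[x_1,\dots,x_m]\to \mathbb C[x_1,\dots,x_m]/(x_1,\dots,x_k))\mapsto(\text{usual }I^n)$ along the functor from ``polynomial pairs'' to all surjective pairs of animated rings, followed by base change to liquid modules via $-\otimes_{\mathbb C[\underline x]}A$. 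Equivalently, we take the filtered derived de Rham-type construction, the derived adic filtration. Compatibility with base change is then formal, since left Kan extension commutes with $-\otimes_A A'$. Connectivity and pseudocoherence of each $I^n$ follow because the associated graded pieces are $\mathrm{Sym}^n_B(L_{B/A}[-1])$, where $L_{B/A}[-1]\in\mathcal D_{pc,\geq 0}(B)$ since $H_0A\to H_0B$ is surjective. Induction up the finite filtration $I^n\supset I^{n+1}$ then gives the result.

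In the lci case we compare with the classical picture. Locally, meaning after passing to a finite closed cover refined by an open cover, which suffices by Theorem~\ref{thm:compactsteincoherent} flatness and descent, $I$ is generated by a regular sequence $f_1,\dots,f_k$ in the noetherian ring $A(\ast)$. Then $A/\!\!/(f)=B$, the Koszul complex is exact, and $\mathrm{gr}^n=\mathrm{Sym}^n(I/I^2)$ is locally free in degree $0$, which matches the classical result $\mathrm{gr}^n I=\mathrm{Sym}^n(I/I^2)$ for regular sequences. Hence every $I^n$ sits in degree $0$ and the map to the classical powers is an isomorphism.

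We expect the main obstacle to be making the Kan extension land correctly in $\mathcal D(A)$ for liquid $A$. The issue is that $B$ is a liquid algebra, not a discrete one. We would handle this by first working with the discrete pair $H_0A(\ast)\to H_0B(\ast)$ and pseudocoherent modules, and then using Proposition~\ref{coherentisDNF}(2) together with the fact, given by Lemma~\ref{replacebyH0}-type arguments, that the pair is pulled back from the polynomial model along $\mathbb C[\underline x]\to A$.
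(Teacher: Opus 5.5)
Your core construction is sound and essentially the paper's. Defining $(I^n)$ by left Kan extension from polynomial pairs produces the same filtered algebra as the paper's definition: the initial animated filtered $A$-algebra with a map $B\to I^0/I^1$. That universal property makes functoriality and base change tautological. Both are then computed by reducing along sifted colimits to derived Koszul quotients and finally to $\mathbb C[x]\to\mathbb C$. Identifying the graded pieces with $\mathrm{Sym}^n_B(L_{B/A}[-1])$ and the Koszul comparison in the lci case are also what is needed.

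The gap is exactly the step you flag as the main obstacle, the passage from liquid algebras to animated rings, and your proposed remedy does not work.
\begin{itemize}
\item Replacing the pair by $H_0A(\ast)\to H_0B(\ast)$ discards the derived structure on which the answer depends; already $I^1=\mathrm{fib}(A\to B)$ sees it.
\item Proposition~\ref{coherentisDNF}(2) concerns $\mathcal O(K)$ and discrete modules, and plays no role here.
\item A general Zariski closed immersion is \emph{not} a derived base change of a polynomial pair along some $\mathbb C[\underline x]\to A$, so base change via $-\otimes_{\mathbb C[\underline x]}A$ is not available. For example, $B=\mathcal O(\overline{\mathbb D}^2)/(z_1,z_2)^2$ is discrete but is not of the form $A/\!\!/(f_1,\dots,f_m)$. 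Discreteness of such a quotient would force the $f_i$ to be a regular sequence at the origin, and $(z_1,z_2)^2$ is not a complete intersection ideal there. Only sifted colimits of derived Koszul quotients give general $B$.
\end{itemize}

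The missing idea is relative discreteness, which is the paper's one-line reduction. The unit $A$ of $\mathrm{Mod}_A(\mathcal D(\mathrm{Liq}_p))$ is compact with endomorphism ring $A(\ast)$. Hence $A\otimes_{A(\ast)}-:\mathcal D(A(\ast))\to\mathrm{Mod}_A(\mathcal D(\mathrm{Liq}_p))$ is fully faithful and symmetric monoidal (cf.~Definition~\ref{def:discreteobject}). The hypothesis $B\in\mathcal D_{pc}(A)$ says $B$ lies in its essential image, so $B\simeq A\otimes_{A(\ast)}B(\ast)$ as commutative $A$-algebras. Also $\pi_0A(\ast)=(H_0A)(\ast)\to(H_0B)(\ast)=\pi_0B(\ast)$ is surjective. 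So you apply your construction to the animated map $A(\ast)\to B(\ast)$ and tensor with $A$. For a map of affinoids $A\to A'$ one has $A'\otimes_AB\simeq A'\otimes_{A'(\ast)}\bigl(A'(\ast)\otimes_{A(\ast)}B(\ast)\bigr)$. Thus base change, pseudocoherence and the lci comparison all reduce to the animated statements.

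This also simplifies your lci paragraph. Being lci is Zariski-local on $\operatorname{Spec}A(\ast)$, and both filtrations commute with flat localization, so neither analytic covers nor Theorem~\ref{thm:compactsteincoherent} are needed.

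A minor point: connectivity of $I^{n+1}=\mathrm{fib}(I^n\to\mathrm{gr}^n)$ does not follow by induction without surjectivity on $\pi_0$. It is immediate from the Kan extension, since the $I^n$ are sifted colimits of discrete modules.
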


Note that everything here is relatively discrete over $A$, so it suffices to prove the analogous result for the map of abstract derived rings $A(\ast)\to B(\ast)$. More generally, one has the following result; a very related (and more detailed) discussion is in \cite[Section 3]{MaoPD}. (Here, we use the notion of animated commutative rings, which are ``simplicial commutative rings up to homotopy''. In characteristic $0$, this is the same thing as a connective $E_\infty$-algebra, as used previously. In positive or mixed characteristic, the notions diverge. There is still a functor from animated rings to $E_\infty$-rings; for geometric questions, generally animated rings behave better. For example, free animated rings are given by polynomial algebras, while free $E_\infty$-rings are very hard to understand.)

\begin{proposition}[cf.~{\cite[Corollary 3.54]{MaoPD}}]\label{prop:powersofidealanimated} Let $A\to B$ be a map of animated commutative rings that is surjective in degree $0$. Consider the initial animated filtered $A$-algebra $(I^n)_{n\geq 0}$ equipped with a map of animated $A$-algebras $B\to I^0/I^1$. Then the natural maps $A\to I^0$ and $B\to I^0/I^1$ are isomorphisms; in particular $I=I^1$ is the fibre of $A\to B$. Moreover, there is a natural isomorphism $L_{B/A}[-1]\cong I^1/I^2$, and the natural maps $\mathrm{Sym}^n_B(I/I^2)\to I^n/I^{n+1}$ are isomorphisms. The formation of $(I^n)_{n\geq 0}$ commutes with any base change in $A$, and if $A$ and $B$ are concentrated in degree $0$ and $A\to B$ is a local complete intersection, then $(I^n)_{n\geq 0}$ sits in degree $0$ and agrees with the usual filtered $A$-algebra of powers of $I$.
\end{proposition}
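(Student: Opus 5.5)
The plan is to reduce everything to the universal case by left Kan extension (animation) from the category of pairs $(P\to Q)$ with $P=\mathbb Z[x_1,\ldots,x_m,y_1,\ldots,y_k]$ a polynomial ring and $Q=\mathbb Z[x_1,\ldots,x_m]$, the map killing the $y_j$. Every map of animated rings $A\to B$ surjective on $\pi_0$ is a sifted colimit of such pairs: resolve $A$ by polynomial rings and then adjoin polynomial variables mapping to generators of the kernel. On these pairs I take the classical $I$-adic filtration $I^n=(y_1,\ldots,y_k)^n$, which is a filtered $P$-algebra with $I^0/I^1=Q$. I then define $(I^n)_{n\geq 0}$ for general $A\to B$ as the left Kan extension (i.e.\ the animation) of this functor, valued in animated filtered $A$-algebras.

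For the universal property, the key observation is that on polynomial pairs the classical $y$-adic filtration is the initial filtered $P$-algebra with a map $Q\to I^0/I^1$. The free animated filtered algebra on generators placed in filtration degree $1$ is the polynomial algebra with its degree filtration, and imposing the relation identifying $I^0/I^1$ with $Q$ forces the $y_j$ into filtration $1$. Both the construction and the universal property commute with sifted colimits, since the initial object is itself computed by a colimit of free objects. So the identification extends to all $A\to B$, and $A\to I^0$, $B\to I^0/I^1$ are isomorphisms because they are so on generators.

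Base change in $A$ follows for the same reason: tensoring with $A'$ preserves sifted colimits and sends polynomial pairs to pairs of the same kind.

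For the graded pieces, on a polynomial pair $I^n/I^{n+1}=\mathrm{Sym}^n_Q(I/I^2)$ with $I/I^2=\bigoplus Q\,dy_j\cong L_{Q/P}[-1]$. Both $\mathrm{Sym}^n$ (derived symmetric powers) and $L_{-/-}$ are defined by left Kan extension, so both isomorphisms persist.

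The lci statement is the main obstacle, since Kan extensions do not obviously preserve degree-$0$ concentration. I would first argue Zariski-locally on $\mathrm{Spec}\, A$, which is compatible with the construction by base change. Locally $B=A/(f_1,\ldots,f_r)$ with $f$ a regular sequence, so $B=A\otimes_{\mathbb Z[y]}\mathbb Z$ with $y\mapsto f$. Base change from the universal pair $(\mathbb Z[y]\to\mathbb Z)$ gives $I^n=A\otimes^L_{\mathbb Z[y]}(y)^n$, and the graded pieces are $\mathrm{Sym}^n_B(B^r)$, which are discrete. By induction along the finite filtration $I^n/I^N$, followed by a limit argument (or a Tor-independence argument using regularity of the sequence), each $I^n$ is discrete. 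The resulting map onto the classical $I^n$ is then an isomorphism, because on graded pieces it is the classical isomorphism $\mathrm{Sym}^n(I/I^2)\cong I^n/I^{n+1}$ for regular sequences. Gluing the local identifications uses functoriality.
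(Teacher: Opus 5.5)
Your proof is correct in substance and rests on the same computation as the paper's. In the universal case, a $P$-algebra map $Q=P/^L(y_1,\ldots,y_k)\to J^0/J^1$ amounts to lifts of the $y_j$ from $J^0$ to $J^1$. So the initial object is the free filtered algebra on degree-one generators $Y_j$ modulo the relations $Y_j=y_j$, which is the $y$-adic filtration.

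The difference is in framing.
\begin{itemize}
\item You first \emph{define} $(I^n)$ as the animation of the classical $y$-adic filtration on polynomial pairs, varying $A$ and $B$ together. You then have to compare this construction with the initial object.
\item The paper works with the universal property throughout. For fixed $A$, it gives commutation with all colimits in $B$ and compatibility with base change directly. The paper then reduces, via the generators $A/^L(a_1,\ldots,a_n)$ and base change, to $\mathbb Z[X_1,\ldots,X_n]\to\mathbb Z$, and by K\"unneth to one variable.
\end{itemize}
Your version buys an explicit model in which $I^n/I^{n+1}\cong\mathrm{Sym}^n_B(L_{B/A}[-1])$ is immediate by Kan extension. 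The paper's version makes colimit-preservation and base change formal, with no separate comparison step needed.

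Your base-change sentence is inaccurate as written. Base change of a polynomial pair along an arbitrary map $P\to A'$ gives $A'/^L(y)$, which is no longer a polynomial pair. However, once your construction is identified with the initial object, base change follows at once from the adjunction.

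You treat the general lci case more explicitly than the paper, which only computes the universal case. There, the ``limit argument'' does not work, because the $I$-adic filtration is not complete: for $\mathbb Z[X]\to\mathbb Z$ one has $\varprojlim_N I^0/I^N=\mathbb Z[[X]]\neq I^0$. It is also unnecessary:
\begin{itemize}
\item The finite filtration already shows that $A/I^N$ is discrete.
\item Since $I^N$ is connective, $\pi_0A\to\pi_0(A/I^N)$ is surjective.
\item Hence the fibre $I^N$ of $A\to A/I^N$ is discrete, with $\pi_0(I^N)=\ker(A\to A/I^N)$.
\end{itemize}
The Tor-independence argument you mention in parentheses also works.
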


We stress that we need this result only in case the base space $Y$ is derived.

\begin{proof} It is clear that the formation of $(I^n)_{n\geq 0}$ commutes with all colimits in $B$, and is compatible with base change. Moreover, there are maps $A\to I^0$, $B\to I^0/I^1$ and $L_{B/A}\to I/I^2[1]$ (as $A/I^2$ is a square-zero extension of $B$ by $I^1/I^2$) and $\mathrm{Sym}^n(I/I^2)\to I^n/I^{n+1}$ (the latter exists for any animated filtered algebra). We claim that these are isomorphisms. As everything commutes with all sifted colimits, one can assume that
\[
B=A/^L(a_1,\ldots,a_n) = A\otimes^L_{\mathbb Z[X_1,\ldots,X_n]} \mathbb Z
\]
for some set of elements $a_1,\ldots,a_n\in A$; indeed, under sifted colimits such animated $A$-algebras generate all animated $A$-algebras $B$ with $H_0 A\to H_0 B$ surjective. By base change compatibility, one can then assume that $A=\mathbb Z[X_1,\ldots,X_n]$ and $B=\mathbb Z$ (with all $X_i$ mapping to $0$). It is then enough to prove the final claim, that for complete intersections, it gives the usual algebra of powers of $I$. By K\"unneth, this can be reduced to the case of one variable, so to $A=\mathbb Z[X]\to B=\mathbb Z$. But in this case one gets the free animated filtered $A$-algebra on a generator $Y$ in filtered degree $1$, modulo the relation $Y=X$. The first step gives $\mathbb Z[X,Y]$ filtered by $Y^n \mathbb Z[X,Y]$, and the second cuts it down to $\mathbb Z[X]$ filtered by $X^n\mathbb Z[X]$, as desired.
\end{proof}

Using this construction, we can define a general deformation to the normal cone. Namely, recall that filtered objects can equivalently be considered as $\mathbb G_m$-equivariant objects over $\mathbb A^1$, by the Rees construction. We can always glue them to $\mathbb G_m$-equivariant objects over $\mathbb P^1$ by using the trivial filtration at $\infty$. In this language, we arrive at the following construction, if we extend the above filtered algebra $(I^n)_{n\geq 0}$ to the full $\mathbb Z$-indexed case by setting $I^n=A$ for $n\leq 0$. (Actually, we ignore the $\mathbb G_m$-action here for simplicity.)

\begin{construction}\label{prop:deformationtonormalcone} Let $f: A\to B$ be a map of animated commutative rings that is surjective on $H_0$, let $I$ be the fibre of $f$, and consider $I^n$ as in Construction~\ref{prop:powersofidealanimated}. Then there is a Zariski closed immersion
\[
\tilde{Y}=\mathbb P^1_B\to \tilde{X}
\]
of affine derived schemes over $\mathbb P^1_{\mathbb Z}$ such that over $\mathbb P^1_{\mathbb Z}\setminus \{0\}$ it is isomorphic to the base change of $\mathrm{Spec}(B)\to \mathrm{Spec}(A)$ from $\Spec(\mathbb Z)$ to $\mathbb P^1_{\mathbb Z}\setminus \{0\}$, and the fibre over $0$ is isomorphic to the embedding
\[
\mathrm{Spec}(B)\to \mathrm{Spec}(\bigoplus_{n\geq 0} I^n/I^{n+1}) = \mathrm{Spec}(\mathrm{Sym}^\bullet(L_{B/A}[-1])),
\]
the (derived) normal cone on $L_{B/A}[-1]$.

The cotangent complex $L_{\tilde{Y}/\tilde{X}}$ is isomorphic to the pullback of $L_{B/A}$ to $\mathbb P^1_B$, twisted by $\mathcal O(-1)$. If $B$ is pseudocoherent over $A$, then also $\tilde{Y}\to \tilde{X}$ is pseudocoherent.
\end{construction}

\begin{proof} One takes $\tilde{X}$ as corresponding to the animated filtered commutative ring $\bigoplus_{n\in \mathbb Z} I^n$, and $\tilde{Y}$ corresponding to $\bigoplus_{n\leq 0} B$. Only the statement that $L_{\tilde{Y}/\tilde{X}}$ is isomorphic to the pullback of $L_{B/A}$ twisted by $\mathcal O(-1)$ must be proved. Note that $D(B)\to D(\mathbb P^1_B)$ is fully faithful, so it suffices to see that $L_{\tilde{Y}/\tilde{X}}(1)$ lies in the essential image. This condition is stable under sifted colimits in $B$, so as in the last construction we can reduce to $A=\mathbb Z[X_1,\ldots,X_n]$ and $B=\mathbb Z$. In that case, one gets the usual deformation to the normal cone, and the result is readily verified.
\end{proof}

\begin{remark} Another consequence of Construction~\ref{prop:powersofidealanimated} is the construction of blow-ups of derived schemes, by taking the projective spectrum of the animated graded $A$-algebra $\bigoplus_{n\geq 0} I^n$. The previous construction can be understood as an open subscheme of the blow-up of $\mathbb P^1_A$ along $\mathrm{Spec}(B)\times \{0\}\to \mathbb P^1_A$.
\end{remark}

To identify the dualizing complex $f^! \mathcal O_Y$ for smooth maps $f: X\to Y$, we note that it is enough to identify the pullback $s^\ast f^! \mathcal O_X$ along a section $s: Y\to X$. Indeed, pulling back $f: X\to Y$ along the map $g: Y'\to Y$ given by $f=g: X=Y'\to Y$, with fibre product
\[
f': X'=X\times_Y Y'\to Y',
\]
and $g': X'\to X$ the other projection, it acquires the universal section $s: Y'\to X'$ given by the diagonal $Y'=X\to X'=X\times_Y X$, and then the base change compatibility of $f^!$ shows that
\[
s^\ast f'^! \mathcal O_{Y'} = s^\ast g'^\ast f^! \mathcal O_X = f^! \mathcal O_Y.
\]
Thus, to compute $f^!\mathcal O_Y$, it suffices to compute $s^\ast f'^! \mathcal O_{Y'}$, as promised.

Thus, assume from now on that the smooth map $f: X\to Y$ comes with a section $s: Y\to X$. To compute $s^\ast f^!\mathcal O_Y$, we deform $Y\to X$ to the normal cone (locally on the corresponding affinoid algebras, and then analytify). This gives a map of complex-analytic spaces
\[
\tilde{f}: \tilde{X}\to \tilde{Y}=\mathbb P^1_Y
\]
with a section $\tilde{s}: \tilde{Y}\to \tilde{X}$, such that away from $0\in \mathbb P^1$ this is isomorphic to the base change of $Y\to X$, and over $0$ it is isomorphic to the normal cone $Y\to N_{Y\subset X}$ over $Y$.

Now consider
\[
\tilde{s}^\ast \tilde{f}^! \mathcal O_{\mathbb P^1_Y}(1)\in C_{\mathbb P^1_Y}.
\]
The pullback functor
\[
\pi^\ast: C_Y\to C_{\mathbb P^1_Y}
\]
is fully faithful -- for this, it suffices to see that $\pi_\ast \pi^\ast$ is the identity, but this is given by tensoring with $\pi_\ast \mathcal O_{\mathbb P^1_Y}$ by the projection formula, and the latter is just $\mathcal O_Y$. We claim that
\[
\tilde{s}^\ast \tilde{f}^! \mathcal O_{\mathbb P^1_Y}(d)\in \pi^\ast C_Y\subset C_{\mathbb P^1_Y}
\]
where $d$ is the dimension of $f$. This can be checked locally on $Y$ and after replacing $X$ by an open neighborhood of the section, so we can assume that $X=Y\times \mathbb D^d$. Then we can assume $Y=\ast$ is a point, where it follows from a direct computation.

But this means that all fibres of $\tilde{s}^\ast \tilde{f}^! \mathcal O_{\mathbb P^1_Y}(d)$ along sections of $\mathbb P^1_Y\to Y$ are canonically isomorphic (as they map isomorphically to $\pi_\ast \tilde{s}^\ast \tilde{f}^! \mathcal O_{\mathbb P^1_Y}(d)$). But the fibre over $1\in \mathbb P^1$ is just $s^\ast f^! \mathcal O_Y$, while the fibre over $0\in\mathbb P^1$ is the similar object for the normal cone $Y\subset N_{Y\subset X}$. The latter evidently only depends on $s^\ast \Omega^1_{X/Y}$. In fact, we can now consider the association that takes any $Y$ equipped with a rank $d$ vector bundle $f: E\to Y$ to $0^\ast f^! \mathcal O_Y$. This defines an invertible object of $C_{[\ast/\mathrm{GL}_d]}$ for the stack $[\ast/\mathrm{GL}_d]$ (for the open cover topology). As it is given by a line bundle shifted into degree $d$, to identify it, it suffices to identify it for $Y=\ast$ a point, with the $\mathrm{GL}_d$-action on it. This can be done easily, finally showing that there is a natural isomorphism
\[
s^\ast f^! \mathcal O_Y\cong \Omega^d_{X/Y}[d].
\]
We have proved the following theorem.

\begin{theorem}[Serre Duality] For a $d$-dimensional smooth map $f: X\to Y$ of (generalized) complex-analytic spaces, there is a natural isomorphism
\[
f^! M\cong f^\ast M\otimes_{\mathcal O_X} \Omega^d_{X/Y}[d]
\]
of functors $C_Y\to C_X$.
\end{theorem}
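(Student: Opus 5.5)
The plan is to assemble the theorem from the pieces already established in the text preceding the statement. By Proposition~\ref{prop:serreduality0}, for smooth $f$ of relative dimension $d$ the natural transformation $f^!\mathcal O_Y\otimes f^\ast(-)\to f^!(-)$ is an isomorphism. Moreover, $f^!$ commutes with arbitrary base change in $Y$. So the theorem reduces to producing a natural isomorphism $f^!\mathcal O_Y\cong \Omega^d_{X/Y}[d]$ of invertible objects of $C_X$, compatible with base change. I would first fix what $\Omega^1_{X/Y}$ means in the derived setting: take the conormal $I/I^2$ of the diagonal $X\to X\times_Y X$, with $I^2$ given by Proposition~\ref{prop:powersofideal}. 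This is defined locally on affinoids and glued by base-change compatibility. In the local model $Y\times\mathbb D^d$ it is free of rank $d$, since the diagonal is a local complete intersection there. Then $\Omega^d_{X/Y}$ is its top exterior power.

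Next I would reduce to the case where $f$ has a section, by base changing along $f$ itself. With $X'=X\times_Y X\to X$ and diagonal section $s$, base change of $f^!$ gives $f^!\mathcal O_Y\cong s^\ast f'^!\mathcal O_X$. Likewise $s^\ast\Omega^d_{X'/X}\cong\Omega^d_{X/Y}$, because $I/I^2$ is exactly the conormal of the diagonal. So it suffices to construct, functorially in $(f,s)$ and compatibly with base change, an isomorphism $s^\ast f^!\mathcal O_Y\cong s^\ast\Omega^d_{X/Y}[d]$. To do this, deform $s$ to the normal cone as in Construction~\ref{prop:deformationtonormalcone}, built locally on affinoids and analytified. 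This gives $\tilde f:\tilde X\to\mathbb P^1_Y$ with section $\tilde s$. I would then show that $\mathcal L=\tilde s^\ast\tilde f^!\mathcal O(d)$ lies in the essential image of the fully faithful functor $\pi^\ast:C_Y\to C_{\mathbb P^1_Y}$. This can be checked locally on $Y$ and near the section, which reduces to $Y=\ast$ and $X=\mathbb D^d$. There the deformation is $\mathbb P^1\times$ (a polydisc neighborhood), and $\tilde f^!\mathcal O=\mathcal O[d]$ is twisted by the explicit $\mathcal O(-d)$ coming from the rescaling $z\mapsto tz$; this should be a direct calculation with Theorem~\ref{boundarylessuppershriek}. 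Consequently the fibres of $\mathcal L$ at $1$ and at $0$ are canonically identified. The fibre at $1$ is $s^\ast f^!\mathcal O_Y$, and the fibre at $0$ is $0^\ast g^!\mathcal O_Y$ for the linear vector bundle $g:N=\mathbb V(s^\ast\Omega^1_{X/Y})\to Y$.

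Finally, I would compute the linear case. The assignment $(E\to Y)\mapsto 0^\ast g^!\mathcal O_Y$ for rank-$d$ vector bundles is functorial and compatible with base change, so it descends to an invertible object on the classifying stack $[\ast/\mathrm{GL}_d]$, namely a $\mathrm{GL}_d$-equivariant line in degree $d$. It is therefore determined by a character of $\mathrm{GL}_d$. I would compute that character at $Y=\ast$, $E=\mathbb C^d$: by Künneth this reduces to $d=1$. There the explicit computation $f^!\simeq f^\ast[1]$ from Theorem~\ref{boundarylessuppershriek}, via $M((T^{-1}))/M[T]$, identifies $f^!\mathcal O$ with $\mathcal O\,dT[1]$. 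Scaling $T$ by $\lambda$ multiplies the residue pairing by $\lambda^{-1}$, i.e.\ it transforms like $dT$. For general $d$ this gives the determinant character, so $0^\ast g^!\mathcal O\cong\det(E^\vee)[d]=\Omega^d_{N/Y}|_0[d]$, which equals $s^\ast\Omega^d_{X/Y}[d]$.

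I expect the main obstacle to be making this equivariance and naturality precise. The identification at $Y=\ast$ must be canonical and not merely up to scalar, so that it glues over $[\ast/\mathrm{GL}_d]$ and matches the deformation identification. Independence of the chosen local coordinates is exactly what this equivariance controls, so I would handle it by tracking the residue map explicitly in the one-variable computation.
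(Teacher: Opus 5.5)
Your proposal is correct and follows essentially the same route as the paper. You reduce via Proposition~\ref{prop:serreduality0} to identifying $f^!\mathcal O_Y$, pass to the universal section by base change along $f$, use the deformation to the normal cone to show that $\tilde{s}^\ast\tilde{f}^!\mathcal O_{\mathbb P^1_Y}(d)$ is pulled back from $Y$, and then identify the linear case as a $\mathrm{GL}_d$-equivariant line over a point. Your K\"unneth reduction to $d=1$ and the residue computation via $M((T^{-1}))/M[T]$ just spell out the final step, which the paper only says ``can be done easily.''
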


Let us discuss this theorem in case $Y=\ast$ is a point. If $X$ is proper, it gives the usual Serre duality, extended from coherent sheaves to all ``liquid quasicoherent sheaves''. Indeed, for any $M\in C_X$, we get
\[
\mathrm{Hom}(f_\ast M,\mathbb C)=\mathrm{Hom}(f_! M,\mathbb C)=f_\ast \mathrm{Hom}(M,f^! \mathbb C)=f_\ast(\mathrm{Hom}(M,\mathcal O_X)\otimes \Omega^d_X)[d],
\]
identifying the dual of the cohomology $R\Gamma(X,M) = f_\ast M$ with the cohomology $R\Gamma(X,M^\vee\otimes \Omega^d_X)$ up to a shift by degree $d$. Here $M^\vee := \mathrm{Hom}(M,\mathcal O_X)$.

But we can also apply this result in case $X$ is not proper. Then it is still true that the dual of
\[
R\Gamma_c(X,M) = f_! M\in \mathcal D(\mathrm{Liq}_p(\mathbb C))
\]
is given by $R\Gamma(X,M^\vee\otimes \Omega^d_X)$. We note that if $X\in \mathcal D_{pc}(X)$ is (pseudo)coherent, then $R\Gamma_c(X,M)$ is basic nuclear, as follows from the formula for $f_!$ from the proof of Theorem~\ref{thm:lowershriek}. In particular, all cohomology groups $H^i_c(X,M)$ are quotients of DNF spaces. Note that if $V$ is a DNF space, then by Mittag-Leffler, one has $\mathrm{Ext}^i_{\mathbb C}(V,\mathbb C)=0$ for $i>0$. Thus, for quotients of DNF spaces, one gets vanishing of $\mathrm{Ext}^i$ for $i>1$. The derived duality therefore induces short exact sequences
\[
0\to \mathrm{Ext}^1_{\mathbb C}(H_c^{d-i+1}(X,M),\mathbb C)\to H^i(X,M^\vee\otimes \Omega^d_X)\to \mathrm{Hom}_{\mathbb C}(H^{d-i}_c(X,M),\mathbb C)\to 0.
\]
This gives a statement on the level of individual cohomology groups even if these cohomology groups are not quasiseparated. The algebra here is very similar to the algebra in Poincar\'e duality in case there is torsion in the cohomology. More precisely, if $V$ is any quotient of DNF spaces, there is a maximal quasiseparated quotient $\overline{V}$, which is a DNF space; the kernel of $V\to \overline{V}$ can be identified with the closure $V^0\subset V$ of $0\in V$. We get a short exact sequence
\[
0\to V^0\to V\to \overline{V}\to 0.
\]
Then $\mathrm{Hom}_{\mathbb C}(V,\mathbb C)=\mathrm{Hom}_{\mathbb C}(\overline{V},\mathbb C)$ while $\mathrm{Ext}^1_{\mathbb C}(V,\mathbb C)=\mathrm{Ext}^1(V^0,\mathbb C)$. Using the (underived) duality between DNF and NF spaces, one can show that $V^0\neq 0$ if and only if $\mathrm{Ext}^1_{\mathbb C}(V^0,\mathbb C)\neq 0$, and that in the latter $\{0\}$ is dense. Thus, the above short exact sequence can be rewritten as
\[
0\to \mathrm{Ext}^1_{\mathbb C}(H_c^{d-i+1}(X,M)^0,\mathbb C)\to H^i(X,M^\vee\otimes \Omega^d_X)\to \mathrm{Hom}_{\mathbb C}(\overline{H^{d-i}_c}(X,M),\mathbb C)\to 0,
\]
where the first term is precisely the closure of $0\in H^i(X,M^\vee\otimes \Omega^d_X)$, and the last term is the quasiseparated quotient of $H^i(X,M^\vee\otimes \Omega_X^d)$ (and is a nuclear Fr\'echet space). In particular:

\begin{corollary} If $X$ is a complex manifold of dimension $d$ and $M\in \mathcal D_{pc}(X)$, then $H_c^{d-i+1}(X,M)$ is quasiseparated if and only if $H^i(X,M^\vee\otimes \Omega_X^d)$ is quasiseparated.
\end{corollary}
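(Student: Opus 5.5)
The plan is to read the corollary off the short exact sequence
\[
0\to \mathrm{Ext}^1_{\mathbb C}(H_c^{d-i+1}(X,M)^0,\mathbb C)\to H^i(X,M^\vee\otimes \Omega^d_X)\to \mathrm{Hom}_{\mathbb C}(\overline{H^{d-i}_c}(X,M),\mathbb C)\to 0
\]
established just above, which comes from Serre duality for $f:X\to\ast$ smooth of dimension $d$. First I record the inputs. For $M\in\mathcal D_{pc}(X)$, $R\Gamma_c(X,M)=f_!M$ is basic nuclear, by the formula for $f_!$ as a sequential colimit of pushforwards of compactly supported objects. So, by Theorem~\ref{basicnuclearcharacterization}, each $V=H^j_c(X,M)$ is a quotient of DNF spaces. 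Hence $V$ sits in $0\to V^0\to V\to\overline V\to 0$ with $\overline V$ DNF, and $\mathrm{Ext}^{\geq 2}_{\mathbb C}(V,\mathbb C)=0$.

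Next I identify the two outer terms of the sequence as the closure of zero and the quasiseparated quotient. The right-hand term $\mathrm{Hom}_{\mathbb C}(\overline V,\mathbb C)$ is the dual of a DNF space, which is a nuclear Fr\'echet space, and in particular is quasiseparated. The left-hand term $E:=\mathrm{Ext}^1_{\mathbb C}(V^0,\mathbb C)$ has $\{0\}$ dense in it, using the DNF/NF duality claim stated above. A map from $E$ to a qs object kills the closure of $0$, which is all of $E$. So the sub $E$ lies in $H^i(\ldots)^0$. Conversely, the quotient by $E$ is qs, so $H^i(\ldots)^0\subset E$. Hence $E=H^i(X,M^\vee\otimes\Omega^d_X)^0$.

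With this identification the corollary reduces to two statements. First, $H^i(X,M^\vee\otimes\Omega^d_X)$ is quasiseparated iff its closure of zero $E$ vanishes. Second, $E=\mathrm{Ext}^1(V^0,\mathbb C)$ vanishes iff $V^0=0$, where $V=H^{d-i+1}_c(X,M)$, and this holds iff $V$ is quasiseparated. The second statement is the cited equivalence, namely $V^0\ne 0$ iff $\mathrm{Ext}^1_{\mathbb C}(V^0,\mathbb C)\ne0$.

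The main obstacle is this last equivalence. It relies on the DNF/NF duality: $V^0=W/W'$ with $W'\subset W$ a dense DNF subspace, so $\mathrm{Ext}^1(V^0,\mathbb C)=\mathrm{coker}(W^\vee\to W'^\vee)$, a Fr\'echet space modulo a dense subspace. This cokernel is zero exactly when the restriction map is surjective. By open mapping for Fr\'echet spaces, that forces $W'$ to be closed in $W$, hence $W'=W$. I would spell out this step. The rest is formal from the sequence above, so that is where the care goes.
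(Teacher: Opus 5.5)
Your proposal is correct and follows the paper's own argument. The paper likewise reads the corollary off the rewritten short exact sequence, identifying $\mathrm{Ext}^1_{\mathbb C}(H^{d-i+1}_c(X,M)^0,\mathbb C)$ with the closure of $0$ in $H^i(X,M^\vee\otimes\Omega^d_X)$ and invoking the equivalence $V^0\neq 0 \Leftrightarrow \mathrm{Ext}^1_{\mathbb C}(V^0,\mathbb C)\neq 0$. Your open-mapping sketch for that equivalence (write $V^0=W/W'$ with $W'\subset W$ a dense DNF subspace, and show that surjectivity of $W^\vee\to W'^\vee$ forces $W'=W$) is a sound way to fill in the step the paper attributes to NF/DNF duality.
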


Finally, let us say a few words about GAGA. Generalizing our previous discussion slightly, we can actually start with any affinoid algebra $A$ and a proper derived scheme $X$ over $A$. It admits an analytification $X^{\mathrm{an}}$ over $A$, and there is an equivalence $C_X\cong C_{X^{\mathrm{an}}}$; the proof is the same as before. We note that $X^{\mathrm{an}}$ is actually proper (over $\mathcal{M}(A)$, or, as that one is proper, equivalently in the absolute sense). It is easy to see that it is separated, using the compatibility of all constructions with fibre products, and with Zariski closed immersions. For quasicompacity, we use the following proposition:

\begin{proposition} For any complex-analytic space $Y$, the map $\pi_Y: \mathcal S(C_Y)\to |Y|$ has the property that a closed subset $Z\subset |Y|$ is quasicompact if and only if the preimage of $Z$ in $\mathcal S(C_Y)$ is quasicompact.
\end{proposition}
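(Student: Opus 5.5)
We have to show that a closed subset $Z\subset |Y|$ is quasicompact if and only if $\pi_Y^{-1}(Z)\subset\mathcal S(C_Y)$ is quasicompact. The plan is to reduce to the affinoid case, where both directions come from compactness of $|Y|$ and of $\mathcal S$ together with the surjectivity statement of Theorem~\ref{affinoidgiveslocale}(3).

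\textbf{Reduction to the affinoid case.} Quasicompactness of a closed subset can be tested using finitely many pieces of a cover. So first cover $Y$ by open subsets of affinoids, and then, as in Lecture XII, use closed affinoid subspaces $\mathcal M(A)\subset U$ which form a neighbourhood basis. If $Z$ is quasicompact, it is contained in finitely many such $\mathcal M(A_i)$. Then
\[
Z=\bigcup_i (Z\cap \mathcal M(A_i)),
\]
and each $\pi^{-1}(Z\cap\mathcal M(A_i))$ is a closed subset of $\mathcal S(A_i)$, viewed inside $\mathcal S(C_Y)$ via the closed immersion. A finite union of quasicompact closed sublocales is quasicompact, so the forward direction reduces to the affinoid case.

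\textbf{Forward direction for affinoids.} For affinoid $A$, the locale $\mathcal S(A)$ is quasicompact because the unit of $C_A$ is compact, as remarked before Lemma~\ref{affinoidsareanalytic}. A closed sublocale of a quasicompact locale is quasicompact, so $\pi_A^{-1}(Z)$ is quasicompact. There is one subtlety here: quasicompactness of $\mathcal S(C)$ means that $\varinjlim$ of idempotent algebras equal to $0$ forces one term to be $0$. This follows from compactness of $1$: $1\to\varinjlim A_i=0$ factors through some $A_i$ as zero, so $A_i=0$ by idempotence.

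\textbf{Converse.} Suppose $\pi^{-1}(Z)$ is quasicompact. Cover $|Y|$ by open subsets $V$ that are relatively compact inside quasi-affinoid opens, each contained in a closed affinoid. The preimages $\pi^{-1}(V)$ form an open cover of $\pi^{-1}(Z)$, so finitely many $V_1,\dots,V_n$ already cover $\pi^{-1}(Z)$. I then want to deduce that $Z\subset \bigcup V_i$. The complement $W=Z\setminus\bigcup V_i$ is closed with $\pi^{-1}(W)=\emptyset$. By the surjectivity property Theorem~\ref{affinoidgiveslocale}(3), applied locally on affinoids (points $x$ have $\mathcal O(\{x\})\ne0$), it follows that $W=\emptyset$. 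Hence $Z$ is a closed subset of the finite union of the compact closures $\overline{V_i}$, and is therefore compact.

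The main obstacle is this last step. One must make sure that ``$\pi^{-1}(W)=\emptyset$ implies $W=\emptyset$'' holds globally on $Y$, not just on affinoids, and also handle the non-Hausdorff case. For the first point I would argue pointwise: for $x\in W$, take an affinoid neighbourhood and use $\mathcal O(\{x\})\neq 0$ via the evaluation map to $\mathbb C$. For the second, I would deal with compactness by working with the closures inside quasi-affinoid charts rather than in $|Y|$ directly.
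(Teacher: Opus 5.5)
Your proposal is correct and follows the paper's approach. You check the claim on affinoid pieces: there, $\pi_A^{-1}(Z)$ is quasicompact because $\mathcal S$ of the corresponding idempotent algebra is quasicompact (the unit is compact), and the converse holds because $|\mathcal M(A)|$ is compact Hausdorff. The paper compresses the reduction into ``this can be checked on affinoid pieces''. Your explicit use of the non-emptiness of $\pi^{-1}(\{x\})$, via $\mathcal O(\{x\})\to\mathbb C$ as in Theorem~\ref{affinoidgiveslocale}(3), is precisely what makes that reduction work in the converse direction.
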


\begin{proof} This can be checked on affinoid pieces, where it follows from the construction, which associated to closed subsets of the compact Hausdorff space $|\mathcal{M}(A)|$ idempotent algebras (whose corresponding locale is quasicompact).
\end{proof}

Thus, to see that $|X^{\mathrm{an}}|$ is quasicompact, it suffices to see that $\mathcal S(C_{X^{\mathrm{an}}})$ is quasicompact. But this agrees with $\mathcal S(C_X)$, and the latter is quasicompact for any scheme $X$ that is separated and of finite type. Indeed, it is glued in finitely many steps from $\mathcal S(C_A)$ for liquid algebras $A$, and those are quasicompact.

\begin{remark} This gives a proof that for $X$ a proper scheme over $\mathbb C$, the space $X(\mathbb C)$ is compact Hausdorff that does not use Chow's lemma in order to reduce to projective space, but instead directly uses the valuative criterion.
\end{remark}

Inside $C_X$, we have $\mathcal D_{pc}(X)\subset C_X$, by gluing on affine pieces the derived $\infty$-category of pseudocoherent complexes; these can be characterized as the discrete pseudocompact objects (and using this description one can prove that this glues). The functor $C_X\to C_{X^{\mathrm{an}}}$ takes $\mathcal D_{pc}(X)$ into $\mathcal D_{pc}(X^{\mathrm{an}})$. Finally, we can formulate the ``usual'' form of GAGA.

\begin{theorem}[GAGA] The fully faithful functor $\mathcal D_{pc}(X)\hookrightarrow \mathcal D_{pc}(X^{\mathrm{an}})$ is an equivalence of $\infty$-categories.
\end{theorem}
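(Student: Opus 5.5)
The idea is to use the equivalence $C_X\cong C_{X^{\mathrm{an}}}$ of Theorem~\ref{thm:GAGAabstract}, in its relative form over the affinoid $A$, and to show that this equivalence identifies $\mathcal D_{pc}(X)$ with $\mathcal D_{pc}(X^{\mathrm{an}})$. Full faithfulness is already known, so only essential surjectivity remains. Both subcategories will be characterized intrinsically inside the common big category by the ansatz ``pseudocoherent $=$ nuclear $+$ pseudocompact''. Once the two intrinsic descriptions agree, the equivalence must restrict to an equivalence of the subcategories.

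On the algebraic side, recall that $\mathcal D_{pc}(X)$ is glued from the $\mathcal D_{pc}(R)$ for affine opens $\operatorname{Spec} R$, where $R$ is of finite type over $A(\ast)$. For such $R$, Proposition~\ref{prop:nuclearpseudocompactdiscrete} and Proposition~\ref{prop:almostdualizable} should say that, inside $\mathrm{Mod}_R(\mathcal D(\mathrm{Liq}_p))$, the pseudocoherent complexes are exactly the objects that are nuclear and pseudocompact. For this I need the Fredholm property of $\mathcal D(\mathrm{Liq}_p(R))$. I would get it by writing $R$ as a filtered colimit of $p$-Banach algebras: $A$ is a sequential colimit of Banach algebras, and adjoining polynomial variables amounts to a union over radii of overconvergent disc algebras. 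Proposition~\ref{prop:banachalgebrafredholm} then applies. On the analytic side, Theorem~\ref{thm:pseudocoherentglues} and its globalization in Lecture XII describe $\mathcal D_{pc}(X^{\mathrm{an}})$ locally on affinoids, again as nuclear $\cap$ pseudocompact.

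Because the algebraic side is only Zariski-local, I would not compare the two characterizations chart by chart. Instead I would make them global and categorical. Nuclearity is defined intrinsically in any compactly generated closed symmetric monoidal category with compact unit. For pseudocompactness I would use the characterization via $\operatorname{Hom}$ commuting with uniformly homologically bounded direct sums. Both $C_X$ and $C_{X^{\mathrm{an}}}$ arise by finite gluing: closed gluing for $X$, since $X$ is quasicompact separated, and finite closed affinoid covers for $X^{\mathrm{an}}$, since $X^{\mathrm{an}}$ is compact. So I expect to show the following: an object is nuclear (resp.\ pseudocompact) in the glued category if and only if it is so on every piece. The argument mirrors Propositions~\ref{prop:nuclearglues} and~\ref{prop:pseudocompactglues}, namely a finite \v{C}ech limit together with bounded Tor-dimension of the restriction functors. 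Given this, the equivalence, being symmetric monoidal and preserving colimits, carries the nuclear objects of $C_X$ exactly onto those of $C_{X^{\mathrm{an}}}$.

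The main obstacle is pseudocompactness. The notion of homological boundedness is not obviously preserved by the abstract equivalence, since neither side has a global $t$-structure that the equivalence is known to respect. To handle this, I would first try to show that boundedness can be detected through $\operatorname{Hom}$ out of the images of $\mathcal O(n)$-type compact generators, or more robustly through pullback along the affine and affinoid charts. I would then use that the comparison $\operatorname{Spa}(X,X)\to\operatorname{Spa}(X,\mathbb C)'$ refines both gluings, which should show that a family uniformly bounded on Zariski pieces is uniformly bounded on affinoid pieces and conversely, with a shift bounded by the dimension as in Lemma~\ref{locallyboundeddimension}. If this works, pseudocompact objects correspond under the equivalence, and the theorem follows.
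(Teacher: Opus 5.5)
\textbf{Genuine gap.} The algebraic half of your ansatz is false. You need that inside $\mathrm{Mod}_R(\mathcal D(\mathrm{Liq}_p))$ the pseudocoherent complexes are exactly the nuclear pseudocompact objects. This fails once the chart $\operatorname{Spec} R$ has positive dimension, because $\mathcal D(\mathrm{Liq}_p(R))$ is then not Fredholm.

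The paper gives the counterexample itself, in the example in Lecture IX after Proposition~\ref{prop:coneoffredholmdualizable}. Over $\mathbb C[T]$, take the DNF space of sequences of at most polynomial growth, with $T$ acting diagonally by $(1,2,3,\ldots)$. It is dualizable, hence compact and nuclear by Proposition~\ref{prop:nuclearcompact}, but it is not discrete.

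Your route to the Fredholm property fails for a related reason. The algebras $\mathcal O(\{|T|\leq r\})$ shrink as $r$ grows, and their colimit along decreasing $r$ is the ring of germs at $0$, not the polynomial ring. In fact $A[T]=\bigoplus_n A\,T^n$ is not a filtered colimit of $p$-Banach algebras at all. A null sequence together with its limit is compact, so it maps into finitely many summands, and any sequence can be rescaled to a null sequence. Hence a map from a $p$-Banach algebra to $A[T]$ lands in polynomials of bounded degree, and its image cannot contain $T$. So Proposition~\ref{prop:banachalgebrafredholm} does not apply.

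On algebraic charts the correct intrinsic description is ``discrete and pseudocompact''. The implication ``nuclear and pseudocompact $\Rightarrow$ discrete'' on an algebraic chart does not follow formally from $C_X\cong C_{X^{\mathrm{an}}}$ plus gluing. It is the actual content of coherent GAGA beyond the big equivalence, and it requires a finiteness theorem, which your argument never invokes.

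\textbf{What the paper does instead.} The step you flag as the main obstacle is the easy one. For $M\in\mathcal D_{pc}(X^{\mathrm{an}})$ and an affine $i:\operatorname{Spec} B\subset X$, the pullback $i^\ast M\in C_B$ is pseudocompact simply because $i_\ast$ commutes with direct sums. What remains is discreteness of $i^\ast M$. Since $B$ is discrete over $A$, it suffices to show that $R\Gamma(X^{\mathrm{an}},i_\ast i^\ast M)$ is discrete over $A$.

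The paper argues as follows:
\begin{enumerate}
\item Blow up along the boundary so that $D=X\setminus\operatorname{Spec} B$ becomes a Cartier divisor.
\item Write $i_\ast i^\ast M$ as the colimit of the twists of $M$ by multiples of $D$.
\item Apply Grauert's coherence theorem (Theorem~\ref{thm:grauert}) to each twist: the global sections lie in $\mathcal D_{pc}(A)$ and are therefore discrete.
\item Conclude, since a colimit of discrete objects is discrete.
\end{enumerate}
This finiteness input is what your proof must supply in place of the Fredholm claim.
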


We note that the mere fully faithfulness already includes the comparison of cohomology.

\begin{proof} It remains to see that for all $M\in \mathcal D_{pc}(X^{\mathrm{an}})$ and any affine $\Spec(B)\subset X$, letting $i^\ast: C_{X^{\mathrm{an}}}\cong C_X\to C_B$ be the pullback functor, the preimage $i^\ast M$ lies in $\mathcal D_{pc}(B)\subset C_B=\mathrm{Mod}_B(\mathcal D(\mathrm{Liq}_p))$. We already know that it is pseudocompact (as $i_\ast$ commutes with all direct sums). Thus, it remains to see that it is discrete. As $B$ itself is discrete, it suffices to see that $R\Gamma(i^\ast M)=R\Gamma(X^{\mathrm{an}},i_\ast i^\ast M)$ is discrete. We note that by blowing up (some ideal sheaf supported at) the boundary $D=X\setminus \Spec(B)$, we can assume that the boundary $D$ is a Cartier divisor. Then $i_\ast i^\ast M = \mathrm{colim}_n M(-nD)$, and
\[
R\Gamma(X^{\mathrm{an}},i_\ast i^\ast M) = \mathrm{colim}_n R\Gamma(X^{\mathrm{an}},M(-nD)).
\]
Here each $M(-nD)\in \mathcal D_{pc}(X^{\mathrm{an}})$, so by Theorem~\ref{thm:grauert}, one has $R\Gamma(X^{\mathrm{an}},M(-nD))\in \mathcal D_{pc}(A)$, which in particular is discrete. Thus, also the colimit over $n$ is discrete, as desired.
\end{proof}

\begin{exercise} A map $f: X\to Y$ is a local complete intersection if it is locally the composite of a Zariski closed immersion that is a (derived) base change of $\ast\to \mathbb D^n$, and a smooth map. Show that the natural transformation
\[
f^! \mathcal O_Y\otimes_{\mathcal O_X} f^\ast\to f^!
\]
is an isomorphism, and that $f^!\mathcal O_Y$ is naturally isomorphic to $\mathrm{det}(L_{X/Y})$. Here $L_{X/Y}$ is the cotangent complex, which is a perfect complex with amplitude $[0,1]$; and for a $2$-term complex of finite projective modules $[P_1\to P_0]$, one defines
\[
\mathrm{det}([P_1\to P_0]) = \bigwedge^{r_0}(P_0)[r_0]\otimes (\bigwedge^{r_1}P_1)^\vee[-r_1],
\]
with $r_i=\mathrm{rk}(P_i)$. (Hint: For the isomorphism $f^! \mathcal O_Y\cong \mathrm{det}(L_{X/Y})$, follow the argument in the smooth case, by base change to the case with a universal section, and a deformation to the normal cone. This time, this will introduce truly derived spaces even in case $A$ and $B$ are in degree $0$. Finally, analyze the universal case $[M_{r_0\times r_1}/\mathrm{GL}_{r_0}\times \mathrm{GL}_{r_1}]$ directly.)
\end{exercise}
\newpage

\section{Lecture XIV: Hirzebruch--Riemann--Roch, part I}

The set-up for the following discussion will be a pair $(X,V)$ where $X$ is a compact complex manifold, meaning $f:X\rightarrow\ast$ is a proper smooth map of complex analytic spaces without boundary in the terminology of the previous lectures, and $V$ is a vector bundle on $X$ (a locally free $\mathcal{O}_X$-module sheaf of finite rank).  By Theorem \ref{thm:grauert}, the total cohomology of $V$ is finite-dimensional in each degree:
$$f_\ast V = R\Gamma(X;V)\in\mathcal{D}_{pc}(\mathbb{C}).$$
But actually we can also see that $f_\ast V$ lives in only finitely many degrees, for example because the space $X$ has finite cohomological dimension.

In any case, a crucial question for applications is what the dimensions
$$\operatorname{dim}_\mathbb{C} H^i(X;V)$$
of the individual cohomology groups of $(X,V)$ are.  However, this is difficult to answer in generality, because these dimensions are highly sensitive to the geometry of $(X,V)$.  In particular, they can jump in families.  To obtain a more robust invariant, we instead look at the Euler characteristic
$$\chi(X,V) := \sum_{i=0}^\infty (-1)^i\operatorname{dim}_\mathbb{C} H^i(X;V).$$
As a consequence of the proper base change theorem, this integer is (locally) constant in families.  The \emph{Hirzebruch-Riemann-Roch (HRR) Theorem} provides a formula for $\chi(X,V)$ where all the terms in the formula are in some sense ``topological'' invariants of $(X,V)$.  In particular they are quite computable, and they also give another explanation for this robustness of $\chi(X,V)$.  The formula looks as follows:
$$\chi(X,V) = \int_X \operatorname{ch}(V)\cdot \operatorname{Td}(T_X).$$
To make sense of the right-hand side, we need to choose a cohomology theory on compact complex manifolds.  We will take what is in some sense the simplest (or at least the easiest to relate to coherent cohomology), namely \emph{Hodge cohomology}.

\begin{definition}
Let $X$ be a complex manifold.  Define:
$$\mathcal{H}dg(X) = \bigoplus_{i\geq 0} \Omega^i[i]\in \operatorname{Perf}(X)\subset C_X,$$
and
$$\operatorname{Hdg}(X) = R\Gamma(X;\mathcal{H}dg(X))\in \mathcal{D}(\operatorname{Liq}_p).$$
\end{definition}

Thus, in degree $0$ we get
$$H_0\operatorname{Hdg}(X) =  \bigoplus_{i\geq 0} H^i(X;\Omega^i).$$

\begin{remark}
As usual, for purposes of reducing questions to affinoids, it's useful to extend this definition to the case where $X$ is a complex manifold ``with boundary'', meaning $X$ is a complex analytic space which is locally isomorphic to an open subspace of a closed polydisk.  The bundle of one-forms $\Omega^1$, hence of $i$-forms $\Omega^i$, can be just as well defined for such $X$, namely $\Omega^1=I/I^2$ where $I$ is the ideal in $\mathcal{O}_{X\times X}$ giving the Zariski closed immersion $\Delta_X:X\rightarrow X\times X$.
\end{remark}

Note that $\mathcal{H}dg(X)$ and $\operatorname{Hdg}(X)$ are contravariantly functorial, via pullback of differential forms.  They also have commutative ring structures, induced by wedge product of differential forms.  The classes $ \operatorname{ch}(V), \operatorname{Td}(T_X)$ will lie in $H^0 \operatorname{Hdg}(X)$, and $ \int_X$ will be a map
$$ \int_X: \operatorname{Hdg}(X)\rightarrow\mathbb{C},$$
allowing to make sense of the right-hand side of the HRR formula.  To explain this in more detail, let's start with the following basic observation.

\begin{lemma}
Let $X$ be a compact complex manifold.  Then $\operatorname{Hdg}(X) \in \operatorname{Perf}(\mathbb{C})$, and for every other complex manifold with boundary $Y$ the K\"{u}nneth map
$$\operatorname{Hdg}(X)\otimes \operatorname{Hdg}(Y)\overset{\sim}{\rightarrow} \operatorname{Hdg}(X\times Y)$$
is an isomorphism.
\end{lemma}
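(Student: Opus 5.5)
Perfectness is a direct application of Grauert's Coherence Theorem, Theorem~\ref{thm:grauert}. Since $f:X\to\ast$ is proper and boundaryless (smooth), and each $\Omega^i$ is a vector bundle, hence lies in $\mathcal D_{pc}(X)$, we get $R\Gamma(X;\Omega^i)\in\mathcal D_{pc}(\mathbb C)$. Only finitely many $i$ contribute ($\Omega^i=0$ for $i>\dim X$). Each $R\Gamma(X;\Omega^i)$ is also homologically bounded, because $|X|$ is compact of finite cohomological dimension and $\Omega^i$ sits in degree $0$ (Lemma~\ref{locallyboundeddimension}). So $\operatorname{Hdg}(X)$ is a bounded pseudocoherent complex over the field $\mathbb C$, hence perfect.

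For K\"unneth, I would first prove it at the level of sheaves. The exterior tensor product $\mathrm{pr}_1^\ast\Omega^1_X\oplus\mathrm{pr}_2^\ast\Omega^1_Y\to\Omega^1_{X\times Y}$ is an isomorphism, checked locally in the polydisk model using the diagonal description $I/I^2$. Taking exterior powers gives $\bigoplus_{a+b=i}\mathrm{pr}_1^\ast\Omega^a_X\otimes\mathrm{pr}_2^\ast\Omega^b_Y\cong\Omega^i_{X\times Y}$, and this is compatible with the shifts $[a]\otimes[b]=[a+b]$. It then remains to show that for vector bundles $E$ on $X$ and $F$ on $Y$,
\[
R\Gamma(X;E)\otimes_{\mathbb C_{<p}}R\Gamma(Y;F)\to R\Gamma(X\times Y;E\boxtimes F)
\]
is an isomorphism. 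Write $\pi:X\times Y\to Y$, which is the base change of the proper map $f:X\to\ast$ along $Y\to\ast$. By the proper base change theorem together with the projection formula for $f_\ast=f_!$,
\[
\pi_\ast(\mathrm{pr}_1^\ast E\otimes\pi^\ast F)\cong\pi_\ast\mathrm{pr}_1^\ast E\otimes F\cong(\text{pullback to }Y\text{ of }f_\ast E)\otimes F .
\]
Here $f_\ast E=R\Gamma(X;E)$ is a perfect complex of $\mathbb C$-vector spaces, so its pullback to $Y$ is $\mathcal O_Y\otimes R\Gamma(X;E)$. Applying $R\Gamma(Y;-)$ and pulling out the perfect complex $R\Gamma(X;E)$ gives the claim. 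This uses that $R\Gamma(Y;-)$ commutes with tensoring by a finite-dimensional complex, which is exact and a finite colimit. The resulting isomorphism should be checked to agree with the K\"unneth map, which is a formal compatibility of the base change isomorphisms with the cup product.

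The main subtlety is that $Y$ is only a manifold with boundary and need not be Hausdorff or compact. The proper base change theorem of Lecture XII, however, works locally on the target and needs only properness of $f$, with the tube-lemma reduction to affinoid bases. I therefore expect the formal identification to go through after working locally on $Y$ and gluing by descent, since both sides are sheaves in $Y$ for the open cover topology. The step I expect to need the most care is exactly this: making sure the base change applies for arbitrary (possibly non-Hausdorff) $Y$, rather than merely for affinoid $Y$.
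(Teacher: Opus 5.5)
Your proposal is correct and follows the paper's argument. Perfectness comes from Grauert's coherence theorem plus boundedness via finite cohomological dimension, and K\"unneth comes from the proper base-change theorem (with the projection formula) for $X\times Y\to Y$ pulled back from $X\to\ast$; you additionally supply details the paper leaves implicit, namely the splitting $\Omega^i_{X\times Y}\cong\bigoplus_{a+b=i}\mathrm{pr}_1^\ast\Omega^a_X\otimes\mathrm{pr}_2^\ast\Omega^b_Y$ and the use of perfectness of $R\Gamma(X;E)$ to pull it out of $R\Gamma(Y;-)$ for non-compact $Y$, while your worry about non-Hausdorff $Y$ is already covered because the paper states the proper base-change theorem for arbitrary targets and proves it locally on $Y$.
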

\begin{proof}
Note that $\Omega^i_X=0$ for $i>\operatorname{dim}(X)$, so the direct sum defining $\mathcal{H}dg(X)$ is actually finite, so $\mathcal{H}dg(X)\in \operatorname{Perf}(X)$.  As remarked above, this implies that $\operatorname{Hdg}(X)\in\operatorname{Perf}(\mathbb{C})$ as well (see also Lemma \ref{preservesperf}).  The second claim follows from applying the proper base-change theorem to the pullback of $X\rightarrow\ast\leftarrow Y$.
\end{proof}

Next, we introduce the first Chern class for line bundles, with values in Hodge cohomology.  Line bundles are locally free sheaves of rank one, so they are classified by elements of $H^1(X;\mathcal{O}_X^\times)$, and we simply take the map
$$c_1: H^1(X;\mathcal{O}_X^\times)\rightarrow H^1(X;\Omega^1)\subset H_0 \operatorname{Hdg}(X)$$
induced by the homomorphism $d\log: \mathcal{O}_X^\times\rightarrow \Omega^1$ given by
$$d\log(f) = \frac{df}{f}.$$

With the first Chern class in hand, we can state the projective bundle formula.

\begin{proposition}
Let $X$ be a complex manifold with boundary and $V$ a vector bundle over $X$ of rank $d$.  For the associated projective bundle $\pi:\mathbb{P}(V)\rightarrow X$, we have that
$$\bigoplus_{i=0}^{d-1} \operatorname{Hdg}(X)\overset{\sim}{\rightarrow} \operatorname{Hdg}(\mathbb{P}(V)),$$
where the map on the $i^{th}$ summand is given by pulling back via $\pi$ and then multiplying with $c_1(\mathcal{O}(1))^i$.
\end{proposition}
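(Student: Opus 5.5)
We reduce to the case of a trivial bundle over a polydisk, in three steps.

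First, both sides are local on $X$. The map $\bigoplus_{i=0}^{d-1}\operatorname{Hdg}(X)\to \operatorname{Hdg}(\mathbb P(V))$ is the global sections of a map of objects of $C_X$:
\[
\bigoplus_{i=0}^{d-1}\mathcal{H}dg(X)\longrightarrow \pi_\ast \mathcal{H}dg(\mathbb P(V)).
\]
Here $\pi$ is proper, so $\pi_\ast$ commutes with base change along open (and closed affinoid) inclusions by the proper base change theorem. The map is built from pullback and multiplication by $c_1(\mathcal O(1))^i$, and these are compatible with restriction. It therefore suffices to prove that this map is an isomorphism in $C_X$. Since $X\mapsto C_X$ is a sheaf for open covers, this can be checked locally, so we may assume $V\cong\mathcal O_X^d$ is trivial and $\mathbb P(V)=X\times\mathbb P^{d-1}$.

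Second, we apply Künneth. The preceding Lemma applies with the compact manifold $\mathbb P^{d-1}$ and the manifold with boundary $X$, giving
\[
\operatorname{Hdg}(X\times\mathbb P^{d-1})\simeq\operatorname{Hdg}(X)\otimes\operatorname{Hdg}(\mathbb P^{d-1}).
\]
The same holds sheaf-theoretically after pushforward to $X$. Moreover, $c_1(\mathcal O(1))$ on $X\times\mathbb P^{d-1}$ is pulled back from $\mathbb P^{d-1}$, because $d\log$ is natural. This reduces the claim to the absolute statement
\[
\bigoplus_{i=0}^{d-1}\mathbb C\cdot c_1(\mathcal O(1))^i\xrightarrow{\ \sim\ }\operatorname{Hdg}(\mathbb P^{d-1}),
\]
that is, $H^q(\mathbb P^{d-1},\Omega^p)=\mathbb C$ for $p=q\le d-1$ and $0$ otherwise, with generator $c_1(\mathcal O(1))^p$.

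Third, we compute $\operatorname{Hdg}(\mathbb P^{d-1})$. By the GAGA theorem of Lecture XIII, the coherent cohomology of $\Omega^p$ on $(\mathbb P^{d-1})^{\mathrm{an}}$ agrees with the algebraic one. Algebraically, it is computed from the Euler sequence
\[
0\to\Omega^1\to\mathcal O(-1)^d\to\mathcal O\to0,
\]
its exterior powers, and the standard Čech computation of $H^\ast(\mathbb P^{d-1},\mathcal O(n))$. This is Bott's formula. Alternatively, one can induct on $d$, using the hyperplane $\mathbb P^{d-2}\subset\mathbb P^{d-1}$ and the residue sequence for $\Omega^p$.

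The main obstacle is identifying the generator, namely that $c_1(\mathcal O(1))^p\ne0$ in $H^p(\Omega^p)$. The plan is to show first that $c_1(\mathcal O(1))$ is nonzero in $H^1(\mathbb P^{1},\Omega^1)\cong\mathbb C$. On the standard cover $\{U_0,U_1\}$ its Čech cocycle is $d\log(x_1/x_0)=dz/z$. This is not a difference of forms holomorphic on $\mathbb A^1$ and at $\infty$, since its residue is nonzero. For general $p$, we use multiplicativity, pulling back along the product map $(\mathbb P^1)^{p}\dashrightarrow$ or, more simply, restricting to a linear $\mathbb P^p\subset\mathbb P^{d-1}$ and inducting with the Gysin/residue sequence. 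This shows that $c_1^{p}$ generates, and that $c_1^{d-1}$ is the nonzero top class on $\mathbb P^{d-1}$.
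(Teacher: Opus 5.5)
Your proposal is correct and follows the paper's proof essentially step for step: descent in $X$ to reduce to a trivial bundle, the K\"unneth lemma to reduce to $\operatorname{Hdg}(\mathbb P^{d-1})$, and GAGA to carry out that computation algebraically. Your extra discussion of why $c_1(\mathcal O(1))^p$ generates $H^p(\mathbb P^{d-1},\Omega^p)$ fills in what the paper leaves as a standard exercise; just finish or delete the dangling clause about $(\mathbb P^1)^p$ (the finite map $(\mathbb P^1)^p\to\mathbb P^p$ pulls $c_1^p$ back to $p!\,h_1\cdots h_p\neq 0$, where $h_i$ is the hyperplane class on the $i$-th factor), since the linear-subspace and residue-sequence argument already suffices on its own.
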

\begin{proof}
Note that both sides satisfy descent in $X$, and the map is natural.  So it suffices to show this when $V$ is trivial.  But then $\pi:\mathbb{P}(V)\rightarrow X$ is pulled back from $\mathbb{P}^{d-1}\rightarrow \ast$, so by the K\"{u}nneth formula we reduce to calculating the Hodge cohomology of $\mathbb{P}^{d-1}$.  By GAGA we can perform this calculation in the algebraic category instead, and then it is a standard exercise.
\end{proof}

Following Grothendieck (\cite{grothendieck1958theorie}), this lets us define Chern classes for vector bundles.  First, there is the all-important \emph{splitting principle}:

\begin{lemma}
For any vector bundle $V$ over a complex manifold with boundary $X$, there exists a proper smooth map of complex manifolds with boundary $f:Y\rightarrow X$ such that:
\begin{enumerate}
\item $f^\ast V$ admits a full flag, i.e.~a filtration by sub-bundles where each successive quotient is a line bundle.
\item $f^\ast: H_0\operatorname{Hdg}(X)\hookrightarrow H_0\operatorname{Hdg}(Y)$ is injective.
\end{enumerate}
\end{lemma}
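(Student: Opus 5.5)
We will use Grothendieck's classical construction: take $Y$ to be the complete flag bundle of $V$, built as an iterated projective bundle, and verify injectivity via the projective bundle formula just proved.

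First we do one step. Let $\pi_1:\mathbb{P}(V)\rightarrow X$ be the projective bundle of lines (or hyperplanes, depending on the convention for $\mathbb{P}(V)$ implicit in the projective bundle formula). Then $\pi_1$ is proper and smooth: locally on $X$ it is a product $X\times\mathbb{P}^{d-1}\rightarrow X$, and $\mathbb{P}^{d-1}$ is a compact complex manifold. Moreover $\mathbb{P}(V)$ is again a complex manifold with boundary, since locally it is an open subspace of a closed polydisk times a chart of $\mathbb{P}^{d-1}$. On $\mathbb{P}(V)$, the pullback $\pi_1^\ast V$ contains the tautological line subbundle $\mathcal{O}(-1)$ (respectively, surjects onto $\mathcal{O}(1)$). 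Writing $V_1 := \pi_1^\ast V/\mathcal{O}(-1)$, we get a vector bundle of rank $d-1$; it is locally free because the tautological sequence is locally split. The summand $i=0$ of the projective bundle formula says exactly that $\pi_1^\ast:\operatorname{Hdg}(X)\rightarrow\operatorname{Hdg}(\mathbb{P}(V))$ is the inclusion of a direct summand. Hence $H_0\operatorname{Hdg}(X)\rightarrow H_0\operatorname{Hdg}(\mathbb{P}(V))$ is split injective.

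Next we iterate, by induction on $d$. Apply the same construction to $V_1$ over $X_1=\mathbb{P}(V)$ to get $X_2=\mathbb{P}(V_1)\rightarrow X_1$, and continue until the remaining bundle has rank $1$; this takes $d-1$ steps. Let $f:Y=X_{d-1}\rightarrow X$ be the composite. Proper smooth maps are stable under composition, as noted after the definitions in Lecture XII. On $Y$, the successive tautological subbundles, pulled back along the later projections, produce a filtration of $f^\ast V$ by subbundles whose graded pieces are line bundles. At each stage the relevant preimage of the tautological line is a subbundle, because subbundle inclusions pull back to subbundle inclusions. Finally, injectivity on $H_0\operatorname{Hdg}$ holds for $f$ because it is a composite of split injections.

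There is one technical point. The projective bundle formula was stated for $X$ a complex manifold with boundary, so each $X_k$ must again be of that type. This follows from the local product description above. The only real care needed is that the tautological quotient is a vector bundle and that the construction of $\mathbb{P}(V)$ and its $\mathcal{O}(1)$ glue in the categorified-locale setting. For both, we can argue by descent on $X$, where $V$ is trivial and everything is pulled back from $\mathbb{P}^{d-1}$, using GAGA as in the proof of the projective bundle formula. I expect no serious obstacle; the main thing to check is this descent and gluing bookkeeping.
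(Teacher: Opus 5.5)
Your proposal is correct and follows the paper's argument. The paper takes $Y$ to be the relative flag variety of full flags in $V$, notes that $Y\to X$ is a composite of projective bundles, and deduces injectivity from the projective bundle formula, exactly as you do. Your iterated construction, together with split injectivity of each $\pi^\ast$ coming from the $i=0$ summand, just spells this out in more detail.
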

\begin{proof}
Take $Y$ to be the relative Grassmannian classifying full flags in $X$.  Then (1) holds by construction, and (2) follows by the projective bundle formula, since $Y\rightarrow X$ is a composition of projective bundles.
\end{proof}

\begin{theorem}
The exists a unique assignment $V\mapsto c(V)\in H_0\operatorname{Hdg}(X)[[t]]$ for an arbitrary vector bundle $V$ over a complex manifold with boundary $X$, such that:
\begin{enumerate}
\item $V\mapsto c(V)$ commutes with pullbacks;
\item If $0\rightarrow V'\rightarrow V\rightarrow V''\rightarrow 0$ is a short exact sequence of vector bundles on $X$, then $c(V) = c(V')\cdot c(V'')$;
\item If $V=\mathcal{L}$ is a line bundle, then $c(\mathcal{L}) = 1 + c_1(\mathcal{L})\cdot t.$
\end{enumerate}
\end{theorem}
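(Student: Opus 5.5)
Uniqueness comes first and uses the splitting principle. Given any assignment satisfying (1)--(3), take the full flag variety $f:Y\to X$ of $V$ from the preceding lemma. On $Y$ the bundle $f^\ast V$ has a filtration with line bundle quotients $\mathcal L_1,\ldots,\mathcal L_d$. Repeated use of (2) together with (3) forces
\[
c(f^\ast V)=\prod_{j=1}^d (1+c_1(\mathcal L_j)t).
\]
By (1) this equals $f^\ast c(V)$, and $f^\ast$ is injective on $H_0\operatorname{Hdg}$, so $c(V)$ is determined.

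For existence I follow Grothendieck and use the projective bundle formula. Let $V$ have rank $d$, let $\pi:\mathbb P(V)\to X$ be the projective bundle, and put $\xi=c_1(\mathcal O(1))$. The proposition says $H_0\operatorname{Hdg}(\mathbb P(V))$ is a free module over $H_0\operatorname{Hdg}(X)$ with basis $1,\xi,\ldots,\xi^{d-1}$. Applying $H_0$ is harmless here, because the isomorphism holds at the level of complexes and $\xi^i$ lies in degree $0$. Hence there are unique classes $c_i(V)\in H^i(X;\Omega^i)$ with
\[
\xi^d+c_1(V)\xi^{d-1}+\cdots+c_d(V)=0,
\]
with signs fixed to match the chosen convention for $\mathcal O(1)$ versus $\mathcal O(-1)$. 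I then define $c(V)=\sum_i c_i(V)t^i$. Property (1) follows because the projective bundle formula is compatible with pullback, so the defining relation pulls back. For property (3), note that $\mathbb P(\mathcal L)=X$ and that $\mathcal O(1)$ is $\mathcal L$ or $\mathcal L^\vee$ depending on convention; the relation then reads $\xi+c_1=0$, and the signs are chosen so that this gives $c_1(\mathcal L)$. I also need $c_1$ to be additive on tensor products, which holds because $d\log$ is a homomorphism. The classes commute because they lie in even total degree of the graded-commutative Hodge ring.

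The main work is property (2), the Whitney formula, and I expect it to be the main obstacle. By the splitting principle applied successively to $V'$ and $V''$, and by injectivity of pullback, I can reduce to the case where $V$ has a full flag with line quotients $\mathcal L_j$. It then suffices to show $c(V)=\prod_j(1+c_1(\mathcal L_j)t)$, that is, to show
\[
\prod_j\bigl(\xi - c_1(\mathcal L_j)\bigr)=0 \quad\text{in } H_0\operatorname{Hdg}(\mathbb P(V)),
\]
with the sign convention fixed above. On $\mathbb P(V)$ the tautological line $\mathcal O(-1)\subset\pi^\ast V$ composed with the projections gives sections $s_j$ of $\mathcal L_j\otimes\mathcal O(1)$. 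Let $U_j$ be the open locus where $s_j\neq 0$, meaning where the tautological line is not contained in the $j$-th step of the flag. The sets $U_j$ cover $\mathbb P(V)$. On $U_j$ the line bundle $\mathcal L_j\otimes\mathcal O(1)$ is trivial, so its first Chern class $\xi-c_1(\mathcal L_j)$ (up to sign) vanishes there.

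The remaining step, which is the delicate one, is the standard argument that a product of classes each vanishing on one member of an open cover vanishes. In singular cohomology this uses relative classes. Hodge cohomology does not have relative classes in that naive form. I would instead use $\mathcal{H}dg$ as a sheaf of commutative algebras in $C_{\mathbb P(V)}$: each $c_1$ is the image of a class in $H^1(\mathcal O^\times)$, which gives a \v{C}ech cocycle with respect to any trivializing cover. The class $\xi-c_1(\mathcal L_j)$ then lifts to a class in $R\Gamma$ with supports in the closed set $Z_j=\mathbb P(V)\setminus U_j$, namely a lift to $\operatorname{Fib}(\mathcal{H}dg\to j_{U_j\ast}j_{U_j}^\ast\mathcal{H}dg)$. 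Products of classes supported on $Z_j$ are supported on $\bigcap_j Z_j=\emptyset$, so the product is zero.

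If this supports formalism turns out awkward, my fallback is a downward induction using the explicit projective bundle formula for $V'\subset V$ with $V/V'=\mathcal L$. That route requires computing the restriction map $\operatorname{Hdg}(\mathbb P(V))\to\operatorname{Hdg}(\mathbb P(V'))$ and the complement $\mathbb P(V)\setminus\mathbb P(V')$, which is an affine bundle over $X$.
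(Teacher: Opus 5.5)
Your uniqueness argument and your definition of $c(V)$ through the relation $\xi^d+c_1(V)\xi^{d-1}+\cdots+c_d(V)=0$ coming from the projective bundle formula follow the paper's (Grothendieck's) route. Your checks of (1) and (3) are fine. The paper refers to Grothendieck for everything else.

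\textbf{The gap is in your proof of the Whitney formula (2).} After the splitting principle you only have a \emph{filtration} $0=V_0\subset V_1\subset\cdots\subset V_d=V$ with $\mathcal L_j=V_j/V_{j-1}$, not a direct sum. For $j<d$ there is no map $V\to\mathcal L_j$, so your sections $s_j$ do not exist. Only $s_d$ exists, coming from $V\to V/V_{d-1}=\mathcal L_d$. The bundle $\mathcal L_j\otimes\mathcal O(1)$ is canonically trivialized only on the strata $\mathbb P(V_j)\setminus\mathbb P(V_{j-1})$, which are locally closed, not open. Your sets ``tautological line not contained in a step of the flag'' either fail to cover $\mathbb P(V)$ or fail to trivialize $\mathcal L_j\otimes\mathcal O(1)$. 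For example, $\{\ell\not\subset V_0\}$ is all of $\mathbb P(V)$, but $c_1(\mathcal L_1\otimes\mathcal O(1))\neq 0$ there for $d\ge 2$.

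Your cover is the one from the topological proof for $V=\bigoplus_j\mathcal L_j$, where $U_j$ is the complement of $\mathbb P(\bigoplus_{k\neq j}\mathcal L_k)$. Topologically one reaches that case by choosing a Hermitian metric. Holomorphic extensions need not split: the tautological flag $0\to\mathcal O(-1)\to\mathcal O^2\to\mathcal O(1)\to 0$ on the flag variety $\mathbb P^1$ of $\mathbb C^2$ does not split. You give no replacement for this step. Your supports argument is itself sound once the bundle is split. The lift to $R\Gamma_{Z_j}$ comes from the fibre sequence $R\Gamma_{Z_j}\to R\Gamma\to R\Gamma(U_j,-)$, and by the sheaf property products with supports land in $R\Gamma_{\bigcap_j Z_j}=0$.

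So what is missing is a reduction from filtered to split bundles. Here is one way using the paper's tools.
\begin{itemize}
\item For a subbundle $V'\subset V$, take the subsheaf $\tilde V=\mathrm{pr}^\ast V'+t\,\mathrm{pr}^\ast V$ of $\mathrm{pr}^\ast V$ on $X\times\mathbb C$, and glue it with $\mathrm{pr}^\ast V$ over $X\times(\mathbb P^1\setminus\{0\})$. This is a vector bundle on $X\times\mathbb P^1$ (check locally where $V=V'\oplus W$). Its restriction to $t=1$ is $V$ and its restriction to $t=0$ is canonically $V'\oplus V/V'$.
\item By the K\"unneth lemma with compact factor $\mathbb P^1$, $H_0\operatorname{Hdg}(X\times\mathbb P^1)=H_0\operatorname{Hdg}(X)\oplus H_0\operatorname{Hdg}(X)\cdot h$ with $h=c_1(\mathcal O_{\mathbb P^1}(1))$. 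Restriction to any fibre $X\times\{a\}$ kills $h$.
\item Hence (1) gives $c(V)=c(V'\oplus V/V')$. Iterating gives $c(V)=c(\bigoplus_j\mathcal L_j)$, and your argument applies to that.
\end{itemize}
Alternatively, you can follow Grothendieck: identify the factor for $\mathcal L_d$ with the Gysin image of $1$ under $\mathbb P(V_{d-1})\hookrightarrow\mathbb P(V)$ and use a projection formula.

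Your fallback, as sketched, would not close the gap. Hodge cohomology is not homotopy invariant ($H^0(\mathbb C,\mathcal O)$ is infinite-dimensional), so the affine bundle $\mathbb P(V)\setminus\mathbb P(V')$ does not behave like $X$. Also, vanishing of $\prod_{j<d}(\xi\pm c_1(\mathcal L_j))$ in $\operatorname{Hdg}(\mathbb P(V'))$ is a statement about forms on the submanifold. It does not by itself give vanishing on a neighbourhood of $\mathbb P(V')$, which is what a supports argument would need; for that you need a Thom class or projection formula input.
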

\begin{proof}
Uniqueness is clear by the splitting principle.  For existence, define $c(V)$ to be given by the coefficients (with some signs, and in reverse order) of $c_1(\mathcal{O}(1))^d$ as a polynomial in the previous powers of $c_1(\mathcal{O}(1))$, using the projective bundle formula.  We refer to \cite{grothendieck1958theorie} for the details.
\end{proof}

Using these Chern classes, we can define the Chern character and Todd class, which appear in the HHR formula.

\begin{theorem}
There exists a unique assignment $V\mapsto \operatorname{ch}(V)\in H_0\operatorname{Hdg}(X)$ for an arbitrary vector bundle $V$ over a complex manifold with boundary $X$, such that:
\begin{enumerate}
\item $V\mapsto  \operatorname{ch}(V)$ commutes with pullbacks;
\item If $0\rightarrow V'\rightarrow V\rightarrow V''\rightarrow 0$ is a short exact sequence of vector bundles on $X$, then $ \operatorname{ch}(V) =  \operatorname{ch}(V')+ \operatorname{ch}(V'')$.
\item If $V=\mathcal{L}$ is a line bundle, then $ \operatorname{ch}(\mathcal{L}) = e^{c_1(\mathcal{L})}.$
\end{enumerate}
Moreover, we have $\operatorname{ch}(V\otimes W)= \operatorname{ch}(V)\cdot  \operatorname{ch}(W)$.  Here $e^{c_1(\mathcal{L})}$ is to be understood as a formal power series in $c_1(\mathcal{L})$, which is actually a polynomial as $c_1(\mathcal{L})^i\in H^i(X;\Omega^i)=0$ for $i>\operatorname{dim}(X)$.

Similarly, there exists a unique assignment $V\mapsto \operatorname{Td}(V)\in H_0\operatorname{Hdg}(X)$ for an arbitrary vector bundle $V$ over a complex manifold with boudnary $X$, such that:
\begin{enumerate}
\item $V\mapsto  \operatorname{Td}(V)$ commutes with pullbacks;
\item If $0\rightarrow V'\rightarrow V\rightarrow V''\rightarrow 0$ is a short exact sequence of vector bundles on $X$, then $ \operatorname{Td}(V) =  \operatorname{Td}(V')\cdot \operatorname{Td}(V'')$.
\item If $V=\mathcal{L}$ is a line bundle, then $ \operatorname{Td}(\mathcal{L}) = \frac{c_1(\mathcal{L})}{1-e^{-c_1(\mathcal{L})}}.$
\end{enumerate}

\end{theorem}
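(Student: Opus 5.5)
Both assertions will follow the pattern of the preceding theorem on total Chern classes, that is, Grothendieck's method via the splitting principle. I would first prove uniqueness and then construct the classes out of Chern classes. The multiplicativity $\operatorname{ch}(V\otimes W)=\operatorname{ch}(V)\operatorname{ch}(W)$ comes last, using the same reduction.

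\emph{Uniqueness.} Let $V\mapsto\operatorname{ch}'(V)$ be any assignment satisfying (1)--(3), and likewise for $\operatorname{Td}'$. Choose $f:Y\to X$ as in the splitting lemma, so that $f^\ast V$ has a full flag with line bundle quotients $\mathcal L_1,\ldots,\mathcal L_d$. Then (1), (2) and (3) force
\[
f^\ast\operatorname{ch}'(V)=\operatorname{ch}'(f^\ast V)=\textstyle\sum_i e^{c_1(\mathcal L_i)},
\qquad
f^\ast\operatorname{Td}'(V)=\textstyle\prod_i \frac{c_1(\mathcal L_i)}{1-e^{-c_1(\mathcal L_i)}}.
\]
Since $f^\ast$ is injective on $H_0\operatorname{Hdg}$, the classes are determined.

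Throughout, the power series make sense because $H_0\operatorname{Hdg}(X)=\bigoplus_i H^i(X;\Omega^i)$ is graded and $c_1$ lies in degree $1$. The $c_1(\mathcal L_i)$ are therefore nilpotent as long as $X$ has finite dimension, which holds locally. To be safe I would work in the completion $\prod_i H^i(X;\Omega^i)$, which agrees with $H_0\operatorname{Hdg}(X)$ in the finite-dimensional case.

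\emph{Existence.} Write $c(V)=\sum_k c_k(V)t^k$ using the preceding theorem. The expression $\sum_i e^{x_i}$ is symmetric in $x_1,\ldots,x_d$; its degree-$n$ part $\frac1{n!}\sum_i x_i^n$ is a polynomial with rational coefficients in the elementary symmetric functions, by Newton's identities. Define $\operatorname{ch}(V)$ as these polynomials evaluated at $c_k(V)$ in place of $e_k$. Do the same for $\operatorname{Td}$, using the symmetric power series $\prod_i \frac{x_i}{1-e^{-x_i}}$.

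Property (1) follows from pullback-compatibility of the $c_k$, and (3) is immediate from $c(\mathcal L)=1+c_1(\mathcal L)t$. For (2) I would use the Whitney formula $c(V)=c(V')c(V'')$. Formally, the elementary symmetric functions of the concatenated variable set are obtained by multiplying the generating series. Hence the universal identities
\[
\textstyle\sum_{x\in x'\sqcup x''} e^{x}=\sum e^{x'}+\sum e^{x''},
\qquad
\prod_{x'\sqcup x''}=\prod_{x'}\cdot\prod_{x''},
\]
which hold in the ring of symmetric functions, translate into (2) after substituting Chern classes. The one point needing care is to check this at the level of the universal symmetric-function ring, so that it only uses Whitney and not a splitting.

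\emph{Multiplicativity.} Apply the splitting principle to $V$ and then to the pullback of $W$. Iterating the lemma gives a single $f:Y\to X$ that is injective on $H_0\operatorname{Hdg}$ and splits both bundles, with line quotients $\mathcal L_i$ and $\mathcal M_j$. Then $f^\ast(V\otimes W)$ has a filtration with quotients $\mathcal L_i\otimes\mathcal M_j$. We have $c_1(\mathcal L\otimes\mathcal M)=c_1(\mathcal L)+c_1(\mathcal M)$, since $d\log$ is a homomorphism $\mathcal O^\times\to\Omega^1$. Hence
\[
\operatorname{ch}=\textstyle\sum_{i,j}e^{x_i+y_j}=\bigl(\sum_i e^{x_i}\bigr)\bigl(\sum_j e^{y_j}\bigr),
\]
and injectivity of $f^\ast$ concludes.

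I expect no step to be a real obstacle. The mildly delicate points are convergence and nilpotence in the graded ring, and ensuring that the Whitney-formula argument in (2) does not secretly require a splitting of the extension. If one prefers, one can instead split $V'$ and $V''$ simultaneously, which yields a full flag of $V$.
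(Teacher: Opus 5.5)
Your proposal is correct and follows the paper's route: uniqueness comes from the splitting principle, and existence from substituting the Chern classes into the universal symmetric power series (the paper simply cites Hirzebruch for these details). Your Whitney-formula argument for (2), with the rank as the degree-zero term of $\operatorname{ch}$, and your simultaneous splitting of $V$ and $W$ for multiplicativity correctly fill in what the paper leaves implicit.
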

\begin{proof}
Uniqueness is clear by the splitting principle.  For existence, one writes down the appropriate universal power series in the Chern classes.  See \cite{hirzebruch1966topological} for the details.
\end{proof}

To finish explaining the meaning of the HRR formula, which we recall reads 

$$\chi(X,V) = \int_X \operatorname{ch}(V)\cdot \operatorname{Td}(T_X),$$

\noindent we need to define the ``trace'' map
 
$$ \int_X: \operatorname{Hdg}(X)\rightarrow\mathbb{C}$$

\noindent for a compact complex manifold $X$.  This is done using Serre duality.  Namely, by projecting on to the $d$-summand where $d$ is the dimension of $X$ (working separately on each connected component if $X$ is not connected), we get a map
$$\mathcal{H}dg(X)=\bigoplus_{i\geq 0} \Omega^i[i] \rightarrow \Omega^d[d]\simeq f^!\mathbb{C},$$
whence
$$\operatorname{Hdg}(X)\rightarrow f_\ast f^!\mathbb{C}\rightarrow \mathbb{C},$$
as desired.  Here the isomorphism $\Omega^d[d]\simeq f^!\mathbb{C}$ is Serre duality, and the map $f_\ast f^!\mathbb{C}\rightarrow \mathbb{C}$ is the counit for the adjunction.

This finishes the statement of the HRR formula.  But before moving on to start the proof, let us make a further remark about this trace map we just introduced.  Namely, the object $\mathcal{H}dg(X)\in \operatorname{Perf}(X)$ is canonically self-dual with respect to the invertible object $ \Omega^d[d]\simeq f^!\mathbb{C}$.  More precisely, the map
$$\mathcal{H}dg(X)\otimes \mathcal{H}dg(X)\rightarrow \mathcal{H}dg(X)\rightarrow \Omega^d[d]$$
induced by multiplication followed by projection to the top-dimensional component is a perfect pairing in $\operatorname{Perf}(X)$.  This is a general fact about exterior algebras.  Combining with Serre duality, we deduce that $\operatorname{Hdg}(X)\in\operatorname{Perf}(\mathbb{C})$ is also canonically self-dual via the perfect pairing
$$\operatorname{Hdg}(X)\otimes \operatorname{Hdg}(X)\rightarrow \operatorname{Hdg}(X)\overset{\int_X}{\rightarrow}\mathbb{C}$$
where the first map is multiplication.   In terms of this self-duality, we can say that $\int_X$ is dual to the unit map $\mathbb{C}\rightarrow \operatorname{Hdg}(X)$.

Now let's move towards the proof of HRR.  For this, we will actually need to state and prove a generalization, called the \emph{Grothendieck-Riemann-Roch (GRR) theorem}.  The idea is to at first forget about the Todd class, and focus only on the Chern character.  This is a map
$$\operatorname{ch}: \operatorname{Vect}(X)/\sim \longrightarrow H_0\operatorname{Hdg}(X)$$
from isomorphism classes of vector bundles to Hodge cohomology, but Grothendieck explains how to promote it to a natural transformation of cohomology theories, by introducing \emph{$K$-theory}, a cohomology theory which is formally built out of vector bundles.  Namely, $K_0(\operatorname{Vect}(X))$ is the free abelian group on $\operatorname{Vect}(X)/\sim$ modulo the relations
$$[V]=[V']+[V'']$$
running over all short exact sequences $0\rightarrow V'\rightarrow V\rightarrow V''\rightarrow 0$.  There is a ring structure on $K_0(\operatorname{Vect}(X))$  induced by tensor product of vector bundles, and then we can view the Chern character as a natural transformation
$$\operatorname{ch}: K_0(\operatorname{Vect}(X)) \longrightarrow H_0\operatorname{Hdg}(X)$$
of cohomology theories, i.e.\@ contravariant functors from complex manifolds to commutative rings.

Then Grothendieck rephrases the HRR question as that of how to address the compatibility of $\operatorname{ch}$ with the natural \emph{pushforward} structures which exist on both cohomology theories.  But actually, to do this we have to work with a modification of the $K$-theory of vector bundles, namely the $K$-theory of perfect complexes, since it is only on the level of perfect complexes that we actually have a pushforward functoriality.  The $K$-theory of perfect complexes, $K_0(\operatorname{Perf}(X))$, is defined analogously to that of vector bundles, but we replace short exact sequences with distinguished triangles (or cofiber sequences in the stable $\infty$-category language).

Then the pushforward on $K$-theory of perfect complexes comes from the following category-level statement.

\begin{lemma}\label{preservesperf}
Let $f:X\rightarrow Y$ be a map of compact complex manfiolds.  Then $f_\ast:C_X\rightarrow C_Y$ sends $\operatorname{Perf}(X)$ to $\operatorname{Perf}(Y)$.
\end{lemma}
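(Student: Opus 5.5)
The plan is to characterize $\operatorname{Perf}$ inside $C_X$ as the intersection of dualizable objects with something we already control, and to combine Grauert's Coherence Theorem~\ref{thm:grauert} with a boundedness argument. Since $X$ and $Y$ are compact complex manifolds, $f$ is proper, and it is boundaryless: factor it as the graph $X\to X\times Y$, a Zariski closed immersion because $Y$ is separated, followed by the projection $X\times Y\to Y$. The projection is smooth because $X$ is locally an open polydisk. So Theorem~\ref{thm:grauert} gives $f_\ast\operatorname{Perf}(X)\subset f_\ast\mathcal{D}_{pc}(X)\subset\mathcal{D}_{pc}(Y)$. It remains to show that $f_\ast\mathcal{F}$ has locally bounded Tor-amplitude, since on a regular space a pseudocoherent complex of finite Tor-amplitude is perfect. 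Here $\mathcal{O}(K)$ is noetherian and regular for small Stein $K$ by Theorem~\ref{thm:compactsteincoherent} and its appendix.

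For the Tor-amplitude I would argue as follows. Work locally on $Y$, so $Y=\mathcal{M}(A)$ is an affinoid closed polydisk piece, with $A$ regular noetherian of finite Krull dimension. Then a pseudocoherent $A(\ast)$-complex $M$ is perfect as soon as it is bounded, that is, has only finitely many nonzero homology groups. Regularity gives finite global dimension, so bounded pseudocoherent implies perfect. To see boundedness, use the proper base change theorem: $f_\ast\mathcal{F}$ is computed, after covering $X$ by finitely many closed affinoids with affinoid intersections (Lemma~\ref{Hausdorffbasis}), as a finite Čech limit of forgetful functors. Each term $\Gamma(\mathcal{F}|_{\mathcal{M}(B)})$ is a bounded complex because $\mathcal{F}$ is perfect, hence locally bounded, and there are only finitely many terms. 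So $f_\ast\mathcal{F}$ is homologically bounded above and below in $\mathcal{D}(\operatorname{Liq}_p)$. Since $\mathcal{D}_{pc}(A)\subset\operatorname{Mod}_A(\mathcal{D}(\operatorname{Liq}_p))$ is $t$-exactly embedded (Proposition~\ref{coherentisDNF}(2)), the abstract $A(\ast)$-module complex is bounded too.

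Alternatively, and more robustly, I could check the Tor-amplitude directly. Using the base change compatibility of $f_\ast$ for proper $f$, for any point $y\in Y$ we have $\mathcal{O}(\{y\})\otimes^L_A f_\ast\mathcal{F}\simeq f_\ast(\mathcal{F}|_{f^{-1}(y)})$. The right side is cohomology of a perfect complex on the compact space $f^{-1}(y)$, and it is bounded uniformly in $y$ by the cohomological dimension of $X$ together with the amplitude of $\mathcal{F}$. Bounded fibers at all points then give finite Tor-amplitude over the local rings, and hence perfectness.

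The main obstacle I expect is passing from boundedness in the liquid derived category to perfectness as an abstract $A(\ast)$-complex. This needs the $t$-exactness of the discrete embedding and the regularity of the relevant rings $\mathcal{O}(K)$, and regularity requires noetherianity, which the paper only has for suitable Stein compacts such as polydisks. I would therefore make sure to cover $Y$ by closed polydisk-type affinoids where the appendix applies.
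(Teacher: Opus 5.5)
Your proof is correct, but it gets perfectness from a different input than the paper. The two arguments agree up to the point where Theorem~\ref{thm:grauert} gives $f_\ast\mathcal F\in\mathcal D_{pc}(Y)$ and proper base change reduces to $Y$ a closed polydisk.

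\textbf{The paper's route.} It then proves that $f_\ast\mathcal F$ is \emph{compact} in $C_Y$. Using the graph factorization into an lci Zariski closed immersion followed by a smooth map, the formulas for $f^!$ in those two cases show that $f^!$ commutes with \emph{all} direct sums. Hence $f_\ast$ preserves compact objects. A compact pseudocoherent object is discrete, so it is perfect by Proposition~\ref{prop:discretecompact}.

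\textbf{Your route.} You instead show that $f_\ast\mathcal F$ is homologically \emph{bounded}, as a finite \v{C}ech limit of bounded terms. Evaluation at $\ast$ is already exact, so the $t$-exactness from Proposition~\ref{coherentisDNF}(2) is more than you need here. You then use that $\mathcal O(\overline{\mathbb D}^n)(\ast)$ has finite global dimension.

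\textbf{What each costs.} Your last step requires real ring theory: noetherianity of polydisk algebras (the appendix to Lecture X), plus regularity and the dimension count from Theorem~\ref{thm:compactsteincoherent}(2),(6). The paper's route needs no noetherian or regularity input on the target. It uses only the formal principle ``compact and discrete means perfect'' and the $f^!$ formulas for smooth maps and lci Zariski closed immersions. Both routes still rely on smoothness of $Y$: the paper through lci-ness of the graph, and you through regularity of the function rings.

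\textbf{A caution on your alternative paragraph.} In the paper's notation $\mathcal O(\{y\})$ is the ring of germs at $y$, not the residue field. So boundedness of $\mathcal O(\{y\})\otimes^L_A f_\ast\mathcal F$ only says that the stalk is bounded, and that yields perfectness only by invoking regularity of $\mathcal O_y$ again. It does not give Tor-amplitude directly. To read off Tor-amplitude, base change along $A\to A/\mathfrak m_y$ instead; proper base change applies to this map of affinoids as well. Then bound the cohomology on the derived fiber.
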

\begin{proof}
Let us more generally prove the claim when $f$ is a boundaryless proper map between complex manifolds with boundary.  By descent and proper base-change, we can reduce to where $Y$ is a closed polydisk, in particular proper.  From Theorem \ref{thm:grauert} we know that $f_\ast \mathcal{F}\in \mathcal{D}_{pc}(Y)$ for $\mathcal{F}\in \mathcal{D}_{pc}(X)$, so in particular for $\mathcal{F}\in\operatorname{Perf}(X)$.  Thus it suffices to show that $f_\ast \mathcal{F}$ is a compact object of $C_Y$ when $\mathcal{F}\in\operatorname{Perf}(X)$.  But a perfect complex on a compact analytic space is a compact object (by descent from the affinoid case), so for this it suffices to show that $f_\ast$ preserves compact objects.  And for that it suffices to see that the right adjoint $f^!$ commutes with direct sums.  But we can factor $f$ as the composition of an lci Zariski closed immersion followed by a smooth map using the graph trick, so this follows claim follows from the formulas given for $f^!$ in those two cases in the previous lecture.
\end{proof}

Thus, for a map $f:X\rightarrow Y$ of compact complex manifolds, we have not just the contravariant
$$f^*:K_0(\operatorname{Perf}(Y))\rightarrow K_0(\operatorname{Perf}(X))$$
induced by pullback of perfect complexes, but also the covariant
$$f_\ast: K_0(\operatorname{Perf}(X))\rightarrow K_0(\operatorname{Perf}(Y))$$
induced by pushforward of perfect complexes.

We also have a pushforward structure on Hodge cohomology, defined in a completely different way: we can take
$$f_\ast: \operatorname{Hdg}(X)\rightarrow \operatorname{Hdg}(Y)$$
to be the dual to $f^\ast:\operatorname{Hdg}(Y)\rightarrow \operatorname{Hdg}(X)$ via the self-duality of $\operatorname{Hdg}(-)$ coming from Serre duality.

Then the fact of life is that our natural transformation
$$\operatorname{ch}: K_0(\operatorname{Vect}(X)) \longrightarrow H_0\operatorname{Hdg}(X)$$
does not intertwine these pushforward structures (why should it?).  However, this can be corrected by modifying the pushforward structure on Hodge cohomology, namely, by conjugating with the Todd classes of the tangent bundles.  Formally:

\begin{theorem}\label{GRR}
We work in the category of compact complex manifolds.
\begin{enumerate}
\item There is a natural transformation $\widetilde{\operatorname{ch}}:K_0(\operatorname{Perf}(X))\rightarrow H_0\operatorname{Hdg}(X)$ of multiplicative cohomology theories on compact complex manifolds extending $\operatorname{ch}: K_0(\operatorname{Vect}(X)) \longrightarrow H_0\operatorname{Hdg}(X)$.
\item If $f:X\rightarrow Y$ is a map of compact complex manifolds, then for $c\in K_0(\operatorname{Perf}(X))$, we have
$$\widetilde{\operatorname{ch}}(f_\ast c)\cdot\operatorname{Td}(T_Y)  = f_\ast(\widetilde{\operatorname{ch}}(c)\cdot\operatorname{Td}(T_X)) \in H_0\operatorname{Hdg}(Y).$$
\end{enumerate}
\end{theorem}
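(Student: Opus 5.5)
For part (1), I would not work with the Chern character of arbitrary perfect complexes directly. On a compact complex manifold, whose local rings are regular, the natural map $K_0(\operatorname{Vect}(X))\to K_0(\operatorname{Perf}(X))$ is an isomorphism. To see this I would show that every perfect complex is, in $K_0$, an alternating sum of vector bundles. A clean route is to note that $\operatorname{ch}$ is additive on short exact sequences and that $\operatorname{Perf}(X)$ has a bounded $t$-structure whose heart is coherent sheaves. The additivity of $\operatorname{ch}$ then lets me define $\widetilde{\operatorname{ch}}(\mathcal F)=\sum(-1)^i\operatorname{ch}(E_i)$ for any finite locally free resolution $E_\bullet$ of $\mathcal F$.

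Locally free resolutions need not exist globally on a non-projective manifold, so I would avoid them. Instead I would define $\widetilde{\operatorname{ch}}$ through an Atiyah class. Concretely, every perfect complex $\mathcal F$ has an Atiyah class $\operatorname{At}(\mathcal F)\colon \mathcal F\to\mathcal F\otimes\Omega^1[1]$, obtained from the extension $0\to I/I^2\to\mathcal O_{X\times X}/I^2\to\mathcal O_X\to 0$ of Proposition~\ref{prop:powersofideal}. I would set
\[
\widetilde{\operatorname{ch}}(\mathcal F)=\operatorname{tr}\exp(\operatorname{At}(\mathcal F))\in \bigoplus_i H^i(X;\Omega^i)=H_0\operatorname{Hdg}(X).
\]
This definition is additive in cofiber sequences, because the trace is additive. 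It is multiplicative for tensor products, and it is natural under pullback. For a line bundle it recovers $e^{c_1}$, since $\operatorname{At}(\mathcal L)$ is $d\log$ of the transition functions. Using the splitting principle together with these properties, it then agrees with $\operatorname{ch}$ on all vector bundles.

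For part (2), I would follow Grothendieck. I factor $f$ through the graph as $X\xrightarrow{\Gamma} X\times Y\xrightarrow{p} Y$, where $\Gamma$ is an lci Zariski closed immersion and $p$ is a projection. Both sides of the formula are compatible with composition: for $K_0$ this follows from $(g\circ f)_\ast=g_\ast f_\ast$, and for $\operatorname{Hdg}$ from duality. The Todd classes match because $\operatorname{Td}(T_{X\times Y})=\operatorname{Td}(T_X)\operatorname{Td}(T_Y)$ and $T_X\to\Gamma^\ast T_{X\times Y}\to N_\Gamma$ is exact. So it suffices to treat projections and closed immersions separately.

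For a projection $X\times Y\to Y$, the Künneth isomorphism of the preceding lemma, together with base change of $f_\ast$ and of Serre duality, reduces the claim to HRR for $X\to\ast$. I would make this reduction more robust by instead embedding into a projective bundle over $Y$ and then using the projective bundle formula. Thus the projection case reduces to projective space $\mathbb P^n\to\ast$. By GAGA, that case is an explicit algebraic computation: check $\chi(\mathcal O(k))$ against $\int e^{kh}\operatorname{Td}$.

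The closed immersion $i\colon X\to Z$ is the main obstacle. Here I would use the deformation to the normal cone, Construction~\ref{prop:deformationtonormalcone}, analytified over $\mathbb P^1$. Both sides of the identity vary in a family over $\mathbb P^1$, and pulling back along sections identifies the fibre over $1$ with the fibre over $0$, exactly as in the Serre duality argument. This reduces the problem to the zero section of a vector bundle $N$, and then, compactifying inside $\mathbb P(N\oplus\mathcal O)$, to the zero section of a projective bundle. On a projective bundle everything is computed by the projective bundle formula and the Koszul resolution of $i_\ast\mathcal O$. The resulting identity is the universal formula
\[
\operatorname{ch}\Big(\textstyle\sum(-1)^j\Lambda^jN^\vee\Big)=c_{\mathrm{top}}(N)\operatorname{Td}(N)^{-1},
\]
which I would check by the splitting principle.

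The delicate point in this last step is verifying that the Hodge-side pushforward, defined by Serre duality, equals multiplication by the fundamental class for the zero section. This amounts to identifying $i^!\mathcal O\cong\det N[-c]$ compatibly with $\int$, which is Exercise-level given the lci computation of $f^!$ from the previous lecture.
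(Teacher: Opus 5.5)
The step that fails is the projection case in part (2), and with it your whole reduction to $\mathbb P^n$. The K\"unneth isomorphism holds for $\operatorname{Hdg}$, but not for $K_0$: $K_0(\operatorname{Perf}(X\times Y))$ is not spanned by exterior products $a\boxtimes b$. Already $[\mathcal O_\Delta]$ on $E\times E$, for an elliptic curve $E$, is not, since its Chern character has a nonzero component in $H^1(E)\otimes H^1(E)$. So HRR for $X\to\ast$ tells you nothing about $p_\ast c$ for a general class $c$.

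Your fallback, embedding into a projective bundle over $Y$, requires $f$ to be a projective morphism; for $Y=\ast$ it requires $X$ to be projective. For a non-algebraic compact complex manifold (a general complex torus, a non-algebraic K3 surface, a Hopf surface) no such embedding exists. So your argument does not even reach HRR for $X\to\ast$. It covers only projective morphisms, and for $Y=\ast$ only projective $X$, where the result already follows from GAGA and the algebraic theorem. This is exactly the classical obstruction to running Grothendieck's d\'evissage in analytic geometry. Relatedly, your opening claim that $K_0(\operatorname{Vect}(X))\to K_0(\operatorname{Perf}(X))$ is an isomorphism is false for non-projective $X$ in general (Voisin's general complex tori). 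You abandon it, but it should not be asserted.

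The missing idea is how the paper removes the projection case altogether. It defines $\widetilde{\operatorname{ch}}(E)$ as the image of $1$ under the map $\mathbb C=\operatorname{HH}(C_\ast)\to\operatorname{HH}(X)=\dim(C_X)$ induced on categorical traces by $-\otimes E\colon C_\ast\to C_X$. Compatibility of $\widetilde{\operatorname{ch}}$ with $f_\ast$, with values in $\operatorname{HH}$, is then formal. It follows from the projection formula $f_\ast(-\otimes E)\cong -\otimes f_\ast E$ and the functoriality of traces along functors with $C_\ast$-linear right adjoints (Proposition~\ref{prop:GHRRformal}).

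GRR thereby becomes a comparison of two pushforward structures on the single theory $\operatorname{HH}\cong\operatorname{Hdg}$ (Proposition~\ref{prop:GHRRtodd}). In such a theory $f_\ast$ is dual to $f^\ast$ for the self-duality whose coevaluation is $\Delta_{X\ast}(1)$. Hence it suffices to compare the diagonal classes, and more generally $i_\ast(1)$ for closed immersions $i$. Deformation to the normal cone and the splitting principle reduce these to codimension one, i.e.\ to Euler classes of line bundles. Those are $c_1$ in $\operatorname{Hdg}$ and $1-e^{-c_1}$ in $\operatorname{HH}$, and their ratio is the Todd class (Theorem~\ref{thm:abstractGHRR}). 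No projective embedding and no computation on $\mathbb P^n$ is ever needed.

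Your Atiyah-class definition is a reasonable alternative for part (1), but it carries no formal compatibility with pushforward. That is why your whole argument rests on the classical d\'evissage, which is precisely what breaks down here.
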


When $Y=\ast$, this GRR theorem exactly recovers the HRR theorem.  But to prove even that special case, we need to consider the general case.

\begin{remark}
In principle, one should state and prove this result more generally, for an arbitrary proper map of not-necessarily-compact complex manifolds.  The proof we present here does not directly give that generalization.
\end{remark}

Now we can explain the strategy of proof.  We will introduce an intermediary cohomology theory $\operatorname{HH}(X)$, which in some sense lies halfway between $K(\operatorname{Perf}(X))$ and $\operatorname{Hdg}(X)$.  This $\operatorname{HH}(X)$ also has a natural pushforward structure.  We will define $\widetilde{\operatorname{ch}}$ as a composition of natural transformations of multiplicative cohomology theoreis
$$K_0(\operatorname{Perf}(X))\overset{\alpha}{\longrightarrow} H_0\operatorname{HH}(X)\overset{\beta}{\longrightarrow} H_0\operatorname{Hdg}(X),$$
where:

\begin{enumerate}
\item $\alpha$ also commutes with pushforwards, on the nose, without any Todd classes or correction factors;
\item $\beta$ is an isomorphism.
\end{enumerate}

This reduces the GRR problem to showing that two different pushfowards structures on the same cohomology theory agree: namely, the cohomology theory is Hodge cohomology, and the two different pushforwards are the one from Serre duality twisted by the Todd class, and the one coming from $\operatorname{HH}(X)$ via the isomorphism $\beta$.

In the rest of this lecture, we will define $\operatorname{HH}(X)$ and produce the isomorphism $\beta$, and in the next lecture we will do the remainder of the work.

\begin{definition}
Let $X$ be a complex manifold with boundary, and let $\Delta:X\rightarrow X\times X$ denote the diagonal.  Define the \emph{Hochschild homology}
$$\mathcal{HH}(X) := \Delta^\ast \Delta_\ast \mathcal{O}_X \in \operatorname{Perf}(X),$$
and
$$\operatorname{HH}(X) = R\Gamma(X;\mathcal{HH}(X))\in \mathcal{D}(\operatorname{Liq}_p).$$
\end{definition}
Note that these are commutative algebra objects, contravariantly functorial in $X$ via pullback.  By descent, they are determined by the affinoid case, and if $X=\mathcal{M}(A)$ is affinoid, then
$$\operatorname{HH}(X) = A\otimes_{A\otimes A} A,$$
where all tensor products are derived liquid tensor products.  Here let's say that the $A$-algebra structure on $\operatorname{HH}(X)$, promoting it to $\mathcal{HH}(X)$, comes from the right hand factor.

The first step to constructing the isomorphism $\beta$ relating Hochschild homology to Hodge cohomology is the \emph{Hochschild-Kostant-Rosenberg (HKR) theorem}.

\begin{theorem}
Let $X$ be a complex manifold with boundary.  Then there is a natural isomorphism
$$H_\ast \mathcal{HH}(X) \simeq \bigoplus_i \Omega^i[i]$$
of graded commutative algebra objects in the category of coherent sheaves on $X$.
\end{theorem}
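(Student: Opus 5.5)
We construct first a natural map of graded commutative algebras $\bigoplus_i \Omega^i[i]\to H_\ast\mathcal{HH}(X)$, and then show it is an isomorphism by a local computation. For the map: $\mathcal{HH}(X)=\Delta^\ast\Delta_\ast\mathcal O_X$ is a commutative algebra in $\operatorname{Perf}(X)$, and by the HKR-type identification $H_1\mathcal{HH}(X)=\Delta^\ast H_1$ of the fibre of $\mathcal O_{X\times X}\to\Delta_\ast\mathcal O_X$. Concretely, with $I$ the ideal of the diagonal, the Tor-group $\operatorname{Tor}_1^{\mathcal O_{X\times X}}(\mathcal O_X,\mathcal O_X)=I/I^2=\Omega^1$. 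Here we use that $\Delta$ is a Zariski closed immersion, since $X$ is separated locally, and that the $t$-structure on $\mathcal D_{pc}$ glues, by flatness of the restriction maps. So $H_0\mathcal{HH}(X)=\mathcal O_X$ and $H_1\mathcal{HH}(X)=\Omega^1$, functorially.

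Since $H_\ast\mathcal{HH}(X)$ is a graded-commutative $\mathcal O_X$-algebra and we are in characteristic $0$, so that odd elements square to zero, the identification of $H_1$ extends uniquely to a map of graded-commutative algebras
$$\textstyle\bigwedge^\ast_{\mathcal O_X}\Omega^1[\ast]\to H_\ast\mathcal{HH}(X).$$
This map is natural in $X$ and compatible with restriction to closed sub-affinoids.

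To check that it is an isomorphism, we argue locally. Both sides lie in coherent sheaves, and coherent sheaves form a sheaf of abelian categories (Corollary~\ref{cor:coherentsheavesdescend}), so it suffices to work on affinoid closed neighborhoods and even on stalks. Locally, $X$ is an open subset of a closed polydisk, so we may take $X=\mathcal M(A)$ with $A=\mathcal O(\overline{\mathbb D}^n)$ or a small closed polydisk inside the chart. Then $A\otimes A=\mathcal O(\overline{\mathbb D}^{2n})$ by the liquid tensor product computations (Lecture IV). Moreover $A=(A\otimes A)/(x_1-y_1,\dots,x_n-y_n)$, and this is a Koszul (regular sequence) quotient: we can identify it by base change from $\mathbb C[x,y]\to\mathbb C[x]$, using Lemma~\ref{analyticaffinelemma} and flatness over polynomial rings.

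The Koszul resolution then gives
$$A\otimes_{A\otimes A}A\simeq \operatorname{Kos}(A;0,\dots,0)=\textstyle\bigwedge^\ast_A(A^n)[\ast],$$
whose homology is exterior on the classes $dx_i$, which are exactly the generators of $I/I^2=\Omega^1$. Hence the map is an isomorphism locally, and therefore globally.

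The main obstacle is making sure that the liquid derived tensor products agree with these discrete algebraic computations. This means identifying $\mathcal O(\overline{\mathbb D}^n)\otimes_{<p}\mathcal O(\overline{\mathbb D}^n)$ with $\mathcal O(\overline{\mathbb D}^{2n})$ in the derived sense, and checking that the Koszul complex computes the derived liquid tensor product. For the first point we use the idempotent/flatness results; for the second we use that the modules involved are pseudocoherent and relatively discrete, via Proposition~\ref{prop:nuclearpseudocompactdiscrete}, so that everything reduces to abstract modules over $A(\ast)$.
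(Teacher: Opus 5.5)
Your proposal is correct and follows essentially the paper's route: reduce by descent to a polydisk affinoid $A$, use that $A$ is idempotent over $\mathbb C[x_1,\ldots,x_n]$ to transport the Koszul resolution of the diagonal, and read off the exterior algebra on $I/I^2=\Omega^1$. The only difference is cosmetic. The paper pushes the whole computation down to the polynomial ring, whereas you base change the Koszul complex up to $A\otimes A$. In your version, idempotence alone already gives $(A\otimes A)\otimes^L_{\mathbb C[x,y]}\mathbb C[x]\simeq A\otimes^L_{\mathbb C[x]}A\simeq A$, so the appeal to Proposition~\ref{prop:nuclearpseudocompactdiscrete} is unnecessary.
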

\begin{proof}
By descent, it suffices to construct such a natural isomorphism when $X=\mathcal{M}(A)$ is an affinoid isomorphic to a closed polydisk.  Take the boundary map associated to the fiber sequence
$$I\rightarrow A\otimes A\rightarrow A$$
and apply $-\otimes_{A\otimes A} A$; this gives a natural map of $A$-modules
$$\mathcal{HH}(A)\rightarrow \Sigma I\otimes_{A\otimes A} A,$$
whence on $H_1$ a natural map of $A$-modules
$$H_1 \mathcal{HH}(A)\rightarrow \Omega^1_A.$$
It suffices to show that this map is an isomorphism, and that
$$\Lambda^\ast_A H_1 \mathcal{HH}(A)\overset{\sim}{\rightarrow} H_\ast  \mathcal{HH}(A).$$
Choosing an isomorphism with the closed unit disk, the resulting map
$$\mathbb{C}[x_1,\ldots,x_n]\rightarrow A$$
is idempotent in liquid algebras and flat as a map of discrete rings, so we reduce to the analogous claims for $A=\mathbb{C}[x_1,\ldots,x_n]$, where it is a simple Koszul complex calculation.  (One can even use a K\"{u}nneth argument to reduce to the case $n=1$.)
\end{proof}

Now we would like to promote this to a natural commutative algebra isomorphism $\mathcal{HH}(X) \simeq \bigoplus_i \Omega^i[i]$ in $C_X$.  For that we will use a trick: the action of Adams operations on $\mathcal{HH}(X)$.  Again, by descent it suffices to consider the affinoid case $X=\mathcal{M}(A)$.  Then
$$\operatorname{HH}(A) = A\otimes_{A\otimes A}A = A^{\otimes S^1},$$
the coproduct of $A$ indexed by the anima $S^1$ in the $\infty$-category of commutative algebra objects of $\mathcal{D}(\operatorname{Liq}_p)$.  Indeed, this holds because the anima $S^1$ is the pushout of $\ast \leftarrow \ast\sqcup \ast\rightarrow\ast$.  Moreover, the $A$-algebra structure on $\operatorname{HH}(A)$, promoting it to $\mathcal{HH}(A)$, comes from the inclusion of a base-point in $S^1$.

We consider the degree two map $S^1\rightarrow S^1$.  It sends the base-point to the base-point, so it induces a natural commutative $A$-algebra endomorphism
$$\psi^2: \operatorname{HH}(A)\rightarrow \operatorname{HH}(A).$$
The key is the following.

\begin{lemma}
On $H_i  \operatorname{HH}(A)$, the endomorphism $\psi^2$ induces the map of multiplication by $2^i$.
\end{lemma}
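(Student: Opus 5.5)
The plan is to reduce, exactly as in the proof of the HKR theorem above, to the case $A=\mathbb C[x_1,\ldots,x_n]$ (or rather to work with the universal case and transport), and there compute $\psi^2$ on the generators $H_1$ and use multiplicativity. Since $\psi^2$ is a map of commutative $A$-algebras and, by HKR, $H_\ast\operatorname{HH}(A)=\Lambda^\ast_A H_1\operatorname{HH}(A)$ is generated as an $A$-algebra by $H_0=A$ and $H_1\cong\Omega^1_A$, it suffices to show that $\psi^2$ acts as the identity on $H_0$ and as multiplication by $2$ on $H_1$; then on $H_i=\Lambda^i H_1$ it acts by $2^i$.

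On $H_0$: the $A$-algebra structure comes from the base point, and $\psi^2$ is an $A$-algebra map, while $H_0\operatorname{HH}(A)=A$ via the unit. So $\psi^2$ is the identity there. For $H_1$, I will use naturality of $\psi^2$ in $A$. The identification $H_1\operatorname{HH}(A)\cong\Omega^1_A$ is natural in $A$ and $\Omega^1_A$ is generated (as an $A$-module, after the idempotent/flat reduction to polynomial rings used in HKR) by classes $df$ pulled back along $\mathbb C[T]\to A$, $T\mapsto f$. So it suffices to treat the universal case $A=\mathbb C[T]$ and the class $dT\in H_1$. Here $\operatorname{HH}(\mathbb C[T])=\mathbb C[T]^{\otimes S^1}$, and $\mathbb C[T]$ is the free commutative algebra on one generator in degree $0$; since we are in characteristic zero (as used in Lemma~\ref{replacebyH0}), $\mathbb C[T]^{\otimes S^1}$ is the free commutative algebra on $\mathbb C\otimes \Sigma^\infty_+ S^1$, i.e. on $C_\ast(S^1;\mathbb C)=\mathbb C\oplus\mathbb C[1]$. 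The degree-one generator is the fundamental class of $S^1$, which corresponds to $dT$, and the degree two map acts on $H_1(S^1)$ by $2$ and on $H_0(S^1)$ by $1$. Thus $\psi^2(dT)=2\,dT$ and $\psi^2(T)=T$.

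The main obstacle is checking that the generator of $H_1$ coming from $H_1(S^1)\otimes T$ in the free algebra description really matches, under the HKR map (the boundary map $\mathcal{HH}(A)\to\Sigma I\otimes_{A\otimes A}A$), the class $dT$ up to a unit, and that the identification of $H_1\operatorname{HH}(A)$ with $\Omega^1_A$ is compatible with the pullbacks along $T\mapsto f$ so that the universal computation propagates. I would handle the first by a direct Koszul computation for $\mathbb C[T]$: $H_1\operatorname{HH}$ is free of rank one over $\mathbb C[T]$, so any natural generator differs from $dT$ by a unit, and the eigenvalue $2$ is unaffected. The second follows from naturality of both the $S^1$-description and the boundary map in $A$, together with the fact that $H_1$ is generated by the images of such $df$, which holds for closed polydisk affinoids by the reduction to polynomial rings and for general $X$ by descent.
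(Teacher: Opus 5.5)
Your proposal is correct and is essentially the paper's proof: multiplicativity and HKR reduce to $H_1$, naturality reduces to the one-variable polynomial algebra, and the only difference is that you compute the degree-two map via $\mathbb C[T]^{\otimes S^1}\simeq\mathrm{Sym}(C_\ast(S^1;\mathbb C))$ rather than via a simplicial model of $S^1$. Your stated ``main obstacle'' is not actually an obstacle: $\psi^2$ is $\mathbb C[T]$-linear and $H_1\operatorname{HH}(\mathbb C[T])$ is free of rank one on the fundamental-class generator, so $\psi^2$ is multiplication by $2$ on all of $H_1$ without having to match that generator to $dT$.
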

\begin{proof}
By multiplicativity and HKR, it suffices to check this on $H_1$, and as in the previous proof we can reduce to the case of a polynomial algebra in one generator, where it is a simple calculation after choosing a simplicial model for the degree two map on $S^1$.
\end{proof}

Now we have the more refined HKR theorem.

\begin{theorem}\label{thm:refinedHKR}
Let $X$ be a complex manifold with boundary.  Then there is a natural isomorphism
$$\mathcal{HH}(X) \simeq \bigoplus_i \Omega^i[i] = \mathcal{H}dg(X)$$
of commutative algebra objects in $C_X$, and hence on global sections a natural isomorphism
$$\beta:\operatorname{HH}(X)\simeq\operatorname{Hdg}(X)$$
of commutative algebra objects in $\mathcal{D}(\operatorname{Liq}_p)$.
\end{theorem}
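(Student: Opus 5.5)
The plan is to split $\mathcal{HH}(X)$ canonically into eigen-pieces for the Adams operation $\psi^2$, identify each piece with a single shifted homology sheaf, and then compare with $\mathcal{H}dg(X)$ via the HKR theorem. Everything is first done on affinoids $X=\mathcal M(A)$ isomorphic to closed polydisks, and then glued by descent along finite closed covers refined by open covers. Because everything is natural and $\mathcal{HH}$ commutes with these base changes, the local isomorphisms will glue. The global statement for $\operatorname{HH}(X)$ then follows by applying $R\Gamma$.

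\textbf{Step 1 (eigen-splitting).} On such $A$, the object $\mathcal{HH}(A)$ is perfect with homology concentrated in degrees $0,\dots,n$, where $n=\dim X$. This follows from the HKR theorem, since $\Omega^i=0$ for $i>n$. We work over $\mathbb Q\subset\mathbb C$ and set
$$e_i=\prod_{j\neq i,\,0\le j\le n}\frac{\psi^2-2^j}{2^i-2^j}.$$
This is an endomorphism of $\mathcal{HH}(A)$ in $\mathrm{Mod}_A(\mathcal D(\mathrm{Liq}_p))$, and it is natural in $A$.

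Let $P=\prod_{j=0}^n(\psi^2-2^j)$. By the preceding lemma, $P$ kills every homology group. Using the Postnikov filtration, which has finite length $n+1$, we get that $P^{N}=0$ for some $N$. So $\psi^2$ satisfies a polynomial with distinct roots up to nilpotence. The standard idempotent-lifting argument then shows that $\mathcal{HH}(A)$ splits canonically as $\bigoplus_i \mathcal{HH}^{(i)}(A)$, where $\mathcal{HH}^{(i)}$ is the generalized $2^i$-eigenpart, for instance the image of a suitable polynomial in $\psi^2$ that is idempotent in the homotopy category. Idempotents split in the stable $\infty$-category, so these pieces exist.

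\textbf{Step 2 (identifying the pieces).} By the lemma on $\psi^2$, the piece $\mathcal{HH}^{(i)}$ has homology only in degree $i$. Hence $\mathcal{HH}^{(i)}\simeq H_i\mathcal{HH}[i]$, and by HKR this is $\simeq\Omega^i[i]$. To make this an identification in $C_X$ rather than just on homology, we use the canonical truncation map $\mathcal{HH}^{(i)}\to\tau_{\ge i}\mathcal{HH}^{(i)}=H_i[i]$, which is an equivalence. Naturality in $A$ comes from the naturality of $\psi^2$ and of the truncations. This gives a natural equivalence of $A$-modules $\mathcal{HH}(X)\simeq\mathcal{H}dg(X)$.

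\textbf{Step 3 (multiplicativity).} This is the main obstacle. It is not enough to have a module equivalence; we need one of commutative algebras. The key point is that $\psi^2$ is a map of commutative $A$-algebras, so the eigen-decomposition is a grading: multiplication sends $\mathcal{HH}^{(i)}\otimes\mathcal{HH}^{(j)}$ into $\mathcal{HH}^{(i+j)}$, since $\psi^2$ acts on the source by $2^{i+j}$.

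The plan is to exhibit $\mathcal{HH}(A)$ as a graded commutative algebra whose weight-$i$ part is concentrated in homological degree $i$. Such a graded $E_\infty$-algebra over $A$ in characteristic $0$ should be formal. Concretely, the space of graded commutative algebra structures on $\bigoplus_i M_i[i]$ with each $M_i$ discrete is discrete, because all obstruction groups lie in negative degrees by weight/degree reasons. The structure is therefore determined by its homology algebra, and by HKR that is the exterior algebra $\Lambda^\ast_A\Omega^1_A$.

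A cleaner alternative would use the universal property of $\mathcal{H}dg$. In characteristic $0$, $\bigoplus_i\Omega^i[i]=\mathrm{Sym}_A(\Omega^1_A[1])$ is the free commutative $A$-algebra on $\Omega^1_A[1]$. One then maps $\Omega^1[1]\simeq\mathcal{HH}^{(1)}\hookrightarrow\mathcal{HH}$ and extends multiplicatively. This map is an algebra map by construction, and it is an isomorphism on homology by the HKR theorem. I would try this second route first, since it avoids formality arguments. The only care needed is that the free-algebra identification $\mathrm{Sym}_A(\Omega^1[1])=\Lambda\Omega^1$ (shifted) holds in $\mathrm{Mod}_A(\mathcal D(\mathrm{Liq}_p))$. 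That follows because $\Omega^1_A$ is finite free, so everything reduces to discrete linear algebra over $\mathbb Q$.
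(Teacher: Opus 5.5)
Your preferred route in Step 3 is essentially the paper's proof: since $\mathcal{H}dg(X)$ is the free commutative algebra on $\Omega^1[1]$ (characteristic $0$, Koszul signs), it suffices to give a natural map $\Omega^1[1]\simeq (H_1\mathcal{HH})[1]\to\mathcal{HH}$ that is an isomorphism on $H_1$, and the HKR theorem on homology then makes the induced algebra map an equivalence. The one difference is that the paper isolates the weight-one piece directly as the fiber of $\mathcal{HH}(A)\to\mathcal{HH}(A)[\frac{1}{T-2}]$ (with $T$ acting by $\psi^2$), which needs no boundedness or nilpotence argument, so your full idempotent splitting in Steps 1--2 and the formality alternative are more than is required.
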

\begin{proof}
Note that $\mathcal{H}dg(X)$ is the free commutative algebra object on $\Omega^1[1]$.  Indeed, we are in characteristic $0$, so there are no higher derived functors of $\Sigma_n$-invariants, and as $\Omega^1[1]$ lives in odd degree the symmetric power becomes an alternating power due to the Koszul sign rule.  Thus, by the above HKR theorem on homology, it suffices to show that there's a natural map 
$$(H_1 \mathcal{HH}(X))[1]\rightarrow \mathcal{HH}(X)$$
in $C_X$ inducing an isomorphism on $H_1$.  For this, work on affinoids, and consider $\psi^2$ just as an $A$-module endomorphism of $\mathcal{HH}(A)$, giving an $A[T]$-module structure where $T$ acts by $\psi^2$.  Then as $T-2$ and $T-2^i$ generate the unit ideal for $i\neq 1$, the previous lemma implies that the $A$-module map
$$\mathcal{HH}(A)\rightarrow \mathcal{HH}(A)[\frac{1}{T-2}]$$
is an isomorphism on homology in all degrees $\neq 1$.  But in degree $1$ the target has vanishing homology because $T-2$ acts both as $0$ and as an isomorphism.  It follows that the fiber of this map identifies with $(H_1 \mathcal{HH}(A))[1]$, giving the required construction.
\end{proof}

Finally, in order to segue into the next lecture, we explain a categorical reinterpretation of this Hochschild homology $\operatorname{HH}(X)$ in the case where $X$ is compact (proper over $\ast$).  In fact, we will see in this case that $\operatorname{HH}(X)$ only depends on the $\infty$-category $C_X$, tensored over our base symmetric monoidal $\infty$-category $C_\ast = \mathcal{D}(\operatorname{Liq}_p)$.

More precisely, we will implicitly fix a cut-off cardinal in our condensed world, so that all our $\infty$-categories are presentable.  Then our ambient setting will be
$$\operatorname{Mod}_{C_\ast}(\operatorname{Pr}^L).$$
Here $\operatorname{Pr}^L$ is the $\infty$-category of presentable $\infty$-categories, with morphisms given by the colimit-preserving functors, or equivalently those which admit a right adjoint.  This $\operatorname{Pr}^L$ is a symmetric monoidal $\infty$-category with respect to Lurie's tensor product, which is characterized by the fact that maps
$$\mathcal{C}\otimes\mathcal{D}\rightarrow\mathcal{E}$$
in $\operatorname{Pr}^L$ are the same as functors
$$\mathcal{C}\times\mathcal{D}\rightarrow\mathcal{E}$$
which preserve colimits in each variable separately.  Moreover $\operatorname{Pr}^L$ admits all colimits, and the tensor product on $\operatorname{Pr}^L$ preserves colimits in each variable.  Note finally that $C_\ast$ is a commutative algebra object in this symmetric monoidal $\infty$-category $\operatorname{Pr}^L$, because of the liquid tensor product on $C_\ast$ which commutes with colimits in each variable.  Thus $\operatorname{Mod}_{C_\ast}(\operatorname{Pr}^L)$ is defined, and is itself symmetric monoidal, via relative tensor product over $C_\ast$.

As an example of a tensor product calculation in $\operatorname{Mod}_{C_\ast}(\operatorname{Pr}^L)$ (valid more generally with $C_\ast$ replaced by any commutative algebra object in $\operatorname{Pr}^L$), suppose given two algebras $A$ and $B$ in $C_\ast$.  Then $\operatorname{Mod}_A(C_\ast),\operatorname{Mod}_B(C_\ast) \in \operatorname{Mod}_{C_\ast}(\operatorname{Pr}^L)$, and we have
$$\operatorname{Mod}_A(C_\ast)\otimes_{C_\ast}\operatorname{Mod}_B(C_\ast)\simeq \operatorname{Mod}_{A\otimes B}(C_\ast),$$
see \cite[Remark 4.8.5.17]{lurie2017higher}.

We can use this basic fact to prove the following.

\begin{lemma}\label{lem:symmetricmonoidalCX}
Let $X$ and $Y$ be complex analytic spaces.  Then
$$C_X\otimes_{C_\ast} C_Y \overset{\sim}{\rightarrow} C_{X\times Y}.$$
\end{lemma}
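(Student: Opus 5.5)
The plan is to reduce to the affinoid case using descent, and then to apply the formula $\operatorname{Mod}_A(C_\ast)\otimes_{C_\ast}\operatorname{Mod}_B(C_\ast)\simeq\operatorname{Mod}_{A\otimes B}(C_\ast)$ recalled just above. In the affinoid case $X=\mathcal M(A)$, $Y=\mathcal M(B)$ we have $C_X=\operatorname{Mod}_A(C_\ast)$ and $C_Y=\operatorname{Mod}_B(C_\ast)$. The fibre product computed in Lecture XI over the base $\mathcal M(\mathbb C)$ gives $X\times Y=\mathcal M(A\otimes B)$, and hence $C_{X\times Y}=\operatorname{Mod}_{A\otimes B}(C_\ast)$. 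One still has to check that the comparison functor, which sends $(M,N)$ to $\mathrm{pr}_1^\ast M\otimes\mathrm{pr}_2^\ast N$, agrees with Lurie's equivalence. This is true because both are the colimit-preserving bilinear extension of $(A,B)\mapsto A\otimes B$.

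Next, I globalize in one variable at a time, starting by fixing $Y$ affinoid. The functor $\mathcal D\mapsto \mathcal D\otimes_{C_\ast} C_Y$ commutes with colimits in $\operatorname{Mod}_{C_\ast}(\operatorname{Pr}^L)$. The key input is that $C_Y=\operatorname{Mod}_B(C_\ast)$ is dualizable in $\operatorname{Mod}_{C_\ast}(\operatorname{Pr}^L)$ (with dual $\operatorname{Mod}_B(C_\ast)$ itself), so tensoring with it also preserves all limits. Now $C_X$ is a limit of module categories: open descent writes it as a limit over quasi-affinoids, and each quasi-affinoid is a limit, over finite closed covers refined by open covers, of affinoid module categories. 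These limits are taken in $\operatorname{Pr}^L$ along pullback functors, or equivalently in $\operatorname{Pr}^R$, and limits in $\operatorname{Mod}_{C_\ast}(\operatorname{Pr}^L)$ are computed underlying. Hence $C_X\otimes_{C_\ast} C_Y$ is the corresponding limit of the categories $C_{\mathcal M(A_i)\times Y}$.

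It then remains to see that this limit is $C_{X\times Y}$. The products $\mathcal M(A_i)\times Y$ and $U\times Y$ form covers of $X\times Y$ of the same type (open covers, and closed covers refined by open covers). Since products commute with the forgetful functor to topological spaces, this follows from the sheaf property of $C$ on complex analytic spaces, i.e. the equivalence of sites proved in Lecture XII. After that, I repeat the argument in the $Y$ variable with $X$ arbitrary, using dualizability of affinoid module categories in the same way.

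The main obstacle is the commutation of $-\otimes_{C_\ast}C_Y$ with the descent limits. I handle it by the dualizability of $\operatorname{Mod}_B(C_\ast)$. One must be careful that the descent limits are genuinely limits in $\operatorname{Mod}_{C_\ast}(\operatorname{Pr}^L)$: for closed covers the pullbacks are localizations with colimit-preserving right adjoints, so these are limits in $\operatorname{Pr}^L$, and for open covers the same holds. Once $Y$ is affinoid, the limit then passes through the tensor product.
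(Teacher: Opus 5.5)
Your affinoid case and your first globalization step (arbitrary $X$, affinoid $Y$) are fine. $\operatorname{Mod}_B(C_\ast)$ is dualizable in $\operatorname{Mod}_{C_\ast}(\operatorname{Pr}^L)$, so $-\otimes_{C_\ast}C_Y$ preserves all limits, and the descent limits are computed underlying. One correction: a non-compact quasi-affinoid $U$ has no \emph{finite} closed cover by affinoids. You should instead present $C_U$ as the limit over all affinoids mapping to $U$. Since $C_Y$ is dualizable, any limit presentation works.

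The gap is the final step. To pass from affinoid $Y$ to arbitrary $Y$ with $X$ arbitrary, you have to commute $C_X\otimes_{C_\ast}-$ with the descent limit $C_Y\simeq\lim_j C_{Y_j}$. Dualizability of the affinoid $C_{Y_j}$ is irrelevant here. You would need $C_X$ itself to be dualizable, which you have not shown and which is not available for general $X$ at this stage; the later self-duality of $C_X$ is built on this very lemma. Swapping the roles of $X$ and $Y$ does not help, since neither factor is then known to be dualizable.

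The missing idea is the one the paper uses. For open covers, each $j^\ast$ has a $C_\ast$-linear left adjoint $j_\natural$ (by the projection formula). So the limit $C_X\simeq\lim_U C_U$ along the $j^\ast$ is also the colimit $\varinjlim_U C_U$ in $\operatorname{Mod}_{C_\ast}(\operatorname{Pr}^L)$ along the $j_\natural$ \cite[5.5.3.18]{LurieHTT}. Since $\otimes_{C_\ast}$ commutes with colimits in each variable, this reduces arbitrary $X$ to quasi-affinoid $X$, for arbitrary $Y$, with no dualizability needed.

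The quasi-affinoid case can then be finished in either of two ways:
\begin{enumerate}
\item As in the paper: tensoring preserves the localization sequence $\operatorname{Mod}_B\to\operatorname{Mod}_A\to C_U$. Combine this with $\operatorname{Mod}_A(C_\ast)\otimes_{C_\ast}C_Y\simeq\operatorname{Mod}_A(C_Y)\simeq C_{\mathcal M(A)\times Y}$, which holds for arbitrary $Y$.
\item In your style: $C_U$ is a retract of $\operatorname{Mod}_A(C_\ast)$ via $j_\natural$ and $j^\ast$, hence dualizable. So $C_U\otimes_{C_\ast}-$ does commute with the descent limit for $C_Y$, and your first step finishes the argument.
\end{enumerate}
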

\begin{proof}
If $X=\mathcal{M}(A)$ is affinoid, then $C_{X\times Y}\simeq \operatorname{Mod}_A(C_Y)$ and the claim follows as in the previous remarks.  Now suppose $X$ is quasiaffinoid, inside an affinoid $\mathcal{M}(A)$ with closed complement given by the idempotent algebra $B\in \operatorname{Mod}_A$.  The localization sequence
$$\operatorname{Mod}_B\overset{i_\ast}{\rightarrow} \operatorname{Mod}_A\overset{j^\ast}{\rightarrow} C_X$$
is $C_\ast$-linear, and the left adjoints are also $C_\ast$-linear.  It then follows from the symmetric monoidal $(\infty,2)$-category structure of $\operatorname{Mod}_{C_\ast}(\operatorname{Pr}^L)$ that $-\otimes_{C_\ast} C_Y$ sends this localization sequence to another such localization sequence, see \cite{hoyois2017higher}, and combining again with the remarks about tensoring with module categories this implies the claim.  For general $X$, by descent we have that in $\operatorname{Cat}_\infty$ the pullback functors give
$$C_X = \varprojlim_{U\subset X} C_U,$$
where $U\subset X$ runs over all quasi-affinoid open subsets of $X$.  As the pullback functors along open inclusions have left adjoints which are $C_\ast$-linear by the projection formula, this gives
$$C_X = \varinjlim_{U\subset X} C_U$$
in $\operatorname{Mod}_{C_\ast}(\operatorname{Pr}^L)$, via left adjoint to pullback, see \cite[5.5.3.18]{LurieHTT}.  In this manner we reduce the general case to the quasi-affinoid case, giving the result.
\end{proof}

Now we can give the promised categorified interpretation of $\operatorname{HH}(X)$ for $X$ proper.  This is based on the concept of ``dimension'' of a dualizable object in a symmetric monoidal $\infty$-category.  If $c\in\mathcal{C}^\otimes$ is dualizable, then by definition we have
$$\operatorname{dim}(c) = \left[1\rightarrow c\otimes c^\vee\rightarrow 1\right] \in \operatorname{End}_\mathcal{C}(1).$$
Here $1$ is the unit object of $\mathcal{C}$, the first map is the coevaluation for the duality, and the second map is the evaluation for the duality.

In our case, where $\mathcal{C} = \operatorname{Mod}_{C_\ast}(\operatorname{Pr}^L)$, we have
$$\operatorname{End}_\mathcal{C}(1) = C_\ast,$$
or more precisely the anima of objects of $C_\ast$, so given a dualizable object $M$ of $\operatorname{Mod}_{C_\ast}(\operatorname{Pr}^L)$ we get an object
$$\operatorname{dim}(M)\in C_\ast.$$
Then the result is the following.

\begin{theorem}
Let $X$ be a proper complex manifold with boundary.  Then there is a natural identification
$$\operatorname{dim}(C_X) = \operatorname{HH}(X).$$
\end{theorem}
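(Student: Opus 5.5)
The plan is to exhibit $C_X$ as self-dual in $\operatorname{Mod}_{C_\ast}(\operatorname{Pr}^L)$, with explicit evaluation and coevaluation built from the diagonal, and then compute the dimension as a composite of pullbacks and pushforwards. By Lemma~\ref{lem:symmetricmonoidalCX} we have $C_X\otimes_{C_\ast}C_X\simeq C_{X\times X}$. Write $f:X\to\ast$ and $\Delta:X\to X\times X$. For the coevaluation we take the $C_\ast$-linear colimit-preserving functor $C_\ast\to C_{X\times X}$, $M\mapsto \Delta_\ast f^\ast M$. For the evaluation we take $C_{X\times X}\to C_\ast$, $\mathcal F\mapsto f_\ast\Delta^\ast\mathcal F$. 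Both are colimit-preserving and $C_\ast$-linear: $\Delta$ is a Zariski closed immersion, hence proper since $X$ is Hausdorff, and $f$ is proper, so the proper base change theorem (and its version Theorem~\ref{thm:lowershriek} with $f_!=f_\ast$) gives colimit preservation, base change and the projection formula.

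Next we verify the triangle identities. The composite $C_X\to C_X\otimes C_X\otimes C_X\to C_X$ is given by the integral-transform calculus. Writing $p_{ij}$ for the projections from $X^3$, it sends $\mathcal G$ to
\[
p_{3\ast}\bigl((\Delta_{12}^\ast)(\mathcal G\boxtimes \Delta_\ast\mathcal O_X)\bigr),
\]
where $\Delta_{12}^\ast$ restricts to $\{x_1=x_2\}$. Using base change for the cartesian square relating $\Delta\times\mathrm{id}$ and $\mathrm{id}\times\Delta$ (legitimate because fiber products of analytic spaces are computed naively, and proper base change holds for all maps), together with the projection formula, this reduces to $\mathcal G$. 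The other triangle identity is symmetric. We have to make sure the identification $C_X\otimes C_Y\simeq C_{X\times Y}$ sends the external tensor product $\mathcal G\boxtimes\mathcal H=p_1^\ast\mathcal G\otimes p_2^\ast\mathcal H$ to the tensor of objects, and that the functors above correspond to the $C_\ast$-linear maps of module categories. In the affinoid case this follows from \cite[Remark 4.8.5.17]{lurie2017higher}, and in general it follows by the colimit presentation used in that lemma.

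With duality in hand, $\dim(C_X)$ is evaluation composed with coevaluation applied to $1\in C_\ast$. This gives
\[
f_\ast\Delta^\ast\Delta_\ast f^\ast\mathbb C=f_\ast\Delta^\ast\Delta_\ast\mathcal O_X=R\Gamma(X;\mathcal{HH}(X))=\operatorname{HH}(X).
\]
Naturality in $X$ comes from functoriality of these constructions.

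The main obstacle is the careful bookkeeping showing that the abstract duality data in $\operatorname{Mod}_{C_\ast}(\operatorname{Pr}^L)$ really are these kernel functors, and that the triangle identities hold up to coherent homotopy. This is where compactness (properness of $f$, to use $f_\ast=f_!$ with base change) is essential. We would handle it by working first in the affinoid case, where $C_X=\operatorname{Mod}_A$ and self-duality of module categories is standard (the dimension being $A\otimes_{A\otimes A}A$). We would then glue using a finite cover of $X$ by affinoids with affinoid intersections, as in Lemma~\ref{Hausdorffbasis}.
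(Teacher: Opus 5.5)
Your proposal is correct and takes the same approach as the paper. Both exhibit $C_X$ as self-dual in $\operatorname{Mod}_{C_\ast}(\operatorname{Pr}^L)$, with coevaluation and evaluation given by pull-push along $\ast\leftarrow X\xrightarrow{\Delta}X\times X$ and its reverse (using Lemma~\ref{lem:symmetricmonoidalCX}), check the triangle identities by proper base change, and read off $\dim(C_X)=f_\ast\Delta^\ast\Delta_\ast\mathcal O_X=\operatorname{HH}(X)$. The coherence issue you flag is milder than you suggest: dualizability can be checked in the homotopy category, so your affinoid-gluing fallback is not needed.
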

\begin{proof}
We claim that $C_X$ is canonically self-dual as an object of $\operatorname{Mod}_{C_\ast}(\operatorname{Pr}^L)$, with coevaluation map given by pull-push along
$$\ast\leftarrow X\overset{\Delta}{\rightarrow} X\times X,$$
and evaluation map given by pull-push along
$$X\times X\overset{\Delta}{\leftarrow} X\rightarrow \ast.$$
Here we use that $C_{X\times X} = C_X\otimes C_X$ by the previous lemma to correctly interpret these maps as candidate evaluation and coevaluation maps.  To prove these give a duality datum, we just need to check the adjunction identities, but they follow immediately from the proper base-change theorem.  Composing coevaluation and evaluation we find exactly the definition of $\operatorname{HH}(X)$, proving the identification with the dimension of $C_X$.
\end{proof}

In the next lecture we will be more specific about the functoriality of this identification, and carry out the rest of the argument for GRR.

\begin{remark}
Even if $X$ is not proper, we still have that $C_X$ is canonically self-dual: one just uses proper pushforward instead of usual pushforward in the push-pull formalism.  The conclusion in that case is that $\operatorname{dim}(C_X)$ identifies with the \emph{compactly supported} cohomology of $\mathcal{HH}(X)$.
\end{remark}\newpage

\section{Lecture XV: Hirzebruch--Riemann--Roch, part II}

This is the final lecture of the course. The goal is to finish the proof of the (Grothendieck--)Hirzebruch--Riemann--Roch theorem for compact complex manifolds. Given the work done in the previous lecture, this naturally splits into two pieces:

\begin{enumerate}
\item For any compact complex manifold $X$, construct the (extended) Chern character
\[
\widetilde{\mathrm{ch}}: K_0(\mathrm{Perf}(X))\to H_0\mathrm{HH}(X)\cong H_0\mathrm{Hdg}(X)
\]
as a map of contravariant functors from compact complex manifolds to rings, and show that after restriction along the natural map $K_0(\mathrm{Vect}(X))\to K_0(\mathrm{Perf}(X))$ it agrees with the Chern character $\mathrm{ch}$ defined in the last lecture.
\item For any map $f: X\to Y$ of compact complex manifolds, show that the diagram
\[\xymatrix{
K_0(\mathrm{Perf}(X))\ar[rr]^-{\mathrm{td}(T_X)\cdot \widetilde{\mathrm{ch}}}\ar[d]^{f_\ast} && H_0\mathrm{Hdg}(X)\ar[d]^{f_\ast}\\
K_0(\mathrm{Perf}(Y))\ar[rr]^-{\mathrm{td}(T_Y)\cdot \widetilde{\mathrm{ch}}} && H_0\mathrm{Hdg}(Y)
}\]
commutes.
\end{enumerate}

For the first part, we will need to study the functoriality of the association taking any dualizable $C\in \operatorname{Mod}_{C_\ast}(\operatorname{Pr}^L)$ to the (relative) Hochschild homology $\mathrm{HH}(C) := \mathrm{HH}(C/C_\ast)$. Here $C_\ast=\mathcal D(\mathrm{Liq}_p)$ (or rather, a truncated version for some fixed $\kappa$). This discussion actually works more generally in symmetric monoidal $(\infty,2)$-categories $M$; for us $M$ will be $\operatorname{Mod}_{C_\ast}(\operatorname{Pr}^L)$. A detailed discussion of this functoriality is in the work of Hoyois--Scherotzke--Sibilla \cite{hoyois2017higher}. We note that we can actually get by with the same constructions for symmetric monoidal $2$-categories (ignoring all higher morphisms).

Recall that $X\in M$ is dualizable if there is some object $X^\vee\in M$, a coevaluation map $c: 1\to X\otimes X^\vee$, and an evaluation map $d: X^\vee\otimes X\to 1$, such that the composites
\[
X\xrightarrow{c\otimes 1} X\otimes X^\vee\otimes X\xrightarrow{1\otimes d} X
\]
and
\[
X^\vee\xrightarrow{1\otimes c} X^\vee\otimes X\otimes X^\vee\xrightarrow{d\otimes 1} X^\vee
\]
are equal (via implicit invertible $2$-morphisms) to the identity. In this case $X^\vee$ is necessarily the internal Hom from $X$ to $1$; in fact, for all $Y$, the internal Hom from $X$ to $Y$ is given by $X^\vee\otimes Y$.

Now given any endomorphism $f: X\to X$, one can define an endomorphism of $1$ in $M$ as the composite
\[
\mathrm{tr}_M(f|X):=(1\xrightarrow{c} X\otimes X^\vee\xrightarrow{f\otimes 1} X\otimes X^\vee\xrightarrow{d} 1)\in \mathrm{End}_M(1).
\]
This gives a natural functor of $(\infty,1)$-categories
\[
\mathrm{tr}_M(-|X): \mathrm{End}_M(X)\to \mathrm{End}_M(1).
\]
In particular, this can be applied to the identity $f=\mathrm{id}_X$, giving
\[
\mathrm{dim}_M(X)\in \mathrm{End}_M(1).
\]

An important property of the trace construction is its cyclic invariance. This mirrors the property $\mathrm{tr}(AB)=\mathrm{tr}(BA)$ for matrices. In this abstract situation, assume given dualizable $X,Y\in M$ and maps $a: X\to Y$ and $b: Y\to X$. Let $f=ba: X\to X$ and $g=ab: Y\to Y$ be the induced endomorphisms. Then there is an isomorphism
\[
\mathrm{tr}_M(f|X)\cong \mathrm{tr}_M(g|Y)
\]
defined by the following diagram
\[\xymatrix{
1\ar@{=}[d]\ar[r]^-{c_X} & X\otimes X^\vee\ar[dr]^{a\otimes b^\vee}\ar[r]^{f\otimes 1} & X\otimes X^\vee\ar[r]^-{d_X} & 1\ar@{=}[d]\\
1\ar[r]^-{c_Y} & Y\otimes Y^\vee\ar[r]^{g\otimes 1} & Y\otimes Y^\vee\ar[r]^-{d_Y} & 1.
}\]

Now we want to analyze the functoriality of $\mathrm{dim}_M(X)$ in $X$. It will be useful to study more generally the functoriality of $\mathrm{tr}_M(f|X)$. Assume given dualizable $X,Y\in M$ and a map $F: X\to Y$ that admits a right adjoint $G: Y\to X$ in $M$. (Recall that the theory of adjoints works in the generality of $2$-categories, or more generally $(\infty,2)$-categories -- it is a morphism in the opposite direction together with unit and counit transformations making some obvious diagrams commute. Adjoints are unique (up to unique isomorphism) when they exist.) Moreover, consider a diagram
\[\xymatrix{
X\ar[r]^F\ar[d]^f & Y\ar[d]^g\\
X\ar[r]^F & Y
}\]
(with an implicit $2$-morphism, which actually does not need to be invertible -- it can be any map in the direction $Ff\to gF$). Then one gets an induced map
\[
\mathrm{tr}_M(f|X)\to \mathrm{tr}_M(fGF|X)\cong \mathrm{tr}_M(FfG|Y)\to \mathrm{tr}_M(gFG|Y)\to \mathrm{tr}_M(g|Y).
\]

In \cite{hoyois2017higher}, it is shown that this gives in particular a symmetric monoidal functor of symmetric monoidal $(\infty,1)$-categories
\[
\mathrm{dim}_M: M^{\mathrm{dual}}\to \mathrm{End}_M(1)
\]
where $M^{\mathrm{dual}}\subset M$ is the sub-$(\infty,1)$-category whose objects are the dualizable objects of $M$, and whose morphisms are those morphisms that admit right adjoints (and whose $2$-morphisms are equivalences). As stated before, we will actually only require the version of this result that ignores $n$-morphisms for $n>2$.

Now for any proper map $\pi: X\to Y$ of complex-analytic spaces, the pullback functor
\[
\pi^\ast: C_Y\to C_X
\]
admits a right adjoint $\pi_\ast$ which satisfies the projection formula, and thus indeed defines a morphism in $M=\operatorname{Mod}_{C_\ast}(\operatorname{Pr}^L)$. In particular, using also Lemma~\ref{lem:symmetricmonoidalCX} and the dualizability (in fact selfduality) of $C_X$, the functor $X\mapsto C_X$ gives a symmetric monoidal functor from the category of compact complex manifolds to $M^{\mathrm{dual}}$. Composing with Hochschild homology, we get a contravariant symmetric monoidal functor
\[
X\mapsto \mathrm{HH}(X)\in \mathcal D(\mathrm{Liq}_p).
\]
(Let us stress again that since many lectures, all liquid vector spaces are over $\mathbb C$, not $\mathbb R$.) Moreover, Theorem~\ref{thm:refinedHKR} shows that, as a symmetric monoidal functor, this is isomorphic to
\[
X\mapsto \mathrm{Hdg}(X),
\]
giving the symmetric monoidal comparison isomorphism $\beta: \mathrm{HH}(X)\cong \mathrm{Hdg}(X)$.

Finally, we can discuss the construction of the refined Chern character
\[
\widetilde{\mathrm{ch}}: K_0(\mathrm{Perf}(X))\to H_0\mathrm{HH}(X)\cong H_0\mathrm{Hdg}(X).
\]
Indeed, given $E\in \mathrm{Perf}(X)$, we get the functor $-\otimes_{\mathbb C} E: C_\ast\to C_X$, whose right adjoint $R\Gamma(X,-\otimes_{\mathcal O_X} E^\vee): C_X\to C_\ast$ is also $C_\ast$-linear. In particular, it induces a map $\mathbb C=\mathrm{HH}(C_\ast)\to \mathrm{HH}(X)$, and we let
\[
\widetilde{\mathrm{ch}}(E)\in H_0\mathrm{HH}(X)
\]
be the image of $1\in \mathbb C$ under this map.

\begin{proposition}\label{prop:cherntriangle} For a cofiber sequence $E'\to E\to E''$ in $\mathrm{Perf}(X)$, one has
\[
\widetilde{\mathrm{ch}}(E) = \widetilde{\mathrm{ch}}(E') + \widetilde{\mathrm{ch}}(E'')\in H_0\mathrm{HH}(X).
\]
\end{proposition}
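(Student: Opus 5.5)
We reduce additivity to a statement about traces of a two-step filtered object, using the functoriality of $\mathrm{tr}_M$ along lax-commuting squares with right adjoints. The key device is the category of cofiber sequences. Let $C_\ast^{\Delta^1}$ be the category of arrows in $C_\ast$, or equivalently modules over the $C_\ast$-linear path algebra of $\bullet\to\bullet$. It is dualizable in $M=\operatorname{Mod}_{C_\ast}(\operatorname{Pr}^L)$, being compactly generated by the two objects $P_1=(1\to 1)$ and $P_2=(0\to 1)$. The cofiber sequence $E'\to E\to E''$ defines a $C_\ast$-linear functor
\[
\Phi: C_\ast^{\Delta^1}\to C_X,\qquad (V\to W)\mapsto V\otimes E'\ \text{glued into}\ W\otimes E,
\]
which sends $P_1\mapsto E$ and $P_2\mapsto E''$ (up to identifying the cofiber). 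Restriction along the three evident $C_\ast$-linear functors $C_\ast\to C_\ast^{\Delta^1}$ (inserting $1$ as $(1\to 0)$, $(1\to 1)$, $(0\to 1)$) recovers $-\otimes E'$, $-\otimes E$ and $-\otimes E''$. Each of these functors, and $\Phi$ itself, has a $C_\ast$-linear right adjoint, because the source is compactly generated by objects sent to perfect, hence compact, objects, and perfect complexes are dualizable so the projection formula holds. All these functors therefore lie in $M^{\mathrm{dual}}$, and by the functoriality of $\mathrm{dim}_M$ from \cite{hoyois2017higher} they induce maps on $\mathrm{HH}$.

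It therefore suffices to prove the statement in the universal case. We show that in $H_0\mathrm{HH}(C_\ast^{\Delta^1})$ the class of the middle insertion equals the sum of the classes of the other two, and then push forward along $\mathrm{HH}(\Phi)$, using functoriality $\mathrm{HH}(\Phi\circ\iota)=\mathrm{HH}(\Phi)\circ\mathrm{HH}(\iota)$. For the universal case, write $C_\ast^{\Delta^1}=\operatorname{Mod}_R(C_\ast)$, where $R=\mathrm{End}(P_1\oplus P_2)$ is the upper-triangular matrix algebra $\begin{pmatrix}1&1\\0&1\end{pmatrix}$ over $\mathbb C$ (or over $\mathbf 1$). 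Then $\mathrm{HH}$ is the usual Hochschild homology of $R$ relative to $C_\ast$. The standard computation gives $\mathrm{HH}(R)\simeq \mathrm{HH}(\mathbf 1)^{\oplus 2}$, via the two diagonal idempotents: the semi-orthogonal decomposition, or equivalently Morita invariance applied to the split localization sequence $C_\ast\to C_\ast^{\Delta^1}\to C_\ast$. The class attached to an $R$-module $N$ is the trace of $\mathrm{id}$ on $N$, computed via cyclic invariance, and equals $\sum_i[\mathrm{rank}\text{ at vertex }i]$. Writing out the vertex multiplicities, $(1\to 0)$ gives $(1,0)$, $(0\to 1)$ gives $(0,1)$ and $(1\to 1)$ gives $(1,1)$. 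This yields the relation, provided $(1\to 1)=P_1$ and the other two are identified correctly with the idempotent decomposition.

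The main obstacle is this universal computation done honestly inside $M$ rather than for ordinary algebras. One must check that the trace of $\mathrm{id}$ on the image of a functor $C_\ast\to C_\ast^{\Delta^1}$ is computed by the Hattori--Stallings type formula, i.e.\ that the map $\mathrm{HH}(C_\ast)\to\mathrm{HH}(C_\ast^{\Delta^1})$ induced by $-\otimes N$ sends $1$ to the class of $\mathrm{id}_N$ in $\pi_0$ of $(N^\vee\otimes_R N)(\ast)$. We would first verify this by unwinding the cyclic-invariance map in the definition of the functoriality. Then we would use the additivity of that Hattori--Stallings class along the split filtration $P_2\to P_1\to(1\to 0)$ of $R$-modules, where the splitting of the associated trace is visible directly from the triangular shape of the endomorphism algebra.
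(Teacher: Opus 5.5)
Your reduction to a universal instance on the arrow category $C_\ast^{\Delta^1}\simeq\mathrm{Mod}_R(C_\ast)$ is a legitimate alternative to the paper's argument, but as written two steps fail.

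\emph{First, $\Phi$ is mislabeled.} Any exact functor carries the cofiber sequence $(0\to1)\to(1\to1)\to(1\to0)$, i.e.\ $P_2\to P_1\to(1\to0)$, to a cofiber sequence. Your prescription $(1\to0)\mapsto E'$, $(1\to1)\mapsto E$, $(0\to1)\mapsto E''$ would therefore require a cofiber sequence $E''\to E\to E'$, which does not exist in general. For instance, for $\mathcal O_X\to 0\to\mathcal O_X[1]$ it would force $\mathcal O_X\simeq\mathcal O_X[2]$. The functor actually determined by $E'\to E$ sends $P_1\mapsto E$, $P_2\mapsto E'$ and $(1\to0)\mapsto E''$. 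Since the conclusion is symmetric in $E',E''$, this is harmless once corrected. Two smaller inaccuracies: $C_\ast^{\Delta^1}$ is not compactly generated by $P_1,P_2$ as a plain category, since $C_\ast$ is not generated by its unit; what you use is $C_\ast^{\Delta^1}\simeq\mathrm{Mod}_R(C_\ast)$. And the $C_\ast$-linearity of the right adjoint of $\Phi$ comes from dualizability of $E,E'$ together with the projection formula for $R\Gamma(X,-)$, not from compactness.

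\emph{Second, and more seriously, the universal case is not free.} The relation $[(1\to1)]=[(1\to0)]+[(0\to1)]$ in $H_0\mathrm{HH}(C_\ast^{\Delta^1})$ \emph{is} the proposition for the sequence $P_2\to P_1\to(1\to0)$. That sequence is not split, since $(1\xrightarrow{\mathrm{id}}1)\not\simeq(1\xrightarrow{0}1)$, so your last step cannot get additivity from a splitting. Likewise, ``standard computation'' or ``Morita invariance'' does not by itself prove $\mathrm{HH}(C_\ast^{\Delta^1})\simeq\mathrm{HH}(C_\ast)^{\oplus2}$ for traces in $\operatorname{Mod}_{C_\ast}(\operatorname{Pr}^L)$. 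Even after the Hattori--Stallings identification you plan, you would still need classical additivity of that trace.

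The clean repair bypasses Hattori--Stallings entirely. The functors $\mathrm{ev}_0,\mathrm{ev}_1\colon C_\ast^{\Delta^1}\to C_\ast$ have $C_\ast$-linear right adjoints $B\mapsto(B\to0)$ and $B\mapsto(B\to B)$, and satisfy $\mathrm{ev}_i\circ(-\otimes N)\simeq-\otimes\mathrm{ev}_i(N)$. By functoriality they send the three classes to $(1,0),(0,1),(1,1)\in\mathbb C^2$, which are exactly your vertex multiplicities. What remains is to show that $(\mathrm{ev}_0,\mathrm{ev}_1)$ induces an isomorphism on $\mathrm{HH}$. This follows from the cofiber sequence of endofunctors
\[
\bigl[(V\to W)\mapsto(\mathrm{fib}(V\to W)\to0)\bigr]\to\mathrm{id}\to\bigl[(V\to W)\mapsto(W\to W)\bigr]
\]
together with cyclic invariance: the outer traces are $\mathrm{HH}(C_\ast)$, the second map is $\mathrm{HH}(\mathrm{ev}_1)$, and the first map is split by $\mathrm{HH}(\mathrm{ev}_0)$. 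Alternatively, it follows by base change from the classical triangular algebra over $\mathbb C$.

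This is precisely the paper's mechanism. The paper runs it once, directly on $S_2(C_X)\simeq C_\ast^{\Delta^1}\otimes_{C_\ast}C_X$ with the projections to $A'$ and $A''$, so it needs neither the auxiliary $\Phi$ nor a separate universal computation. Your route instead isolates an $X$-independent universal case that can be imported from classical algebra. The cost is constructing $\Phi$ correctly and tracking the identifications of the three insertion functors.
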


\begin{proof} Let $S_2(C_X)$ be the $C_\ast$-linear presentable stable $\infty$-category of cofiber sequence $A'\to A\to A''$ in $C_X$. It comes with three projections to $C_X$, given respectively by $A'$, $A$, and $A''$, and the given cofiber sequence induces a functor $C_\ast\to S_2(C_X)$ in $M^{\mathrm{dual}}$. It suffices to see that the projections $S_2(C_X)\to C_X$ given by $A'$ and $A''$ induce an isomorphism
\[
\mathrm{HH}(S_2(C_X))\cong \mathrm{HH}(C_X)\oplus \mathrm{HH}(C_X)
\]
under which the map $\mathrm{HH}(S_2(C_X))\to \mathrm{HH}(C_X)$ induced by the projection to $A$ is given by the sum map. The second claim actually follows from the first, using the natural splitting $C_X\times C_X\to S_2(C_X)$ given by split cofiber sequences.

For the first claim, it suffices to see that the inclusion $C_X\to S_2(C_X)$ given by $A'\mapsto (A'\to A'\to 0)$ and the projection $(A'\to A\to A'')\mapsto A''$ induce a cofiber sequence
\[
\mathrm{HH}(C_X)\to \mathrm{HH}(S_2(C_X))\to \mathrm{HH}(C_X).
\]
But the identity endofunctor $\mathrm{id}_{S_2(C_X)}: S_2(C_X)\to S_2(C_X)$ naturally sits in a cofiber sequence
\[
((A'\to A\to A'')\mapsto (A'\to A'\to 0))\to \mathrm{id}_{S_2(C_X)}\to ((A'\to A\to A'')\mapsto (0\to A''\to A''))
\]
which induces a cofiber sequence after applying $\mathrm{tr}(-|S_2(C_X))$. Using cyclic symmetry of the trace, one can identify the first term with the trace of the identity on $C_X$, and similarly for the last term, and this gives the desired result.
\end{proof}

Thus, we get the desired map
\[
\widetilde{\mathrm{ch}}: K_0(\mathrm{Perf}(X))\to H_0\mathrm{HH}(X)\cong H_0\mathrm{Hdg}(X),
\]
and it follows from the construction that it is a contravariant map of rings.

\begin{proposition}\label{prop:refinedchernextendschern} The restriction of $\widetilde{\mathrm{ch}}$ along $K_0(\mathrm{Vect}(X))\to K_0(\mathrm{Perf}(X))$ agrees with the Chern character $\mathrm{ch}$.
\end{proposition}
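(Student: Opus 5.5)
Both $\mathrm{ch}$ and the restriction of $\widetilde{\mathrm{ch}}$ to $K_0(\mathrm{Vect}(X))$ are additive in short exact sequences (Proposition~\ref{prop:cherntriangle}) and compatible with pullback. The plan is therefore to use the uniqueness characterization of $\mathrm{ch}$ from the last lecture. Additivity and pullback compatibility hold for $\widetilde{\mathrm{ch}}$ by construction, since $\widetilde{\mathrm{ch}}$ is natural for proper maps. By the splitting principle, pulling back along the full flag variety $Y\to X$ (a proper smooth map, so $Y$ is again a compact complex manifold) is injective on $H_0\mathrm{Hdg}$, and reduces us to $V$ with a full flag. Additivity then reduces us to a line bundle $\mathcal L$. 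Multiplicativity of $\widetilde{\mathrm{ch}}$, together with $\mathrm{ch}(\mathcal L)=e^{c_1(\mathcal L)}$, further suggests that it suffices to show $\widetilde{\mathrm{ch}}(\mathcal L)=e^{c_1(\mathcal L)}$ under $\beta$. This is the only real claim.

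For the line bundle case I would first reduce to a universal situation. The class $\widetilde{\mathrm{ch}}(\mathcal L)$ depends only on $\mathcal L$ as an invertible object of $\mathrm{Perf}(X)$, i.e.\ on its classifying datum in $H^1(X,\mathcal O_X^\times)$. Working on a Čech cover, the map $\mathbb C\to\mathrm{HH}(X)$ can be described concretely on the level of $\mathcal{HH}(X)=\Delta^\ast\Delta_\ast\mathcal O_X$. For an invertible sheaf, the trace of the induced functor is the image of $1$ under $\mathcal O_X\to \mathcal{HH}(X)$ twisted by the ``holonomy'' of $\mathcal L$. This is a section of $\mathcal{HH}(X)$ which on a chart where $\mathcal L$ is trivial is $1$, glued via the transition functions $g_{ij}$. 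Using the refined HKR isomorphism (Theorem~\ref{thm:refinedHKR}), $\mathcal{HH}(X)\simeq\bigoplus_i\Omega^i[i]$, and the Adams operation $\psi^k$ acting by $k^i$ in degree $i$, I would compare gradings. Because $\widetilde{\mathrm{ch}}(\mathcal L^{\otimes k})=\widetilde{\mathrm{ch}}(\mathcal L)^k$ (multiplicativity), and $\psi^k$ on $\mathrm{HH}$ should send $\widetilde{\mathrm{ch}}(\mathcal L)$ to $\widetilde{\mathrm{ch}}(\mathcal L^{\otimes k})$ (it is induced by the $k$-fold map on $S^1$, matching the $k$-th power of the monodromy), the class $x=\widetilde{\mathrm{ch}}(\mathcal L)$ has components $x_i\in H^i(X,\Omega^i)$ satisfying $\sum k^i x_i=(\sum x_i)^k$ for all $k$. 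Comparing coefficients forces $x=e^{x_1}$ once $x_0=1$. Here $x_0=1$ holds because the rank is $1$, i.e.\ restricting to a point.

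Hence everything reduces to identifying the degree-one component: $x_1=c_1(\mathcal L)=[d\log g_{ij}]$. I expect this to be the main obstacle, together with justifying the claimed compatibility of $\psi^k$ with tensor powers. For $x_1$, I would compute in the universal case where only the first-order part matters. Via the HKR boundary map $\mathcal{HH}\to\Sigma I\otimes_{A\otimes A}A$, the degree-one part of the Čech-glued class is the Čech $1$-cocycle whose value on $U_{ij}$ is the image of $g_{ij}\otimes g_{ij}^{-1}-1\in I$ in $I/I^2=\Omega^1$. That image is $g_{ij}^{-1}dg_{ij}=d\log g_{ij}$, up to a sign convention, which one fixes by checking on $\mathcal O(1)$ over $\mathbb P^1$ against $\int c_1=1$. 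If the Adams-operation argument proves awkward, I would instead compute all components directly. This amounts to the same Čech computation, now carried out in the exponential-type expansion of $g\otimes g^{-1}$ in the completion along the diagonal.
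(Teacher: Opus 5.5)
Your reduction to line bundles (splitting principle plus Proposition~\ref{prop:cherntriangle}) matches the paper. Your algebra is also correct: if $\sum_i k^i x_i=(\sum_i x_i)^k$ for all $k$ and $x_0=1$, then $x=e^{x_1}$. The gap is the input to that algebra, namely the identity $\psi^k(\widetilde{\mathrm{ch}}(\mathcal L))=\widetilde{\mathrm{ch}}(\mathcal L^{\otimes k})$. It is true a posteriori, since it follows from the proposition because $\psi^k$ acts by $k^i$ on $H^i(X,\Omega^i)$. But it carries essentially all the content, and nothing in this setup proves it. The two sides are defined in unrelated ways:
\begin{itemize}
\item $\widetilde{\mathrm{ch}}(\mathcal L)$ comes from the trace functoriality of $\dim$ in $\operatorname{Mod}_{C_\ast}(\operatorname{Pr}^L)$, applied to the right-adjointable functor $-\otimes\mathcal L\colon C_\ast\to C_X$;
\item $\psi^k$ is defined on affinoids through $\mathrm{HH}(A)=A^{\otimes S^1}$ and the degree-$k$ self-map of $S^1$, then glued by descent.
\end{itemize}
Your justification (``matching the $k$-th power of the monodromy'') is the derived loop space heuristic, in which $\widetilde{\mathrm{ch}}(\mathcal L)$ is the holonomy function of $\mathcal L$. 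Making it rigorous amounts to showing that $\widetilde{\mathrm{ch}}(\mathcal L)$ is pulled back along $X\to[\ast/\mathbb G_m]$ from the tautological class $z\in\mathcal O(\mathbb G_m)$, on which $\psi^k$ acts by $z\mapsto z^k$. That is exactly the classifying-stack argument the paper mentions and deliberately avoids, because it needs the full $\infty$-categorical functoriality of $\widetilde{\mathrm{ch}}$ (and here also of the circle action). Similarly, you assert rather than derive the \v{C}ech description of $\widetilde{\mathrm{ch}}(\mathcal L)$ as ``$1$ glued by the $g_{ij}$''. For the degree-one component this can be justified from the trace formalism, since it is the Atiyah class of $\mathcal L$. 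Your fallback of computing all components directly, however, is essentially the statement that the HKR components are $\mathrm{at}(\mathcal L)^i/i!$ (a form of Markarian's theorem), not a routine \v{C}ech computation.

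You can get the functional equation from naturality and multiplicativity alone, once you reduce to a universal case in which $\widetilde{\mathrm{ch}}(\mathcal L)$ is forced to be a power series in $c_1$; this is what the paper does.
\begin{itemize}
\item Take $\tilde X=\mathbb P_X(L^N\oplus\mathcal O_X^N)$ and remove $Z=\mathbb P_X(L^N)\sqcup\mathbb P_X(\mathcal O_X^N)$. Work in the localization $C_{\tilde X-Z}$, which is still dualizable and satisfies HKR.
\item On $\tilde X-Z$ the two projections to $\mathbb P^{N-1}$ exhibit the pullback of $L$ as a ratio of pullbacks of $\mathcal O(1)$, and $H_0\mathrm{Hdg}(X)\to H_0\mathrm{Hdg}(\tilde X-Z)$ is injective for $N$ large. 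This reduces you to $\mathcal O(1)$ on $\mathbb P^{N-1}$.
\item There $H_0\mathrm{Hdg}=\mathbb C[c_1]/c_1^N$, so $\widetilde{\mathrm{ch}}(\mathcal O(1))=f(c_1)$ for a single $f\in\mathbb C[[t]]$, compatible in $N$.
\item Since $\mathcal O(1)\boxtimes\mathcal O(1)$ is pulled back from $\mathcal O(1)$ along the Segre map, multiplicativity on $(\mathbb P^{N-1})^2$ gives $f(t+t')=f(t)f(t')$. Hence $f=\exp(\lambda t)$.
\item Your first-order computation, now needed only on $\mathbb P^1$, then pins down $\lambda=1$.
\end{itemize}
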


\begin{proof} By the splitting principle and Proposition~\ref{prop:cherntriangle}, it suffices to check this on the class of line bundles. One can now make an argument using the classifying stack $[\ast/\mathbb G_m]$, and computing its Hodge cohomology to be $\mathbb C[[c_1]]$; but this really requires the $\infty$-categorical functoriality of $\widetilde{\mathrm{ch}}$. We give a more ad hoc argument below.

We first observe that if $Z\subset X$ is a closed submanifold with coherent ideal sheaf $I\subset \mathcal O_X$, then one can look at the full subcategory $C_{X-Z}$ of $C_X$ of all objects whose reduction modulo $I$ vanishes; if $Z$ would be a Cartier divisor, this would amount to algebraically inverting $I$. Then $C_{X-Z}$ is still dualizable in $\operatorname{Mod}_{C_\ast}(\operatorname{Pr}^L)$, and still satisfies a Hochschild--Kostant--Rosenberg theorem. For any vector bundle $F$ on $X$, the map $R\Gamma(X,F)\to R\Gamma(X-Z,F)$ is an isomorphism in degrees less than $\mathrm{codim}(Z\subset X)-1$.

Now given a line bundle $L$ on $X$, consider the projective space $\tilde{X}=\mathbb P_X(L^N\oplus \mathcal O_X^N)$ for some large $N$. This contains $\mathbb P_X(L^N)\cong X\times \mathbb P^{N-1}$ and $\mathbb P_X(\mathcal O_X^N)\cong X\times \mathbb P^{N-1}$; let $Z\subset \tilde{X}$ be their disjoint union. Away from $Z$, there are two projection maps to $\mathbb P^{N-1}$, and the pullback of $L$ to $\tilde{X}-Z$ (i.e.~to $C_{\tilde{X}-Z}$) is isomorphic to the ratio of the pullbacks of the corresponding line bundles $\mathcal O_{\mathbb P^{N-1}}(1)$. Also, the pullback map $H_0\mathrm{Hdg}(X)\to H_0\mathrm{Hdg}(\tilde{X}-Z)$ is injective if $N$ is chosen large enough. All of this reduces us to the case of the line bundle $\mathcal O_{\mathbb P^{N-1}}(1)$.

Now $H_0\mathrm{Hdg}(\mathbb P^{N-1})\cong \mathbb C[c_1]/c_1^N$ where $c_1=c_1(\mathcal O(1))$. It follows that $\widetilde{\mathrm{ch}}(\mathcal O(1))$ is given by some truncated power series in $c_1(\mathcal O(1))$. In fact, these truncated power series are compatible under pullback, giving a formula $\widetilde{\mathrm{ch}}(\mathcal O(1))=f(c_1(\mathcal O(1)))$ for some $f\in \mathbb C[[t]]$. Computing for $\mathbb P^1$, one checks that $f(t)\equiv 1+t$ modulo $t^2$. On the other hand, $\widetilde{\mathrm{ch}}$ is multiplicative, and using this on $(\mathbb P^{N-1})^2$ along with $c_1(\mathcal L\otimes \mathcal L')=c_1(\mathcal L)+c_1(\mathcal L')$, we get
\[
f(t+t')=f(t)f(t)\in \mathbb C[[t,t']].
\]
This means that $f(t)=\mathrm{exp}(\lambda t)$ for some $\lambda\in \mathbb C$, which (by $f(t)\equiv 1+t$ modulo $t^2$) must be given by $\lambda=1$.
\end{proof}

This finishes the first part. It remains to prove the following theorem.

\begin{theorem}\label{thm:GHRRfinal} For any map $f: X\to Y$ of compact complex manifolds, the diagram
\[\xymatrix{
K_0(\mathrm{Perf}(X))\ar[rr]^{\mathrm{td}(T_X)\cdot \widetilde{\mathrm{ch}}}\ar[d]^{f_\ast} && H_0\mathrm{Hdg}(X)\ar[d]^{f_\ast}\\
K_0(\mathrm{Perf}(Y))\ar[rr]^{\mathrm{td}(T_Y)\cdot \widetilde{\mathrm{ch}}} && H_0\mathrm{Hdg}(Y)
}\]
commutes.
\end{theorem}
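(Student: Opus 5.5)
Following the strategy announced in Lecture XIV, I will compare two pushforward structures on the single cohomology theory $H_0\mathrm{Hdg}(-)$. The first, $f_\ast^{\mathrm{HH}}$, is transported through the HKR isomorphism $\beta$ (Theorem~\ref{thm:refinedHKR}). The second is $c\mapsto \mathrm{td}(T_Y)^{-1}f_\ast(\mathrm{td}(T_X)c)$, with $f_\ast$ the Serre-duality pushforward. Everything then reduces to two claims.

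The first claim is $\widetilde{\mathrm{ch}}(f_\ast E)=f^{\mathrm{HH}}_\ast\widetilde{\mathrm{ch}}(E)$. To prove it, I define $f^{\mathrm{HH}}_\ast$ by applying $\mathrm{HH}=\mathrm{dim}_M$ to the morphism $f_\ast\colon C_X\to C_Y$. This morphism lies in $M^{\mathrm{dual}}$: it is colimit-preserving and $C_\ast$-linear by the proper base change theorem, and by Theorem~\ref{boundarylessuppershriek} and Serre duality its right adjoint $f^!=f^\ast(-)\otimes\omega_f[d]$ is also $C_\ast$-linear and colimit-preserving. The functor $C_\ast\to C_X$, $V\mapsto V\otimes E$, composed with $f_\ast$ is $V\mapsto V\otimes f_\ast E$, and $f_\ast E\in\mathrm{Perf}(Y)$ by Lemma~\ref{preservesperf}. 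The claim then follows from the functoriality of $\mathrm{dim}_M$ on $M^{\mathrm{dual}}$ in the sense of \cite{hoyois2017higher}, using that $\mathrm{dim}$ is a functor on composites of right-adjointable maps.

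The second claim identifies $f^{\mathrm{HH}}_\ast$ under $\beta$. Since $C_X$ is self-dual, $f_\ast$ is the dual morphism of $f^\ast$ up to twisting by $\omega$. Hence $f^{\mathrm{HH}}_\ast$ is dual to $f^\ast\colon\mathrm{HH}(Y)\to\mathrm{HH}(X)$, with respect to the self-duality of $\mathrm{HH}(X)=\mathrm{dim}(C_X)$ given by a ``Mukai pairing''. Concretely, this pairing is $\mathrm{HH}(X)\otimes\mathrm{HH}(X)\to\mathrm{HH}(X)\to\mathbb C$, where the trace $\mathrm{HH}(X)\to\mathbb C$ is $\mathrm{dim}$ of $f_\ast\colon C_X\to C_\ast$ for $f\colon X\to\ast$. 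Because $f^\ast$ on $\mathrm{HH}\cong\mathrm{Hdg}$ is already the ring pullback, it suffices to compare the two trace maps $\int^{\mathrm{HH}}_X$ and $\int_X$ on $\mathrm{Hdg}(X)$. The goal is the formula
\[
\int^{\mathrm{HH}}_X c=\int_X c\cdot\mathrm{td}(T_X)
\]
for every compact complex manifold $X$. The projection formula then gives the general case: writing $f_\ast^{\mathrm{HH}}$ as the adjoint of ring pullback under the two trace pairings, the Todd factors conjugate exactly as in the theorem. Here I would use the pairing on $\mathrm{HH}$ with the untwisted unit, and must check that the HKR self-duality matches the exterior-algebra one up to this correction; I expect that check to be a formal consequence of the previous step.

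The main obstacle is the trace formula itself. My first attempt is a universality argument in the spirit of Proposition~\ref{prop:refinedchernextendschern}. The discrepancy $\int^{\mathrm{HH}}_X\circ(\int_X)^{-1}$ is multiplication by a class $\tau(X)\in H_0\mathrm{Hdg}(X)$, since both traces are perfect pairings compatible with K\"unneth. This $\tau$ is multiplicative for products, and I expect it to be local: it should be determined by $T_X$ via the deformation to the normal cone of $\Delta\colon X\to X\times X$, as in the Serre-duality computation of Lecture XIII. That would make $\tau(X)=\phi(T_X)$ for a multiplicative characteristic class $\phi$. It then remains to determine the power series of $\phi$. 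For this I would compare $\int^{\mathrm{HH}}_{\mathbb P^n}$ with $\chi(\mathbb P^n,\mathcal O(k))$: by the first claim applied to $\mathbb P^n\to\ast$, $\int^{\mathrm{HH}}_{\mathbb P^n}\widetilde{\mathrm{ch}}(\mathcal O(k))=\chi(\mathcal O(k))$, which is computable via GAGA. Using $\widetilde{\mathrm{ch}}(\mathcal O(k))=e^{kc_1}$, the values $\chi=\binom{n+k}{n}$ for all $n,k$ force $\phi$ to be the Todd series $t/(1-e^{-t})$, by Hirzebruch's classical characterization. I expect the locality step to be the most delicate part, and there I would also make sure that naturality under pullbacks along open embeddings suffices.
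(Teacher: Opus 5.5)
Your first claim is exactly the paper's Proposition~\ref{prop:GHRRformal}. Your reduction of the Todd part to the single identity $\int^{\mathrm{HH}}_X c=\int_X c\cdot\mathrm{td}(T_X)$ is also sound. Both pushforwards are adjoint to the same ring pullback under their respective self-dualities (multiplication followed by trace), so matching the traces, equivalently the coevaluations, suffices; the paper makes the same reduction, phrased via $\Delta_{X\ast}(1)$.

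The gap is the step you yourself flag: that the discrepancy class $\tau(X)$ equals $\phi(T_X)$ for a multiplicative characteristic class $\phi$. This is where the whole content of the theorem sits, and no argument is given.

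\begin{itemize}
\item $\tau(X)$ is defined by comparing two global perfect pairings on a compact $X$. It is not defined for non-compact $X$, so ``naturality under open embeddings'' has nothing to act on.
\item It is attached only to tangent bundles, and there is no a priori reason for it to be compatible with pullback along maps.
\item Hirzebruch's uniqueness argument needs a class $\phi$ defined for all vector bundles, natural under all pullbacks, and multiplicative in short exact sequences on a fixed base. That is what gives $\phi(T_{\mathbb P^n})=Q(h)^{n+1}$ from the Euler sequence.
\end{itemize}

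The product formula $\tau(X\times Y)=\tau(X)\tau(Y)$ does not supply this. ``Determined by $T_X$ via the deformation to the normal cone of $\Delta$'' is a hope, not an argument.

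The paper avoids any universal characteristic class. It views $\beta$ as a map from $\mathrm{HH}$, with its trace-functoriality pushforward, to $\mathrm{Hdg}$, with the Todd-twisted pushforward $f^{\mathrm{td}}_\ast=\mathrm{td}(T_Y)^{-1}f_\ast\mathrm{td}(T_X)$. It then proves (Theorem~\ref{thm:abstractGHRR}) that such a map respects $i_\ast(1)$ for every closed immersion $i\colon Z\hookrightarrow X$, in particular for the diagonal, in three reductions:

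\begin{itemize}
\item In codimension one, $i_\ast(1)=0^\ast 0_\ast(1)=e(\mathcal O(Z))$. This uses $s_\ast=0_\ast$ on $\mathbb P_X(L^\vee\oplus\mathcal O_X)$, checked via the weak projective bundle axiom.
\item The zero section of $\mathbb P_Z(N^\vee\oplus\mathcal O_Z)$ reduces to codimension one by the splitting principle.
\item The general case reduces to that zero section via the blow-up of $X\times\mathbb P^1$ along $Z\times\{0\}$ and the split injectivity of $i_{X,1\ast}$.
\end{itemize}

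What remains is the Euler class of a line bundle. In $\mathrm{HH}$ it is $1-e^{-c_1}$, by your first claim applied to the resolution of $0_\ast\mathcal O_X$ by $\mathcal O$ and the ideal sheaf of the zero section. In Todd-twisted Hodge cohomology it is $\mathrm{td}(L)^{-1}c_1(L)=1-e^{-c_1(L)}$, since the normal bundle of the zero section is $L$.

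So the deformation to the normal cone should be applied to the class $i_\ast(1)$ itself, not to $\tau(X)$. The Todd series then comes out of a one-line line-bundle computation rather than from Hirzebruch's characterization.
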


In fact, given the definition of $\widetilde{\mathrm{ch}}$, this naturally splits into two diagrams. Note that any map $f: X\to Y$ of compact complex manifolds is a local complete intersection (by factoring it via the graph) and hence the right adjoint $f^!: C_Y\to C_X$ to $f_\ast: C_X\to C_Y$ is given by $f^! \mathcal O_Y\otimes_{\mathcal O_X} f^\ast$, and in particular is $C_\ast$-linear. Thus, $f_\ast$ induces a covariant functoriality on Hochschild homology.

\begin{proposition}\label{prop:GHRRformal} For any map $f: X\to Y$ of compact complex manifolds, the diagram
\[\xymatrix{
K_0(\mathrm{Perf}(X))\ar[r]^{\widetilde{\mathrm{ch}}}\ar[d]^{f_\ast} & H_0\mathrm{HH}(X)\ar[d]^{f_\ast}\\
K_0(\mathrm{Perf}(Y))\ar[r]^{\widetilde{\mathrm{ch}}} & H_0\mathrm{HH}(Y)
}\]
commutes.
\end{proposition}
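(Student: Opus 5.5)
The plan is to reduce the statement to the functoriality of traces in the symmetric monoidal $(\infty,2)$-category $M=\operatorname{Mod}_{C_\ast}(\operatorname{Pr}^L)$ recalled above, i.e.\ the fact that $\mathrm{dim}_M$ is a functor on $M^{\mathrm{dual}}$, whose morphisms are the maps admitting a (necessarily $C_\ast$-linear) right adjoint. First I will make precise which morphism induces $f_\ast$ on Hochschild homology. The functor $f_\ast: C_X\to C_Y$ is $C_\ast$-linear and colimit-preserving (proper base change theorem). Its right adjoint $f^!\cong f^!\mathcal O_Y\otimes_{\mathcal O_X} f^\ast$ is also colimit-preserving and $C_\ast$-linear, by the local complete intersection exercise of Lecture XIII applied to the graph factorization $X\to X\times Y\to Y$. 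Hence $f_\ast$ is a morphism of $M^{\mathrm{dual}}$, and $H_0\mathrm{HH}(X)\to H_0\mathrm{HH}(Y)$ is by definition $\mathrm{dim}_M(f_\ast)$.

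Next, for $E\in\mathrm{Perf}(X)$, let $F_E:=-\otimes_{\mathbb C}E: C_\ast\to C_X$. By definition, $\widetilde{\mathrm{ch}}(E)$ is the image of $1$ under $\mathrm{dim}_M(F_E):\mathbb C=\mathrm{HH}(C_\ast)\to\mathrm{HH}(X)$. The composite $f_\ast\circ F_E$ is the functor $V\mapsto V\otimes_{\mathbb C} f_\ast E$, which is $F_{f_\ast E}$ by the projection formula. Here $f_\ast E\in\mathrm{Perf}(Y)$ by Lemma~\ref{preservesperf}. Its right adjoint is $R\Gamma(Y,-\otimes(f_\ast E)^\vee)$, which agrees with the composite of right adjoints $R\Gamma(X,-\otimes E^\vee)\circ f^!$. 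Functoriality of $\mathrm{dim}_M$ on $M^{\mathrm{dual}}$ then gives
\[
\mathrm{dim}_M(f_\ast)\circ\mathrm{dim}_M(F_E)\simeq\mathrm{dim}_M(f_\ast\circ F_E)\simeq\mathrm{dim}_M(F_{f_\ast E}),
\]
and evaluating on $1$ yields $f_\ast\widetilde{\mathrm{ch}}(E)=\widetilde{\mathrm{ch}}(f_\ast E)$. Since both sides are additive (Proposition~\ref{prop:cherntriangle}) and $f_\ast$ is exact, this identity descends to $K_0$.

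The main obstacle is justifying composition-compatibility of the induced maps on traces. This is needed only at the level of $\pi_0$ and $2$-categorically, so I would cite \cite{hoyois2017higher}, or verify it by hand. To check it directly, unwind the defining zigzag
\[
\mathrm{tr}(\mathrm{id})\to\mathrm{tr}(GF)\cong\mathrm{tr}(FG)\to\mathrm{tr}(\mathrm{id})
\]
for a composite $F_2F_1$ with right adjoint $G_1G_2$. The unit of $F_2F_1\dashv G_1G_2$ factors through the unit of $F_1\dashv G_1$ followed by $G_1(\text{unit of }F_2\dashv G_2)F_1$. Combining this with naturality of the cyclic isomorphism reduces the comparison to a pasting of $2$-cells; this is routine but has to be done carefully. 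A secondary point is to confirm that the isomorphism $f_\ast F_E\simeq F_{f_\ast E}$ is compatible with the adjunction data, which amounts to saying the projection formula isomorphism is the mate of the corresponding right adjoint identification. Since the $2$-morphisms in $M^{\mathrm{dual}}$ are equivalences, any equivalence of functors suffices here.
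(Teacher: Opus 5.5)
Your proposal is correct and follows the paper's argument. The paper also identifies the upper-right composite with the effect on $\mathrm{HH}$ of $f_\ast\circ(-\otimes_{\mathbb C}E)$, uses the projection formula to rewrite this functor as $-\otimes_{\mathbb C}f_\ast E$, and relies on the functoriality of traces on $M^{\mathrm{dual}}$ from \cite{hoyois2017higher}; your extra checks (that $f_\ast$ lies in $M^{\mathrm{dual}}$ via the lci description of $f^!$, and the reduction to generators of $K_0$) only spell out what the paper leaves implicit.
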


\begin{proof} This is a formal consequence of the functoriality of Hochschild homology. Indeed, take any $E\in \mathrm{Perf}(X)$. Going via the upper right corner, one gets the element of $H_0\mathrm{HH}(Y)$ corresponding to the effect on Hochschild homology of the functor
\[
C_\ast\xrightarrow{-\otimes_{\mathbb C} E} C_X\xrightarrow{f_\ast} C_Y.
\]
But by the projection formula, this functor agrees with
\[
C_\ast\xrightarrow{-\otimes_{\mathbb C} f_\ast E} C_Y,
\]
whose effect on Hochschild homology gives the element of $H_0\mathrm{HH}(Y)$ one obtains when going via the left lower corner.
\end{proof}

Thus, it remains to prove the following result.

\begin{proposition}\label{prop:GHRRtodd} For any map $f: X\to Y$ of compact complex manifolds, the diagram
\[\xymatrix{
\mathrm{HH}(X)\ar[r]^{\mathrm{td}(T_X)\cdot} \ar[d]^{f_\ast} & \mathrm{Hdg}(X)\ar[d]^{f_\ast}\\
\mathrm{HH}(Y)\ar[r]^{\mathrm{td}(T_Y)\cdot} & \mathrm{Hdg}(Y)
}\]
commutes (up to equivalence), where the upper maps are the HKR isomorphisms multiplied with the Todd class.
\end{proposition}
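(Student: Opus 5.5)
Our strategy is to reduce, via the factorization of $f$ through its graph, to two special cases: a closed immersion and a projection. Both pushforwards in the diagram are compatible with composition. On the Hochschild side this is the functoriality of $\mathrm{HH}$ on $M^{\mathrm{dual}}$ applied to $f_\ast$, which has the $C_\ast$-linear right adjoint $f^!$. On the Hodge side, $f_\ast$ is dual to the ring map $f^\ast$ under Serre self-duality, so it is compatible with composition as well. The Todd classes are multiplicative and compatible with pullback. Writing $f$ as $X\xrightarrow{\Gamma_f} X\times Y\xrightarrow{p} Y$, it therefore suffices to treat
\begin{enumerate}
\item the projection $p: X\times Y\to Y$ with $X$ compact;
\item a closed embedding $i: X\hookrightarrow W$ of compact complex manifolds.
\end{enumerate}

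\textbf{Projection case.} By Lemma~\ref{lem:symmetricmonoidalCX} we have $C_{X\times Y}=C_X\otimes_{C_\ast}C_Y$, and $p_\ast$ is $(\pi_X)_\ast\otimes\mathrm{id}$. Since $\mathrm{HH}$ is symmetric monoidal, $p_\ast$ on $\mathrm{HH}$ is $(\pi_X)_\ast\otimes\mathrm{id}_{\mathrm{HH}(Y)}$. On the Hodge side, the K\"unneth isomorphism and the multiplicativity of the Serre-duality trace show that $p_\ast$ is $\int_X\otimes\mathrm{id}$. Moreover $\mathrm{td}(T_{X\times Y})=\mathrm{td}(T_X)\boxtimes\mathrm{td}(T_Y)$. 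This reduces the projection case to $Y=\ast$, that is, to the statement that $\int_X\mathrm{td}(T_X)\cdot\beta(-)$ agrees with the trace map $\mathrm{HH}(X)\to\mathrm{HH}(C_\ast)=\mathbb C$ induced by $(\pi_X)_\ast$.

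\textbf{Closed embedding case.} We plan to use the deformation to the normal cone, Construction~\ref{prop:deformationtonormalcone}. It gives a family $\tilde X=\mathbb P^1\times X\hookrightarrow\tilde W$ over $\mathbb P^1$, whose fibre over $1$ is $i$ and whose fibre over $0$ is the zero section of the projective completion $\mathbb P(N\oplus\mathcal O)$. Both pushforwards commute with base change to the fibres: on $\mathrm{HH}$ by functoriality, and on Hodge cohomology by base change of $f^!$ as in Proposition~\ref{prop:serreduality0}. The difference of the two composites in the diagram then becomes a class over $\mathbb P^1$, and by the projective bundle formula its restrictions to the fibres over $0$ and $1$ agree. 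This reduces the closed embedding case to the zero section $s$ of $P=\mathbb P(N\oplus\mathcal O)\to X$. Because $s$ has the retraction $q:P\to X$, which is a projection-type map, we can combine the projection case for $q$ (applied relatively, locally over $X$) with $q\circ s=\mathrm{id}$. In the end everything reduces to explicit computations on projective bundles, where the splitting principle brings us down to line bundles on $\mathbb P^n$.

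\textbf{Universal constant and main obstacle.} The main remaining problem is the case $Y=\ast$, which amounts to identifying the trace on $\mathrm{HH}(X)$ through $\beta$. The two maps $\mathrm{HH}(X)\to\mathbb C$ are both local in $X$ and natural. The first is $\mathrm{tr}\circ\beta^{-1}$, where $\beta$ is the HKR map built from $\psi^2$. The second is $\int_X\mathrm{td}(T_X)\cdot$. Under the self-duality of $\mathcal{HH}(X)$ and $\mathcal Hdg(X)$, their ratio is an invertible, natural, multiplicative characteristic class $u(T_X)\in H_0\mathrm{Hdg}(X)$: naturality and compatibility with products follow from the projection case. Such a class is a universal power series $u(t)=\prod_j g(t_j)$ in the Chern roots. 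To pin down $g$, we would first verify the formula on $X=\mathbb P^n$ for all $n$, using $\chi(\mathbb P^n,\mathcal O(k))=\binom{n+k}{n}$, which follows from GAGA, together with Proposition~\ref{prop:GHRRformal} and Proposition~\ref{prop:refinedchernextendschern}. These values determine the coefficients of $g$ inductively, and the same computation against the known values characterises $t/(1-e^{-t})$ (Hirzebruch's argument). The delicate step is showing rigorously that the discrepancy really is such a universal multiplicative class, i.e.\ that the HKR identification intertwines the categorical duality of $C_X$ with Serre duality up to a local, natural correction. If this is hard to do directly, the fallback is to compare the two dualities locally on polydisks, where $f^!$ was computed explicitly in Theorem~\ref{boundarylessuppershriek}.
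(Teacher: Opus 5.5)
\textbf{Gap 1: the case $Y=\ast$.} Your plan rests on this case, and it is not proved. Nondegeneracy of the Serre pairing does give some $u\in H_0\mathrm{Hdg}(X)$ with $\mathrm{tr}\circ\beta^{-1}=\int_X u\cdot(-)$. But nothing you have shows that $u$ is a universal multiplicative characteristic class of $T_X$.

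\begin{itemize}
\item The categorical trace $\mathrm{HH}(X)\to\mathbb C$ is not visibly local on $X$.
\item You say the naturality of $u$ ``follows from the projection case''. But you reduced the projection case to exactly this $Y=\ast$ statement, so the argument is circular.
\end{itemize}

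Without universality, the $\mathbb P^n$ computation pins down nothing. The missing idea is that $\pi_{X\ast}$ never needs separate treatment. For any pushforward structure with transverse base change and the K\"unneth property, $H^\ast(X)$ is self-dual: the coevaluation is $\Delta_{X\ast}\pi_X^\ast$, the evaluation is $\pi_{X\ast}\Delta_X^\ast$, and every $f_\ast$ is the dual of $f^\ast$. So the whole pushforward structure, including the trace to a point, is determined by the classes $\Delta_{X\ast}(1)$. It therefore suffices to show that the two ($\mathrm{td}$-twisted) structures agree on $i_\ast(1)$ for closed immersions $i$. This is how the paper argues (Theorem~\ref{thm:abstractGHRR}); it makes the graph factorization and the projection case unnecessary.

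\textbf{Gap 2: the closed-immersion case.} Your argument fails here as written, in two places.

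\begin{itemize}
\item \emph{The deformation step.} A compact deformation over $\mathbb P^1$ is $D=\mathrm{Bl}_{X\times 0}(W\times\mathbb P^1)$. Its fibre over $0$ is $\mathbb P(N\oplus\mathcal O)\cup\mathrm{Bl}_XW$, which is not a manifold. There is also no sense in which the restrictions of a class on $D$ to the fibres over $0$ and $1$ ``agree by the projective bundle formula'': these fibres are different spaces. The paper pushes forward instead:
\[
i_{W,1\ast}\,i_\ast(1)=\tilde{i}_\ast\, i_{X,1\ast}(1)=\tilde{i}_\ast\, i_{X,0\ast}(1)=i_{0\ast}\, s_\ast(1)\in H^\ast(D),
\]
for both structures. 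It then cancels $i_{W,1\ast}$, which is split injective because $W\hookrightarrow D$ has a retraction.

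\item \emph{The zero-section step.} $q\circ s=\mathrm{id}$ only gives $q_\ast s_\ast=\mathrm{id}$. Right inverses of $q_\ast$ are far from unique, so this cannot pin down $s_\ast$. Since $s^\ast$ is surjective, the projection formula reduces $s_\ast$ to the single class $s_\ast(1)$, and that class has to be computed. By the splitting principle, $s$ factors into codimension-one embeddings. For a divisor $Z$ one has $i_\ast(1)=0^\ast0_\ast(1)=e(\mathcal O(Z))$.
\end{itemize}

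The input that actually produces the Todd class is the comparison of Euler classes: $e_{\mathrm{Hdg}}(L)=c_1(L)$, while $e_{\mathrm{HH}}(L)=1-e^{-c_1(L)}$. Twisting the Hodge pushforward by $t$ changes the Euler class to $e(L)/t(L)$, and the choice $t=\mathrm{td}$, with $\mathrm{td}(L)=c_1(L)/(1-e^{-c_1(L)})$, makes these agree. This computation does not appear anywhere in your proposal.
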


To prove this, we will actually pass to an abstraction, as follows.

\begin{definition}\label{def:abstractcohomology} Consider the category $\mathrm{Man}$ of compact complex manifolds, and let $(S,\otimes)$ be a symmetric monoidal $1$-category.
\begin{enumerate}
\item An abstract $S$-valued cohomology theory on $\mathrm{Man}$ is a contravariant symmetric monoidal functor
\[
H^\ast: \mathrm{Man}^{\mathrm{op}}\to S
\]
that takes disjoint unions to products, and about which we ask the following weak form of the projective bundle formula: For any $X\in \mathrm{Man}$ and any vector bundle $E$ of positive rank on $X$, the pullback map
\[
H^\ast(X)\to H^\ast(\mathbb P_X(E))
\]
is a monomorphism.
\item Let $H^\ast$ be an abstract $S$-valued cohomology theory on $\mathrm{Man}$. A pushforward structure on $H^\ast$ is a covariant functor
\[
H_\ast: \mathrm{Man}\to S
\]
with an identification
\[
H_\ast|_{\mathrm{Man}^{\simeq}}\cong H^\ast|_{\mathrm{Man}^{\simeq}}
\]
to the subcategory $\mathrm{Man}^{\simeq}\subset \mathrm{Man}$ of all objects, but only isomorphisms as morphisms, subject to the following conditions:
\begin{enumerate}
\item[{\rm (a)}] For any transverse pullback square
\[\xymatrix{
X'\ar[r]^{g'}\ar[d]^{f'} & X\ar[d]^f\\
Y'\ar[r]^g & Y,
}\]
one has
\[
g^\ast f_\ast = f'_\ast g^{\prime\ast}: H^\ast(X)\to H^\ast(Y'),
\]
where we denote $g^\ast = H^\ast(g)$ and $f_\ast = H_\ast(f)$, etc.
\item[{\rm (b)}] For any map $f: X\to Y$ and any $Z$, the map
\[
(f\times \mathrm{id}_Z)_\ast: H^\ast(X\times Z)=H^\ast(X)\otimes H^\ast(Z)\to H^\ast(Y\times Z)=H^\ast(Y)\otimes H^\ast(Z)
\]
agrees with $f_\ast\otimes H^\ast(Z)$.
\item[{\rm (c)}] Moreover, we ask a weak form of the projective bundle formula: for any $X$ and line bundle $L$ on $X$, the map
\[
H^\ast(\mathbb P_X(L\oplus 1))\xrightarrow{(\pi_\ast,\infty^\ast)} H^\ast(X)\times H^\ast(X)
\]
is a monomorphism.
\end{enumerate}
\end{enumerate}
\end{definition}

\begin{example} We will take $S$ to be $D(\mathrm{Liq}_p)$, the derived $1$-category of $p$-liquid $\mathbb C$-vector spaces. (In fact, everything will take place in $D(\mathbb C)\subset D(\mathrm{Liq}_p)$.) We have two pushforward structures on $H^\ast = \mathrm{HH}\cong \mathrm{Hdg}$: The first by the covariant functoriality of Hochschild homology, and the second as constructed in the last lecture, using Serre duality.
\end{example}

\begin{remark} If $(H^\ast,H_\ast)$ is any abstract cohomology theory with pushforwards, one gets a version of the projection formula. Namely, for any map $f: X\to Y$ and classes $x\in H^\ast(X)$ and $y\in H^\ast(Y)$ (where $x\in H^\ast(X)$ is meant to be read as $x: 1_S\to H^\ast(X)$), one has
\[
f_\ast x\cdot y = f_\ast(x\cdot f^\ast y).
\]
The product used here is defined as the composite of pullback to the diagonal and the exterior product (coming from the symmetric monoidal structure of $H^\ast$). Thus
\[
f_\ast x\cdot y = \Delta_Y^\ast (f_\ast x\otimes y) = \Delta_Y^\ast (f\times \mathrm{id}_Y)_\ast (x\otimes y)
\]
using condition (b). On the other hand
\[
f_\ast(x\cdot f^\ast y) = f_\ast \Delta_X^\ast(x\otimes f^\ast y)=f_\ast \Delta_X^\ast (\mathrm{id}_X\times f)^\ast(x\otimes y)
\]
by the symmetric monoidal nature of $H^\ast$ in the second equation. Finally, the result follows from the transverse pullback square
\[\xymatrix{
X\ar[r]^{(\mathrm{id}_X,f)} \ar[d]^f & X\times Y\ar[d]^{f\times\mathrm{id}_Y}\\
Y\ar[r]^{\Delta_Y} & Y\times Y.
}\]
\end{remark}

\begin{remark} Given any abstract cohomology theory with pushforward $(H^\ast,H_\ast)$, one can define some rudimentary theory of characteristic classes. We will only require Euler classes of line bundles: If $L$ is a line bundle on $X$, let $\mathbb P_X(L^\vee\oplus 1)$ be the projective space over $X$ that compactifies the $\mathbb A^1$-bundle corresponding to $L$ into a $\mathbb P^1$-bundle. It comes with a zero section $0: X\to \mathbb P_X(L^\vee\oplus 1)$, and we set
\[
e(L) = 0^\ast 0_\ast (1)\in H^\ast(X).
\]
\end{remark}

\begin{example} In Hodge cohomology, the Euler class of a line bundle is given by its first Chern class $c_1$. In Hochschild homology, the Euler class is given by $1-e^{-c_1}$. Indeed, using the pushforward compatibility of $0_\ast$ with the Chern character, $0_\ast(1)$ comes from the perfect complex that is given by the zero section, which is resolved by the structure sheaf and the ideal sheaf of the zero section. The first corresponds to $1$, the second corresponds to $\mathcal O(-1)$, whose Chern character is $e^{-c_1}$.
\end{example}

Note that in the previous example, one precisely sees a discrepancy of $c_1$ versus $1-e^{-c_1}$, getting us close to the definition of the Todd class.

\begin{remark} Let $(H^\ast,H_\ast)$ be an abstract cohomology theory with a pushforward structure. Assume that $t: K_0(\mathrm{Vect}(X))\to H^\ast(X)^\times$ is a contravariant natural transformation from the additive group $K_0(\mathrm{Vect}(X))$ to the multiplicative group of units of the ring $H^\ast(X)$ (i.e.~maps $1_S\to H^\ast(X)$ that admit a map $1_S\to H^\ast(X)$ that is a multiplicative inverse under $H^\ast(X)\otimes H^\ast(X)\cong H^\ast(X\times X)\xrightarrow{\Delta_X^\ast} H^\ast(X)$). Then one can define $t$-twisted pushforward maps $f_\ast^t = t(T_Y)^{-1} f_\ast t(T_X)$ for $f: X\to Y$, and it is easy to see that this is still a pushforward structure.
\end{remark}

The previous discussion then shows that it is enough to prove the following abstract result.

\begin{theorem}\label{thm:abstractGHRR} Let $(H^\ast,H_\ast)$ and $(H^{\prime\ast},H^\prime_\ast)$ be two abstract $S$-valued cohomology theories with pushforward structures, and let $\alpha: H^\ast\to H^{\prime\ast}$ be a map of abstract $S$-valued cohomology theories. Assume that $\alpha$ is compatible with Euler classes of line bundles, i.e.~for any $X$ and any line bundle $L$ on $X$, one has $\alpha(e(L))=e'(L)$.

Then $\alpha$ commutes with all pushforward maps.
\end{theorem}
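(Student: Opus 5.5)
Every map $f:X\to Y$ of compact complex manifolds factors, via the graph, as a closed embedding $X\to X\times Y$ followed by the projection $X\times Y\to Y$. Since $H_\ast$ and $H'_\ast$ are functors, it suffices to prove the claim for projections and for closed embeddings separately.

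\textbf{Projections.} By condition (b), $(\mathrm{pr}_Y)_\ast$ on $H^\ast(X\times Y)=H^\ast(X)\otimes H^\ast(Y)$ equals $(X\to\ast)_\ast\otimes H^\ast(Y)$, and likewise for $H'$. The map $\alpha$ is symmetric monoidal, so this case reduces to the structure maps $X\to\ast$. For these I would embed $X$ into some $P=\mathbb P^N$; I would use the self-duality in the motivating examples, or else treat $\mathbb P^N$ (and products of projective spaces) as known test objects. Then $X\to\ast$ factors as $X\hookrightarrow P\to\ast$.

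I am not sure such an embedding is available for a general compact complex manifold. The intended route may instead be to reduce every embedding to the diagonal-type embeddings and zero sections described below.

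\textbf{Zero sections.} Let $s:X\to V=\mathbb P_X(L^\vee\oplus 1)$ be the zero section. The projection formula in the earlier remark gives $s^\ast s_\ast(x)=x\cdot e(L)$. To determine $s_\ast x$ itself, I would use condition (c): the pair $(\pi_\ast,\infty^\ast)$ is injective. First, $\infty^\ast s_\ast=0$, because the zero and infinity sections are disjoint and the square with empty fibre product is transverse, so (a) applies. Second, $\pi_\ast s_\ast=\mathrm{id}$ by functoriality. These two facts determine $s_\ast x$ purely in terms of the pullback structure and $\pi_\ast$. The map $\pi$ is a projection only locally, so I still need to show that $\alpha$ commutes with $\pi_\ast$ for $\mathbb P^1$-bundles. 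I would do this in two steps.

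The first step compares $\alpha\circ\pi_\ast$ and $\pi'_\ast\circ\alpha$ after precomposing with $\pi^\ast$. One has $\pi_\ast\pi^\ast y=\pi_\ast(1)\cdot y$. The weak projective bundle formula reduces the comparison of $\pi_\ast(1)$ and $\pi'_\ast(1)$ to a universal computation using $e$. Concretely, the diagonal of $V\times_XV$ is the zero section of a line bundle, so $\pi_\ast(1)$ can be expressed through Euler classes via $s^\ast s_\ast$ together with base change (a).

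The second step deals with the remaining classes. On $\mathbb P^1$-bundles, $H^\ast(V)$ should be generated over $\pi^\ast H^\ast(X)$ by $1$ and $s_\ast(1)$, so I only need $\alpha(s_\ast 1)=s'_\ast(1)$. This follows from the characterization above, since $\pi_\ast s_\ast 1=1$ and $\infty^\ast s_\ast 1=0$.

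\textbf{General closed embeddings.} I would handle these by deformation to the normal cone, or by the blow-up trick. Blow up along $X$, or use a splitting-principle flag bundle, to reduce to the case of an embedding that is a transverse base change of iterated zero sections of line bundles. Then (a) and the injectivity of pullback along projective bundles transport the statement to the zero-section case.

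\textbf{Main obstacle.} I expect the hard step to be establishing generation (or injectivity) strong enough to pin down pushforwards using only the weak axioms (a)–(c). In particular, I need the zero-section / $\pi_\ast$ argument to close up without circularity, and I need to reduce arbitrary embeddings to zero sections using only the monomorphism conditions. My first attempt would be to prove the $\mathbb P^1$-bundle case directly from (c), and then induct on codimension via blow-ups.
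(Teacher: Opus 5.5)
The step that fails is the structure map $\pi_X\colon X\to\ast$, to which your graph factorization and condition (b) correctly reduce the projection case. A general compact complex manifold (a Hopf surface, a non-algebraic torus) has no embedding into $\mathbb P^N$. Even for $X=\mathbb P^N$ you have no independent handle on $\pi_{X\ast}$.

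The missing idea is that projections never need separate treatment. Using (a), (b) and functoriality, one checks that $H^\ast(X)$ is self-dual, with coevaluation $\Delta_{X\ast}\pi_X^\ast\colon 1_S\to H^\ast(X\times X)=H^\ast(X)\otimes H^\ast(X)$ and evaluation $\pi_{X\ast}\Delta_X^\ast$. The zigzag identity comes from one transverse base change together with $(\pi_X\times\mathrm{id})\circ\Delta_X=\mathrm{id}$. The same computation with the graph of $f$ shows that every $f_\ast$ is the transpose of $f^\ast$ for these dualities. Since $\alpha$ respects pullbacks and products, and the duality is determined by its coevaluation, it suffices to show $\alpha(\Delta_{X\ast}1)=\Delta'_{X\ast}1$. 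Then $\pi_{X\ast}$ is forced, and the theorem reduces to $\alpha(i_\ast 1)=i'_\ast 1$ for closed embeddings $i$. This is the paper's route. Your remark about the diagonal of $V\times_X V$ points at the right object, but that diagonal computes a coevaluation, and only this duality turns it into information about $\pi_\ast$.

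Within that reduced problem, your zero-section step is circular, as you feared:
\begin{itemize}
\item applying (c) to $H'$ requires $\pi'_\ast\alpha=\alpha\pi_\ast$ on $V=\mathbb P_X(L^\vee\oplus 1)$;
\item your second step sends this back to $\alpha(s_\ast 1)=s'_\ast 1$;
\item your first step never pins down $\pi_\ast(1)$ against $\pi'_\ast(1)$ (for $X=\ast$ these are two endomorphisms of $1_S$ that $\alpha$ does not relate at all).
\end{itemize}
The generation of $H^\ast(V)$ by $\pi^\ast H^\ast(X)$ and $s_\ast H^\ast(X)$ does follow from (c), but it does not break the circle.

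What you are missing is the identity $i_\ast(1)=e(\mathcal O_X(Z))$ for every smooth divisor $i\colon Z\hookrightarrow X$, valid in any theory. Let $L=\mathcal O_X(Z)$ with its canonical section $s$. Then $Z$ is the transverse fibre product of $s$ and the zero section $0$ inside $\mathbb P_X(L^\vee\oplus 1)$. Condition (c), used within one theory rather than across $\alpha$, gives $s_\ast=0_\ast$, since both are sections of $\pi$ that miss $\infty$. Hence $i_\ast 1=0^\ast s_\ast 1=0^\ast 0_\ast 1=e(L)$, and the Euler-class hypothesis gives $\alpha(i_\ast 1)=i'_\ast 1$ without any control of $\pi_\ast$. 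Applied to the zero section of $V$ itself, this settles your zero-section case at once.

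From there the paper proceeds in two steps:
\begin{itemize}
\item zero sections of flagged vector bundles are treated as chains of such divisors whose line bundles extend to the ambient bundle, so the projection formula applies to classes in the image of pullback;
\item a general $i$ is handled by deformation to the normal cone in the blow-up $D$ of $X\times\mathbb P^1$ along $Z\times\{0\}$.
\end{itemize}
In $D$ one has $i_{X,1\ast}i_\ast 1=i_{0\ast}0_\ast 1$, and one then cancels $i'_{X,1\ast}$, which is a split monomorphism because $D\to X$ retracts $i_{X,1}$. Your ``transverse base change'' sketch for general embeddings needs exactly this cancellation step.
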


\begin{proof} First, we note that given any such $(H^\ast,H_\ast)$, there is a natural self-duality of $H^\ast(X)\in S$, given by the coevaluation
\[
1_S=H^\ast(\ast)\xrightarrow{\pi_X^\ast} H^\ast(X)\xrightarrow{\Delta_{X\ast}} H^\ast(X\times X)\cong H^\ast(X)\otimes H^\ast(X)
\]
and evaluation
\[
H^\ast(X)\otimes H^\ast(X)=H^\ast(X\times X)\xrightarrow{\Delta_X^\ast} H^\ast(X)\xrightarrow{\pi_{X\ast}} H^\ast(\ast)=1_S.
\]
Moreover, with respect to this self-duality (which depends on the pushforward structure), the maps $f_\ast: H^\ast(X)\to H^\ast(Y)$ are the duals of $f^\ast: H^\ast(Y)\to H^\ast(X)$. In particular, to show that $\alpha$ commutes with the pushforward maps, it is enough to show that it commutes with the self-dualities, for which in turn it is enough to show that it is compatible with the coevaluation maps (as these determine the self-duality). Thus, it suffices to show that
\[
\alpha(\Delta_{X\ast}(1))=\Delta_{X\ast}'(1).
\]
As $H^\ast$ takes disjoint unions to products, we can assume that $X$ is connected.

We will now show more generally that for any closed immersion $i: Z\hookrightarrow X$ of compact complex manifolds (of codimension $c$), one has $\alpha(i_\ast(1))=i'_\ast(1)$. Consider first the case where $Z$ is of codimension $c=1$. In that case, we can define the line bundle $L=\mathcal O_X(Z)$, which comes with a natural section $s: X\to L$ vanishing exactly along $Z$, and $0: X\to L$. In particular, we get a transverse pullback square
\[\xymatrix{
Z\ar[r]^i\ar[d]^i & X\ar[d]^s\\
X\ar[r]^-0 & \mathbb P_X(L^\vee\oplus \mathcal O_X).
}\]
Now note that $s_\ast=0_\ast: H^\ast(X)\to H^\ast(\mathbb P_X(L^\vee\oplus \mathcal O_X))$: By the weak form of the projective bundle formula we asked for, it suffices to check this after pushing forward to $X$ (where it follows as $s$ and $0$ are both sections) and pulling back to $\infty$ (where it follows from the transverse pullback formula, and the emptiness of the intersection with $\infty$). Thus,
\[
i_\ast(1) = 0^\ast s_\ast(1) = 0^\ast 0_\ast(1)=e(L),
\]
and the claim follows from compatibility with Euler classes.

Note that if we know $\alpha(i_\ast(1))=i'_\ast(1)$ for some $i: Z\subset X$, then in fact for all $z\in H^\ast(Z)$ in the image of $i^\ast: H^\ast(X)\to H^\ast(Z)$, one has $\alpha(i_\ast(z))=i'_\ast(\alpha(z))$. Indeed, if $z=i^\ast(x)$, then
\[
\alpha(i_\ast(z))=\alpha(i_\ast(i^\ast(x))) = \alpha(i_\ast(1)\cdot x) = \alpha(i_\ast(1))\cdot \alpha(x) = i'_\ast(1)\cdot \alpha(x)=i'_\ast i'^\ast(\alpha(x)) = i'_\ast \alpha(z)
\]
using the compatibility of $\alpha$ with pullback and multiplication.

Next, consider the situation of a vector bundle $E$ of rank $r$ on a compact complex manifold $X$, and the embedding of the zero section $0: X\hookrightarrow \mathbb P_X(E^\vee\oplus \mathcal O_X)$. We want to show that $\alpha(0_\ast(1))=0'_\ast(1)$. By the splitting principle, we can assume that $E$ admits a complete flag
\[
0=E_0\subset E_1\subset \ldots\subset E_r=E.
\]
In that case, one can factor
\[
X=\mathbb P_X(\mathcal O_X)\subset \mathbb P_X(E_1^\vee\oplus \mathcal O_X)\subset \ldots\subset \mathbb P_X(E_r^\vee\oplus \mathcal O_X)=\mathbb P_X(E^\vee\oplus\mathcal O_X)
\]
into a series of codimension $1$ embeddings. Moreover, the corresponding line bundles
\[
\mathcal O_{\mathbb P_X(E_i^\vee\oplus \mathcal O_X)}(\mathbb P_X(E_{i-1}^\vee\oplus \mathcal O_X))
\]
on $\mathbb P_X(E_i^\vee\oplus \mathcal O_X)$ lift to $\mathbb P_X(E^\vee\oplus \mathcal O_X)$ (they are twists of $\mathcal O(1)$ by the pullback of the line bundle $E_i/E_{i-1}$ on $X$). Thus, the result can be inductively reduced to the case of codimension $1$.

Finally, we consider the general case $i: Z\hookrightarrow X$. The following argument is inspired by an argument in \cite[Chapter 15]{Fulton}. Let $D$ be the blow-up of $X\times \mathbb P^1$ along $Z\times \{0\}$. Then there is a projection $D\to \mathbb P^1$ with a map $\tilde{i}: Z\times \mathbb P^1\hookrightarrow D$ over $\mathbb P^1$, which after pullback to $\mathbb P^1\setminus \{0\}$ becomes isomorphic to $(Z\hookrightarrow X)\times (\mathbb P^1\setminus \{0\})$, while after pullback to $0\in \mathbb P^1$, we get
\[
Z\hookrightarrow \mathbb P_Z(N^\vee\oplus 1)\hookrightarrow D\times_{\mathbb P^1} \{0\}
\]
where $N$ is the normal bundle of $Z$ in $X$, and the second map is an inclusion of an irreducible component, where the other irreducible component does not meet $Z$.

Let $i_{Z,1}: Z=Z\times \{1\}\hookrightarrow Z\times \mathbb P^1$ and $i_{Z,0}: Z=Z\times \{0\}\hookrightarrow Z\times \mathbb P^1$ be the inclusions. Then $i_{Z,0\ast}(1) = i_{Z,1\ast}(1) = e(\mathcal O(1))$. Also consider $i_{X,1}: X=X\times\{1\}\hookrightarrow D$ and $i_0: \mathbb P_Z(N^\vee\oplus 1)\hookrightarrow D$ and $0: Z\hookrightarrow \mathbb P_Z(N^\vee\oplus 1)$. Then
\[
i_{X,1\ast} i_\ast(1) = \tilde{i}_\ast i_{Z,1\ast}(1) = \tilde{i}_\ast i_{Z,0\ast}(1)=i_{0\ast} 0_\ast (1).
\]
In particular,
\[
\alpha(i_{X,1\ast} i_\ast(1)) = \alpha(i_{0\ast} 0_\ast(1)) = i'_{0\ast} \alpha(0_\ast(1)) = i'_{0\ast} 0'_\ast(1)
\]
by the codimension $1$ case and embedding of the zero section case handled before (and noting that $0_\ast(1)$ is in the image of $i_0^\ast$, as it is in fact $i_0^\ast$ applied to $\tilde{i}_\ast(1)$).

Now we observe that also
\[
i_{X,1\ast}' \alpha(i_\ast(1)) = \alpha(i_{X,1\ast} i_\ast(1))
\]
using the codimension $1$ case for $i_{X,1}$ (and again noting that $i_\ast(1)$ is in the image of $i_{X,1}^\ast$, again when applied to $\tilde{i}_\ast(1)$), and
\[
i_{X,1\ast}'i'_\ast(1) = i'_{0\ast} 0'_\ast(1)
\]
as we proved the similar assertion for the unprimed version before. In total, we get
\[
i_{X,1\ast}' \alpha(i_\ast(1)) = \alpha(i_{X,1\ast} i_\ast(1)) = i'_{0\ast} 0'_\ast(1) = i_{X,1\ast}'i'_\ast(1).
\]
But $i_{X,1\ast}'$ is a (split) monomorphism (as $i_{X,1}: X\hookrightarrow D$ admits a splitting), so we get $\alpha(i_\ast(1))=i'_\ast(1)$, as desired.
\end{proof}

\bibliographystyle{amsalpha}

\bibliography{Complex}

\newcommand{\etalchar}[1]{$^{#1}$}
\providecommand{\bysame}{\leavevmode\hbox to3em{\hrulefill}\thinspace}
\providecommand{\MR}{\relax\ifhmode\unskip\space\fi MR }
\providecommand{\MRhref}[2]{%
  \href{http://www.ams.org/mathscinet-getitem?mr=#1}{#2}
}
\providecommand{\href}[2]{#2}
\begin{thebibliography}{BBBK18}

\bibitem[Ami64]{AmirSepInj}
D.~Amir, \emph{Projections onto continuous functions spaces}, Proc. Am. Math.
  Soc. \textbf{15} (1964), 396--402.

\bibitem[And21]{Andreychev}
G.~Andreychev, \emph{Pseudocoherent and {P}erfect {C}omplexes and {V}ector
  {B}undles on {A}nalytic {A}dic {S}paces}, arXiv:2105.12591, 2021.

\bibitem[ASC{\etalchar{+}}16]{SepInjBanach}
A.~Avil\'{e}s, F.~C. S\'{a}nchez, J.~M.~F. Castillo, M.~Gonz\'{a}lez, and
  Y.~Moreno, \emph{Separably injective {B}anach spaces}, Lecture Notes in
  Mathematics, vol. 2132, Springer, [Cham], 2016.

\bibitem[BBBK18]{BambozziBenBassatKremnizer}
F.~Bambozzi, O.~Ben-Bassat, and K.~Kremnizer, \emph{Stein domains in {Banach}
  algebraic geometry}, J. Funct. Anal. \textbf{274} (2018), no.~7, 1865--1927.

\bibitem[BBK17]{BenBassatKremnizer}
O.~Ben-Bassat and K.~Kremnizer, \emph{Non-{A}rchimedean analytic geometry as
  relative algebraic geometry}, Ann. Fac. Sci. Toulouse Math. (6) \textbf{26}
  (2017), no.~1, 49--126.

\bibitem[BBT18]{BakkerBrunebarbeTsimerman}
B.~Bakker, Y.~Brunebarbe, and J.~Tsimerman, \emph{o-minimal {GAGA} and a
  conjecture of {G}riffiths}, arXiv:1811.12230, 2018.

\bibitem[Ber90]{BerkovichSpectral}
V.~G. Berkovich, \emph{Spectral theory and analytic geometry over
  non-{Archimedean} fields}, Math. Surv. Monogr., vol.~33, Providence, RI:
  American Mathematical Society, 1990.

\bibitem[BF11]{BalmerFavi}
P.~Balmer and G.~Favi, \emph{Generalized tensor idempotents and the telescope
  conjecture}, Proc. Lond. Math. Soc. (3) \textbf{102} (2011), no.~6,
  1161--1185.

\bibitem[Bin78]{Bingener}
J.~Bingener, \emph{\"{U}ber formale komplexe {R}\"{a}ume}, Manuscripta Math.
  \textbf{24} (1978), no.~3, 253--293.

\bibitem[BKS20]{BalmerKrauseStevenson}
P.~Balmer, H.~Krause, and G.~Stevenson, \emph{The frame of smashing
  tensor-ideals}, Math. Proc. Cambridge Philos. Soc. \textbf{168} (2020),
  no.~2, 323--343.

\bibitem[Cem84]{Cembranos}
P.~Cembranos, \emph{{$C(K,\,E)$} contains a complemented copy of {$c_{0}$}},
  Proc. Amer. Math. Soc. \textbf{91} (1984), no.~4, 556--558.

\bibitem[Fri67]{Frisch}
J.~Frisch, \emph{Points de platitude d'un morphisme d'espaces analytiques
  complexes}, Invent. Math. \textbf{4} (1967), 118--138.

\bibitem[Ful98]{Fulton}
W.~Fulton, \emph{Intersection theory}, second ed., Ergebnisse der Mathematik
  und ihrer Grenzgebiete. 3. Folge. A Series of Modern Surveys in Mathematics
  [Results in Mathematics and Related Areas. 3rd Series. A Series of Modern
  Surveys in Mathematics], vol.~2, Springer-Verlag, Berlin, 1998.

\bibitem[GK00]{grosse2000rigid}
E.~Grosse-Kl{\"o}nne, \emph{Rigid analytic spaces with overconvergent structure
  sheaf}, Journal f\"ur die reine und angewandte Mathematik \textbf{519}
  (2000), 73--95.

\bibitem[Gle58]{Gleason}
A.~M. Gleason, \emph{Projective topological spaces}, Illinois J. Math.
  \textbf{2} (1958), 482--489.

\bibitem[Gro55]{GrothendieckTensor}
A.~Grothendieck, \emph{Produits tensoriels topologiques et espaces
  nucl\'{e}aires}, Mem. Amer. Math. Soc. \textbf{No. 16} (1955), Chapter 1: 196
  pp.; Chapter 2: 140.

\bibitem[Gro58]{grothendieck1958theorie}
Alexander Grothendieck, \emph{La th{\'e}orie des classes de chern}, Bulletin de
  la soci{\'e}t{\'e} math{\'e}matique de France \textbf{86} (1958), 137--154.

\bibitem[Gro61]{grothendieck1961elements}
\bysame, \emph{{\'E}l{\'e}ments de g{\'e}om{\'e}trie alg{\'e}brique: {III}.
  {\'e}tude cohomologique des faisceaux coh{\'e}rents, premi{\`e}re partie},
  Publications Math{\'e}matiques de l'IH{\'E}S \textbf{11} (1961), 5--167.

\bibitem[HBS66]{hirzebruch1966topological}
Friedrich Hirzebruch, Armand Borel, and RLE Schwarzenberger, \emph{Topological
  methods in algebraic geometry}, vol. 175, Springer Berlin-Heidelberg-New
  York, 1966.

\bibitem[Hou73]{Houzel}
C.~Houzel, \emph{Espaces analytiques relatifs et th\'eor\`eme de finitude},
  Math. Ann. \textbf{205} (1973), 13--54.

\bibitem[HSS17]{hoyois2017higher}
Marc Hoyois, Sarah Scherotzke, and Nicolo Sibilla, \emph{Higher traces,
  noncommutative motives, and the categorified chern character}, Advances in
  Mathematics \textbf{309} (2017), 97--154.

\bibitem[Hub93]{HuberContVal}
R.~Huber, \emph{Continuous valuations}, Math. Z. \textbf{212} (1993), no.~3,
  455--477.

\bibitem[Kal81]{KaltonConvexityType}
N.~J. Kalton, \emph{Convexity, type and the three space problem}, Studia Math.
  \textbf{69} (1980/81), no.~3, 247--287.

\bibitem[Lan77]{LangmannFrisch}
K.~Langmann, \emph{Zum {S}atz von {J}. {F}risch: ``{P}oints de platitude d'un
  morphisme d'espaces analytiques complexes'' ({I}nvent. {M}ath. {\bf 4}
  (1967), 118--138)}, Math. Ann. \textbf{229} (1977), no.~2, 141--142.

\bibitem[Loj64]{Lojasiewicz}
S.~Lojasiewicz, \emph{Triangulation of semi-analytic sets}, Ann. Scuola Norm.
  Sup. Pisa Cl. Sci. (3) \textbf{18} (1964), 449--474. \MR{173265}

\bibitem[Lur09]{LurieHTT}
J.~Lurie, \emph{Higher topos theory}, Annals of Mathematics Studies, vol. 170,
  Princeton University Press, Princeton, NJ, 2009.

\bibitem[Lur17]{lurie2017higher}
\bysame, \emph{Higher {A}lgebra},
  \url{https://www.math.ias.edu/~lurie/papers/HA.pdf}, 2017.

\bibitem[Mao21]{MaoPD}
Z.~Mao, \emph{Revisiting derived crystalline cohomology}, arXiv:2107.02921,
  2021.

\bibitem[Mat80]{Matsumura}
H.~Matsumura, \emph{Commutative algebra}, second ed., Mathematics Lecture Note
  Series, vol.~56, Benjamin/Cummings Publishing Co., Inc., Reading, Mass.,
  1980.

\bibitem[Oka50]{OkaVII}
K.~Oka, \emph{Sur les fonctions analytiques de plusieurs variables. {VII}.
  {S}ur quelques notions arithm\'{e}tiques}, Bull. Soc. Math. France
  \textbf{78} (1950), 1--27.

\bibitem[Ost17]{Ostrowski}
A.~Ostrowski, \emph{{\"U}ber einige {L{\"o}sungen} der {Funktionalgleichung}
  ${{\varphi}}(x){{\cdot}} {{\varphi}}(y)= {{\varphi}}(xy)$.}, Acta Math.
  \textbf{41} (1917), 271--284.

\bibitem[Rem84]{Remmert}
R.~Remmert, \emph{Funktionentheorie. {I}}, Grundwissen Mathematik [Basic
  Knowledge in Mathematics], vol.~5, Springer-Verlag, Berlin, 1984.

\bibitem[Sch13]{schatten2013norm}
R.~Schatten, \emph{Norm ideals of completely continuous operators}, vol.~27,
  Springer-Verlag, 2013.

\bibitem[Sch19]{Condensed}
P.~Scholze, \emph{Lectures on {C}ondensed {M}athematics},
  \url{people.mpim-bonn.mpg.de/scholze/Condensed.pdf}, 2019.

\bibitem[Sch20]{Analytic}
\bysame, \emph{Lectures on {A}nalytic {G}eometry},
  \url{people.mpim-bonn.mpg.de/scholze/Analytic.pdf}, 2020.

\bibitem[Sch26]{Gestalten}
\bysame, \emph{Geometry and {H}igher {C}ategory {T}heory},
  \url{https://people.mpim-bonn.mpg.de/scholze/Gestalten.pdf}, 2026.

\bibitem[Siu69]{SiuNoetherian}
Y.-T. Siu, \emph{Noetherianness of rings of holomorphic functions on {S}tein
  compact subsets}, Proc. Amer. Math. Soc. \textbf{21} (1969), 483--489.

\bibitem[SS72]{SchejaStorch}
G.~Scheja and U.~Storch, \emph{Differentielle {E}igenschaften der
  {L}okalisierungen analytischer {A}lgebren}, Math. Ann. \textbf{197} (1972),
  137--170.

\bibitem[Tay72]{Taylor}
J.~L. Taylor, \emph{A general framework for a multi-operator functional
  calculus}, Advances in Math. \textbf{9} (1972), 183--252.

\bibitem[Ull99]{UllrichWeierstrassPreparation}
P.~Ullrich, \emph{Division mit {R}est in {B}anach-{A}lgebren konvergenter
  {P}otenzreihen}, Arch. Math. (Basel) \textbf{72} (1999), no.~4, 289--292.

\bibitem[Wae06]{waelbroeck2006topological}
L.~Waelbroeck, \emph{Topological vector spaces and algebras}, vol. 230,
  Springer, 2006.

\end{thebibliography}

\end{document}